\documentclass[11pt]{amsart}

\usepackage{hyperref}
\usepackage{mathtools}
\usepackage{amsthm}
\usepackage{amssymb}
\usepackage{amsfonts}
\usepackage[utf8]{inputenc}
\usepackage{float}
\usepackage{verbatim}
\usepackage{enumitem}
\usepackage{booktabs}

\usepackage{tikz}

\numberwithin{equation}{section}
\newtheorem{theorem}{Theorem}[section]

\newtheorem{proposition}[theorem]{Proposition}
\newtheorem{prop}[theorem]{Proposition}
\newtheorem{claim}[theorem]{Claim}
\newtheorem{corollary}[theorem]{Corollary}
\newtheorem{cor}[equation]{Corollary}
\newtheorem{question}{Question}

\newtheorem{lemma}[theorem]{Lemma}
\newtheorem{conjecture}[theorem]{Conjecture}

\theoremstyle{definition}
\newtheorem{definition}[theorem]{Definition}
\theoremstyle{remark}
\newtheorem{remark}[theorem]{Remark}
\newtheorem{example}[theorem]{Example}
\newtheorem{notation}[equation]{Notation}
\newtheorem{convention}[equation]{Convention}

\newcommand{\bd}{\partial}
\newcommand{\mC}{\mathcal C}
\newcommand{\mU}{\mathcal U}
\newcommand{\mM}{\mathcal M}
\newcommand{\mH}{\mathcal H}
\newcommand{\mQ}{\mathcal Q}

\newcommand{\ra}{\rangle}

\newcommand{\Stab}{\mathrm{Stab}}
\newcommand{\ub}{\underline{b}}
\newcommand{\uv}{\underline{v}}
\newcommand{\eps}{\varepsilon}
\newcommand{\wh}{\widehat}
\newcommand{\wt}{\widetilde}
\newcommand{\Omegaout}{\Omega_{\mathrm{out}}}
\newcommand{\bunder}{\underline{b}}

\DeclareMathOperator{\area}{\mathrm{Area}}
\DeclareMathOperator{\length}{\mathrm{Length}}
\DeclareMathOperator{\ind}{\mathrm{ind}}

\newcommand{\Id}{\mathrm{Id}}

\newcommand{\Sph}{\mathbb{S}}

\newcommand{\N}{\mathbb{N}}
\newcommand{\Z}{\mathbb{Z}}

\newcommand{\del}{\partial}

\newcommand{\aut}{\mathrm{Conf}}

\newcommand{\mP}{\mathcal{P}}
\newcommand{\fC}{\mathfrak{C}}
\newcommand{\fU}{\mathfrak{U}}
\newcommand{\dist}{\mathrm{dist}}
\newcommand{\g}{\gamma}
\newcommand{\Mcal}{\mathcal{M}}


\newcommand{\Rcapunder}{\underline{\mathsf{R}}}
\newcommand{\Ecal}{\mathcal{E}}

\newcommand{\R}{\mathbb{R}}
\newcommand{\B}{\mathbb{B}}

\newcommand{\Ncal}{\mathcal{N}}
\newcommand{\Ccal}{\mathcal{C}}

\newcommand{\Acal}{\mathcal{A}}
\newcommand{\Scal}{\mathcal{S}}

\newcommand{\genus}{\mathrm{genus}}
\newcommand{\Isom}{\mathrm{Isom}}
\newcommand{\Conf}{\mathrm{Conf}}
\newcommand{\Area}{\mathrm{Area}}
\newcommand{\inj}{\mathrm{inj}}
\newcommand{\Diff}{\mathrm{Diff}}
\newcommand{\Met}{\mathrm{Met}}

\newcommand{\firsteigen}{\Ecal}

\newcommand{\lapone}{\Ecal_{\lambda_1}}
\newcommand{\stekone}{\Ecal_{\sigma_1}}
\newcommand{\cut}{\circ}

\newcommand{\Nserif}{\mathsf{N}}

\newcommand{\inn}{\circ}
\newcommand{\out}{-}

\newcommand{\Uhat}{\hat{U}}
\newcommand{\sigmabar}{\overline{\sigma}}

\definecolor{light-gray}{gray}{.95}
\definecolor{dark-gray}{gray}{.7}

\begin{document}
\title[Equivariant Eigenvalue Optimization]{Embedded minimal surfaces in $\mathbb{S}^3$ and $\mathbb{B}^3$ via equivariant eigenvalue optimization}
\author[M.~Karpukhin]{Mikhail~Karpukhin}
\author[R.~Kusner]{Robert~Kusner}
\author[P.~McGrath]{Peter~McGrath}
\author[D.~Stern]{Daniel~Stern}
\date{}
\address{Department of Mathematics, University College London, 25 Gordon Street, London, WC1H 0AY, UK} \email{m.karpukhin@ucl.ac.uk}
\address{Department of Mathematics, University of Massachusetts,
Amherst, MA, 01003} \email{profkusner@gmail.com, kusner@umass.edu}
\address{Department of Mathematics, North Carolina State University, Raleigh NC 27695} 
\email{pjmcgrat@ncsu.edu}
\address{Department of Mathematics, Cornell University, Ithaca NY, 14853} \email{daniel.stern@cornell.edu}

\begin{abstract}

In 1970, Lawson solved the topological realization problem for minimal surfaces in the sphere, showing that any closed orientable surface can be minimally embedded in $\Sph^3$. The analogous problem for surfaces with boundary was posed by Fraser and Li in 2014, and it has attracted much attention in recent years, stimulating the development of many new constructions for free boundary minimal surfaces. In this paper, we resolve this problem by showing that any compact orientable surface with boundary can be embedded in $\mathbb{B}^3$ as a free boundary minimal surface with area below $2\pi$. Furthermore, we show that the number of minimal surfaces in $\Sph^3$ of prescribed topology and area below $8\pi$, and the number of free boundary minimal surfaces in $\mathbb{B}^3$ with prescribed topology and area below $2\pi$, grow at least linearly with the genus. This is achieved via a new method for producing minimal surfaces of prescribed topology in low-dimensional balls and spheres, based on the optimization of Laplace and Steklov eigenvalues in the presence of a discrete symmetry group.
 As a key ingredient, we develop new techniques for proving the existence of maximizing metrics, which can be used to resolve the existence problem in many symmetric situations and provide at least partial existence results for classical eigenvalue optimization problems. 
\end{abstract}

\maketitle
\bibliographystyle{alpha}

\section{Introduction} 

The study of minimal surfaces in space forms dates back to the early days of differential geometry, with 
$\R^n$ and $\Sph^n$ being the 
simplest geometries. The sphere $\Sph^n$ is distinguished 
as the basic setting for the study of \emph{closed} minimal surfaces, and closed minimal surfaces in $\Sph^n$ arise naturally in a variety of problems, for example, as the links of minimal cones in $\mathbb{R}^{n+1}$, and as examples of Willmore surfaces in $\mathbb{R}^n$ under stereographic projection. For surfaces with boundary, a natural analog of closed minimal surfaces in the sphere are free boundary minimal surfaces in the Euclidean ball $\B^n$, that is,  critical points for the area functional among $2$-cycles relative to the boundary $\partial \mathbb{B}^n$, which have attracted attention in recent decades. 

In the last thirty years, a striking connection has emerged between natural isoperimetric problems in spectral geometry and minimal surfaces in $\Sph^n$ and $\mathbb{B}^n$. Nadirashvili  discovered \cite{Nadirashvili} that metrics $g$ on a closed surface $M$ which are critical points for the normalized Laplace eigenvalues
$$\bar{\lambda}_k(M,g):=\area(M,g)\lambda_k(M, g)$$
are (up to scaling) precisely those induced by branched minimal immersions of $M$ into $\Sph^n$, where $k$ can be recovered as the smallest integer for which $\lambda_k(M, g)=2$. More recently, Fraser and Schoen discovered an analogous characterization of free boundary minimal surfaces in $\mathbb{B}^n$ as critical points for the normalized Steklov eigenvalues
$$\bar{\sigma}_k(N,g):=\length(\partial N,g)\sigma_k(N,g)$$
on surfaces $N$ with boundary \cite{FSadvances, FraserSchoen}.

Nadirashvili's observation was motivated by the study of metrics maximizing the first nonzero Laplace eigenvalue $\bar{\lambda}_1(M,g)$ on surfaces of fixed topological type, a subject with origins in Hersch's 1970 proof that $\bar{\lambda}_1(\Sph^2,g)$ is maximized by the round metric \cite{Hersch}, later taken up by Berger \cite{Berger}, Yang--Yau \cite{YangYau}, Li--Yau \cite{LiYau} and others. In particular, \cite{Nadirashvili} shows that $\bar{\lambda}_1$-maximizing metrics must be induced by branched minimal immersions $M\to \Sph^n$ whose coordinate functions $x_1,\ldots,x_{n+1}$ are first Laplace eigenfunctions. Here, $n+1$ is bounded above by the multiplicity of $\lambda_1(M,g)$, which a priori could grow linearly with the genus of $M$ \cite{Cheng, Besson}, and $\bar{\lambda}_1$-maximizing metrics are generally expected to  arise from high-codimension surfaces: indeed, the corresponding metrics on the projective plane, torus, 
 and the Klein bottle are induced by minimal embeddings in $\mathbb{S}^4$, $\mathbb{S}^5$, and $\mathbb{S}^4$, respectively \cite{LiYau, Nadirashvili, Jakobson, EGJ}.

Building on the ideas of Nadirashvili, Fraser--Schoen, and others, here we develop the existence theory for metrics maximizing $\bar{\lambda}_1(M,g)$ and $\bar{\sigma}_1(N,g)$ with a prescribed discrete symmetry group, as a tool for producing many new minimal surfaces in $\Sph^n$ and free boundary minimal surfaces in $\mathbb{B}^n$ with prescribed topology and symmetries. In the present paper, we identify a large class of symmetries for which maximizing metrics exist and are induced by \emph{minimal embeddings of codimension one}, generating a wealth of new examples of embedded minimal surfaces in $\mathbb{S}^3$ and free boundary minimal surfaces in $\mathbb{B}^3$ with prescribed topology and symmetries, as we describe in more detail below. Moreover, the symmetric $\bar{\lambda}_1$- and $\bar{\sigma}_1$-maximization techniques developed here can be used to generate many other new families of minimal surfaces in $\Sph^3$, in $\mathbb{B}^3$, and in higher dimensional spheres and balls, well beyond the examples described in Sections \ref{intro.lap.sec} and \ref{intro.stek.sec} below.

\subsection{New minimal surfaces in $\mathbb{S}^3$}\label{intro.lap.sec}

Until the 1960s, the only known closed minimal surfaces embedded in $\mathbb{S}^3$ were the classical examples: the equator $\mathbb{S}^2$, and the Clifford torus $\Sph^1(1/\sqrt{2})\times \Sph^1(1/\sqrt{2})$, 
 up to rotations. This changed with Lawson's landmark paper \cite{Lawson}, which provided a construction associating to each pair of integers $m,k\in \mathbb{N}$ a minimal surface $\xi_{m, k}$ of genus $mk$, demonstrating in particular the existence of at least one example of each orientable topological type.  Since then, many new discrete families of examples have been constructed using a variety of methods, including generalizations of Lawson's construction, based on solving Plateau problems \cite{KPS, ChoeSoretT}, singular perturbation and gluing techniques \cite{KY, Kapouleas, KapMcG, Wiygul,  LDG, KapWtori}, and min-max methods \cite{KetoverTori}.  We refer to Brendle's survey \cite{Brendle:survey} for more about minimal surfaces in $\Sph^3$.

For surfaces of very low genus, the space $\Mcal_\gamma$
of genus $\gamma$ minimal surfaces embedded in $\Sph^3$, modulo isometries, is fully understood: work of Almgren \cite{Almgren} and Brendle \cite{Brendle:lawson} shows the great spheres and Clifford tori are the only examples with genus zero and one, respectively. 
For $\gamma\geq 2$, however, relatively little is known beyond the fact that $\mathcal{M}_{\gamma}$ is nonempty and is compact in the smooth topology \cite{ChoiSchoen}. It is widely believed that
the cardinality $|\mathcal{M}_{\gamma}|$ of $\Mcal_\gamma$ is always finite, but as of this writing this remains an open problem.

Regarding lower bounds for $|\Mcal_\gamma|$,  examples from families mentioned above provide sequences $\gamma_n \rightarrow \infty$ with $|\Mcal_{\gamma_n}| \rightarrow \infty$, and  Ketover recently established \cite{KetoverTori} that $|\Mcal_\gamma| \rightarrow \infty$ in general as $\gamma \rightarrow \infty$.  His 
min-max argument 
constructs a family of doublings of the Clifford torus and establishes the lower bound $|\mathcal{M}_{\gamma,4\pi^2}|> C\gamma / \log \log \gamma$ 
on the cardinality of the subset $\Mcal_{\gamma, 4\pi^2} \subset \Mcal_\gamma$
of surfaces with area $<4\pi^2$. 
For future use, let $\Mcal_{\gamma, A} \subset \Mcal_\gamma$ denote the subset of surfaces with area $< A$. 
 
The methods developed in this article produce many new minimal surfaces embedded in $\Sph^3$ with prescribed genus and symmetry groups, and area $<8\pi$, 
leading to new conclusions about $\Mcal_{\gamma}$ and $\Mcal_{\gamma, 8\pi}$.  In what follows, a finite group $\Gamma$ acting smoothly, properly, and effectively on a surface $M$ is called a \emph{reflection group} acting on $M$ if $\Gamma$ is generated by involutions each of whose fixed-point sets separates $M$. 

\begin{theorem}
\label{Tsph1}
Let $M$ be a closed orientable surface and $\Gamma = \Z_2 \times G$ be a reflection group acting on $M$.  If the quotient $M / \Z_2$ has genus zero, then $M$ admits a $\Gamma$-equivariant minimal embedding into $\Sph^3$, with area less than $8\pi$. 

When $G = \Z_2$ and $M$ has a given genus $\gamma$, there are at least $\lfloor \frac{\gamma-1}{4}\rfloor+1$ distinct such embeddings.  In particular, $|\Mcal_{\gamma, 8\pi}| \geq \lfloor \frac{\gamma-1}{4}\rfloor+1$. 
\end{theorem}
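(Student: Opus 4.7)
The plan is to reduce the $\Sph^3$-embedding problem to an equivariant Steklov maximization on the $\Z_2$-quotient, then double across the equator. Let $\tau$ be the distinguished $\Z_2$-generator of $\Gamma$; by the reflection hypothesis its fixed set $\Sigma = \fix(\tau)$ separates $M$ into two copies of a surface with boundary $M^+ := M/\tau$, which has genus zero (by assumption on $M/\Z_2$) and carries a residual $G$-action. Any $G$-equivariant free boundary minimal embedding $\psi: M^+ \hookrightarrow \B^3$ reflects through $\partial \B^3 = \Sph^2 \subset \Sph^3$ to yield a $\Gamma$-equivariant minimal embedding $M \hookrightarrow \Sph^3$ of area $2\area(\psi(M^+))$.

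I would apply the new existence theory for equivariant maximizers developed in this paper to produce a smooth $G$-invariant metric $\bar g$ on $M^+$ maximizing the normalized first $G$-equivariant Steklov eigenvalue $\bar{\sigma}_1^G$. By the equivariant analogue of the Fraser--Schoen correspondence, $\bar g$ is induced by a $G$-equivariant free boundary branched minimal immersion $\psi: M^+ \to \B^n$ whose coordinate functions span a subspace of the first $G$-equivariant Steklov eigenspace. Since $M^+$ has genus zero, sharp multiplicity bounds for first Steklov eigenvalues on planar $G$-surfaces force $n \leq 3$; equivariant two-piece-property arguments then rule out branch points and self-intersections, yielding an embedded free boundary minimal surface $\psi(M^+) \subset \B^3$. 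The area bound $\area(M, \bar g) < 8\pi$ follows from the companion bound $\area(\psi(M^+)) < 4\pi$, obtained by testing $\bar{\sigma}_1^G$-maximality against a comparison metric whose image lies in an equatorial hemisphere.

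For the counting statement with $G = \Z_2$, I would enumerate the distinct smooth actions of $\Gamma = \Z_2 \times \Z_2$ on a genus-$\gamma$ orientable $M$ in which the first factor acts as a separating reflection with genus-zero quotient. Such an action is classified up to equivariant diffeomorphism by combinatorial data describing how the fixed circles of the two commuting involutions intersect in $M$, constrained by Riemann--Hurwitz applied to each quotient. A direct count yields $\lfloor (\gamma-1)/4 \rfloor + 1$ inequivalent equivariant classes; each produces a minimal embedding via the construction above, and these embeddings lie in distinct isometry classes because the induced $\Gamma$-actions on $\Sph^3$ are distinguished by the topology of their fixed loci (in particular by the number of totally geodesic $\Sph^2$'s and great circles fixed).

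The main obstacle is the existence and codimension step: producing the $G$-equivariant $\bar{\sigma}_1^G$-maximizer on $M^+$ and forcing its associated free boundary minimal immersion into $\B^3$ rather than into a higher-dimensional ball. This requires both the paper's new techniques for ruling out degenerations of maximizing sequences, and sharp equivariant multiplicity bounds for first Steklov eigenvalues on planar surfaces. Once these are in hand, the reflection-doubling, area estimate, and the combinatorial enumeration follow by comparatively standard arguments.
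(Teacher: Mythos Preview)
Your reduction to Steklov maximization on the quotient has a fatal geometric error in the doubling step. A free boundary minimal surface in the Euclidean ball $\B^3$ meets $\partial\B^3=\Sph^2$ orthogonally and is minimal for the \emph{flat} metric; reflecting it across $\Sph^2$ (however you interpret this---by inversion in $\R^3$, or by identifying $\B^3$ with a hemisphere of $\Sph^3$) does \emph{not} produce a minimal surface in the round $\Sph^3$, because minimality is not conformally invariant and $\B^3$ is not isometric to $\Sph^3_+$. The paper does note a genuine equivalence between $\bar\lambda_1$-maximization on the closed surface $M$ under a reflection $\tau$ and an eigenvalue problem on the quotient $M^+$, but the correct problem on $M^+$ is maximizing $\min\{\lambda_1^{\mathrm{Neu}},\lambda_1^{\mathrm{Dir}}\}\cdot\Area$, which produces free boundary minimal surfaces in the \emph{hemisphere} $\Sph^3_+$, not Steklov maximization, which lands in $\B^3$.

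The paper instead works directly on the closed surface $M$: it maximizes $\bar\lambda_1$ among $\Gamma$-invariant metrics (Theorem~\ref{thm:group_existence}), obtaining a $\Gamma$-equivariant branched minimal immersion $M\to\Sph^n$ by first Laplace eigenfunctions. The basic reflection surface structure then forces $n=3$ via the multiplicity bound $\dim\Ecal_{\lambda_1}\le 4$ (Propositions~\ref{Lasymcl} and~\ref{Lrhoquo2}), and the Hersch-type estimate $\bar\lambda_1<16\pi$ on the genus-zero quotient (Lemma~\ref{Levalbdcl}) gives area $<8\pi$, whence Li--Yau yields embeddedness (Lemma~\ref{Lgraphsph}). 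The hardest step is verifying the gap condition $\Lambda_1^\Gamma(M)>\Lambda_1^{\Gamma'}(M')$ for every degeneration $(M',\Gamma')\prec(M,\Gamma)$; this uses the new contradiction technique of Theorem~\ref{thm:general_existence} together with Proposition~\ref{prop:uniq_map} and comparison with Lawson surfaces. For the counting, your enumeration of $\Z_2\times\Z_2$-actions is the right starting point, but you also need Lemma~\ref{lem:2basic} and the argument of Proposition~\ref{prop:distinct_MS} to show that distinct actions actually yield non-isometric maximizing metrics---it is not automatic that different equivariant types give different surfaces, since a single surface could in principle admit multiple actions.
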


Several remarks are in order.  First, for each fixed $A< 8\pi$, the space $\Mcal_{\gamma, A}$ is known  to be empty \cite{KLS} for all sufficiently large $\gamma$, so Theorem \ref{Tsph1} shows that the number of minimal surfaces of genus $\gamma$ grows at least linearly at the lowest possible area threshold.  Second, the groups $G$ arising in Theorem \ref{Tsph1} are precisely those isomorphic to the finite reflection subgroups of $O(3)$: the symmetry groups of the platonic solids, the dihedral and prismatic dihedral groups $D_k$ and $\Z_2 \times D_k$, the group $\Z_2$, and the trivial group.  Third, each group $\Gamma$ in Theorem \ref{Tsph1} is a reflection group on infinitely many distinct surfaces $M$, up to $\Gamma$-equivariant homeomorphism, so Theorem \ref{Tsph1} provides infinitely many distinct $\Gamma$-invariant minimal surfaces. 

The surfaces in Theorem \ref{Tsph1} are all \emph{doublings} of the equator $\Sph^2 \subset \Sph^3$: following \cite[Definition 1.1]{LDG}, a surface $M$ \emph{doubles} a surface $\Sigma$ if the nearest-point projection $\pi$ to $\Sigma$ is well-defined on $M$ and $M = M_1 \cup M_2$, where $M_1$ is a $1$-manifold, $M_2 \subset M$ is open,  $\pi |_{M_1}$ is a diffeomorphism,  and $\pi|_{M_2}$ is a $2$-sheeted covering map.  Each connected component of $\Sigma \setminus \pi(M)$ is called a \emph{doubling hole}, and the doubling holes for the constructions of Theorem \ref{Tsph1} are all convex.   We refer to Kapouleas's survey  \cite{Kap:survey} for more about constructions of doublings and desingularizations by gluing methods. 

We now discuss some of the surfaces from Theorem \ref{Tsph1}.

When $G$ is the symmetry group of a platonic solid, a $G$-fundamental domain $\Omega \subset \Sph^2$ bounded by fixed-point sets for reflections generating $G$ is a geodesic triangle.  Theorem \ref{Tsph1} allows arbitrary and independent numbers of doubling holes on the interior of $\Omega$, holes centered along each edge of $\partial \Omega$, and holes centered on each corner of $\partial \Omega$.  

When $G$ is a dihedral group $D_k$, one new family has doubling holes in a pyramidal configuration, one centered at a pole $p \in \Sph^2$, and $k$ more symmetrically arranged on a circle of some fixed distance from $p$. 

When $G$ is a prismatic dihedral group $\Z_2 \times D_k$,  Theorem \ref{Tsph1} is interesting only when $k$ is small, because examples from \cite{LDG} already exhaust all $\Gamma$-equivariant homeomorphism classes for $k$ large---indeed, more information is needed to adequately describe the diversity of configurations of doubling holes in examples from \cite{LDG}. One family to note has $k = 3$ and  $m$ doubling holes arranged along each of $3$ equally-spaced meridians.

Finally, when $G = \Z_2$, the configurations of doubling holes are governed by a pair of integers $a,b$, with $a$ holes centered on the circle $C\subset \Sph^2$ fixed by the $\Z_2$-action, and $2b$ more placed $\Z_2$-symmetrically on $\Sph^2 \setminus C$.  Varying $a$ and $b$ and estimating the number of necessarily distinct configurations with a given genus leads to the bound on $|\Mcal_{\gamma, 8\pi}|$ in Theorem \ref{Tsph1}.

\subsection{New free boundary minimal surfaces in $\mathbb{B}^3$}\label{intro.stek.sec}

In manifolds with boundary, the natural analog of closed minimal surfaces are \emph{free boundary minimal surfaces}---critical points for the area functional on the space of relative $2$-cycles with respect to the boundary of the ambient manifold. Though free boundary minimal surfaces have been studied since work of Courant \cite{CourantFBMS}, the subject has been taken up with renewed intensity in recent years, following Fraser-Schoen's discovery \cite{FSadvances} that free boundary minimal surfaces in Euclidean balls arise from a natural analog of the $\bar{\lambda}_1$-optimization problem. In this setting, the Laplacian is replaced by the first-order, nonlocal \emph{Dirichlet-to-Neumann} operator $\mathcal{D}:C^{\infty}(\partial N)\to C^{\infty}(\partial N)$ on the compact surface $N$ with boundary $\partial N$, given by
$$\mathcal{D}(u)=\frac{\partial\hat{u}}{\partial\nu},$$
where $\hat{u}$ is the harmonic extension of $u$ to $N$. The spectrum 
$ 0=\sigma_0(N,g)<\sigma_1(N,g)\leq \cdots$ of $\mathcal{D}$ is called the \emph{Steklov} spectrum of $(N,g)$.
 Fraser-Schoen discovered \cite{FScontemp, FraserSchoen} that, just as $\bar{\lambda}_1$-extremal metrics on closed surfaces are induced by minimal immersions in spheres, extremal metrics for the length-normalized first Steklov eigenvalue
$$\bar{\sigma}_1(N,g):=\length(\partial N,g)\sigma_1(N,g)$$
are induced by free boundary minimal immersions into Euclidean balls $\mathbb{B}^n$.

Just as $\Sph^3$ is the simplest and most natural setting for studying closed 
minimal surfaces, the simplest setting for studying compact free boundary minimal surfaces is the 
 unit ball $\mathbb{B}^3$. For many years, the only known examples of such surfaces 
were the equatorial disk and the critical catenoid, until Fraser-Schoen showed \cite{FraserSchoen} that $\bar{\sigma}_1$-maximizing metrics on genus zero surfaces must be induced by minimal embeddings in $\mathbb{B}^3$, establishing the existence of genus zero examples with
 any number of boundary components, modulo the existence theory for $\bar{\sigma}_1$-maximizing metrics. Over the last decade, many families of examples with various topological types 
have been constructed \cite{Zolotareva, KapWiygul, KapZou, LDG, Carlotto, Carlotto2} by a combination of gluing and min-max methods, and we refer to Martin Li's survey \cite{LiSurvey} and Mario Schulz's website \cite{Schulz} for an overview of previous constructions and other conjectural families.

In spite of this rapid recent progress, 
the following basic \emph{realization} problem \cite[Question 1]{FraserLi} has remained open until now.

\begin{question}\label{fbms.q1}
Can every compact, oriented surface with boundary be realized as an embedded free boundary minimal surface in $\mathbb{B}^3$?
\end{question}

As with $\bar{\lambda}_1$-maximizing metrics on closed surfaces, $\bar{\sigma}_1$-maximizing metrics on surfaces of nonzero genus are in general induced by branched free boundary minimal immersions into balls of dimension $>3$, so one cannot hope to resolve Question \ref{fbms.q1} via unconstrained $\bar{\sigma}_1$-maximization. 
On the other hand, analogous to Yau's conjecture for minimal surfaces in $\mathbb{S}^3$, the coordinate functions of every embedded free boundary minimal surface in $\mathbb{B}^3$ are expected to be \emph{first} Steklov eigenfunctions \cite[Conjecture 3.3]{FraserLi}, \cite[Open Question 7]{LiSurvey}, \cite{KusnerMcGrath}, suggesting that such surfaces can be constructed by alternative variational methods for the $\bar{\sigma}_1$ functional.

We resolve Question \ref{fbms.q1} as follows (see Theorem \ref{thm:group_Sexistence}):
\begin{theorem}
\label{Tfbms1} 
Each compact orientable surface with boundary is realized as an embedded free boundary minimal surface in $\B^3$, with area $<2\pi$.

Moreover, for each $\gamma \geq 0$ and $b \geq 2$, there are at least $\lfloor \frac{\gamma-2}{4} \rfloor + 1$ distinct $\Z_2\times \Z_2$-equivariant such embeddings of the surface with genus $\gamma$ and $b$ boundary components.

\end{theorem}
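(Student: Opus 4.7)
The plan is to derive Theorem~\ref{Tfbms1} from the equivariant Steklov maximization result (Theorem~\ref{thm:group_Sexistence}) applied to topological doublings of the equatorial disk $\mathbb{B}^2 \subset \mathbb{B}^3$, in direct analogy with the proof of Theorem~\ref{Tsph1}. Take the symmetry group $\Gamma = \mathbb{Z}_2 \times G$, with the first $\mathbb{Z}_2$ acting as reflection across $\mathbb{B}^2$ and $G$ acting on $\mathbb{B}^2$ as a reflection subgroup of $O(2)$; for the multiplicity statement we specialize to $G = \mathbb{Z}_2$ acting by reflection across a diameter. Admissible doubling hole configurations on $\mathbb{B}^2$ consist of $G$-symmetric collections of interior holes (each contributing to the genus of the double $N$) and boundary holes on $\partial \mathbb{B}^2$ (each contributing to the number of boundary components of $N$). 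A short Euler characteristic calculation, tracking separately the number of holes on the fixed diameter of $G$ versus those appearing in $G$-orbits of size two, confirms that any prescribed $(\gamma, b)$ with $\gamma \geq 0$ and $b \geq 1$ can be realized.

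Given such a $\Gamma$-surface $N$, we invoke Theorem~\ref{thm:group_Sexistence} to produce a smooth $\Gamma$-invariant metric $g$ maximizing $\bar\sigma_1$ among $\Gamma$-invariant metrics, together with a free boundary branched minimal immersion $\Phi : N \to \mathbb{B}^n$ realizing $g$, whose coordinate functions are first Steklov eigenfunctions transforming by a $\Gamma$-representation on $\mathbb{R}^n$. The key geometric point is to show $n = 3$: since $N/\mathbb{Z}_2$ is planar and $G$ acts on it as a reflection subgroup of $O(2)$, the $\Gamma$-isotypical decomposition of the first eigenspace admits a unique three-dimensional subrepresentation realized by $\Gamma$-equivariant first Steklov eigenfunctions, namely the standard coordinates of $\mathbb{B}^3$. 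Embeddedness then follows by applying nodal-domain arguments to the reflecting involutions in $\Gamma$, ruling out interior nodal sets that could produce self-intersections, paralleling the argument for Theorem~\ref{Tsph1}.

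The area bound comes from the identity $\mathrm{Area}(N, g) = \bar\sigma_1(N, g)/2$ (valid when the coordinates of a free boundary minimal immersion are first Steklov eigenfunctions), together with the upper bound $\bar\sigma_1 \leq 4\pi$ on doublings of $\mathbb{B}^2$, which follows from a degeneration argument: pinching the necks of $N$ to points yields a limiting configuration of two disks glued at points, on which $\bar\sigma_1 = 4\pi$; since the maximizer is smooth and the degenerate limit is not attained in our topological class, strict inequality holds. For the multiplicity statement with $\Gamma = \mathbb{Z}_2 \times \mathbb{Z}_2$, $\gamma \geq 0$, and $b \geq 2$, varying the number of $G$-orbits of interior holes off the diameter (compensating with holes on the axis to keep $(\gamma, b)$ fixed) produces a sequence of pairwise $\Gamma$-equivariantly non-homeomorphic doublings; a combinatorial count gives at least $\lfloor (\gamma-2)/4 \rfloor + 1$ such configurations, each yielding a distinct free boundary minimal embedding.

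The main obstacle will be the codimension-one reduction $n = 3$, which requires a detailed analysis of the $\Gamma$-isotypical decomposition of the first Steklov eigenspace at the maximizer and careful exclusion of higher-dimensional equivariant immersions. A subsidiary technical issue is the strict area bound, which requires a precise degeneration analysis comparing $\bar\sigma_1$ on smooth $\Gamma$-invariant doublings with its supremum on the degenerate configuration.
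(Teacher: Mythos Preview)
Your overall architecture is right—invoke Theorem~\ref{thm:group_Sexistence} on the surfaces $N_{\rho_1}(\mathbb{Z}_2,f+e_1\rho_1)$ (and $N_{\rho_1}(D_2,\ub)$ for the multiplicity count), then appeal to the structure theory of basic reflection surfaces—but two of the key steps are not justified as you describe them.

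\textbf{The area bound.} Your degeneration argument is backwards. Pinching necks to approach a pair of disks shows that a sequence of $\Gamma$-invariant metrics can have $\bar\sigma_1$ approaching $4\pi$; this gives a \emph{lower} bound $\Sigma_1^\Gamma(N)\geq 4\pi$ on the supremum, not an upper bound on any individual metric. Nothing in that picture prevents a smooth $\Gamma$-invariant metric from having $\bar\sigma_1>4\pi$. The paper's actual argument (Lemma~\ref{Levalbdbd}) is a Weinstock-type test function bound: because $(N,\tau)$ is a basic reflection surface, the quotient $\pi(N)$ is genus zero with $\pi(\partial N\cup N^\tau_\partial)$ lying in a single boundary component, so one can balance a conformal map $\pi(N)\to\mathbb{B}^2$ and use its coordinates as test functions for the mixed Steklov--Neumann problem on $\pi(N)$. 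This yields $\bar\sigma_1(N,g)<4\pi$ for \emph{every} $\tau$-invariant metric, hence area $<2\pi$ for the maximizer.

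\textbf{The reduction to $n=3$.} The claim that ``the $\Gamma$-isotypical decomposition of the first eigenspace admits a unique three-dimensional subrepresentation'' is not an argument: nothing a priori bounds the multiplicity of $\sigma_1$, and the first eigenspace could in principle contain many copies of any given $\Gamma$-representation. What is actually needed is the multiplicity bound $\dim\mathcal{E}_{\sigma_1}\leq 3$, which the paper proves for basic reflection surfaces via nodal-domain arguments on the quotient (Propositions~\ref{Lasymcl} and~\ref{Lrhoquo2}): one shows $\dim\mathcal{E}^-\leq 1$ because $\tau$-odd first eigenfunctions have nodal set equal to $M^\tau$, and $\dim\mathcal{E}^+\leq 2$ by analyzing the image of the nodal set in the disk $M^\bullet$. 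Embeddedness then follows from the area bound via Volkmann's monotonicity result, not from a separate nodal-domain argument as you suggest; the Morse-theoretic analysis (Lemma~\ref{Lbdgraph}) is used to establish the doubling structure, which is a refinement rather than the source of embeddedness.
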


\pdfsuppresswarningpagegroup=1
\begin{figure}
\includegraphics[width=0.3\textwidth,page=4]{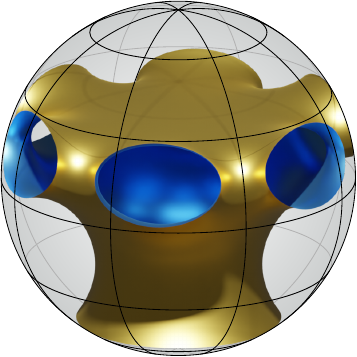}
\hfill
\includegraphics[width=0.3\textwidth,page=5]{figures-fbms}
\hfill
\includegraphics[width=0.3\textwidth,page=6]{figures-fbms}
\caption{{\small Conjectural pictures of some of the surfaces given by Theorem~\ref{Tfbms1}. Image credit: M.~Schulz~\cite{Schulz}.}}
\label{fig:fbms1}
\end{figure}

Independently of our work, we note that Mario Schulz has  conjectured \cite{Schulz} the existence of examples satisfying the area bound from the first part of Theorem \ref{Tfbms1}.  Figure~\ref{fig:fbms1} shows numerical pictures of free boundary minimal surfaces provided by M. Schulz, whose geometry is consistent with our understanding of the surfaces obtained in Theorem~\ref{Tfbms1}.

For genus zero surfaces, we have a result analogous to Theorem \ref{Tsph1}.  
\begin{theorem}
\label{Tfbms2}
For every $b\in \mathbb{N}$, there exist at least $\lfloor \frac{b-2}{4}\rfloor+1$ distinct free boundary minimal surfaces embedded in $\mathbb{B}^3$ of genus $0$ with $b$ boundary components and area $<4\pi$, invariant under a $\mathbb{Z}_2\times \mathbb{Z}_2$ action. More generally, on any compact, genus zero surface with boundary $N$, there is a large class of reflection groups $G$ on $N$ for which $N$ admits a $G$-equivariant free boundary minimal embedding in $\B^3$ of area $<4\pi$.
\end{theorem}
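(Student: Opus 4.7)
The plan is to apply the equivariant Steklov eigenvalue optimization framework underlying Theorem \ref{Tfbms1}, specialized to genus zero. Fix a compact orientable genus-zero surface $N$ with $b$ boundary components and a reflection group $G$ acting on $N$; for the quantitative assertion take $G=\mathbb{Z}_2\times\mathbb{Z}_2$, and the combinatorial types of such actions will be enumerated below. The problem is to maximize
\[
\bar{\sigma}_1(N,g) = \length(\partial N, g)\,\sigma_1(N,g)
\]
over the space of $G$-invariant Riemannian metrics on $N$.

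First I would establish existence of a $G$-invariant maximizing metric $g_*$ using the new existence machinery for equivariant eigenvalue optimization which is the technical core of this paper. Given $g_*$, the $G$-equivariant refinement of the Fraser--Schoen characterization of $\bar{\sigma}_1$-extremal metrics produces a $G$-equivariant free boundary harmonic map $u:N\to\mathbb{B}^n$ whose components are first Steklov eigenfunctions, and hence a $G$-equivariant free boundary branched minimal immersion. To force $n=3$, use that $u$ intertwines the $G$-action on $N$ with a representation $\rho:G\to O(n+1)$, so the coordinates of $u$ span a $G$-subrepresentation of the first Steklov eigenspace; for the reflection groups considered here, which embed as subgroups of $O(3)$ generated by hyperplane reflections, the equivariant variational setup ensures that the first Steklov eigenspace realizes this three-dimensional representation, confining $u$ to $\mathbb{B}^3$. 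Embeddedness would then follow by restricting to a $G$-fundamental domain, where $u$ is a free boundary minimal embedding into a spherical polyhedral subset of $\mathbb{B}^3$, and reflecting to recover a globally embedded surface.

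For the quantitative count with $G=\mathbb{Z}_2\times\mathbb{Z}_2$, the distinct $G$-equivariant homeomorphism types of $N$ with $b$ boundary components are parameterized by the numbers of boundary components fixed by each generating involution and the number of free $G$-orbits among the remaining components; enumerating the possibilities yields at least $\lfloor (b-2)/4\rfloor+1$ types, and the resulting minimal surfaces are pairwise noncongruent because they carry distinct $G$-equivariant topological types. The area bound $\mathrm{Area}(N,g_*)<4\pi$ follows from the identity $\mathrm{Area}(N,g_*)=\tfrac12\bar{\sigma}_1(N,g_*)$ for free boundary minimal surfaces in $\mathbb{B}^3$ combined with the estimate $\bar{\sigma}_1(N,g_*)<8\pi$, obtained by comparison with an explicit $G$-invariant trial metric of high $\bar{\sigma}_1$-value, analogous to the doubling comparison yielding area $<8\pi$ in Theorem~\ref{Tsph1}.

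The main obstacle I anticipate is the equivariant existence step. Along any $\bar{\sigma}_1$-maximizing sequence of $G$-invariant metrics, one must rule out degeneration to a $G$-invariant limit of strictly lower topological complexity while preserving the supremum; this is more delicate than Fraser--Schoen's unconstrained genus-zero argument, since it requires analyzing modes of degeneration compatible with the symmetry group, and is precisely where the new existence theory for symmetric eigenvalue optimization is essential.
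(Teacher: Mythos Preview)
Your proposal has the right overall architecture (equivariant $\bar\sigma_1$-maximization, then extract a free boundary minimal immersion), but several of the key steps are incorrect as stated.

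First, your mechanism for forcing $n=3$ does not work: $\mathbb{Z}_2\times\mathbb{Z}_2$ has only one-dimensional irreducibles, so representation theory alone cannot single out a three-dimensional eigenspace, and nothing in the equivariant variational setup bounds multiplicity. The paper instead uses that on \emph{any} genus-zero surface with boundary the first Steklov eigenvalue has multiplicity at most $3$ (a nodal-domain argument due to Kokarev and Fraser--Schoen, independent of symmetry). Second, your embeddedness argument fails: restricting an a priori branched immersion to a fundamental domain gives no reason to expect an embedding, and reflecting a non-embedded piece does not repair this. The paper uses Fraser--Schoen's result \cite[Proposition 8.1]{FraserSchoen} that a genus-zero free boundary minimal immersion by first Steklov eigenfunctions is star-shaped, hence embedded; the area bound $<4\pi$ is also extracted from that result, not from any test-metric comparison. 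Indeed, your area argument is backwards: a trial metric gives a \emph{lower} bound on $\Sigma_1^G$, never an upper bound, and the analogous $<8\pi$ bound in Theorem~\ref{Tsph1} comes from the Hersch-type estimate of Lemma~\ref{Levalbdcl} on the quotient, not from a comparison surface.

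Finally, the distinctness claim needs more than ``distinct $G$-equivariant topological types'': two maximizing metrics for different actions could a priori be isometric via a map not intertwining the actions. The paper handles this in Proposition~\ref{Pzeronum} by an argument parallel to Proposition~\ref{prop:distinct_MS}, using Lemma~\ref{lem:2basic} to constrain what happens when a single metric carries two inequivalent basic-reflection structures; this is where the restriction to parameters with $b>2(f+f'+1)$, and hence the floor $\lfloor (b-2)/4\rfloor+1$, actually arises.
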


\begin{figure}
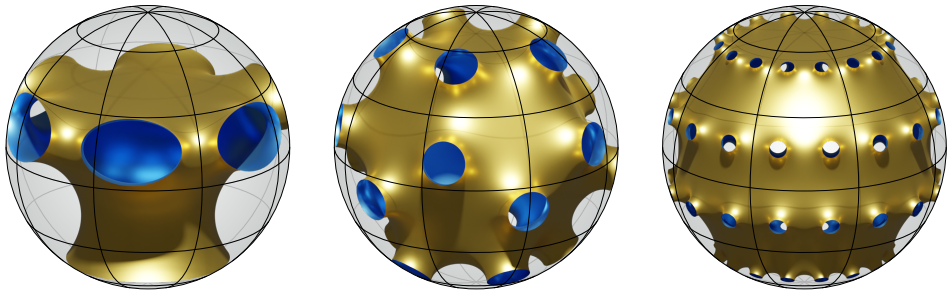

\includegraphics[width=0.3\textwidth,page=1]{figures-fbms}
\hfill
\includegraphics[width=0.3\textwidth,page=2]{figures-fbms}
\hfill
\includegraphics[width=0.3\textwidth,page=3]{figures-fbms}
\caption{{\small Conjectural pictures of some of the surfaces given by Theorem~\ref{Tfbms2}. From left to right: $D_3$-symmetric surface with $7$ boundary components; symmetric surface with boundary components around vertices of the dodecahedron; $\Z_2\times D_{16}$-symmetric surface with $64$ boundary components. Image credit: M.~Schulz~\cite{Schulz}.}}
\label{fig:fbms2}
\end{figure}

 Figure~\ref{fig:fbms2} shows numerical examples of surfaces with symmetry, whose existence is covered by Theorem~\ref{Tfbms2}.
The list of pairs $(N, G)$ admitting equivariant free boundary minimal embeddings in $\mathbb{B}^3$ is not yet exhaustive, as is the case for Theorem \ref{Tsph1}, but is nonetheless extensive, and the reasons for this are detailed in Section \ref{ssbasref} and in the discussion before Theorem \ref{thm:group_Sexistence}.  We refer to Theorem \ref{thm:group_Sexistence} for the complete list of actions, but remark that all cases where $G$ is a tetrahedral, octahedral, or icosahedral symmetry group are included, as well as most cases where $G = \Z_2\times D_k$. 

\subsection{Existence theory for extremal metrics} 
The bulk of the analytic work in the paper is devoted to developing the existence theory for $\bar{\lambda}_1$- and $\bar{\sigma}_1$-maximizing metrics in the presence of a discrete symmetry group. Notably, our methods also provide a new approach to the existence theory for the classical $\bar{\lambda}_1$- and $\bar{\sigma}_1$-maximization problems, as described briefly below.

The existence of a metric maximizing $\bar{\lambda}_1$ in a fixed \emph{conformal} class $c$ on a closed surface $M$ was established by Petrides \cite{Pet1}, with alternative proofs later given in \cite{KNPP} and \cite{KSminmax}, yielding the following.

\begin{theorem}[\cite{Pet1, KNPP, KSminmax}]\label{lap.cf.ex}
For each closed surface $M$ and each conformal class $c$ on $M$, there is a metric $g \in c$, possibly with a finite set of conical singularities, realizing the supremum
$$\Lambda_1(M,c):=\sup \{ \bar{\lambda}_1(M, h) \mid  h \in c \}, $$
and a sphere-valued harmonic map $u\colon(M,g)\to \mathbb{S}^n$ by first eigenfunctions. 
\end{theorem}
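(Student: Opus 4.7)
The plan is to establish existence via the direct method, following the scheme of Petrides \cite{Pet1}. Take a maximizing sequence $(g_n) \subset c$, normalized so that $\area(M, g_n) = 1$ and $\lambda_1(M, g_n) \to \Lambda_1(M,c)$. Fixing a smooth background metric $g_0 \in c$, write $g_n = e^{2\omega_n} g_0$, so that $d\mu_n := e^{2\omega_n} \, d\mathrm{vol}_{g_0}$ is a sequence of probability measures on $M$. By weak-$*$ compactness of Radon probability measures, a subsequence converges to a limit $\mu$, and the candidate maximizing metric will be extracted from the absolutely continuous part of $\mu$.

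For each $n$, choose an $L^2(\mu_n)$-orthonormal basis $\phi_1^{(n)}, \ldots, \phi_{k_n}^{(n)}$ of first Laplace eigenfunctions. The multiplicity $k_n$ stabilizes along a subsequence to some $k$, and the conformal invariance of the Dirichlet energy in dimension two gives the uniform bound $\int_M |\nabla \phi_j^{(n)}|^2_{g_0} \, d\mathrm{vol}_{g_0} = \lambda_1(M, g_n) \to \Lambda_1(M,c)$. Extracting weak $W^{1,2}(M, g_0)$-limits $\phi_1, \ldots, \phi_k$ and passing to the limit in the eigenvalue equation, one finds that the $\phi_j$ satisfy $-\Delta_{g_0} \phi_j = \Lambda_1(M,c) \, \phi_j \, \mu$ in the distributional sense, i.e.\ they are first eigenfunctions of the limit spectral problem determined by $\mu$. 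When $(M,c)$ is the conformal class of the round sphere, I would precompose the maximizing sequence with M\"obius transformations and apply a Hersch-type balancing lemma to ensure the limiting eigenfunctions are nontrivial; in every other conformal class, the conformal automorphism group is finite, and no such renormalization is needed. Assembling $(\phi_1, \ldots, \phi_k)$ into a map provides the candidate sphere-valued first-eigenmap $u$.

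The main obstacle is to rule out degeneration of $\mu$ to a sum of point masses, i.e.\ bubbling along the maximizing sequence. The key input is the sharp gap
\[
\Lambda_1(M, c) \geq 8\pi,
\]
together with the fact that any bubble profile extracted at a concentration point is a harmonic map $\Sph^2 \to \Sph^m$ and therefore carries Dirichlet energy an integer multiple of $8\pi$. A concentration-compactness dichotomy — realized as Petrides' direct analysis, the measure-theoretic framework of \cite{KNPP}, or the min-max approach of \cite{KSminmax} — then shows that each atom of $\mu$ absorbs at least $8\pi$ of spectral weight, so atoms are finite in number, and the absolutely continuous part of $\mu$ is nontrivial precisely when $\Lambda_1(M, c) > 8\pi$. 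In the borderline case $\Lambda_1(M, c) = 8\pi$, which occurs only in the conformal class of the round sphere, the round metric itself is the maximizer.

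Once concentration is controlled, writing the absolutely continuous part of $\mu$ as $e^{2\omega} \, d\mathrm{vol}_{g_0}$ produces the maximizing metric $g = e^{2\omega} g_0$; elliptic regularity for the sphere-valued harmonic map $u\colon(M, g)\to \Sph^n$ then upgrades $\omega$ to be smooth away from the finitely many atoms of $\mu$, whose masses prescribe conical angle defects of $g$. The hardest step is the concentration-compactness analysis: one must identify the bubble profiles with sufficient precision to subtract their spectral contribution from $\Lambda_1(M,c)$ while preserving the $L^2(\mu_n)$-orthonormality constraint on the limiting eigenfunctions, so that the residual measure still realizes a nontrivial eigenvalue problem.
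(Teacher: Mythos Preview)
Your outline follows the direct-method scheme of Petrides \cite{Pet1}: take a maximizing sequence of conformal metrics, pass to a weak measure limit, and use concentration-compactness to control atoms. This is one of the three cited proofs and is a valid route, though a few of your technical remarks are imprecise. The claim that finiteness of the conformal automorphism group makes renormalization unnecessary is a red herring: concentration of the $\mu_n$ can occur regardless, and what rules it out is the Hersch-type test-function estimate showing that any atom carries spectral weight at most $8\pi$, combined with the lower bound $\Lambda_1(M,c) \geq 8\pi$. The assertion that bubble profiles are harmonic maps $\Sph^2 \to \Sph^m$ with energy a multiple of $8\pi$ conflates two frameworks: in Petrides' argument no harmonic-map bubbles are extracted at all, while in the \cite{KNPP, KSminmax} approaches the bubbles have energy a multiple of $4\pi$ (the spectral weight $8\pi$ is $2E$). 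Finally, that $\Lambda_1(M,c)=8\pi$ forces $(M,c)$ to be the round sphere is an output of the analysis rather than an input you can simply invoke.

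The paper itself takes the third cited route, the min-max approach of \cite{KSminmax}, carried out in Section~\ref{lap.mm} (in the more general equivariant setting). Rather than maximizing over metrics, one runs a min-max for the Ginzburg--Landau energies $E_\epsilon$ over $m$-parameter families of maps into a sphere, producing critical points $u_\epsilon$ with Morse index $\leq m$. The index bound forces the associated conformal metric $g_{u_\epsilon}=\epsilon^{-2}(1-|u_\epsilon|^2)g$ to satisfy $\lambda_1 \geq 1 - C/m$ (Proposition~\ref{index.eigen.bd}), and this is then combined with the hypothesis $\Lambda_1(M,c)>8\pi$ to rule out concentration and pass to a harmonic-map limit as $\epsilon\to 0$, after which a stabilization argument in $m$ (Lemma~\ref{closed.stab.lem}, Theorem~\ref{lap.mm.char}) pins down the maximizing metric. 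What this buys over the direct method is structure: the min-max characterization $2\mathcal{E}_m^\Gamma = \Lambda_1^\Gamma$ and the associated families in $\mathcal{B}_m$ are exactly what the paper exploits in Sections~\ref{sec:global_existence}--\ref{sec:global_Sexistence} to run the contradiction arguments behind Propositions~\ref{intro.handle} and~\ref{intro.strip}. Your direct-method approach yields existence more cheaply but without this leverage.
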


With the existence of conformally $\bar{\lambda}_1$-maximizing metrics in hand, to show the existence of metrics realizing the global supremum
$$\Lambda_1(M):=\sup\{\bar{\lambda}_1(M,h)\mid h\in \Met(M)\},$$
it suffices to show that a sequence of conformal classes $c_j$ with  $\Lambda_1(M, c_j)\to \Lambda_1(M)$ does not accumulate at the boundary of the finite-dimensional moduli space of conformal structures on $M$. Petrides \cite{Pet1} reduced this problem  to verifying a strict inequality, which is simplest to state for orientable surfaces (see \cite{MS2} for the nonorientable case); here and below, the closed orientable surface with genus $\gamma$ is denoted by $M_\gamma$. 

\begin{theorem}[\cite{Pet1}]\label{pet.glob1}
Suppose that
\begin{equation}\label{g.mono}
\Lambda_1(M_{\gamma+1})>\Lambda_1(M_{\gamma}).
\end{equation}
Then $\Lambda_1(M_{\gamma+1})$ is achieved by a metric $g$ on $M_{\gamma+1}$, possibly with conical singularities, induced by a branched minimal immersion $u\colon M_{\gamma+1}\to \mathbb{S}^n$ by first eigenfunctions.
\end{theorem}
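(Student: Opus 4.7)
The plan is to show that under the strict inequality \eqref{g.mono}, a $\bar{\lambda}_1$-maximizing sequence of conformal classes on $M_{\gamma+1}$ cannot escape to the boundary of the moduli space $\mathcal{M}_{\gamma+1}$ of conformal structures, so a subsequential limit provides a conformal class to which the conformal existence result (Theorem \ref{lap.cf.ex}) can be applied. Concretely, I would pick a sequence $[c_j] \in \mathcal{M}_{\gamma+1}$ with $\Lambda_1(M_{\gamma+1}, c_j) \to \Lambda_1(M_{\gamma+1})$, use Theorem \ref{lap.cf.ex} to obtain, for each $j$, a maximizing metric $g_j \in c_j$ (with at most conical singularities) together with a harmonic map $u_j \colon (M_{\gamma+1}, g_j) \to \mathbb{S}^{n_j}$ whose components are first eigenfunctions; a multiplicity bound for $\lambda_1$ on a surface of fixed genus keeps $n_j$ uniformly bounded, so after normalizing $\mathrm{Area}(u_j) = 1$ we may pass to a weak subsequential limit.

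The key dichotomy is compactness versus degeneration of $[c_j]$ in moduli space. If $[c_j]$ stays in a compact subset of $\mathcal{M}_{\gamma+1}$, then after pulling back by diffeomorphisms $\phi_j$ so that $\phi_j^*g_j$ lies in a fixed compact family of background metrics $\tilde{g}_j$, upper semi-continuity of $\Lambda_1(M_{\gamma+1}, \cdot)$ on the interior of moduli space (an immediate consequence of Theorem \ref{lap.cf.ex} applied to the limit conformal class $c_\infty$ and the variational characterization of $\bar{\lambda}_1$) yields $\Lambda_1(M_{\gamma+1}, c_\infty) = \Lambda_1(M_{\gamma+1})$, and a final application of Theorem \ref{lap.cf.ex} inside $c_\infty$ produces the desired global maximizer together with the branched minimal immersion by first eigenfunctions.

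The main obstacle, and the content of the theorem, is therefore ruling out the degenerate case. If $[c_j]$ escapes every compact set in $\mathcal{M}_{\gamma+1}$, the Deligne–Mumford picture shows that some family of simple closed geodesics in $(M_{\gamma+1}, \tilde{g}_j)$ shrinks to zero length, so $M_{\gamma+1}$ decomposes in the limit into a nodal surface whose normalization $\Sigma_\infty$ has arithmetic genus at most $\gamma$. I would run a bubble-tree analysis on the normalized maps $u_j$: away from the collapsing necks and concentration points, $u_j$ converges to a harmonic map $u_\infty$ from $\Sigma_\infty$ into $\mathbb{S}^n$, while at each concentration point one extracts bubbles $v_\alpha \colon \mathbb{S}^2 \to \mathbb{S}^n$. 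The delicate step is the no-neck-energy-loss estimate on the collapsing cylinders, which here follows from the fact that the restrictions of the $u_j$ to long thin necks are first Laplace eigenfunctions and hence satisfy strong Poincaré-type controls; this gives the energy identity
\[
\Lambda_1(M_{\gamma+1}) \;=\; \lim_{j\to\infty}\bar{\lambda}_1(M_{\gamma+1},g_j) \;=\; 2E(u_\infty) + \sum_\alpha 2E(v_\alpha).
\]

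Now the conclusion: the limit harmonic map $u_\infty$ and the bubbles can be glued into a competitor for $\bar{\lambda}_1$ on a surface of genus at most $\gamma$ by attaching the bubble spheres to the components of $\Sigma_\infty$ via small handles (a standard conformal surgery), using that any sphere-valued harmonic map induces a valid test configuration whose first Laplace eigenvalue is at most $2$. Together with the Colbois–El Soufi type monotonicity, this bounds the right-hand side by $\Lambda_1(M_\gamma)$; that is, any escaping maximizing sequence satisfies $\Lambda_1(M_{\gamma+1}) \le \Lambda_1(M_\gamma)$, directly contradicting \eqref{g.mono}. Hence $[c_j]$ is precompact and the compact case above applies, producing the asserted metric $g$ and branched minimal immersion by first eigenfunctions. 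The main technical hurdle is the precise justification of the no-neck-energy-loss step and of the surgery that converts the nodal/bubble limit into an admissible competitor on $M_\gamma$, since both must be carried out in a way that respects the first-eigenfunction constraint on the competitor.
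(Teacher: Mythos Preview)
Your overall architecture (maximizing sequence of conformal classes, dichotomy between precompactness and degeneration in moduli, application of Theorem~\ref{lap.cf.ex} in the compact case) matches the paper's equivariant version (Theorems~\ref{thm:DM_limit} and~\ref{thm:existence}) and Petrides's original argument. The compact case is fine.

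The degeneration case, however, has a genuine gap. Your plan is to prove an energy identity $\Lambda_1(M_{\gamma+1})=2E(u_\infty)+\sum_\alpha 2E(v_\alpha)$ and then glue $u_\infty$ and the bubbles into a single sphere-valued map $w$ on a surface of genus $\leq\gamma$, invoking that ``any sphere-valued harmonic map induces a valid test configuration whose first Laplace eigenvalue is at most $2$''. But that inequality points the wrong way: it gives $\bar\lambda_1(M',|dw|^2g)\leq 2E(w)$, not $\geq$. So from your competitor you only obtain $\bar\lambda_1(M',|dw|^2g)\leq \Lambda_1(M_{\gamma+1})$ and $\bar\lambda_1(M',|dw|^2g)\leq \Lambda_1(M_\gamma)$, which together say nothing about $\Lambda_1(M_{\gamma+1})$ versus $\Lambda_1(M_\gamma)$. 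The underlying issue is that the limit map $u_\infty$ need not be by \emph{first} eigenfunctions for the induced metric on $\Sigma_\infty$, so $2E(u_\infty)$ is not bounded above by $\Lambda_1(\Sigma_\infty)$.

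The argument in the paper (following \cite{Pet1}) does not try to add up the bubble energies. Instead it uses a measure-concentration dichotomy: if the area of $g_j$ (equivalently, the energy density $|du_j|^2$) does not concentrate on a \emph{single} collar or a \emph{single} thick component, then $\lambda_1(g_j)\to 0$ (this is \cite[Claim~11]{Pet1}, quoted in Step~1 of Theorem~\ref{thm:DM_limit}). If it concentrates on a collar, a Hersch-type argument gives $\limsup\bar\lambda_1\leq 8\pi\leq\Lambda_1(M_\gamma)$. If it concentrates on one thick component, one runs bubble convergence there; again, if any bubble forms, concentration at a point forces $\limsup\bar\lambda_1\leq 8\pi$, while if no bubble forms the convergence is smooth and one applies \emph{upper semicontinuity of $\bar\lambda_1$ under weak$^*$ convergence of measures} \cite[Proposition~1.1]{Kok.var} to the limit measure $|d\widehat\Phi|^2\,dv_{\widehat h_\infty}$ on the limit component $\widehat X$, obtaining $\limsup\bar\lambda_1\leq\bar\lambda_1(\widehat X,|d\widehat\Phi|^2\widehat h_\infty)\leq\Lambda_1(\widehat X)\leq\Lambda_1(M_\gamma)$. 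No surgery and no first-eigenfunction constraint on the limit are needed; the whole point is to reduce to the situation with a single limiting piece and then pass the eigenvalue, not the energy, through the limit.
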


While the nonstrict version of the inequality \eqref{g.mono} is not difficult to prove (see e.g.~\cite{CES}), the strict inequality is surprisingly subtle. There have been multiple attempts, most notably \cite{MatthiesonSiffert}, to prove \eqref{g.mono} 
by a perturbation and induction argument, starting from a maximizing metric $g$ on $M_{\gamma}$, and attaching a handle with carefully chosen geometry to directly produce a metric $g'$ on $M_{\gamma+1}$ with $\bar{\lambda}_1(M_{\gamma+1},g')>\bar{\lambda}_1(M_{\gamma},g_{})=\Lambda_1(M_{\gamma})$.  H. Matthiesen and A. Siffert  have communicated to us \cite{Siffert} that the argument in \cite{MatthiesonSiffert} contains a nontrivial gap, and to our knowledge the strict inequality \eqref{g.mono} remains open in general. 

In Section \ref{sec:global_existence}, we develop a new strategy for proving \eqref{g.mono} and its analog in the presence of a prescribed symmetry group. Unlike previous approaches, our approach is based on a contradiction argument, using the bound $\Lambda_1(M_{\gamma+1},c)\leq \bar{\lambda}_1(M_{\gamma},g_{})$ for the conformal suprema of $\bar{\lambda}_1$ on $M_{\gamma+1}$ and the min-max characterization of $\Lambda_1(M_{\gamma+1},c)$ obtained in \cite{KSminmax}. 

Starting from a $\bar{\lambda}_1$-maximizing metric $g_{}$ on the 
 surface $M_{\gamma}$, we remove small disks centered at a prescribed pair of points $p,q\in M_{\gamma}$, and attach a flat cylinder to obtain a conformal class $c$ on $M_{\gamma+1}$. Assuming---for a contradiction---that $\Lambda_1(M_{\gamma+1})\leq \Lambda_1(M_{\gamma})$, it follows that $\Lambda_1(M_{\gamma+1},c)\leq \bar{\lambda}_1(M_{\gamma},g_{})$.  Combining this inequality with results of \cite{KSminmax} and a few elementary estimates proves the existence of a map $u\colon(M_{\gamma+1},c)\to \mathbb{S}^n$ with quantifiably negligible energy in the handle region, whose component functions are $W^{1,2}$-close to first eigenfunctions on $(M_{\gamma}, g)$. Taking the radius of the disks removed arbitrarily small and letting the conformal type of the attached handle degenerate, we take the limits of these maps on $M_{\gamma}$ to obtain the following.

\begin{proposition}\label{intro.handle}
If $\Lambda_1(M_{\gamma-1})<\Lambda_1(M_{\gamma})=\Lambda_1(M_{\gamma+1})$, then for every pair of points $p,q\in M_{\gamma}$, there exists a map $F=F_{p,q}\colon M_{\gamma}\to \mathbb{S}^n$ by first eigenfunctions such that $F(p)=F(q).$
\end{proposition}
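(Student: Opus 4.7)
The plan is to carry out the strategy sketched in the paragraphs preceding the proposition. Since $\Lambda_1(M_{\gamma-1}) < \Lambda_1(M_\gamma)$, Theorem \ref{pet.glob1} (applied with $\gamma-1$ in place of $\gamma$) supplies a (possibly mildly singular) metric $g$ on $M_\gamma$ realizing $\Lambda_1(M_\gamma)$, together with a branched minimal immersion $\Phi\colon M_\gamma\to\mathbb{S}^N$ by first $g$-eigenfunctions. Given $p,q\in M_\gamma$, I would form a degenerating family of conformal classes $c_{r,L}$ on $M_{\gamma+1}$ by excising $g$-geodesic disks of radius $r$ about $p$ and $q$ and gluing a flat cylinder of length $L$ to the resulting boundaries; as $r\to 0$ and $L\to\infty$ the class $c_{r,L}$ leaves every compact subset of moduli space. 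The assumed equality $\Lambda_1(M_\gamma)=\Lambda_1(M_{\gamma+1})$ yields the crucial bound
$$\Lambda_1(M_{\gamma+1}, c_{r,L}) \leq \Lambda_1(M_{\gamma+1}) = \Lambda_1(M_\gamma) = \bar{\lambda}_1(M_\gamma, g).$$

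Next, Theorem \ref{lap.cf.ex} produces, for each $(r,L)$, a harmonic map $u_{r,L}\colon M_{\gamma+1}\to\mathbb{S}^{n_{r,L}}$ by first eigenfunctions of a conformal $\bar{\lambda}_1$-maximizer in $c_{r,L}$, with Dirichlet energy $E(u_{r,L}) = \tfrac{1}{2}\Lambda_1(M_{\gamma+1},c_{r,L})$. The central analytic task is to show that the energy carried by $u_{r,L}$ on the cylindrical neck is $o(1)$ as $r\to 0$ and $L\to\infty$. I would proceed by sandwiching: on one side we have the upper bound above; on the other, logarithmic cutoffs of $\Phi$ away from $\{p,q\}$ extend to valid test maps on $M_{\gamma+1}$ that are essentially constant on the cylinder and whose energies approach $\tfrac{1}{2}\bar{\lambda}_1(M_\gamma, g)$. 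Comparing with these test maps through the min-max characterization of $\Lambda_1(M_{\gamma+1},c_{r,L})$ from \cite{KSminmax} should force the maximizer $u_{r,L}$ to be $W^{1,2}$-close to the test family, hence to carry negligible energy on the degenerating neck.

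With the neck energy controlled, I would extract a subsequential limit of the restrictions $u_{r,L}\big|_{M_\gamma\setminus(B_r(p)\cup B_r(q))}$. The uniform energy bound controls the relevant first-eigenspace dimension, so after composing with rotations of the target we may assume the maps land in a fixed $\mathbb{S}^n$; a standard weak-$W^{1,2}$/strong-$L^2$ extraction yields a limit $F\colon M_\gamma\to\mathbb{S}^n$. Passing to the limit in the Euler--Lagrange equation identifies $F$ as a harmonic map whose components are first eigenfunctions of some conformal extremal metric on $M_\gamma$; combined with the maximality of $\bar{\lambda}_1(M_\gamma, g)$ in the conformal class of $g$, this forces the extremal metric to agree with $g$ up to scale and $F$ to be by first eigenfunctions on $(M_\gamma, g)$. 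Finally, the vanishing energy on the collapsing cylinder, together with a Courant--Lebesgue estimate on the bounding circles, forces $u_{r,L}$ to send both ends of the neck to a common limit in $\mathbb{S}^n$, producing $F(p)=F(q)$.

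The main obstacle is the quantitative control of the neck energy together with the identification of $F$ as a first-eigenfunction map on $g$ itself rather than on a distinct conformal extremal. This relies on careful use of the min-max construction of \cite{KSminmax}, and it is precisely at this stage that the strict inequality $\Lambda_1(M_{\gamma-1})<\Lambda_1(M_\gamma)$ enters: it rules out the possibility that a nonzero amount of energy concentrates and bubbles off onto a lower-genus surface, which would both spoil the identification of the limit as a first-eigenfunction map on $M_\gamma$ and destroy the coincidence $F(p)=F(q)$.
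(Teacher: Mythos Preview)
Your overall strategy—degenerate the conformal class on $M_{\gamma+1}$ by attaching a thin handle at $p,q$, then extract a limiting map on $M_\gamma$—matches the paper's. However, there is a real gap at the heart of your analytic argument, and it concerns \emph{which} map you analyze.

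You propose to work with the conformal maximizer maps $u_{r,L}$ supplied by Theorem~\ref{lap.cf.ex}. The paper does \emph{not} do this. The trouble is that $u_{r,L}$ is a harmonic map by first eigenfunctions of some uncontrolled conformally maximizing metric $g_{r,L}\in c_{r,L}$; nothing ties $u_{r,L}$ to the fixed metric $g$ on $M_\gamma$. In particular, $u_{r,L}$ carries no balancing condition of the form $\int_{M_\gamma}u\,dv_g=0$, so even if you pin down its total energy to $\tfrac12\Lambda_1(M_\gamma)+o(1)$, you cannot deduce that its restriction to $M_\gamma$ is $W^{1,2}$-close to first eigenfunctions of $g$, nor that the neck energy is negligible. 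Your sentence ``comparing with these test maps through the min-max characterization \ldots\ should force the maximizer $u_{r,L}$ to be $W^{1,2}$-close to the test family'' is precisely where the argument breaks: knowing that the min-max value equals a certain number does not force the \emph{maximizer} to be close to any particular test family.

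The paper's key idea (Lemma~\ref{lem:good_map}) is to abandon the maximizer entirely and instead pick, from a near-optimal min-max \emph{family} $(F_y)$, a single map $F$ satisfying the balancing $\int_{M_\gamma}\hat F\,dv_g=0$, where $\hat F$ is the harmonic extension to $M_\gamma$ of $F|_{M_\gamma\setminus D_\epsilon(\{p,q\})}$; this is possible by the Hersch-type argument of Remark~\ref{gen.hersch}. Once $\hat F$ is balanced with respect to $g$, the variational characterization of $\lambda_1(M_\gamma,g)$ gives $\int_{M_\gamma}|d\hat F|^2\geq\Lambda_1(M_\gamma)-C\epsilon$, and comparing with the upper bound $\int_M|dF|^2\leq\Lambda_1(M_{\gamma+1},c_{\epsilon,L})+\epsilon^2\leq\Lambda_1(M_\gamma)+\epsilon^2$ immediately forces the neck energy to be $O(|\log\epsilon|^{-1})$ (Lemma~\ref{lem:Feb_cyl}). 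The same balancing then controls the quadratic form $Q(\hat F,\hat F)$ measuring distance from the first $g$-eigenspace, yielding the decomposition $\hat F=\Phi_{\epsilon,L}+R_{\epsilon,L}$ with $\|R_{\epsilon,L}\|_{W^{1,2}}\to 0$; the limit of $\Phi_{\epsilon,L}$ is automatically a map by first eigenfunctions of $g$. The coincidence at $p$ and $q$ then follows from averaging (Lemma~\ref{lem:average}) across the neck.

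Finally, your diagnosis of where the hypothesis $\Lambda_1(M_{\gamma-1})<\Lambda_1(M_\gamma)$ enters is off: it is used only to guarantee that a $\bar\lambda_1$-maximizing metric $g$ exists on $M_\gamma$ (via Theorem~\ref{pet.glob1}), not to rule out bubbling.
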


We conjecture that the conclusion of Proposition \ref{intro.handle} can never hold, in which case \eqref{g.mono} must hold, yielding the existence of $\bar{\lambda}_1$-maximizing metrics on any closed oriented surface.  As explained in Section \ref{sec:global_existence}, applying a similar strategy with three points and three cylinders glued together shows that if $\Lambda_1(M_{\gamma-1})<\Lambda_1(M_{\gamma})=\Lambda_1(M_{\gamma+2})$, then for any $p, q, r\in M_{\gamma}$, there exists a map $F\colon M_{\gamma}\to \mathbb{S}^n$ by first eigenfunctions  such that $F(p) = F(q) = F(r)$, a condition which we prove is indeed impossible, leading to the following weaker monotonicity result (see Corollary \ref{cor:gamma2}). 

\begin{theorem} 
\label{thm:gamma+2}
For any $\gamma \geq 0$, one has  $\Lambda_1(M_{\gamma+2})>\Lambda_1(M_{\gamma})$.  In particular, for all $\gamma\geq 0$, either $M_{\gamma}$ or $M_{\gamma+1}$ admits a $\bar{\lambda}_1$-maximizing metric.
\end{theorem}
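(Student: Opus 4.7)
The plan is to prove the strict monotonicity $\Lambda_1(M_{\gamma+2}) > \Lambda_1(M_\gamma)$ by a minimal-counterexample argument, combining a three-cylinder analog of Proposition \ref{intro.handle} with a rigidity lemma ruling out the resulting three-point condition, and then to deduce the existence corollary from Theorem \ref{pet.glob1}.

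First I would establish the three-cylinder analog of Proposition \ref{intro.handle}: under the hypothesis $\Lambda_1(M_{\gamma-1}) < \Lambda_1(M_\gamma) = \Lambda_1(M_{\gamma+2})$, for every triple $p, q, r \in M_\gamma$ there exists a first eigenfunction map $F\colon M_\gamma \to \Sph^n$ satisfying $F(p)=F(q)=F(r)$. The construction mirrors that of Proposition \ref{intro.handle}: starting from a maximizing metric $g$ on $M_\gamma$ (supplied by Theorem \ref{pet.glob1} via $\Lambda_1(M_{\gamma-1}) < \Lambda_1(M_\gamma)$), I remove small disks around $p, q, r$ and glue in a thin pair of pants with degenerating conformal modulus (the Euler characteristic computation $(2-2\gamma-3)+(-1) = 2-2(\gamma+2)$ confirms the result has genus $\gamma+2$). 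The equality assumption forces $\Lambda_1(M_{\gamma+2}, c) \leq \bar\lambda_1(M_\gamma, g)$ on the resulting conformal classes $c$, and combining the min-max characterization of conformal suprema from \cite{KSminmax} with the near-maximality of $g$ produces a sequence of maps into $\Sph^n$ whose energy on the pants region is quantifiably negligible and whose restriction to the punctured $M_\gamma$ is $W^{1,2}$-close to first eigenfunctions on $(M_\gamma, g)$. As the disks shrink and the pants modulus degenerates, the three boundary circles of the pants converge to $p, q, r$ and the image of the pants collapses to a single point of $\Sph^n$, yielding the desired map $F$.

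Next I would rule out the three-point condition. The family of first eigenfunction maps $F\colon (M_\gamma, g) \to \Sph^n$ is parametrized by a compact finite-dimensional space. Choose tangent directions $v_1, v_2, v_3$ at a point $x_0 \in M_\gamma$ so that $v_1 - v_2$ and $v_1 - v_3$ span $T_{x_0} M_\gamma$, and set $p_t = \exp_{x_0}(t v_1)$, $q_t = \exp_{x_0}(t v_2)$, $r_t = \exp_{x_0}(t v_3)$. For each $t > 0$ the three-point condition supplies an $F_t$ with $F_t(p_t)=F_t(q_t)=F_t(r_t)$; after extracting a convergent subsequence $F_t \to F_0$, first-order Taylor expansion gives $dF_0(v_i - v_j) = 0$, so $dF_0$ vanishes at $x_0$ and $x_0$ is a branch point of $F_0$. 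One then combines higher-order Taylor analysis with the freedom to vary the base point and the three tangent directions, together with the analytic structure of branched minimal immersions into $\Sph^n$, to derive a contradiction.

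Finally I would conclude by minimal counterexample. Suppose some $\gamma$ violates $\Lambda_1(M_{\gamma+2}) > \Lambda_1(M_\gamma)$, and let $\gamma_0$ be the smallest such; nonstrict monotonicity then pinches $\Lambda_1(M_{\gamma_0}) = \Lambda_1(M_{\gamma_0+1}) = \Lambda_1(M_{\gamma_0+2})$. For $\gamma_0 = 0$, Nadirashvili's value $\Lambda_1(M_1) = 8\pi^2/\sqrt{3} > 8\pi = \Lambda_1(M_0)$ together with nonstrict monotonicity $\Lambda_1(M_2) \geq \Lambda_1(M_1)$ contradicts $\Lambda_1(M_2) = \Lambda_1(M_0)$. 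For $\gamma_0 \geq 1$, minimality of $\gamma_0$ forces $\Lambda_1(M_{\gamma_0+1}) > \Lambda_1(M_{\gamma_0-1})$, so $\Lambda_1(M_{\gamma_0-1}) < \Lambda_1(M_{\gamma_0}) = \Lambda_1(M_{\gamma_0+2})$, and the two previous steps produce a contradiction. For the corollary, for each $\gamma \geq 1$ the chain $\Lambda_1(M_{\gamma-1}) \leq \Lambda_1(M_\gamma) \leq \Lambda_1(M_{\gamma+1})$ combined with the strict gap $\Lambda_1(M_{\gamma+1}) > \Lambda_1(M_{\gamma-1})$ just established forces strict inequality at one of the two intermediate steps, so Theorem \ref{pet.glob1} yields a $\bar\lambda_1$-maximizing metric on the corresponding $M_\gamma$ or $M_{\gamma+1}$; the case $\gamma = 0$ is Hersch's theorem. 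The main obstacle is the rigidity step, specifically making the higher-order Taylor analysis and choice of configurations robust enough to rule out the requisite branching structure for any limit map $F_0$.
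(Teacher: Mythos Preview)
Your overall strategy matches the paper's proof (Corollary~\ref{cor:gamma2}) closely: three collapsing points on a maximizing surface, an application of the contradiction machinery behind Proposition~\ref{intro.handle}/Theorem~\ref{thm:general_existence}, and a limit map $F_0$ by first eigenfunctions with $dF_0|_{x_0}=0$. Your minimal-counterexample reduction and base case are fine (in fact your base case avoids the appeal to \cite{NaySh}).

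The gap is exactly where you flag it, but it is not a real obstacle---you are overcomplicating the rigidity step. Once you have a sphere-valued map $F_0\colon (M_\gamma,g)\to\Sph^n$ whose components are $\lambda_1(M_\gamma,g)$-eigenfunctions, the identity $|F_0|^2\equiv 1$ forces
\[
0=\Delta_g|F_0|^2 = 2|dF_0|_g^2 - 2\langle F_0,\Delta_g F_0\rangle = 2|dF_0|_g^2 - 2\lambda_1(M_\gamma,g)|F_0|^2,
\]
so $|dF_0|_g^2\equiv \lambda_1(M_\gamma,g)>0$ \emph{everywhere}. Hence $dF_0|_{x_0}=0$ at a smooth point $x_0$ of the maximizing metric is already a contradiction. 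No higher-order Taylor expansion, no analysis of branch points of minimal immersions, no variation of base point or directions is needed. This is precisely how the paper closes the argument: it picks $p$ smooth, sends $q_n=\exp_p(t_n\xi)$, $r_n=\exp_p(t_n\eta)$ with $\{\xi,\eta\}$ orthonormal, extracts a limit $\Phi$, reads off $d_p\Phi=0$, and invokes $|d\Phi|_g^2\equiv\lambda_1$. Replace your last paragraph of the rigidity step with this one-line observation and the proof is complete.
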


The current status of the existence theory for $\bar{\sigma}_1$-maximizing metrics on surfaces with boundary $N$ is similar to that of the $\bar{\lambda}_1$-maximization problem on closed surfaces. Work of Petrides \cite{PetridesS} establishes the existence of metrics maximizing $\bar{\sigma}_1$ in any conformal class, assuming the validity of a mild gap condition, which is expected to hold in general.

\begin{theorem}[\cite{PetridesS}]\label{stek.cf.ex}
Let $N$ be a compact surface with boundary and $c$ be a conformal class on $N$. 
If 
$$\Sigma_1(N,c):=\sup\{\bar{\sigma}_1(N,h)\mid h\in c\}> 2\pi,$$
then $\Sigma_1(N, c)$ is realized by a metric $g \in c$, possibly with a finite set of conical singularities, which is induced by a free boundary harmonic map $u\colon(N,\partial N)\to (\mathbb{B}^n,\mathbb{S}^{n-1})$ by first Steklov eigenfunctions.
\end{theorem}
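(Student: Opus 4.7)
The plan is to adapt the conformal $\bar{\lambda}_1$-maximization strategy (as in \cite{Pet1, KNPP, KSminmax}) to the Steklov setting. The starting observation is that $\bar{\sigma}_1$ depends on the conformal class $c$ only through the boundary measure: fixing any background metric $g_0\in c$, the min--max characterization of $\sigma_1$ gives
\[
\sigma_1(N,g) = \inf_{u\perp_{\mu}1} \frac{\int_N|\nabla u|^2_{g_0}\,dv_{g_0}}{\int_{\partial N} u^2\, d\mu}, \qquad \mu = e^{\omega}\,ds_{g_0},
\]
whenever $g = e^{2\omega}g_0$, so $\bar{\sigma}_1(N,g) = \mu(\partial N)\sigma_1(N,g_0,\mu)$ extends to a natural functional on positive Radon measures $\mu$ on $\partial N$. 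I would first extend the optimization to this larger class of measures, so the supremum becomes a well-defined variational problem over probability measures, with the original problem recovered when $\mu$ is absolutely continuous with smooth, strictly positive density.

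Next I would take a maximizing sequence $\mu_j$ of unit-mass measures on $\partial N$ with $\bar{\sigma}_1(\mu_j)\to \Sigma_1(N,c)$, and consider the associated $L^2(\mu_j)$-normalized first eigenfunctions $u_j$ (harmonically extended to $N$). Standard Dirichlet energy bounds, together with the trace inequality, give a uniform $W^{1,2}(N,g_0)$-bound on $u_j$; passing to subsequences, $\mu_j \rightharpoonup \mu_\infty$ weakly-$*$, and $u_j \rightharpoonup u_\infty$ weakly in $W^{1,2}$ and strongly away from a (finite) concentration set. By a P.L.~Lions-type argument on the boundary circle(s), the defect measure of $u_j^2\,d\mu_j$ decomposes as the regular part $u_\infty^2\,d\mu_\infty$ plus a sum $\sum_i \alpha_i \delta_{p_i}$ of atoms at finitely many points $p_i\in\partial N$.

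The heart of the argument, and its main obstacle, is the bubble analysis at each concentration point $p_i$: rescaling conformally around $p_i$ extracts a limiting Steklov eigenfunction on a half-plane, which compactifies to the disk, and one has to quantify how much of $\Sigma_1(N,c)$ can be absorbed by such a bubble. The sharp point is the disk bound $\bar{\sigma}_1(\mathbb{D},h)\leq 2\pi$ for every metric $h$ on the disk--- a Hersch-type inequality proved by the standard balancing trick (conformal automorphisms of $\mathbb{D}$, applied to the identity map, give trial functions that saturate Fraser--Schoen's min--max characterization on $\mathbb{D}$). Combined with an energy-decomposition / capacity argument that cuts off neighborhoods of each $p_i$ without losing more than $o(1)$ of the eigenvalue, this yields the bubbling inequality
\[
\Sigma_1(N,c) \;\leq\; \max\!\bigl\{\,\bar{\sigma}_1(\mu_\infty^{\mathrm{reg}}),\;2\pi\,\bigr\},
\]
so the hypothesis $\Sigma_1(N,c)>2\pi$ forces all atomic parts to vanish and $\mu_\infty$ to be a probability measure realizing the supremum.

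Finally, once a maximizing measure $\mu_\infty$ is in hand, I would derive the Euler--Lagrange equation by perturbing $\mu_\infty$ along smooth variations $\mu_\infty + t\,\phi\,\mu_\infty$ with $\int\phi\,d\mu_\infty=0$: multiplicity of $\sigma_1$ at the maximizer and a convex-combination/Hahn--Banach step produce orthonormal first eigenfunctions $u_1,\ldots,u_{n+1}$ on $\partial N$ with $\sum_i u_i^2\equiv 1$ on $\spt\mu_\infty$. Their harmonic extensions assemble into a map $u=(u_1,\ldots,u_{n+1})\colon N\to \mathbb{B}^n$ with $|u|\equiv 1$ on $\spt\mu_\infty\subset \partial N$ and $\partial_\nu u_i = \sigma_1 u_i$ there, which is exactly the free boundary harmonic map condition; the density of $\mu_\infty$ with respect to $ds_{g_0}$ is smooth and positive on $\spt\mu_\infty$ by elliptic regularity applied to the eigenfunctions, with at worst conical singularities where it vanishes. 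The induced metric then realizes $\Sigma_1(N,c)$ as claimed.
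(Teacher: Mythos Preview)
Your outline is essentially Petrides' original argument in \cite{PetridesS} (measure-theoretic extension of $\bar\sigma_1$, concentration--compactness for a maximizing sequence of boundary measures, the Weinstock bound $\bar\sigma_1(\mathbb{D})\le 2\pi$ to exclude atoms, then Hahn--Banach to produce the eigenmap), and it is correct as a strategy. The paper, however, re-proves this result---in fact its equivariant generalization, Theorem~\ref{stek.conf.ex}---by a genuinely different method developed in Section~\ref{SSprelim}: a min-max construction for the boundary Ginzburg--Landau functionals $F_\epsilon(u)=\int_N\tfrac12|du|^2+\int_{\partial N}\tfrac{(1-|u|^2)^2}{4\epsilon}$ over families in $\mathcal{B}_m$, yielding critical points $u_{m,\epsilon}$ with equivariant Morse index $\le m$. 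The index bound is converted (Lemma~\ref{mm.stek.lower}) into the eigenvalue lower bound $\sigma_1(N,g_{u})\ge 1-C/m$ for the conformal metric $g_u=\epsilon^{-2}(1-|u|^2)^2 g$, and this, together with the hypothesis $\Sigma_1>2\pi$ and a localized Weinstock trick, rules out energy concentration (Lemma~\ref{no.dir.conc}); the limits $u_m$ as $\epsilon\to 0$ are free boundary harmonic maps, and a stabilization argument shows $2\mathcal{F}_m^\Gamma=\Sigma_1^\Gamma(N,[g])$ for $m$ large. Your approach is more direct and avoids the Ginzburg--Landau machinery; the paper's approach costs more setup but yields the min-max identity $\Sigma_1^\Gamma(N,[g])=2\mathcal{F}_m^\Gamma(N,[g])$ and the associated test families, which are exactly what is needed later (Sections~\ref{sec:global_existence}--\ref{sec:global_Sexistence}) to run the contradiction arguments proving the strict gap inequalities. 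One small gap in your sketch: you only obtain $|u|\equiv 1$ on $\spt\mu_\infty$, and you still need to argue that $\spt\mu_\infty=\partial N$ (equivalently, that the limiting density is strictly positive), which in this literature is deduced from regularity of the free boundary harmonic map.
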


For the global maximization problem, there is an analog of Theorem \ref{pet.glob1}, giving existence of a $\bar{\sigma}_1$-maximizer among all metrics on $N$, assuming a natural gap condition holds.  In the version below---stated for simplicity for oriented surfaces---$N_{\gamma, k}$ denotes the compact surface with genus $\gamma$ and $k$ boundary components. 

\begin{theorem}[\cite{PetridesS}]\label{pet.glob2}
If
\begin{align}
\label{stek.k.mono0}
\Sigma_1(N_{\gamma,k})&>\Sigma_1(N_{\gamma,k-1})\text{ or }k=1; \quad \text{and} \\
\label{stek.g.mono}
\Sigma_1(N_{\gamma,k})&>\Sigma_1(N_{\gamma-1,k+1})\text{ or }\gamma=0,
\end{align}
 there is a metric $g$ on $N_{\gamma,k}$ realizing $\bar{\sigma}_1(N_{\gamma,k},g)=\Sigma_1(N_{\gamma,k})$, induced by a branched free boundary minimal immersion $u\colon N_{\gamma,k} \to \B^n$ by first Steklov eigenfunctions.
\end{theorem}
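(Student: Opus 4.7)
The plan is to follow Petrides' strategy from Theorem \ref{pet.glob1} for closed surfaces, substituting Steklov analogs throughout and using the conformal existence result Theorem \ref{stek.cf.ex} as a black box. Start with a maximizing sequence of conformal classes $c_j$ on $N_{\gamma,k}$ with $\Sigma_1(N_{\gamma,k}, c_j) \to \Sigma_1(N_{\gamma,k})$. Provided $\Sigma_1(N_{\gamma,k}) > 2\pi$ (which holds in any nontrivial case by explicit construction of a trial metric, the case $(\gamma,k)=(0,1)$ being trivial), for $j$ large the hypothesis of Theorem \ref{stek.cf.ex} is met, yielding conformal maximizers $g_j \in c_j$ induced by free boundary harmonic maps $u_j \colon (N_{\gamma,k}, \partial N_{\gamma,k}) \to (\mathbb{B}^n, \mathbb{S}^{n-1})$ by first Steklov eigenfunctions. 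If the classes $[c_j]$ stay in a compact region of the moduli space $\mathcal{M}_{\gamma,k}$ of conformal structures on $N_{\gamma,k}$, pass to a subsequence converging to some smooth $c_\infty$; upper semicontinuity of $c \mapsto \Sigma_1(N_{\gamma,k}, c)$ combined with one more application of Theorem \ref{stek.cf.ex} and Fraser--Schoen boundary regularity then produces the desired branched free boundary minimal immersion.

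The crux is ruling out degeneration of $[c_j]$ at the boundary of $\mathcal{M}_{\gamma,k}$. Nodal limits of bordered Riemann surfaces arise from pinching combinations of interior simple closed curves, arcs with both endpoints on the same boundary component (creating nodal limits of type $N_{\gamma-1,k+1}$ after bookkeeping), and arcs joining distinct boundary components (creating nodal limits of type $N_{\gamma,k-1}$), together with possible shrinking of a whole boundary loop (which reduces to the previous). A Steklov analog of Petrides' bubble analysis---based on free boundary harmonic replacement and quantitative control of $\sigma_1$ on thin necks---decomposes the limiting eigenvalue as a maximum of $\Sigma_1$ over the irreducible components of the nodal limit, with possible disk bubbles contributing exactly $\Sigma_1(\mathbb{D}^2) = 2\pi$. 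Interior annular necks contribute nothing since $\bar\sigma_1$ on a long thin annulus tends to zero; the nontrivial contributions come from surviving bordered components. Combining this decomposition with the non-strict monotonicities $\Sigma_1(N_{\gamma,k'}) \leq \Sigma_1(N_{\gamma,k''})$ for $k' \leq k''$ and $\Sigma_1(N_{\gamma-1, k}) \leq \Sigma_1(N_{\gamma-1,k+1})$ (provable directly by gluing small disks or half-necks), every nontrivial component contribution is bounded above by $\max\{\Sigma_1(N_{\gamma,k-1}), \Sigma_1(N_{\gamma-1,k+1})\}$. The strict gap conditions \eqref{stek.k.mono0} and \eqref{stek.g.mono} together with $\Sigma_1(N_{\gamma,k}) > 2\pi$ then contradict $\Sigma_1(N_{\gamma,k}, c_j) \to \Sigma_1(N_{\gamma,k})$, forcing $[c_j]$ to stay in a compact subset of $\mathcal{M}_{\gamma,k}$.

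The main technical obstacle is executing the Steklov bubble analysis on degenerating bordered surfaces, particularly in the boundary neck regions where an arc with endpoints on $\partial N_{\gamma,k}$ pinches. Here one must quantitatively estimate the concentration of Steklov energy on strip-like domains of large modulus with free boundary on two long sides, a setting subtler than the interior handle degeneration in the closed surface case, because the boundary length itself redistributes across the nodal components as the neck thins. This calls for an analog on bordered surfaces of Petrides' harmonic replacement and energy-decomposition arguments, drawing on Fraser--Schoen's harmonic extension estimates and a Courant-type analysis of the Steklov spectrum to show that no first eigenfunction mass can hide in a collapsing strip. Once these neck estimates are in place, the interior degenerations are handled easily (since Steklov eigenvalues are insensitive to far-from-boundary geometry), and the Euler characteristic bookkeeping which reduces every surviving component to the two reference topologies $N_{\gamma,k-1}$ and $N_{\gamma-1,k+1}$ is routine.
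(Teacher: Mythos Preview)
The paper does not give its own proof of this statement: Theorem~\ref{pet.glob2} is quoted directly from \cite{PetridesS} and used as a black box, so there is no in-paper argument to compare against line by line. That said, the paper does establish an equivariant generalization in Section~\ref{sec:conf} (Theorems~\ref{thm:SDM_limit} and~\ref{thm:Sexistence}), whose specialization to the trivial group recovers Theorem~\ref{pet.glob2}, and your sketch matches that route closely: maximize over conformal classes, invoke the conformal existence result, and rule out degeneration via an upper-semicontinuity statement for $\Sigma_1$ along the Deligne--Mumford-type boundary of the moduli space of bordered surfaces, with disk bubbles contributing $2\pi$ and surviving components contributing at most $\Sigma_1$ of lower-complexity surfaces.

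A few minor corrections to your description. First, interior necks are irrelevant not because $\bar\sigma_1$ of a thin annulus tends to zero, but because interior collars carry no boundary measure, so no first-eigenfunction $L^2(\partial N)$-mass can concentrate there (cf.\ the remark in the proof of Theorem~\ref{thm:SDM_limit}). Second, your topological bookkeeping is slightly off: pinching an interior non-separating simple closed curve, or an arc with both endpoints on the same boundary circle, both yield $N_{\gamma-1,k+1}$ (the latter splits one boundary circle into two); pinching an arc joining two distinct boundary circles yields $N_{\gamma,k-1}$; separating interior curves give disconnected limits whose components are dominated by one of these two types via the non-strict monotonicities you mention. With those adjustments your plan is exactly the Petrides strategy, and essentially the same as the paper's equivariant version.
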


As in the closed case, the non-strict versions of \eqref{stek.k.mono0} and \eqref{stek.g.mono} are easily established, but proving the strict inequalities is highly nontrivial in general.  As was pointed out in \cite[Appendix A]{GL}, the argument for \eqref{stek.k.mono0} from \cite[Proposition 4.3]{FraserSchoen} contains a gap; also, an approach \cite{MatthiesonPetrides}  analogous to the one from \cite{MatthiesonSiffert} in the closed cases is known \cite{Petridescomm}  to contain a gap: in particular, the lower bound in \cite[inequality (6.35)]{MatthiesonPetrides} is not necessarily positive, with consequences for several key estimates.

In Section \ref{sec:global_Sexistence}, we develop new approaches for proving \eqref{stek.k.mono0} and \eqref{stek.g.mono} via contradiction arguments and a min-max characterization of the conformal suprema $\Sigma_1(N_{\gamma,k}, c)$. In particular, arguments similar to the proof Proposition \ref{intro.handle} in the closed case establish the following.

\begin{proposition}\label{intro.strip}
The following hold.
\begin{enumerate}[label=\emph{(\roman*)}]
\item The inequality
$\Sigma_1(N_{\gamma,k})>\Sigma_1(N_{\gamma,k-1})$ holds, provided $\gamma\geq 0, k\geq 2$.
\item  If $\gamma, k \geq 1$ and $N=N_{\gamma-1,k+1}$ admits a $\bar{\sigma}_1$-maximizing metric, but
$$\Sigma_1(N_{\gamma,k})=\Sigma_1(N_{\gamma-1,k+1}),$$
then for each pair of points $p,q$ in distinct components of $\partial N$, there is a map $F\colon(N,\partial N)\to (\mathbb{B}^n,\Sph^{n-1})$ by first Steklov eigenfunctions with $F(p)=F(q)$.
\end{enumerate}
\end{proposition}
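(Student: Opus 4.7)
The plan is to mimic the strategy behind Proposition \ref{intro.handle}, with flat strips in place of cylinders and a Steklov analog of the conformal min-max characterization of $\Sigma_1(\cdot, c)$ from \cite{KSminmax}.

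For part (ii), I would construct the desired map $F$ directly from the hypotheses. Let $g$ be the $\bar{\sigma}_1$-maximizing metric on $N = N_{\gamma-1, k+1}$, and fix $p$ and $q$ in distinct components of $\partial N$. For small $\epsilon > 0$ and large $L$, excise half-disks of radius $\epsilon$ around $p$ and $q$ and glue in a flat strip $[-L, L] \times [0, \epsilon]$ matching the two semicircular arcs, obtaining a surface of topological type $N_{\gamma, k}$ equipped with a conformal class $c_{\epsilon, L}$. The equality hypothesis yields
\[
\Sigma_1(N_{\gamma, k}, c_{\epsilon, L}) \leq \Sigma_1(N_{\gamma, k}) = \bar{\sigma}_1(N, g).
\]
The Steklov analog of the conformal min-max characterization then produces free-boundary harmonic maps $u_{\epsilon, L}\colon (N_{\gamma, k}, c_{\epsilon, L}) \to \B^n$ by approximate first Steklov eigenfunctions; a Poincar\'e-type estimate on the thin strip bounds their boundary energy there by $O(\epsilon)$ uniformly in $L$. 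Letting $L \to \infty$ (degenerating the strip modulus) and then $\epsilon \to 0$, one extracts a $W^{1,2}$-limit map $F\colon (N, \partial N) \to (\B^n, \Sph^{n-1})$ on compact subsets of $N \setminus \{p, q\}$; elliptic regularity and spectral bounds show that $F$ extends to a free-boundary harmonic map by genuine first Steklov eigenfunctions. Identification of the two long sides of the strip in the limit forces $F(p) = F(q)$.

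For part (i), argue by contradiction: assume $\Sigma_1(N_{\gamma, k}) = \Sigma_1(N_{\gamma, k-1})$. Apply the analogous construction with $p, q$ on the \emph{same} boundary component of $N_{\gamma, k-1}$, which splits that component and yields $N_{\gamma, k}$. Since we do not assume a global maximizer on $N_{\gamma, k-1}$, replace $g$ by a conformally maximizing metric provided by Theorem \ref{stek.cf.ex} on a near-maximal conformal class, and degenerate the conformal class jointly with the strip. The argument yields, for each such pair $p, q$, a first-eigenfunction map $F_{p, q}$ with $F_{p, q}(p) = F_{p, q}(q)$; a contradiction then follows from rigidity for first Steklov eigenfunction maps in the spirit of the three-point obstruction used to prove Theorem \ref{thm:gamma+2}.

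The main obstacle in both parts is ensuring the limit map $F$ is non-trivial and that its components are \emph{first} (rather than higher) Steklov eigenfunctions. Non-triviality relies on the lower bound $\bar{\sigma}_1(N, g) > 2\pi$, which rules out bubbling into a free-boundary harmonic half-disk of boundary energy $2\pi$ absorbing all the energy. The first-eigenfunction property requires spectral convergence under the combined strip (and, for (i), conformal) degeneration, obtained via uniform elliptic estimates away from $p, q$ and a careful diagonal extraction. For part (i), the absence of a global maximizer introduces an additional compactness hurdle, handled by showing that approximate conformal maximizers on $N_{\gamma, k-1}$ remain stable along a maximizing sequence of conformal classes.
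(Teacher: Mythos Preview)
Your approach to part (ii) is essentially the paper's: glue a half-strip at $p,q$ on distinct boundary components, use the min-max characterization of $\Sigma_1^T(N,c)$ to produce near-optimal maps on the glued surface, and pass to the limit to obtain an eigenmap $F$ with $F(p)=F(q)$. The details are packaged in Theorem~\ref{thm:general_Sexistence} (condition (2)) and its proof in Section~\ref{sec:global_Sexistence}, but the skeleton you describe is correct.

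Part (i), however, has a genuine gap. Your degeneration---a strip at two points $p,q$ on the \emph{same} boundary component---places the collapsed set entirely on $\partial N'$, so the limiting conclusion is only that some first-eigenmap satisfies $F(p)=F(q)$ for two points on one boundary circle. This is not a contradiction: such coincidences happen routinely (e.g.\ on the critical catenoid, any rotational eigenmap identifies antipodal pairs on each boundary circle). Your appeal to ``rigidity in the spirit of the three-point obstruction'' does not apply here; the three-point argument for Theorem~\ref{thm:gamma+2} works because $|d\Phi|^2$ is a nonzero constant, forcing a contradiction at a smooth point where three nearby values coincide, but nothing analogous is available for two boundary points.

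The paper's mechanism for (i) is different and much simpler. Realize $N_{\gamma,k-1}\prec N_{\gamma,k}$ by attaching a cylinder at a single \emph{interior} point $p$ of $N_{\gamma,k-1}$ (the other end of the cylinder becomes the new boundary component). The collapsed set is $\{p\}\subset P'^\iota$, i.e.\ an interior point whose attached piece has nonempty boundary. In the limiting argument, the boundary constraint $|F|=1$ on the attached piece forces the averages of $F$ over small circles around $p$ to have norm tending to $1$; hence $|F(p)|=1$ at an \emph{interior} point, contradicting the maximum principle for the ball-valued harmonic map $F$. This is exactly condition (1) of Theorem~\ref{thm:general_Sexistence}; see the end of its proof, where a mixed Steklov--Neumann eigenvalue estimate on the cylinder is used to transfer the boundary constraint $|F|\approx 1$ across. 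Finally, the paper sidesteps the lack of a global maximizer on $N_{\gamma,k-1}$ not by your diagonal-in-conformal-class extraction, but by passing to the $\prec$-minimal $N''\preccurlyeq N_{\gamma,k-1}$ with $\Sigma_1(N'')=\Sigma_1(N_{\gamma,k-1})$; this $N''$ automatically admits a maximizer by Theorem~\ref{thm:Sexistence}, and the same interior-point argument applies to $N''\prec N_{\gamma,k}$.
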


As argued in \cite{FraserSchoen}, when $\gamma = 0$, the inequality \eqref{stek.k.mono0}  already implies the existence of $\bar{\sigma}_1$-maximizing metrics on genus zero surfaces with any number of boundary components, and these metrics must be induced by free boundary minimal embeddings in $\mathbb{B}^3$, whose coordinate functions span the first Steklov eigenspace. When $\gamma=1$, it follows from Proposition \ref{intro.strip} that if \eqref{stek.g.mono} fails, then the $\bar{\sigma}_1$-maximizing metric on $N=N_{0,k+1}$ is realized by a free boundary minimal surface $N\subset\mathbb{B}^3$ such that for \emph{any} pair of points $p,q$ in distinct components of $\partial N$, the coordinates of the plane normal to $p-q$ satisfy an additional quadratic relation on $\partial N$. However, it is easy to see that this cannot occur, so $\Sigma_1(N_{1,k})>\Sigma_1(N_{0,k+1})$ for every $k\in \mathbb{N}$. Combining these observations with Theorem \ref{pet.glob2} gives the following corollary.

\begin{corollary}
Each compact oriented surface with boundary of genus zero or one admits a
$\bar{\sigma}_1$-maximizing metric. 
\end{corollary}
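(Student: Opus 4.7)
The plan is to verify the two gap conditions \eqref{stek.k.mono0} and \eqref{stek.g.mono} of Theorem~\ref{pet.glob2} in genus zero and one.

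For $N=N_{0,k}$, condition \eqref{stek.g.mono} is vacuous, and \eqref{stek.k.mono0} is trivial at $k=1$ and equal to Proposition~\ref{intro.strip}(i) for $k\geq 2$, so Theorem~\ref{pet.glob2} yields a $\bar{\sigma}_1$-maximizing metric on every $N_{0,k}$. As explained in the paragraph preceding the corollary, Fraser--Schoen's argument shows each such maximizer is induced by a free boundary minimal embedding $N_{0,k}\hookrightarrow\B^3$ whose coordinates $x_1,x_2,x_3$ span the first Steklov eigenspace.

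For $N=N_{1,k}$, condition \eqref{stek.k.mono0} is again Proposition~\ref{intro.strip}(i). To establish \eqref{stek.g.mono}, I argue by contradiction: if $\Sigma_1(N_{1,k})=\Sigma_1(N_{0,k+1})$, Proposition~\ref{intro.strip}(ii) applied to the genus zero maximizer $M:=N_{0,k+1}\hookrightarrow\B^3$ produces, for every pair $p,q$ in distinct boundary components of $M$, a map $F_{p,q}=A_{p,q}x\colon(M,\partial M)\to(\B^n,\Sph^{n-1})$ by first Steklov eigenfunctions with $A_{p,q}(p-q)=0$. Setting $Q_{p,q}:=A_{p,q}^{T}A_{p,q}-I$, the boundary condition $|A_{p,q}x|=1$ on $\partial M$ combined with $|x|=1$ on $\partial M$ gives $x^{T}Q_{p,q}x\equiv 0$ on $\partial M$, with $Q_{p,q}(p-q)=-(p-q)$; the matrix $Q_{p,q}$ is a genuinely nontrivial symmetric $3\times 3$ matrix since $A_{p,q}$ has rank at most $2$.

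The final step rules out such a family of quadratic relations on the embedded $\partial M\subset\Sph^2$. Let $V$ be the finite-dimensional space of symmetric $3\times 3$ matrices $Q$ with $x^{T}Qx\equiv 0$ on $\partial M$ modulo $|x|^{2}-1$. If $V=0$ no nontrivial $Q_{p,q}$ can exist, contradicting \ref{intro.strip}(ii). If $\dim V=1$ with generator $Q_{0}$, then each $Q_{p,q}$ is a scalar multiple of $Q_{0}$ modulo $I$, so $(p-q)$ must be an eigenvector of the fixed matrix $Q_{0}$ for every admissible pair; fixing $p\in\Gamma_{i}$ and letting $q$ vary continuously along $\Gamma_{j}\neq\Gamma_{i}$, the resulting $1$-parameter family of directions $p-q$ would be forced into one of the three eigenlines of $Q_{0}$, contradicting the fact that any line through $p$ meets $\Sph^{2}$ in at most two points. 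The higher-dimensional cases $\dim V\geq 2$ would be handled by an analogous finite-intersection argument on the pencil of quadrics through $\partial M$.

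\textbf{Main obstacle.} Most of the analytic machinery is already packaged in Proposition~\ref{intro.strip} and Theorem~\ref{pet.glob2}; the genuinely new piece is the linear-algebraic ruling-out step, which the excerpt flags as ``easy to see''. The care needed is in handling the case $\dim V\geq 2$---showing the $(-1)$-eigendirection constraint $Q_{p,q}(p-q)=-(p-q)$ cannot be satisfied for a continuous family of directions $p-q$ regardless of how $\partial M$ sits inside the pencil of quadrics containing it---so that the contradiction goes through uniformly in $k$.
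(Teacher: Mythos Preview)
Your overall strategy—verify the gap conditions of Theorem~\ref{pet.glob2} via Proposition~\ref{intro.strip}—matches the paper's, and your genus-zero case and genus-one setup (writing $F_{p,q}=A_{p,q}x$ and extracting the quadric $Q_{p,q}=A_{p,q}^{T}A_{p,q}-I$ with $x^{T}Q_{p,q}x\equiv 0$ on $\partial M$ and $Q_{p,q}(p-q)=-(p-q)$) are correct.

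The divergence, and the gap you correctly flag, is in the quadric step. You attempt a case split on $\dim V$ and leave $\dim V\geq 2$ open; your $\dim V=1$ argument also tacitly assumes three simple eigenlines. The paper avoids all of this by analyzing a \emph{single} quadric rather than the family. Its Proposition~\ref{prop:Suniq_map} shows: if one nontrivial $Q$ satisfies $x^{T}Qx\equiv 0$ on $\partial M\subset\Sph^{2}$, then diagonalizing $Q$ and intersecting $\Sph^{2}$ with the quadric cone yields at most two connected components, and the orthogonality $\int_{\partial M}x_{3}\,ds=0$ rules out the one-component case. Hence $\partial M$ has exactly two components. This forces $\dim V=0$ whenever $k+1\geq 3$, so your $\dim V\geq 2$ case never actually occurs. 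For the remaining case $k+1=2$, the maximizer is the critical catenoid, where all first-eigenfunction maps are explicitly $(ax,ay,bz)$; choosing $p,q$ on the two boundary circles with distinct $(x,y)$-projections gives a pair that no such map can collapse.

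So your approach is sound but harder than necessary: the key simplification you are missing is that one quadric relation already determines the number of boundary components, collapsing your case analysis to the trivial case plus the explicit catenoid computation.
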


While the explicit $\bar{\sigma}_1$-maximizing metrics in these cases have only been determined for the disk and the annulus, the results of \cite{GL} and \cite{KS21} give an asymptotic description of their geometry as the number of boundary components becomes large: namely, the $\bar{\sigma}_1$-maximizing metrics on $N_{0, k}$ are induced by free boundary minimal surfaces in $\mathbb{B}^3$ converging to the boundary $\Sph^2$ as $k\to\infty$, while the $\bar{\sigma}_1$-maximizing metrics on $N_{1, k}$ are induced by free boundary minimal surfaces in $\mathbb{B}^6$ converging to the Bryant-Itoh-Montiel-Ros torus in $\mathbb{S}^5=\partial \mathbb{B}^6$ as $k\to\infty$. 

As with the conclusion of Proposition \ref{intro.handle}, we conjecture that the conclusion of Proposition \ref{intro.strip} is never satisfied, 
  so that \eqref{stek.g.mono} always holds, and every orientable surface with boundary admits a $\bar{\sigma}_1$-maximizing metric. 

\subsection{Extremal metrics with prescribed symmetries}

Given a closed surface $M$ and a finite subgroup $\Gamma\leq \Diff(M)$ of the diffeomorphism group, denote by $\Met_{\Gamma}(M)$ the (always nonempty) space
of $\Gamma$-invariant metrics $g$ on $M$, and let $c \subset \Met_{\Gamma}(M)$ be a non-empty conformal class of $\Gamma$-invariant metrics. 

In Section \ref{SSprelim}, we show that, under very mild assumptions, every such conformal class $c$ contains a $\bar{\lambda}_1$-maximizing representative and an associated $\Gamma$-equivariant sphere-valued map. In particular, we have the following generalization of Theorem \ref{lap.cf.ex}.

\begin{theorem}\label{intro.lap.cf}
Given a closed surface $M$, a finite subgroup $\Gamma \leq \Diff(M)$, and a conformal class $c\subset \Met_\Gamma(M)$, there is a metric $g\in c$, possibly with a finite set of conical singularities, realizing the supremum
$$\Lambda^\Gamma_1(M,c):=\sup\{\bar{\lambda}_1(M,h)\mid h\in c\},$$
provided either $\Lambda^\Gamma_1(M,c)>8\pi$ or $\Gamma$ has no fixed points. Moreover, for some $n\in \mathbb{N}$ there is a linear isometric action of $\Gamma$ on $\mathbb{R}^{n+1}$ and a $\Gamma$-equivariant harmonic map $u\colon (M,g_{})\to \Sph^n$ by first eigenfunctions. 
\end{theorem}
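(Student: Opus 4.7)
The plan is to adapt the variational proof of Theorem~\ref{lap.cf.ex} to the equivariant setting, in the spirit of \cite{KSminmax}. Fix a $\Gamma$-invariant reference metric $g_0 \in c$. For any $g \in c$, the first eigenspace $E_{\lambda_1}(M, g)$ is $\Gamma$-invariant and inherits an orthogonal $\Gamma$-action, so a choice of $L^2(g)$-orthonormal basis assembles into a map $v_g \colon M \to \mathbb{R}^{n_g+1}$ which is equivariant with respect to some orthogonal representation $\rho_g \colon \Gamma \to O(n_g + 1)$. By Takahashi's theorem and conformal invariance of the Dirichlet energy in dimension two, after rescaling $v_g$ is a $\rho_g$-equivariant $g_0$-harmonic map into $\Sph^{n_g}$ satisfying $2 E_{g_0}(v_g) = \bar{\lambda}_1(M, g)$.

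Now let $g_k \in c$ be a maximizing sequence with $\bar{\lambda}_1(M, g_k) \to \Lambda^\Gamma_1(M, c)$, and let $v_k \colon M \to \Sph^{n_k}$ be the associated $\rho_k$-equivariant harmonic maps. Classical multiplicity bounds for $\lambda_1$ bound $n_k$ uniformly, so after passing to a subsequence we may assume $\rho_k = \rho$ and $n_k = n$. Apply bubble tree analysis to $\{v_k\}$: after a further subsequence, $v_k \rightharpoonup v_\infty$ weakly in $W^{1,2}$, with possible concentration confined to a finite, $\Gamma$-invariant subset of points. I must rule out concentration under either hypothesis. If $\Gamma$ has no fixed points, every concentration orbit contains $\geq 2$ points, so the energy densities $|dv_k|^2 \, d\mu_{g_0}$ develop at least two atoms in the limit; testing the Rayleigh quotient with $W^{1,2}$ cutoffs separating distinct atoms forces $\bar{\lambda}_1(M, g_k) \to 0$, contradicting $\Lambda^\Gamma_1(M, c) > 0$. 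If instead $\Lambda^\Gamma_1(M, c) > 8\pi$, the standard Petrides-style argument applies: rescaling at any single-point (necessarily $\Gamma$-fixed) concentration yields a nonconstant harmonic bubble $\omega \colon \Sph^2 \to \Sph^n$ by first eigenfunctions whose normalized energy is capped by $4\pi$ via Hersch's inequality, while multi-point concentrations again force $\bar{\lambda}_1 \to 0$; splitting energy between $v_\infty$ and the bubble set caps $\Lambda^\Gamma_1(M, c)$ at $8\pi$, contradicting the hypothesis.

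Having ruled out concentration, $v_k \to v_\infty$ strongly in $W^{1,2}$, so $v_\infty$ is a nonconstant $\rho$-equivariant harmonic map with $2 E_{g_0}(v_\infty) = \Lambda^\Gamma_1(M, c)$. The pullback metric $g := v_\infty^{*} g_{\Sph^n}$ is $\Gamma$-invariant and conformal to $g_0$ off the finite branch locus of $v_\infty$ (where conical singularities arise); it realizes the supremum, and the coordinates of $v_\infty$ are first eigenfunctions of $(M, g)$, providing the required $\Gamma$-equivariant harmonic map into $\Sph^n$.

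The main obstacle I anticipate is the fixed-point-free case: implementing the multi-atom cutoff argument rigorously requires an equivariant analog of the min-max characterization of $\Lambda^\Gamma_1(M, c)$ from \cite{KSminmax}, together with care in choosing cutoffs compatible with the $\Gamma$-orbit structure of the concentration set, and a quantitative bound showing that two or more concentration atoms are incompatible with a uniform lower bound on $\bar{\lambda}_1$ along the maximizing sequence.
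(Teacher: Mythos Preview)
Your proposal has a fundamental gap at the very first step. You claim that for any $g\in c$, an $L^2(g)$-orthonormal basis of $E_{\lambda_1}(M,g)$ assembles, after rescaling, into a $g_0$-harmonic map $v_g\colon M\to\Sph^{n_g}$ with $2E_{g_0}(v_g)=\bar\lambda_1(M,g)$. This is false for generic $g$. Takahashi's theorem runs in the other direction: it says that an isometric immersion into a sphere whose coordinates are eigenfunctions is minimal; it does \emph{not} say that an arbitrary $L^2$-orthonormal collection $(\phi_1,\ldots,\phi_{n+1})$ of first eigenfunctions satisfies $\sum_i\phi_i^2\equiv\mathrm{const}$ and hence lands in a sphere. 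The existence of such a basis is precisely Nadirashvili's characterization of $\bar\lambda_1$-\emph{extremal} metrics---it is the conclusion you are trying to establish for the maximizer, not a property enjoyed by every term of a maximizing sequence. Without it there is no sequence of sphere-valued harmonic maps on which to run bubble-tree analysis, and the rest of your argument has nothing to stand on.

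The paper (Section~\ref{lap.mm}) circumvents this by never attempting to extract harmonic maps from the individual $g_k$. Instead it runs an equivariant min-max for the Ginzburg--Landau energies $E_\epsilon$ on $W^{1,2}_\Gamma(M,A_\Gamma^m)$, producing critical points $u_{m,\epsilon}$ with equivariant Morse index $\leq m$. The crucial step (Proposition~\ref{index.eigen.bd}) shows that this index bound forces $\lambda_1$ of the associated conformal metric $g_{u_{m,\epsilon}}=\epsilon^{-2}(1-|u_{m,\epsilon}|^2)g$ to satisfy $\lambda_1\geq 1-C/m$; combined with a Hersch-type argument this rules out concentration under either hypothesis (Proposition~\ref{hm.ex}), and the limits $u_m$ stabilize as $m\to\infty$ to the desired equivariant eigenmap (Theorem~\ref{lap.mm.char}). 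The paper notes that one could alternatively adapt the Petrides/KNPP approach, but those methods also work directly with conformal factors and regularized eigenvalue functionals---they do not attempt the shortcut you propose.
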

By ``$\Gamma$ has no fixed points'' we mean that there are no points of $M$ that are preserved by {\em all} elements of $\Gamma$. 

Theorem~\ref{intro.lap.cf} could be proved by a variant of the techniques in \cite{Pet1,  KNPP}, but instead we opt for a proof based on a natural min-max construction for $\Gamma$-equivariant harmonic maps generalizing \cite{KSminmax}, which yields additional information necessary for proving analogs of Proposition \ref{intro.handle} and \ref{intro.strip}. In fact, even in the classical case where $\Gamma$ is trivial, we are able to simplify the proofs of the main results of \cite{KSminmax} by streamlining  much of the bubbling analysis.

Similarly, given a compact surface $N$ with boundary $\partial N$ and a finite subgroup $\Gamma\leq \Diff(N)$, a free boundary analog of the techniques in \cite{KSminmax} leads to the following existence result for $\bar{\sigma}_1$-maximizers  within a $\Gamma$-invariant conformal class.

\begin{theorem}\label{intro.stek.cf}
Given a compact surface $N$ with boundary, a finite subgroup $\Gamma\leq \Diff(N)$, and a conformal class $c \subset Met_{\Gamma}(N)$,  there is a metric $g \in c$, possibly with a finite set of conical singularities, realizing
$$\Sigma^\Gamma_1(N,c):=\sup\{\bar{\sigma}_1(N,h)\mid h\in c\},$$
provided either $\Sigma^\Gamma_1(N,c)>2\pi$ or $\Gamma$ has no fixed points on $\partial N$. Moreover,  for some $n\in \mathbb{N}$ there is a linear isometric action of $\Gamma$ on $\mathbb{R}^n$ and a $\Gamma$-equivariant free boundary harmonic map $u\colon (N,g_{})\to \mathbb{B}^n$ by first Steklov eigenfunctions.
\end{theorem}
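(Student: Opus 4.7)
The plan is to prove Theorem~\ref{intro.stek.cf} via a $\Gamma$-equivariant min-max construction for free boundary harmonic maps, giving the Steklov analog of the approach already used for Theorem~\ref{intro.lap.cf} and adapting the free boundary version of the min-max framework of \cite{KSminmax} to the equivariant setting. Fix a faithful orthogonal representation $\Gamma \hookrightarrow O(n)$ for some $n$ depending on $\Gamma$, and consider the space of $\Gamma$-equivariant maps $u : (N, \partial N) \to (\mathbb{B}^n, \mathbb{S}^{n-1})$. For any $g \in c$, a free boundary harmonic map $u$ by first Steklov eigenfunctions satisfies $E(u) = \tfrac12 \bar{\sigma}_1(N, g)$, so the conformal supremum may be encoded by the equivariant min-max
\[
\tfrac12 \Sigma_1^\Gamma(N, c) = \inf_{\mathcal{F}} \sup_{u \in \mathcal{F}} E(u),
\]
where $\mathcal{F}$ ranges over suitable $\Gamma$-equivariant sweepouts taken modulo the Moebius action of $\Conf(\mathbb{B}^n)$ on the target.

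Following the strategy of \cite{KSminmax}, I regularize the Dirichlet energy on this space of equivariant maps (for instance by an $\alpha$-energy with $\alpha > 1$, or a heat-flow regularization). The Palais principle of symmetric criticality guarantees that $\Gamma$-equivariant critical points of the regularized free boundary problem are genuine critical points. A min-max within the equivariant class then produces, for each $\alpha$, a $\Gamma$-equivariant free boundary $\alpha$-harmonic map $u_\alpha$ with $E(u_\alpha) \to \tfrac12 \Sigma_1^\Gamma(N, c)$ as $\alpha \to 1$.

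The principal analytic step is the bubble analysis of the sequence $u_\alpha$ as $\alpha \to 1$. Two bubble types may form: interior bubbles, nonconstant harmonic maps $\mathbb{S}^2 \to \mathbb{S}^{n-1}$ with Dirichlet energy at least $4\pi$, and boundary bubbles, nonconstant free boundary harmonic maps $(\mathbb{B}^2, \mathbb{S}^1) \to (\mathbb{B}^n, \mathbb{S}^{n-1})$ with Dirichlet energy at least $\pi$ (equivalently Steklov energy at least $2\pi$), with equality only for isometric disks by first Steklov eigenfunctions. Bubble-tree convergence yields the energy identity
\[
\Sigma_1^\Gamma(N, c) = \bar{\sigma}_1(N, g_\infty) + \sum_i 2 E(\text{bubble}_i),
\]
where $g_\infty \in c$ is the possibly degenerate limit conformal factor. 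Under the hypothesis $\Sigma_1^\Gamma(N, c) > 2\pi$, a Petrides-style blow-up/improvement argument, replacing the bubble region by a suitably rescaled disk-like competitor, shows that a single boundary bubble is incompatible with $\mathcal{F}$ being extremal, giving a contradiction. If instead $\Gamma$ has no fixed points on $\partial N$, then any boundary bubble at $p \in \partial N$ is accompanied by $|\Gamma \cdot p| \geq 2$ disjoint $\Gamma$-translated bubbles, forcing total boundary-bubble contribution at least $4\pi$, and a refined version of the improvement argument again yields the contradiction. Interior bubbles are ruled out similarly using the $4\pi$ threshold together with $\Gamma$-orbit multiplicity.

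Once bubbling is excluded, the $\epsilon$-regularity theory for free boundary harmonic maps gives strong convergence $u_\alpha \to u_\infty$ away from finitely many conical singularities, and $u_\infty$ is the desired $\Gamma$-equivariant free boundary harmonic map by first Steklov eigenfunctions, with induced metric $g \in c$ realizing $\Sigma_1^\Gamma(N, c)$. The main obstacle will be the bubble-exclusion step: the Steklov setting is more delicate than the closed case of \cite{KSminmax} because of the additional Moebius action on the target ball and the sharp $2\pi$ threshold for boundary bubbles, and extra care is needed to run the improvement argument equivariantly in a way that exploits the fixed-point structure of $\Gamma$ on $\partial N$.
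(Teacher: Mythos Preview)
Your outline follows a Petrides-style route (regularize, bubble-tree, then exclude bubbles by an improvement/replacement argument), which is different from what the paper actually does. The paper's proof (Section~4) uses a Ginzburg--Landau relaxation $F_\epsilon(u)=\int_N\tfrac12|du|^2+\int_{\partial N}\tfrac{(1-|u|^2)^2}{4\epsilon}$ on $\Gamma$-equivariant maps into $A_\Gamma^m$ with $m$ \emph{growing}; the min-max produces critical points $u_{m,\epsilon}$ with $\Gamma$-index $\le m$, and the crucial step (Lemma~\ref{mm.stek.lower}) converts this index bound into the eigenvalue lower bound $\sigma_1(N,g_u)\ge 1-C/m$ for the induced metric $g_u=\epsilon^{-2}(1-|u|^2)^2g$. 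This eigenvalue bound is then used \emph{directly}, via a Weinstock/Hersch-type comparison on small half-disks, to show that no energy concentration can occur near $\partial N$ (Lemma~\ref{no.dir.conc}): under $\Sigma_1^\Gamma>2\pi$ one gets a contradiction from the conformal disk test-map, while under the no-fixed-point hypothesis concentration at one boundary point forces concentration at a disjoint orbit point, contradicting the variational dichotomy for $\sigma_1(g_u)$. Thus convergence as $\epsilon\to0$ is smooth and no bubble-tree analysis or improvement argument is ever needed; a separate stabilization step (Lemma~\ref{stek.stab}, Theorem~\ref{stek.mm.char}) then shows the limits land in a fixed-dimensional equatorial ball and realize $\Sigma_1^\Gamma$.

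Your proposal also contains a concrete error: for free boundary harmonic maps $(N,\partial N)\to(\mathbb{B}^n,\mathbb{S}^{n-1})$ there are \emph{no} interior bubbles of the form ``$\mathbb{S}^2\to\mathbb{S}^{n-1}$ with energy $\ge 4\pi$'', since an interior blow-up yields a harmonic sphere in the convex target $\mathbb{B}^n$, which is constant. More seriously, the step you flag as the ``main obstacle'', namely the Petrides-style improvement to exclude a single boundary bubble, is exactly the kind of delicate replacement argument that the paper's machinery is designed to bypass; fixing a single representation $\Gamma\hookrightarrow O(n)$ also forfeits the index-to-eigenvalue mechanism, which depends on letting the target dimension $m|\Gamma|$ grow. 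If you want to pursue your route, you would need to supply that improvement argument in full in the equivariant free boundary setting; the paper's approach avoids it entirely.
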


As in the classical case, to prove existence of metrics realizing the suprema
\begin{align*}
\Lambda_1^{\Gamma}(M) &:=\sup\{\bar{\lambda}_1(M,g)\mid g\in \Met_{\Gamma}(M)\}
\quad
\text{and}\\
\Sigma_1^{\Gamma}(N) &:=\sup\{\bar{\sigma}_1(N,g)\mid g\in \Met_{\Gamma}(N)\},
\end{align*}
it remains to show that a sequence of $\Gamma$-invariant conformal classes $c_n$ for which $\Lambda^{\Gamma}_1(M, c_n)\rightarrow \Lambda_1^{\Gamma}(M)$, or correspondingly for $\Sigma^{\Gamma}_1(M, c_n)$, does not degenerate to the boundary of the relevant moduli space of $\Gamma$-invariant conformal classes. Section \ref{sec:top_degen} describes this degeneration precisely, showing roughly that for $(M, \Gamma)$ as before, the boundary of the moduli space consists of surfaces $M'$ with $\Gamma'\leq \Diff(M')$ arising from pinching off a $\Gamma$-invariant collection of simple closed curves in $M$, a relation which we denote by 
$$(M',\Gamma') \prec (M,\Gamma).$$
In the case with nonempty boundary, a doubling construction described in Section \ref{sec:actions_Sex} associates to each pair $(N,\Gamma)$ a closed surface $\tilde{N}$ with group action $\mathbb{Z}_2\times \Gamma \cong \tilde{\Gamma}\leq \Diff(\tilde{N})$, and we likewise write
$$(N',\Gamma')\prec (N,\Gamma)$$
if $(\tilde{N}',\tilde{\Gamma}')\prec (\tilde{N},\tilde{\Gamma}).$ With this notation in place, we can now state the following generalization of Theorems \ref{pet.glob1} and \ref{pet.glob2}, proved in Section \ref{sec:conf}; see \ref{thm:existence} and \ref{thm:Sexistence} for precise versions.

\begin{theorem}\label{intro.lap.glob}
If $M$ is a closed orientable surface, $\Gamma$ is a finite group of diffeomorphisms, and
\begin{equation}\label{lap.sym.mono}
\Lambda_1^{\Gamma}(M)>\max\{\Lambda_1^{\Gamma'}(M'),8\pi\}
\end{equation}
whenever $(M',\Gamma')\prec (M,\Gamma)$, then there exists $g \in Met_{\Gamma}(M)$ realizing
$$\bar{\lambda}_1(M,g)=\Lambda_1^{\Gamma}(M),$$ 
and a linear isometric action of $\Gamma$ on $\mathbb{R}^{n+1}$ such that $g$ is induced by a $\Gamma$-equivariant branched minimal immersion $M\to \mathbb{S}^n$ by first eigenfunctions.
\end{theorem}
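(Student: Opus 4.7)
The plan is to produce a global maximizer by extracting a limit of conformal maximizers along a maximizing sequence, using the equivariant conformal existence (Theorem \ref{intro.lap.cf}) and the strict monotonicity hypothesis \eqref{lap.sym.mono} to rule out degeneration of the conformal class. First, I would take a maximizing sequence $g_n \in \Met_\Gamma(M)$ with $\bar\lambda_1(M, g_n) \to \Lambda_1^\Gamma(M)$; since $\Lambda_1^\Gamma(M) > 8\pi$ by hypothesis, we may assume $\bar\lambda_1(M,g_n) > 8\pi$ for all $n$. Applying Theorem \ref{intro.lap.cf} within each conformal class $c_n = [g_n]$, I replace $g_n$ by a conformal maximizer $\hat g_n \in c_n$ with $\bar\lambda_1(M, \hat g_n) = \Lambda_1^\Gamma(M, c_n) \ge \bar\lambda_1(M, g_n)$, and obtain a corresponding $\Gamma$-equivariant harmonic map $u_n \colon (M, \hat g_n) \to \Sph^{k_n}$ by first eigenfunctions (for some linear isometric action of $\Gamma$ on $\mathbb{R}^{k_n+1}$). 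Since $\Gamma$ is finite, passing to a subsequence we may fix the target dimension and the $\Gamma$-representation.

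The remaining task is to show that the conformal classes $c_n$, viewed in the moduli space $\mathcal{M}_\Gamma(M)$ of $\Gamma$-invariant conformal structures on $M$, do not escape to infinity. If they converge (after equivariant diffeomorphisms) to some $c_\infty \in \mathcal{M}_\Gamma(M)$, then applying Theorem \ref{intro.lap.cf} to $c_\infty$ (together with upper semicontinuity of $\Lambda_1^\Gamma(M,\cdot)$ on the interior of moduli space) produces a metric realizing $\Lambda_1^\Gamma(M)$ with the required branched minimal immersion, and we are done. So the heart of the argument is ruling out the case that $c_n$ degenerates at the boundary of $\mathcal{M}_\Gamma(M)$.

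To do this, I would appeal to the degeneration analysis of Section \ref{sec:top_degen}: along a degenerating subsequence, a nontrivial $\Gamma$-invariant collection of disjoint simple closed curves on $M$ is pinched, producing in the limit a pair $(M',\Gamma') \prec (M,\Gamma)$. Combined with the bubble-tree analysis for the sequence of first-eigenfunction harmonic maps $u_n$ (adapting the non-equivariant bubbling theory to the equivariant setting, which is routine since $\Gamma$ is finite), I would obtain an energy-partition inequality of the form
\begin{equation*}
\Lambda_1^\Gamma(M) \;=\; \lim_{n\to\infty}\bar\lambda_1(M,\hat g_n) \;\le\; \max\bigl\{\Lambda_1^{\Gamma'}(M'),\, 8\pi\bigr\},
\end{equation*}
where the $8\pi$ arises as the normalized first-eigenvalue contribution of any sphere bubbles (whose harmonic maps are first eigenmaps into $\Sph^n$ and thus have normalized area at least $8\pi = \bar\lambda_1(\Sph^2)$). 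This directly contradicts \eqref{lap.sym.mono}, so no degeneration occurs.

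The main obstacle is precisely this degeneration step: one must control the $\Gamma$-equivariant bubble tree carefully to identify the limit object as a metric on a surface $(M',\Gamma') \prec (M,\Gamma)$ with its own $\Gamma'$-equivariant first-eigenfunction structure, and to show that the limit normalized eigenvalue is either bounded by $\Lambda_1^{\Gamma'}(M')$ on the collapsed surface or concentrates into a spherical bubble contributing at least $8\pi$. Once the strict inequality \eqref{lap.sym.mono} rules this out, convergence of $\hat g_n$ (modulo equivariant diffeomorphism and possible concentration of conformal factor at finitely many $\Gamma$-invariant points, which yield the permitted conical singularities) to the desired maximizer follows by the standard compactness for first-eigenfunction harmonic maps combined with the regularity statement of Theorem \ref{intro.lap.cf}.
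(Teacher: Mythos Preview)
Your proposal is correct and follows essentially the same approach as the paper: take a maximizing sequence of conformal classes, use the conformal existence result (Theorem~\ref{intro.lap.cf}) within each class, and rule out degeneration of the conformal structure via a bubble-tree/concentration analysis that produces the inequality $\limsup \Lambda_1^\Gamma(M,\mC_n) \le \max\{\Lambda_1^{\Gamma'}(M'),8\pi\}$, contradicting~\eqref{lap.sym.mono}. The paper packages the degeneration step as a separate statement (Theorem~\ref{thm:DM_limit}), whose proof distinguishes concentration in a collapsing $\Gamma$-invariant collar (yielding the $8\pi$) from concentration at a $\Gamma$-fixed bubble point in the thick part (also yielding $8\pi$ via Hersch's trick) versus smooth convergence of the harmonic maps to the limit surface $\widehat M_\infty$---a finer case split than your ``sphere bubble'' description, but with the same conclusion in the orientable setting. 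One small addendum: to upgrade the limiting harmonic map to a branched minimal \emph{immersion} (rather than just a conformal harmonic map), you need the characterization of global critical metrics in Theorem~\ref{thm:symm_crit_L}, not just the conformal result Theorem~\ref{intro.lap.cf}.
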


\begin{theorem}\label{intro.stek.glob}
If $N$ is a compact orientable surface with boundary, $\Gamma$ is a finite group of diffeomorphisms, and
\begin{equation}\label{stek.sym.mono}
\Sigma_1^{\Gamma}(N)>\max\{\Sigma_1^{\Gamma'}(N'),2\pi\}
\end{equation}
whenever $(N',\Gamma')\prec (N,\Gamma)$, then there exists $g \in Met_{\Gamma}(N)$ realizing
$$\bar{\sigma}_1(N,g)=\Sigma_1^{\Gamma}(N),$$
and an isometric action of $\Gamma$ on $\mathbb{R}^n$ such that $g$ is induced by a $\Gamma$-equivariant branched free boundary minimal immersion $N\to \mathbb{B}^n$ by first eigenfunctions.
\end{theorem}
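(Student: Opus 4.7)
The plan is to follow the same three-step scheme as in the proof of Theorem \ref{intro.lap.glob}, adapted to the Steklov setting via the doubling correspondence $(N,\Gamma)\mapsto (\tilde N,\tilde\Gamma)$ from Section \ref{sec:actions_Sex}. Start with a maximizing sequence $g_j \in \Met_\Gamma(N)$ with $\bar\sigma_1(N,g_j)\to \Sigma_1^{\Gamma}(N)$, and pass to the associated sequence $c_j = [g_j]$ of $\Gamma$-invariant conformal classes in the moduli space $\mathcal{M}_\Gamma(N)$ of $\Gamma$-invariant conformal structures described in Section \ref{sec:top_degen}.

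\emph{Step 1 (non-degeneration).} Show that $\{c_j\}$ stays in a compact subset of $\mathcal{M}_\Gamma(N)$. Arguing by contradiction, suppose that after a subsequence the $c_j$ degenerate to the boundary. By the description in Section \ref{sec:top_degen}, this means some $\Gamma$-invariant system of disjoint simple closed curves has extremal length (or collar modulus) tending to infinity, producing a degeneration to a pair $(N',\Gamma')\prec (N,\Gamma)$. Combining this with quantitative Steklov estimates on long thin cylindrical necks (the analog of the ``cylinder-energy kills'' input in Propositions \ref{intro.handle} and \ref{intro.strip}) forces almost all of the first Steklov eigenfunction mass to concentrate on the non-degenerating pieces, yielding
\[
\limsup_{j\to\infty}\bar\sigma_1(N,g_j)\ \le\ \max\bigl\{\Sigma_1^{\Gamma'}(N'),\,2\pi\bigr\},
\]
where the $2\pi$ accounts for pinchings that produce disk-type components (on which the Steklov supremum is $2\pi$). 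This contradicts the hypothesis \eqref{stek.sym.mono}.

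\emph{Step 2 (conformal existence at the limit).} After Step 1, extract a limit $c_\infty\in \mathcal{M}_\Gamma(N)$; upper semicontinuity of $c\mapsto\Sigma_1^\Gamma(N,c)$ along non-degenerating sequences gives $\Sigma_1^\Gamma(N,c_\infty)\ge\Sigma_1^\Gamma(N)>2\pi$. Apply the equivariant conformal existence result Theorem \ref{intro.stek.cf} to $c_\infty$: this produces a metric $g\in c_\infty\cap\Met_\Gamma(N)$, possibly with finitely many conical singularities, realizing $\bar\sigma_1(N,g)=\Sigma_1^\Gamma(N,c_\infty)=\Sigma_1^\Gamma(N)$, together with a $\Gamma$-equivariant free boundary harmonic map $u\colon (N,g)\to \B^n$ whose components are first Steklov eigenfunctions and where $\Gamma$ acts linearly and isometrically on $\mathbb{R}^n$.

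\emph{Step 3 (conformality and branched minimality).} Show $u$ is weakly conformal. Because $g$ maximizes $\bar\sigma_1$ over \emph{all} of $\Met_\Gamma(N)$, not just within $c_\infty$, one may vary $g$ along $\Gamma$-equivariant directions transverse to $c_\infty$ and differentiate $\bar\sigma_1(N,g_t)\le\bar\sigma_1(N,g)$ at $t=0$; as in Fraser--Schoen, this kills the Hopf differential $\phi(u)=\langle u_z,u_z\rangle\,dz^2$ in the interior, while the free boundary condition forces $\operatorname{Re}\phi(u)=0$ along $\partial N$, so $\phi(u)\equiv 0$. Equivalently, $u$ is weakly conformal, and therefore induces a branched free boundary minimal immersion $N\to \B^n$; $\Gamma$-equivariance and the linear isometric $\Gamma$-action on $\mathbb{R}^n$ are inherited from Step 2.

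The main obstacle is Step 1. The Laplace case Theorem \ref{intro.lap.glob} provides a template, but the Steklov setting has its own subtleties: one must separately track pinchings of interior curves, of curves separating distinct boundary components of $N$, and of curves in the ``cap'' regions giving rise to disk bubbles and the $2\pi$ term. The doubling correspondence lets us import many Laplace-side estimates from $\tilde N$, but converting them into Steklov estimates on $N$ requires care with the $\Z_2$-reflection symmetry that defines the doubling and with the relationship between the area normalization on $\tilde N$ and the boundary-length normalization on $N$; in particular, one must verify that a degeneration $(N',\Gamma')\prec(N,\Gamma)$ on the Steklov side corresponds to a degeneration $(\tilde N',\tilde\Gamma')\prec(\tilde N,\tilde\Gamma)$ compatible with the Laplace analysis from the proof of Theorem \ref{intro.lap.glob}.
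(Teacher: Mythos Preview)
Your three-step scheme---rule out conformal degeneration by contradiction, apply the conformal existence theorem at the limit class, then upgrade from free boundary harmonic to branched minimal using full criticality over $\Met_\Gamma(N)$---is exactly the paper's route (Theorem~\ref{thm:Sexistence} combined with Theorem~\ref{thm:symm_crit_S}).

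One correction in Step~1: the tool you need is \emph{not} the cylinder-energy analysis of Propositions~\ref{intro.handle} and~\ref{intro.strip}. Those propositions run in the opposite direction---they build test configurations to \emph{prove} the strict gap~\eqref{stek.sym.mono}, and are irrelevant once~\eqref{stek.sym.mono} is assumed. What actually drives Step~1 is the degeneration upper bound (Theorem~\ref{thm:SDM_limit}), which says that along a degenerating sequence $\mC_n$ one has $\limsup_n \Sigma_1^T(N,\mC_n)\le \max\{\Sigma_1^{\hat T_\infty}(\hat N_\infty,\hat\mC_\infty),\,2\pi\tilde\epsilon_{or}\}$; this is proved by bubble-convergence of the conformally maximal free boundary harmonic maps $\Psi_n$ (following Petrides and Medvedev), not by neck estimates on eigenfunctions of the original $g_j$. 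Your concern in the final paragraph about aligning degenerations on $N$ with those on $\tilde N$ is precisely what Section~\ref{sec:top_degen_boundary} handles, and the paper simply transports the closed-surface moduli compactness through the doubling correspondence.

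For Step~3, your Hopf-differential variation is a valid route, but the paper packages it differently: once $g$ is a global maximizer in $\Met_\Gamma(N)$ it is in particular critical, and Theorem~\ref{thm:symm_crit_S} directly produces the equivariant branched free boundary minimal immersion via a Hahn--Banach argument in the space of $\Gamma$-invariant symmetric $2$-tensors, without singling out the specific harmonic map from Step~2.
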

In Sections \ref{sec:global_existence} and \ref{sec:global_Sexistence}, we give geometric criteria under which the inequalities \eqref{lap.sym.mono} and \eqref{stek.sym.mono} must hold, generalizing Propositions \ref{intro.handle} and \ref{intro.strip} to the case of nontrivial symmetry groups; see Theorems \ref{thm:general_existence} and \ref{thm:general_Sexistence} for the precise statements.

With this general existence theory for maximizing metrics in place, we turn to the following questions concerning the pairs $(M, \Gamma)$ and $(N, \Gamma)$: for which pairs can one prove \eqref{lap.sym.mono} or \eqref{stek.sym.mono}, and therefore the existence of $\bar{\lambda}_1$- or $\bar{\sigma}_1$-maximizing $\Gamma$-invariant metrics? Among these, for which pairs do the associated minimal surfaces lie in \emph{low-dimensional} spheres or balls, and which of these is necessarily embedded? The construction of the minimal surfaces described in Sections \ref{intro.lap.sec} and \ref{intro.stek.sec} hinges on identifying a large class of examples satisfying all three conditions, which we now briefly describe.

\subsection{Extremal metrics on basic reflection surfaces and applications}
\label{ssbasref}
In Section \ref{S:BRS} we study a class of surfaces with group actions which we call \emph{basic reflection surfaces} (or \emph{BRS}).  In the closed case, $(M, \Gamma)$ is a BRS if $\Gamma$ contains an involution $\tau$ whose fixed-point set $M^\tau$ contains a curve (a \emph{reflection}) with the property that the quotient $M/ \langle \tau \rangle$ has genus zero.  In the case with nonempty boundary, $(N, \Gamma)$ is a BRS if $\Gamma$ contains a reflection $\tau$ such that $N / \langle \tau \rangle $ has genus zero and the image of $\partial N \cup N^\tau_\partial$ in $N / \langle \tau \rangle$ is connected, where $N^\tau_\partial$ is the union of the components of $N^\tau$ meeting $\partial N$.

Each topological type of compact surface---orientable or nonorientable, with or without boundary---is realized as a BRS, see Section~\ref{Tclass}, and each BRS satisfies two important \emph{a priori} estimates,  independent of its topology,  the first an optimal upper bound on the first normalized eigenvalue, and the second an upper bound on the multiplicity of the first eigenvalue.  These bounds imply that any branched minimal immersion by first eigenfunctions into a ball or sphere is necessarily an embedding, with controlled codimension. 

In particular, if $M$ and $N$ are basic reflection surfaces, where $\partial M = \varnothing$ and $\partial N \neq \varnothing$,  Lemma \ref{Levalbdcl} shows that for any $\Gamma$-invariant metric $g$,
\begin{align*}
\lambda_1(M, g)\area (M) < 16 \pi
\quad
\text{and}
\quad
\sigma_1(N,g) \length (\partial N) < 4\pi,
\end{align*}
essentially by applying variants of Hersch's \cite{Hersch} and Weinstock's \cite{Weinstock} arguments to the quotient of the surface by $\langle \tau \rangle$. It follows that any minimal immersion $u\colon M \rightarrow \Sph^n$ by first eigenfunctions has area $< 8\pi$, hence by work of Li-Yau \cite{LiYau}  is an embedding.  Similarly, any free boundary immersion $u\colon  N \rightarrow \B^n$ by first eigenfunctions has area $<2\pi$, hence is an embedding by \cite{Volkmann} .

The multiplicity bounds for the first eigenvalue are proved in Propositions \ref{Lasymcl} and \ref{Lrhoquo2}. 
 If $M$ and $N$ are orientable, then the dimension of the first Laplace eigenspace 
  is at most $4$, and the dimension of the first Steklov eigenspace 
   is at most $3$.  
  In particular, if $u\colon M \rightarrow \Sph^n$ is a minimal immersion by first eigenfunctions, then $n = 3$, and if $u\colon N \rightarrow \B^n$ is a free boundary minimal immersion by first eigenfunctions, then $n = 3$. Note that for genus zero surfaces with boundary, this multiplicity bound holds without any symmetry assumption \cite{Kokarev, FraserSchoen}. 
 
 Combining these observations with Theorem \ref{intro.lap.glob} shows that if $(M,\Gamma)$ is a closed, oriented BRS for which \eqref{lap.sym.mono} holds, then $\Lambda_1^{\Gamma}(M)$ is achieved by a $\Gamma$-equivariant minimal embedding $M\to \Sph^3$. Likewise, Theorem \ref{intro.stek.glob} shows that if $(N,\Gamma)$ is a compact oriented BRS with boundary and \eqref{stek.sym.mono} holds, then $\Sigma_1^{\Gamma}(N)$ is achieved by a $\Gamma$-equivariant free boundary minimal embedding $N\to \mathbb{B}^3$.
 
 Furthermore, in Section \ref{ssdoub} we show that any such minimal embedding is a doubling of an equatorial sphere $\Sph^2$, or of an equatorial disk $\mathbb{D}^2$.  In the closed case, this is established by using nodal domain arguments to conclude (Lemma \ref{Lgraphsph}) that the restriction of the nearest-point projection to the complement of the fixed-point set is a $2$-sheeted cover, while in the case with boundary, a more delicate Morse-theoretic argument (Lemma \ref{Lbdgraph}) is used to establish the corresponding property.  

Crucially, the preceding multiplicity bounds also make it easier to apply Theorems \ref{thm:general_existence} and \ref{thm:general_Sexistence} 
to prove the inequalities \eqref{lap.sym.mono} and \eqref{stek.sym.mono}. In the closed case, we argue in Section \ref{ss:gamma}  that if \eqref{lap.sym.mono} fails for some degeneration $(M',\Gamma')\prec (M,\Gamma)$, then $(M',\Gamma')$ must be a basic reflection surface of lower genus, which we can assume---proceeding by induction---admits a metric $g_{}$ realizing $\Lambda_1^{\Gamma'}(M')$. The preceding observations together with Theorem  \ref{thm:general_existence} then imply that $g_{}$ is realized by an embedded minimal surface $M'\subset \Sph^3$ whose coordinate functions satisfy \emph{an additional quadratic relation}, and it is shown in Proposition \ref{prop:uniq_map} that this forces $M'$ to be either a great sphere or a Clifford torus. Thus, proving \eqref{lap.sym.mono} in this case reduces to proving 
$$\Lambda_1^{\Gamma}(M)> 4\pi^2 $$
for all the group actions appearing in Theorem \ref{Tsph1}, which can be checked directly via comparison with the Lawson surfaces $\xi_{\gamma, 1}$ of genus $\gamma \geq 2$, yielding the existence of $\bar{\lambda}_1$-maximizing metrics with these symmetries, and in particular a proof of Theorem \ref{Tsph1}.

Similarly, for a large class of compact, oriented BRS with nonempty boundary, we argue in Section \ref{sec:global_Sexistence} that an analog of \eqref{stek.k.mono0} continues to hold, and the failure of \eqref{stek.sym.mono} implies the existence of a degeneration $(N',\Gamma')$ realized by a free boundary minimal embedding $N'\to \mathbb{B}^3$ by first eigenfunctions whose boundary $\partial N'$ is the intersection of $\Sph^2$ with an elliptic cylinder. We conjecture that the only such surface $N'$ is the critical catenoid; indeed, such a surface must have two boundary components, but it is not clear \emph{a priori} whether the genus can be positive. Moreover, even when the inequality \eqref{stek.sym.mono} can be reduced to comparison with the critical catenoid, in general we have no analog of the comparison to Lawson surfaces $\xi_{\gamma, 1}$ from the closed setting, and so must resort to more ad-hoc arguments to complete the proof.  We are able to carry out this strategy for many, but not all of the desired group actions, and for this reason the statement of the main existence result, Theorem \ref{thm:group_Sexistence} is more technical than the corresponding theorem in the closed case. 

\subsection{Discussion and open questions}

\subsubsection{Area, varifold limits, and Morse index of the minimal surfaces}

Beyond prescribing the symmetries and topological type, it is natural to seek a finer geometric description of the minimal surfaces constructed in Theorems \ref{Tsph1} and \ref{Tfbms1} above, using these new families to test some well-known conjectures for minimal surfaces in $\Sph^3$ and $\mathbb{B}^3$.

As a first step in this direction, we consider the problem of obtaining improved area estimates for these minimal surfaces, beyond the a priori bounds by $8\pi$ and $2\pi$, respectively. By construction, estimating the areas of these surfaces is equivalent to estimating the maximum value of $\bar{\lambda}_1$ and $\bar{\sigma}_1$ on surface with a prescribed symmetry group, reducing the question to a fairly concrete problem in spectral geometry. An important source of inspiration is the following well-known conjecture--a special case of a conjecture  \cite[Conjecture 8.4]{KusnerWillmore} characterizing the Lawson surfaces $\xi_{\gamma,1}$ as minimizers for the Willmore functional among surfaces of genus $\gamma$.

\begin{conjecture}\label{ag.conj}\cite{KusnerWillmore}
\label{Clawson}
 The minimum area $\Acal_\gamma$ among all minimal surfaces in $\Sph^3$ of genus $\gamma$ is uniquely attained by the Lawson surface $\xi_{\gamma, 1}$.
\end{conjecture}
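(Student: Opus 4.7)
The plan is to combine Yau's conjecture on first eigenfunctions with the equivariant eigenvalue framework developed in this paper to convert area minimization into an equivariant spectral identification problem. If every embedded minimal surface $\Sigma\subset\mathbb{S}^3$ of genus $\gamma$ has its coordinate functions as first Laplace eigenfunctions of the induced metric $g_\Sigma$, then $\mathrm{Area}(\Sigma) = \tfrac12 \bar{\lambda}_1(\Sigma, g_\Sigma)$, so Conjecture \ref{ag.conj} reduces to showing that the infimum of $\bar{\lambda}_1(M_\gamma, g)$ over metrics $g$ induced by minimal embeddings into $\mathbb{S}^3$ is uniquely realized by $g_{\xi_{\gamma,1}}$.

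First I would show that $\xi_{\gamma,1}$ realizes an equivariant supremum $\Lambda_1^{\Gamma_\gamma}(M_\gamma)$ for a suitable prismatic dihedral reflection group $\Gamma_\gamma \leq \Diff(M_\gamma)$ under which $(M_\gamma, \Gamma_\gamma)$ is a basic reflection surface in the sense of Section \ref{S:BRS}. By Theorem \ref{intro.lap.glob}, this amounts to verifying the strict inequality \eqref{lap.sym.mono} across all degenerations $(M', \Gamma')\prec (M_\gamma, \Gamma_\gamma)$, which I would carry out inductively on the complexity of the orbifold quotient $M_\gamma/\Gamma_\gamma$, using the Lawson surfaces themselves as competitors to bound $\Lambda_1^{\Gamma_\gamma}(M_\gamma)$ from below. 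The multiplicity bounds from Propositions \ref{Lasymcl} and \ref{Lrhoquo2} then force the resulting maximizer to be a codimension-one embedding, identifying it with $\xi_{\gamma, 1}$.

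Second I would argue that any competing minimal embedding $\Sigma$ of genus $\gamma$ satisfies $\bar{\lambda}_1(\Sigma, g_\Sigma) \geq \Lambda_1^{\Gamma_\gamma}(M_\gamma)$, with equality only when $\Sigma$ is $\Gamma_\gamma$-equivariantly congruent to $\xi_{\gamma, 1}$. The plan here is to combine the Choi--Schoen compactness of $\mathcal{M}_\gamma$ with the quadratic-relation rigidity of Proposition \ref{prop:uniq_map}: for $\Sigma$ whose conformal class lies outside the $\Gamma_\gamma$-symmetric stratum, one would construct a symmetrization trial map by averaging coordinate functions of $\xi_{\gamma,1}$ against those of $\Sigma$ over a $\Gamma_\gamma$-orbit, producing a competitor whose Rayleigh quotient strictly dominates $g_{\xi_{\gamma,1}}$'s. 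Coupling this with the genus monotonicity of Theorem \ref{thm:gamma+2} would then give the desired lower bound $\mathrm{Area}(\Sigma) \geq \mathrm{Area}(\xi_{\gamma, 1})$.

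The main obstacles are substantial and genuinely bar immediate execution. Yau's conjecture is itself open for $\gamma \geq 2$ and appears at least as deep as Conjecture \ref{ag.conj}; without it one can only assert $\mathrm{Area}(\Sigma) = \bar{\lambda}_k(\Sigma, g_\Sigma)/2$ for some $k \geq 1$, and bounding $k$ from below via conformal or topological invariants of $\Sigma$ would require genuinely new ideas, perhaps via a Cheeger-type inequality on $M_\gamma$ or a conformal-class sweepout. Even granting Yau's conjecture, the symmetrization step has no classical analog in the $\bar{\lambda}_1$-optimization literature: unlike Hersch's argument on $\Sph^2$, there is no finite-dimensional moduli space on which to run a degree-theoretic averaging, so a new rigidity theorem characterizing low-$\bar{\lambda}_1$ critical metrics on $M_\gamma$ is needed. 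I expect this latter step to be the hardest part, and even a conditional proof would constitute a significant advance.
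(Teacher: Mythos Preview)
The statement you are attempting to prove is a \emph{conjecture}, not a theorem: the paper states it as Conjecture~\ref{ag.conj}, attributes it to \cite{KusnerWillmore}, and offers no proof. The paper's only progress on it is Claim~\ref{area.exp}, announced for forthcoming work, which would verify the conjecture asymptotically for the specific families produced by Theorem~\ref{Tsph1}. There is nothing in the paper to compare your proposal against.

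As for the proposal itself, you are candid that it is a research outline rather than a proof, and your own final paragraph identifies the fatal gaps. The first step---assuming Yau's conjecture that every embedded minimal surface in $\Sph^3$ has $\lambda_1=2$---is already an open problem of comparable depth to the target conjecture; without it the identification $\mathrm{Area}(\Sigma)=\tfrac12\bar\lambda_1(\Sigma,g_\Sigma)$ fails and the entire spectral reformulation collapses. The second step is worse: you need to show that \emph{every} embedded minimal surface of genus $\gamma$, regardless of its symmetry group, has $\bar\lambda_1$ at least $\Lambda_1^{\Gamma_\gamma}(M_\gamma)$. But $\Lambda_1^{\Gamma_\gamma}$ is a \emph{supremum} over $\Gamma_\gamma$-invariant metrics, not a lower bound for arbitrary metrics, so the inequality runs the wrong way. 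Your proposed ``symmetrization trial map'' averaging coordinate functions over a $\Gamma_\gamma$-orbit has no meaning when $\Sigma$ carries no $\Gamma_\gamma$-action, and nothing in the paper---not Proposition~\ref{prop:uniq_map}, not Theorem~\ref{thm:gamma+2}, not Choi--Schoen compactness---supplies a mechanism for comparing $\bar\lambda_1$ of an asymmetric surface to an equivariant supremum on a different conformal class. This is not a technical gap but a structural one: the equivariant optimization machinery of the paper produces \emph{upper} bounds on area within symmetry classes, whereas Conjecture~\ref{ag.conj} asks for a \emph{lower} bound across all surfaces.
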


Of course, the upper bound
$$\Acal_\gamma\leq \Area(\xi_{\gamma,1})=8\pi\left(1-\frac{\log 2}{2\gamma}+O(\gamma^{-2})\right),$$
is immediate, with the expansion on the right-hand side coming from \cite{HHT}, and it is known \cite{KLS} that $\lim_{\gamma\to\infty}\Acal_{\gamma}=8\pi.$ On the other hand, all of the minimal surfaces constructed in Theorem \ref{Tsph1} have area $<8\pi$, and could therefore \emph{ a priori } give examples with area $<\Area(\xi_{\gamma,1})$. In forthcoming work, we rule this out in the large genus limit, thus confirming Conjecture \ref{ag.conj} for all the examples constructed here with genus sufficiently large.

\begin{claim}\label{area.exp}
There exists $\gamma_0\in \mathbb{N}$ such that for every minimal surface $M_{\gamma}\subset \Sph^3$ of genus $\gamma\geq \gamma_0$ constructed in Theorem \ref{Tsph1}, we have $\Area(M_{\gamma})\geq \Area(\xi_{\gamma,1})$. Moreover, there is a constant $C>0$ such that if $M_{\gamma}$ is obtained by $\bar{\lambda}_1$-optimization with respect to a symmetry group $\Gamma\leq \Diff(M_{\gamma})$ not conjugate to a subgroup of $\Isom(\xi_{\gamma,1})$, then 
$$\Area(M_{\gamma})\geq 8\pi-e^{-C\sqrt{\gamma}}.$$
\end{claim}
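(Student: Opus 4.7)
Since $M_\gamma$ is a minimal embedding by first eigenfunctions, $\bar\lambda_1(M_\gamma) = 2\Area(M_\gamma) = \Lambda_1^\Gamma(M_\gamma)$, so it suffices to produce $\Gamma$-invariant test metrics whose normalized first Laplace eigenvalue approaches the universal upper bound $16\pi$ from Lemma~\ref{Levalbdcl}. The test metrics will be \emph{doublings} of $\Sph^2$: glue two copies of the round $\Sph^2$ minus a $\Gamma$-symmetric configuration of $\gamma+1$ small disks, along matching cylindrical necks, to obtain a $\Gamma$-invariant metric $g_r$ on $M_\gamma$ depending on a neck-radius parameter $r$. Using the three spherical harmonic coordinates on $\Sph^2$ (extended to be $\Z_2$-symmetric across the equator), together with the $\Z_2$-antisymmetric ``height function'' (interpolating between $\pm 1$ across the necks), as test functions in the Rayleigh quotient, one obtains a bound
\[
\bar\lambda_1(g_r) \geq 16\pi - E(r, \gamma, \Gamma),
\]
where the error $E$ is controlled by the total conformal capacity of the necks together with interaction terms between nearby necks.

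The argument then splits into two cases. If $\Gamma \leq \Isom(\xi_{\gamma,1})$, use the Lawson surface $\xi_{\gamma,1}$ itself as a $\Gamma$-invariant test metric and couple this with a separate asymptotic analysis showing $\lambda_1(\xi_{\gamma,1}) \to 2$ as $\gamma \to \infty$ (via the convergence of $\xi_{\gamma,1}$ to a doubly-covered $\Sph^2$ and perturbation of spherical harmonic eigenfunctions); combined with the HHT expansion of $\Area(\xi_{\gamma,1})$, this yields $\Area(M_\gamma) \geq \Area(\xi_{\gamma,1})$ for $\gamma$ sufficiently large. If $\Gamma \not\leq \Isom(\xi_{\gamma,1})$, exploit the flexibility of $\Gamma$-symmetric disk configurations: because $\Gamma$ is not conjugate to a large dihedral or rotational subgroup of $O(3)$ tailored to $\gamma$, one can arrange $\gamma+1$ disks on $\Sph^2$ with minimum pairwise separation $\gtrsim 1/\sqrt\gamma$. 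Choosing neck radii $r \sim e^{-C\sqrt\gamma}$ then makes $E \lesssim e^{-C'\sqrt\gamma}$, yielding $\Area(M_\gamma) \geq 8\pi - e^{-C\sqrt\gamma}$.

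The main obstacle will be preventing $\lambda_1(g_r)$ from collapsing below $2$ due to ``bottleneck'' modes concentrated in the necks. The $\Z_2$-reflection factor of $\Gamma$ splits the low eigenspace in a helpful way: the $\Z_2$-symmetric modes reduce to a mixed Neumann-Steklov type problem on $\Sph^2 \setminus \bigcup D_i$, whose first eigenvalue tends to $2$ as $r \to 0$, while the single $\Z_2$-antisymmetric low mode (corresponding to the height function in $\Sph^3$) must be separately shown to have eigenvalue approaching $2$. Quantifying this latter statement through a careful capacity estimate, which balances neck geometry against the number and placement of holes, is the heart of the forthcoming analysis; the BRS multiplicity bound restricting the first eigenspace to dimension at most $4$ plays a key role, ensuring that no further low modes can appear unaccounted for.
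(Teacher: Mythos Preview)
This claim is not proved in the present paper: it is explicitly announced as forthcoming work, with the paper offering only the one-line hint that ``these lower bounds are obtained by directly constructing test metrics for the associated $\Gamma$-invariant $\bar\lambda_1$-optimization problems.'' Your overall strategy---build $\Gamma$-invariant doubling metrics $g_r$ on $M_\gamma$ and show $\bar\lambda_1(g_r)\to 16\pi$---matches that hint, and the case split according to whether $\Gamma$ embeds in $\Isom(\xi_{\gamma,1})$ is the right organizing principle.

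That said, your plan contains a genuine directional error. You write that plugging the three spherical harmonics and the antisymmetric height function ``as test functions in the Rayleigh quotient'' yields $\bar\lambda_1(g_r)\ge 16\pi - E$. Test functions in the min-max characterization give \emph{upper} bounds on eigenvalues, not lower bounds; exhibiting four functions with Rayleigh quotient near $2$ bounds $\lambda_4(g_r)$ from above, which says nothing about $\lambda_1(g_r)$ from below. The actual work, as you correctly identify later, is a \emph{lower} bound on $\lambda_1(g_r)$: one must show that \emph{every} mean-zero function has Rayleigh quotient at least $2-E$. This requires genuine spectral analysis of the glued surface (e.g.\ Dirichlet--Neumann bracketing across the necks, or a quantitative perturbation argument), not merely writing down four good candidates. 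The BRS multiplicity bound $\dim\mathcal E\le 4$ constrains the \emph{maximal} metric but does not by itself control $\lambda_1$ of an arbitrary $\Gamma$-invariant test metric.

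Two smaller points. First, when $\Gamma\le\Isom(\xi_{\gamma,1})$ you do not need any asymptotic analysis of $\lambda_1(\xi_{\gamma,1})$: by Choe--Soret (cited in the paper) one has $\lambda_1(\xi_{\gamma,1})=2$ exactly, so $\xi_{\gamma,1}$ is a $\Gamma$-invariant metric with $\bar\lambda_1=2\Area(\xi_{\gamma,1})$, giving $\Area(M_\gamma)\ge\Area(\xi_{\gamma,1})$ immediately for \emph{all} $\gamma$. Second, the assertion that $\Gamma\not\le\Isom(\xi_{\gamma,1})$ forces a $\Gamma$-symmetric configuration of $\gamma+1$ points on $\Sph^2$ with pairwise separation $\gtrsim 1/\sqrt\gamma$ is plausible but requires a careful case analysis over the groups $G$ in Theorem~\ref{Tsph1} (in particular the dihedral and prismatic families, where $|G|$ can grow with $\gamma$); this is where the combinatorics of the type $\underline b$ enters.
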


These lower bounds are obtained by directly constructing test metrics for the associated $\Gamma$-invariant $\bar{\lambda}_1$-optimization problems. For many of these families, we show that the area grows at an even faster rate $\Area(M_{\gamma})\sim 8\pi-e^{-C\gamma}$, which seems to give the largest possible area for basic reflection surfaces embedded by first eigenfunctions.

It is interesting to note that the area expansion for minimal doublings in \cite[Theorem A(v)]{LDG} shows that if $M_\gamma$ is an $\Sph^2$-doubling constructed by  Kapouleas \cite{Kapouleas} or by Kapouleas-McGrath \cite{KapMcG, LDG} with genus $\gamma$, then $\area(M_\gamma) > 8\pi(1-e^{-C\sqrt{\gamma}})$.  Actually, the stronger bound $\area(M_\gamma) > 8\pi ( 1- e^{-C\gamma})$ holds for all of the families except the ``equator-poles" family from \cite{Kapouleas}. Indeed, we expect that there is significant overlap in the large genus limit between the families obtained by $\bar{\lambda}_1$-optimization in Theorem \ref{Tsph1} and some families obtained by gluing methods.

In addition to the area estimates of Claim \ref{area.exp}, we can also characterize the varifold limits for most of the families of minimal surfaces in Theorem \ref{Tsph1} in the large genus limit. A simple compactness argument using the monotonicity formula shows that a varifold limit of such a sequence must be a multiplicity-two great sphere, as for the $\Sph^2$-doublings from \cite{Kapouleas, KapMcG, LDG},  or the disjoint union of two great spheres, as for the Lawson surfaces $\xi_{\gamma, 1}$. For the families of Theorem \ref{Tsph1}, we show that nearly all of them converge to a multiplicity-two sphere, except for those which we expect to coincide with the Lawson surfaces $\xi_{\gamma,1}$. 

\begin{claim}\label{var.lim}
There exists a sequence $\epsilon_{\gamma}$ with $\lim_{\gamma\to\infty}\epsilon_{\gamma}=0$ such that if $M$ is one of the surfaces of genus $\gamma$ obtained in Theorem \ref{Tsph1}, realizing $\Lambda_1^{\Gamma}$ for a group $\Gamma\leq \Diff(M)$ not conjugate to a subgroup of $\Isom(\xi_{\gamma,1})$, then 
$$d_V(M,2\cdot \Sph^2_{eq})\leq \epsilon_{\gamma},$$
where $d_V$ denotes the varifold distance (in the sense of \cite{Pitts}), and $\Sph^2_{eq}\subset \Sph^3$ is the equator separating $M$ into isometric genus zero components.
\end{claim}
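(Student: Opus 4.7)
The plan is to argue by contradiction via varifold compactness. Suppose otherwise: there exist $\delta_0 > 0$ and a sequence $M_n = M_{\gamma_n}$ from Theorem~\ref{Tsph1}, with $\gamma_n \to \infty$, symmetry groups $\Gamma_n$ not conjugate to subgroups of $\Isom(\xi_{\gamma_n,1})$, satisfying $d_V(M_n, 2 \cdot \Sph^2_{eq}) \geq \delta_0$. By the uniform area bound $\Area(M_n) < 8\pi$ and Allard's compactness theorem, pass to a subsequence converging in the varifold topology to a stationary integer-rectifiable varifold $V$ in $\Sph^3$. Claim~\ref{area.exp} yields $\mass(V) = \lim_n \Area(M_n) = 8\pi$, while $d_V(V, 2 \cdot \Sph^2_{eq}) \geq \delta_0$ persists by lower semicontinuity.

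Next, I would classify $V$. Smooth compactness for embedded minimal surfaces in $\Sph^3$ of bounded area, away from finitely many points of curvature concentration (cf.~\cite{ChoiSchoen}), shows that the support of $V$ is a finite union of smoothly embedded closed minimal surfaces, possibly meeting transversely along great circles. Combined with Almgren's theorem that any minimal $2$-sphere in $\Sph^3$ is a great sphere, the Li--Yau~\cite{LiYau} lower bound $\Area \geq 4\pi$, and the fact that the Clifford torus has area $2\pi^2 > 8\pi$, the mass constraint forces $V$ to be supported on at most two great spheres: either $V = 2 \cdot S$ for a single great sphere $S$, or $V = S_1 + S_2$ for two distinct great spheres meeting along a great circle.

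Now I would narrow down $V$ using the doubling structure and discrete symmetries. Each $M_n$ is $\tau$-invariant, where $\tau$ denotes reflection of $\Sph^3$ across $\Sph^2_{eq}$ (the $\mathbb{Z}_2$ factor in $\Gamma_n = \langle\tau\rangle \times G_n$), and doubles $\Sph^2_{eq}$, so the nearest-point projection $\pi \colon \Sph^3 \setminus \{N,S\} \to \Sph^2_{eq}$ restricts to $M_n$ as a $2$-to-$1$ cover away from the doubling hole boundaries. Both properties pass to $V$. If $V = 2 \cdot \Sph^2_{eq}$, we directly contradict $d_V(V, 2 \cdot \Sph^2_{eq}) \geq \delta_0$; if $V = 2 \cdot S$ with $S \ne \Sph^2_{eq}$, then $\tau$-invariance forces $S$ to contain the poles $N, S$, but then $\pi(S)$ is a great circle of measure zero in $\Sph^2_{eq}$, incompatible with the push-forward $\pi_\ast M_n$ distributing mass uniformly over $\Sph^2_{eq}$. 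Analogous projection arguments rule out the non-Lawson configurations in case (b), leaving $V$ with precisely the support structure of the large-genus limit of $\xi_{\gamma,1}$: two great spheres swapped by $\tau$ meeting in a great circle inside $\Sph^2_{eq}$, or a $\tau$-invariant pair through the poles meeting along a meridional great circle.

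The main obstacle will be translating the Lawson-type limit structure into a precise inclusion of $\Gamma_n$ into $\Isom(\xi_{\gamma_n,1})$ up to conjugation, contradicting the hypothesis. Passing to a further subsequence, the $G_n$ converge in the Hausdorff topology on closed subgroups of $O(4)$ to a limit $G_\infty$ preserving $V$; the cyclic/dihedral symmetry around the distinguished great circle of $V$ must then be matched, for large $n$, by rotational and reflective subgroups of $G_n$ of order proportional to $\gamma_n$, reproducing the $\mathbb{Z}_{\gamma_n + 1}$ and reflection structure present in $\Isom(\xi_{\gamma_n, 1})$. Making this matching rigorous requires a careful case-by-case analysis of the finite groups $G_n$ allowed by Theorem~\ref{Tsph1} (platonic, dihedral $D_k$, prismatic $\mathbb{Z}_2 \times D_k$, $\mathbb{Z}_2$, or trivial), combined with a quantitative link between the count of doubling holes of $M_n$ concentrating near the singular great circle of $V$ and the order of the Lawson-type cyclic symmetry.
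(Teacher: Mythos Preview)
The paper does not actually prove Claim~\ref{var.lim}: together with Claim~\ref{area.exp}, it is announced as a result to appear in \emph{forthcoming work} (see the discussion surrounding both claims in the introduction). So there is no proof in the paper to compare against. The only methodological hint the paper gives is the remark immediately following the claim, that in forthcoming work they show any family of genus-zero free boundary minimal surfaces in $\Sph^3_+$, embedded by first eigenfunctions, with $\gamma+1$ boundary components and area $>8\pi - o(1/\gamma^3)$ must converge to $\Sph^2_{eq}$. This suggests their argument passes through the hemisphere picture and uses the sharp quantitative area lower bound from Claim~\ref{area.exp}, rather than a direct varifold-compactness contradiction.

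That said, your outline has several issues worth flagging. First, a factual slip: the Clifford torus has area $2\pi^2 \approx 19.7 < 8\pi \approx 25.1$, not $>8\pi$; it is excluded from the limit not for the reason you state but because no integer multiple of $2\pi^2$ can combine with multiples of $4\pi$ to give exactly $8\pi$. Second, your classification of $V$ as a finite union of smooth embedded minimal surfaces is not a direct consequence of Choi--Schoen, since the genus of $M_n$ is unbounded; you need curvature-concentration/lamination arguments for embedded minimal surfaces (and even then the structure at concentration points needs care). Third, the argument that $\pi_*M_n$ ``distributes mass uniformly over $\Sph^2_{eq}$'' is not justified: the projection being $2$-to-$1$ says nothing about the Jacobian, which controls the push-forward measure. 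Finally, you correctly identify the real crux---deducing that $\Gamma_n$ is conjugate into $\Isom(\xi_{\gamma_n,1})$ from a Lawson-type limit configuration---but your sketch there is too vague to assess; the passage from a Hausdorff limit $G_\infty$ of the (discrete, varying) groups $G_n$ back to structural information about $G_n$ for large $n$ is exactly where the difficulty lies, and ``order proportional to $\gamma_n$'' subgroups do not come for free from varifold convergence.
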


More generally, we show in forthcoming work that any family of genus zero free boundary minimal surfaces in the hemisphere $\Sph^3_+$, embedded by first eigenfunctions, with $\gamma+1$ boundary components and area $>8\pi-o(1/\gamma^3)$ must converge to the boundary equator $\Sph^2_{eq}=\partial \Sph^3_+$ as $\gamma\to\infty$. 

Another well-known conjecture asserts that the Lawson surfaces $\xi_{\gamma,1}$ have the least \emph{Morse index} for their genus. 
\begin{conjecture}
\label{Cindex}
If $M$ is a closed minimal surface in $\Sph^3$ of genus $\gamma$, then
\begin{align*}
\mathrm{ind}(M) \geq \mathrm{ind}(\xi_{\gamma, 1}) = 2\gamma+3.
\end{align*} 
\end{conjecture}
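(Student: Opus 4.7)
The plan is to construct a $(2\gamma+3)$-dimensional subspace of $C^\infty(M)$ on which the Jacobi quadratic form
$$Q(f) := \int_M \bigl(|\nabla f|^2 - (|A|^2+2)f^2\bigr)\,dA$$
is negative definite, so that $\mathrm{ind}(M) \geq 2\gamma+3$.

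The first five negative directions should come from classical constructions. The four normal-component functions $\nu_i := \langle e_i,\nu\rangle$, $i=0,\ldots,3$, arising from the standard parallel vector fields on $\R^4$, satisfy $L\nu_i=-2\nu_i$, and hence lie in the $(-2)$-eigenspace of $L$; together with the constant function, for which $Q(1)=-\int_M(|A|^2+2)\,dA<0$, this yields a five-dimensional subspace on which $Q$ is negative definite (after checking that the $5\times 5$ matrix of $Q$ on $\mathrm{span}\{1,\nu_0,\ldots,\nu_3\}$ is negative definite whenever $M$ is not the equator). This reproduces the Urbano-type lower bound $\mathrm{ind}(M)\geq 5$ and is already sharp at $\gamma=1$.

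For $\gamma\geq 2$, the remaining $2\gamma-2$ directions should be extracted from the space $\mathcal{H}^1(M)\cong\R^{2\gamma}$ of harmonic $1$-forms. For each $\omega\in\mathcal{H}^1(M)$ and each Killing field $V$ on $\Sph^3$ (a six-dimensional family), form the test function $f_{\omega,V}(p):=\omega_p(V^T(p))$, where $V^T$ is the component of $V|_M$ tangent to $M$. A Weitzenb\"ock-type computation using $d\omega=d^*\omega=0$, the Codazzi equation, and minimality of $M$ should express $Q(f_{\omega,V})$ as an integral of intrinsic cohomological data paired against curvature quantities, whose sign can then be analyzed. The goal is to extract $2\gamma-2$ functions $f_{\omega,V}$ for which $Q$ is simultaneously negative and which are linearly independent of $\mathrm{span}\{1,\nu_0,\ldots,\nu_3\}$, modulo the Jacobi kernel coming from the six-dimensional space of ambient Killing fields.

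The hard part will be this last step. Even establishing $Q(f_{\omega,V})<0$ for generic $(\omega,V)$ requires a delicate curvature estimate, and obtaining the correct independence count --- in particular, identifying precisely which pairs $(\omega,V)$ yield functions in the baseline span, which collapse to zero, and which produce positive Rayleigh quotients --- appears to require a genuinely new algebraic identity, and this is exactly the obstruction that has kept previously known lower bounds of the form $\mathrm{ind}(M)\geq c\gamma$ (Savo, Ambrozio--Carlotto--Sharp) from reaching the sharp constant $c=2$. A complementary approach would combine the test-function strategy with a deformation argument within a connected component of $\Mcal_\gamma$ containing $\xi_{\gamma,1}$: given lower semi-continuity of the index along varifold limits, together with the asserted value $\mathrm{ind}(\xi_{\gamma,1})=2\gamma+3$, it would suffice to continuously deform any $M\in\Mcal_\gamma$ to $\xi_{\gamma,1}$ through minimal surfaces. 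Connectedness of $\Mcal_\gamma$ is itself wide open, however, so this route faces serious difficulties of its own; a first test case would be to handle the surfaces produced in Theorem~\ref{Tsph1}, for which Claim~\ref{var.lim} provides strong control on their varifold limits.
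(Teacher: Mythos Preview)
This statement is labeled \textbf{Conjecture} in the paper, not a theorem, and the paper offers no proof of it. Immediately after stating the conjecture, the authors only remark that the equality $\mathrm{ind}(\xi_{\gamma,1})=2\gamma+3$ was established by Kapouleas--Wiygul, and then move on to pose a related question about the Morse indices of their newly constructed surfaces. The sharp lower bound $\mathrm{ind}(M)\geq 2\gamma+3$ for arbitrary closed minimal $M\subset\Sph^3$ of genus $\gamma$ remains open.

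Your proposal is therefore not comparable to anything in the paper; it is a sketch toward an open problem. On its merits: your first step (the five-dimensional negative subspace $\mathrm{span}\{1,\nu_0,\ldots,\nu_3\}$) is the standard Urbano argument and is fine. Your second step is where the content lies, and you are candid that it does not close. The test functions $f_{\omega,V}$ built from harmonic $1$-forms and Killing fields are precisely those used by Ros, Savo, and Ambrozio--Carlotto--Sharp; as you note, the known arguments only extract a linear-in-$\gamma$ count of negative directions with a constant strictly below $2$, because the independence/positivity bookkeeping loses a fixed fraction of the $2\gamma$-dimensional space $\mathcal{H}^1(M)$. You have correctly located the obstruction, but you have not proposed anything that overcomes it---the ``genuinely new algebraic identity'' you invoke is exactly what is missing. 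The alternative deformation route you mention requires connectedness of $\Mcal_\gamma$, which is also wide open, so it is not a viable workaround either.

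In short: there is no proof in the paper to compare against, and your proposal, while a reasonable survey of known techniques and their limitations, does not constitute a proof of the conjecture.
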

\noindent
The fact that $\mathrm{ind}(\xi_{\gamma, 1}) = 2\gamma+3$ was established by Kapouleas-Wiygul \cite{KWindex}. With this conjecture in mind, the following is a natural question.

\begin{question}
What are the Morse indices of the minimal surfaces constructed in Theorem \ref{Tsph1}? 
\end{question}

In a related direction, it would also be interesting to determine the minimal symmetry assumptions needed to characterize the Lawson surfaces $\xi_{\gamma,1}$. Since $\xi_{\gamma,1}$ is embedded by first eigenfunctions \cite{ChoeSoret}, this is essentially equivalent to determining the smallest groups of diffeomorphisms on the closed surface $M$ of genus $\gamma$ for which $\Lambda_1^{\Gamma}(M)=2\Area(\xi_{\gamma,1})$. Work of Kapouleas-Wiygul characterizes the Lawson surfaces by their topology and full symmetry group \cite{KWsym}, but we suspect that they can be characterized at the level of $\mathbb{Z}_2\times \mathbb{Z}_2$ symmetries alone; more precisely, we make the following conjecture. 

\begin{conjecture}
\label{Cmonoz2}
Let $M_{\gamma, a}$ denotes the genus-$\gamma$ minimal surface from Theorem \ref{Tsph1} with $G = \Z_2$ and $\gamma - 2a \geq 0$ doubling holes on the equatorial circle fixed by $\Z_2$. Then
\begin{enumerate}
\item $M_{\gamma, 0}$ is the unique minimal surface with its symmetry group and genus, hence is congruent to $\xi_{\gamma,1}$. 
\item $\area(M_{\gamma, a+1}) \geq \area(M_{\gamma, a})$ for each $a \in \{0, \dots, \lfloor \frac{\gamma}{2}\rfloor\}$.
\end{enumerate}
\end{conjecture}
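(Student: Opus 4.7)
The two parts of Conjecture \ref{Cmonoz2} demand quite different strategies: the uniqueness in (1) is an equivariant rigidity statement about a specific minimal surface in $\Sph^3$, while the monotonicity in (2) is a comparison between $\bar{\lambda}_1$-optimization problems with distinct, non-nested symmetry groups.

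For (1), I would argue that the $\Z_2 \times \Z_2$ symmetry of $M_{\gamma, 0}$, combined with the fact that \emph{all} doubling necks lie on the great circle $C$ fixed by the rotation $\tau$, forces the surface to contain a family of great circles of $\Sph^3$ through a common antipodal pair, whereupon it must be congruent to $\xi_{\gamma, 1}$. Writing the symmetry group as $\langle \sigma, \tau \rangle$ with $\sigma$ the reflection through the equatorial $\Sph^2$ carrying the doubling configuration and $\tau$ the rotation fixing $C \subset \Sph^2$, the product $\sigma \tau$ is a reflection through the great $2$-sphere $P$ orthogonal to $C$. The transverse intersection $P \cap M_{\gamma, 0}$ is a $1$-manifold whose components pass through the $\tau$-fixed antipodal pair $p_\pm = \mathrm{Fix}(\tau) \cap P$. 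By the Schwarz reflection principle applied to $\sigma \tau$, each such component extends to a full great circle of $\Sph^3$ lying inside $M_{\gamma, 0}$, and the doubling structure produces $\gamma + 1$ such circles. Lawson's construction characterizes $\xi_{\gamma, 1}$ as the unique embedded minimal surface of genus $\gamma$ in $\Sph^3$ containing $\gamma + 1$ great circles through a common antipodal pair, yielding the desired congruence.

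For (2), I would use the first-eigenfunction identity $\area(M_{\gamma, a}) = \tfrac{1}{2} \Lambda_1^{\Gamma_a}(M_\gamma)$ --- valid because these surfaces are embedded by first Laplace eigenfunctions --- to reduce the inequality to
\[ \Lambda_1^{\Gamma_{a+1}}(M_\gamma) \;\geq\; \Lambda_1^{\Gamma_a}(M_\gamma), \]
where $\Gamma_a \cong \Z_2 \times \Z_2$ denotes the action on $M_\gamma$ realizing the $a$-th doubling pattern. The plan is to build a one-parameter family of $\Gamma_{a+1}$-invariant test metrics $g_t \in \Met_{\Gamma_{a+1}}(M_\gamma)$ from the $\Gamma_a$-maximizing metric $g_a$ by an explicit equivariant surgery: take a pair of neighboring $\tau$-fixed necks on the equator of $M_{\gamma, a}$, cut along their common plane of symmetry, and reglue so that the resulting pair of necks is instead swapped by $\tau$ and sits at small distance $t$ off the equator. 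Continuity of $\bar{\lambda}_1$ along this family as $t \to 0$ should give $\Lambda_1^{\Gamma_{a+1}}(M_\gamma) \geq \lim_{t \to 0} \bar{\lambda}_1(g_t) = \bar{\lambda}_1(g_a) = \Lambda_1^{\Gamma_a}(M_\gamma)$.

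The most serious obstacle lies in (2): the subgroups $\Gamma_a, \Gamma_{a+1} \leq \Diff(M_\gamma)$ are not conjugate, since they have different rotation-fixed-point counts, so the spaces $\Met_{\Gamma_a}(M_\gamma)$ and $\Met_{\Gamma_{a+1}}(M_\gamma)$ are disjoint strata of $\Met(M_\gamma)$, neither contained in the other, and the comparison cannot be performed through a naive inclusion of admissible metrics. The equivariant surgery sketched above must therefore be carried out explicitly at the level of conformal structures on $M_\gamma$, and controlling the behavior of $\bar{\lambda}_1$ across this jump in topological symmetry type is genuinely subtle --- it is the analog, in a highly constrained symmetric setting, of the handle-attachment difficulties reviewed for Proposition \ref{intro.handle} above. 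For (1), the secondary difficulty is ensuring that the arcs of $M_{\gamma, 0} \cap P$ extend smoothly past $p_\pm$ across the doubling necks possibly sitting near these points, which is a prerequisite for invoking the great-circle rigidity of $\xi_{\gamma, 1}$.
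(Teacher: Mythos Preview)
This statement is a \emph{conjecture} in the paper: it is explicitly left open, and the paper contains no proof against which to compare your proposal. So the question is only whether your sketch constitutes a valid argument on its own.

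For part (1), the key step is flawed. You claim that ``by the Schwarz reflection principle applied to $\sigma\tau$, each such component [of $M_{\gamma,0}\cap P$] extends to a full great circle of $\Sph^3$ lying inside $M_{\gamma,0}$.'' But invariance of a minimal surface under reflection across a totally geodesic $2$-sphere $P$ only forces $M_{\gamma,0}$ to meet $P$ orthogonally along $M_{\gamma,0}\cap P$; it does \emph{not} force that intersection to be a geodesic of $P$ (i.e., a great circle of $\Sph^3$). The Schwarz reflection principle runs in the opposite direction: it extends a minimal surface across a great-circle boundary arc, but does not show that a given curve in the surface is a great circle. Without this, the appeal to a ``great-circle rigidity'' characterization of $\xi_{\gamma,1}$ has no input --- and in any case, you have not supplied a reference or argument for that characterization. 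The paper itself notes only that Kapouleas--Wiygul \cite{KWsym} characterize $\xi_{\gamma,1}$ by its \emph{full} symmetry group, which is strictly larger than the $\Z_2\times\Z_2$ at hand; indeed, the paper conjectures (immediately before \ref{Cmonoz2}) that the $\Z_2\times\Z_2$ alone should suffice, so you are assuming what is to be proved.

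For part (2), you have correctly identified the obstruction --- the actions $\Gamma_a$ and $\Gamma_{a+1}$ are not conjugate in $\Diff(M_\gamma)$, so the metric spaces do not nest --- but the ``equivariant surgery'' you describe is not an argument: you have not specified the conformal class of $g_t$, not shown that $g_t$ lies in $\Met_{\Gamma_{a+1}}(M_\gamma)$, and not established the crucial continuity $\lim_{t\to 0}\bar\lambda_1(g_t)=\bar\lambda_1(g_a)$. This last limit is exactly the kind of degenerate-handle eigenvalue comparison that, as the paper emphasizes around Proposition~\ref{intro.handle} and inequality~\eqref{g.mono}, has resisted several published attempts even in the simplest (non-equivariant) case. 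Your proposal for (2) is therefore at the level of a heuristic, not a proof.
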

Note that the monotonicity statement (2) would directly imply Conjecture \ref{ag.conj} for all of the minimal surfaces constructed in Theorem \ref{Tsph1} with $G=\Z_2$.

Most of the preceding questions have natural counterparts for free boundary minimal surfaces in $\mathbb{B}^3$. In particular, with the topological realization problem solved, one can ask about the \emph{lowest-complexity} free boundary minimal surfaces in $\mathbb{B}^3$ of given topological type, in terms of area, Morse index, etc, perhaps identifying candidate minimizers for the Willmore-type energy $\int_{\Sigma}\frac{1}{4}H_{\Sigma}^2+\int_{\partial \Sigma}\kappa_{\partial\Sigma}$ among surfaces in $\mathbb{B}^3$ of given topological type.

\begin{question}
What is the minimum area $\mathcal{A}(\gamma,k)$ of a free boundary minimal surface in $\mathbb{B}^3$ with genus $\gamma$ and $k$ boundary components?
\end{question}
It is well-known \cite{FSadvances} that $\mathcal{A}(\gamma,k)\geq \pi$, with equality realized only by the disk ($\gamma=0$, $k=1$). It is conjectured that $\mathcal{A}(0,2)$ is realized by the critical catenoid, and moreover that $\mathcal{A}(\gamma,k)\geq \mathcal{A}(0,2)$ whenever $\gamma+k\geq 2$. Restricting our attention to the genus zero setting, we propose the following.
\begin{conjecture}
\label{Cknoid}
The following hold. 
\begin{enumerate}
\item For each $k\geq 2$, the minimum area $\Acal(0, k)$ is realized by the $\Z_2\times D_k$-symmetric ``$k$-noid" from Theorem \ref{thm:group_Sexistence}.   
\item For each $k>2$, the ``$k$-noid" from Theorem \ref{thm:group_Sexistence} has area greater than that of the critical catenoid.
\end{enumerate}
\end{conjecture}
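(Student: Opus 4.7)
The starting point is the variational identity $2\,\Area(N) = \bar{\sigma}_1(N, g_{\mathrm{ind}})$ for any free boundary minimal surface $N$ in $\mathbb{B}^3$ embedded by first Steklov eigenfunctions. By Fraser--Schoen \cite{FraserSchoen}, every free boundary minimal surface in $\mathbb{B}^3$ of genus zero is so embedded, and moreover its induced metric realizes $\Sigma_1(N_{0,k}, c)$ in its conformal class $c$. Since $\bar{\sigma}_1(N_{0,k}, g_k) = \Sigma_1^{\mathbb{Z}_2 \times D_k}(N_{0,k}) = \Sigma_1(N_{0,k}, c_k)$ for the $k$-noid metric $g_k$ and its conformal class $c_k$, Part (1) reduces to showing that $c_k$ globally minimizes $c \mapsto \Sigma_1(N_{0,k}, c)$ over the moduli space $\mathcal{M}_{0,k}$ of conformal structures on $N_{0,k}$.

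For Part (1), I would first show that $c_k$ is a critical point of $\Sigma_1(N_{0,k}, \cdot)$: this follows from the $\mathbb{Z}_2 \times D_k$-invariance of the functional together with the rigidity of the symmetric conformal structure. Next, I would analyze the Hessian at $c_k$ using first- and second-variation formulas for the Steklov spectrum, evaluated on the explicit first eigenfunctions (the coordinate functions of the $k$-noid in $\mathbb{B}^3$). The $\mathbb{Z}_2 \times D_k$-action decomposes the tangent space to $\mathcal{M}_{0,k}$ into isotypic components, reducing the analysis to a finite number of explicit positivity checks. Finally, I would promote local minimality to global minimality by controlling the behavior of $\Sigma_1(N_{0,k}, \cdot)$ at the boundary of $\mathcal{M}_{0,k}$, combining the degeneration framework of Section~\ref{sec:top_degen} with the contradiction strategy of Section~\ref{sec:global_Sexistence}: degenerations of the conformal structure correspond to pinching $\mathbb{Z}_2 \times D_k$-invariant collections of simple closed curves, and one must verify that such degenerations strictly increase $\Sigma_1$ in the symmetric setting.

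For Part (2), I would exhibit a $\mathbb{Z}_2 \times D_k$-symmetric conformal metric $g_*$ on $N_{0,k}$ whose $\bar{\sigma}_1$ strictly exceeds $2\,\Area(\text{critical catenoid})$. A natural construction takes the critical catenoid, which is itself $\mathbb{Z}_2 \times D_2$-symmetric and conjecturally realizes the $k=2$ case, and modifies it by a $\mathbb{Z}_2 \times D_k$-symmetric perturbation: glue on $k-2$ small disks symmetrically arranged around the waist circle. A first-order perturbation computation on the Dirichlet-to-Neumann operator should show that adding small boundary circles strictly raises $\bar{\sigma}_1$ to leading order, since the gain in boundary length dominates the change in eigenvalue. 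Rigorously, this requires the perturbative Steklov analysis developed for the handle attachments in Section~\ref{sec:global_Sexistence}, adapted to symmetric deformations.

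The main obstacle lies in Part (1): establishing the \emph{global} minimality of $c_k$ in $\mathcal{M}_{0,k}$. Unlike in the classical spectral rearrangement theory for Laplace eigenvalues, there is no known symmetrization procedure for Steklov eigenvalues on surfaces with boundary, so the local analysis at $c_k$ only yields partial information. Overcoming this will likely require a min-max reformulation of $\inf_c \Sigma_1(N_{0,k}, c)$ over symmetric conformal structures, using the $\mathbb{Z}_2 \times D_k$-action to enforce structural constraints on the auxiliary variational problem, together with a careful analysis of the barrier at the boundary of moduli space.
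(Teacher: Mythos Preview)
This statement is presented in the paper as a \emph{conjecture}, not a theorem: the paper offers no proof, and explicitly remarks that Part~(2), if established, ``would significantly simplify the proof of Theorem~\ref{thm:group_Sexistence},'' and that Part~(2) is currently known only for large $k$ (via the surfaces of \cite{Zolotareva} combined with \cite{KusnerMcGrath}). There is therefore no proof in the paper against which to compare your proposal.

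Your proposal is an honest strategy outline rather than a proof, and you correctly flag the central obstacle in Part~(1). A few further issues are worth noting. For Part~(1), your reduction of area minimization among free boundary minimal surfaces to minimization of $c\mapsto\Sigma_1(N_{0,k},c)$ presupposes that every genus-zero free boundary minimal surface in $\mathbb{B}^3$ is a conformal $\bar\sigma_1$-\emph{maximizer}; Fraser--Schoen guarantees embedding by first eigenfunctions (hence $\bar\sigma_1$-criticality in the conformal class), but criticality is weaker than maximality. Your rigidity claim for $c_k$ is also unjustified: the moduli space of $\mathbb{Z}_2\times D_k$-invariant conformal structures on $N_{0,k}$ is in general positive-dimensional, so symmetry alone does not pin down a unique critical point, and the Hessian analysis you sketch would require explicit knowledge of the $k$-noid, which is unavailable for $k>2$. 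For Part~(2), your perturbative gluing idea is in the spirit of the attachment arguments of Section~\ref{sec:global_Sexistence}, but those arguments produce inequalities between \emph{global suprema} such as $\Sigma_1(N_{0,k})>\Sigma_1(N_{0,k-1})$, not the specific equivariant comparison $\Sigma_1^{\mathbb{Z}_2\times D_k}(N_{0,k})>\Sigma_1(\mathbb{A})$ needed here; the paper's own difficulty with this case (cf.\ the restriction $|\ub|>1$ in item~(4) of Theorem~\ref{thm:group_Sexistence} and the ad hoc workarounds in its proof) indicates the gap is genuine.
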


Part (2) would significantly simplify the proof of Theorem \ref{thm:group_Sexistence}.  We remark
that $D_k$-symmetric genus zero surfaces with $k$ boundary components have been constructed in~\cite{Zolotareva}
for large $k$ and in~\cite[Theorem 5.1]{FrS} for
any $k$. It is an interesting question whether these
surfaces and the ones from Theorem~\ref{thm:group_Sexistence} coincide (cf.
~\cite[Conjecture 7.8]{Carlotto}). In particular, by construction the surfaces in~\cite{Zolotareva} have areas close to $2\pi$, so that by~\cite{KusnerMcGrath} we have that Conjecture~\ref{Cknoid}.(2) holds for large $k$.

The proof of Theorem \ref{thm:group_Sexistence} would also be greatly simplified if the following conjecture were known to hold (see Proposition \ref{prop:non_uniq} and the discussion just above). 

\begin{conjecture}
Suppose $N$ is an orientable surface with two boundary components, and $\Psi : (N, g) \rightarrow \B^3$ is a free boundary minimal embedding by $\sigma_1(N, g)$-eigenfunctions such that the multiplicity of $\sigma_1(N, g)$ equals $3$.  If there is a free boundary harmonic map $\Xi$ to $\B^3$ by $\sigma_1(N, g)$-eigenfunctions such that  $\Xi \neq A \Psi$ for all $A \in O(3)$, then $(N, g)$ is the critical catenoid.  
\end{conjecture}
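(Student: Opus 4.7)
The plan proceeds in four steps. First I would exploit the multiplicity hypothesis. Since $\mathrm{mult}\,\sigma_1(N,g) = 3$, the first Steklov eigenspace is exactly the three-dimensional span of the coordinate functions $\Psi^1, \Psi^2, \Psi^3$. Because each component of $\Xi$ is also a first Steklov eigenfunction, this forces $\Xi = A\Psi$ for a unique $A \in \mathbb{R}^{3\times 3}$. The free boundary conditions $|\Xi|^2 = 1 = |\Psi|^2$ on $\partial N$ then translate into the quadratic identity
\[
\Psi^T Q \Psi \equiv 0 \text{ on } \partial N, \qquad Q := A^T A - I,
\]
where $Q$ is symmetric and, by the hypothesis $A \notin O(3)$, nonzero. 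After conjugation by an ambient isometry I may take $Q = \mathrm{diag}(q_1, q_2, q_3)$; the identity together with $|\Psi|^2 = 1$ rules out $Q$ being a scalar multiple of the identity, and the existence of nontrivial solutions on $\Sph^2$ forces $Q$ to have mixed-sign eigenvalues.

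Next I would perform a case analysis on the quadric $\{q_1 x_1^2 + q_2 x_2^2 + q_3 x_3^2 = 0\} \cap \Sph^2$ on which $\partial N$ lies. Generically this is the intersection of an elliptic cone with the sphere, a pair of disjoint smooth simple closed curves whose shape depends on $(q_1, q_2, q_3)$; the critical catenoid corresponds to the subcase $q_1 = q_2 \neq q_3$, where the two curves degenerate to parallel round circles at $x_3 = \pm c$ with $c^2 = q_1/(q_1 - q_3)$. Since $\partial N$ has exactly two components, each lies on one of the two curves.

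Assuming the symmetric case $q_1 = q_2$, the third step aims to extract axial symmetry of $(N, g)$. The boundary is manifestly $SO(2)$-invariant under rotations about the $x_3$-axis, and the plan is to lift this to an isometric action on the interior. To this end I would consider the 1-parameter family of matrices $A_t$ with $A_t^T A_t = I + tQ$ (positive definite for small $|t|$), which yields a corresponding family $\Xi_t = A_t \Psi$ of free boundary harmonic maps by first Steklov eigenfunctions. Combined with the skew-symmetric generator of rotations in the $x_1 x_2$-plane, I hope to extract an infinitesimal variation of $\Psi$ corresponding to a Killing field on $(N, g)$ tangent to the rotation direction. Once $N$ is shown to be a surface of revolution, classical ODE analysis identifies it as the critical catenoid, the unique free boundary minimal surface of revolution in $\B^3$ with two boundary components.

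The hardest parts of this plan are twofold. First, ruling out the generic case where all three $q_i$ are distinct: the boundary curves are then not round, and I would need to show that such an irregular boundary geometry is incompatible with $\mathrm{mult}\, \sigma_1 = 3$, perhaps via a Bochner-type identity for the extension of $\Psi^T Q \Psi$ to $N$, or by a rigidity argument for Steklov spectra with a prescribed nontrivial quadratic relation on the boundary. Second, even in the symmetric case, promoting $SO(2)$-symmetry of $\partial N$ to an isometry of the full surface is nontrivial: a priori the interior of a free boundary minimal surface need not inherit the symmetries of its boundary, and making this step rigorous likely requires combining the constraints furnished by the family $\{\Xi_t\}$ with a delicate unique continuation principle. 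These are the genuine analytic and geometric hurdles on which the conjecture turns.
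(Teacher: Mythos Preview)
This statement is a \emph{conjecture} in the paper, not a theorem; the paper does not prove it. What the paper does prove is the partial result Proposition~\ref{prop:non_uniq}: under the same hypotheses, $(N,g)$ admits an isometric $\mathbb{Z}_2\times D_2$ action of a specific combinatorial type. So your proposal is an attempt to go beyond what the paper establishes, and your honest assessment that the generic case (all $q_i$ distinct) is the genuine obstruction is exactly right.

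Your Steps 1--2 match the paper's setup in Propositions~\ref{prop:Suniq_map} and~\ref{prop:non_uniq}: diagonalize $Q$, reduce to $\Psi(\partial N)\subset \Sph^2\cap\{Q'=0\}$ with $Q'$ of mixed sign. For your Step 3 (the symmetric case $q_1=q_2$), the paper has a much cleaner argument than your Killing-field construction: when two eigenvalues coincide, $\partial N$ consists of two round circles in parallel planes, and Schoen's rigidity theorem \cite[Corollary~3]{Schoen} immediately forces $N$ to be a portion of a catenoid. No infinitesimal variations or unique continuation are needed.

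For the generic case $q_1,q_2,q_3$ all distinct, the paper's approach is to apply Schoen's graphical symmetry theorem \cite[Theorem~2]{Schoen} with each coordinate axis in turn as the distinguished direction. This yields three commuting reflection symmetries of $(N,g)$, hence the $\mathbb{Z}_2\times D_2$ action---but stops there. Your proposed Bochner-type or spectral-rigidity arguments to rule out this case are speculative, and indeed this is precisely where the conjecture remains open. The paper's partial result is already enough for its applications (it feeds into the induction in Theorem~\ref{thm:group_Sexistence}), so the authors did not need to resolve the full conjecture.
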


Note also that Theorem \ref{Tfbms1} gives the universal upper bound $\mathcal{A}(\gamma,k)<2\pi,$ 
and a geometric measure theoretic argument analogous to that of \cite{KLS} shows this inequality is saturated as $\gamma+ k\to\infty$, that is, that
$$\lim_{\gamma+k\to\infty}\mathcal{A}(\gamma,k)=2\pi.$$ 
For many of the examples of $\B^2$-doublings constructed by $\bar{\sigma}_1$-maximization over basic reflection surfaces, by directly estimating the suprema $\Sigma_1^{\Gamma}$, we can show that the area grows like $2\pi-e^{-c(k+\gamma^2)}$ with respect to the genus $\gamma$ and number $k$ of boundary components.  Analogous to the Lawson surfaces $\xi_{\gamma, 1}$ desingularizing two orthogonal spheres, we expect that the areas of desingularizations of two orthogonal disks (described in the introduction of \cite{KapLi}) approach $2\pi$ more slowly. In addition to estimating the areas of the surfaces constructed in Theorem \ref{Tfbms2}, it would of course be interesting to compute other natural measures of complexity, like the Morse index, and identify those with lowest complexity of a given topology.

Moreover, by improving the existence theory for $\bar{\sigma}_1$-maximizing metrics on a larger class of basic reflection surfaces, results along the following lines seem to be within reach.

\begin{conjecture}
The number of distinct embeddings of the compact orientable surface $N_{\gamma, k}$ with genus $\gamma$ and $k$ boundary components as a free boundary minimal surface in $\B^3$ grows at least linearly with $\gamma+k$. 
\end{conjecture}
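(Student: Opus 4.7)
The plan is to produce, for each pair $(\gamma, k)$, a family of mutually nonconjugate reflection group actions on the surface $N_{\gamma, k}$ whose cardinality grows linearly in $\gamma + k$, and to apply the $\bar{\sigma}_1$-maximization theory for basic reflection surfaces (Theorem \ref{thm:group_Sexistence}, together with the supporting machinery of Theorems \ref{intro.stek.cf} and \ref{intro.stek.glob}) to each one. Since distinct conjugacy classes of symmetries cannot be realized by the same embedded surface up to ambient isometry, a linear count of admissible symmetry classes yields a linear count of embeddings. This strategy parallels and unifies the two linear bounds already obtained: the bound $\lfloor(\gamma-2)/4\rfloor+1$ from Theorem \ref{Tfbms1} (linear in $\gamma$, for fixed $k$) and the bound $\lfloor(b-2)/4\rfloor+1$ from Theorem \ref{Tfbms2} (linear in $k$, for $\gamma=0$).

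Concretely, I would work with $\B^2$-doublings carrying a $\Z_2 \times \Z_2$ symmetry generated by two commuting reflections, parametrized by a pair $(a, \alpha)$, where $a \in \{0, 1, \dots, k\}$ counts the boundary-component doubling holes centered on one fixed geodesic arc of the disk (with the remaining $k - a$ off-axis and paired by the second reflection), and $\alpha \in \{0, 1, \dots, \gamma\}$ counts handles attached symmetrically along that axis versus in off-axis pairs. Admissibility constrains $(a, \alpha)$ by parity conditions on $k - a$ and $\gamma - \alpha$, but elementary arithmetic shows that the number of admissible configurations of $(a, \alpha)$ realizing a given $(\gamma, k)$ is at least $\lfloor(\gamma + k)/C\rfloor$ for a universal constant $C$. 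Each such configuration defines a basic reflection surface in the sense of Section \ref{S:BRS}, so the a priori bounds $\bar{\sigma}_1 < 4\pi$ and $\dim \ker(\mathcal{D} - \sigma_1) \leq 3$ from Lemma \ref{Levalbdcl} and Proposition \ref{Lrhoquo2} force any $\bar{\sigma}_1$-maximizer to be induced by an embedded free boundary minimal surface in $\B^3$, which is a $\B^2$-doubling by Lemma \ref{Lbdgraph}.

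To realize these maximizers, I would verify the strict inequality \eqref{stek.sym.mono} for each configuration. Following the reduction in Section \ref{sec:global_Sexistence}, any violating degeneration $(N', \Gamma') \prec (N, \Gamma)$ produces a free boundary minimal embedding whose coordinates satisfy an additional quadratic relation along one boundary component, which conjecturally forces the degeneration to be the critical catenoid; modulo this conjectured uniqueness, the problem reduces to constructing test metrics with $\bar{\sigma}_1 > \max\{2\pi, \Sigma_1^{\Gamma'}(N')\}$ for each $(a, \alpha)$, which can be done by explicit cut-and-paste constructions gluing scaled copies of the Fraser--Schoen genus zero maximizers to symmetric handle caps, in the same vein as the constructions underlying Claim \ref{area.exp} in the closed case.

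The main obstacle is carrying out the degeneration analysis uniformly in $(\gamma, k)$: unlike the closed setting, where comparison with the Lawson surfaces $\xi_{\gamma, 1}$ supplies a universal lower bound approaching $8\pi$, no analogous universal family of free boundary minimal surfaces is known for all $(\gamma, k)$, so the partly ad hoc arguments behind Theorem \ref{thm:group_Sexistence} must be upgraded to a uniform scheme. A satisfactory resolution appears to require either settling the conjectured characterization of the critical catenoid among degeneration limits or, failing that, an inductive argument that uses the established linear families of Theorems \ref{Tfbms1} and \ref{Tfbms2} as base cases and propagates the strict inequality along the lattice of $(a, \alpha)$-configurations.
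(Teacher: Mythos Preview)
The statement you are addressing is labeled a \emph{Conjecture} in the paper, and the paper does not prove it. The surrounding text says only that ``by improving the existence theory for $\bar{\sigma}_1$-maximizing metrics on a larger class of basic reflection surfaces, results along the following lines seem to be within reach.'' So there is no proof in the paper to compare against; your proposal is an outline of a strategy for an open problem.

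That said, your outline is well-aligned with what the paper suggests and with the partial results it does establish. The paper proves linear growth in $\gamma$ for fixed $k\ge 2$ (Proposition~\ref{Pnumbd}) and linear growth in $k$ for $\gamma=0$ (Proposition~\ref{Pzeronum}), each via one-parameter families of $\Z_2\times\Z_2$-actions on basic reflection surfaces, exactly as you describe. The conjecture asks for these to be merged into a two-parameter family indexed by $(a,\alpha)$, and you correctly identify the obstruction: the existence theorem for $\bar\sigma_1$-maximizers (Theorem~\ref{thm:group_Sexistence}) does not cover all the needed $D_2$-actions---see item~(8) and Remark~\ref{Rstek8}, where the conditions $f,e_1=0$ or $e_1\ge 1,\ f<e_2-1$ exclude much of the parameter space. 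Your diagnosis that the degeneration analysis must be made uniform, and that this likely requires either the conjectured characterization of the critical catenoid among degeneration limits or a sharper inductive scheme, matches the paper's own assessment (see the discussion around Proposition~\ref{prop:non_uniq} and the ad hoc arguments in Case~8 of the proof of Theorem~\ref{thm:group_Sexistence}).

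In short: your proposal is not a proof, and you acknowledge as much; but it is a faithful description of the approach the paper implicitly has in mind and of where that approach currently stalls.
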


\subsubsection{Free boundary minimal surfaces and the realization problem in other settings}

While Theorem \ref{Tfbms1} solves the topological realization problem for free boundary minimal surfaces in $\mathbb{B}^3$, we observe that the $\mathbb{Z}_2\times\mathbb{Z}_2$-symmetric families of minimal surfaces in $\mathbb{S}^3$ constructed in Theorem \ref{Tsph1} resolve the analogous problem for free boundary minimal surfaces in the \emph{hemisphere} $\mathbb{S}^3_+$: for any $b=1,2,\ldots$ and $a=0,1,\ldots,$ the half of $M_{2a+b,a}$ corresponding to the fundamental domain of the second reflection is a free boundary minimal surface in $\mathbb{S}^3_+$ of genus $a$ with $b$ boundary components, giving the following.

\begin{corollary}
The hemisphere $\mathbb{S}^3_+$ contains a free boundary minimal surface of every oriented topological type, with area $<4\pi$.
\end{corollary}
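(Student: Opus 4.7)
The plan is to specialize Theorem \ref{Tsph1} to the case $G = \Z_2$ and apply the reflection principle to pass from closed minimal surfaces in $\Sph^3$ to free boundary minimal surfaces in $\Sph^3_+$. Fix $a \geq 0$ and $b \geq 1$; these together parameterize all oriented topological types of compact surface with nonempty boundary. Theorem \ref{Tsph1} supplies a $\Z_2 \times \Z_2$-equivariant embedded minimal surface $M \subset \Sph^3$ with area less than $8\pi$. After conjugation in $O(4)$, the ambient $\Z_2 \times \Z_2$ is generated by reflections $\tau_1, \tau_2$ across two orthogonal great $2$-spheres $\Sigma_1, \Sigma_2$; the first exhibits $M$ as a doubling of $\Sigma_1$, and the doubling configuration can be chosen so that $b$ doubling holes lie on the equatorial circle $C = \Sigma_1 \cap \Sigma_2$ (each fixed by $\tau_2$) while $a$ further pairs lie off $C$ (each pair swapped by $\tau_2$).

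Next, let $\Sph^3_+$ be one of the two hemispheres of $\Sph^3$ cut off by $\Sigma_2$, and set $M_+ := M \cap \overline{\Sph^3_+}$, so $\partial M_+ = M \cap \Sigma_2$. At each $p \in M \cap \Sigma_2$, smooth $\tau_2$-invariance of $M$ forces $T_pM$ to be $d\tau_2$-stable. Since $T_p\Sigma_2$ and $N_p\Sigma_2$ are the $+1$ and $-1$ eigenspaces of $d\tau_2|_p$ and $M$ meets $\Sigma_2$ transversally in a smooth $1$-submanifold, a short linear-algebra argument forces $T_pM \supset N_p\Sigma_2$; hence $M$ meets $\Sigma_2$ orthogonally along $\partial M_+$. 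Therefore $M_+$ is a smooth embedded free boundary minimal surface in $\Sph^3_+$, with area $\tfrac{1}{2}\Area(M) < 4\pi$.

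It remains to identify the topology of $M_+$. The restriction $\tau_2|_M$ is an orientation-reversing involution on the closed orientable surface $M$, and its fixed set is $\partial M_+$, a disjoint union of embedded circles. Tracing these through the doubling description---alternating between the long arcs of $C$ on each sheet and the two $\tau_2$-fixed arcs inside each on-$C$ neck---one verifies that $\partial M_+$ has exactly $b$ components, one associated to each on-$C$ hole. Combined with $\chi(M) = 2\chi(M_+)$ and the doubling identity $\chi(M) = 4 - 2(b + 2a)$, this forces $\chi(M_+) = 2 - 2a - b$, so $M_+$ has genus $a$ and $b$ boundary components. Since every compact oriented surface with nonempty boundary appears for some such $(a, b)$, the corollary follows. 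The only step requiring real care is the combinatorial bookkeeping of the $\tau_2$-fixed circles in the doubling picture; no new analytic input is needed beyond Theorem \ref{Tsph1}.
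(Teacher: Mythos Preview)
Your proof is correct and follows the same approach as the paper, which sketches the argument in a single sentence just before the corollary: take the $\Z_2\times\Z_2$-symmetric surfaces from Theorem~\ref{Tsph1} with $G=\Z_2$ and halve along the second reflection. Your detailed verification of the free boundary condition, the area bound, and the topology of $M_+$ fills in exactly what the paper leaves implicit; the only cosmetic point is that transversality of $M$ with $\Sigma_2$ need not be assumed separately, since Lemma~\ref{Ldis} already forces $M^{\tau_2}$ to be a union of codimension-one curves, which gives the $-1$ eigenvector in $T_pM$ directly.
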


Indeed, it is interesting to note that the problem of maximizing $\bar{\lambda}_1$ among metrics on a closed surface invariant under a reflection $\tau$ is equivalent to the problem of maximizing the minimum $\min\{\lambda_1^{\mathrm{Neu}}(N),\lambda_1^{\mathrm{Dir}}(N)\}\cdot \Area(N)$ of the first Neumann and Dirichlet eigenvalues on a fundamental domain of the reflection. In particular, as a consequence of Theorem \ref{Tsph1} and its proof in the case $G=1$, we have the following.

\begin{corollary}
For any $k\in \mathbb{N}$, the compact oriented surface $N_k$ with genus zero and $k$ boundary components admits a metric realizing the maximum
$$\tilde{\Lambda}_1(N_k):=\max\{\Area(N_k,g)\min\{\lambda_1^{\mathrm{Dir}}(N_k,g),\lambda_1^{\mathrm{Neu}}(N_k,g)\}\mid g\in \Met(N_k)\},$$
induced by a free boundary minimal embedding in $\Sph^3_+$ by first Dirichlet and Neumann eigenfunctions. For the disk $\mathbb{D}$ and annulus $\mathbb{A}$, we have
$$\tilde{\Lambda}_1(\mathbb{D})=4\pi,\text{\hspace{4mm}}\tilde{\Lambda}_1(\mathbb{A})=2\pi^2.$$
\end{corollary}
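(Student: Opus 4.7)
The plan is to identify $\tilde{\Lambda}_1(N_k)$ with $\frac{1}{2}\Lambda_1^{\Z_2}(M_{k-1})$, where $M_{k-1}$ is the orientable double of $N_k$ across its boundary, and then apply Theorem~\ref{Tsph1} in the case $G = 1$. To set up the doubling correspondence, let $M = M_{k-1}$ and let $\tau \in \Diff(M)$ be the reflection whose fixed set is the image of $\partial N_k$. Any smooth metric $g$ on $N_k$ extends by reflection to a (at worst Lipschitz) $\tau$-invariant metric $\hat{g}$ on $M$; this gives a correspondence with smooth $\tau$-invariant metrics on $M$ up to an arbitrarily small smoothing across the fixed set that preserves $\bar{\lambda}_1$ to any desired accuracy. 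The $\tau$-even/odd decomposition of $L^2(M,\hat{g})$ identifies the even Laplace eigenvalues with the Neumann spectrum of $(N_k,g)$ and the odd ones with the Dirichlet spectrum, so
\begin{equation*}
\lambda_1(M,\hat{g}) = \min\bigl\{\lambda_1^{\mathrm{Neu}}(N_k,g),\ \lambda_1^{\mathrm{Dir}}(N_k,g)\bigr\},
\end{equation*}
while $\Area(M,\hat{g}) = 2\Area(N_k,g)$. Taking suprema yields $\Lambda_1^{\langle\tau\rangle}(M_{k-1}) = 2\tilde{\Lambda}_1(N_k)$.

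Next I would invoke Theorem~\ref{Tsph1} with $\Gamma = \langle\tau\rangle$, which is admissible since $M/\langle\tau\rangle \cong N_k$ has genus zero. Its proof produces a metric $g_{\max} \in \Met_{\langle\tau\rangle}(M_{k-1})$ realizing $\Lambda_1^{\langle\tau\rangle}(M_{k-1})$, together with a $\Z_2$-equivariant minimal embedding $\Psi\colon(M_{k-1},g_{\max}) \hookrightarrow \Sph^3$ by first Laplace eigenfunctions, where $\Z_2$ acts on $\Sph^3$ by reflection across an equatorial $\Sph^2$. The fixed curves $\Psi(\partial N_k)$ lie on this equator, and equivariance forces $\Psi(M_{k-1})$ to meet $\Sph^2$ orthogonally along $\Psi(\partial N_k)$. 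Restricting to one of the two halves then produces a free boundary minimal embedding $N_k \hookrightarrow \Sph^3_+$ realizing $\tilde{\Lambda}_1(N_k)$: the three equatorial coordinates of $\Psi$ restrict to first Neumann eigenfunctions on $(N_k, g_{\max})$, while the coordinate perpendicular to the equator restricts to a first Dirichlet eigenfunction.

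For the explicit values, when $k = 1$ the double $M_0 = \Sph^2$ carries a $\tau$-invariant round metric for any reflection $\tau$, so Hersch's theorem gives $\Lambda_1^{\langle\tau\rangle}(\Sph^2) = \Lambda_1(\Sph^2) = 8\pi$ and therefore $\tilde{\Lambda}_1(\mathbb{D}) = 4\pi$, realized by the round hemisphere. When $k = 2$, an Euler characteristic count shows that the $\tau$-reflections on $T^2$ with annular quotient (exactly two fixed circles) are precisely those whose conformal class is rectangular; rhombic reflections, including the natural reflection on the equilateral torus, fix only a single circle and have M\"obius band quotients, so they do not enter this optimization. Theorem~\ref{Tsph1} then produces a $\Z_2$-equivariant minimal embedding of $T^2$ into $\Sph^3$ realizing $\Lambda_1^{\langle\tau\rangle}(T^2)$, and by Brendle's uniqueness theorem this must be the Clifford torus $\Sph^1(1/\sqrt{2})\times \Sph^1(1/\sqrt{2})$ (with $\lambda_1 = 2$ and area $2\pi^2$), giving $\Lambda_1^{\langle\tau\rangle}(T^2) = 4\pi^2$ and $\tilde{\Lambda}_1(\mathbb{A}) = 2\pi^2$. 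The main obstacle has already been overcome in Theorem~\ref{Tsph1}; the genuinely new input here is the doubling dictionary between mixed Dirichlet--Neumann eigenvalues and $\Z_2$-equivariant Laplace eigenvalues, together with the parity bookkeeping that isolates the rectangular conformal classes on $T^2$.
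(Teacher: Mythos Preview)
Your proposal is correct and follows essentially the approach the paper intends: the corollary is stated without proof as ``a consequence of Theorem~\ref{Tsph1} and its proof in the case $G=1$,'' and you have correctly unpacked this via the doubling dictionary $\Lambda_1^{\langle\tau\rangle}(M_{k-1}) = 2\tilde{\Lambda}_1(N_k)$, the existence result of Theorem~\ref{thm:group_existence}, and the structure of the resulting embedding from Lemma~\ref{Lgraphsph}. The only minor remark is that for the torus case the paper invokes the Montiel--Ros classification~\cite[Theorem~4]{MR} of minimal tori embedded by first eigenfunctions rather than Brendle's theorem, but either suffices here; your digression on rectangular versus rhombic reflections is correct but not strictly needed, since the topological type of $\tau$ is already fixed by the doubling construction.
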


Recently, in \cite{LimaMenezes2, Medvedev2}, it has been observed that certain other eigenvalue optimization problems give rise to free boundary minimal surfaces in other convex geodesic balls in space forms. By applying the techniques of the present paper to the optimization problems studied in \cite{LimaMenezes2, Medvedev2}, we expect that the realization can be solved for geodesic balls in $\mathbb{S}^3_+$ or $\mathbb{H}^3$. 

\begin{conjecture}
Any geodesic ball in $\mathbb{S}^3_+$ or $\mathbb{H}^3$ admits free boundary minimal surfaces of any prescribed orientable topology.
\end{conjecture}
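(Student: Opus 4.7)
The plan is to mirror the strategy developed in this paper for $\B^3$, replacing the Fraser--Schoen correspondence by its analog in geodesic balls. First, I would fix a geodesic ball $B_R \subset \mathbb{S}^3_+$ or $\mathbb{H}^3$ and recall from \cite{LimaMenezes2, Medvedev2} the weighted spectral functional $\bar{\sigma}_1^w(N,g)$ whose extremal metrics on a compact surface $N$ with boundary are induced by free boundary minimal immersions into $B_R$, with the coordinate functions being eigenfunctions. Given a closed oriented surface $\Sigma$ of genus $\gamma$ with $k$ boundary components, my goal is to realize $\Sigma$ as such a free boundary minimal surface, with a prescribed basic reflection symmetry group $\Gamma$ acting on $\Sigma$. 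By the classification in Section \ref{Tclass}, every orientable topological type $(\gamma,k)$ is realized as a BRS, so realizing BRS examples suffices.

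Next, I would build the existence theory for $\bar{\sigma}_1^w$-maximizing metrics in $\Met_\Gamma(\Sigma)$ by adapting the conformal and global existence theorems (Theorems \ref{intro.stek.cf} and \ref{intro.stek.glob}) to the weighted setting. The min-max / bubbling analysis in Sections \ref{sec:conf} and \ref{sec:global_Sexistence} is essentially local in nature and should transfer verbatim once one replaces the target $\B^n$ by the correct warped-product target used in \cite{LimaMenezes2, Medvedev2} and inserts the weight into the energy identities. This produces, for each BRS $(\Sigma,\Gamma)$, a $\Gamma$-invariant conformal class and a maximizing metric assuming the strict gap inequality
\[
\Sigma_1^{\Gamma,w}(\Sigma) > \max\bigl\{\Sigma_1^{\Gamma',w}(\Sigma'),\; \bar{\sigma}_1^w(\mathbb{D})\bigr\}
\]
whenever $(\Sigma',\Gamma') \prec (\Sigma,\Gamma)$, where $\bar{\sigma}_1^w(\mathbb{D})$ plays the role of the $2\pi$ threshold for the disk.

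Then I would invoke the BRS \emph{a priori} bounds from Section \ref{S:BRS}: exploiting Hersch/Weinstock-style test functions on the genus-zero quotient $\Sigma/\la\tau\ra$ together with the curvature of the ambient space form, I expect an analog of Lemma \ref{Levalbdcl} giving an upper bound on $\bar{\sigma}_1^w$ strong enough to force, via the Li--Yau/Volkmann-type multiplicity inequality, any maximizing free boundary minimal immersion to be an embedding. The orientable multiplicity bound (Proposition \ref{Lrhoquo2}), which relies only on topological arguments on the quotient, should carry over and force codimension one, i.e. the target sits in the $3$-dimensional geodesic ball. Combining these with Section \ref{ssdoub}'s Morse-theoretic characterization of the embedded surface as a doubling of an equatorial disk in $B_R$ will then yield surfaces of all desired topological types.

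The hard part, as in the Euclidean case, will be verifying the strict gap inequality for every degeneration $(\Sigma',\Gamma') \prec (\Sigma,\Gamma)$. The contradiction argument underlying Proposition \ref{intro.strip} should generalize: a failure would produce a free boundary harmonic map into $B_R$ by first eigenfunctions with $F(p)=F(q)$ for arbitrary $p,q$ in distinct boundary components, and the BRS multiplicity bound plus a uniqueness statement (the analog of Proposition \ref{prop:uniq_map}) should rule this out, reducing the whole problem to comparing $\bar{\sigma}_1^{\Gamma,w}(\Sigma)$ against the weighted functional on a unique ``critical catenoid in $B_R$'' and against $\bar{\sigma}_1^w(\mathbb{D})$. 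The genuine obstacle is that these reference values, and suitable test metrics on each BRS exceeding them, are not yet explicitly computed in $\mathbb{S}^3_+$ and $\mathbb{H}^3$, nor is the uniqueness of the critical catenoid analog established; this is the step that will require the most new input, most likely via explicit conformal test metrics supported in small disks near the boundary, together with a careful study of the weight's behavior near $\partial B_R$.
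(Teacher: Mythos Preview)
This statement is a \emph{conjecture} in the paper, not a theorem; the paper offers no proof, only the remark that ``by applying the techniques of the present paper to the optimization problems studied in \cite{LimaMenezes2, Medvedev2}, we expect that the realization can be solved for geodesic balls in $\mathbb{S}^3_+$ or $\mathbb{H}^3$.'' So there is no proof in the paper to compare against.

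Your proposal is not a proof but a research program, and you recognize this yourself in the final paragraph. As a program it is the natural one and aligns with what the authors suggest: transport the equivariant $\bar\sigma_1$-maximization machinery to the weighted Steklov functionals of \cite{LimaMenezes2, Medvedev2}, redo the BRS multiplicity and area bounds in the new setting, and verify the gap inequalities. The genuine gaps you flag are real: the analogs of Lemma~\ref{Levalbdcl} and the Li--Yau/Volkmann embeddedness criterion in geodesic balls of $\mathbb{S}^3_+$ and $\mathbb{H}^3$ are not established in the paper, the uniqueness statement replacing Proposition~\ref{prop:Suniq_map} is not known, and the comparison with explicit reference surfaces (the disk and the ``critical catenoid in $B_R$'') has not been carried out. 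Until those pieces are supplied, this remains a plausible strategy rather than a proof, which is exactly why the paper states it as a conjecture.
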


\subsubsection{Equivariant optimization beyond orientable basic reflection surfaces.}

While our strongest results at present apply to reflection groups on basic reflection surfaces, we anticipate that the equivariant $\bar{\lambda}_1$- and $\bar{\sigma}_1$-optimization techniques developed here can be applied to various other group actions to produce many other families of minimal surfaces in $\Sph^n$ and $\mathbb{B}^n$ with prescribed topology and symmetries.

This article uses three methods for proving embeddedness of minimal immersions by first eigenfunctions.  First, for basic reflection surfaces, the areas of the corresponding immersions must be (Lemma \ref{Levalbdcl}) less than $8\pi$ in the closed case ($2\pi$ in the case with nonempty boundary), hence are embeddings by Li-Yau-type results.  Second, for basic reflection surfaces (but independently from the first argument), nodal domain arguments in the closed case and Morse-theoretic arguments in the case with boundary show the corresponding minimal immersions are doublings of $\Sph^2$ or $\B^2$, hence are embeddings.  Third, for genus zero surfaces with nonempty boundary, work of Fraser-Schoen \cite[Proposition 8.1]{FraserSchoen} shows that all such immersions are necessarily star-shaped, hence are embeddings.  It would be interesting to develop  techniques for proving embeddedness for other classes of surfaces. 

While the applications in this article are for constructions of oriented hypersurfaces, the concept of basic reflection surface includes the nonorientable case. Modulo the existence theory for maximizing metrics in that setting, the results of Section \ref{S:BRS} suggest the following codimension-two realization result  via $\bar{\lambda}_1$- and $\bar{\sigma}_1$-optimization on these nonorientable basic reflection surfaces.

\begin{conjecture}\label{nonor.real}
Every closed nonorientable surface admits a minimal embedding in $\Sph^4$ as an $\Sph^2$-doubling, with area $<8\pi$. Likewise, every compact nonorientable surface with boundary admits a free boundary minimal embedding in $\mathbb{B}^4$ as a $\B^2$-doubling, with area $<2\pi$.
\end{conjecture}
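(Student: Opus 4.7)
The plan is to mirror the orientable BRS strategy from Section \ref{S:BRS} in the nonorientable setting. For each closed nonorientable surface $M$ (respectively, compact nonorientable surface $N$ with boundary), first select a reflection involution $\tau$ realizing $(M,\langle\tau\rangle)$ (resp.\ $(N,\langle\tau\rangle)$) as a nonorientable BRS, as provided by the classification in Section \ref{Tclass}. The Hersch- and Weinstock-type arguments on the genus-zero quotient $M/\langle\tau\rangle$ (resp.\ $N/\langle\tau\rangle$) are insensitive to the orientability of the ambient surface and carry over verbatim, yielding the a priori bounds of Lemma \ref{Levalbdcl}. Consequently, any minimal immersion by first eigenfunctions has area strictly less than $8\pi$ (resp.\ $2\pi$), and is therefore an embedding by Li--Yau (resp.\ Volkmann).

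Second, I would establish nonorientable analogs of the multiplicity bounds in Propositions \ref{Lasymcl} and \ref{Lrhoquo2}. In the orientable case these give dimensions at most $4$ and $3$, forcing codimension one; in the nonorientable setting, one extra degree of freedom coming from the orientation-reversing sheet of the orientation double cover is expected to raise these bounds to $5$ and $4$ respectively, constraining the target of any minimal embedding by first eigenfunctions to be $\Sph^4$ or $\B^4$. Conditional on the existence of an equivariant maximizing metric, the nodal-domain argument of Lemma \ref{Lgraphsph} in the closed case and the Morse-theoretic argument of Lemma \ref{Lbdgraph} in the boundary case are local near the fixed-point set of $\tau$, depend only on the BRS structure, and would carry over to show that the embedding is a doubling of the equatorial $\Sph^2\subset\Sph^4$ (resp.\ $\B^2\subset\B^4$).

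The main obstacle is the existence of the equivariant maximizing metric itself. Theorems \ref{intro.lap.cf} and \ref{intro.stek.cf} already provide conformal maximizers in each $\Gamma$-invariant conformal class, so it remains to verify the strict gap inequalities \eqref{lap.sym.mono} and \eqref{stek.sym.mono} for every degeneration $(M',\Gamma')\prec(M,\langle\tau\rangle)$ and then apply Theorems \ref{intro.lap.glob} and \ref{intro.stek.glob}. Following the inductive scheme in Section \ref{ss:gamma}, failure would produce a lower-complexity nonorientable BRS $(M',\Gamma')$ realized by a minimal embedding into $\Sph^4$ whose coordinate functions satisfy an additional quadratic relation; one then needs a codimension-two analog of Proposition \ref{prop:uniq_map} to classify such surfaces, combined with explicit test metrics (for instance, Lawson's nonorientable minimal surfaces $\tau_{g,k}$ pushed isometrically into $\Sph^4$, or test metrics obtained by inserting crosscaps into the orientable $\bar{\lambda}_1$-maximizers from Theorem \ref{Tsph1}) to rule out each limit. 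The Steklov analog is even more delicate, since even in the orientable setting (see Section \ref{sec:global_Sexistence}) the comparison against the critical catenoid required ad-hoc arguments that do not obviously extend to nonorientable test surfaces.
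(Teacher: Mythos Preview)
The statement you are attempting to prove is labeled as a \emph{Conjecture} in the paper, and the paper does not claim a proof; indeed, immediately after stating it the authors write that ``the key challenge lies in proving the non-orientable analog of inequalities \eqref{lap.sym.mono} and \eqref{stek.sym.mono} in this setting,'' and that ``even when $\Gamma$ is trivial, we have not yet found an analog of Proposition \ref{intro.handle} for nonorientable surfaces, and new ideas may be needed.'' So there is no paper proof to compare against, and your proposal should be read as a strategy outline rather than a proof.

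That said, your outline tracks the paper's own heuristic closely, and you correctly isolate the existence of the equivariant maximizing metric as the sole genuine obstruction. A few points where you overstate the work required: the multiplicity bounds you propose to establish are already proved in the paper for nonorientable BRS. Proposition \ref{Lasymcl} gives $\dim\Ecal^-\leq 2$ for basic nonorientable $(M,\tau)$ with $M\setminus M^\tau$ of genus zero, and Proposition \ref{Lrhoquo2} gives $\dim\Ecal^+\leq 3$ (closed) or $\leq 2$ (boundary) without any orientability hypothesis. Likewise, Lemmas \ref{Lgraphsph} and \ref{Lgraphbd} are stated and proved for arbitrary BRS and already yield $n=4$ in the nonorientable case together with the doubling structure and area bounds. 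So \emph{conditional} on existence of a $\bar\lambda_1^T$- or $\bar\sigma_1^T$-maximizing metric, the conjecture follows directly from results already in the paper.

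The genuine gap is therefore exactly where you place it, but your sketch for closing it is too optimistic. The inductive scheme of Section \ref{ss:gamma} relies on Theorem \ref{thm:general_existence}, whose proof (Section \ref{sec:proof_general_existence}) attaches \emph{cylinders} to reconstruct $M$ from a degeneration $M'$; in the nonorientable setting the relevant surgery may require attaching a M\"obius band or crosscap, and the energy estimates on the glued region (Lemmas \ref{lem:ext_bound}, \ref{lem:Feb_cyl}) do not have obvious analogs there. This is precisely the missing ``analog of Proposition \ref{intro.handle}'' the authors flag. Your proposed codimension-two analog of Proposition \ref{prop:uniq_map} and the suggested test metrics are plausible ingredients for the endgame, but they do not address this surgery-level difficulty, which is the actual bottleneck.
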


At present, the key challenge lies in proving the non-orientable analog of inequalities \eqref{lap.sym.mono} and \eqref{stek.sym.mono} in this setting. Indeed, even when $\Gamma$ is trivial, we have not yet found an analog of Proposition \ref{intro.handle} for nonorientable surfaces, and new ideas may be needed. 

Finally, we return to the classical problems of maximizing $\bar{\lambda}_1$ among all metrics on a given closed, orientable surface, and maximizing $\bar{\sigma}_1$ among all metrics on a compact, orientable surface with boundary. In view of Propositions \ref{intro.handle} and \ref{intro.strip}, the existence theory in these settings will be complete once the following conjectures are proved.

\begin{conjecture}\label{somepair.lap}
If $M^2\subset \Sph^n$ is a branched minimal immersion by first eigenfunctions for the Laplacian, there exists some pair of points $p,q\in M$ such that for every map $F\colon M\to \Sph^n$ by first eigenfunctions, $F(p)\neq F(q)$.
\end{conjecture}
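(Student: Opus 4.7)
The plan is an argument by contradiction: assume that for every pair $p\neq q$ in $M$ there exists a map $F_{p,q}\colon M\to \Sph^n$ by first eigenfunctions with $F_{p,q}(p)=F_{p,q}(q)$, and aim to extract enough structure on the first eigenspace of the induced metric to contradict its spectral origin. First, fix an $L^2$-orthonormal basis $\phi_1,\dots,\phi_{N+1}$ of the first eigenspace $V_1\subset C^\infty(M,g)$, where $N+1=\mathrm{mult}_g(\lambda_1)\geq n+1$, and assemble it into the spectral map $\Phi=(\phi_1,\dots,\phi_{N+1})\colon M\to \mathbb{R}^{N+1}$. Every map $F\colon M\to \Sph^n$ by first eigenfunctions takes the form $F=A\Phi$ for some $A\in\mathbb{R}^{(n+1)\times (N+1)}$ satisfying the pointwise sphericity condition $\Phi^\top A^\top A\,\Phi\equiv 1$ on $M$, so writing $Q:=A^\top A$, the admissible set $\mathcal{P}$ of such $Q$'s is a compact semialgebraic family of rank-$(n+1)$ PSD matrices. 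Choosing $A_0$ so that $A_0\Phi$ coincides with the original immersion $\iota$ produces a distinguished $Q_0\in \mathcal{P}$, and every $Q\in\mathcal{P}$ differs from $Q_0$ by an element of the linear space
\[
\mathcal{Q}:=\{R\in\mathrm{Sym}(N+1):\Phi^\top R\,\Phi\equiv 0 \text{ on }M\}
\]
of quadratic relations among first eigenfunctions. Since $F(p)=F(q)$ is equivalent to $\Phi(p)-\Phi(q)\in \ker Q$, the contradiction hypothesis asserts that every secant of $\Phi(M)$ lies in $\ker Q$ for some $Q\in \mathcal{P}$.

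In the baseline regime $\mathrm{mult}_g(\lambda_1)=n+1$ one has $\mathcal{Q}=\{0\}$ and $\mathcal{P}=\{Q_0\}$, so every competing $F$ equals $R\circ\iota$ for some $R\in O(n+1)$; the conjecture then reduces to injectivity of $\iota$ at a single non-branch pair, which is immediate. The interesting case is $\mathrm{mult}_g(\lambda_1)>n+1$, where the extra first eigenfunctions supply genuine deformation freedom. Here I propose to study the incidence variety
\[
\mathcal{I}:=\{(Q,p,q)\in \mathcal{P}\times M\times M : Q(\Phi(p)-\Phi(q))=0\}
\]
together with its projections to $M\times M$ and to $\mathcal{P}$. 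Surjectivity of $\mathcal{I}\to M\times M$, combined with the eigenmap equation $\Delta_g\Phi=\lambda_1\Phi$, the Cheng--Besson multiplicity bound $N+1\leq C(1+\gamma(M))$, and the Li--Yau-type sharp area inequality for minimal immersions of $M$ into spheres by first eigenfunctions, should force $\dim\mathcal{Q}$ to exceed the algebraic size compatible with $\Phi$ being a spectral embedding. The desired punchline is that as the secant $\Phi(p)-\Phi(q)$ sweeps $M\times M$, the rank and PSD constraints on $Q$ cannot be simultaneously satisfied along with the sphericity relations unless $\Phi(M)$ lies on a lower-dimensional affine subvariety, contradicting fullness of the spectral embedding.

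The main obstacle is precisely this rigidity step in the high-multiplicity regime: controlling $\mathcal{Q}$ when additional first eigenfunctions are present requires genuinely new rigidity for sphere-valued harmonic maps by first eigenfunctions, for which no sufficiently sharp statement appears to be presently available. A promising complementary tool is a blow-up analysis: letting $q\to p$ in the hypothetical identifying maps $F_{p,q}$ should produce a branched map by first eigenfunctions with prescribed first-order degeneracy along any tangent direction $v\in T_pM$; the existence of such \emph{prescribed-branching} maps for every $(p,v)\in TM$ would itself be a very restrictive structural constraint on $V_1$, which one might hope to contradict using elliptic unique continuation together with the integral bounds underpinning Theorem \ref{lap.cf.ex}.
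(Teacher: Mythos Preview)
The statement you are attempting to prove is listed in the paper as an \emph{open conjecture} (Conjecture~\ref{somepair.lap}); the authors explicitly write that they ``hope to see both proved in the near future,'' and no proof is offered. So there is no paper proof to compare against, and your proposal should be judged as an attack on an open problem.

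Your overall framing is sound and matches the paper's own description of the problem as having ``an algebro-geometric flavor, asserting roughly that the minimal surfaces in question cannot lie in the intersection of too large a family of quadric hypersurfaces.'' Encoding the admissible maps via $Q=A^\top A$ and the linear space $\mathcal{Q}$ of quadratic relations among first eigenfunctions is exactly the right set-up, and the incidence variety $\mathcal{I}$ is a natural object to study.

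However, there is a concrete error in your ``baseline regime.'' You claim that when $\mathrm{mult}_g(\lambda_1)=n+1$ one has $\mathcal{Q}=\{0\}$, forcing every competing map to be $R\circ\iota$ for some $R\in O(n+1)$. This is false: the Clifford torus in $\Sph^3$ has first eigenspace of dimension exactly $4=n+1$, yet the relation $x_1^2+x_2^2-x_3^2-x_4^2\equiv 0$ gives $\dim\mathcal{Q}=1$, and the one-parameter family $F_t=(\sqrt{1+t}\,x_1,\sqrt{1+t}\,x_2,\sqrt{1-t}\,x_3,\sqrt{1-t}\,x_4)$ consists of genuinely distinct sphere-valued eigenmaps. (The conjecture still holds for the Clifford torus---take $p,q$ with distinct $\theta_1$ and distinct $\theta_2$ coordinates---but not by your argument.) This is not a peripheral issue: Proposition~\ref{prop:uniq_map} in the paper shows that the Clifford torus and the round sphere are precisely the obstructions to uniqueness of eigenmaps even in the lowest-multiplicity embedded case, so the baseline regime already contains the essential difficulty.

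Beyond this, you yourself identify the main gap: the dimension-counting argument on $\mathcal{I}$ in the high-multiplicity regime requires control on $\dim\mathcal{Q}$ that is not currently available, and your sketch does not supply it. The blow-up idea you mention at the end---forcing prescribed branching at every $(p,v)\in TM$---is closer in spirit to the paper's own successful argument for the weaker statement $\Lambda_1(\gamma+2)>\Lambda_1(\gamma)$ (Corollary~\ref{cor:gamma2}), where three nearby points and a limiting argument yield $d_p\Phi=0$, contradicting $|d\Phi|^2_g\equiv\lambda_1$. Pushing this to two points rather than three is exactly where the problem remains open.
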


\begin{conjecture}\label{somepair.stek}
If $N^2\subset\mathbb{B}^n$ is a free boundary branched minimal immersion by first Steklov eigenfunctions such that $\partial N$ has at least two components, there exists a pair of points $p,q$ in distinct components of $\partial N$ such that for every map $F\colon(N,\partial N)\to (\mathbb{B}^n,\Sph^{n-1})$ by first Steklov eigenfunctions, $F(p)\neq F(q)$.
\end{conjecture}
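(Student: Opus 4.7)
The plan is to extend the dimension $n=3$, $\gamma=1$ argument sketched in the text to arbitrary $n$. Denote the inclusion by $\Psi\colon N\hookrightarrow \B^n$, and suppose for contradiction that for every pair $p,q$ in distinct components of $\partial N$ there exists a map $F_{p,q}\colon (N,\partial N)\to (\B^n,\Sph^{n-1})$ by first Steklov eigenfunctions with $F_{p,q}(p)=F_{p,q}(q)$. After enlarging the target to the full multiplicity $m$ of $\sigma_1(N,g)$, replace $\Psi$ by a canonical map $\tilde{\Psi}\colon (N,\partial N)\to (\B^m,\Sph^{m-1})$ whose coordinates are an orthonormal basis of the first eigenspace; then every map by first Steklov eigenfunctions has the form $A\tilde{\Psi}$ for some linear $A\in \Hom(\RR^m,\RR^n)$, and the hypothesis is preserved.

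For a fixed pair $(p,q)$, set $v:=\tilde{\Psi}(p)-\tilde{\Psi}(q)\ne 0$. The identification $F_{p,q}(p)=F_{p,q}(q)$ forces $Av=0$, and $|F_{p,q}|^2\equiv 1$ on $\partial N$ forces the symmetric matrix $M_{p,q}:=A^{\top}A-I_m$ to satisfy $\tilde{\Psi}^{\top}M_{p,q}\tilde{\Psi}\equiv 0$ on $\partial N$, together with $M_{p,q}v=-v$ and $M_{p,q}+I_m\succeq 0$ of corank $\ge 1$. In the case $m=3$ this is precisely the elliptic cylinder constraint recalled in the text. As $(p,q)$ ranges over two distinct boundary components, the vectors $v_{p,q}$ span $\RR^m$ (since $\tilde{\Psi}$ is an embedding); extracting finitely many pairs whose associated quadratic forms $M_{p_i,q_i}$ are linearly independent modulo the ambient form $|\tilde{\Psi}|^2-1$, the image $\tilde{\Psi}(\partial N)$ must lie on the common zero set of sufficiently many independent quadrics in $\Sph^{m-1}$ to be $0$-dimensional. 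This contradicts the fact that $\partial N$ is a disjoint union of smooth closed curves with $\tilde{\Psi}|_{\partial N}$ non-constant on each component (as $\tilde{\Psi}$ is an immersion sending $\partial N$ into $\Sph^{m-1}$).

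The main obstacle is this transversality step: the matrix $M_{p,q}$ depends on a non-canonical selection of $F_{p,q}$, so $(p,q)\mapsto M_{p,q}$ may fail to vary continuously. A natural remedy is to view the set of triples $(F,p,q)$ with $F$ a first-eigenfunction map and $F(p)=F(q)$ as a compact semi-algebraic set, then invoke a genericity argument to produce linearly independent forms in the fiber over generic directions $v$. A subtler difficulty arises when $m$ strictly exceeds the codimension of $\Psi$: in that case the relevant space of maps is large enough that the $M_{p,q}$ could a priori all be proportional modulo $|\tilde{\Psi}|^2-1$, and resolving this regime likely reduces to the uniqueness conjecture stated immediately above, possibly via a joint induction on the number of boundary components and the eigenspace dimension $m$.
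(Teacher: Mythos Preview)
This statement is labeled a \emph{Conjecture} in the paper and is explicitly left open; the authors remark that it ``has something of an algebro-geometric flavor'' and that they ``hope to see [it] proved in the near future.'' There is therefore no proof in the paper to compare against, and your proposal should be read as an attack on an open problem rather than a reconstruction of a known argument.

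Your outline correctly isolates the algebraic core: each hypothetical $F_{p,q}$ forces the boundary image $\tilde{\Psi}(\partial N)$ into the quadric $\{x^\top M_{p,q}\,x=0\}\cap\Sph^{m-1}$, and one would like to accumulate enough constraints to reach a contradiction. However, two of the steps you state as if they were routine are in fact the entire difficulty. First, linear independence of the symmetric matrices $M_{p_i,q_i}$ modulo $I_m$ does \emph{not} imply that their common zero locus on $\Sph^{m-1}$ is $0$-dimensional; real quadrics can intersect very non-transversally, and there is no Bezout-type bound on dimension. Second, and more damagingly, the critical catenoid already shows that your independence hypothesis can fail completely while the conjecture still holds: there the entire family of eigenmaps is $(ax,ay,bz)$, and every resulting $M_{p,q}$ is proportional to $\mathrm{diag}(0,0,1)$ modulo $I_3$, yet one can still find $p,q$ with $F(p)\neq F(q)$ for all such $F$ (choose $p,q$ with distinct $(x,y)$-projections). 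So even when all the quadrics collapse to a single one, something else must be used to finish. Your own final paragraph essentially concedes this, reducing that regime back to the uniqueness-type conjecture the paper also leaves open.

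A smaller point: the claim that the vectors $v_{p,q}=\tilde{\Psi}(p)-\tilde{\Psi}(q)$ span $\RR^m$ when $p,q$ range only over two fixed boundary components is not justified; two components of $\partial N$ could a priori lie in a proper affine subspace even though the full boundary does not. In short, the quadric-counting strategy is a reasonable heuristic, but as written it does not close, and the gaps you identify are exactly the ones that make the statement a conjecture.
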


Both conjectures have something of an algebro-geometric flavor, asserting roughly that the minimal surfaces in question cannot lie in the intersection of too large a family of quadric hypersurfaces. We hope to see both proved in the near future, solving the original existence problem for $\bar{\lambda}_1$- and $\bar{\sigma}_1$-maximizing metrics on orientable surfaces of all topological types.

\subsection{Outline of the paper} In Section~\ref{sec:prelim} we collect some preliminary observations about the functionals $\Lambda_1^\Gamma(M)$ and $\Sigma_1^\Gamma(N)$. In particular, we establish characterisation of critical points for eigenvalue functionals in the space of $\Gamma$-symmetric metrics as those induced by equivariant minimal immersions. Sections~\ref{lap.mm} and~\ref{SSprelim} contain the proofs of min-max characterization of  $\Lambda_1^\Gamma(M,[g])$ and $\Sigma_1^\Gamma(N,[g])$ respectively, which, in particular, imply existence of conformal maximizers of Theorems~\ref{intro.lap.cf} and~\ref{intro.stek.cf}. This generalizes the results of~\cite{KSminmax} to Steklov and $\Gamma$-symmetric settings and, moreover, simplifies some of the original arguments of~\cite{KSminmax}. Reflection groups and the geometry of BRS are covered in Section~\ref{S:BRS}. This is where we prove all the geometric properties of the minimal immersions corresponding to eigenvalue maximisers on BRS, such as bounds on the dimension of the sphere, area, and embeddedness. After that, Theorems~\ref{Tsph1},~\ref{Tfbms1} and~\ref{Tfbms2} follow once the existence of maximizers is established. The remainder of the paper, which is the most technical part, is devoted to precisely that task. In Sections~\ref{sec:top_degen} and~\ref{sec:conf} we prove the existence under an additional gap condition, see Theorems~\ref{intro.lap.glob} and~\ref{intro.stek.glob}. Finally, Sections~\ref{sec:global_existence} and~\ref{sec:global_Sexistence} contain a novel method for verifying the gap conditions~\eqref{lap.sym.mono} and~\eqref{intro.stek.glob}, which we then apply to resolve the existence question for most BRS, thus, completing the proofs of our main results.

\subsection*{Acknowledgements} The authors are grateful to Mario Schulz for providing Figures~\ref{fig:fbms1} and~\ref{fig:fbms2} and for remarks on the preliminary version of the manuscript. The authors also thank Jean Lagac\'e for fruitful discussions leading to the proof of Theorem~\ref{thm:gamma+2}, and thank Romain Petrides and Anna Siffert for helpful correspondence. P.M. thanks N. Kapouleas for many discussions about minimal surface doublings. M.K. was partially supported by the NSF grant DMS-2104254 during the initial stages of this project, D.S. was partially supported by the NSF fellowship DMS-2002055, and  P.M. was partially supported by Simons Foundation Collaboration Grant 838990.  R.K. is grateful to NC State University for its hospitality during the Spring 2023 semester, where part of this work was carried out.



\section{Preliminaries and initial observations}
\label{sec:prelim}

In this section, we set some basic notation and definitions concerning isometries of surfaces and their actions on Laplace and Steklov eigenspaces, and describe some basic features of the equivariant optimization problems.

\subsection{Isometries}

\begin{notation}
Given an isometry $\tau$ of $M$, 
let $M^\tau$ denote its fixed-point set.  If $M$ has nonempty boundary $\partial M$, let $M^\tau_\partial \subset M^\tau$ denote the union of the components of $M^\tau$ meeting $\partial M$. 
\end{notation}

\noindent Note \cite{Kobayashi} that $M^\tau$ is a disjoint union of totally geodesic submanifolds of $M$.

\begin{definition}
\label{drefl}
An isometry $\tau$ of $M$ is called a \emph{reflection} if for some $p \in M^\tau$, the differential $d \tau_p$ is a reflection in the Euclidean space $(T_p M, \left. g\right|_p)$ with respect to a hyperplane. 
\end{definition}

\noindent The next results collect some standard facts \cite{Michor} about reflections. 
\begin{lemma}
\label{Lref}
If $\tau$ is a reflection on $M$, then
\begin{enumerate}[label=\emph{(\roman*)}]
\item $\tau$ is an involution, $\tau^2 = \tau \circ \tau =  \Id_M$;
\item $d\tau_p|_{T_p S} = \Id_{T_p S}$ and $d\tau_p |_{T_p S^\perp} = -\Id_{T_p S^\perp}$ for each component $S$ of $M^\tau$ and each $p \in S$;
\item at least one component of $M^\tau$ has codimension $1$; 
\item for any $\rho\in \mathrm{Isom}(M)$, $\rho \tau \rho^{-1}$ is a reflection, and $M^{\rho \tau \rho^{-1}} = \rho(M^\tau)$.
\end{enumerate}
\end{lemma}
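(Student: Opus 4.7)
The plan is to derive all four statements from two standard facts: an isometry of a connected Riemannian manifold is determined by its value and differential at a single point, and the fixed-point set of an isometry near a fixed point is, via the exponential map, the image of the $+1$-eigenspace of the differential at that point.

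First I would establish (i). At the defining point $p$, the differential $d\tau_p$ is a hyperplane reflection of the Euclidean space $(T_pM,\left.g\right|_p)$, so $(d\tau_p)^2 = \mathrm{Id}$. The isometry $\tau^2$ then satisfies $\tau^2(p)=p$ and $d(\tau^2)_p = \mathrm{Id}$, hence $\tau^2 = \mathrm{Id}_M$ by the uniqueness principle above (assuming $M$ is connected, which we may by restricting to the component of $p$ and then using that other components are permuted by an involution of finite order whose square fixes $p$).

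For (ii), I would use (i): at any $q \in M^\tau$, the map $d\tau_q$ is a linear involutive isometry of $T_qM$, hence orthogonally diagonalizable with eigenvalues in $\{\pm 1\}$, giving a splitting $T_qM = V^+_q \oplus V^-_q$. Applying the exponential map and using that $\tau$ carries geodesics to geodesics, one checks that the fixed-point set of $\tau$ in a normal neighborhood of $q$ is exactly $\exp_q(V^+_q)$, a totally geodesic submanifold tangent to $V^+_q$. Thus the component $S$ of $M^\tau$ containing $q$ satisfies $T_qS = V^+_q$ and $T_qS^\perp = V^-_q$, proving (ii). Statement (iii) is then immediate: at the distinguished point $p$ from Definition~\ref{drefl}, the space $V^+_p$ is a hyperplane, so the component of $M^\tau$ through $p$ has codimension one.

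Finally, (iv) is a direct computation. Clearly $(\rho\tau\rho^{-1})^2 = \rho\tau^2\rho^{-1} = \mathrm{Id}$, and $x \in M^{\rho\tau\rho^{-1}}$ if and only if $\rho^{-1}(x) \in M^\tau$, giving $M^{\rho\tau\rho^{-1}} = \rho(M^\tau)$. At the point $\rho(p)$, the chain rule gives $d(\rho\tau\rho^{-1})_{\rho(p)} = d\rho_p \circ d\tau_p \circ (d\rho_p)^{-1}$, which is the conjugate of a hyperplane reflection by a linear isometry and is therefore itself a hyperplane reflection; so $\rho\tau\rho^{-1}$ is a reflection in the sense of Definition~\ref{drefl}. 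The only subtle step is the claim in (ii) that the local fixed set equals $\exp_q(V^+_q)$; the rest is essentially formal, so I expect no real obstacle beyond being careful with that geodesic argument.
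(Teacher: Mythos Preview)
Your argument is correct. The paper itself does not give a proof of this lemma: it simply states that these are standard facts about reflections and cites \cite{Michor}. Your proof is therefore more detailed than the paper's treatment, but the underlying ideas---uniqueness of isometries from first-order data at a point, the local description of the fixed set via $\exp_q$ applied to the $+1$-eigenspace of $d\tau_q$, and the conjugation computation---are exactly the standard ones one would find in that reference.

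One small remark: your parenthetical in (i) about disconnected $M$ is garbled (the phrase ``an involution of finite order whose square fixes $p$'' does not parse into a valid argument). In the paper's setting $M$ is always a compact connected Riemannian surface (see Definition~\ref{dref}), so the uniqueness principle applies directly and you can simply drop the parenthetical.
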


\begin{lemma}
\label{Ldis}
If $\tau$ is an isometry on $M$ and $M \setminus M^\tau$ is not connected, then
\begin{enumerate}[label=\emph{(\roman*)}]
\item $\tau$ is a reflection;
\item $M \setminus M^\tau$ has exactly two components, which are exchanged by $\tau$;
\item $M^\tau$ is a disjoint union of codimension-$1$ submanifolds. 
\end{enumerate}
\end{lemma}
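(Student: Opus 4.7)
The plan is to prove (i) first by producing a codimension-$1$ component of $M^\tau$, then deduce (iii) via an orientation argument, and finally obtain (ii) from a covering-space argument on the quotient $M/\langle \tau \rangle$. Throughout I would assume $M$ connected (the only case of interest, since otherwise the conclusion (ii) would already fail).

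First I would extract a codimension-$1$ component of $M^\tau$. If every component of $M^\tau$ had codimension $\geq 2$, then $M \setminus M^\tau$ would remain connected, by the standard topological fact that removing a closed subset of codimension $\geq 2$ from a connected manifold preserves connectedness. This contradicts the hypothesis, so some component $S \subset M^\tau$ has codimension one. At any $p \in S$, the differential $d\tau_p$ is orthogonal on $T_pM$ and fixes the hyperplane $T_pS$ pointwise, so it acts as $\pm 1$ on the line $(T_pS)^\perp$. The $+1$ case is ruled out, since $d\tau_p = \mathrm{Id}$ together with $\tau(p)=p$ forces $\tau = \mathrm{Id}_M$ by the standard uniqueness of isometries determined by their $1$-jet, contradicting $M\setminus M^\tau \neq \varnothing$. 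Hence $d\tau_p$ is a hyperplane reflection, so $\tau$ is a reflection by Definition~\ref{drefl}, proving (i); Lemma~\ref{Lref}(i) then yields $\tau^2 = \mathrm{Id}$.

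Next, for (iii), I would argue via orientation on the surface. The continuous $\{\pm 1\}$-valued function $\det d\tau$ is locally constant on $M$, hence globally equal to its value $-1$ at $p$. If $q \in M^\tau$ lies on a component $C$ of codimension $k$, then by Lemma~\ref{Lref}(ii) the differential $d\tau_q$ acts as $-\mathrm{Id}$ on the $k$-dimensional space $(T_qC)^\perp$, giving $\det d\tau_q = (-1)^k$. Matching $(-1)^k = -1$ forces $k$ odd, and since $\dim M = 2$ this means $k=1$; hence every component of $M^\tau$ is codim-one. (For non-orientable $M$, pass to the orientation double cover.) With (iii) established, (ii) follows by forming the quotient $\pi\colon M \to M/\langle \tau \rangle$: since $M^\tau$ is purely codim-one and $\tau$ acts as a local reflection transverse to it, $M/\langle\tau\rangle$ is a manifold with boundary $\pi(M^\tau)$, and $\pi$ restricts to a $2$-to-$1$ covering of $M\setminus M^\tau$ onto the connected interior $(M/\langle \tau \rangle)^\circ$. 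Such a cover has either one or two sheets, and the disconnectedness hypothesis forces exactly two sheets, swapped by the deck transformation $\tau$.

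The main obstacle I anticipate is the reliance on orientability and on $\dim M = 2$ in the proof of (iii). The parity argument is immediate for surfaces, but in higher ambient dimension a codim-$1$ component could in principle coexist with codim-$k$ components for any odd $k \geq 3$, so a separate input would be needed to rule those out in full generality. In the paper's surface setting, the orientation argument (and its double-cover variant in the non-orientable case) handles all cases of interest.
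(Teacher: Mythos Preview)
The paper does not prove this lemma; it is stated alongside Lemma~\ref{Lref} as a ``standard fact'' cited from \cite{Michor}, so there is nothing in the paper to compare your argument against. Your proof of (i) is correct, as is your derivation of (ii) from (iii) via the quotient $M/\langle\tau\rangle$. The soft spot is the non-orientable case of (iii): the phrase ``the continuous $\{\pm 1\}$-valued function $\det d\tau$'' already presumes an orientation (for $p\notin M^\tau$ the differential maps between distinct tangent spaces), and your fix---``pass to the orientation double cover''---is not the one-liner you present it as. The two lifts of $\tau$ to $\tilde M$ behave differently over $M^\tau$: the orientation-reversing lift has no fixed points over an isolated fixed point of $\tau$ (since $d\tau_q=-\mathrm{Id}$ has determinant $+1$ there), while the orientation-preserving lift has \emph{only} isolated fixed points. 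Neither lift obviously inherits the disconnectedness hypothesis, so you cannot simply invoke the orientable case upstairs. The argument can be completed, but it takes more than you have indicated; your final paragraph flags the higher-dimensional issue but treats the non-orientable surface case as settled, which it is not.

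A cleaner route avoids orientation entirely. Once (i) gives that $\tau$ is an involution swapping the two local sides of each codimension-$1$ fixed component, show directly that no component $U$ of $M\setminus M^\tau$ can satisfy $\tau(U)=U$: if it did, then near every $p\in\partial U$ both local sides of $M^\tau$ would lie in $U$ (this holds whether $p$ sits on a curve, where $\tau$ swaps sides, or is an isolated point, where a punctured disk is connected), forcing $\overline U\cap\overline{U'}=\varnothing$ for every other component $U'$ and contradicting connectedness of $M$. Since an isolated fixed point $q$ would force $\tau(U)=U$ for the component containing a small $\tau$-invariant punctured disk about $q$, there are no isolated fixed points, giving (iii); the same disjointness argument applied to the $\tau$-orbits $\{U,\tau(U)\}$ then shows there is exactly one such pair, giving (ii).
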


\noindent An isometry $\tau$ as in Lemma \ref{Ldis} is called \emph{separating}.  

If $G\leq Isom(M)$ is a subgroup of isometries of $M$, let $M/ G$ denote the quotient and $\pi_G : M \rightarrow M/ G$ denote the quotient map.  We will sometimes write $\pi_G(M)$ instead of $M/G$ for convenience.

\subsection{Eigenfunctions}
We consider two eigenvalue problems on Riemannian manifolds.  If $(M,g)$ is closed, the \emph{Laplace eigenvalue problem} is
\begin{equation}
\label{Elaplace}
\Delta u=\lambda u,
\end{equation}
a function $u$ satisfying \eqref{Elaplace} is a \emph{Laplace eigenfunction}, and $\lambda \in \R$ is its \emph{eigenvalue}.  Note that we are taking the positive Laplacian $\Delta=-d^*d$, so that with respect to the $L^2(M)$ inner product, 
 $\Delta$ is a self-adjoint differential operator with discrete spectrum
\begin{align*}
0 = \lambda_0 < \lambda_1 \leq \lambda_2 \leq \cdots. 
\end{align*}
Let $\Ecal_{\lambda_i}$ denote the $\Delta$-eigenspace corresponding to $\lambda_i$, and write
$$
\bar{\lambda}_i(M,g):=\Area(M,g)\lambda_i(M,g)
$$
for the scale-invariant normalization of $\lambda_i$ by the area.

If $(M,g)$ has nonempty boundary, the \emph{Steklov eigenvalue problem} is
\begin{equation}
\label{Esteklov}
  \begin{cases} 
      \hfill \Delta u  = 0    \hfill & \text{in}\quad M \\
      \hfill \frac{\partial u}{\partial \eta} = \sigma u \hfill & \text{on}\quad \partial M, \\
  \end{cases}
\end{equation}
where $\eta$ is the unit outward pointing conormal vector field on $\partial M$. 
A function $u$ satisfying \eqref{Esteklov} is called a \emph{Steklov eigenfunction}.  
The eigenvalues of \eqref{Esteklov} are the spectrum of the \emph{Dirichlet-to-Neumann map} 
$\mathcal{D}\colon C^\infty(\partial M)\rightarrow C^{\infty}(\partial M)$ given by
\[ \mathcal{D} u = \frac{\partial \hat{u}}{\partial \eta} \]
where $\hat{u}$ denotes the harmonic extension of $u$ to $M$.  With respect to the $L^2(\partial M)$ inner product, $\mathcal{D}$ is a self-adjoint pseudodifferential operator with discrete spectrum
\[ 0 = \sigma_0 < \sigma_1 \leq \sigma_2 \leq \cdots. \]
Let $\Ecal_{\sigma_i}$ denote the $\mathcal{D}$-eigenspace corresponding to $\sigma_i$, and write
$$\bar{\sigma}_i(M,g):=\mathrm{Length}(\partial M,g)\sigma_i(M,g)$$
for the scale-invariant normalization of $\sigma_i$ by the length of $\partial M$.

\begin{convention}
Because both the situations where $M$ is closed and where $M$ has nonempty boundary will arise, in the later discussion, we often write \emph{eigenfunction} to mean Laplace eigenfunction in the former case, and Steklov eigenfunction in the latter.  Furthermore, $\firsteigen$ will denote the first Laplace eigenspace $\lapone$ in the former case, and the first Steklov eigenspace $\stekone$ in the latter. 
\end{convention}

The \emph{nodal set} of an eigenfunction $u$ is $ \Ncal_u := \{ p \in M  : u(p) = 0\}$ and
a \emph{nodal domain} of $u$ is a connected component of $M \setminus \Ncal_u$.  
 The following Courant-type nodal domain theorem is standard  \cite{Courant, Proc}:
\begin{lemma}
\label{Lcourant}
Each nonzero $u \in \firsteigen$ has exactly two nodal domains. 
\end{lemma}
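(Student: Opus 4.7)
The plan is to prove both directions (at least two, at most two nodal domains) by the classical arguments, and handle the Laplace and Steklov cases in parallel, since the Steklov case becomes almost identical once one observes that every nodal domain must meet $\partial M$.

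For the lower bound, the first step is to use orthogonality to the constant function: since $\lambda_1>0$ (resp.\ $\sigma_1>0$) any nonzero $u\in\firsteigen$ satisfies $\int_M u=0$ in the closed case, respectively $\int_{\partial M}u=0$ in the Steklov case. Hence $u$ changes sign, and its nodal set separates at least two nonempty open components. In the Steklov case, to conclude that $u$ actually has at least two nodal domains in $M$ (rather than only on $\partial M$), I would invoke continuity and the strong maximum principle for the harmonic function $u$ on any would-be single nodal domain, together with the fact that $u$ assumes both signs on $\partial M$.

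For the upper bound, I would run the standard Courant argument. Suppose, for contradiction, that $u$ admits at least three distinct nodal domains $\Omega_1,\Omega_2,\Omega_3$. Define $v_i := u\chi_{\Omega_i}\in H^1(M)$, which is continuous because $u$ vanishes on $\partial\Omega_i$. By Green's formula, in the Laplace case
\[
\int_M|\nabla v_i|^2=\int_{\Omega_i}|\nabla u|^2=\lambda_1\int_{\Omega_i}u^2=\lambda_1\int_M v_i^2,
\]
and an analogous computation using the Steklov boundary condition $\partial u/\partial \eta=\sigma_1 u$ together with the vanishing of $u$ on $\partial\Omega_i\setminus\partial M$ gives
\[
\int_M|\nabla v_i|^2=\sigma_1\int_{\partial M}v_i^2
\]
in the Steklov case. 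Since the $v_i$ have essentially disjoint supports, every $v=\sum_i c_iv_i$ in the three-dimensional space $V:=\sspan\{v_1,v_2,v_3\}$ realizes the Rayleigh quotient exactly $\lambda_1$ (resp.\ $\sigma_1$).

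Next, I would impose the single linear constraint that $v$ be orthogonal to the constant function (over $M$ in the Laplace case, over $\partial M$ in the Steklov case), leaving a two-dimensional subspace $V_0\subset V$ of admissible test functions. The min–max characterization of $\lambda_1$ (resp.\ $\sigma_1$) then forces every $0\ne v\in V_0$ to achieve the minimum, hence to be a genuine first eigenfunction on $M$. However, since $\dim V_0=2$ while there are three indices $i$, one can choose $v\in V_0\setminus\{0\}$ with at least one $c_i=0$, so $v$ vanishes identically on the open set $\Omega_i$. For the Steklov case one first observes, by the maximum principle, that every nodal domain meets $\partial M$, so all $v_i|_{\partial M}$ are nontrivial and the orthogonality constraint is nondegenerate. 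In either case, unique continuation for solutions of $\Delta v=\lambda_1 v$ (respectively for harmonic functions with Robin-type boundary condition) forces $v\equiv 0$, a contradiction.

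The main obstacle is the Steklov case, specifically verifying that every nodal domain touches $\partial M$ (so that the Rayleigh quotient and the orthogonality constraint are well-defined) and invoking the correct unique continuation theorem through the boundary for harmonic functions satisfying the Steklov condition. The interior Laplace case is by now entirely textbook.
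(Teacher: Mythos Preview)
Your argument is correct and is precisely the classical Courant nodal domain proof; the paper itself does not supply a proof but simply cites the standard references \cite{Courant} and \cite{Proc} for the Laplace and Steklov cases respectively. A minor simplification: it suffices to take only two of the $v_i$ and impose the single orthogonality constraint, since the resulting nonzero combination already vanishes identically on the third domain $\Omega_3$, so the dimension-counting with all three is unnecessary.
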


\subsection{Equivariant optimization}

\label{sec:prelim}

Let $M$ be a compact surface (with or without boundary), let $\Gamma$ be a finite group acting on $M$ by diffeomorphisms via the faithful action $T\colon \Gamma\times M\to M$. Denote by $\Met_T(M)$ the space of Riemannian metrics on $M$ invariant under this action; a standard averaging argument shows that $\Met_T(M)\neq \varnothing$. For any metric $g\in \Met_T(M)$, note also that the action of $T$ preserves the conformal class $[g]$, and by the uniformization theorem, $[g]$ contains a $T$-invariant constant-curvature representative. The latter fact will be particularly useful in Section \ref{sec:conf}, where it is necessary to describe the moduli space of $T$-invariant conformal classes.

Throughout the paper, our central concern is the existence and structure of metrics realizing the suprema
$$\Lambda_1^T(M):=\sup\{\bar{\lambda}_1(M,g)\mid g\in \Met_T(M)\}$$
when $\partial M=\varnothing$, and
$$\Sigma_1^T(M):=\sup\{\bar{\sigma}_1(M,g)\mid g\in \Met_T(M)\}$$
when $\partial M\neq \varnothing$. As a key step in the existence theory, in Sections \ref{lap.mm} and 4 below, we first develop the existence theory for metrics realizing the $\Gamma$-invariant \emph{conformal} suprema
$$\Lambda_1^T(M,[g]):=\sup\{\bar{\lambda}_1(M,h)\mid h\in \Met_T(M)\cap [g]\},$$
$$\Sigma_1^T(M,[g]):=\sup\{\bar{\sigma}_1(M,h)\mid h\in \Met_T(M)\cap [g]\}.$$
Note that when discussing the conformal maximization problem, there is no loss of generality in taking $\Gamma$ to be a subgroup of the diffeomorphism group $\Diff(M)$ with its natural action on $M$, and in this case we write $\Met_{\Gamma}(M)=\Met_T(M)$, $\Lambda_1^{\Gamma}(M,[g])=\Lambda_1^T(M,[g])$, and $\Sigma_1^{\Gamma}(M,[g])=\Sigma_1^T(M,[g])$.



\subsection{Initial remarks about equivariant optimization.} 

On $\Sph^2$ and $\mathbb{RP}^2$, we observe next that maximization of $\bar{\lambda}_1$ with respect to any symmetry group yields the canonical round metric, as a consequence of the following.

\begin{proposition}[{\cite[Corollary 2.13.2]{Singerman}}]
\label{s2.gp}
Any finite group $\Gamma\subset \aut(\mathbb{S}^2)$ is conjugate to a subgroup of $O(3)$.
\end{proposition}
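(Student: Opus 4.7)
The approach is to transfer the $\Gamma$-action from $\mathbb{S}^2$ to hyperbolic $3$-space via the Poincar\'e extension, apply a fixed-point theorem for finite isometric actions on a CAT(0) space, and conjugate the fixed point to the origin of the ball model.

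First, I would identify $\mathbb{S}^2$ with the ideal boundary $\partial \mathbb{H}^3$, modeling $\mathbb{H}^3$ as the open ball $\mathbb{B}^3$ with the Poincar\'e metric. The classical Poincar\'e extension yields a canonical isomorphism $\aut(\mathbb{S}^2) \cong \Isom(\mathbb{H}^3)$ under which each conformal (possibly orientation-reversing) transformation of $\mathbb{S}^2$ extends uniquely to an isometry of $\mathbb{H}^3$. Under this identification, the stabilizer of the origin $0 \in \mathbb{B}^3$ is exactly the linear orthogonal group $O(3)$, acting on $\mathbb{S}^2 = \partial \mathbb{B}^3$ in the standard way.

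Next, I would view $\Gamma$ as a finite subgroup of $\Isom(\mathbb{H}^3)$ and produce a fixed point. Since $\mathbb{H}^3$ is a complete, simply connected Riemannian manifold of negative sectional curvature, in particular a CAT(0) space, the Cartan--Bruhat--Tits fixed-point theorem applies: the center of mass (Karcher mean) of any finite $\Gamma$-orbit is fixed pointwise by $\Gamma$. This gives $p \in \mathbb{H}^3$ with $\gamma(p) = p$ for every $\gamma \in \Gamma$.

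Finally, I would choose any isometry $\Phi$ of $\mathbb{H}^3$ with $\Phi(p) = 0$. Then $\Phi \Gamma \Phi^{-1}$ consists of isometries of $\mathbb{H}^3$ fixing $0$, so lies in $\mathrm{Stab}(0) = O(3)$. The boundary restriction of $\Phi$ is a conformal diffeomorphism of $\mathbb{S}^2$ realizing the desired conjugation inside $\aut(\mathbb{S}^2)$. There is no substantial analytic obstacle: the entire argument hinges on invoking the Poincar\'e extension together with the fixed-point theorem for a finite group of isometries of a nonpositively curved space. The only mild point to check is that the construction treats orientation-preserving and orientation-reversing elements of $\Gamma$ uniformly, which it does, since the Poincar\'e extension is defined for all of $\aut(\mathbb{S}^2)$ and $\mathrm{Stab}(0)$ is the full group $O(3)$, not merely $SO(3)$.
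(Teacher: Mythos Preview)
Your argument is correct: the Poincar\'e extension identifies $\aut(\mathbb{S}^2)$ with $\Isom(\mathbb{H}^3)$, the Cartan fixed-point theorem gives a common fixed point for the finite group, and conjugating that point to the origin lands $\Gamma$ in $O(3)$. The paper itself does not prove this proposition at all; it simply records the statement with a citation to Jones--Singerman, so your proof is not so much a different route as the only route on offer here.

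As a brief comparison, the Jones--Singerman treatment works within complex analysis on the Riemann sphere, classifying finite M\"obius groups directly (ultimately reducing to the classification of finite rotation groups), whereas your argument is geometric and uniform: it avoids any classification and works verbatim for $\aut(\mathbb{S}^{n-1}) \cong \Isom(\mathbb{H}^n)$ in every dimension. The hyperbolic fixed-point approach is arguably the cleanest way to see the result, and your remark that it handles orientation-reversing elements automatically is the right point to flag.
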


\begin{corollary}
\label{cor:max_sphere}
For any finite group $\Gamma$ and any action $T$ of $\Gamma$ on $\mathbb{S}^2$ one has
$$
\Lambda_1^T(\mathbb{S}^2) = 8\pi.
$$
\end{corollary}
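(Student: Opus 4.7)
The plan is to show both inequalities separately. The upper bound $\Lambda_1^T(\Sph^2) \leq 8\pi$ is immediate from Hersch's classical theorem, since restricting the supremum to $T$-invariant metrics only decreases it, and the unrestricted supremum on $\Sph^2$ is exactly $8\pi$. So the content is the lower bound: we must exhibit a $T$-invariant metric $g$ on $\Sph^2$ with $\bar{\lambda}_1(\Sph^2, g) = 8\pi$, and the natural candidate is some realization of the round metric.

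The strategy is to reduce the arbitrary smooth $\Gamma$-action $T$ to an isometric action on the round sphere, up to conjugation in $\Diff(\Sph^2)$. First I would fix any metric $g_0 \in \Met_T(\Sph^2)$ (nonempty by averaging). Applying the uniformization theorem yields a conformal diffeomorphism $\phi \colon (\Sph^2, g_0) \to (\Sph^2, g_{\mathrm{round}})$. Under conjugation by $\phi$, the action $T$ becomes an action $\widetilde{T}_\gamma := \phi \circ T_\gamma \circ \phi^{-1}$, and since $T$ preserves the conformal class $[g_0]$, each $\widetilde{T}_\gamma$ lies in $\aut(\Sph^2)$. Proposition~\ref{s2.gp} then provides a conformal automorphism $\psi \in \aut(\Sph^2)$ such that $\psi \widetilde{T}_\gamma \psi^{-1} \in O(3)$ for every $\gamma \in \Gamma$.

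Now define the metric $g := (\psi \circ \phi)^* g_{\mathrm{round}}$ on $\Sph^2$. Since each $\psi \widetilde{T}_\gamma \psi^{-1} \in O(3)$ preserves $g_{\mathrm{round}}$, the chain
\begin{equation*}
T_\gamma^* g = (\psi \circ \phi \circ T_\gamma)^* g_{\mathrm{round}} = \bigl((\psi \widetilde{T}_\gamma \psi^{-1}) \circ (\psi \circ \phi)\bigr)^* g_{\mathrm{round}} = (\psi \circ \phi)^* g_{\mathrm{round}} = g
\end{equation*}
shows that $g \in \Met_T(\Sph^2)$. By construction $\psi \circ \phi$ is an isometry from $(\Sph^2, g)$ to $(\Sph^2, g_{\mathrm{round}})$, so $\bar{\lambda}_1(\Sph^2, g) = \bar{\lambda}_1(\Sph^2, g_{\mathrm{round}}) = 8\pi$. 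Combined with the Hersch upper bound, this gives $\Lambda_1^T(\Sph^2) = 8\pi$.

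I do not anticipate any serious obstacle: the only nontrivial ingredients are the two results already quoted (Hersch's bound and Proposition~\ref{s2.gp}), and the bookkeeping amounts to checking that pulling back the round metric by the conjugating diffeomorphism produces a $T$-invariant representative. The mild subtlety to be careful about is that the conjugating map $\psi \circ \phi$ is only a diffeomorphism (not an isometry from the outset), but since we define $g$ as its pullback, it becomes an isometry by construction.
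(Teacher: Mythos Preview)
Your proof is correct and follows essentially the same approach as the paper's. The paper's argument is more compressed---it first reduces to a conformal action (your uniformization step) and then says Proposition~\ref{s2.gp} yields a $T$-invariant constant-curvature metric (your conjugation and pullback)---but the underlying logic, using Hersch for the upper bound and Proposition~\ref{s2.gp} to produce a $T$-invariant round metric for the lower bound, is the same.
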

\begin{proof}
As explained before, without loss of generality, we can assume that $\Gamma$ acts by conformal transformations.
The previous proposition implies that there is a $T$-invariant metric of constant curvature $g$. Any such metric satisfies $\bar\lambda_1(\Sph^2,g) = 8\pi$, so that $\Lambda_1^T(\mathbb{S}^2) \geq 8\pi$. It follows from~\cite{Hersch} that 
\[
\Lambda_1^T(\Sph^2)\leq \Lambda_1(\Sph^2) = 8\pi,
\]
so the converse inequality is true.
\end{proof}

\begin{corollary}
\label{cor:max_RP2}
For any finite group $\Gamma$ and any action $T$ of $\Gamma$ on $\mathbb{RP}^2$ one has
$$
\Lambda_1^T(\mathbb{RP}^2) = 12\pi.
$$
\end{corollary}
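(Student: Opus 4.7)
The plan is to mirror the proof of Corollary~\ref{cor:max_sphere}, with two adjustments: the upper bound will come from Li--Yau's sharp inequality $\Lambda_1(\mathbb{RP}^2) = 12\pi$ in place of Hersch's bound, and Proposition~\ref{s2.gp} will be applied indirectly, after lifting through the orientation double cover $\pi\colon \mathbb{S}^2 \to \mathbb{RP}^2$.

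First, as in the proof of Corollary~\ref{cor:max_sphere}, I would reduce to the case where $\Gamma$ acts by conformal transformations. Any metric in $\Met_T(\mathbb{RP}^2)$ (which is nonempty by the usual averaging construction) determines a $T$-invariant conformal class, and since $\mathbb{RP}^2$ carries a unique conformal structure up to diffeomorphism, after conjugating $T$ by a suitable diffeomorphism we may assume $\Gamma$ preserves the standard round conformal structure. Such conjugation does not alter $\Lambda_1^T(\mathbb{RP}^2)$.

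Next, I would lift the action to obtain a group $\tilde{\Gamma}$ acting conformally on $\mathbb{S}^2$, fitting into a central extension $1 \to \langle a \rangle \to \tilde{\Gamma} \to \Gamma \to 1$, where $a$ denotes the antipodal map. By Proposition~\ref{s2.gp}, there is some $\phi \in \aut(\mathbb{S}^2)$ with $\phi \tilde{\Gamma} \phi^{-1} \subset O(3)$. The key observation is that $a$ is the \emph{unique} fixed-point-free conformal involution on $\mathbb{S}^2$: any other involution in $O(3)$ is diagonalizable with eigenvalues in $\{\pm 1\}$ and has a nontrivial $+1$-eigenspace, hence fixes points of $\mathbb{S}^2$. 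Therefore $\phi a \phi^{-1} = a$, so $\phi$ descends to a diffeomorphism $\bar{\phi}$ of $\mathbb{RP}^2$ that conjugates $T$ to an action by isometries of the round metric $g_0$.

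After this conjugation, $g_0 \in \Met_T(\mathbb{RP}^2)$, and computing directly on the round $\mathbb{RP}^2$ (with area $2\pi$ and $\lambda_1 = 6$) gives $\Lambda_1^T(\mathbb{RP}^2) \geq \bar{\lambda}_1(\mathbb{RP}^2,g_0) = 12\pi$. The Li--Yau inequality then provides the reverse bound $\Lambda_1^T(\mathbb{RP}^2) \leq \Lambda_1(\mathbb{RP}^2) = 12\pi$. The only non-routine step is the identification of $a$ as the unique fixed-point-free conformal involution, but this is an elementary eigenvalue computation, so no substantial obstacle is anticipated.
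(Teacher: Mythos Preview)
Your argument is correct and follows the same overall strategy as the paper: lift to $\mathbb{S}^2$, show the lifted group acts by isometries of the round metric, and conclude via the Li--Yau bound. One minor imprecision: the assertion that $a$ is the unique fixed-point-free conformal involution on $\mathbb{S}^2$ is false as stated (e.g., $z \mapsto -4/\bar z$ is another); what is true---and what your justification and subsequent use actually require---is uniqueness within $O(3)$, since you only apply it to $\phi a \phi^{-1} \in O(3)$.

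The paper's proof differs slightly at this step: rather than invoking Proposition~\ref{s2.gp} to find a conjugator $\phi$ and then arguing that $\phi$ commutes with $a$, it directly cites the fact (also \cite[Corollary 3]{MR}) that any conformal transformation of $\mathbb{S}^2$ commuting with the antipodal map already lies in $O(3)$, so the lifted group is in $O(3)$ without any conjugation. Your route through Proposition~\ref{s2.gp} has the advantage of being self-contained within the paper's stated results, at the cost of a slightly more roundabout argument; note in fact that once you establish $\phi a = a\phi$, the paper's cited fact would give $\phi \in O(3)$, so $\tilde\Gamma$ was already in $O(3)$ and the conjugation was unnecessary.
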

\begin{proof}
The action $T$ of $\Gamma$ on $\mathbb{RP}^2$ can be lifted to the action $\wt T$ of $\Gamma$ on $\mathbb{RP}^2$, such that $\wt T$ commutes with the antipodal involution. It is then well-known that any conformal transformation of $\Sph^2$ commuting with the antipodal involution is an isometry, so that $T$ in fact acts by isometries of the round metric on $\mathbb{RP}^2$. The latter fact also follows from~\cite[Corollary 3]{MR}. Once it is established, the rest of the proof is as in Corollary~\ref{cor:max_RP2}.
\end{proof}

Similarly, the standard metric on the disk $\mathbb{D}^2$ maximizes $\bar{\sigma}_1$ under any prescribed symmetry group, as a consequence of the following. 

\begin{proposition}
Any finite group $\Gamma\subset \aut(\mathbb{D}^2)$ is conjugate to a subgroup of $O(2)$.
\end{proposition}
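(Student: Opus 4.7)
The plan is to mimic the standard proof for the analogous statement about finite subgroups of $\mathrm{Conf}(\Sph^2) \cong \mathrm{PSL}(2,\C)$, but in the hyperbolic setting: produce a common fixed point via a barycenter/Cartan fixed point argument, conjugate it to the origin, then use Schwarz's lemma to conclude.

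Concretely, I would first equip $\mathbb{D}^2$ with the Poincar\'e (hyperbolic) metric of constant curvature $-1$, under which $\mathrm{Conf}(\mathbb{D}^2)$ coincides with the full isometry group $\mathrm{Isom}(\mathbb{H}^2)$ (each conformal automorphism of the disk is either a M\"obius transformation preserving $\mathbb{D}^2$ or such a transformation composed with complex conjugation, and all of these preserve the hyperbolic metric). In particular, the given finite group $\Gamma$ acts by hyperbolic isometries.

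Next, I would apply the Cartan fixed point theorem: since $(\mathbb{D}^2, g_{\mathrm{hyp}})$ is a complete, simply connected Riemannian manifold of non-positive sectional curvature, any finite group of isometries admits a common fixed point. Explicitly, for any $q \in \mathbb{D}^2$, the function $f(x) = \sum_{\gamma \in \Gamma} d_{\mathrm{hyp}}(x, \gamma q)^2$ is strictly convex along hyperbolic geodesics and proper, so it has a unique minimizer $p \in \mathbb{D}^2$, which is necessarily $\Gamma$-invariant.

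Now choose any $\phi \in \mathrm{Isom}(\mathbb{H}^2) \subset \mathrm{Conf}(\mathbb{D}^2)$ with $\phi(p) = 0$; then $\phi \Gamma \phi^{-1}$ fixes $0 \in \mathbb{D}^2$. Any holomorphic automorphism of $\mathbb{D}^2$ fixing the origin is a rotation $z \mapsto e^{i\theta} z$ by the Schwarz lemma, and any anti-holomorphic one is such a rotation composed with $z \mapsto \bar z$; both are elements of $O(2)$ acting on $\mathbb{D}^2 \subset \C \cong \R^2$. Hence $\phi \Gamma \phi^{-1} \subset O(2)$, which is the claim.

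There is no real obstacle here beyond remembering to include the orientation-reversing conformal automorphisms in the identification with $\mathrm{Isom}(\mathbb{H}^2)$; the only step that is not a one-line consequence of a named theorem is the construction of a common fixed point, and that is handled cleanly by the hyperbolic barycenter, which is possible precisely because $\mathbb{H}^2$ is a CAT($0$) space.
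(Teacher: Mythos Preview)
Your proof is correct. It takes a genuinely different route from the paper's, however. The paper argues by identifying $\aut(\mathbb{D}^2)$ with the subgroup of $\aut(\Sph^2)$ preserving a closed hemisphere, then invoking the already-stated Proposition~\ref{s2.gp} (any finite subgroup of $\aut(\Sph^2)$ is conjugate into $O(3)$); the conjugated group still preserves the hemisphere and hence lies in $O(2)$. Your argument instead works intrinsically on the disk: you equip $\mathbb{D}^2$ with the Poincar\'e metric, use the Cartan fixed point theorem in the CAT$(0)$ space $\mathbb{H}^2$ to produce a common fixed point, conjugate it to the origin, and finish with the Schwarz lemma. The paper's approach is shorter in context because it recycles the spherical result already cited, while yours is self-contained and does not depend on Proposition~\ref{s2.gp}; both are standard and equally valid.
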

\begin{proof}
This follows in a straightforward way from Proposition \ref{s2.gp} by identifying $\aut(\mathbb{D}^2)$ with the subgroup of $\aut(\Sph^2)$ preserving the closed hemisphere $\Sph^2_+$.
\end{proof}

\begin{corollary}
\label{cor:max_disk}
For any finite group $\Gamma$ and any action $T$ of $\Gamma$ on $\mathbb{D}^2$ one has
$$
\Sigma_1^T(\mathbb{D}^2) = 2\pi.
$$
\end{corollary}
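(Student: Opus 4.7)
The plan is to mirror the proof of Corollary~\ref{cor:max_sphere} in the Steklov setting, replacing Hersch's inequality with Weinstock's inequality and Proposition~\ref{s2.gp} with the proposition just stated.

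First, I would reduce to the case of a conformal action. As noted in Section~\ref{sec:prelim}, any smooth action $T$ of a finite group on a surface preserves some Riemannian metric (by averaging), so after replacing the metric on $\mathbb{D}^2$ by a $T$-invariant one we may assume $T$ acts by conformal diffeomorphisms, i.e.\ $T(\Gamma)\subset \aut(\mathbb{D}^2)$. The preceding proposition then produces a diffeomorphism conjugating $T(\Gamma)$ to a subgroup of $O(2)$. Since $O(2)$ preserves the standard round metric $g_0$ on $\mathbb{D}^2$, pulling $g_0$ back under this conjugation yields a $T$-invariant metric $g\in \Met_T(\mathbb{D}^2)$ isometric to $(\mathbb{D}^2,g_0)$.

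For the lower bound, it is classical that $\bar{\sigma}_1(\mathbb{D}^2,g_0)=2\pi$, so
\[
\Sigma_1^T(\mathbb{D}^2)\;\geq\;\bar{\sigma}_1(\mathbb{D}^2,g)\;=\;2\pi.
\]
For the upper bound, I would invoke Weinstock's inequality, which asserts that $\bar{\sigma}_1(\mathbb{D}^2,h)\leq 2\pi$ for every smooth metric $h$ on the disk, so that $\Sigma_1(\mathbb{D}^2)=2\pi$. Since $\Sigma_1^T(\mathbb{D}^2)\leq \Sigma_1(\mathbb{D}^2)$, the reverse inequality follows and both sides equal $2\pi$.

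There is essentially no obstacle in this argument: the entire content is packaged into the preceding proposition (reducing equivariance to a linear action) and Weinstock's theorem (the unconstrained upper bound). The argument is structurally identical to the proofs of Corollaries~\ref{cor:max_sphere} and~\ref{cor:max_RP2}, and no symmetrization or bubbling analysis is needed at this stage.
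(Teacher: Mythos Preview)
Your proof is correct and follows essentially the same approach as the paper: the paper's own proof simply says ``Analogous to the proof of Corollary~\ref{cor:max_sphere} using the result of~\cite{Weinstock} that $\Sigma_1(\mathbb{D}^2) = 2\pi$,'' and you have spelled out exactly that analogy.
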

\begin{proof}
Analogous to the proof of Corollary~\ref{cor:max_sphere} using the result of~\cite{Weinstock} that $\Sigma_1(\mathbb{D}^2) = 2\pi$.
\end{proof}

For many group actions $T:\Gamma\times M\to M$, $\Lambda_1(\Sph^2)=8\pi$ and $\Sigma_1(\mathbb{D}^2)=2\pi$ also provide universal lower bounds for the associated maximization problem on any $T$-invariant conformal class.

\begin{proposition}
\label{prop:Lambda_fixedpt}
Suppose that $(M,T)$ has a fixed point, i.e. $M^\Gamma\ne\varnothing$ and let  $\mC$ be a $T$-invariant conformal class. Then
\[
\Lambda_1^T(M,\mC)\geq 8\pi.
\]
\end{proposition}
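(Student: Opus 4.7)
The plan is to construct, for each small $\epsilon > 0$, a $T$-invariant metric $g_\epsilon \in \mC$ that models a round unit sphere bubbled at the fixed point $p$, and then to verify $\bar{\lambda}_1(M, g_\epsilon) \to 8\pi$ as $\epsilon \to 0$, which immediately gives $\Lambda_1^T(M, \mC) \geq 8\pi$.

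To carry out the bubble construction $\Gamma$-equivariantly, first pick a $T$-invariant reference metric $g_0 \in \mC$, which exists by averaging since $\Gamma$ is finite. The isotropy of $\Gamma$ at $p \in M^\Gamma$ acts on the conformal structure of $T_pM$ by a finite group of conformal linear maps, which by the Schwarz lemma is a finite subgroup of $O(2)$. Passing to an isothermal chart and precomposing with a biholomorphism of $\mathbb{D}$ that conjugates this action into $O(2)$ yields $\Gamma$-equivariant isothermal coordinates $z : U \to \mathbb{D}$ with $z(p) = 0$ and $g_0 = \rho_0^2 |dz|^2$. Fix a radial cutoff $\chi \in C^\infty([0,1])$ with $\chi \equiv 1$ on $[0, 1/2]$ and $\chi \equiv 0$ on $[3/4, 1]$, and for each $\epsilon > 0$ and a large fixed integer $N$ define the $\Gamma$-invariant conformal factor
\[
\phi_\epsilon := \chi(|z|)\cdot \frac{2\epsilon}{\rho_0\,(\epsilon^2 + |z|^2)} + \bigl(1-\chi(|z|)\bigr)\cdot \epsilon^N
\]
on $U$, extended by $\epsilon^N$ outside $U$. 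The metric $g_\epsilon := \phi_\epsilon^2\, g_0$ is $T$-invariant and lies in $\mC$; on the inner disk $\{|z| < 1/2\}$ it coincides, after the rescaling $u = z/\epsilon$, with the round unit sphere written in stereographic coordinates.

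A direct calculation shows $\Area(M, g_\epsilon) \to 4\pi$ as $\epsilon \to 0$: the bubble portion integrates to $4\pi + o(1)$ since the full stereographic integral over $\R^2$ equals $4\pi$, the transition annulus contributes $O(\epsilon^2)$, and the complement contributes $O(\epsilon^{2N})$. For the first eigenvalue, testing against the three Euclidean coordinate functions $y_1, y_2, y_3$ of the bubble sphere, smoothly cut off in the transition annulus, yields the upper bound $\lambda_1(M, g_\epsilon) \leq 2 + o(1)$. The matching lower bound $\lambda_1(M, g_\epsilon) \geq 2 - o(1)$ is the main obstacle: the plan is to argue by contradiction, observing that any normalized sequence of first eigenfunctions violating this bound must concentrate on the bubble (since the complement has $g_\epsilon$-area $O(\epsilon^{2N})$ and the transition annulus has bounded conformal modulus, standard elliptic and Dirichlet-energy estimates rule out concentration elsewhere), and would then produce, in the limit, a nontrivial $W^{1,2}$ first eigenfunction on $\Sph^2$ with eigenvalue strictly less than $2$, contradicting the known spectrum of the round sphere. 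Combining the area and eigenvalue asymptotics yields $\bar{\lambda}_1(M, g_\epsilon) \to 8\pi$, as required. The key technical step is the eigenvalue lower bound, which leverages the concentration-compactness machinery developed elsewhere in the paper; all other steps are routine.
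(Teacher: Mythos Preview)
Your proposal is correct and rests on the same geometric idea as the paper---concentrate a round-sphere bubble at the fixed point $p$---but the paper packages this more efficiently through a Neumann-eigenvalue reformulation that sidesteps the step you flagged as the main obstacle. Rather than building a global metric $g_\epsilon$ on all of $M$ and then proving the lower bound $\lambda_1(M,g_\epsilon)\geq 2-o(1)$ by concentration-compactness, the paper works only on a $T$-invariant conformally flat neighborhood $U$ of $p$, pulls back the round metric via scaled stereographic projection to get metrics $g_n$ on $U$, and invokes two lemmas from \cite{KM}: first $\Lambda_1^T(M,\mC)\geq \bar\lambda_1^N(U,g_n)$, the conformal supremum dominates the first Neumann eigenvalue on any subdomain; second $\bar\lambda_1^N(U,g_n)\to \bar\lambda_1(\Sph^2)=8\pi$ as the stereographic images exhaust $\Sph^2\setminus\{N\}$. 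The first lemma swallows both your extension by $\epsilon^N$ outside the bubble and your eigenvalue lower bound into one clean inequality, and the second is a standard exhaustion statement. Your direct route is more self-contained but obliges you to actually execute the concentration-compactness argument (which, incidentally, is not quite the machinery developed elsewhere in this paper---that machinery is for min-max harmonic maps and conformal degeneration, not spectral convergence of bubbling metrics; you would want something closer to \cite[Lemma~4.1]{KM} or the arguments in \cite{CES}). The paper's route trades self-containedness for modularity and brevity.
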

\begin{proof}
The unconstrained (i.e. without the group action) version of this statement can be found~\cite[Theorem A]{CES}. We provide a slightly different proof using the tools from~\cite{KM}.
 Let $p\in M^\Gamma$ and let $U\ni p$ be a $T$-invariant conformally flat neighbourhood of $p$, which exists because $\mC$ is $T$-invariant. We identify $U$ is a domain in $\R^2$, so that $p$ is identified with the origin. Since $T$ preserves a small circle around $p$, we conclude that $T(\Gamma)$ can be identified with finite subgroup of the isometries of a circle. In particular, the stereographic projection $\pi\colon\Sph^2\setminus\{N\}\to \R^2$ is $\Gamma$-equivariant for the action of $\Gamma$ on $\Sph^2$ leaving $z$-coordinate invariant. Define a metric $g_n$ on $U$ to be the restriction of the pullback $(\pi^{-1}\circ(n\cdot))^*g_{\Sph^2}$, where $(n\cdot)$ is a scaling map $x\mapsto nx$. 
 By~\cite[Lemma 4.6]{KM}, one has $\Lambda^T_1(M,\mC)\geq \bar\lambda^N_1(U, g_n)$, where $\bar\lambda_1^N$ refers to the first non-trivial normalized Neumann eigenvalue. At the same time, since the preimages 
 $\pi^{-1}(n\cdot U)$ cover $\Sph^2\setminus\{N\}$, by~\cite[Lemma 4.1]{KM} one has $\bar\lambda^N_1(M'\setminus D_\eps(P'), g')\to \bar\lambda_1(\Sph^2,g_{\Sph^2}) = 8\pi$.
\end{proof}

\begin{proposition}
\label{prop:Sigma_fixedpt}
Suppose that $(N,T)$ has a fixed point on $\bd N$, i.e. $(\bd N)^\Gamma\ne\varnothing$ and let  $\mC$ be a $T$-invariant conformal class. Then
\[
\Sigma_1^T(M,\mC)\geq 2\pi.
\]
\end{proposition}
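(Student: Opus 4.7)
The plan is to mirror the proof of Proposition~\ref{prop:Lambda_fixedpt}, replacing the closed-sphere Laplace setting by the Steklov setting on the disk $\mathbb{D}^2$: Hersch's bound $\Lambda_1(\Sph^2)=8\pi$ is replaced by Weinstock's bound $\Sigma_1(\mathbb{D}^2)=2\pi$ from Corollary~\ref{cor:max_disk}.

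First, I would fix a boundary fixed point $p\in (\partial N)^\Gamma$ and produce, using $T$-invariance of $\mC$, a $T$-equivariant conformally flat chart identifying a neighborhood $U$ of $p$ with an open half-disk $V\subset\overline{\mathbb{H}}$ centered at $0$, with $p\mapsto 0$ and $\partial N\cap U$ onto $V\cap\R$. A conformal automorphism of $\overline{\mathbb{H}}$ fixing $0$ and preserving the real axis is either the identity or the reflection $z\mapsto -\bar z$ (dilations are excluded by the finite order of $T(\Gamma)$), so $T(\Gamma)$ acts on $V$ as a subgroup of $\{\mathrm{id},\,z\mapsto -\bar z\}$.

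Next, I would transfer the problem to the disk via the Cayley map $\phi(z)=(z-i)/(z+i)$, a conformal diffeomorphism $\overline{\mathbb{H}}\to\overline{\mathbb{D}^2}$ with $\phi(0)=-1$ and $\phi(\infty)=1$; the identity $\phi(-\bar z)=\overline{\phi(z)}$ shows it is equivariant for the two possible $\Z_2$-actions. Setting $s_n(z)=nz$, the pullback $g_n:=(\phi\circ s_n)^*g_{\mathbb{D}^2}$ is a $T(\Gamma)$-invariant metric on $V$ lying in the restriction of $\mC$. By a Steklov analog of~\cite[Lemma 4.6]{KM}, one can extend $g_n$ to a $T$-invariant metric on $N$ in $\mC$ such that
\[
\Sigma_1^T(N,\mC)\geq \bar\sigma_1^{SN}(V,g_n),
\]
where $\bar\sigma_1^{SN}$ is the first nontrivial normalized mixed Steklov--Neumann eigenvalue, with the Steklov condition imposed on $\partial N\cap U$ and the Neumann condition on the interior boundary $\partial V\setminus\partial N$.

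Finally, as $n\to\infty$ the images $(\phi\circ s_n)(V)$ exhaust $\overline{\mathbb{D}^2}\setminus\{1\}$, and a Steklov analog of~\cite[Lemma 4.1]{KM}---a domain-convergence statement for mixed Steklov--Neumann eigenvalues on the disk minus a shrinking boundary cap---gives
\[
\bar\sigma_1^{SN}(V,g_n)\longrightarrow \bar\sigma_1(\overline{\mathbb{D}^2},g_{\mathbb{D}^2})=2\pi,
\]
which combined with the previous inequality yields $\Sigma_1^T(N,\mC)\geq 2\pi$. The main obstacle I anticipate is verifying the Steklov analog of~\cite[Lemma 4.6]{KM}: unlike the Laplace case, where the area normalization behaves well under a constant extension, the Steklov normalization involves the boundary length $\length(\partial N,g_n)$, so the auxiliary metric on $N\setminus U$ must be arranged so that $\partial N\setminus U$ contributes negligibly to this length while the Dirichlet energy on $N\setminus U$ remains controlled and the extension stays $T$-invariant. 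Once both lemmas of~\cite{KM} have been upgraded to Steklov counterparts, the remainder of the argument is routine.
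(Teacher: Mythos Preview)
Your proposal is correct and follows essentially the same route as the paper: the paper's proof is the one-line remark ``Analogous to Proposition~\ref{prop:Lambda_fixedpt}, but using the tools from~\cite{Medvedev} as opposed to~\cite{KM},'' so the Steklov analogs of~\cite[Lemmas~4.1 and~4.6]{KM} that you identify as the missing ingredients are precisely what~\cite{Medvedev} supplies. In particular, the boundary-length normalization issue you flag is handled there, and you need not reprove these lemmas yourself.
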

\begin{proof}
Analogous to Proposition~\ref{prop:Lambda_fixedpt}, but using the tools from~\cite{Medvedev} as opposed to~\cite{KM}. 
\end{proof}

\subsection{Extremal metrics for $\bar{\lambda}_1$ and $\bar{\sigma}_1$ in $\Met_T(M)$}

Next, we characterize critical metrics for $\bar{\lambda}_k$ or $\bar{\sigma}_k$ metrics on $\Met_T(M)$ as those realized by $\Gamma$-equivariant branched minimal immersions into spheres. The case $\Gamma=\{1\}$ corresponds to well known results of Nadirashvili \cite{Nadirashvili}, El Soufi--Ilias~\cite{ESI} and Fraser--Schoen \cite{FScontemp}; moreover, formally, the \emph{principle of symmetric criticality} suggests that critical points of a $\Gamma$-invariant functional restricted to a space of $\Gamma$-invariant objects are also critical points in an unconstrained sense, so it is unsurprising that critical points of $\bar{\lambda}_k$ and $\bar{\sigma}_k$ on $\Met_T(M)$ are indeed critical points on the space of all metrics $\Met(M)$. Since the functionals $\bar{\lambda}_k$ and $\bar{\sigma}_k$ are, however, only Lipschitz on $\Met(M)$, this requires some verification in practice.

\begin{theorem}
\label{thm:symm_crit_L}
Suppose that $g$ is a $T$-invariant metric, critical for $\bar\lambda_k$ in the class of $T$-invariant metrics on $M$. Then there exists an orthogonal representation $\rho\colon \Gamma\to O(n+1)$ and an equivariant (branched) minimal immersion $\Phi\colon (M,g)\to \Sph^{n}$ to the unit sphere such that the components of $\Phi$ are $\lambda_k(M,g)$-eigenfunctions and $\Phi^*g_{\Sph^n} = \alpha g$ for some $0<\alpha\in\R$.
\end{theorem}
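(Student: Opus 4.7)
The plan is to adapt the classical Nadirashvili--El Soufi--Ilias variational argument to the $\Gamma$-equivariant setting, invoking the principle of symmetric criticality to handle the constraint that variations be $T$-invariant. Given a direction $h \in \Gamma(S^2 T^* M)^T$ (a $T$-invariant symmetric $2$-tensor), I consider the path $g_t = g + th$, which remains in $\Met_T(M)$. Since $\lambda_k$ can have multiplicity, $\bar\lambda_k(g_t)$ is only Lipschitz in $t$, so I work with the Dini derivatives. The standard Hadamard-type perturbation formula yields
\begin{equation*}
\partial_{t=0^\pm} \bar\lambda_k(g_t) = -\sup/\inf_{u\in\Ecal_{\lambda_k},\,\|u\|_{L^2}=1}\int_M \bigl\langle h,\,Q(u)\bigr\rangle_g\,dv_g,
\end{equation*}
where $Q(u) = du\otimes du - \tfrac12|du|^2 g + \tfrac{\lambda_k}{2}u^2 g - \tfrac{\lambda_k}{2}g$ absorbs both the eigenvalue variation and the area normalization. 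Criticality at $g$ forces $\partial_{t=0^+}\bar\lambda_k \le 0 \le \partial_{t=0^-}\bar\lambda_k$ for every $T$-invariant $h$.

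Next, the equivariant Hahn--Banach step: the map $u\mapsto Q(u)$ is $\Gamma$-equivariant, so the convex cone $\Ccal := \mathrm{conv}\{Q(u) : u\in\Ecal_{\lambda_k},\,\|u\|=1\}$ is $\Gamma$-stable in $L^2(S^2 T^* M)$. The criticality condition translates to: for every $T$-invariant $h$, the linear functional $\langle h,\cdot\rangle_{L^2}$ takes both signs on $\Ccal$, hence vanishes somewhere on $\Ccal$. By the standard convex separation theorem applied in the Hilbert space of symmetric $2$-tensors modulo $T$-invariant multiples of $g$, it follows that some element of $\Ccal$ lies in the linear span of $g$ after orthogonal projection onto $T$-invariant tensors. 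Averaging this element over $\Gamma$ (which preserves $\Ccal$) produces a genuine convex combination
\begin{equation*}
\sum_{i=1}^{N} c_i\,Q(u_i) = \mu\,g \qquad (c_i>0,\ \tsum c_i=1)
\end{equation*}
whose underlying family $\{u_1,\dots,u_N\}\subset\Ecal_{\lambda_k}$ may be chosen $\Gamma$-stable (so that $\Gamma$ permutes the $u_i$, up to signs, preserving the coefficients $c_i$). Carath\'eodory's theorem bounds $N$ by $\dim\Ecal_{\lambda_k}+1$.

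Tracing out the $g$-component of the tensor identity gives $\mu = 0$, and re-expressing the equation yields pointwise on $M$
\begin{equation*}
\sum_i c_i\,du_i\otimes du_i = \tfrac{\lambda_k}{2}\Bigl(\tsum_i c_i u_i^2\Bigr) g \quad\text{(conformality)}, \qquad \sum_i c_i u_i^2 \equiv \text{const} \quad\text{(sphere condition)},
\end{equation*}
as in the classical case. After rescaling $v_i := \sqrt{c_i}\,u_i$ and spanning, set $V := \mathrm{span}\{v_1,\dots,v_N\}$, which is $\Gamma$-stable in $\Ecal_{\lambda_k}$; choose an $L^2$-orthonormal basis $(w_0,\dots,w_n)$ of $V$. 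The map $\Phi := (w_0,\dots,w_n)\colon M\to\R^{n+1}$ then has image in a sphere of fixed radius, its components are $\lambda_k$-eigenfunctions (so $\Phi$ is harmonic), and $\Phi^* g_{\Sph^n}=\alpha g$ by the conformality identity; hence $\Phi$ is a branched minimal immersion into $\Sph^n$. Finally, the $\Gamma$-action on $V$ by pullback is orthogonal in $L^2$, giving a representation $\rho\colon\Gamma\to O(n+1)$ in the basis $(w_i)$, and $\Phi(\gamma\cdot x)=\rho(\gamma)\,\Phi(x)$ by construction.

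The \emph{main obstacle} is the equivariant convex-hull step: extracting a $\Gamma$-stable convex combination despite testing only against $T$-invariant variations. The key observation is that the orthogonal complement of $T$-invariant tensors is spanned by differences $\xi - \gamma_*\xi$, against which the trace of $Q(u)$ over a $\Gamma$-orbit of $u$ vanishes identically; equivalently, the $\Gamma$-average of any convex combination in $\Ccal$ still lies in $\Ccal$ and continues to satisfy the projected identity, so symmetrization costs nothing. This is the rigorous form of symmetric criticality in the Lipschitz-functional setting at hand.
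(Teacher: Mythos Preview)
Your strategy is essentially the paper's: run the Hahn--Banach separation in $L^2(S^2T^*M)^T$ to obtain a convex combination whose $T$-invariant projection equals a multiple of $g$, then remove the projection by $\Gamma$-averaging. The paper implements the averaging via the explicit orbit map $\Phi(x)=|\Gamma|^{-1/2}\bigl(\tilde\Phi(\gamma_1 x),\ldots,\tilde\Phi(\gamma_{|\Gamma|}x)\bigr)$, which is manifestly equivariant for the regular representation and, because $\pi^T(h)=|\Gamma|^{-1}\sum_\gamma\gamma^*h$, converts the projected tensor identity into the full one in a single stroke.

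Your version, however, contains a gap at the orthonormalization step. After averaging you correctly obtain a $\Gamma$-stable family $(v_i)$ with $\sum_i v_i^2\equiv\mathrm{const}$ and conformality, so $\Psi=(v_1,\ldots,v_N)$ is an equivariant branched minimal immersion into $S^{N-1}$. But passing to an $L^2$-orthonormal basis $(w_j)$ of $V=\mathrm{span}\{v_i\}$ and setting $\Phi=(w_0,\ldots,w_n)$ does \emph{not} preserve the sphere condition: writing $v_i=\sum_jA_{ij}w_j$ gives $\sum_i v_i^2=\sum_{j,k}(A^\top A)_{jk}\,w_jw_k$, which equals $\sum_jw_j^2$ only when $A^\top A=I$, i.e.\ when the $v_i$ were already $L^2$-orthonormal. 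The correct reduction is in the target, not in $L^2(M)$: the image of $\Psi$ spans an $(n{+}1)$-dimensional linear subspace $W\subset\R^N$, and choosing an orthonormal basis of $W$ for the \emph{Euclidean} inner product on $\R^N$ expresses $\Psi$ as a map into $W\cap S^{N-1}\cong S^n$, with equivariance preserved because the permutation action of $\Gamma$ on $\R^N$ is orthogonal and stabilizes $W$. (Incidentally, your Carath\'eodory bound on $N$ is lost after $\Gamma$-averaging, but this is harmless since no sharp bound on the number of components is needed.)
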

\begin{proof}
The proof follows closely the corresponding arguments for the unconstrained optimization problem originating from~\cite{Nadirashvili}, see e.g.~\cite{ESI,KaMe, FScontemp}. We follow the exposition in~\cite{FScontemp} and only emphasize the features unique to the constrained optimization.

For any symmetric bilinear form $h$ on $M$ we define a quadratic form $Q_h$ on smooth functions as 
\[
Q_h(u) = - \int_M\langle du\otimes du - \frac{1}{2}|\nabla u|_g^2g + \frac{\lambda_k}{2}u^2g, h\rangle\,dv_g.
\]
Let $L^2(S^2(M))^T$ denote the $L^2$-space of $T$-invariant symmetric bilinear forms on $M$, and let $E_k(g)$ denote the $\lambda_k(M,g)$-eigenspace. The following lemma is proved in the same way as~\cite[Lemma 2.3]{FScontemp}. 
\begin{lemma}
For any $\omega\in L^2(S^2(M))^T$ such that $\int\langle\omega, g\rangle_g\,dv_g = 0$ there exists $u\in E_k(g)$ such that $Q_\omega(u) = 0$.
\end{lemma}

The remainder of the proof uses a Hahn-Banach argument similar to the one in~\cite{FScontemp}, only in the space $L^2(S^2(M))^T$ instead of $L^2(S^2(M))$. Scale the metric $g$ so that $\lambda_k(M,g) = 2$ and let $K$ be the convex hull of 
\[
\left\{du\otimes du - \frac{1}{2}|\nabla u|_g^2g + u^2g,\,\,\,u\in E_k(g)\right\}
\]
in $L^2(S^2(M))$. Define $K^T = \pi^T(K)$, where we let $\pi^T\colon L^2(S^2(M))\to L^2(S^2(M))^T$ be an orthogonal projection. Applying the Hahn-Banach separation theorem in $L^2(S^2(M))^T$ to $K^T$ and $g$, the same computations as in~\cite[p. 7]{FScontemp} imply that there exist eigenfunctions $\wt u_1,\ldots, \wt u_k$ such that 
\[
\pi^T\left(\sum_{i=1}^kd\wt u_i\otimes d\wt u_i - \frac{1}{2}|\nabla \wt u_i|_g^2g + \wt u_i^2g\right) = g.
\]
At the same time, observe that 
\[
\pi^T(h) = \frac{1}{|\Gamma|}\sum_{\gamma\in\Gamma}\gamma^*h.
\]
Denote $\wt\Phi = (\wt u_1,\ldots,\wt u_k)\colon (M,g)\to \R^k$ and consider the map $\Phi = (u_1,\ldots, u_{|\Gamma|k})$ given by
 \begin{equation}
 \label{eq:make_it_equiv}
 \Phi(x) = \frac{1}{\sqrt{|\Gamma|}}\left(\wt{\Phi}(\gamma_1\cdot x),\ldots,\wt{\Phi}(\gamma_{|\Gamma|}\cdot x)\right),
 \end{equation}
 where $\gamma_j$ is any enumeration of elements of $\Gamma$. Then $u_i\in E_k(g)$ and one has  
 \[
 \sum_{i=1}^{|\Gamma|k}d u_i\otimes d u_i - \frac{1}{2}|\nabla u_i|_g^2g +  u_i^2g = g,
 \]
 which as in~\cite{FScontemp} implies that $\Phi$ is a branched minimal immersion to a unit sphere. It remains to observe that $\Phi$ defined by~\eqref{eq:make_it_equiv} is equivariant by construction.
\end{proof}

In the same way, one can prove the following conformally constrained version of the theorem. It is not used in the present paper, so we omit the proof; the case $\bar\lambda_1$-conformally maximal metrics also follows directly from the results of Section 3 below.

\begin{theorem}
Suppose that $g$ is a $T$-invariant metric, conformally critical for $\bar\lambda_k$ in the class of $T$-invariant metrics on $M$. Then there exists an orthogonal representation $\rho\colon \Gamma\to O(n+1)$ and an equivariant harmonic map $\Phi\colon (M,g)\to \Sph^{n}$ to the unit sphere such that the components of $u$ are $\lambda_k(M,g)$-eigenfunctions and $|d\Phi|_g^2 = \alpha$ for some $0<\alpha\in\R$, where $h\in[g]$.
\end{theorem}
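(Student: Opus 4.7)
The plan is to adapt the Hahn--Banach argument from the proof of Theorem~\ref{thm:symm_crit_L} to the conformally constrained setting. The crucial simplification in dimension two is that the Dirichlet energy $\int_M |\nabla u|^2_g\,dv_g$ is conformally invariant, so along a conformal variation only the function $u^2$ (rather than the full stress--energy $2$-tensor $du\otimes du - \tfrac12|\nabla u|^2 g + \tfrac{\lambda_k}{2}u^2 g$) enters the first variation of $\bar\lambda_k$. Accordingly, the separation theorem will be applied in the space $L^2(M)^T$ of $T$-invariant functions rather than in $L^2(S^2(M))^T$.

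First I would compute the one-sided derivatives of $\bar\lambda_k(g_t)$ along a conformal variation $g_t = e^{2t\phi}g$ with $\phi \in C^\infty(M)^\Gamma$. Using the Rayleigh-quotient identity $\lambda_k(g_t) = \min_V \max_{u\in V}\int|\nabla u|^2_g\,dv_g\big/\int u^2 e^{2t\phi}\,dv_g$ together with the variation of the area, after normalising $\Area(M,g) = 1$ one obtains
\[
\partial_t^{\pm} \bar\lambda_k(g_t)\bigr|_{t=0} = 2\lambda_k(g) \cdot \bigl(\sup\text{ or }\inf\bigr)_{u\in E_k(g),\,\|u\|_{L^2}=1}\int_M \phi\,(1-u^2)\,dv_g.
\]
Conformal criticality of $g$ under $T$-invariant variations is then equivalent to the statement that for every $\phi \in C^\infty(M)^\Gamma$ the supremum and infimum above bracket zero. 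Applying Hahn--Banach separation in $L^2(M)^T$, with orthogonal projection $\pi^T f = \tfrac{1}{|\Gamma|}\sum_{\gamma\in\Gamma}\gamma^* f$, to the convex hull
\[
K^T := \pi^T\bigl(\mathrm{conv}\{u^2 : u \in E_k(g),\ \|u\|_{L^2}=1\}\bigr)
\]
yields weights $t_i \ge 0$ with $\sum_i t_i = 1$ and eigenfunctions $\wt u_1,\dots,\wt u_m \in E_k(g)$ such that $\pi^T\bigl(\sum_i t_i \wt u_i^2\bigr) \equiv 1$.

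Finally, I would equivariantise exactly as in the proof of Theorem~\ref{thm:symm_crit_L}: set $v_i = \sqrt{t_i}\,\wt u_i$, enumerate $\Gamma = \{\gamma_1,\dots,\gamma_{|\Gamma|}\}$, and define
\[
\Phi(x)_{i,j} := \frac{1}{\sqrt{|\Gamma|}}\,v_i(\gamma_j\cdot x), \qquad 1\le i\le m,\ 1\le j\le |\Gamma|,
\]
with $\Gamma$ acting orthogonally on the codomain by the obvious permutation of the index $j$. Then $|\Phi(x)|^2 = \pi^T\bigl(\sum_i v_i^2\bigr)(x) \equiv 1$, so $\Phi$ lands in the unit sphere and is $\Gamma$-equivariant by construction, and each of its components is a $\lambda_k(g)$-eigenfunction. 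Taking the Laplacian of the identity $|\Phi|^2 \equiv 1$ and using $\Delta u_i = \lambda_k u_i$ gives $|d\Phi|_g^2 = \sum_{i,j} |\nabla(v_i\circ\gamma_j)|^2/|\Gamma| = \lambda_k(g)$ pointwise, so $\Phi$ is automatically a harmonic map to $\Sph^{m|\Gamma|-1}$ with constant energy density $\alpha = \lambda_k(g)$, completing the argument.

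The main technical point, just as in the unconstrained Theorem~\ref{thm:symm_crit_L}, is the careful treatment of the one-sided derivatives of $\bar\lambda_k$ in the presence of eigenvalue multiplicity and the resulting reduction of the critical-point condition to a separation statement for the convex set $K^T$. This is standard (compare the arguments in~\cite{ESI,FScontemp}) and becomes noticeably cleaner in the conformal setting because only scalar-valued quantities appear.
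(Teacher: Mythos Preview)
Your proposal is correct and matches the approach the paper indicates: the paper itself omits the proof of this theorem, saying only that it is proved ``in the same way'' as Theorem~\ref{thm:symm_crit_L}, which is precisely the adaptation you carry out---replacing $L^2(S^2(M))^T$ by $L^2(M)^T$ thanks to the conformal invariance of the Dirichlet energy in two dimensions, running the Hahn--Banach separation there, and equivariantising via~\eqref{eq:make_it_equiv}. The observation that taking the Laplacian of $|\Phi|^2\equiv 1$ forces $|d\Phi|_g^2\equiv\lambda_k(g)$, and hence harmonicity of $\Phi$, is the correct way to close the argument.
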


The following are Steklov counterparts of the previous theorems. 

\begin{theorem}
\label{thm:symm_crit_S}
Suppose that $g$ is a $T$-invariant metric, critical for $\bar\sigma_k$ in the class of $T$-invariant metrics on $N$. Then there exists an orthogonal representation $\rho\colon \Gamma\to O(n+1)$ and an equivariant free boundary (branched) minimal immersion $\Psi\colon (N,g)\to \B^{n+1}$ to the unit ball such that the components of $u$ are $\sigma_k(M,g)$-eigenfunctions and $\left(\Psi|_{\del N}\right)^*g_{\Sph^n} = \alpha g|_{\del N}$ for some $0<\alpha\in\R$.
\end{theorem}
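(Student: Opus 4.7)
The plan is to adapt, step by step, the Hahn--Banach argument from the proof of Theorem~\ref{thm:symm_crit_L} to the Steklov setting, replacing the Laplace quadratic form with its boundary analog and working in the space of $T$-invariant symmetric bilinear forms on $\partial N$. Throughout I follow the unconstrained Fraser--Schoen argument \cite{FScontemp} and emphasize only the points where the $\Gamma$-symmetry has to be handled.

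First, after normalizing so that $\sigma_k(N,g)=1$, for any symmetric bilinear form $h$ on $\partial N$ define the quadratic form on Steklov eigenfunctions
\[
Q_h(u) \;=\; -\int_{\partial N}\Bigl\langle du\otimes du - \tfrac{1}{2}|\nabla^{\partial N} u|_g^2\,g + \tfrac{1}{2}u^2 g,\,h\Bigr\rangle\,ds_g,
\]
where the derivatives on $\partial N$ are tangential and the eigenfunctions are extended harmonically into $N$. A routine first-variation computation (identical to \cite[\S 2]{FScontemp}) shows that if $g_t$ is a smooth family of metrics on $N$ whose boundary restriction varies with initial velocity $h = \partial_t (g_t|_{\partial N})|_{t=0}$, then $Q_h$ governs the first-order behavior of $\bar\sigma_k$ via the eigenspace $E_k(g)$. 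The first lemma I need is the $T$-invariant analog of \cite[Lemma 2.3]{FScontemp}: for every $\omega\in L^2(S^2(\partial N))^T$ with $\int_{\partial N}\langle\omega,g\rangle\,ds_g=0$, there exists $u\in E_k(g)$ with $Q_\omega(u)=0$. Its proof is the same as in \cite{FScontemp}, using now that a $T$-invariant $\omega$ can be realized as the boundary velocity of a smooth family of $T$-invariant metrics (by averaging an arbitrary extension over $\Gamma$), and invoking the criticality of $g$ in $\Met_T(N)$.

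Next, let $\pi^T\colon L^2(S^2(\partial N))\to L^2(S^2(\partial N))^T$ denote orthogonal projection, and let $K$ be the convex hull in $L^2(S^2(\partial N))$ of
\[
\Bigl\{\,du\otimes du - \tfrac{1}{2}|\nabla^{\partial N} u|_g^2\,g + u^2 g \,:\, u\in E_k(g)\Bigr\}.
\]
Set $K^T:=\pi^T(K)$. Applying the Hahn--Banach separation theorem in $L^2(S^2(\partial N))^T$ to $K^T$ and $g|_{\partial N}$, exactly as in the proof of Theorem~\ref{thm:symm_crit_L}, produces eigenfunctions $\tilde u_1,\dots,\tilde u_m\in E_k(g)$ with
\[
\pi^T\!\left(\sum_{i=1}^{m}\Bigl(d\tilde u_i\otimes d\tilde u_i - \tfrac{1}{2}|\nabla^{\partial N}\tilde u_i|_g^2\,g + \tilde u_i^2 g\Bigr)\right) \;=\; g|_{\partial N}.
\]

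Finally, I make the family equivariant by the same averaging trick \eqref{eq:make_it_equiv}: letting $\tilde\Psi=(\tilde u_1,\dots,\tilde u_m)$ and enumerating $\Gamma=\{\gamma_1,\dots,\gamma_{|\Gamma|}\}$, define
\[
\Psi(x) \;=\; \tfrac{1}{\sqrt{|\Gamma|}}\bigl(\tilde\Psi(\gamma_1\cdot x),\dots,\tilde\Psi(\gamma_{|\Gamma|}\cdot x)\bigr)\colon N\to\R^{m|\Gamma|}.
\]
The components of $\Psi$ lie in $E_k(g)$, and unwinding $\pi^T$ as an average over $\Gamma$ shows that $\sum_i du_i\otimes du_i - \tfrac{1}{2}|\nabla u_i|_g^2 g + u_i^2 g = g$ on $\partial N$; tracing this identity yields $|\Psi|^2=1$ on $\partial N$ and $(\Psi|_{\partial N})^*g_{\Sph^{n}} = \tfrac{1}{2}g|_{\partial N}$. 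Since each $u_i$ is a Steklov eigenfunction with $\partial_\eta u_i = u_i$ on $\partial N$, the map $\Psi$ satisfies the free-boundary condition $\Psi(\partial N)\subset\Sph^{n}$ with $\partial_\eta\Psi\perp\Sph^n$, and the conformality on $\partial N$ together with harmonicity implies that $\Psi$ is a (branched) free boundary minimal immersion into $\B^{n+1}$ by the argument of \cite{FScontemp,FraserSchoen}. Equivariance under the left-regular action of $\Gamma$ on the index set $\{\gamma_1,\dots,\gamma_{|\Gamma|}\}$ is immediate from the construction, giving the orthogonal representation $\rho\colon\Gamma\to O(n+1)$ after passing to the span.

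The only substantive point beyond the closed case is verifying that the Hahn--Banach step survives restriction to $L^2(S^2(\partial N))^T$, i.e. that one cannot separate $g|_{\partial N}$ from $K^T$; this is where the preceding lemma on $T$-invariant $\omega$ is essential. Everything else is a direct transcription of the unconstrained Fraser--Schoen argument composed with the symmetrization in \eqref{eq:make_it_equiv}.
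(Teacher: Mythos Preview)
Your approach has a genuine gap: working only in $L^2(S^2(\partial N))^T$ is not enough, because the first variation of $\bar\sigma_k$ is \emph{not} determined by the boundary restriction $h_\partial=\partial_t(g_t|_{\partial N})$ alone. The Rayleigh quotient involves the Dirichlet energy $\int_N|du|^2\,dv_g$, whose variation contributes an interior term $-\int_N\langle du\otimes du-\tfrac12|\nabla u|^2g,\,h\rangle\,dv_g$ depending on all of $h$ on $N$. Consequently your boundary-only $Q_h$ is not the derivative of $\bar\sigma_k$, your lemma on the existence of $u\in E_k(g)$ with $Q_\omega(u)=0$ cannot be deduced from criticality in $\Met_T(N)$, and the Hahn--Banach step yields the wrong identity. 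Indeed, since $\partial N$ is one-dimensional, your tensor $du\otimes du-\tfrac12|\nabla^{\partial N}u|^2g+u^2g$ reduces to the scalar $\bigl(\tfrac12(\partial_\tau u)^2+u^2\bigr)g|_{\partial N}$, so even granting the separation you would only obtain $\tfrac12|\Psi_\tau|^2+|\Psi|^2=1$ on $\partial N$, which gives neither $|\Psi|^2=1$ nor the boundary conformality $|\Psi_\tau|^2=|\Psi_\nu|^2$ that you need.

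The paper repairs this by working in the space $\mathcal H_T$ of $T$-invariant \emph{pairs} $(h,h_\partial)\in L^2(S^2(N))\times L^2(S^2(\partial N))$, with
\[
Q_{(h,h_\partial)}(u)=-\int_N\Bigl\langle du\otimes du-\tfrac12|\nabla u|^2g,\,h\Bigr\rangle\,dv_g-\tfrac{\sigma_k}{2}\int_{\partial N}\langle u^2g|_{\partial N},\,h_\partial\rangle\,ds_g,
\]
separating the convex hull of $\bigl(du\otimes du-\tfrac12|\nabla u|^2g,\;u^2g|_{\partial N}\bigr)$ from the point $(0,\,g|_{\partial N})$. This yields two equations: $\sum_i du_i\otimes du_i-\tfrac12|\nabla u_i|^2g=0$ on $N$ (conformality, hence minimality) and $\sum_i u_i^2=1$ on $\partial N$ (free boundary). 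Your symmetrization via \eqref{eq:make_it_equiv} is exactly right once these are in hand; the missing ingredient is the interior component of the variation.
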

\begin{proof}
Similarly to the proof of Theorem~\ref{thm:symm_crit_L}, a modification of the arguments in~\cite{FScontemp} yields the proof. This time for a pair $(h, h_\del)$, where $h$ ($h_\del$) is a bilinear symmetric form on $N$ (on $\del N$ respectively) one sets
\[
Q_{(h,h_\del)}(u) = - \int_N\langle du\otimes du - \frac{1}{2}|\nabla u|_g^2g, h\rangle\,dv_g - \frac{\sigma_k}{2}\int_{\del N} \langle u^2g|_{\del N}, h_\del\rangle\,ds_g.
\]
Denote by $\mH_T$ the $L^2$ space of $T$-invariant pairs $(h,h_\del)$ and by $E_k(g)$ the $\sigma_k(N,g)$-eigenspace. Then we have the following analogue of~\cite[Lemma 2.6]{FScontemp}.
\begin{lemma}
For any $(h,h_\del)\in\mH^T$  such that $\int \langle g|_{\del N}, h_\del\rangle = 0$ there exists $u\in E_k(g)$ such that $Q_{(h,h_\del)}(u) = 0$.
\end{lemma}
Scale the metric $g$ so that $\sigma_k(M,g) = 2$ and let $K^T$ be the orthogonal projection of the convex hull of 
\[
\left\{\left(du\otimes du - \frac{1}{2}|\nabla u|_g^2g, u^2g|_{\del N}\right),\,\,\,u\in E_k(g)\right\}
\]
to $\mH^T$. Applying Hahn-Banach separation theorem in $\mH^T$ to $K^T$ and $(0,g|_{\del M})$, the same computations as in~\cite[p. 10]{FScontemp} imply that there exist eigenfunctions $\wt u_1,\ldots, \wt u_k$ such that 
\begin{equation*}
\pi^T\left(\sum_{i=1}^kd\wt u_i\otimes d\wt u_i - \frac{1}{2}|\nabla \wt u_i|_g^2g, \sum_{i=1}^k \wt u_i^2g|_{\del N}\right) = (0, g|_{\del N}).
\end{equation*}
Constructing $\Psi$ from $\wt\Psi$ using formula~\eqref{eq:make_it_equiv} completes the proof.\qedhere

\end{proof}

\begin{theorem}
Suppose that $g$ is a $T$-invariant metric, conformally critical for $\bar\sigma_k$ in the class of $T$-invariant metrics on $N$. Then there exists an orthogonal representation $\rho\colon \Gamma\to O(n+1)$ and a free boundary harmonic map $\Psi\colon (N,g)\to \B^{n+1}$ to the unit ball such that the components of $u$ are $\sigma_k(M,g)$-eigenfunctions and 
$|\del_\nu\Psi| = \alpha$ for some $0<\alpha\in\R$, where $\nu$ is an outer unit normal to $\del N$ in the metric $g$.
\end{theorem}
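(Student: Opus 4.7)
The plan is to adapt the Hahn-Banach argument of Theorem~\ref{thm:symm_crit_S} to the conformally-constrained setting, with the key new input being the following two-dimensional identity: for any $\phi \in C^\infty(N)$ and any smooth $u$,
\[
\left\langle du\otimes du - \tfrac{1}{2}|\nabla u|_g^2\, g,\, \phi g\right\rangle_g = \phi|\nabla u|_g^2 - \tfrac{1}{2}|\nabla u|_g^2 \phi \cdot \langle g,g\rangle_g = 0,
\]
since $\langle g,g\rangle_g = \dim N = 2$. Consequently, if $h = \phi g$ is a conformal deformation (with induced boundary variation $h_\del = \phi|_{\del N}\, g|_{\del N}$), the interior contribution to $Q_{(h,h_\del)}(u)$ vanishes identically, and the relevant quadratic form reduces to the boundary integral
\[
Q_{(\phi g,\,\phi g|_{\del N})}(u) = -\frac{\sigma_k}{2}\int_{\del N}\phi\, u^2\, ds_g.
\]

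With this reduction in hand, the Hahn-Banach separation is performed on the boundary. Rescale $g$ so that $\sigma_k(N,g) = 2$, let $E_k(g)$ be the $\sigma_k$-eigenspace, and let $K^T \subset L^2(\del N)^T$ be the orthogonal projection onto the $T$-invariant subspace of the convex hull of $\{u^2|_{\del N}\mid u\in E_k(g)\}$. Criticality of $g$ in $\Met_T(N)\cap [g]$ forces $1 \in K^T$: otherwise a $T$-invariant function $\phi$ strictly separating $1$ from $K^T$, extended harmonically to $N$, would yield a length-preserving conformal variation along which $\bar\sigma_k$ strictly increases, contradicting criticality. Following the algebraic manipulation on p.~10 of~\cite{FScontemp}, one then extracts eigenfunctions $\wt u_1,\ldots,\wt u_k \in E_k(g)$ with
\[
\pi^T\!\Bigl(\sum_{i=1}^k \wt u_i^{\,2}\Bigr) = 1\quad\text{on } \del N,
\]
where $\pi^T h = |\Gamma|^{-1}\sum_{\gamma\in\Gamma}\gamma^* h$.

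Finally, one symmetrizes and identifies the map. Forming $\Psi = (u_1,\ldots,u_{|\Gamma|k})\colon N\to\R^{n+1}$ with $n+1 = |\Gamma|k$ via the block construction~\eqref{eq:make_it_equiv}, the map is $\Gamma$-equivariant for the orthogonal representation $\rho$ permuting the $|\Gamma|$ blocks by left translation, its components remain $\sigma_k$-eigenfunctions, and the averaged identity above becomes $|\Psi|^2 \equiv 1$ on $\del N$. Since each $u_i$ is harmonic in $N$, the maximum principle applied to $|\Psi|^2$ ensures $\Psi(N)\subset \B^{n+1}$. Using $\del_\nu u_i = \sigma_k u_i$ on $\del N$ together with $|\Psi| = 1$ there, one concludes $|\del_\nu\Psi| = \sigma_k|\Psi| = \sigma_k =: \alpha$ on $\del N$, as required. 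The main subtlety, handled exactly as in Theorem~\ref{thm:symm_crit_S}, is ensuring that separation inside $L^2(\del N)^T$ produces eigenfunctions whose $\Gamma$-average yields the constant $1$, rather than $\Gamma$-invariant eigenfunctions directly; the block construction~\eqref{eq:make_it_equiv} then promotes the averaged identity to the pointwise identity $|\Psi|^2 = 1$.
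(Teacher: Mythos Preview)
The paper omits the proof of this theorem entirely, so there is nothing to compare against directly; your argument is precisely the natural adaptation of the proof of Theorem~\ref{thm:symm_crit_S} to the conformally-constrained setting, and it is correct. The key observation---that the interior term $\langle du\otimes du - \tfrac12|\nabla u|^2 g,\,\phi g\rangle$ vanishes identically in dimension two, reducing the Hahn--Banach argument to $L^2(\partial N)^T$---is exactly what makes the conformal case simpler than the full case, and your deductions that $|\Psi|^2\equiv 1$ on $\partial N$ (via the block construction~\eqref{eq:make_it_equiv}), that $\Psi(N)\subset\B^{n+1}$ by subharmonicity of $|\Psi|^2$, and that $|\partial_\nu\Psi|=\sigma_k$ from the Steklov condition, are all sound.

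One minor imprecision: the phrase ``$\bar\sigma_k$ strictly increases, contradicting criticality'' is slightly loose, since criticality here is not maximality but the two-sided condition that along every variation the left and right derivatives of $\bar\sigma_k$ have opposite signs (or vanish). What the separation actually yields is a $T$-invariant $\phi$ with $\int_{\partial N}\phi\,ds_g=0$ and $\int_{\partial N}\phi\,u^2\,ds_g$ of a fixed sign for all $u\in E_k(g)$, which forces $\bar\sigma_k$ to be strictly monotone (one direction) along the $\phi$-variation---and that is what contradicts criticality. Since you immediately defer to the computations on p.~10 of~\cite{FScontemp}, which handle exactly this, the argument is complete as written.
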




\section{Min-max characterization of $\Lambda_1^T(M,[g])$}\label{lap.mm}

Let $(M,g)$ be a closed surface, and let $\Gamma$ 
be a finite subgroup of the isometry group. Denote by $A_{\Gamma}\cong \mathbb{R}^{|\Gamma|}$ the group algebra
$$A_{\Gamma}:=\bigoplus_{\gamma\in \Gamma}\mathbb{R}[e_\gamma],$$
equipped with the left regular representation defined by requesting that $\rho(\gamma)(e_\sigma)=e_{\gamma\sigma}$. Likewise, for each $m\in \mathbb{N}$, let $\Gamma$ act on $A_{\Gamma}^m\cong \mathbb{R}^{|\Gamma|m}$ via the left regular representation on each factor, and denote by $F_m\subset A_{\Gamma}^m$ the $m$-dimensional fixed point set, spanned by $m$ copies of $\sum\limits_{\gamma\in\Gamma}e_{\gamma}$. 

For each $m$, let $X : = W^{1,2}(M, A^m_\Gamma)$ and define the Hilbert space
$$X_\Gamma: = W^{1,2}_{\Gamma}(M,A_{\Gamma}^m)
:=\{u\in W^{1,2}(M,A_{\Gamma}^m)\mid u\circ \gamma=\rho(\gamma)\cdot u,\,\text{ }\forall \gamma\in \Gamma\}$$
of $\Gamma$-equivariant $W^{1,2}$ maps to $A_{\Gamma}^m \cong \mathbb{R}^{|\Gamma|m}$.

We collect the analytic preliminaries we need in the following Proposition.
\begin{proposition}\label{gl.pre-mm} For each $\epsilon>0$, the Ginzburg-Landau energy
\[
E_{\epsilon}(u):=\int_M\frac{1}{2}|du|^2+\frac{(1-|u|^2)^2}{4\epsilon^2}\,dv_g
\]
defines a $C^2$ functional on the space $X_{\Gamma}$ satisfying the following. 
\begin{enumerate}[label=\emph{(\roman*)}]
\item $E_{\epsilon}'(u)(v)=\int_M \langle \Delta_g u-\epsilon^{-2}(1-|u|^2)u,v\rangle\,dv_g$.
\item $E_{\epsilon}''(u)(v,v)=\int_M |dv|^2+2\epsilon^{-2}\langle u,v\rangle^2-\epsilon^{-2}(1-|u|^2)|v|^2\,dv_g.$
\item $E_\epsilon$ satisfies the Palais-Smale condition on $X_\Gamma$.
\item For any critical point $u \in X_\Gamma$ of $E_\epsilon$, the map $v\mapsto E''_\epsilon(u) (v, \cdot)$ defines a Fredholm operator $X_\Gamma \rightarrow X_\Gamma^*$.  
\item $u\in X_\Gamma$ is a critical point for $E_\epsilon$ if and only if 
\begin{align}
\label{gl.e}
\Delta_g u + \epsilon^{-2} (1-|u|^2) u = 0.
\end{align}
\item A solution $u$ of \eqref{gl.e} satisfies $|u| \leq 1$, with $|u|=1$ iff $u$ is constant. 
\end{enumerate}
\end{proposition}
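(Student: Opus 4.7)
The plan is to reduce every claim to the unrestricted Ginzburg--Landau functional on $X := W^{1,2}(M, A_\Gamma^m)$, exploiting that $X_\Gamma \subset X$ is a closed linear subspace cut out by the linear equivariance conditions $u\circ\gamma = \rho(\gamma)\cdot u$, and that $E_\epsilon$ is $\Gamma$-invariant on $X$: $\Gamma$ acts by isometries of $(M,g)$ (preserving the volume form and $|du|^2$) and by orthogonal transformations on $A_\Gamma^m$ via the regular representation (preserving $|u|^2$). All the variational structure on $X$ thus descends to the closed subspace $X_\Gamma$.

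Parts (i) and (ii) are direct G\^ateaux derivative computations on $X$: the two-dimensional Sobolev embedding $W^{1,2}(M)\hookrightarrow L^p(M)$ for every $p<\infty$ makes the quartic potential a $C^2$ function on $X$, and the stated formulas then restrict immediately to $X_\Gamma$. For (v) the key input is the principle of symmetric criticality in this linear setting: given any $\phi\in C^\infty(M, A_\Gamma^m)$, the equivariant average
\[
P\phi(x) \,:=\, \frac{1}{|\Gamma|}\sum_{\gamma\in\Gamma}\rho(\gamma)^{-1}\,\phi(\gamma\cdot x)
\]
lies in $X_\Gamma$, and $\Gamma$-invariance of $E_\epsilon$ yields $E_\epsilon'(u)(\phi) = E_\epsilon'(u)(P\phi)$ for every $u\in X_\Gamma$. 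Hence a critical point of $E_\epsilon|_{X_\Gamma}$ has vanishing first variation against all of $X$, so it solves \eqref{gl.e} weakly; standard two-dimensional elliptic bootstrap then promotes it to a classical solution.

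For (vi), set $w := |u|^2$ and use \eqref{gl.e} to derive the identity $\tfrac{1}{2}\Delta w = -|du|^2 + \epsilon^{-2}(1-w)w$. Evaluation at an interior maximum of $w$ forces $(1-w)w\ge 0$, hence $w\le 1$ and $|u|\le 1$ on all of $M$. For the rigidity statement, if $w=1$ is attained then $v := 1-w \ge 0$ satisfies a linear differential inequality of the form $\Delta v + Cv \ge 0$ with bounded $C$, and the strong minimum principle forces $v \equiv 0$; substituting $|u| \equiv 1$ back into the identity gives $|du|^2 \equiv 0$, so $u$ is constant.

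Parts (iii) and (iv) are standard functional analytic consequences of the subcritical nature of the quartic term in two dimensions. For Palais--Smale, an energy bound on a sequence $(u_n)$ yields a uniform $W^{1,2}$-bound; passing to a subsequence weakly converging in $X_\Gamma$ (the limit stays in the closed subspace), Rellich compactness makes the $L^4$ norms converge and the quartic term pass strongly to the limit, while $E_\epsilon'(u_n)\to 0$ combined with ellipticity of $\Delta$ upgrades weak to strong $W^{1,2}$-convergence. For (iv), the map $v\mapsto E_\epsilon''(u)(v,\cdot)$ on $X_\Gamma$ differs from the Riesz isomorphism $-\Delta+1: X_\Gamma \to X_\Gamma^*$ by zeroth-order multiplication operators with $L^\infty$ coefficients (using $|u|\le 1$ from (vi)), which factor through the compact embedding $W^{1,2}\hookrightarrow L^2$; hence it is a compact perturbation of an isomorphism, so Fredholm of index zero. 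I expect (v) --- specifically the verification that the averaging argument legitimately justifies symmetric criticality at the level of the $W^{1,2}$ weak formulation --- to be the only step requiring any real care; the remaining items are routine bookkeeping once the equivariant structure is in place.
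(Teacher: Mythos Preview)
Your proposal is correct and follows essentially the same route as the paper. The paper cites \cite{KSminmax} for the unrestricted functional on $X$, then shows that for $u\in X_\Gamma$ the Euler--Lagrange expression $\Delta_g u - \epsilon^{-2}(1-|u|^2)u$ is itself $\Gamma$-equivariant (so $E_\epsilon'(u)$ vanishes on $X_\Gamma^\perp$), which is exactly equivalent to your averaging argument for (v); the maximum-principle argument for (vi) is the same as yours up to working with $1-|u|^2$ instead of $|u|^2$.
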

\begin{proof}
As discussed in \cite[Proposition 3.1]{KSminmax}, the energies $E_\epsilon$ define $C^2$ functionals on the larger space $X$, satisfying (i)-(v), but with $X_\Gamma$ replaced with $X$ in (iii)-(v).  It follows immediately that $E_\epsilon$ restricts to a $C^2$ functional on the closed subspace $X_\Gamma$ of $\Gamma$-equivariant maps.  Moreover, if $u \in X_\Gamma$, 
$$
(\Delta_g u)\circ \gamma=\Delta_g (u\circ \gamma)=\Delta_g(\rho(\gamma)\cdot u)=\rho(\gamma)\cdot \Delta_g u
$$
in the weak sense and
$$
((1-|u|^2)u)  \circ \gamma = (1-|u\circ \gamma|^2)u \circ \gamma = (1-|u|^2)\rho(\gamma)\cdot u 
= \rho(\gamma) \cdot (1-|u|^2)u
$$
so that 
$$
E_{\epsilon}'(u)(v)=0\text{ for }v\in X_{\Gamma}^{\perp}.
$$
In particular, it follows that a sequence $u_j\in X_{\Gamma}$ is Palais-Smale for the restricted functional $E_{\epsilon}|_{X_{\Gamma}}$ if and only if it is Palais-Smale for $E_{\epsilon}$ in $X$, and $u\in X_{\Gamma}$ is a critical point for $E_{\epsilon}|_{X_{\Gamma}}$ if and only if it solves \eqref{gl.e} in the weak sense.  Moreover, if $u\in X_{\Gamma}$ is a critical point for $E_{\epsilon}$, it is likewise easy to check that the restricted Hessian $(E_{\epsilon}|_{X_{\Gamma}})''(u)=E_{\epsilon}''(u)|_{X_{\Gamma}}$ defines a Fredholm linear operator $X_{\Gamma}\to X_{\Gamma}^*$.

Finally, for (vi), it is a straightforward exercise to check that a map $u \in X_\Gamma$ solving \eqref{gl.e} weakly must in fact be smooth.  Note then that 
\[
\Delta_g(1-|u|^2) = 2|du|_g^2 - 2\frac{1-|u|^2}{\epsilon^2}|u|^2\geq -2\frac{1-|u|^2}{\epsilon^2}|u|^2,
\]
so if a minimum of $1-|u|^2$ is negative, then $\Delta_g(1-|u|^2)$ is positive, which is a contradiction. This implies $|u|\leq 1$ and, furthermore,  $|u| = 1$ iff $u$ is a constant map.
\end{proof}

 Given such a solution of \eqref{gl.e}, 
 we define the $\Gamma$-equivariant Morse index $\ind_{\Gamma,E_{\epsilon}}(u)$ to be the Morse index of the restricted functional $E_{\epsilon}|_{X_{\Gamma}}$ at the critical point $u$:
$$
\ind_{\Gamma,E_{\epsilon}}(u):=\max\left\{\dim V\mid V\subset X_{\Gamma}\text{ such that }E_{\epsilon}''(u)|_V\text{ is negative-definite}\right\}.
$$

With Proposition \ref{gl.pre-mm} in place, we are now in a position to produce $\Gamma$-equivariant solutions of \eqref{gl.e} with bounds on $\ind_{\Gamma,E_{\epsilon}}$ via min-max methods. Specifically, we will be interested in the following construction.

Let $\B^m\subset F_m$ denote the closed unit ball in the fixed point set $F_m\subset A_{\Gamma}^m$, and denote by $\mathcal{B}_m(\Gamma)\subset C^0(\B^m,W^{1,2}_{\Gamma}(M,A_{\Gamma}^m))$ the collection of families
$$
\B^m\ni y\mapsto u_y\in W^{1,2}_{\Gamma}(M, A_{\Gamma}^m)
$$
satisfying $u_y\equiv y$ for $y\in \partial \B^m$. 

\begin{lemma}
\label{ex.claim.1}\label{mm.ex}
For each $\epsilon>0$, the min-max energy
\begin{align}
\mathcal{E}_{m,\epsilon}^{\Gamma}(M,g):=\inf_{(u_y)\in \mathcal{B}_m}\max_{y\in \B^m}E_{\epsilon}
\end{align} 
is realized as the energy $E_{\epsilon}(u_{\epsilon})$ of a smooth $\Gamma$-equivariant critical point $u_{\epsilon}\in W^{1,2}_{\Gamma}(M, A_{\Gamma}^m)$ for $E_{\epsilon}$, with $\Gamma$-equivariant Morse index
$$
\ind_{\Gamma,E_{\epsilon}}(u_{\epsilon})\leq m.
$$
\end{lemma}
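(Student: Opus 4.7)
The plan is to apply standard min-max machinery for $C^2$ functionals on Hilbert space to the restriction of $E_\epsilon$ to the closed subspace $X_\Gamma$, using the technical package provided by Proposition \ref{gl.pre-mm}.

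First, I would check that the admissible class $\mathcal{B}_m(\Gamma)$ is nonempty and the min-max value is finite: the constant-map family $y\mapsto u_y\equiv y$ belongs to $\mathcal{B}_m(\Gamma)$, since $F_m$ is by definition the subspace of $\Gamma$-fixed vectors in $A_\Gamma^m$ and constants valued in $F_m$ are $\Gamma$-equivariant. This already yields $\mathcal{E}_{m,\epsilon}^\Gamma(M,g)\leq \Area(M,g)/(4\epsilon^2)$. Moreover, for $y\in\partial \B^m$ one has $|y|=1$, so $u_y\equiv y$ is itself a global minimum of $E_\epsilon|_{X_\Gamma}$ (both the gradient term and the potential term vanish), and in particular a critical point at level $0$. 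Consequently, any locally Lipschitz pseudo-gradient flow for $E_\epsilon|_{X_\Gamma}$ fixes these boundary values pointwise in $y$, so the class $\mathcal{B}_m(\Gamma)$ is preserved under deformation.

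Combined with the Palais--Smale property from Proposition \ref{gl.pre-mm}(iii), the standard deformation lemma applies inside $X_\Gamma$, and the min-max principle produces a Palais--Smale sequence at the level $\mathcal{E}_{m,\epsilon}^\Gamma(M,g)$, which subsequentially converges to a critical point $u_\epsilon\in X_\Gamma$ with $E_\epsilon(u_\epsilon)=\mathcal{E}_{m,\epsilon}^\Gamma(M,g)$. Smoothness is then automatic: $u_\epsilon$ solves \eqref{gl.e} weakly, and since $|u_\epsilon|\leq 1$ by Proposition \ref{gl.pre-mm}(vi), the right-hand side is uniformly bounded and a standard elliptic bootstrap gives $u_\epsilon\in C^\infty(M,A_\Gamma^m)$. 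Equivariance of $u_\epsilon$ requires no separate argument, as the entire construction takes place in the closed $\Gamma$-invariant subspace $X_\Gamma$.

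For the Morse index bound $\ind_{\Gamma,E_\epsilon}(u_\epsilon)\leq m$, I would invoke the refinement of the min-max principle due to Ghoussoub (or a Marino--Prodi-style perturbation argument), which produces a critical point whose Morse index does not exceed the dimension of the parameter space. This refinement relies precisely on the Fredholm property of the Hessian from Proposition \ref{gl.pre-mm}(iv): if $u_\epsilon$ had Morse index strictly greater than $m$, one could use a negative eigenspace of the Hessian to build a finite-dimensional push-down deformation and splice it into a nearly optimal family in $\mathcal{B}_m(\Gamma)$, producing a competitor with strictly smaller maximum energy and contradicting the definition of $\mathcal{E}_{m,\epsilon}^\Gamma(M,g)$. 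The main obstacle in the proof is this index estimate: plain min-max yields existence and the critical value almost for free from the Palais--Smale condition, but the sharp bound $\ind_{\Gamma,E_\epsilon}(u_\epsilon)\leq m$ is precisely why Proposition \ref{gl.pre-mm}(iv) was established, and carrying out the deformation inside $X_\Gamma$ (rather than $X$) requires that one works throughout with the restricted Fredholm Hessian rather than with the unrestricted one.
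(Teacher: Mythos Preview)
Your proposal is correct and follows essentially the same approach as the paper: the paper's proof is a one-line citation to \cite[Chapter 10]{Ghou93} combined with Proposition \ref{gl.pre-mm}, and what you have written is precisely an unpacking of what that citation supplies (flow invariance of $\mathcal{B}_m$ via boundary critical points, Palais--Smale giving existence at the min-max level, and Ghoussoub's index bound using the Fredholm Hessian).
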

\begin{proof} 
This follows by combining Proposition \ref{gl.pre-mm} with results in \cite[Chapter 10]{Ghou93}.
\end{proof}

We turn now to the behavior of these critical points $u_{\epsilon}$ as $\epsilon\to 0$, beginning with the problem of finding upper and--more importantly--sharp lower bounds for the limiting energy
$$
\mathcal{E}_m^{\Gamma}(M,g):=\sup_{\epsilon>0}\mathcal{E}_{m,\epsilon}^{\Gamma}=\lim_{\epsilon\to 0}\mathcal{E}_{m,\epsilon}^{\Gamma}.
$$
A priori, it could be that $\mathcal{E}_m^{\Gamma}=\infty$, but we show next that this is not the case for large $m$, by exhibiting families $y\mapsto u_y^{\epsilon}$ in $\mathcal{B}_m$ for which $E_{\epsilon}(u^{\epsilon}_y)\leq C$ for some $C<\infty$ independent of $\epsilon$.

\begin{lemma}\label{mm.ubd}
For $m$ sufficiently large, $\mathcal{E}_m^{\Gamma}(M,g)<\infty$.
\end{lemma}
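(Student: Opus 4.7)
The plan is to construct, for each $\epsilon > 0$, an explicit $\Gamma$-equivariant test family $(u_y^\epsilon)_{y \in \mathbb{B}^m} \in \mathcal{B}_m(\Gamma)$ whose Ginzburg-Landau energy is bounded uniformly in both $\epsilon$ and $y$. The construction adapts the classical Hersch-Nadirashvili test-family device to the equivariant setting, parallel to the Ginzburg-Landau framework of \cite{KSminmax} in the trivial-group case.

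The first step is, for $m$ sufficiently large, to produce a smooth $\Gamma$-equivariant map $\phi \colon M \to S^{N-1} \subset A_\Gamma^m$ with $|\phi|\equiv 1$, where $N = |\Gamma| m$. Such a map exists because $A_\Gamma^m$ consists of $m$ copies of the regular representation of $\Gamma$ and so, by Peter-Weyl, contains every irreducible $\Gamma$-representation with large multiplicity; combining this with the equivariant Mostow-Palais embedding theorem, then adding a sufficiently large $\Gamma$-invariant vector in $F_m$ to ensure nonvanishing and normalizing, produces $\phi$.

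Next, I would parametrize the family using Möbius transformations of the target sphere. For $y \in \mathbb{B}^m$ with $|y| < 1$, let $\Psi_y \colon \overline{\mathbb{B}^N} \to \overline{\mathbb{B}^N}$ denote the hyperbolic translation sending $0$ to $y$. Because $y \in F_m$ is $\Gamma$-fixed and $\Psi_y$ is built out of $y$ and the $\Gamma$-invariant inner product on $A_\Gamma^m$, $\Psi_y$ commutes with the $\Gamma$-action. I set $u_y^\epsilon := \Psi_y \circ \phi$ for $|y| < 1$ and $u_y^\epsilon := y$ for $|y| = 1$. Then $u_y^\epsilon \in X_\Gamma$ with $|u_y^\epsilon| \equiv 1$ on the interior, so the Ginzburg-Landau potential vanishes identically and $E_\epsilon(u_y^\epsilon) = \tfrac{1}{2}\int_M |du_y^\epsilon|^2\, dv_g$ is independent of $\epsilon$.

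The remaining task is a uniform Dirichlet energy bound together with $W^{1,2}$-continuity of $y \mapsto u_y^\epsilon$ at the boundary. By conformality, $|du_y^\epsilon|^2 = \mu_y(\phi)^2 |d\phi|^2$, where $\mu_y(p) = (1-|y|^2)/|p-y|^2$ is the conformal factor of $\Psi_y|_{S^{N-1}}$. This factor is bounded except near the concentration point $y/|y|$, which lies in $F_m \cap S^{N-1} = S^{m-1}$. The main obstacle is therefore controlling $\mu_y(\phi)$ uniformly in $y$: when $\phi(M)$ is separated from $S^{m-1}$ (achievable for $m$ large by transversality when $M^\Gamma = \varnothing$, since $\dim \phi(M) = 2 < m-1$), one gets $\mu_y(\phi) \leq C$ uniformly and $\mu_y(\phi) \to 0$ as $|y| \to 1$, yielding both the energy bound and boundary continuity via dominated convergence. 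When $M^\Gamma \neq \varnothing$, equivariance forces $\phi(M^\Gamma) \subset S^{m-1}$, so global separation is impossible; this is handled by arranging $\phi(M^\Gamma)$ to occupy only a small lower-dimensional subset of $S^{m-1}$ and modifying the family with a $\Gamma$-invariant cutoff localized near $M^\Gamma$ for the bad parameters $y$, the extra Ginzburg-Landau contribution being bounded independently of $\epsilon$.
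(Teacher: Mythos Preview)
Your overall architecture—equivariant sphere-valued map plus M\"obius family parametrized by $F_m\cap\mathbb{B}^N$—is exactly the paper's, and in the case $M^\Gamma=\varnothing$ your argument is actually cleaner than the paper's: once $\phi$ is an equivariant \emph{embedding} (use stereographic projection rather than ``add a fixed vector and normalize,'' which can destroy injectivity), the condition $M^\Gamma=\varnothing$ forces $\phi(M)\cap F_m=\varnothing$ automatically, so $|p_\perp|\geq\delta>0$ on $\phi(M)$, giving $\mu_y\leq(1-|y|^2)/\delta^2$ uniformly and hence both the energy bound and the $W^{1,2}$-continuity at $\partial\mathbb{B}^m$ directly, with no mollification needed.

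The gap is the fixed-point case. When $M^\Gamma\neq\varnothing$, equivariance forces $\phi(M^\Gamma)\subset S^{m-1}$, so the conformal factor $\mu_y$ genuinely blows up on $M^\Gamma$ as $y$ approaches $\phi(M^\Gamma)$, and your separation argument collapses. Your proposed ``$\Gamma$-invariant cutoff localized near $M^\Gamma$'' cannot give a potential term bounded independently of $\epsilon$: any fixed cutoff region where $|u_y|<1$ contributes $\sim\epsilon^{-2}$, and shrinking the region with $\epsilon$ transfers the blow-up to the Dirichlet term (note $M^\Gamma$ can be one-dimensional when $\Gamma$ contains reflections, so even a naive measure count fails). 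This is not a detail that can be patched by a one-line fix.

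The paper handles both cases uniformly with two different devices. For the energy bound it avoids pointwise control of $\mu_y$ altogether, using instead that $G_a$ is a conformal automorphism of the sphere, so the Willmore energy $\mathcal{W}(u_a(M))=\mathcal{W}(u_0(M))$ is constant in $a$ and dominates (a constant multiple of) $E_g(u_a)$; this works regardless of whether $\phi(M)$ meets $S^{m-1}$. For continuity it concedes that the raw family $(u_a)$ is only $L^p$-continuous at the boundary and repairs this by mollifying with the heat kernel, $u_a^t=K_t\ast u_a$: the heat kernel's isometry invariance preserves equivariance, the energy only decreases, and the estimate $\int_M(1-|u_a^t|^2)^2\leq Ct$ lets one take $t\to 0$ after $\epsilon$ to kill the potential. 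You should replace your cutoff sketch with this mollification step.
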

\begin{proof}
To begin, note that by \cite[Thm 4.1, p. 315]{Bredon},
 there exists $N\in \mathbb{N}$ and a representation $\rho_0: \Gamma\to O(N-1)$ such that $M$ admits a $\Gamma$-equivariant embedding
$$
u_0\colon M\to \mathbb{R}^{N-1}.
$$
Observe that stereographic projection sends a reflection across a hyperplane in $\R^{N-1}$ to a reflection across a totally geodesic subsphere in $\Sph^{N-1}$. Since $O(N-1)$ is generated by reflections, replacing $u_0$ by its composition 
 with stereographic projection, we may assume (up to changing $\rho_0$) that
$$
u_0(M)\subset \Sph^{N-1}.
$$
In particular, since $A_{\Gamma}$ contains every irreducible representation of $\Gamma$, we see that this representation $\rho_0\colon\Gamma\to O(N)$ is contained in $m$ copies $A_{\Gamma}^m$ of the regular representation for $m\geq m(\rho_0)$ sufficiently large, so we have a $\Gamma$-equivariant embedding
$$
u_0\colon M\to A_{\Gamma}^m\text{ with }u_0(M)\subset \Sph^{m|\Gamma|-1}.
$$
Next, for each vector $a$ of norm $\leq 1$ in the fixed point set $a\in F_m\cap \B^{m|\Gamma|}(0)$--which can be identified with $\B^m\subset \mathbb{R}^m$--consider the conformal dilations $G_a\in \Conf(\Sph^{m|\Gamma|-1})$ given by
$$
G_a(x)=\frac{(1-|a|^2)}{|x+a|^2}(x+a)+a,
$$
and observe that for any $\gamma\in \Gamma$, we have
$$
\rho(\gamma)\cdot G_a(x)=\frac{(1-|a|^2)}{|\rho(\gamma)x+a|^2}(\rho(\gamma)x+a)+a=G_a(\rho(\gamma)x),
$$
since $\rho(\gamma)(x+a)=\rho(\gamma)(x)+a$. Thus, for $a\in F_m$, the conformal dilations $G_a$ 
 are $\Gamma$-equivariant, so the composition
$$
u_a:=G_a\circ u_0
$$
gives a conformal $\Gamma$-equivariant map $u_a\colon M\to \Sph^{m|\Gamma|-1}$ for $a\in F_m$ with $|a|<1$, while $u_a\equiv a$ for $a\in F_m$ with $|a|=1$.

Similar to \cite[Proposition 3.3]{KSminmax}, we argue next that the family 
$$
a\ni \B^m\mapsto u_a\in W^{1,2}(M,\Sph^{m|\Gamma|-1})
$$
defined via conformal dilations as above, though not continuous in a $W^{1,2}$ sense, is $L^p$-continuous and satisfies a uniform energy bound, so that a suitable mollification gives rise to the desired families in $\mathcal{B}_m$. 
To this end, first note that the family of conformal dilations $a\mapsto G_a$ is continuous in the space of $C^{\infty}$ maps for $|a|<1$, and as $a\to a_0\in F_m$ with $|a_0|=1$, we have $G_a\to a_0$ in $C^{\infty}_{loc}(\mathbb{S}^{m|\Gamma|-1}\setminus\{-a_0\},\mathbb{S}^{m|\Gamma|-1})$. Thus, since $u_0\colon M\to A_{\Gamma}^m$ is a fixed smooth embedding, we see that the family $a\mapsto u_a$ is continuous in a $C^{\infty}$ sense for $|a|<1$, and as $a\to a_0$ with $|a_0|=1$, we have $u_a\to u_{a_0}$ in $C^{\infty}_{loc}(M\setminus u_0^{-1}\{-a_0\})$, which together with the fact that $|u_a|=1$ pointwise implies that $u_a\to u_{a_0}\equiv a_0$ in $L^p(M,A_{\Gamma}^m)$ for every $p\in [1,\infty)$. To see that
$$
\sup_{a\in \B^m}E(u_a)<\infty,
$$
simply note as in \cite{LiYau} that since $u_a$ is conformal for $|a|<1$, we have
$$
E(u_a)=\Area(u_a(M))\leq \mathcal{W}(u_a(M)),
$$
where $\mathcal{W}$ is the spherical Willmore energy $\mathcal{W}(\Sigma)=\int_{\Sigma}(1+\frac{1}{4}|H_{\Sigma}|^2)$. In particular, since $\mathcal{W}$ is invariant under conformal automorphisms of $\Sph^n$,
$$\mathcal{W}(u_a(M))=\mathcal{W}(G_a(u_0(M)))=\mathcal{W}(u_0(M)),$$
so that
\begin{equation}\label{mm.upper}
\sup_{a\in \B^m}E(u_a)\leq \mathcal{W}(u_0(M))<\infty.
\end{equation}

Now, since the family $\B^m\ni a\mapsto u_a\in W^{1,2}_{\Gamma}(M,A_{\Gamma}^m)$ defined above is continuous in an $L^p$ sense with $u_a\equiv a$ for $a\in F_m\cap \Sph^{m|\Gamma|-1}$, as in \cite[Proposition 3.3]{KSminmax}, mollifying by the heat kernel $K_t(x,y)$ on $M$ for some short time $t>0$ gives a family of maps
$$
u_a^t(x):=\int_M u_a(y)K_t(x,y)dy
$$
such that $\B^m\ni a\mapsto u_a\in C^{\infty}(M,A_{\Gamma}^m)$ is continuous in a $C^{\infty}$ sense, still with the property that $u_a^t\equiv a$ when $|a|=1$. Moreover, since the heat kernel $K_t$ satisfies the isometry invariance $K_t(\gamma x,\gamma y)=K_t(x,y)$ for any $\gamma\in \Gamma$, we see that
\begin{eqnarray*}
u_a^t(\gamma x)&=&\int_Mu_a(y)K_t(\gamma x,y)dy\\
&=&\int_Mu_a(y)K_t(x,\gamma^{-1}y)dy\\
&=&\int_M(u_a\circ \gamma)(\gamma^{-1}y)K_t(x,\gamma^{-1}y)dy\\
&=&\int_M(u_a\circ \gamma)(y')K_t(x,y')dy'\\
&=&\rho(\gamma)u_a^t(x),
\end{eqnarray*}
by the $\Gamma$-equivariance of $u_a$. Hence, $\B^m\ni a\mapsto u_a^t$ defines a continuous family in $W^{1,2}_{\Gamma}(M,A_{\Gamma}^m)$, and in particular a family in $\mathcal{B}_m$. Thus, we see that
$$
\mathcal{E}_{m,\epsilon}^{\Gamma}(M,g)\leq \inf_{t>0}\max_{a\in \B^m}E_{\epsilon}(u_a^t),
$$
and to complete the proof, it suffices to show that
$$
\inf_{t>0}\max_{a\in \B^m}E_{\epsilon}(u_a^t)\leq C<\infty
$$
for some constant $C$ independent of $\epsilon$. 

In fact, for every $\epsilon>0$, we can show that
$$
\inf_{t>0}\max_{a\in \B^m}E_{\epsilon}(u_a^t)\leq \sup_{a\in \B^m}E(u_a),
$$
which together with \eqref{mm.upper} gives the desired bound. Note that by the energy-decreasing property of heat flow, we have $E(u_a^t)\leq E(u_a)$ for every $a\in \B^m$ automatically, so to prove this inequality, it suffices to show that
\begin{equation}\label{fam.w.van}
\liminf_{t\to 0}\max_{a\in \B^m}\int_M(1-|u_a^t|^2)^2=0.
\end{equation}
To prove \eqref{fam.w.van}, simply note that standard identities for the heat equation together with \eqref{mm.upper} give a bound of the form
$$
\|u_a^t-u_a\|_{L^2(M)}^2\leq t\cdot (E(u_a)-E(u_a^t))\leq Ct,
$$
and since $|u_a|\equiv 1$, it follows that
$$
\int_M(|u_a^t|-1)^2\,dv_g\leq\int_M|u_a^t-u_a|^2\,dv_g\leq Ct.
$$
But we also have the pointwise upper bound $|u_a^t|\leq 1$, so that $(1-|u_a^t|^2)=(1-|u_a^t|)(1+|u_a^t|)\leq 2(1-|u_a^t|)$, and therefore the preceding inequality gives 
$$
\int_M(1-|u_a^t|^2)^2\,dv_g\leq 4Ct
$$
for $C<\infty$ independent of $a$, from which \eqref{fam.w.van} follows immediately.
\end{proof}

Next, we record the crucial Hersch-type lower bound for $\mathcal{E}_m^{\Gamma}$, providing an equivariant analog of \cite[Proposition 3.4]{KSminmax}.

\begin{proposition}\label{mm.lower} 
$\mathcal{E}_{m,\epsilon}^{\Gamma}(M,g)\geq \frac{1-\epsilon}{2+\epsilon}\bar{\lambda}_1(M,g)$ for each $\epsilon>0$; consequently 
$$\mathcal{E}_m^{\Gamma}(M,g)\geq \bar{\lambda}_1(M,g).$$
\end{proposition}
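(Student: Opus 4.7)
The plan is to adapt the classical Hersch-type argument to this equivariant Ginzburg--Landau setting, essentially following the unconstrained strategy of \cite[Proposition 3.4]{KSminmax}, with the novelty that the barycenter of an equivariant map automatically lies in the fixed-point subspace $F_m$.

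The first step is the key Hersch-type degree argument. Given an arbitrary family $(u_y)_{y\in\B^m}\in\mathcal{B}_m$ with $\max_y E_\epsilon(u_y)<\infty$, I would define the barycenter map
\[
\Phi\colon\B^m\to F_m,\qquad \Phi(y):=\frac{1}{\Area(M,g)}\int_M u_y\,dv_g.
\]
Since $u_y$ is $\Gamma$-equivariant and $dv_g$ is $\Gamma$-invariant, a direct change of variables shows $\rho(\gamma)\cdot\Phi(y)=\Phi(y)$ for every $\gamma\in\Gamma$, so $\Phi$ does land in $F_m\cong\mathbb{R}^m$; continuity in $y$ is inherited from the $W^{1,2}$-continuity of the family. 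The boundary condition $u_y\equiv y$ on $\partial\B^m$ forces $\Phi|_{\partial\B^m}=\mathrm{id}_{\partial\B^m}$, so by Brouwer degree $\Phi$ is surjective onto $\B^m$. In particular there exists $y^\ast\in\B^m$ with $\int_M u_{y^\ast}\,dv_g=0$, meaning that every one of the $|\Gamma|m$ scalar components of $u_{y^\ast}$ is $L^2$-orthogonal to constants.

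For the second step I use each scalar component as a test function for the Rayleigh quotient of $\lambda_1(M,g)$ and sum to obtain
\[
\int_M |du_{y^\ast}|^2\,dv_g\ \geq\ \lambda_1(M,g)\int_M |u_{y^\ast}|^2\,dv_g.
\]
The Ginzburg--Landau penalty gives $\int_M(1-|u_{y^\ast}|^2)^2\leq 4\epsilon^2 E_\epsilon(u_{y^\ast})$, so Cauchy--Schwarz yields
\[
\int_M |u_{y^\ast}|^2\ \geq\ \Area(M,g)-2\epsilon\sqrt{\Area(M,g)\cdot E_\epsilon(u_{y^\ast})}.
\]
Combining these with $\int_M|du_{y^\ast}|^2\leq 2E_\epsilon(u_{y^\ast})$ produces a quadratic inequality in $\sqrt{E_\epsilon(u_{y^\ast})}$; applying Young's inequality to absorb the cross-term and solving yields the explicit bound $E_\epsilon(u_{y^\ast})\geq \frac{1-\epsilon}{2+\epsilon}\bar\lambda_1(M,g)$. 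Since $\max_{y}E_\epsilon(u_y)\geq E_\epsilon(u_{y^\ast})$ and the family was arbitrary, this gives the claimed lower bound on $\mathcal{E}_{m,\epsilon}^\Gamma$, whose limit as $\epsilon\to 0$ is the stated consequence.

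There is no deep obstacle here; the entire proof is a degree argument plus elementary Rayleigh-quotient manipulation. The only point requiring genuine care is verifying that the barycenter map $\Phi$ genuinely takes values in $F_m$ and is the identity on $\partial\B^m$, so that Brouwer degree can be invoked. This is the one place where the $\Gamma$-equivariance of the family and the $\Gamma$-invariance of the volume form are both used, and it is precisely why the target of the family was chosen to be a multiple of the regular representation.
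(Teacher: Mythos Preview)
Your proof is correct and follows essentially the same approach as the paper's. Both arguments hinge on the same topological obstruction (no retraction of $\B^m$ onto $\Sph^{m-1}$) to find $y^\ast$ with $\int_M u_{y^\ast}\,dv_g=0$, followed by the identical Rayleigh-quotient/Young's-inequality manipulation; the only cosmetic difference is that you first observe the barycenter already lies in $F_m$ by equivariance and then run the degree argument, whereas the paper first projects to $F_m$, finds a zero of the projection, and afterwards notes that equivariance forces the full integral to lie in $F_m$.

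One minor phrasing point: you write that Brouwer degree makes $\Phi$ ``surjective onto $\B^m$,'' but $\Phi$ maps into $F_m\cong\R^m$ with no a priori bound on its range. What you actually need---and what the no-retraction argument gives---is only that $0\in\Phi(\B^m)$, which suffices for the proof.
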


\begin{proof}

Fix an arbitrary family $(u_y)\in \mathcal{B}_m$. Defining the orthogonal projection map $P_{F_m}\colon A_{\Gamma}^m\to F_m$ onto the fixed point subspace $F_m$, we claim first that $P_{F_m}(\int_M u_ydv_g)=0$ for some $y\in \B^m$. Indeed, if not, then by definition of $\mathcal{B}_m$, the assignment
$$
f(y):=\frac{P_{F_m}(\int_M u_y\,dv_g)}{|P_{F_m}(\int_M u_y\,dv_g)|}
$$
defines a continuous extension $f\colon \B^m\to \Sph^{m-1}$ of the identity $\Sph^{m-1}\to \Sph^{m-1}$, which cannot exist. On the other hand, since each map $u_y$ is $\Gamma$-equivariant, we see that the integral $\int_Mu_y dv_g$ must lie in $F_m$, which together with the preceding observation implies the existence of some $y\in \B^m$ such that
$$
\int_M u_y\,dv_g=0.
$$

Now we can argue exactly as in \cite[Proposition 3.4]{KSminmax}; since the component functions of $u_y$ are orthogonal to the constants in $L^2_g(M)$, we must have
$$
2E_{\epsilon}(u_y)\geq \int_M |du_y|^2_g\,dv_g\geq \lambda_1(M,g)\int_M|u_y|^2\,dv_g,
$$
and by definition of $E_{\epsilon}$ it's easy to see that
$$
\int_M |u_y|^2=\Area(M,g)-\int_M(1-|u_y|^2)\geq \Area(M)-2\epsilon\sqrt{\Area(M)}E_{\epsilon}(u)^{1/2},
$$
which gives in particular $\int_M |u_y|^2\,dv_g\geq (1-\epsilon)\Area(M,g)-\epsilon E_{\epsilon}(u)$. Combining this with the lower bound for $2E_{\epsilon}(u_y)$ above, we see that
$$
(2+\epsilon)E_{\epsilon}(u_y)\geq (1-\epsilon)\lambda_1(M,g)\Area(M,g),
$$
and therefore
$$
\max_{y\in \B^m}E_{\epsilon}(u_y)\geq \frac{1-\epsilon}{2+\epsilon}\bar{\lambda}_1(M,g).
$$
Since the family $(u_y)$ in $\mathcal{B}_m$ was arbitrary, taking the infimum over all families in $\mathcal{B}_m$ gives the desired estimates.
\end{proof}

\begin{remark}\label{gen.hersch}
The preceding arguments can also be used to show that, for every $\Gamma$-invariant bounded linear operator $T\in (W^{1,2}(M))^*$ and any family $(u_y)\in \mathcal{B}_m$, there exists $y\in \B^m$ such that all components of $u_y$ lie in $\ker(T)$. 
\end{remark}

Moreover, the same simple argument from \cite[Section 3.1]{KSminmax} showing conformal invariance of $\mathcal{E}_m$ in the case of trivial $\Gamma$ continues to hold, so though $\mathcal{E}_{m,\epsilon}^{\Gamma}(M,g)$ depends on the choice of $\Gamma$-invariant metric $g$ for every $\epsilon>0$, the $\epsilon\to 0$ limit $\mathcal{E}_m^{\Gamma}(M,g)$ is invariant under $\Gamma$-invariant conformal changes, which we will emphasize henceforth by writing
$$
\mathcal{E}_m^{\Gamma}(M,[g]):=\mathcal{E}_m^{\Gamma}(M,g),
$$
where $[g]$ denotes all metrics of the form $\tilde{g}=f g$ for $f$ a positive $\Gamma$-invariant function. In particular, 
it follows from the preceding proposition that
\begin{equation}\label{mm.lower.1}
2\mathcal{E}_m^{\Gamma}(M,[g])\geq \Lambda_1^{\Gamma}(M,[g])
\end{equation}
for every $m$.

\subsection{Asymptotics for the min-max maps as $\epsilon\to 0$ and $m\to\infty$}

Next, building on ideas from \cite{KSminmax} and \cite{KS22}, we use the Morse index bound $\ind_{\Gamma,E_{\epsilon}}(u_{\epsilon})\leq m$ for the critical points produced in Lemma~\ref{ex.claim.1} to obtain an asymptotic upper bound
$$
\lim_{m\to\infty}2\mathcal{E}_m^{\Gamma}(M,[g])\leq \Lambda_1^{\Gamma}(M,[g])
$$
matching~\eqref{mm.lower.1} as $m\to\infty$. This asymptotic version of the desired equality then allows us to streamline the bubbling analysis of \cite{KSminmax}, to conclude that the equality $\mathcal{E}_m^{\Gamma}=\Lambda_1^{\Gamma}$ holds for $m$ sufficiently large before passing to the limit.

\begin{proposition}\label{index.eigen.bd}
Let $u\in W^{1,2}_{\Gamma}(M,A_{\Gamma}^m)$ be a critical point for $E_{\epsilon}$ with 
$$
\ind_{\Gamma,E_{\epsilon}}(u_{\epsilon})\leq m.
$$
Then $g_u:=\frac{(1-|u|^2)}{\epsilon^2}g$ defines a $\Gamma$-invariant metric in $[g]$, and there is a universal constant $C<\infty$ such that for $m>C$, 
$$\lambda_1(g_u): = \lambda_1(M, g_u)  \geq 1-\frac{C}{m}.$$
\end{proposition}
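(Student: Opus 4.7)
First I would confirm that $g_u=(1-|u|^2)g/\epsilon^2$ is a genuine $\Gamma$-invariant conformal metric. The factor is a.e.\ positive because Proposition~\ref{gl.pre-mm}(vi) gives $|u|<1$ unless $u$ is constant (the vacuous case), and $|u|^2$ is $\Gamma$-invariant because $u$ is $\Gamma$-equivariant and $\rho$ acts orthogonally.

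For the eigenvalue estimate I would argue by contradiction. Suppose $\lambda_1(g_u)<1-C/m$ for a universal $C$ to be chosen. By the Rayleigh quotient characterization there is a nonzero $\phi\in W^{1,2}(M)$ orthogonal to constants in $L^2(g_u)$, that is, $\int_M \phi(1-|u|^2)\,dv_g=0$, satisfying
\[
\int_M |d\phi|^2\,dv_g \;<\; (1-C/m)\int_M \phi^2(1-|u|^2)/\epsilon^2\,dv_g.
\]
The plan is to exhibit a subspace of $X_\Gamma$ of dimension at least $m+1$ on which $E_\epsilon''(u)$ is negative-definite, contradicting $\ind_{\Gamma,E_\epsilon}(u_\epsilon)\leq m$. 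Because $g_u$ is $\Gamma$-invariant, its $\lambda_1$-eigenspace is $\Gamma$-invariant; when this eigenspace contains a $\Gamma$-invariant element I take $\phi$ to be $\Gamma$-invariant, and otherwise I replace the construction below by its twisted analogue built from $v_\eta(x):=\sum_{\gamma\in\Gamma} \phi(\gamma^{-1}x)\rho(\gamma)\eta$, which is $\Gamma$-equivariant for every $\eta\in A_\Gamma^m$.

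Let $\{e_1,\ldots,e_m\}$ be an orthonormal basis of the fixed subspace $F_m\subset A_\Gamma^m$. Assuming $\phi$ is $\Gamma$-invariant, each $v_j:=\phi e_j$ lies in $X_\Gamma$, and Proposition~\ref{gl.pre-mm}(ii) gives the matrix
\[
A_{jk}:=E_\epsilon''(u)(v_j,v_k) = \alpha\,\delta_{jk}+2\epsilon^{-2}M_{jk},
\]
with $\alpha:=\int_M |d\phi|^2 - \epsilon^{-2}\int_M \phi^2(1-|u|^2)<-(C/m)\int_M \phi^2(1-|u|^2)/\epsilon^2$ and $M_{jk}:=\int_M \phi^2\langle u,e_j\rangle\langle u,e_k\rangle$ a positive semidefinite Gram matrix with $\mathrm{tr}(M)\leq\int_M \phi^2|u|^2\leq \int_M \phi^2$. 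A Markov-type argument shows that only $O(1)$ eigenvalues of $2\epsilon^{-2}M$ can exceed $|\alpha|/2$, so at least $m-O(1)$ eigenvalues of $A$ are strictly negative. Adjoining the constant test vectors $\tilde v_j:=e_j$ gives a $2m\times 2m$ block matrix whose cross-block $E_\epsilon''(u)(\tilde v_j,v_k)=2\epsilon^{-2}\int_M \phi\langle u,e_j\rangle\langle u,e_k\rangle$ simplifies because the potentially troublesome term $-\epsilon^{-2}\int_M \phi(1-|u|^2)\delta_{jk}$ vanishes by the orthogonality constraint on $\phi$. Combining trace bounds across all three blocks yields at least $m+1$ independent negative directions once $C$ is taken sufficiently large.

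The main technical obstacle is that the trace bound on $M$ degenerates when $|u|$ is close to $1$ on the support of $\phi$, making $\int_M \phi^2/\int_M \phi^2(1-|u|^2)$ large. Controlling this will require either sharpening the trace estimate via the weighted $L^2(g_u)$-normalization of $\phi$, or supplementing the test vectors $\phi e_j$ with the components $u_i$ of $u$ themselves, which are $\lambda=1$ eigenfunctions of $\Delta_{g_u}$ by Proposition~\ref{gl.pre-mm}(v) and so lie in the low-eigenvalue subspace. Extending the argument to the case when the first $g_u$-eigenspace has no $\Gamma$-invariant element, via the twisted construction $v_\eta$, adds bookkeeping involving the isotypic decomposition of $A_\Gamma^m$ under $\Gamma$ but follows the same overall outline.
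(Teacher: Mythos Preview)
Your outline has the right shape—build test variations from a first $g_u$-eigenfunction $\phi$ and constants, take a Markov argument on a trace bound, and find more than $m$ negative directions—but the gap you flag at the end is the heart of the matter, and your suggested fixes do not close it.

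The difficulty is exactly the currency mismatch you identify: for $v_j=\phi e_j$ the cross-term in the Hessian produces $2\epsilon^{-2}\int_M \phi^2\langle u,e_j\rangle\langle u,e_k\rangle$, whose trace is controlled by $\int_M\phi^2$, while the favourable diagonal term $\alpha$ scales like $\|\phi\|^2_{L^2(g_u)}=\int_M\phi^2(1-|u|^2)/\epsilon^2$. Nothing prevents $|u|$ from being close to $1$ on most of $M$, so these are incomparable and the Markov step fails. Using the components $u_i$ as additional test directions does not help here, since they give only finitely many further negative directions and do not repair the trace bound.

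The paper's resolution is to replace $\phi e_j$ by the \emph{projected} test fields
\[
v_{\phi,w}=\sum_{\gamma\in\Gamma}(\phi\circ\gamma^{-1})\bigl(w_\gamma-\langle u,w_\gamma\rangle u\bigr),\qquad w\in W_m,
\]
where $W_m$ is the span of the $m$ copies of $e_I$ (not the fixed subspace $F_m$). The subtraction of $\langle u,w_\gamma\rangle u$ forces $\langle v_{\phi,w},u\rangle=(1-|u|^2)\sum_\gamma(\phi\circ\gamma^{-1})\langle u,w_\gamma\rangle$, so the dangerous term $2\epsilon^{-2}\langle u,v\rangle^2$ already carries a factor $(1-|u|^2)^2$. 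After this, the error quadratic form $J_u(w,\phi)$ has trace over an orthonormal basis of $W_m$ bounded by $\int_M|d\phi|^2+\phi^2(2|du|^2+(1-|u|^2)/\epsilon^2)$, and integrating the equation $\Delta u=\epsilon^{-2}(1-|u|^2)u$ against $\phi^2 u$ converts $\int\phi^2|du|^2$ into terms controlled by $\int\phi^2(1-|u|^2)/\epsilon^2$ and $\int|d\phi|^2$. Now everything is measured in $L^2(g_u)$, the Markov argument goes through, and one obtains a space $V_\delta$ of dimension $\geq 2m-O(1/\delta)$ on which $E_\epsilon''(u)$ is negative when $\lambda_1(g_u)<1-C\delta$.

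A secondary point: building $v_{\phi,w}$ as a $\Gamma$-average makes it equivariant for \emph{any} $\phi$, so there is no need to split into the cases where the $\lambda_1(g_u)$-eigenspace does or does not contain a $\Gamma$-invariant function.
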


\begin{proof}
Denote by $W_m\subset A_{\Gamma}^m$ the subspace 
$$
W_m:=\mathrm{Span}\{(e_I,0,\ldots,0),\ldots, (0,\ldots,0,e_I)\}
$$
spanned by $m$ copies of $e_I$ (here $I$ is the identity element of $\Gamma$), so that for every unit vector $w\in W_m$, $\{w_{\gamma}=\gamma\cdot w\mid \gamma\in \Gamma\}$ gives an orthonormal basis for an $m$-dimensional subspace on which $\Gamma$ acts by the regular representation. Moreover, one has the orthogonal direct sum decomposition
\[
A^m_\Gamma = \bigoplus_{\gamma\in\Gamma}\gamma\cdot W_m.
\]

Given $\phi\in C^{\infty}(M)$ and a vector $w\in W_m$, define 
$$v_{\phi,w}:=\sum_{\gamma\in \Gamma}(\phi\circ \gamma^{-1})(w_{\gamma}-\langle u,w_{\gamma}\rangle u).$$
Since $u$ is $\Gamma$-equivariant, it's easy to see that $v_{\phi,w}$ is $\Gamma$-equivariant as well. 
In particular, the space of all such $v_{\phi,w}$ for $w\in W_m$ gives a subspace
$$V_{\phi}=\{v_{\phi,w}\mid w\in W_m\}\subset W^{1,2}_{\Gamma}(M,A_{\Gamma}^m)$$
on which we can test $E_{\epsilon}''(u)$ to get lower bounds on the $\Gamma$-equivariant Morse index. For dimension counts, it will be useful to note that
$$v_{\phi,w}=0\text{ if and only if either }\phi=0\text{ or }w=0.$$
To see this, simply note that if $v_{\phi,w}=0$, then 
$$0=\langle v_{\phi,w},u\rangle=(1-|u|^2)\sum_{\gamma\in \Gamma}(\phi\circ \gamma^{-1})\langle u,w_{\gamma}\rangle,$$
and since $u$ is nonconstant, we have by Proposition \ref{gl.pre-mm}(vi) that $|u|<1$, and we deduce that $\sum_{\gamma}(\phi\circ \gamma^{-1})\langle u,w_{\gamma}\rangle=0$, and therefore
$$0=v_{\phi,w}=\sum_{\gamma\in \Gamma}(\phi\circ \gamma^{-1})w_{\gamma}.$$
But if $0\neq w\in W_m$, then the vectors $\{\frac{w_{\gamma}}{|w|}\mid \gamma\in \Gamma\}$ form an orthonormal basis up to scaling, so that the preceding identity would force $\phi\equiv 0$.

The following proposition establishes useful pointwise estimates for the fields $v_{\phi,w}$ and their derivatives. In what follows, for each unit vector $w\in W_m$, denote by $u^w$ the pointwise projection of $u$ onto the subspace generated by the orbit $\Gamma\cdot w$. 

\begin{proposition}
For any $\phi\in C^{\infty}(M)$, vector $w\in W_m$ and any map $u\in W_\Gamma^{1,2}(M,A^m_\Gamma)$ satisfying $|u|\leq 1$ one has the following pointwise estimates
\label{prop:v_lower}
\begin{equation}\label{v.lower}
|v_{\phi,w}|^2\geq |w|^2\sum_{\gamma\in \Gamma}(\phi\circ \gamma)^2(1-2|u^w|^2),
\end{equation}
\[
\frac{|dv_{\phi,w}|^2}{|w|^2}\leq (1+C|u^w|^2)\sum_{\gamma}|d(\phi\circ \gamma)|^2+C\sum_{\gamma}(\phi\circ \gamma)^2(|du^w|^2+|u^w|^2|du|^2),
\]
\[
\langle v_{\phi,w},u\rangle^2\leq (1-|u|^2)^2|w|^2|u^w|^2\sum_{\gamma}(\phi \circ \gamma)^2,
\]
where $C$ is a universal constant.

If, furthermore, $\phi_0$ is a constant function on $M$, then
\begin{equation}\label{pt.cross.2}
\langle v_{\phi,w},u\rangle\langle v_{\phi_0,z},u\rangle\leq (1-|u|^2)^2|u^w||u^z||w||z||\phi_0|\bigg(\sum_{\gamma}(\phi \circ \gamma)^2\bigg)^{1/2}.
\end{equation}
\begin{equation}\label{pt.cross}
|\langle v_{\phi_0,z},v_{\phi,w}\rangle-\phi_0\langle w,z\rangle \sum_{\gamma\in\Gamma}(\phi\circ \gamma)|\leq C|\phi_0|\bigg(\sum_{\gamma}(\phi \circ \gamma)^2\bigg)^{1/2}|u^w||u^z||w||z|,
\end{equation}
\begin{align}
\label{pt.cross.3}
\begin{split}
\frac{1}{|w||z|}&\left|\langle dv_{\phi_0,z},dv_{\phi,w}\rangle - \phi_0\sum_{\gamma,\tau}
\langle u,z_\tau\rangle\langle d(\phi\circ\gamma^{-1}),\langle du,w_\gamma\rangle\rangle\right| 
\leq  \\
C|\phi_0|&\bigg(\sum_{\gamma}|d\phi\circ \gamma|^2\bigg)^{\frac{1}{2}}\left(|u|^2|u^w||du^z| + |u^w||du^z| + |u||du||u^w||u^z|\right) + \\
C|\phi_0|&\bigg(\sum_{\gamma}(\phi\circ \gamma)^2\bigg)^{\frac{1}{2}}\left(|du^w||du^z||u|^2+|du||u|(|u^w||du^z| + |u^z||du^w|) + |du|^2|u^w||u^z|\right),
\end{split}
\end{align}
\end{proposition}

\begin{proof} Throughout the proof we repeatedly use the following observation. 
Applying the Cauchy-Schwarz inequality to the space formed by sequences $\{a_\gamma\}_{\gamma\in\Gamma}$ of numbers labeled by the elements of $\Gamma$, we obtain
\[
\bigg|\sum_{\gamma\in\Gamma} a_\gamma b_\gamma\bigg|\leq \|a\|_{\ell^2}\|b\|_{\ell^2} = \bigg(\sum_{\gamma\in\Gamma}a_\gamma^2\bigg)^\frac{1}{2}\bigg(\sum_{\gamma\in\Gamma}b_\gamma^2\bigg)^\frac{1}{2},
\]
\[
\bigg|\sum_{\gamma,\tau\in\Gamma}a_\gamma b_\gamma c_\tau d_\tau\bigg| = \bigg|\sum_{\gamma\in\Gamma}a_\gamma b_\gamma \bigg|\bigg|\sum_{\tau\in\Gamma} c_\tau d_\tau\bigg|\leq \|a\|_{\ell^2}\|b\|_{\ell^2}\|c\|_{\ell^2}\|d\|_{\ell^2}.
\]
For the first inequality~\eqref{v.lower} we see that
\[
|v_{\phi,w}|^2=\sum_{\gamma,\tau\in \Gamma}(\phi\circ \gamma^{-1})(\phi\circ \tau^{-1})(|w|^2\delta_{\gamma \tau}-(2-|u|^2)\langle u,w_{\gamma}\rangle \langle u,w_{\tau}\rangle).
\]
Using that $|u|^2\leq 1$ and $\sum_\gamma \langle u,w_\gamma\rangle^2 = |w|^2|u^w|^2$ one has 
\[
(2-|u|^2)\sum_{\gamma,\tau\in \Gamma}(\phi\circ \gamma^{-1})(\phi\circ \tau^{-1})\langle u,w_{\gamma}\rangle \langle u,w_{\tau}\rangle\leq 2\sum_{\gamma\in \Gamma}(\phi\circ \gamma)^2|w|^2|u^w|^2
\]
from which~\eqref{v.lower} follows. Similarly, direct computation shows that
\begin{equation*}
\begin{split}
|d v_{\phi,w}|^2&\leq (1+2|u^w|^2)|w|^2\sum_{\gamma\in \Gamma}|d(\phi\circ \gamma)|^2\\
&+2|w|^2\bigg(\sum_{\gamma\in \Gamma}(\phi \circ \gamma)^2\cdot \sum_{\gamma\in\Gamma}|d(\phi\circ \gamma)|^2\bigg)^{1/2}(2|u^w||du^w|+|u^w|^2|du|)\\
&+|w|^2\bigg(\sum_{\gamma\in \Gamma}(\phi \circ \gamma)^2\bigg)(|du^w|^2+2|u^w||du||du^w|+|u^w|^2|du|^2),
\end{split}
\end{equation*}
and a few applications of the Cauchy--Schwarz inequality give
$$
\frac{|dv_{\phi,w}|^2}{|w|^2}\leq (1+5|u^w|^2)\sum_{\gamma}|d(\phi\circ \gamma)|^2+3\sum_{\gamma}(\phi\circ \gamma)^2(|du^w|^2+|u^w|^2|du|^2)
$$
as required. Moreover, it is easy to see that
$$
\langle v_{\phi,w},u\rangle^2 =(1-|u|^2)^2\sum_{\gamma}(\phi \circ \gamma^{-1})^2\langle u, w_\gamma\rangle^2 \leq (1-|u|^2)^2|u^w|^2|w|^2\sum_{\gamma}(\phi \circ \gamma)^2.
$$
Similar computations give inequalities~\eqref{pt.cross.2},~\eqref{pt.cross} and~\eqref{pt.cross.3}.
\end{proof}

At the given critical point $u$ for $E_{\epsilon}$, recall from Proposition \ref{gl.pre-mm}(ii) that
$$
E_{\epsilon}''(u)(v,v)=\int_M |dv|^2+2\epsilon^{-2}\langle u,v\rangle^2-\epsilon^{-2}(1-|u|^2)|v|^2\,dv_g,
$$
so by Proposition \ref{prop:v_lower}, it follows that
\begin{equation*}
\begin{split}
&\frac{1}{|w|^2}E_{\epsilon}''(u)(v_{\phi,w},v_{\phi,w})\leq \sum_{\gamma}\int_M\left(1+C|u^w|^2\right)|d(\phi\circ \gamma)|^2\,dv_g\\
&+C\sum_{\gamma}\int_M(\phi \circ \gamma)^2(|du^w|^2+|u^w|^2|du|^2)\,dv_g\\
&+\sum_{\gamma}\int_M\left(2\epsilon^{-2}(1-|u|^2)^2|u^w|^2
-\frac{(1-|u|^2)}{\epsilon^2}(1-C|u^w|^2)\right)(\phi \circ \gamma)^2\,dv_g.
\end{split}
\end{equation*}
Moreover, since $\Gamma$ acts on $(M,g)$ by isometries and the quantities $|u|, |du|, |u^w|,$ and $|du^w|$ are invariant under precomposition with $\Gamma$ by the equivariance of $u$, we can combine the preceding estimates to see that
\begin{equation}
\label{EEpp}
\begin{split}
&\frac{1}{|w|^2}E_{\epsilon}''(u)(v_{\phi,w},v_{\phi,w})\leq \\
&|\Gamma|\int_M\left(|d\phi|_g^2-\frac{(1-|u|^2)}{\epsilon^2}\phi^2\right)\,dv_g+C|\Gamma|\int_M|u^w|^2|d\phi|_g^2\,dv_g\\
&+C|\Gamma|\int_M\left(|du^w|_g^2+|u^w|^2|du|_g^2+\epsilon^{-2}(1-|u|^2)|u^w|^2\right)\phi^2\,dv_g.
\end{split}
\end{equation}
 Now, define the map
$
J_u\colon W_m\times W^{1,2}(M)\to \mathbb{R}
$
by 
\begin{equation*}
\begin{split}
J_u(w,\phi):&=|w|^2\int_M|d\phi|^2_g|u^w|^2\,dv_g \\
&+|w|^2\int_M \phi^2\left(|u^w|^2|du|^2+|du^w|^2+\frac{(1-|u|^2)}{\epsilon^2}|u^w|^2\right)\,dv_g,
\end{split}
\end{equation*}
and observe that both $w\mapsto J_u(w,\phi)$ and $\phi\mapsto J_u(w,\phi)$ have the structure of quadratic forms. We can then rewrite the estimate \eqref{EEpp} as
\begin{equation*}
E_{\epsilon}''(u)(v_{\phi,w},v_{\phi,w})\leq |w|^2|\Gamma|\int_M\left(|d\phi|_g^2-\frac{(1-|u|^2)}{\epsilon^2}\phi^2\right)dv_g+C|\Gamma|J_u(w,\phi).
\end{equation*}
Now, let $\phi_0$ be a constant function, $\phi_1\in W^{1,2}(M)$, and $w,z\in W_m$ be unit vectors; we would like to estimate $|E_{\epsilon}''(u)(v_{\phi_0,z},v_{\phi_1,w})|$. To that end, first observe that using integration by parts
\begin{equation*}
\begin{split}
&\phi_0\left|\int_M\sum_{\gamma,\tau}\langle u,z^\tau\rangle\left\langle d\left(\phi_1\circ\gamma^{-1}\right), \langle du,w^\gamma\rangle\right\rangle\,dv_g\right| \leq \\
&\phi_0\left|\int_M\sum_{\gamma,\tau}(\phi_1\circ\gamma^{-1})\left(\frac{(1-|u|^2)}{\epsilon^2}\langle u,z^\tau\rangle\langle 
u,w^\gamma\rangle -  \langle \langle du,z^\tau\rangle,
\langle du,w^\gamma\rangle\rangle\right)\,dv_g\right|\leq \\
& \frac{C|\Gamma|}{|z||w|}\sqrt{J_u(z,\phi_0)J_u(w,\phi_1)}.
\end{split}
\end{equation*}
Integrating the right hand sides of~\eqref{pt.cross.2},~\eqref{pt.cross}, and~\eqref{pt.cross.3} then gives that
\begin{equation*}
|E_{\epsilon}''(u)(v_{\phi_0,z},v_{\phi_1,w})|\leq |\Gamma||\langle w,z\rangle|\left|\langle \phi_0,\phi_1\rangle_{L^2_{g_u}}\right|+C|\Gamma|\sqrt{J_u(z,\phi_0)J_u(w,\phi_1)}.
\end{equation*}

For the remainder of the proof, let's take $\phi_0\equiv 1$, and let $\phi_1\neq 0$ belong to the first nontrivial eigenspace for $\Delta_{g_u}$. Then the estimates above give
\begin{equation}\label{hess.bd.1}
E_{\epsilon}''(u)(v_{\phi_0,z},v_{\phi_0,z})\leq -|\Gamma||z|^2\|\phi_0\|_{L^2_{g_u}}^2+C|\Gamma|J_u(z,\phi_0),
\end{equation}
\begin{equation}\label{hess.bd.2}
E_{\epsilon}''(u)(v_{\phi_1,w},v_{\phi_1,w})\leq |\Gamma||w|^2(\lambda_1(g_u)-1)\|\phi_1\|_{L^2_{g_u}}^2+C|\Gamma| J_u(w,\phi_1),
\end{equation}
and
\begin{equation}\label{hess.bd.3}
|E_{\epsilon}''(u)(v_{\phi_1,w},v_{\phi_0,z})|\leq C|\Gamma|\sqrt{J_u(z,\phi_0)J_u(w,\phi_1)}.
\end{equation}

Next, fixing $\phi=\phi_0$ or $\phi_1$, diagonalizing the quadratic form $W_m\ni w\mapsto J_u(w,\phi)$ gives an orthonormal basis $w_1,\ldots, w_m$ for $W_m$, with respect to which
\begin{equation}\label{j.tr.1}
\sum_{j=1}^mJ_u(w_j,\phi)\leq\int_M|d\phi|^2+\phi^2\left(2|du|^2+\frac{(1-|u|^2)}{\epsilon^2}\right)\,dv_g.
\end{equation}

Now, note that since $\Delta u=-\mathrm{div}(du)=\frac{(1-|u|^2)}{\epsilon^2}u,$ we have
\begin{equation*}
\begin{split}
\int_M \phi^2|du|^2_g\,dv_g &= \int_M \phi^2\frac{(1-|u|^2)}{\epsilon^2}|u|^2 - \langle 2\phi d\phi,\langle u,du\rangle\rangle\,dv_g\\
&\leq \int_M \phi^2\frac{(1-|u|^2)}{\epsilon^2} + \frac{1}{2} \phi^2|du|^2_g + 2|d\phi|_g^2\,dv_g.
\end{split}
\end{equation*}
As a result,
\[
\int_M \phi^2|du|^2_g\,dv_g\leq 2 \int_M \phi^2\frac{(1-|u|^2)}{\epsilon^2} + 2|d\phi|_g^2\,dv_g,
\]
so that \eqref{j.tr.1} implies
$$
\sum_{j=1}^mJ_u(w_j,\phi)\leq \int_M\left(9|d\phi|^2+6\phi^2\frac{1-|u|^2}{\epsilon^2}\right)\,dv_g\leq (9\lambda_1(g_u)+5)\|\phi\|_{L^2_{g_u}}^2,
$$
and the set $S_{\delta}(\phi)$ of $j\in\{1,\ldots,m\}$ for which $J_u(w_j,\phi)\geq \delta \|\phi\|_{L^2_{g_u}}^2$ must have size at most
$$
|S_{\delta}(\phi)|\leq \frac{9\lambda_1(g_u)+5}{\delta}.
$$
As a consequence, the space
$$
Z_{\delta}(\phi):=\mathrm{Span}\{w_j\mid j\notin S_{\delta}(\phi)\}\subset W_m,
$$
on which 
$$J_u(w,\phi)\leq \delta |w|^2\|\phi\|_{L^2_{g_u}}^2,$$
must have dimension
$$\dim Z_{\delta}(\phi)\geq m-|S_{\delta}(\phi)|\geq m-\frac{9\lambda_1(g_u)+5}{\delta}.$$
Writing
$$V_{\delta}(\phi):=\{v_{\phi,w}\mid w\in Z_{\delta}(\phi)\}\subset W^{1,2}_{\Gamma}(M,A_{\Gamma}^m),$$
it follows from the injectivity of the map 
$w\mapsto v_{\phi,w}$
that
$$\dim (V_{\delta}(\phi))\geq m-\frac{9\lambda_1(g_u)+5}{\delta}.$$

Now, defining
$$
V_{\delta}:=V_{\delta}(\phi_0)+V_{\delta}(\phi_1)\subset W^{1,2}_{\Gamma}(M,A_{\Gamma}^m),
$$
it follows that
$$
\dim(V_{\delta})\geq 2m-\frac{18\lambda_1(g_u)+10}{\delta}-\dim(V_{\delta}(\phi_0)\cap V_{\delta}(\phi_1)).
$$
Moreover, for any $v_{\phi_1,w}\in V_{\delta}(\phi_1)$ and $v_{\phi_0,z}\in V_{\delta}(\phi_0)$, then it follows from \eqref{v.lower} and the bound $J_u(w,\phi)\leq \delta |w|^2\|\phi\|_{L^2_{g_u}}^2$ for $w\in Z_{\delta}(\phi)$ that
$$\|v_{\phi_1,w}\|_{L^2_{g_u}}^2\geq |\Gamma| |w|^2\|\phi_1\|_{L^2_{g_u}}^2(1-C\delta)$$
and
$$\|v_{\phi_0,z}\|_{L^2_{g_u}}^2\geq |\Gamma||z|^2\|\phi_0\|_{L^2_{g_u}}^2(1-C\delta),$$
while it follows from \eqref{pt.cross} that
$$\langle v_{\phi_0,z},v_{\phi_1,w}\rangle_{L^2_{g_u}}\leq C|\Gamma||z||w|\delta\|\phi_0\|_{L^2_{g_u}}\|\phi_1\|_{L^2_{g_u}}.$$
If $v=v_{\phi_0,w}=v_{\phi_1,z}\in V_{\delta}(\phi_0)\cap V_{\delta}(\phi_1),$ then combining the three preceding estimates for $\|v\|_{L^2_{g_u}}^2$ gives an estimate of the form
$$(1-C\delta)\leq C\delta,$$
which clearly cannot hold if $\delta<\frac{1}{2C}$.

Thus, if $\delta<\frac{1}{2C}$, it follows that
$$
\dim(V_{\delta})\geq 2m-\frac{18\lambda_1(g_u)+10}{\delta}.
$$
On the other hand, for every $v=v_{\phi_0,z}+v_{\phi_1,w}\in V_{\delta}$, we have
\begin{equation*}
\begin{split}
E_{\epsilon}''(u)(v,v)\leq &-|\Gamma||z|^2\|\phi_0\|_{L^2_{g_u}}^2+C|\Gamma|J_u(z,\phi_0)
+|\Gamma|(\lambda_1(g_u)-1)|w|^2\|\phi_1\|_{L^2_{g_u}}^2\\ &+C|\Gamma|J_u(w,\phi_0) 
+2C|\Gamma|\sqrt{J_u(z,\phi_0)J_u(w,\phi_1)} \\
\leq &(-|\Gamma|+C|\Gamma|\delta)|z|^2\|\phi_0\|_{L^2_{g_u}}^2
+|\Gamma|(\lambda_1(g_u)-1+C\delta)|w|^2\|\phi_1\|_{L^2_{g_u}}^2\\
&+2C|\Gamma|\delta|w||z|\|\phi_0\|_{L^2_{g_u}}\|\phi_1\|_{L^2_{g_u}}.
\end{split}
\end{equation*}

Finally, note that if $\delta<c$ is sufficiently small and $\lambda_1(g_u)-1+C\delta<-\delta,$ then an application of the Cauchy-Schwarz inequality in the estimate above gives
$$E_{\epsilon}''(u)(v,v)<0,$$
while
$$\dim(V_{\delta})\geq 2m-\frac{18\lambda_1(g_u)+10}{\delta}\geq 2m-\frac{30}{\delta}.$$
But since $\ind_{\Gamma,E_{\epsilon}}(u)\leq m$, this cannot hold if $\frac{30}{\delta}\leq m$. Thus, for $\delta$ in the range
$$\frac{30}{m}\leq\delta<c,$$
we must have
$$\lambda_1(g_u)-1+C\delta \geq -\delta.$$
In other words, if $m$ is large enough that $c>\frac{10}{m}$, we must have
$$\lambda_1(g_u)-1\geq -(1+C)\frac{10}{m},$$
giving an estimate of the desired form.
\end{proof}

Since critical points $u$ of $E_{\epsilon}$ satisfy
$$
\mathrm{div}(\langle u,du\rangle)=|du|^2-\frac{(1-|u|^2)}{\epsilon^2}|u|^2,
$$
their energy can be bounded above by
\begin{equation*}
\begin{split}
E_{\epsilon}(u)&=\int_M \frac{1}{2}|du|^2+\frac{(1-|u|^2)^2}{4\epsilon^2}\,dv_g
=\int_M\frac{(1-|u|^2)|u|^2}{2\epsilon^2}+\frac{(1-|u|^2)^2}{4\epsilon^2}\,dv_g\\
&\leq \int_M\frac{(1-|u|^2)}{2\epsilon^2}\,dv_g=\frac{1}{2}\Area(g_u).
\end{split}
\end{equation*}
Thus, for the min-max critical points $u_{\epsilon}$ given by Lemma \ref{mm.ex}, combining this estimate with Proposition \ref{index.eigen.bd} gives 
\begin{equation}\label{lam.lim}
\liminf_{\epsilon\to 0}\bar{\lambda}_1(M,g_{u_{\epsilon}})\geq \lim_{\epsilon\to 0}\left(1-\frac{C}{m}\right)2\mathcal{E}_{m,\epsilon}^{\Gamma}(M,g)\geq \left(1-\frac{C}{m}\right)\Lambda_1^{\Gamma}(M,[g]).
\end{equation}
Assuming $\Lambda_1^{\Gamma}(M,[g])>8\pi$, we can use this estimate as in \cite{Kok.var, Pet1} to rule out energy concentration for the min-max critical points as $\epsilon\to 0$ and $m\to\infty$.

As a first ingredient, it is useful to recall a variant of the small energy regularity theorem for critical points of $E_{\epsilon}$ on $W^{1,2}(M,A_{\Gamma}^m)$. The following observation is well known to experts (cf. \cite{CS89}), but since the precise statement below does not seem to appear in the literature, we include a brief proof for the sake of completeness.

\begin{lemma}\label{gl.ereg} There is a constant $\eta(M,g)>0$ independent of $m$ and $\epsilon$ such that for any critical point $u\colon M\to A_{\Gamma}^m$ for $E_{\epsilon}(u)=\int_M\frac{1}{2}|du|^2+\frac{(1-|u|^2)^2}{\epsilon^2}$, if
$$
\int_{B_{\delta}(p)}\frac{1-|u|^2}{\epsilon^2}\,dv_g\leq \eta,
$$
on some ball $B_{\delta}(p)\subset M$ of radius $\delta<\min\{\inj(M,g),1\}$, then
$$
\delta^2\left\|\frac{1-|u|^2}{\epsilon^2}\right\|_{L^{\infty}(B_{\delta/2}(p))}\leq 1.
$$
\end{lemma}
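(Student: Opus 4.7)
The plan is to argue by a scaling-and-contradiction argument, reducing the problem to the classical small-energy regularity theory for two-dimensional Ginzburg--Landau solutions. First I would rescale so that the ball $B_\delta(p)$ becomes the unit ball: setting $\tilde u(x) := u(\exp_p(\delta x))$, $\tilde g := \delta^{-2}\exp_p^* g$, and $\tilde\epsilon := \epsilon/\delta$, the map $\tilde u$ is a critical point of $E_{\tilde\epsilon}$ on $(B_1(0),\tilde g)$. Since $\delta < \min\{\inj(M,g),1\}$, the metric $\tilde g$ is uniformly quasi-isometric to Euclidean with ellipticity and Sobolev constants depending only on $(M,g)$. Writing $\tilde V := \tilde\epsilon^{-2}(1-|\tilde u|^2)$, the hypothesis becomes $\int_{B_1}\tilde V\,dv_{\tilde g} \leq C\eta$ and the conclusion becomes $\|\tilde V\|_{L^\infty(B_{1/2})} \leq 1$.

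Assume for contradiction that no such $\eta$ exists. Then one obtains a sequence of critical points $u_j$ of $E_{\epsilon_j}$ on unit balls (in manifolds with uniformly bounded geometry) satisfying $\int V_j \to 0$ but $\|V_j\|_{L^\infty(B_{1/2})} \geq 1$. Passing to a subsequence, two cases arise. If $\liminf \epsilon_j > 0$, then equation \eqref{gl.e} has uniformly bounded coefficients, so elliptic regularity provides uniform $C^{2,\alpha}$ bounds for $u_j$; extracting a $C^2_{\mathrm{loc}}$-convergent subsequence $u_j \to u_\infty$, the conditions $\int V_\infty = 0$ and $V_\infty \geq 0$ force $V_\infty \equiv 0$, so $V_j \to 0$ uniformly on $B_{1/2}$, contradicting $\|V_j\|_{L^\infty(B_{1/2})} \geq 1$.

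In the remaining case $\epsilon_j \to 0$, the bound $V_j \leq \epsilon_j^{-2}$ (from $|u_j|\leq 1$) combined with the hypothesis yields $\epsilon_j^{-2}\int(1-|u_j|^2)^2 = \epsilon_j^2\int V_j^2 \leq \int V_j \to 0$, which is exactly the classical small-energy hypothesis for two-dimensional Ginzburg--Landau (cf.~\cite{CS89}). That theory yields $|u_j|^2 \geq 1/2$ on $B_{3/4}$ for $j$ large and the $\epsilon$-independent gradient estimate $|du_j| \leq C$ on $B_{1/2}$; applying the maximum principle to the equation $\Delta_g V_j + 2|u_j|^2 V_j/\epsilon_j^2 = 2|du_j|^2/\epsilon_j^2$ at an interior maximum of $V_j$ then gives $|u_j(x)|^2 V_j(x) \leq |du_j(x)|^2$ at the maximum, so $V_j \leq 4|du_j|^2 \leq C$ on $B_{1/2}$. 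A quantitative sharpening in fact shows $\sup V_j \to 0$ as $\int V_j \to 0$ (via, e.g., iteration exploiting the exponentially-localized Green's function of $-\Delta + \epsilon_j^{-2}$), which provides the contradiction. The principal obstacle is precisely this sharpened form of small-energy regularity, which upgrades the $L^1$-smallness of $V$ to $L^\infty$-smallness on a slightly smaller ball; once it is in hand the remainder is standard elliptic bookkeeping.
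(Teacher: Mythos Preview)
Your Case~2 contains a genuine gap. You claim that $\epsilon_j^{-2}\int(1-|u_j|^2)^2\to 0$ is ``exactly the classical small-energy hypothesis'' of \cite{CS89}, but it is not: the $\eta$-regularity theorems for Ginzburg--Landau require smallness of the \emph{full} local energy $\int\frac{1}{2}|du|^2+\epsilon^{-2}(1-|u|^2)^2$, and you have no a~priori control on the Dirichlet term. Testing the equation against $\phi^2 u$ only gives $\int_{B_{3/4}}|du_j|^2\leq C\int_{B_1}V_j + C\int|d\phi|^2$, which is \emph{bounded}, not small. Consequently the appeal to classical theory for the gradient bound and for $|u_j|\geq 1/2$ is unjustified, and the subsequent maximum-principle step (which needs a pointwise gradient bound at the interior maximum of $V_j$) has nothing to feed on. Your suggested Green's-function iteration for $-\Delta+\epsilon_j^{-2}$ has the same problem: the right-hand side of the equation for $V_j$ is $2\epsilon_j^{-2}|du_j|^2$, so the scheme does not close without first controlling $|du_j|$ by $V_j$.

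The missing idea is a pointwise Bochner-type estimate relating $|du|^2$ directly to $V=\epsilon^{-2}(1-|u|^2)$. The paper observes that $w:=|du|_g^2-(1+C\epsilon^2)V$ satisfies $\Delta_g w\leq -\epsilon^4|u|^2 w$ (using the Bochner identity together with the critical-point equation and a curvature bound), so the maximum principle forces $|du|_g^2\leq(1+C\epsilon^2)V$ everywhere. Plugging this into $\frac{1}{2}\Delta_g(1-|u|^2)=|du|_g^2-V|u|^2$ yields the self-contained subsolution inequality
\[
\tfrac{1}{2}\Delta_g f\leq f^2+Cf,\qquad f:=\frac{1-|u|^2}{\epsilon^2},
\]
after which a standard point-picking or Moser-iteration argument gives the $L^1\to L^\infty$ bound directly, with no compactness or case analysis needed. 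This pointwise gradient estimate is precisely what replaces your appeal to classical small-energy regularity; once you have it, the contradiction framework is unnecessary.
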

\begin{proof}
To begin, we observe as in \cite[Section 4.1]{S19}--where the Laplacian is taken with the opposite sign convention--that if $C(g)$ is an upper bound $|K_g|\leq C$ for the Gauss curvature $K_g$ of $(M,g)$, then for the function $w: = |du|^2-(1+C\epsilon^2)\frac{1-|u|^2}{\epsilon^2}$ the equation $\Delta_g u=\frac{1-|u|^2}{\epsilon^2}u$ together with the Bochner identity gives
$$
\Delta_g w\leq -\epsilon^4|u|^2w,
$$
and, consequently, $w$ can not have positive maxima, so
$$
|du|_g^2\leq (1+C\epsilon^2)\frac{1-|u|^2}{\epsilon^2}.
$$
In particular, it follows that
\begin{eqnarray*}
\frac{1}{2}\Delta_g(1-|u|^2)&=&|du|_g^2-\frac{(1-|u|^2)|u|^2}{\epsilon^2}\\
&\leq & \frac{1-|u|^2}{\epsilon^2}+C(1-|u|^2)-\frac{(1-|u|^2)|u|^2}{\epsilon^2}\\
&=&\frac{(1-|u|^2)^2}{\epsilon^2}+C(1-|u|^2),
\end{eqnarray*}
so multiplying by $\epsilon^{-2}$ and writing $f=\frac{1-|u|^2}{\epsilon^2}$, we arrive at an estimate of the form
$$
\frac{1}{2}\Delta_g f\leq f^2+Cf.
$$
It then follows from standard techniques \cite[Remark 5.4]{KS22} that there exists $\eta(M,g)>0$ such that
whenever $\int_{B_{\delta}(p)}f\leq \eta$ for $\delta<\min\{\inj(M,g),1\}$, then
$$
\delta^2\|f\|_{L^{\infty}(B_{\delta/2}(p))}\leq 1.
$$
This completes the proof.
\end{proof}

Next, we show that Proposition \ref{index.eigen.bd}, together with Lemma \ref{mm.ex} and the assumption $\Lambda_1^{\Gamma}(M,[g])>8\pi$ forces the hypotheses of Lemma \ref{gl.ereg} to hold on all small balls for $m$ sufficiently large and $\epsilon$ small, forcing smooth subsequential convergence $u_{m,\epsilon}\to u_m$ of the min-max critical points to a harmonic map $u_m\colon(M,[g])\to \Sph^{m|\Gamma|-1}$ realizing $\mathcal{E}_m^{\Gamma}(M,[g])$. In what follows, we define the $\Gamma$-equivariant Morse index $\ind_{E,\Gamma}(u)$ of a $\Gamma$-equivariant harmonic map $u\colon (M,[g])\to \Sph^{m|\Gamma|-1}\subset A_{\Gamma}^m$ as the maximal dimension of a subspace 
$$
V\subset \{v\in W^{1,2}_{\Gamma}(M,A_{\Gamma}^m)\mid \langle v,u\rangle \equiv 0\}
$$
on which 
$$
v\mapsto \int_M(|dv|_g^2-|du|_g^2|v|^2)\,dv_g
$$
is negative definite.  Below, we say $\Gamma$ has \emph{empty fixed point set} if $\Gamma p \neq \{p \}$ for any $p \in M$. 

\begin{proposition}\label{hm.ex}
\label{fixpt.rk1}
Suppose either $\Lambda_1^{\Gamma}(M,[g])>8\pi$ or $\Gamma$ has empty fixed point set.  For each sufficiently large $m$, 
as $\epsilon\to 0$, a subsequence of the critical points $u_{m,\epsilon}$ converges smoothly to a $\Gamma$-equivariant harmonic map
$$
u_m\colon (M,[g])\to \Sph^{m|\Gamma|-1}\subset A_{\Gamma}^m
$$
satisfying
$$
E(u_m)=\mathcal{E}_m^{\Gamma}(M,[g]),\hspace{10mm} \ind_{E,\Gamma}(u_m)\leq m,
$$
and
$$
|du_m|_g^2\leq C'
$$
for some constant $C'=C'(M,g,\Gamma)$ independent of $m$.
\end{proposition}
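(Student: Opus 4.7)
The plan is to carry out a bubbling analysis for the $\Gamma$-equivariant Ginzburg--Landau min-max critical points $u_{m,\epsilon}$ from Lemma \ref{mm.ex} as $\epsilon \to 0$, using the sharp eigenvalue lower bound \eqref{lam.lim} from Proposition \ref{index.eigen.bd}, together with the uniform energy bound of Lemma \ref{mm.ubd} and the small-energy regularity of Lemma \ref{gl.ereg}, to exclude energy concentration under each of the two hypotheses. Once non-concentration is established, the remaining claims follow from standard elliptic regularity, lower semicontinuity, and Morse-theoretic arguments, essentially as in \cite[Section 4]{KSminmax}.

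As a first step I would establish uniform control. By Lemma \ref{mm.ubd}, there is $C = C(M,g,\Gamma)$ with $E_\epsilon(u_{m,\epsilon}) \leq \mathcal{E}_m^\Gamma(M,[g]) \leq C$ uniformly in $\epsilon$ and in $m$ large; testing \eqref{gl.e} against $u_{m,\epsilon}$ together with $|u_{m,\epsilon}|\leq 1$ yields $\int_M \tfrac{1-|u_{m,\epsilon}|^2}{\epsilon^2}\,dv_g \leq 2C$ uniformly. With $\eta = \eta(M,g)$ as in Lemma \ref{gl.ereg}, introduce the concentration set
\[
S := \Big\{p \in M \,:\, \liminf_{\epsilon \to 0}\int_{B_\delta(p)}\tfrac{1-|u_{m,\epsilon}|^2}{\epsilon^2}\,dv_g > \eta \text{ for every } \delta > 0\Big\},
\]
which is finite by the mass bound and a union of $\Gamma$-orbits by equivariance. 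Away from $S$, Lemma \ref{gl.ereg} yields uniform $L^\infty$ bounds on $\tfrac{1-|u|^2}{\epsilon^2}$ at a fixed scale, which bootstrap via elliptic regularity for \eqref{gl.e} to uniform $C^k$ estimates on compact subsets of $M\setminus S$. Extracting a subsequence, $u_{m,\epsilon}\to u_m$ smoothly on compact subsets of $M \setminus S$, with $|u_m| = 1$ and $u_m$ a $\Gamma$-equivariant harmonic map.

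The technical heart of the proof is showing $S = \varnothing$. Suppose $p \in S$. A standard bubble-analysis argument for \eqref{gl.e} extracts, via conformal rescaling at $p$, a non-constant finite-energy harmonic map $\omega\colon\mathbb{R}^2 \to \Sph^{m|\Gamma|-1}$, and identifies the weak limit of the area measure of $g_{u_{m,\epsilon}}$ as $\mu_{\mathrm{ac}} + \sum_{q \in S} c_q\delta_q$ with each $c_q \geq \eta_0 > 0$. I would then apply a Hersch-type balancing argument: viewing the bubble region conformally as $\Sph^2$ via an appropriate family of conformal maps, and balancing the Hersch coordinates via the M\"obius group to be orthogonal to constants with respect to $dv_{g_{u_{m,\epsilon}}}$, one produces test functions for $\lambda_1(g_{u_{m,\epsilon}})$ whose Rayleigh quotients limit to $8\pi$, giving $\limsup_{\epsilon\to 0}\bar\lambda_1(g_{u_{m,\epsilon}}) \leq 8\pi$. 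Under the assumption $\Lambda_1^\Gamma(M,[g]) > 8\pi$ this contradicts \eqref{lam.lim} for $m$ large. In the fixed-point-free case, the orbit $\Gamma\cdot p$ has cardinality $k \geq 2$ and equivariance distributes the bubble mass equally across $k$ distinct orbit points. A $\Gamma$-equivariant refinement of the Hersch argument, using the multiplicity $k$ together with the freedom to balance test functions independently near each orbit point, yields an $m$-independent upper bound on $\limsup_{\epsilon\to 0}\bar\lambda_1(g_{u_{m,\epsilon}})$ strictly smaller than $\Lambda_1^\Gamma(M,[g])$ for $m$ sufficiently large, contradicting \eqref{lam.lim}.

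With $S = \varnothing$ established, $u_{m,\epsilon}\to u_m$ smoothly on all of $M$. The identity $E(u_m) = \mathcal{E}_m^\Gamma(M,[g])$ follows by combining $E(u_m) \leq \liminf_{\epsilon \to 0} E_\epsilon(u_{m,\epsilon}) = \mathcal{E}_m^\Gamma(M,[g])$ with the reverse inequality from Proposition \ref{mm.lower}. The Morse index bound $\ind_{E,\Gamma}(u_m) \leq m$ follows from semicontinuity of the $\Gamma$-equivariant Morse index under smooth convergence, applied to the Hessians $E_\epsilon''(u_{m,\epsilon})$ restricted to admissible variations tangent to $\Sph^{m|\Gamma|-1}$. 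Finally, the uniform bound $|du_m|_g^2 \leq C'$ with $C'$ independent of $m$ follows by combining the $m$-independent scale $\delta_0 = \delta_0(M,g,\Gamma)$ produced by the non-concentration analysis (derived from the gap between $\Lambda_1^\Gamma(M,[g])$ and the corresponding Hersch-type concentration bound) with the Bochner-type pointwise estimate $|du_{m,\epsilon}|_g^2 \leq (1 + C\epsilon^2)\tfrac{1-|u_{m,\epsilon}|^2}{\epsilon^2}$ from the proof of Lemma \ref{gl.ereg}. The main obstacle throughout is the Hersch-type concentration bound under each hypothesis, and especially the $\Gamma$-equivariant refinement required when $\Gamma$ is fixed-point-free.
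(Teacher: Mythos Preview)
Your overall strategy---rule out concentration of the measures $\frac{1-|u_{m,\epsilon}|^2}{\epsilon^2}\,dv_g$ via the eigenvalue lower bound of Proposition~\ref{index.eigen.bd}, then invoke the small-energy regularity of Lemma~\ref{gl.ereg}---matches the paper's, and your treatment of the case $\Lambda_1^\Gamma(M,[g])>8\pi$ is correct in outline. The fixed-point-free case, however, has a genuine gap.

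You assert that when bubble mass spreads over an orbit of size $k\geq 2$, a ``$\Gamma$-equivariant refinement of the Hersch argument'' yields an upper bound on $\limsup_{\epsilon\to 0}\bar\lambda_1(g_{u_{m,\epsilon}})$ strictly below $\Lambda_1^\Gamma(M,[g])$. But in this case there is \emph{no} a priori lower bound on $\Lambda_1^\Gamma(M,[g])$ to compare against (you are not assuming it exceeds $8\pi$), and a Hersch balancing near one orbit point carrying mass $\approx \mathrm{Area}/k$ produces only $\bar\lambda_1\lesssim 8\pi k$, which goes the wrong way. The paper's mechanism is both simpler and Hersch-free: by the variational characterization of $\lambda_1(g_u)\geq 1/2$, for any ball $B_{\sqrt\delta}(p)$ the inequality $\tfrac12\int\phi^2\,dv_{g_u}\leq\int|d\phi|^2$ must hold either for all $\phi\in W_0^{1,2}(B_{\sqrt\delta}(p))$ or for all $\phi\in W_0^{1,2}(M\setminus B_{\sqrt\delta}(p))$. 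Concentration at $p$ at scale $\delta$ means the first alternative fails; by equivariance it fails at every orbit point $\gamma p$ as well. Choosing $\delta_0=\delta_0(M,\Gamma)$ so small that some $\gamma p$ lies at distance $>2\sqrt{\delta_0}$ from $p$ (possible since the orbit is nontrivial), the failure at $\gamma p$ forces the second alternative to fail too---a contradiction, independent of the value of $\Lambda_1^\Gamma$. Equivalently: two separated concentration points allow a sign-changing locally constant test function with log-cutoff transitions, forcing $\lambda_1(g_u)\to 0$.

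A minor point: deriving $E(u_m)=\mathcal{E}_m^\Gamma$ via ``the reverse inequality from Proposition~\ref{mm.lower}'' does not work, since that proposition bounds $\mathcal{E}_m^\Gamma$ from below by $\bar\lambda_1$, not $E(u_m)$ from below by $\mathcal{E}_m^\Gamma$. Once convergence is smooth, $\int_M|du_{m,\epsilon}|^2\to\int_M|du_m|^2$ directly, while the uniform bound $\frac{1-|u_{m,\epsilon}|^2}{\epsilon^2}\leq C'$ forces $\int_M\frac{(1-|u_{m,\epsilon}|^2)^2}{4\epsilon^2}\leq C'^2\epsilon^2\,\area(M,g)\to 0$, so $E_\epsilon(u_{m,\epsilon})\to E(u_m)$.
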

\begin{proof}
We first complete the proof assuming $\Lambda_1^{\Gamma}(M,[g])>8\pi$, and leave the modifications to the case where $\Gamma$ has empty fixed point set to the end of the proof.
For each $\epsilon>0$, let
$$
\delta_{\epsilon}:=\inf\left\{\delta>0\left|\,\,\, \int_{B_{\delta}(p)}\frac{1-|u_{m,\epsilon}|^2}{\epsilon^2}\,dv_g>\eta\right.\right\}
$$
be the smallest scale at which the hypothesis of Proposition \ref{index.eigen.bd} fails for $u_{m,\epsilon}$. We first claim that
\begin{equation}\label{no.bub}
\delta_{\epsilon}\geq \delta_0>0
\end{equation}
for some $\delta_0$ independent of $\epsilon$.

To see this, let $p=p_{\epsilon}\in M$ be a point at which the map $u=u_{m,\epsilon}$ on the ball of radius $\delta=\delta_{\epsilon}$ satisfies
$$
\int_{B_{\delta}(p)}\frac{1-|u|^2}{\epsilon^2}\,dv_g\geq \eta,
$$
and recall that by the variational characterization of $\lambda_1(g_u)$, we must have either
\begin{equation}\label{dir.case.1}
\lambda_1(g_u)\int_M \phi^2\frac{1-|u|^2}{\epsilon^2}dv_g\leq \int_M|d\phi|^2\text{ for all }\phi\in W_0^{1,2}(B_{\sqrt{\delta}}(p))
\end{equation}
or
\begin{equation}\label{dir.case.2}
\lambda_1(g_u)\int_M \phi^2\frac{1-|u|^2}{\epsilon^2}dv_g\leq \int_M|d\phi|^2\text{ for all }\phi\in W_0^{1,2}(M\setminus B_{\sqrt{\delta}}(p)),
\end{equation}
where $\lambda_1(g_u)\geq 1-\frac{C}{m}>\frac{1}{2}$. If $\delta<1$ and \eqref{dir.case.1} holds, then one can use logarithmic cutoffs to find a function $\phi\in W_0^{1,2}(B_{\sqrt{\delta}}(p))$ such that $\phi\equiv 1$ on $B_{\delta}(p)$ 
while 
$$\int_M|d\phi|^2\leq \frac{C'}{|\log\delta|},$$
which together with \eqref{dir.case.1} implies
$$\frac{\eta}{2}\leq \frac{1}{2}\int_M\phi^2\frac{1-|u|^2}{\epsilon^2}\,dv_g\leq \frac{C'}{|\log\delta|},$$
and consequently $\delta=\delta_{m,\epsilon}\geq \min\{1, e^{-2C'/\eta}\}$. Thus, to prove \eqref{no.bub}, it suffices to consider what happens when \eqref{dir.case.2} holds. 

If \eqref{dir.case.2} holds, then similarly we can use logarithmic cutoffs to find $0\leq \phi\in W_0^{1,2}(M\setminus B_{\sqrt{\delta}}(p))$ such that $\phi\equiv 1$ on $B_{\delta^{1/4}}(p)$ while $\int_M|d\phi|^2\leq \frac{C'}{|\log\delta|}$, and \eqref{dir.case.2} yields a bound of the form
\begin{equation}\label{comp.small}
\lambda_1(g_u)\int_{M\setminus B_{\delta^{1/4}}(p)}\frac{1-|u|^2}{\epsilon^2}\,dv_g\leq \lambda_1(g_u)\int_M\phi^2\frac{1-|u|^2}{\epsilon^2}\,dv_g\leq \frac{C'}{|\log\delta|}.
\end{equation}
On the other hand, if $\delta<\left(\inj(M,g)\right)^5$, then the disk $B_{\delta^{1/5}}(p)\subset M$ can be conformally identified with a domain in $\Sph^2$, and choosing another cutoff function with $\psi\in C_c^{\infty}(B_{\delta^{1/5}}(p))$ with $\psi\equiv 1$ on $B_{\delta^{1/4}}(p)$ and $\int_M |d\psi|^2\leq \frac{C'}{|\log\delta|}$, we can apply the Hersch trick as in \cite[Lemma 3.1]{Kok.var} to deduce the existence of a conformal map $F=(F_1,F_2,F_3)\colon B_{\delta^{1/4}}(p)\to \mathbb{S}^2\subset \mathbb{R}^3$ of energy $E(F)\leq 4\pi$ and satisfying the balancing condition
$$
\int_M\frac{1-|u|^2}{\epsilon^2} \psi F_i \,dv_g=0\text{ for }i=1,2,3.
$$
Thus, applying the variational characterization of $\lambda_1(g_u)$ with test functions $\psi F_i$ gives
\begin{eqnarray*}
\lambda_1(\Delta_{g_u})\int_M\frac{1-|u|^2}{\epsilon^2}\psi^2 F_i^2\,dv_g&\leq & \int_M|d(\psi F_i)|^2\,dv_g\leq \int_M \psi^2|dF_i|^2\,dv_g\\
&&+(2\|d\psi\|_{L^2}\|dF_i\|_{L^2}+\|d\psi\|_{L^2}^2),
\end{eqnarray*}
and summing over $i=1,2,3$ gives
$$
\lambda_1(g_u)\int_M\frac{1-|u|^2}{\epsilon^2}\psi^2\,dv_g\leq 8\pi+\frac{C'}{\sqrt{|\log \delta|}}.
$$

Since $\psi \equiv 1$ on $B_{\delta^{1/4}(p)}$, we can combine this with \eqref{comp.small} to obtain an estimate of the form
$$
\bar{\lambda}_1(g_{u_{m,\epsilon}})=\lambda_1({u_{m,\epsilon}})\int_M\frac{1-|u_{m,\epsilon}|^2}{\epsilon^2}\,dv_g\leq 8\pi+\frac{C''}{\sqrt{|\log \delta_{\epsilon}|}}.
$$
Taking $\epsilon\to 0$, it then follows from \eqref{lam.lim} that 
$$
\left(1-\frac{C}{m}\right)\Lambda_1^{\Gamma}(M,[g])\leq 8\pi+\liminf_{\epsilon\to 0}\frac{C''}{|\log \delta_{\epsilon}|^{1/2}},
$$
and by choosing $m$ large enough, the assumption $\Lambda^\Gamma_1(M, [g])>8\pi$ implies the left-hand side is $>8\pi$, implying the positive lower bound \eqref{no.bub} for $\delta_{\epsilon}$.

In fact, the same argument shows that the scales
$$
\delta_{m,\epsilon}:=\inf\left\{\delta>0\left|\,\,\, \int_{B_{\delta}(p)}\frac{1-|u_{m,\epsilon}|^2}{\epsilon^2}>\eta\right.\right\}
$$
have a universal lower bound
$$\delta_{m,\epsilon}\geq \delta_0>0$$
independent of both $\epsilon$ and $m$. Together with Lemma \ref{gl.ereg}, this implies that, for $m$ large enough, the maps $u_{m,\epsilon}$ satisfy a uniform bound of the form
\begin{equation}\label{linfty.bd}
\frac{1-|u_{m,\epsilon}|^2}{\epsilon^2}\leq C'
\end{equation}
independent of $m$ and $\epsilon$.

With the uniform control \eqref{linfty.bd} established, it follows from the analysis of \cite{LW02} that, along a subsequence $\epsilon\to 0$, the maps $u_{m,\epsilon}$ converge smoothly to a harmonic map
$$
u_m\colon (M,g)\to \Sph^{m|\Gamma|-1}\subset A_{\Gamma}^m,
$$
with energy
\begin{eqnarray*}
E(u_m)&=&\frac{1}{2}\int_M|du_m|^2=\frac{1}{2}\lim_{\epsilon\to 0}\int_M|du_{m,\epsilon}|^2\\
&=&\lim_{\epsilon\to 0}\left(\mathcal{E}_{m,\epsilon}^{\Gamma}(M,[g])-\int_M\frac{(1-|u|^2)^2}{\epsilon^2}dv_g\right),
\end{eqnarray*}
and since $\frac{(1-|u|^2)^2}{\epsilon^2}\leq \frac{(C')^2\epsilon^4}{\epsilon^2}\leq (C')^2\epsilon^2$ by \eqref{linfty.bd}, it follows that
$$
E(u_m)=\lim_{\epsilon\to 0}\mathcal{E}_{m,\epsilon}^{\Gamma}(M,[g])=\mathcal{E}_m^{\Gamma}(M,[g]).
$$
For the lower semi-continuity
$$
\ind_{E,\Gamma}(u)\leq \ind_{E_{\epsilon},\Gamma}(u_{\epsilon})
$$
of $\Gamma$-equivariant index, one can argue exactly as in \cite[Lemma 3.6]{KSminmax}, and in fact the proof is simpler in the present situation, since the convergence $u_{m,\epsilon}\to u_m$ is smooth.

Finally, note that the limit harmonic map $u_m\colon M\to \Sph^{m|\Gamma|-1}$ satisfies $\Delta_g u_m=|du_m|_g^2u_m$, and since the convergence $u_{m,\epsilon}\to u_m$ is smooth, we have
$$
|du_m|_g^2=\langle \Delta_g u_m,u_m\rangle=\lim_{\epsilon\to 0}\langle \Delta_g u_{m,\epsilon},u_{m,\epsilon}\rangle=\lim_{\epsilon\to 0}\frac{(1-|u_{m,\epsilon}|^2)|u_{m,\epsilon}|^2}{\epsilon^2},
$$
which together with \eqref{linfty.bd} gives a uniform gradient bound
$$
|du_m|_g^2\leq C'
$$
independent of $m$.

Finally, we consider the case where $\Gamma$ has empty fixed point set.  In this case, it is easy to see that the conclusions of the proposition hold as soon as $m$ is large enough such that $\lambda_1(g_u)\geq \frac{1}{2}$. To see this, take $\delta_0(N,\Gamma)>0$ small enough such that the orbit of any point under $\Gamma$ contains elements of distance $>2\sqrt{\delta_0}$ apart, and note that if \eqref{dir.case.1} fails at the point $p=p_{\epsilon}$, then by the equivariance of the map $u=u_{m,\epsilon}$, it must fail at every point $q\in\Gamma p_{\epsilon}$ in the orbit of $p_{\epsilon}$ under $\Gamma$ as well. If $\delta_{\epsilon}<\delta_0$ and $q\in \Gamma p_{\epsilon}\setminus\{p_{\epsilon}\}$, then we must have $B_{\sqrt{\delta_{\epsilon}}}(q_{\epsilon})\subset M\setminus B_{\sqrt{\delta_{\epsilon}}}(p)$, so the failure of \eqref{dir.case.1} implies the failure of \eqref{dir.case.2} as well; however, we know that at least one of these cases must hold, and therefore $\delta_{\epsilon}\geq \delta_0(N,\Gamma)>0$. The rest of the argument then follows exactly as above.
\end{proof}

\subsection{Stabilization and realization of $\Lambda_1^{\Gamma}(M, [g])$.}\label{closed.stab}

By a simplified version of the arguments in \cite[Section 3]{KSminmax} and \cite[Section 3.2]{KS22}, assuming that $\Lambda_1^{\Gamma}(M,[g])>8\pi$ or that $\Gamma$ has empty fixed point set, we can now conclude that the maps $u_m$ stabilize as $m$ gets large to a map by first Laplace eigenfunctions for a metric realizing $\Lambda_1^{\Gamma}(M,[g])$. As a first step, as in \cite{KSminmax, KS22}, we show that the maps $u_m$ take values in a equatorial subsphere of fixed dimension as $m\to\infty$. This can be deduced from the boundedness of the min-max energies $\mathcal{E}_m^{\Gamma}(M,[g])$ by appealing to \cite[Theorem 3.12]{KSminmax}, but we give here a different argument based on the estimates obtained in the preceding subsection, which also provides a much simpler path to the results of \cite{KSminmax} in the case of the trivial group $\Gamma$.

\begin{lemma}\label{closed.stab.lem}
If $\Lambda_1^{\Gamma}(M,[g])>8\pi$ or $\Gamma$ has empty fixed point set, there exists $N(M,[g])\in \mathbb{N}$ such that the harmonic maps $u_m\colon M\to \Sph^{m|\Gamma|-1}$ of Proposition \ref{hm.ex} take values in a totally geodesic subsphere of dimension $N$, i.e. $u_m(M)\subset \Sph^N\subset \Sph^{m|\Gamma|-1}$.
\end{lemma}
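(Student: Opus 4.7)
The plan is to bound the dimension of the linear span
\[
V_m := \mathrm{Span}\{u_m^1, \ldots, u_m^{m|\Gamma|}\} \subset C^\infty(M)
\]
of the component functions of $u_m$ by a constant depending only on $(M, [g])$ and $\Gamma$, and then take $N := \dim V_m - 1$. Since $u_m(M)$ is contained in the unit sphere of $V_m$, this sphere is a totally geodesic $\Sph^{\dim V_m - 1} \subset \Sph^{m|\Gamma|-1}$, giving the conclusion. The argument combines the uniform gradient bound $|du_m|_g^2 \leq C'$ from Proposition~\ref{hm.ex} with a classical multiplicity bound for eigenfunctions of Cheng--Besson--Nadirashvili type.

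First, since $u_m \colon (M,g) \to \Sph^{m|\Gamma|-1}$ is harmonic, each component satisfies $\Delta_g u_m^j = |du_m|_g^2 \, u_m^j$. Writing $\rho_m := |du_m|_g^2 \leq C'$, every $\phi \in V_m$ is a solution of the weighted eigenvalue problem $\Delta_g \phi = \mu \rho_m \phi$ with $\mu = 1$. Equivalently, introducing the (possibly degenerate) conformal metric $\tilde g_m := \rho_m \, g$, one has $V_m \subset \ker(\Delta_{\tilde g_m} - \mathrm{Id})$. By conformal invariance of the Dirichlet energy in dimension two combined with the pointwise upper bound $\rho_m \leq C'$, the Rayleigh quotients compare as $R_{\tilde g_m}(\phi) \geq (C')^{-1} R_g(\phi)$, so by the min-max principle the weighted eigenvalues satisfy
\[
\mu_k(\rho_m) \geq (C')^{-1}\, \lambda_k(M, g)
\]
for every $k$.

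Letting $K_m$ denote the largest index with $\mu_{K_m}(\rho_m) \leq 1$, this yields $\lambda_{K_m}(M, g) \leq C'$, and the Weyl law for $(M,g)$ provides a bound $K_m \leq K_0(M, g, C')$ independent of $m$. The classical multiplicity bound on a closed orientable surface of genus $\gamma$ (see~\cite{Cheng, Besson, Nadirashvili}) then gives
\[
\dim V_m \;\leq\; \mathrm{mult}\bigl(\mu_{K_m}(\rho_m)\bigr) \;\leq\; 2K_m + 2\gamma - 1 \;\leq\; 2K_0 + 2\gamma - 1,
\]
which is the desired uniform bound, completing the proof after setting $N := 2K_0 + 2\gamma - 2$. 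The case where $\Gamma$ has empty fixed point set is handled identically, since the uniform gradient bound of Proposition~\ref{hm.ex} is available in that case as well.

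The only delicate point is that Cheng--Besson--Nadirashvili multiplicity bounds are classically stated for smooth Riemannian metrics, whereas $\tilde g_m$ fails to be smooth at the (finite) branch locus of $u_m$, where $\rho_m$ vanishes. This is not a substantive obstacle: the proofs rest on controlling vanishing orders of eigenfunctions and counting nodal domains, and extend to weighted eigenvalue problems $\Delta_g \phi = \mu \rho \phi$ with non-negative bounded weights having isolated zeros, or equivalently to conformal classes containing metrics with finitely many conical singularities. Verifying this extension is the one place where the argument requires care; in particular, an argument purely in the weighted formulation above avoids any need to regularize $\tilde g_m$.
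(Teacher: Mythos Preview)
Your argument is correct and in fact more direct than the paper's. Both begin from the uniform pointwise bound $|du_m|^2_g \leq C'$ of Proposition~\ref{hm.ex}, but diverge thereafter. The paper upgrades this $L^\infty$ bound to a $W^{1,2}$ bound on $|du_m|_g$ via the Bochner identity, passes to an $L^p$-subsequential limit $\rho$ of $\rho_m = |du_m|^2_g$, invokes continuity of eigenvalues under this convergence (citing \cite{GKL}), and derives a contradiction from discreteness of the spectrum of the limiting weighted problem. Your route is shorter: the Rayleigh-quotient comparison $\mu_k(\rho_m) \geq (C')^{-1}\lambda_k(M,g)$ immediately bounds the index $K_m$ of the last weighted eigenvalue $\leq 1$ in terms of the fixed spectrum of $(M,g)$, avoiding Bochner, compactness, and eigenvalue continuity altogether.

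One simplification you should make: the appeal to Cheng--Besson--Nadirashvili multiplicity bounds, and with it your entire final paragraph about conical singularities and weights with isolated zeros, is unnecessary. Since every $\phi \in V_m$ lies in the weighted eigenspace for the eigenvalue $1$, and every index $k$ with $\mu_k(\rho_m) = 1$ satisfies $1 \leq k \leq K_m$ (the case $k=0$ being excluded because $\mu_0 = 0$), you already have
\[
\dim V_m \;\leq\; \mathrm{mult}(1) \;\leq\; K_m \;\leq\; K_0
\]
directly from your definition of $K_m$. The bound $2K_m + 2\gamma - 1$ is weaker than $K_m$, so invoking multiplicity estimates for possibly degenerate conformal metrics only adds a delicate step you do not need.
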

\begin{proof}
As in \cite[Section 3]{KS22},  observe that the Bochner identity for sphere-valued harmonic maps gives
$$
-\Delta_g\frac{1}{2}|du_m|^2\geq |d|du_m||^2-\|\mathrm{Ric}_g\|_{L^{\infty}}|du_m|^2-|du_m|^4,
$$
so integrating and applying the $L^{\infty}$ gradient bound from Proposition \ref{hm.ex} gives the $W^{1,2}$ bound
$$\||du_m|\|_{W^{1,2}}^2\leq C'(M,g,\Gamma).$$
We now argue as in the proof of \cite[Proposition 3.8]{KS22}. If the desired conclusion failed, then the space $\mathcal{C}_m\subset W^{1,2}(M)$ spanned by the coordinate functions of $u_m$ would have unbounded dimension as $m\to\infty$; in particular, since the harmonic map equation gives
$$
\Delta_g u_m=|du_m|_g^2u_m,
$$
it follows that, along a subsequence $m\to\infty$, the eigenvalues
$$
\lambda_k(|du_m|_g^2 g):=\inf_{\substack{V\subset W^{1,2}(M),\\ \dim(V)=k+1}}\max_{\phi\in V\setminus\{0\}}\frac{\int_M|d\phi|^2\,dv_g}{\int_M|du_m|^2\phi^2\,dv_g} 
$$
satisfy
\begin{equation}\label{eigen.small}
\lim_{m\to\infty}\lambda_k(|du_m|_g^2 g )\leq 1,
\end{equation}
since $\mathcal{C}_m$ gives a space of eigenfunctions with eigenvalue $1$. On the the other hand, since $|du_m|_g$ is bounded in $W^{1,2}(M)$, we can find a nonnegative function $0\leq \rho\in W^{1,2}(M)$ and a further subsequence such that
$$
|du_m|_g^2\to \rho\text{ in }L^p
$$
for every $p\in [1,\infty)$. It is then straightforward to check \cite[Prop. 5.1]{GKL} that $\lambda_k(|du_m|_g^2 g)\to \lambda_k(\rho g)$ for every $k\in \mathbb{N}$, which together with \eqref{eigen.small} implies that 
$$\lambda_k(\rho)\in [0,1]\text{ for every }k\in \mathbb{N}.$$
On the other hand, since $\rho\in W^{1,2}(M)\subset L^p(M)$, it is easy to see that the spectrum
$\{\lambda_k(\rho)\}$
is discrete, with $\lim_{k\to\infty}\lambda_k(\rho)=\infty$, contradicting the containment $\lambda_k(\rho)\in [0,1]$. Thus, $\dim(\mathcal{C}_m)$ must have an upper bound $N(M,[g])+1$ independent of $m$, which implies the desired conclusion. 
\end{proof}

Next, by combining the preceding lemma with a $\Gamma$-equivariant analog of \cite[Proposition 3.11]{KarRP2}, we arrive at the desired identification of $2\mathcal{E}_m^{\Gamma}$ and $\Lambda_1^{\Gamma}$ for $m$ sufficiently large.

\begin{theorem}\label{lap.mm.char}
Let $u_m\colon (M,g)\to \Sph^{m|\Gamma|-1}$ be the harmonic maps of Proposition \ref{hm.ex} realizing $E(u_m)=\mathcal{E}_m^{\Gamma}(M,[g])$. Suppose $\Lambda_1^{\Gamma}(M,[g])>8\pi$ or that $\Gamma$ has empty fixed point set. Then for $m$ sufficiently large, $g_m:=|du_m|_g^2g$ maximizes $\bar{\lambda}_1$ among $\Gamma$-invariant metrics in $[g]$, that is
$$
\Lambda_1^{\Gamma}(M,[g])=\bar{\lambda}_1(M,g_m)=2\mathcal{E}^{\Gamma}_m(M,[g]).
$$
\end{theorem}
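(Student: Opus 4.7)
The plan is to establish both equalities simultaneously by sandwiching the three quantities and then forcing $\lambda_1(g_m) = 1$ for $m$ large. Since $u_m$ is a harmonic map to a sphere, it satisfies $\Delta_g u_m = |du_m|_g^2\, u_m$, so the coordinate functions of $u_m$ are eigenfunctions of $\Delta_{g_m}$ with eigenvalue $1$. In particular $\lambda_1(M, g_m) \leq 1$, and
\[
\bar\lambda_1(M, g_m) \leq \mathrm{Area}(M, g_m) = \int_M |du_m|_g^2\,dv_g = 2E(u_m) = 2\mathcal{E}_m^\Gamma(M, [g]).
\]
Combined with Proposition~\ref{mm.lower} and the definition of $\Lambda_1^\Gamma$, this yields the sandwich $\bar\lambda_1(g_m) \leq \Lambda_1^\Gamma(M, [g]) \leq 2\mathcal{E}_m^\Gamma(M, [g])$, in which equality throughout is equivalent to $\lambda_1(g_m) = 1$.

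To verify $\lambda_1(g_m) = 1$ for large $m$, I would argue by contradiction. Suppose $\lambda_1(g_m) < 1$ with an eigenfunction $\phi$. By the stabilization result of Lemma~\ref{closed.stab.lem}, $u_m(M) \subset \Sph^N \subset A_\Gamma^m$ with $N = N(M, [g])$ independent of $m$; the $\Gamma$-invariant subspace $V := \mathrm{span}(u_m(M))$ has dimension at most $N+1$, so that $\dim(W_m \cap V^\perp) \geq m - N - 1$. For each unit vector $w \in W_m \cap V^\perp$, the test field $v_{\phi, w} = \sum_\gamma (\phi \circ \gamma^{-1})\,w_\gamma$ is $\Gamma$-equivariant, tangent to the sphere (because $\langle u_m, w_\gamma\rangle \equiv 0$), and the correction term $J_{u_m}(w, \phi)$ from the proof of Proposition~\ref{index.eigen.bd} vanishes identically, since $u_m^w = 0$. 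A direct computation using the conformal invariance of the Dirichlet energy in two dimensions yields the clean formula
\[
E''(u_m)(v_{\phi, w}, v_{\phi, w}) = |\Gamma|\,|w|^2\,(\lambda_1(g_m) - 1)\,\|\phi\|_{L^2(g_m)}^2 < 0.
\]

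A routine linear independence argument shows that, ranging $\phi$ over a basis of $\bigoplus_{\lambda<1} E_\lambda(g_m)$ and $w$ over a basis of $W_m \cap V^\perp$, the resulting fields $v_{\phi_i, w_j}$ are linearly independent in $W^{1,2}_\Gamma(M, A_\Gamma^m)$. This produces at least $\dim\bigl(\bigoplus_{\lambda<1} E_\lambda(g_m)\bigr) \cdot (m - N - 1)$ negative directions for $E''(u_m)$, which exceeds the equivariant Morse index bound $\ind_{E,\Gamma}(u_m) \leq m$ from Proposition~\ref{hm.ex} as soon as the total multiplicity of the spectrum of $\Delta_{g_m}$ below $1$ is at least two and $m$ is large compared to $N$, giving the desired contradiction. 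The main obstacle is the borderline case in which the low-spectrum multiplicity equals one; there the naive count yields only $m - N - 1 < m$ negative directions, and one must refine the argument by exploiting the $\Gamma$-isotypic decomposition of $V^\perp$ to construct additional negative directions from nontrivial representations, or else invoke the quantitative bound $\lambda_1(g_m) \geq 1 - C/m$ from Proposition~\ref{index.eigen.bd} together with a compactness argument passing to a limiting harmonic map $u_\infty \colon M \to \Sph^N$ whose associated metric necessarily satisfies $\lambda_1(g_\infty) = 1$ exactly, and then transport the conclusion back to $u_m$ for $m$ sufficiently large. Once $\lambda_1(g_m) = 1$ is secured, the sandwich above collapses to the desired equality $\bar\lambda_1(M, g_m) = \Lambda_1^\Gamma(M, [g]) = 2\mathcal{E}_m^\Gamma(M, [g])$.
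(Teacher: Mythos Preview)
Your overall strategy matches the paper's: establish the sandwich, then use the index bound $\ind_{E,\Gamma}(u_m)\le m$ together with the stabilization $u_m(M)\subset\Sph^N$ to force $\lambda_1(g_m)=1$. The paper carries this out by letting $Y_m$ be the span of eigenfunctions of the Schr\"odinger operator $\Delta_g-|du_m|_g^2$ with negative eigenvalue (equivalently, eigenfunctions of $\Delta_{g_m}$ with eigenvalue $<1$), building $T_m(\phi)=\sum_\sigma(\phi\circ\sigma^{-1})e_\sigma$, and embedding $m-(N+1)$ copies of $T_m(Y_m)$ into $W^{1,2}_\Gamma(M,V_{N+1}^\perp)$; the index bound then gives $(m-N-1)\dim(Y_m)\le m$, forcing $\dim(Y_m)=1$ once $m>2N+3$.

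Your ``borderline case'' is a phantom. The constant function $\phi_0\equiv 1$ always lies in $\bigoplus_{\lambda<1}E_\lambda(g_m)$ (eigenvalue $0$), and for $w\in W_m\cap V^\perp$ the field $v_{\phi_0,w}=\sum_\gamma w_\gamma$ is a constant $\Gamma$-equivariant vector, tangent to the sphere since each $w_\gamma\in V^\perp$, with $E''(u_m)(v_{\phi_0,w},v_{\phi_0,w})=-|\Gamma|\,|w|^2\int_M|du_m|_g^2<0$. Hence if $\lambda_1(g_m)<1$ you already have $\dim\bigl(\bigoplus_{\lambda<1}E_\lambda(g_m)\bigr)\ge 2$, and your own count yields $\ge 2(m-N-1)>m$ negative directions for $m$ large---a clean contradiction with no need for isotypic refinements or compactness arguments. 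This is exactly how the paper closes the argument; your discussion of alternatives should be deleted and replaced by the observation that constants already give a full copy of negative directions.
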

\begin{proof}
We already know that
$$
\Area(M,g_m)=\frac{1}{2}\int_M|du_m|_g^2dv_g=\mathcal{E}_m(M,[g])\geq \frac{1}{2}\Lambda_1^{\Gamma}(M,[g]),
$$
so to prove that $\bar{\lambda}_1(M,g_m)=\frac{1}{2}\Lambda_1^{\Gamma}(M,[g])$ for $m$ large, it suffices to show that $\lambda_1(g_m)=2$,  that
$$
\int_M( |d\phi|^2-|du_m|_g^2\phi^2)dv_g\geq 0
$$
for all $\phi\in C^{\infty}(M)$ with $\int_M|du_m|_g^2\phi dv_g=0$. 

To this end, we mimick the proof of \cite[Proposition 3.11]{KSminmax} in our equivariant setting. Let $Y_m\subset C^{\infty}(M)$ be the subspace spanned by eigenfunctions  for $\Delta_g-|du_m|_g^2$ with negative eigenvalues, and consider the linear map
$$
T_m\colon Y_m\to W^{1,2}(M, A_{\Gamma})
$$
given by
$$
T_m(\phi):=\sum_{\sigma\in \Gamma}(\phi \circ \sigma^{-1})e_{\sigma}.
$$
Note that
\begin{equation*}
\begin{split}
\gamma\cdot T_m(\phi) &=\sum_{\sigma\in \Gamma}(\phi\circ \sigma^{-1})e_{\gamma\sigma}
=\sum_{\tau\in\Gamma}(\phi\circ (\tau^{-1}\gamma))e_{\tau} \\
&=\sum_{\tau\in \Gamma}(\phi\circ \tau^{-1})\circ \gamma e_{\tau}
=T_m(\phi)\circ \gamma,
\end{split}
\end{equation*}
so in fact $T_m$ gives a map $T_m\colon Y_m\to W^{1,2}_{\Gamma}(M,A_{\Gamma})$; moreover, this map is clearly injective, since we can recover $\phi$ as $\langle T_m(\phi), e_{I}\rangle$. Now, appealing to Lemma \ref{closed.stab.lem}, let $V_{N+1}\subset A_{\Gamma}^m$ be an $(N+1)$-dimensional subspace such that $u_m(M)\subset V_{N+1}$; then the orthogonal complement
$$
V_{N+1}^{\perp}\subset A_{\Gamma}^m
$$
contains a direct sum of at least $m-(N+1)$ copies of $A_{\Gamma}$, and therefore we can find a direct sum of $m-(N+1)$ copies of $T_m(Y_m)$ inside 
$$
W^{1,2}_{\Gamma}(M, V_{N+1}^{\perp})\subset \mathcal{V}_{\Gamma}(u_m):=\left\{v\in W^{1,2}_{\Gamma}(M, A_{\Gamma}^m)\mid \langle v,u_m\rangle \equiv 0\right\}.
$$
Moreover, it is clear that
\begin{eqnarray*}
\int_M |d(T_m(\phi))|^2-|du_m|^2|T_m(\phi)|^2&=&\sum_{\sigma\in \Gamma}\int_M |d(\phi\circ \sigma^{-1})|^2-|du_m|^2(\phi\circ \sigma^{-1})^2\\
&=&|\Gamma|\int_M|d\phi|^2-|du_m|^2\phi^2<0
\end{eqnarray*}
for all $0\neq \phi\in Y_m$, so these $m-(N+1)$ copies of $W_m$ give rise to an $(m-(N+1))\cdot \dim(Y_m)$-dimensional subspace of $\mathcal{V}_{\Gamma}(u_m)$ along which $E''(u)$ is negative definite. In particular, we deduce that
$$
m\geq \ind_{\Gamma}(u_m)\geq (m-(N+1))\dim(W_m),
$$
so for $m>2N+3$, we must have $\dim(Y_m)=1$. 

In other words, the Schr\"odinger operator $\Delta_g-|du_m|_g^2$ has index one, which is equivalent to the desired statement $\lambda_1(g_m)=2$, completing the proof.
\end{proof}

In particular, we arrive at the following existence result for the symmetric conformal $\bar{\lambda}_1$-maxization problem.

\begin{theorem}\label{lap.conf.ex}
Let $(M, g)$ be a closed surface.  For any finite subgroup $\Gamma\leq \Isom(M,g)$ such that either $\Lambda_1^{\Gamma}(M,[g])>8\pi$ or $\Gamma$ has an empty fixed point set, there exists $m\in \mathbb{N}$ and a $\Gamma$-equivariant harmonic map
$$
u\colon M\to \mathbb{S}^{m|\Gamma|-1}\subset A_{\Gamma}^m
$$
such that the $\Gamma$-invariant conformal metric $\tilde{g}=|du|_g^2g$ realizes
$$
\bar{\lambda}_1(M,\tilde{g})=\Lambda_1^{\Gamma}(M,[g]),
$$
and the coordinates of $u$ are first eigenfunctions for $\Delta_{\tilde{g}}$.
\end{theorem}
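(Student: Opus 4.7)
The plan is to assemble the statement directly from Proposition \ref{hm.ex} and Theorem \ref{lap.mm.char}, which together have done essentially all of the analytic work. The main theorem is really a repackaging of these two results in the language of the $\Gamma$-equivariant conformal maximization problem.

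First I would fix $m$ large enough to simultaneously invoke Proposition \ref{hm.ex} and Theorem \ref{lap.mm.char}. Under the hypothesis that either $\Lambda_1^{\Gamma}(M,[g])>8\pi$ or $\Gamma$ has empty fixed point set, Proposition \ref{hm.ex} produces a smooth $\Gamma$-equivariant harmonic map
$$u := u_m \colon (M,g) \to \Sph^{m|\Gamma|-1} \subset A_\Gamma^m$$
with $E(u) = \mathcal{E}_m^{\Gamma}(M,[g])$ and a uniform gradient bound. Setting $\tilde g := |du|_g^2 g$, the $\Gamma$-equivariance of $u$ makes $|du|_g^2$ a $\Gamma$-invariant function on $M$, so $\tilde g$ is a $\Gamma$-invariant (possibly degenerate) conformal representative of $[g]$. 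The zero set of $|du|_g^2$ is a finite set of branch points of $u$, where $\tilde g$ acquires at worst conical singularities, so $\tilde g$ is the desired metric (with conical singularities allowed, as in the statement).

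Next, I would invoke Theorem \ref{lap.mm.char}, which for $m$ sufficiently large asserts
$$\bar{\lambda}_1(M,\tilde g) = 2\mathcal{E}_m^{\Gamma}(M,[g]) = \Lambda_1^{\Gamma}(M,[g]),$$
so $\tilde g$ realizes the conformal supremum. It remains to verify that the component functions of $u$ are first eigenfunctions for $\Delta_{\tilde g}$. Since $u$ is harmonic into the unit sphere, each component $u^i$ satisfies $\Delta_g u^i = |du|_g^2 \, u^i$, and because $\dim M = 2$ the conformal rescaling rule $\Delta_{\tilde g} = |du|_g^{-2}\Delta_g$ gives $\Delta_{\tilde g} u^i = u^i$ pointwise away from branch points. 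Using $\Area(M,\tilde g) = \int_M |du|_g^2\,dv_g = 2E(u) = 2\mathcal{E}_m^{\Gamma}(M,[g])$ together with the identity $\bar\lambda_1(M,\tilde g) = 2\mathcal{E}_m^{\Gamma}(M,[g])$ from Theorem \ref{lap.mm.char}, one reads off $\lambda_1(\tilde g) = 1$. The components $u^i$ are not all zero (since $|u| \equiv 1$), hence each nonzero $u^i$ is a nonconstant $\Delta_{\tilde g}$-eigenfunction with eigenvalue equal to $\lambda_1(\tilde g)$, i.e.\ a first eigenfunction.

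Since every ingredient has been established, the main ``obstacle'' in the proof is nothing more than the careful verification that the normalization conventions line up, in particular that the eigenvalue equation for the coordinates transforms under conformal rescaling to $\Delta_{\tilde g} u^i = u^i$ and that this matches the value $\lambda_1(\tilde g) = 1$ forced by Theorem \ref{lap.mm.char}. All of the genuinely hard analysis---the Ginzburg--Landau min-max construction, the $\Gamma$-equivariant Morse index bound, the no-neck/no-bubble analysis that produces smooth harmonic maps in the limit, the stabilization $u_m(M) \subset \Sph^N$, and the identification of $\lambda_1(\tilde g)$ via the index argument---has already been handled in the preceding subsections.
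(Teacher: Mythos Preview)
Your proposal is correct and matches the paper's approach: the paper presents Theorem \ref{lap.conf.ex} simply as a consequence (``In particular, we arrive at the following existence result\ldots'') of Proposition \ref{hm.ex} and Theorem \ref{lap.mm.char}, without a separate proof. You have spelled out the verification that the coordinates of $u$ are first eigenfunctions a bit more explicitly than the paper does, correctly obtaining $\lambda_1(\tilde g)=1$ (the paper's proof of Theorem \ref{lap.mm.char} actually contains a typo writing ``$\lambda_1(g_m)=2$'', but the quadratic-form condition stated there is precisely $\lambda_1(g_m)\geq 1$, consistent with your computation).
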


\section{Min-max characterization of $\Sigma_1^T(N,[g])$}
\label{SSprelim}
Let $(N,g)$ be a compact surface with boundary, and let $\Gamma$ be a discrete subgroup of the isometry group.  For $m \geq 3$, define $A^m_\Gamma$, $X: = W^{1, 2}_\Gamma(N, A^m_\Gamma)$, and $X_\Gamma : = W^{1, 2}_\Gamma(N, A^m_\Gamma)$ analogously as in the closed case. 

We now consider energies $F_\epsilon$ previously studied by Millot and Sire \cite{MS13} and check that these functionals satisfy an analog of Proposition \ref{gl.pre-mm}, allowing us to produce critical points with bounded Morse index via min-max methods.
\begin{proposition}\label{fb.ps}
For each $\epsilon> 0$, the Ginzburg-Landau energy 
$$
F_{\epsilon}(u):=\int_N\frac{1}{2}|du|_g^2\,dv_g+\int_{\partial N}\frac{(1-|u|^2)^2}{4\epsilon}\,ds_g,
$$
defines a $C^2$ functional on the space $X_\Gamma$ satisfying the following.
\begin{enumerate}[label=\emph{(\roman*)}]
\item  $F_{\epsilon}'(u)(v)=\int_N \langle du,dv\rangle\,dv_g-\epsilon^{-1}\int_{\partial N}(1-|u|^2)\langle u,v\rangle\,ds_g$.
\item $F_{\epsilon}''(u)(v,v)=\int_N|dv|^2\,dv_g+\epsilon^{-1}\int_{\partial N}\left(2\langle u,v\rangle^2-(1-|u|^2)|v|^2\right)\,ds_g$.
\item $F_\epsilon$ satisfies the Palais-Smale condition on $X_\Gamma$. 
\item For any critical point $u \in X_\Gamma$ of $F_\epsilon$, the map $v \mapsto F''_\epsilon(u)(v, \cdot)$ defines a Fredholm operator $X_\Gamma \rightarrow X^*_\Gamma$. 
\item $u \in X_\Gamma$ is a critical point for $F_\epsilon$ if and only if
	\begin{align*}
	\Delta u = 0 \text{ on $N$ and  } \frac{\partial u}{\partial \nu} = \frac{1-|u|^2}{\epsilon} u \text{ on $\partial N$}.
	\end{align*}
\end{enumerate}
\end{proposition}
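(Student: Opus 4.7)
The plan is to follow the template of Proposition \ref{gl.pre-mm}, replacing the interior Ginzburg--Landau nonlinearity with the boundary nonlinearity studied by Millot--Sire.

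For parts (i), (ii), and (v), I would expand $F_\epsilon(u+tv)$ and differentiate in $t$ at $t=0$; this yields the stated first and second variation formulas, and $C^2$ regularity in $u\in X_\Gamma$ follows since the boundary integrand is polynomial in $u$ and the trace map $W^{1,2}(N)\to L^q(\partial N)$ is continuous for every $q<\infty$. Integration by parts in (i) then characterizes critical points as weakly harmonic maps on $N$ satisfying the Robin boundary condition in (v).

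For (iii) and (iv) on the full space $X=W^{1,2}(N,A_\Gamma^m)$, I would cite Millot--Sire \cite{MS13}, who established both the Palais--Smale condition and Fredholmness of the Hessian for exactly this $F_\epsilon$. The main analytic input is the compactness of the trace embedding $W^{1,2}(N)\hookrightarrow L^q(\partial N)$ for every $q<\infty$, which makes the boundary nonlinearity a compact perturbation of the quadratic form $v\mapsto \int_N|dv|^2 + \int_{\partial N}|v|^2$, and the latter is equivalent to the $W^{1,2}$ inner product by the trace Poincar\'e inequality. Once (iii)--(iv) are established on $X$, passage to the $\Gamma$-equivariant subspace $X_\Gamma$ proceeds exactly as in the closed case: since $F_\epsilon$ is $\Gamma$-invariant and $X_\Gamma$ is closed, if $u\in X_\Gamma$ then $F_\epsilon'(u)$ annihilates $X_\Gamma^\perp$, so Palais--Smale sequences in $X_\Gamma$ are Palais--Smale sequences in $X$, and their weak $W^{1,2}$-limits are automatically in $X_\Gamma$; likewise the restricted Hessian $F_\epsilon''(u)|_{X_\Gamma}$ is Fredholm on $X_\Gamma\to X_\Gamma^*$.

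The one step where the boundary setting differs substantively from the closed case is obtaining the \emph{a priori} $W^{1,2}$-bound in Palais--Smale. Given $(u_j)\subset X_\Gamma$ with $\sup_j F_\epsilon(u_j)<\infty$ and $F_\epsilon'(u_j)\to 0$, the energy bound directly yields $\|du_j\|_{L^2(N)}\leq C$ and $\|1-|u_j|^2\|_{L^2(\partial N)}\leq 2\sqrt{\epsilon C}$, hence $\|u_j\|_{L^4(\partial N)}\leq C'$; combined with the trace Poincar\'e inequality $\|v\|_{W^{1,2}(N)}^2\leq C(\|dv\|_{L^2(N)}^2+\|v\|_{L^2(\partial N)}^2)$, this gives a uniform $W^{1,2}$ bound. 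Extracting a weakly convergent subsequence $u_j\rightharpoonup u$ and using compactness of the trace on $L^q(\partial N)$ to pass to the limit in the boundary nonlinearity, testing $F_\epsilon'(u_j)(u_j-u)\to 0$ then upgrades weak to strong convergence, completing the Palais--Smale verification. I expect this bound, together with correctly identifying the compact-versus-coercive parts of the Hessian in the proof of Fredholmness, to be the only nontrivial step.
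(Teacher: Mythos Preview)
Your proposal is correct and follows essentially the same route as the paper: verify (i)--(ii) and (v) by direct computation, reduce (iii)--(iv) to the full space $X$ by $\Gamma$-invariance, prove Palais--Smale by extracting a weak $W^{1,2}$ limit and testing $F_\epsilon'(u_j)(u_j-u)\to 0$ using compactness of the trace into $L^p(\partial N)$, and prove Fredholmness by writing $F_\epsilon''(u)$ as a coercive form minus a compact boundary term. The only difference is that you are more explicit than the paper about the a priori $W^{1,2}$ bound (the paper simply asserts weak subsequential convergence), and your citation of \cite{MS13} for (iii)--(iv) is unnecessary since you go on to sketch the direct argument the paper actually uses.
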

\begin{proof}
That $F_\epsilon$ defines a $C^2$ functional on $X$ and on $X_\Gamma$ satisfying (i)-(ii) is straightforward.  Since $F_\epsilon$ is invariant under postcomposition with isometries of $A^m_\Gamma$ and precomposition with isometries of $(N, g)$, it suffices to check items (iii)-(v) on $X$.  We next check the Palais-Smale condition. For fixed $\epsilon>0$, let $u_j\in W^{1,2}(N,\mathbb{B}^n)$ satisfy $F_{\epsilon}(u_j)\leq C$ and $\|F_{\epsilon}'(u_j)\|_{(W^{1,2})^*}=\delta_j\to 0$. We wish to show that a subsequence converges strongly. After passing to a subsequence, clearly we can assume $u_j\rightharpoonup u$ weakly in $W^{1,2}$ and $u_j\to u$ strongly in $L^p(\partial N)$ for every $p<\infty$. In particular, it follows that
$$
\lim_{j\to\infty}\int_{\partial N}\frac{1-|u_j|^2}{\epsilon}\langle u_j,u_j-u\rangle\,ds_g =0
$$
and
$$
\lim_{j\to\infty}| F_{\epsilon}'(u_j)(u_j-u)|\leq \lim_{j\to\infty} \delta_j\|u_j-u\|_{W^{1,2}}=0.
$$
Subtracting these two equations, we see that
\begin{equation*}
\begin{split}
&\lim_{j\to\infty}\int_N( |du_j|^2-|du|^2)\,dv_g=\lim_{j\to\infty}\int_N\langle du_j,du_j-du\rangle\,dv_g= \\
&\lim_{j\to\infty}\left(F_{\epsilon}'(u_j)(u_j-u)+\int_{\partial N}\frac{1-|u_j|^2}{\epsilon}\langle u_j,u_j-u\rangle\,ds_g\right)
=0,
\end{split}
\end{equation*}
so the convergence $u_j\to u$ is indeed strong in $W^{1,2}$, confirming the Palais-Smale condition. Using (ii), it is likewise straightforward to see that the self-adjoint operator $F_{\epsilon}''(u)$ is Fredholm, since
$$
F_{\epsilon}''(u)(v,v)\geq \int_N|dv|^2\,dv_g-\frac{1}{\epsilon}\|v\|_{L^2(\partial N)}^2\geq \|v\|_{W^{1,2}(N)}^2-C_{\epsilon}\|v\|_{L^2(\partial N)}^2
$$
and the trace embedding $W^{1,2}(N)\to L^2(\partial N)$ is compact.
\end{proof}

Next, define $\mathcal{B}_m(\Gamma)\subset C^0(\B^m,W_{\Gamma}^{1,2}(N,A_{\Gamma}^m))$ just as in Section \ref{lap.mm}, and set
$$
\mathcal{F}_{m,\epsilon}^{\Gamma}(N,g):=\inf_{(u_y)\in \mathcal{B}_m}\max_{y\in \B^m}F_{\epsilon}(u_y)$$
and
$$
\mathcal{F}_m^{\Gamma}(N,g):=\sup_{\epsilon>0}\mathcal{F}_{m,\epsilon}^{\Gamma}=\lim_{\epsilon\to 0}\mathcal{F}_{m,\epsilon}^{\Gamma}.
$$
As in the closed case, from the conformal invariance of the Dirichlet energy, it is straightforward to check that $\mathcal{F}_m^{\Gamma}(N,g)=\mathcal{F}_m^{\Gamma}(N,[g])$ is a conformal invariant within the class of $\Gamma$-invariant metrics. Next, we establish analogs of Lemma \ref{mm.ubd} and Proposition \ref{mm.lower}, showing that $\mathcal{F}_m^{\Gamma}(N,[g])$ is finite, and bounded below by the length-normalized first Steklov eigenvalue for any $\Gamma$-invariant metric conformal to $g$.

\begin{lemma}
For $m$ sufficiently large, $\mathcal{F}_m^{\Gamma}(N,[g])<\infty$.
\end{lemma}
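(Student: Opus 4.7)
The plan is to mimic the proof of Lemma~\ref{mm.ubd} in the closed case, replacing the spherical target by the closed ball with boundary on the sphere. First, by a $\Gamma$-equivariant embedding theorem (applied e.g.\ to the double $\widetilde{N}$ of $N$, which carries a $\Z_2 \times \Gamma$ action, followed by restriction to one half and equivariant averaging), we produce for $m$ sufficiently large a $\Gamma$-equivariant embedding
\[
u_0 \colon N \to \overline{\B^{m|\Gamma|}} \subset A^m_\Gamma
\]
whose boundary values lie on $\Sph^{m|\Gamma|-1}$ and which, after replacing $g$ by an appropriate representative of its $\Gamma$-invariant conformal class, is (weakly) conformal. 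Next, for each $a$ in the fixed-point set of the open ball $\B^m := F_m \cap \B^{m|\Gamma|}$, we define a conformal Möbius transformation $G_a$ of $\overline{\B^{m|\Gamma|}}$ preserving $\Sph^{m|\Gamma|-1}$, chosen so that the assignment $a \mapsto G_a$ is $C^\infty$ for $|a|<1$ and $G_a \to a$ in $C^\infty_{loc}(\overline{\B^{m|\Gamma|}}\setminus\{-a_0\})$ as $a \to a_0$ with $|a_0|=1$. Since $a \in F_m$, the map $G_a$ commutes with the $\Gamma$-action, so $u_a := G_a \circ u_0 \in W^{1,2}_\Gamma(N, A^m_\Gamma)$ and $u_a \equiv a$ whenever $|a|=1$, while $a \mapsto u_a$ is $L^p$-continuous by the same argument as in Lemma~\ref{mm.ubd}.

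The key uniform estimate
\[
\sup_{a \in \B^m} F_\epsilon(u_a) \leq C < \infty
\]
independent of $\epsilon$ follows from two observations. First, since $G_a(\Sph^{m|\Gamma|-1}) = \Sph^{m|\Gamma|-1}$, we have $|u_a| \equiv 1$ on $\partial N$, so the boundary penalty term in $F_\epsilon$ vanishes identically. Second, since $u_0$ is conformal and $G_a$ is conformal (being a Möbius transformation in dimension $m|\Gamma| \geq 3$), the composition $u_a$ is conformal, so
\[
E(u_a) = \tfrac{1}{2}\int_N |du_a|^2 \, dv_g = \mathrm{Area}(u_a(N)) \leq \widetilde{\mathcal{W}}(u_a(N)) = \widetilde{\mathcal{W}}(u_0(N)) < \infty,
\]
where $\widetilde{\mathcal{W}}$ is the Möbius-invariant Willmore energy of the doubled surface obtained by reflecting $u_a(N) \subset \overline{\B^{m|\Gamma|}}$ across $\Sph^{m|\Gamma|-1}$; invariance of $\widetilde{\mathcal{W}}$ under ball-preserving Möbius transformations yields the final equality.

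Finally, as in the closed case, we mollify $a \mapsto u_a$ using the $\Gamma$-invariant heat flow on $(N,g)$ with Neumann boundary conditions (which commutes with $\Gamma$ by isometry invariance of the associated heat kernel) to obtain a continuous family $(u_a^t) \in \mathcal{B}_m(\Gamma)$ satisfying $u_a^t \equiv a$ for $|a|=1$ and $E(u_a^t) \leq E(u_a)$ for all $t>0$. To conclude $\inf_{t>0} \max_a F_\epsilon(u_a^t) \leq \sup_a E(u_a)$, it suffices to verify that $\int_{\partial N}(1-|u_a^t|^2)^2 \, ds_g \to 0$ as $t \to 0$, uniformly in $a$; this follows from the standard bound $\|u_a^t - u_a\|_{L^2(N)}^2 \leq t \cdot E(u_a)$, the trace inequality, and the uniform Dirichlet bound $\sup_a E(u_a) < \infty$ established above. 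The principal technical obstacle is constructing the initial $\Gamma$-equivariant conformal embedding $u_0$ with boundary on the sphere while controlling the Möbius-invariant Willmore energy of its double.
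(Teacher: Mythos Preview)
Your approach has a genuine gap: you never actually construct the $\Gamma$-equivariant conformal embedding $u_0 \colon N \to \overline{\B^{m|\Gamma|}}$ with $u_0(\partial N) \subset \Sph^{m|\Gamma|-1}$, and you rightly flag this as ``the principal technical obstacle.'' The doubling-and-restriction idea you sketch does not obviously land $\partial N$ on the sphere (the $\Z_2$ factor of the equivariant embedding acts by a linear reflection, sending the fixed set $\partial N$ into a \emph{hyperplane}, not into $\Sph^{m|\Gamma|-1}$). Moreover, for your doubled-Willmore functional $\widetilde{\mathcal{W}}$ to be genuinely M\"obius-invariant you would also need $u_0(N)$ to meet $\Sph^{m|\Gamma|-1}$ orthogonally, so that the inversion-reflected double is a smooth closed surface; otherwise corner contributions spoil the invariance.

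The paper avoids all of this by reusing the \emph{sphere-valued} embedding $u_0 \colon N \to \Sph^{m|\Gamma|-1}$ from the closed case (Lemma~\ref{mm.ubd}) verbatim, together with the same conformal sphere-dilations $G_a$. Since $|u_0| \equiv 1$ on all of $N$---in particular on $\partial N$---the boundary penalty in $F_\epsilon$ already vanishes for the un-mollified family $u_a = G_a \circ u_0$, and the Dirichlet energy is bounded by the ordinary spherical Willmore energy $\mathcal{W}(u_0(N))$ of the image as a surface in the sphere. No ball-valued map, no boundary-on-sphere constraint, no free-boundary Willmore functional is needed. The paper even remarks that mapping the \emph{interior} of $N$ into the sphere is unnecessary here (only the boundary values enter the penalty), but the closed-case construction gives it for free, and the resulting upper bound is admittedly non-sharp.

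Your mollification and trace steps (Neumann heat flow preserving $\Gamma$-equivariance and the pointwise bound $|u_a^t|\le 1$, then $\|u_a^t - u_a\|_{L^2(\partial N)}^2 \lesssim t^{1/2}$ via the trace inequality combined with $\|u_a^t-u_a\|_{L^2(N)}^2\le Ct$ and the uniform Dirichlet bound) are correct and match the paper's treatment.
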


\begin{proof}
Arguing exactly as in the proof Lemma \ref{mm.ubd}, for $m$ sufficiently large, we can find a $\Gamma$-equivariant embedding
$$
u_0\colon N\to \Sph^{m|\Gamma|-1}\subset A_{\Gamma}^m,
$$
and for each unit vector $a\in F_m\cap \B_1^{m|\Gamma|}(0)$ in the fixed set $F_m\subset A_{\Gamma}^m$ of $\Gamma$, we postcompose with the conformal dilation
$$
G_a(x)=\frac{(1-|a|^2)}{|x+a|^2}(x+a)+a
$$
to obtain a family of $\Gamma$-equivariant maps
$$
u_a=G_a\circ u_0\colon N\to \Sph^{m|\Gamma|-1},
$$
which we extend to a family on the closed unit ball $F_m\cap \overline{\B}$--which can be identified with $\overline{\B}^m\subset \mathbb{R}^m$--by setting
$$
u_a\equiv a\text{ for }a\in \Sph^{m|\Gamma|-1}\cap F_m.
$$
Note that in the setting with boundary, the condition that $u_0$ maps the interior of $N$ to $\Sph^{m|\Gamma|-1}$ is unnecessary, and the upper bound on $\mathcal{F}_m^{\Gamma}(N,[g])$ obtained from this family will not be sharp. 

Arguing as in the proof of Lemma \ref{mm.ubd}, we see that the family $a\mapsto u_a$ is continuous in $L^p(\partial N)$ for every $p<\infty$, with a uniform upper bound
$$
\sup_{a\in \B^m}E(u_a)\leq \mathcal{W}(u_0(N))
$$
on the Dirichlet energy by the Willmore energy of the initial map $u_0$, and mollifying with the heat kernel $K_t(x,y)$ for a short time yields a continuous family
$$
\B^m\cong F_m\cap \overline{\B}_1^{m|\Gamma|}(0)\ni a\mapsto u_a\in W^{1,2}_{\Gamma}(N,A_{\Gamma}^m)
$$
with 
$$
E(u^t_a)\leq E(u_a)\leq \mathcal{W}(u_0)
$$
and 
$$
\|u_a^t-u_a\|_{L^2(N)}^2\leq Ct,
$$
which belongs to the collection $\mathcal{B}_m$, since $u_a\equiv a$ for $a\in \Sph^{m|\Gamma|-1}\cap F_m$.
Further, standard estimates for the trace embedding $W^{1,2}(N)\to L^2(\partial N)$ give
$$
\|f\|_{L^2(\partial N)}^2\leq C(N,g)\left[\|f\|_{L^2(N)}^2+\|f\|_{L^2(N)}\|df\|_{L^2(N)}\right],
$$
which together with the preceding estimates in the case $f=u_a^t-u_a$ gives
$$
\|u_a^t-u_a\|_{L^2(\partial N)}^2\leq C(N,g)t^{1/2}
$$
for $t\in (0,1)$. In particular, since $|u_a|\equiv 1$, it follows that
$$
\int_{\partial N}(|u_a^t|^2-1)\,ds_g\leq \int_{\partial N}|u_a^t-u_a|^2\,ds_g\leq C(N,g)\sqrt{t},
$$
which together with the uniform bound on the Dirichlet energy $E(u_a^t)$ implies
$$
\inf_{t>0}\max_{a\in \B^m}F_{\epsilon}(u_a^t)\leq \mathcal{W}(u_0)
$$
for every $\epsilon>0$, giving the desired finite upper bound
$$
\mathcal{F}_m^{\Gamma}(N,[g])\leq \mathcal{W}(u_0)<\infty
$$
for $m$ large.
\end{proof}

Analogous to Proposition \ref{mm.lower} in the closed setting, we also have the following sharp lower bound.

\begin{lemma}\label{fb.lbd}
There is a constant $C(N,g)$ such that, for every $\epsilon>0$, 
$$2\mathcal{F}_{m,\epsilon}^{\Gamma}(N,g)\geq (1-C\epsilon)\bar{\sigma}_1(N,g),$$
and consequently
$$2\mathcal{F}_m^{\Gamma}(N,[g])\geq \bar{\sigma}_1(N,g).$$
In particular,
$$
2\mathcal{F}_m^{\Gamma}(N,[g])\geq \Sigma_1^{\Gamma}(N,[g]).
$$
\end{lemma}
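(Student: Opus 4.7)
The argument parallels that of Proposition \ref{mm.lower} in the closed setting, with the $L^2(M)$ orthogonality constraint against constants and the Rayleigh characterization of $\lambda_1$ replaced by their boundary analogs on $\partial N$. Fix a family $(u_y)\in\mathcal{B}_m$. By $\Gamma$-equivariance of $u_y$ and $\Gamma$-invariance of the measure $ds_g$ on $\partial N$, the integral $\int_{\partial N} u_y\,ds_g$ is $\rho(\gamma)$-invariant for every $\gamma\in\Gamma$ and therefore lies in the fixed subspace $F_m\cong\mathbb{R}^m$. Since $u_y\equiv y$ for $y\in\partial\B^m$, the assignment $y\mapsto\int_{\partial N}u_y\,ds_g$ restricts on $\partial\B^m$ to multiplication by $L:=\length(\partial N,g)>0$. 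If this $F_m$-valued map vanished nowhere on $\B^m$, dividing by its norm would give a continuous retraction $\B^m\to\Sph^{m-1}$, which does not exist. Hence there is some $y^*\in\B^m$ with $\int_{\partial N}u_{y^*}\,ds_g=0$.

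Since each component of $u_{y^*}$ is then $L^2(\partial N,g)$-orthogonal to the constants, the Rayleigh characterization of $\sigma_1(N,g)$ yields
\[
2F_\epsilon(u_{y^*})\geq \int_N|du_{y^*}|_g^2\,dv_g\geq \sigma_1(N,g)\int_{\partial N}|u_{y^*}|^2\,ds_g.
\]
To bound the boundary $L^2$-norm from below, note that $F_\epsilon(u_{y^*})\geq(4\epsilon)^{-1}\int_{\partial N}(1-|u_{y^*}|^2)^2\,ds_g$ together with Cauchy--Schwarz on $\partial N$ give
\[
\int_{\partial N}(1-|u_{y^*}|^2)\,ds_g\leq L^{1/2}\Bigl(\int_{\partial N}(1-|u_{y^*}|^2)^2\,ds_g\Bigr)^{1/2}\leq 2L^{1/2}\sqrt{\epsilon\,F_\epsilon(u_{y^*})},
\]
so that $\int_{\partial N}|u_{y^*}|^2\,ds_g\geq L-2L^{1/2}\sqrt{\epsilon F_\epsilon(u_{y^*})}$. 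Applying the AM--GM inequality $2\sqrt{\epsilon L\cdot \epsilon F_\epsilon}\leq \epsilon L+\epsilon F_\epsilon$ exactly as in Proposition \ref{mm.lower} then gives $\int_{\partial N}|u_{y^*}|^2\geq (1-\sqrt{\epsilon})L-\sqrt{\epsilon}F_\epsilon(u_{y^*})$ (up to constants absorbed into $C=C(N,g)$), and plugging this back into the Rayleigh inequality yields
\[
\bigl(2+C\epsilon\bigr)F_\epsilon(u_{y^*})\geq (1-C\epsilon)\,\bar\sigma_1(N,g),
\]
so in particular $2F_\epsilon(u_{y^*})\geq (1-C\epsilon)\bar\sigma_1(N,g)$ after adjusting $C$.

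Taking the infimum over families $(u_y)\in\mathcal{B}_m$ gives the claimed inequality $2\mathcal{F}_{m,\epsilon}^{\Gamma}(N,g)\geq (1-C\epsilon)\bar\sigma_1(N,g)$, and letting $\epsilon\to 0$ yields $2\mathcal{F}_m^{\Gamma}(N,[g])\geq \bar\sigma_1(N,g)$. Finally, since $\mathcal{F}_m^{\Gamma}(N,[g])$ depends only on the $\Gamma$-invariant conformal class (as already noted in the paragraph preceding the lemma), the same bound holds with $g$ replaced by any $\Gamma$-invariant $h\in[g]$; taking the supremum of $\bar\sigma_1(N,h)$ over such $h$ gives $2\mathcal{F}_m^{\Gamma}(N,[g])\geq \Sigma_1^{\Gamma}(N,[g])$.

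There is no serious obstacle here: the only subtleties are (i) verifying that $\int_{\partial N}u_y\,ds_g$ automatically lies in $F_m$ so that the no-retraction argument on $\B^m$ applies without further projection, and (ii) matching the precise $\epsilon$-dependence in the stated $(1-C\epsilon)$ prefactor, which is a routine AM--GM bookkeeping step and in any case only the $\epsilon\to 0$ limit is used downstream.
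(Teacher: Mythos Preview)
Your argument is essentially identical to the paper's: find $y^*$ with $\int_{\partial N}u_{y^*}\,ds_g=0$ via the no-retraction argument (using that equivariance forces the integral into $F_m$), apply the variational characterization of $\sigma_1$, and control $\int_{\partial N}(1-|u_{y^*}|^2)$ via Cauchy--Schwarz and the potential term. One small bookkeeping slip: because the boundary potential in $F_\epsilon$ scales like $1/(4\epsilon)$ (not $1/(4\epsilon^2)$ as in the closed case), the quantity you need to bound is $2\sqrt{L\epsilon F_\epsilon}$, not $2\sqrt{\epsilon L\cdot \epsilon F_\epsilon}$, so the AM--GM step ``exactly as in Proposition~\ref{mm.lower}'' actually yields a $(1-C\sqrt{\epsilon})$ prefactor rather than $(1-C\epsilon)$; as you already note, this is immaterial since only the $\epsilon\to 0$ limit is used.
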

\begin{proof}
Fix an arbitrary family $(u_y)$ in $\mathcal{B}_m$. Just as in the proof of Proposition \ref{mm.lower}, defining the orthogonal projection map $P_{F_m}\colon A_{\Gamma}^m\to F_m$ onto the fixed point set of $\Gamma$, it follows from the definition of $\mathcal{B}_m$ that the map
$$
f\colon \B^m\to F_m\cong \mathbb{R}^m
$$
given by
$$
f(y):=\frac{\int_{\partial N}u_y ds_g}{\int_{\partial N}1 ds_g}
$$
restricts to the identity on the boundary $\Sph^{m-1}=\partial \B^m\cong \partial (\B^{m|\Gamma|}\cap F_m)$, and therefore there is at least one $y\in \B^m$ for which $P_{F_m}(\int_{\partial N}u_y ds_g)=0$. Since the $\Gamma$-equivariance of $u_y\in W^{1,2}_{\Gamma}(N,A_{\Gamma}^m)$ forces $\int_{\partial N}u_y ds_g\in F_m$, this forces
$$
\int_{\partial N}u_yds_g=0.
$$
For this $y\in \B^m$, the variational characterization of $\sigma_1(N,g)$ yields the inequality
\begin{eqnarray*}
\int_N|du_y|^2\,dv_g&\geq& \sigma_1(N,g)\int_{\partial N}|u_y|^2\,ds_g\\
&=&\bar{\sigma}_1(N,g)-\sigma_1(N,g)\int_{\partial N}(1-|u_y|^2)ds_g,
\end{eqnarray*}
which together with the defintion of $F_{\epsilon}$ and a simple application of the Cauchy-Schwarz inequality gives
$$
(1-C\epsilon)\bar{\sigma}_1(N,g)\leq 2 F_{\epsilon}(u_y)\leq 2\max_{y\in \B^m}F_{\epsilon}(u_y)
$$
for a suitable constant $C=C(N,g)$. Taking the infimum over all families $(u_y)$ in $\B^m$ gives the desired inequality.
\end{proof}

Finally, combining Claim \ref{fb.ps} with the results of \cite[Chapter 10]{Ghou93}, we have the following preliminary existence result for fixed $m\in \mathbb{N}$ and $\epsilon>0$.

\begin{lemma}\label{fb.gl.ex}
For every $\epsilon>0$, $\mathcal{F}_{m,\epsilon}^{\Gamma}$ is realized as the energy $F_{\epsilon}(u_{m,\epsilon})$ of a $\Gamma$-equivariant critical point $u_{m,\epsilon}\in W^{1,2}_{\Gamma}(N,A_{\Gamma}^m)$ of $\Gamma$-equivariant Morse index
$$
\ind_{F_{\epsilon},\Gamma}(u_{m,\epsilon})\leq m.
$$
\end{lemma}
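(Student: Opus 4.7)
The plan is to apply the abstract min-max principle with Morse index bounds from Ghoussoub \cite[Chapter 10]{Ghou93} to the $C^2$ functional $F_\epsilon$ restricted to the closed subspace $X_\Gamma \subset W^{1,2}(N, A^m_\Gamma)$. All of the required abstract hypotheses have already been verified in Proposition \ref{fb.ps}: $F_\epsilon$ is of class $C^2$, satisfies the Palais-Smale condition, and has Fredholm Hessian at every critical point. Moreover, since $F_\epsilon$ is invariant under both the $\Gamma$-action on $N$ and the induced representation on $A^m_\Gamma$, its $X$-gradient at any point of $X_\Gamma$ automatically lies in $X_\Gamma$, so that critical points of $F_\epsilon|_{X_\Gamma}$ coincide with weak solutions of the Euler-Lagrange system in Proposition \ref{fb.ps}(v) and no additional Lagrange multiplier arises.

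Next I would verify the linking inequality needed to extract a critical point at the minimax level. The family $\mathcal{B}_m$ is parametrized by the $m$-disk $\B^m \subset F_m$ with boundary condition $u_y \equiv y$ for $y \in \partial\B^m$; since $|y|=1$ on $\partial \B^m$, these boundary maps satisfy $F_\epsilon(u_y) = 0$. It therefore suffices to show $\mathcal{F}_{m,\epsilon}^\Gamma > 0$. This follows by essentially the same Hersch-type argument used in Lemma \ref{fb.lbd}: for every family $(u_y) \in \mathcal{B}_m$, a degree/retraction argument applied to $y \mapsto P_{F_m}\!\int_{\partial N} u_y \, ds_g$ produces some $y_0 \in \B^m$ with $\int_{\partial N} u_{y_0}\,ds_g = 0$, and then the variational characterization of $\sigma_1$ combined with the Cauchy-Schwarz bound
\[
\int_{\partial N}(1-|u_{y_0}|^2)\,ds_g \;\leq\; 2\sqrt{\epsilon \,\length(\partial N) \,F_\epsilon(u_{y_0})}
\]
yields a strictly positive lower bound for $\max_{y} F_\epsilon(u_y)$ depending only on $\bar\sigma_1(N,g)$, $\length(\partial N)$, and $\epsilon$.

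With the linking condition secured, the abstract theorem in \cite[Chapter 10]{Ghou93} yields a critical point $u_{m,\epsilon} \in X_\Gamma$ with $F_\epsilon(u_{m,\epsilon}) = \mathcal{F}_{m,\epsilon}^\Gamma$ whose Morse index, computed on $X_\Gamma$, is bounded by $m$---this matches the definition of $\ind_{F_\epsilon,\Gamma}$ since negative directions are sought precisely in $X_\Gamma$. Smoothness of $u_{m,\epsilon}$ follows from standard elliptic regularity for the nonlinear Robin-type boundary problem $\Delta u = 0$ in $N$, $\partial_\nu u = \epsilon^{-1}(1-|u|^2)u$ on $\partial N$, bootstrapping from $u \in W^{1,2}$ via the trace theorem and Schauder theory. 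The main subtlety in this plan is the verification of the sharp Morse-index bound under the symmetry constraint---one must be careful that the deformation theory on $X_\Gamma$ respects the equivariance---but this is automatic from the fact that $F_\epsilon$'s pseudo-gradient flow preserves $X_\Gamma$, which in turn follows from the $\Gamma$-invariance of $F_\epsilon$ and the closedness of $X_\Gamma$ in $X$.
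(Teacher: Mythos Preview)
Your proposal is correct and follows the same approach as the paper: the paper's proof is the single sentence ``combining Claim \ref{fb.ps} with the results of \cite[Chapter 10]{Ghou93},'' and you have simply spelled out the hypotheses one checks to invoke Ghoussoub's abstract min-max machinery (the $C^2$/Palais--Smale/Fredholm conditions from Proposition~\ref{fb.ps}, the linking inequality $\mathcal{F}_{m,\epsilon}^\Gamma>0$ via the Hersch-type bound already proved in Lemma~\ref{fb.lbd}, and the index bound coming from the $m$-dimensional parameter space). Your additional remarks on equivariance of the gradient flow and on regularity are accurate and consistent with the paper's surrounding discussion.
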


As discussed in \cite[Theorem 3.2]{MS13} in the Euclidean setting, smoothness of critical points for $F_{\epsilon}$ can be deduced, for instance, from arguments of \cite{CSM05}; it is straightforward to adapt these and other estimates from \cite{MS13} to general surfaces $N$, e.g. by covering a neighborhood of $\partial N$ with half-disks conformally equivalent to the standard one $D_1^+$, where critical points $u$ of $F_{\epsilon}$ on $N$ can be identified with solutions of
$$\Delta u=0\text{ on }D_1^+,\text{ }\frac{\partial u}{\partial\nu}=\rho \frac{1-|u|^2}{\epsilon}u \text{ on }[-1,1]\times\{0\},$$
where $\frac{1}{2}\leq \rho\leq 2$ is a fixed smooth conformal weight.

In the next subsections, under the assumption $\Sigma_1^{\Gamma}(N,[g])>2\pi$, we argue as in the closed case that for $m$ sufficiently large these maps converge as $\epsilon\to 0$ to $\Gamma$-equivariant free boundary harmonic maps $u_m\colon(N,\partial N)\to (\B^{m|\Gamma|},\Sph^{m|\Gamma|-1})$ with uniform gradient bounds, and show that the metrics $g_m=|du_m|_g^2g$ realize $\Sigma_1^{\Gamma}(N,[g])$ for $m$ sufficiently large.

\subsection{Asymptotics for the 
 min-max maps as $\epsilon\to 0$ and $m\to\infty$.}

Similar to the closed case, the first key step in the asymptotic analysis of the maps $u_{m,\epsilon}$ for large $m$ is the following lemma, showing that the index upper bound $\ind_{\Gamma,F_{\epsilon}}(u_{m,\epsilon})\leq m$ forces the first Steklov eigenvalue of a naturally associated metric to approach $1$ as $m$ becomes large.

\begin{lemma}\label{mm.stek.lower}
Let $u\in W^{1,2}_{\Gamma}(N,A_{\Gamma}^m)$ be a critical point of $F_{\epsilon}$ on $(N,g)$ with 
$$
\ind_{\Gamma,F_{\epsilon}}(u)\leq m.
$$
Then $g_u:=\frac{(1-|u|^2)^2}{\epsilon^2}g$ defines a $\Gamma$-invariant metric in $[g]$, and there is a universal constant $C<\infty$ such that for $m>C$, the first Steklov eigenvalue $\sigma_1(N,g_u)$ satisfies a lower bound of the form
$$\sigma_1(N,g_u)\geq 1-\frac{C}{m}.$$
\end{lemma}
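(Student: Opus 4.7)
The plan is to mirror the proof of Proposition \ref{index.eigen.bd}, replacing the interior weight $\epsilon^{-2}(1-|u|^2)\,dv_g$ by the boundary weight $\epsilon^{-1}(1-|u|^2)\,ds_g$ throughout. For each $w\in W_m$ and $\phi\in W^{1,2}(N)$ I form the same equivariant test fields $v_{\phi,w} := \sum_{\gamma\in \Gamma}(\phi\circ\gamma^{-1})(w_\gamma - \langle u, w_\gamma\rangle u)\in X_\Gamma$, to which the pointwise bounds of Proposition \ref{prop:v_lower} still apply verbatim. Substituting into the Hessian formula of Proposition \ref{fb.ps}(ii) and using the $\Gamma$-isometry of $(N,g)$, I expect an estimate of the form
\[
\frac{F_\epsilon''(u)(v_{\phi,w},v_{\phi,w})}{|w|^2} \leq |\Gamma|\left(\int_N |d\phi|_g^2\,dv_g - \int_{\partial N}\epsilon^{-1}(1-|u|^2)\phi^2\,ds_g\right) + C|\Gamma|\,J_u(w,\phi),
\]
where the Steklov error functional is
\[
J_u(w,\phi) := |w|^2\int_N\left(|d\phi|^2|u^w|^2 + \phi^2(|u^w|^2|du|^2+|du^w|^2)\right)dv_g + |w|^2\int_{\partial N}\epsilon^{-1}(1-|u|^2)|u^w|^2\phi^2\,ds_g.
\]
Since the Steklov Rayleigh quotient for $g_u$ reads $\int_N|d\phi|^2\,dv_g\big/\int_{\partial N}\epsilon^{-1}(1-|u|^2)\phi^2\,ds_g$, the leading term above recovers $|\Gamma||w|^2(\sigma_1(g_u)-1)\|\phi\|^2_{L^2(\partial N, g_u)}$ when $\phi$ is a first Steklov eigenfunction, and $-|\Gamma||w|^2\|\phi_0\|^2_{L^2(\partial N, g_u)}$ for $\phi_0\equiv 1$.

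Taking $\phi_0\equiv 1$ and $\phi_1$ a first nontrivial Steklov eigenfunction for $(N,g_u)$, I would derive Steklov analogs of \eqref{hess.bd.1}--\eqref{hess.bd.3} from the cross-term bounds \eqref{pt.cross}--\eqref{pt.cross.3}, integrated appropriately over both $N$ and $\partial N$. The key new ingredient replacing the closed-case identity $\mathrm{div}\langle u,du\rangle = |du|^2 - \epsilon^{-2}(1-|u|^2)|u|^2$ is the free boundary integration by parts
\[
\int_N \phi^2|du|^2\,dv_g = -2\int_N\phi\langle d\phi,\langle u,du\rangle\rangle\,dv_g + \int_{\partial N}\phi^2\epsilon^{-1}(1-|u|^2)|u|^2\,ds_g,
\]
which uses both $\Delta u = 0$ on $N$ and the boundary condition $\partial_\nu u = \epsilon^{-1}(1-|u|^2)u$ of Proposition \ref{fb.ps}(v). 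Combined with the bound $|u|\leq 1$ (which follows by subharmonicity of $|u|^2$ and the Hopf lemma, using $\partial_\nu|u|^2 = 2\epsilon^{-1}(1-|u|^2)|u|^2$) and Cauchy--Schwarz, this gives $\int_N\phi^2|du|^2\leq 4\int_N|d\phi|^2 + 2\int_{\partial N}\epsilon^{-1}(1-|u|^2)\phi^2$. Using the orthogonal decomposition $A_\Gamma^m = \bigoplus_{\gamma}\gamma\cdot W_m$, the identity $\sum_j|u^{w_j}|^2 = |u|^2\leq 1$ holds for any orthonormal basis $\{w_j\}$ of $W_m$, so diagonalising the quadratic form $w\mapsto J_u(w,\phi)$ yields $\sum_j J_u(w_j,\phi)\leq (9\sigma_1(g_u)+5)\|\phi\|^2_{L^2(\partial N, g_u)}$.

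The dimension count then proceeds essentially verbatim from the closed case: for $i\in\{0,1\}$ the subspace $Z_\delta(\phi_i)\subset W_m$ on which $J_u(\cdot,\phi_i)\leq \delta|\cdot|^2\|\phi_i\|^2_{L^2(\partial N, g_u)}$ has codimension at most $(9\sigma_1(g_u)+5)/\delta$; the corresponding subspaces $V_\delta(\phi_0),V_\delta(\phi_1)\subset X_\Gamma$ intersect trivially once $\delta$ is below a universal constant (by comparing $L^2(\partial N, g_u)$-norms via \eqref{v.lower} and the Steklov counterpart of \eqref{pt.cross}); and on their sum $F_\epsilon''(u)$ is negative definite whenever $\sigma_1(g_u)-1+C\delta<-\delta$. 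The hypothesis $\ind_{\Gamma,F_\epsilon}(u)\leq m$ then forces $\delta\gtrsim 1/m$, which yields $\sigma_1(g_u)\geq 1 - C/m$ for $m$ beyond a universal constant. The main technical obstacle I anticipate is careful bookkeeping of the two new boundary contributions in the Hessian, namely the positive term $2\epsilon^{-1}\langle u, v\rangle^2$ on $\partial N$ in Proposition \ref{fb.ps}(ii) (which is harmless and can be absorbed into $J_u$) and the boundary piece of the integration by parts above; past this accounting, every step adapts from the closed case without essential modification.
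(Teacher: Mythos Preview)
Your proposal is correct and follows essentially the same approach as the paper's proof: the same equivariant test fields $v_{\phi,w}$, the same Steklov-adapted error functional $J_u$, the boundary integration-by-parts identity using $\Delta u=0$ and $\partial_\nu u=\epsilon^{-1}(1-|u|^2)u$, and the identical dimension-counting endgame. Your explicit justification of $|u|\leq 1$ via subharmonicity of $|u|^2$ and the Hopf lemma is a detail the paper leaves implicit, and your trace bound $\sum_j J_u(w_j,\phi)\leq(9\sigma_1(g_u)+5)\|\phi\|_{L^2(\partial N,g_u)}^2$ matches the paper's (which simplifies to $14$ after assuming $\sigma_1(g_u)\leq 1$ without loss of generality).
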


\begin{proof}
The proof is similar to that of Proposition \ref{index.eigen.bd}. We use the same vector fields $v_{\phi,w}$ and the same pointwise lower bounds from Proposition~\ref{prop:v_lower}. Similar to the closed case, we will obtain the lower bound on the first Steklov eigenvalue $\sigma_1(N,g_u)$ for the metric $g_u=\frac{(1-|u|^2)^2}{\epsilon^2}g$ by estimating the second variation $F_{\epsilon}''(u)$ on the space $V_{\phi_0,w}+V_{\phi_1,w}$ where $\phi_0=1$ and $\phi_1$ is a first Steklov eigenfunction for $(N,g_u)$. The main practical difference is in the equation satisfied by $u$, so we need to adapt those parts of the proof that rely on it.


Recalling Proposition \ref{fb.ps}(ii) and appealing to the pointwise estimates of Proposition~\ref{prop:v_lower}, we see that the second variation $F_{\epsilon}''(u)$ in the direction $v_{\phi,w}$ for each unit vector $w\in W_m$ satisfies
\begin{equation*}
\begin{split}
&F_{\epsilon}''(u)(v_{\phi,w},v_{\phi,w})=\int_N|dv_{\phi,w}|^2+\int_{\partial N}\left(2\frac{\langle u,v_{\phi,w}\rangle^2}{\epsilon}-\frac{1-|u|^2}{\epsilon}|v_{\phi,w}|^2\right) \\
&\leq  \int_N(1+C|u^w|^2)\sum_{\gamma\in\Gamma}|d(\phi\circ \gamma)|^2
+C\int_N\sum_{\gamma\in \Gamma}(\phi\circ \gamma)^2(|du^w|^2+|u^w|^2|du|^2) \\
&+\int_{\partial N}\frac{2}{\epsilon}(1-|u|^2)^2|u^w|^2\sum_{\gamma\in \Gamma}(\phi\circ \gamma)^2
-\int_{\partial N}\frac{1-|u|^2}{\epsilon}(1-C|u^w|^2)\sum_{\gamma\in \Gamma}(\phi\circ \gamma)^2.
\end{split}
\end{equation*}
 In view of the equivariance of $u$ and $u^w$ under the action of $\Gamma\leq \Isom(N)$, it follows that
\begin{equation*}
\begin{split}
&F_{\epsilon}''(u)(v_{\phi,w},v_{\phi,w})\leq |\Gamma|\left(\int_N|d\phi|^2-\int_{\partial N}\frac{1-|u|^2}{\epsilon}\phi^2\right)\\
&+C|\Gamma|\int_N|u^w|^2|d\phi|^2+(|du^w|^2+|u^w|^2|du|^2)\phi^2
+C|\Gamma|\int_{\partial N}|u^w|^2\frac{1-|u|^2}{\epsilon}\phi^2.
\end{split}
\end{equation*}
Defining $J_u\colon W_m\times W^{1,2}(N)\to \mathbb{R}$ by
\begin{equation*}
\begin{split}
J_u(w,\phi)=&|w|^2\int_N|u^w|^2|d\phi|^2+(|du^w|^2+|u^w|^2|du|^2)\phi^2\,dv_g\\
+&|w|^2\int_{\partial N}|u^w|^2\frac{1-|u|^2}{\epsilon}\phi^2\,ds_g,
\end{split}
\end{equation*}
we see that both $w\mapsto J_u(w,\phi)$ and $\phi\mapsto J_u(w,\phi)$ define nonnegative quadratic forms, and we can recast the preceding estimate as
$$
F_{\epsilon}''(u)(v_{\phi,w},v_{\phi,w})\leq |\Gamma||w|^2\left(\int_N|d\phi|_g^2\,dv_g-\int_{\partial N}\phi^2ds_{g_u}\right)+C|\Gamma|J_u(w,\phi).
$$
As before, to estimate the mixed terms we first have 
\begin{equation*}
\begin{split}
&\phi_0\left|\int_N\sum_{\gamma,\tau}\langle u,z^\tau\rangle\left\langle d\left(\phi_1\circ\gamma^{-1}\right), \langle du,w^\gamma\rangle\right\rangle\,dv_g\right| \leq \\
&\phi_0\left|\int_N\sum_{\gamma,\tau}(\phi_1\circ\gamma^{-1})\langle \langle du,z^\tau\rangle,
\langle du,w^\gamma\rangle\rangle\,dv_g\right|+ \\
&\phi_0\left|\int_{\bd N}\sum_{\gamma,\tau}(\phi_1\circ\gamma^{-1})\frac{(1-|u|^2)}{\eps}\langle u,z^\tau\rangle\langle 
u,w^\gamma\rangle\,ds_g\right|\leq
 \frac{C|\Gamma|}{|z||w|}\sqrt{J_u(z,\phi_0)J_u(w,\phi_1)}.
\end{split}
\end{equation*}
Integrating the right hand sides of ~\eqref{pt.cross.2},~\eqref{pt.cross}, and~\eqref{pt.cross.3} then gives that
$$
\left|F_{\epsilon}''(u)(v_{\phi_0,w},v_{\phi_1,z})-|\Gamma|\int_{\partial N}\langle w,z\rangle\phi_1 ds_{g_u}\right|\leq C|\Gamma|\sqrt{J_u(w,\phi_0)J_u(z,\phi_1)}.
$$

In particular, letting $\phi_0\equiv 1$, and $\phi_1$ be a first Steklov eigenfunction for $(N,g_u)$. Then the computations above yield
\begin{equation}\label{fj.1}
F_{\epsilon}''(u)(v_{\phi_0,w},v_{\phi_0,w})\leq -|\Gamma||w|^2\|\phi_0\|_{L^2_{g_u}(\partial N)}^2+C|\Gamma|J_u(w,\phi_0),
\end{equation}
\begin{equation}\label{fj.2}
F_{\epsilon}''(u)(v_{\phi_1,w},v_{\phi_1,w})\leq (\sigma_1(g_u)-1)|\Gamma||w|^2\|\phi_1\|_{L^2_{g_u}(\partial N)}^2+C|\Gamma|J_u(w,\phi_1),
\end{equation}
and
\begin{equation}\label{fj.3}
|F_{\epsilon}''(u)(v_{\phi_0,w},v_{\phi_1,z})|\leq C|\Gamma|\sqrt{J_u(w,\phi_0)J_u(z,\phi_1)}.
\end{equation}

For any $\phi\in C^{\infty}(N)$, diagonalizing the quadratic form $W_m\in w\mapsto J_u(w,\phi)$ gives an orthonormal basis $w_1,\ldots,w_m$ for $W_m$, with respect to which we can compute the trace
$$
\sum_{j=1}^mJ_u(w_j,\phi)\leq\int_N\left(|u|^2|d\phi|_g^2+2|du|_g^2\phi^2\right)\,dv_g+\int_{\partial N}\frac{1-|u|^2}{\epsilon}\phi^2\,ds_g
$$
of the quadratic form $w\mapsto J_u(w,\phi)$. Now, since $u$ is a critical point of $F_{\epsilon}$, recall from Proposition \ref{fb.ps} that it solves the equations
$$\Delta u=0\text{ on }N\text{ and }\frac{\partial u}{\partial\nu}=\frac{1-|u|^2}{\epsilon}u,$$
and therefore we can estimate
\begin{equation*}
\begin{split}
\int_N \phi^2|du|^2_g\,dv_g &= \int_{\bd N} \phi^2\frac{1-|u|^2}{\epsilon}|u|^2\,ds_g - \int_N\langle 2\phi d\phi,\langle u,du\rangle\rangle\,dv_g\\
&\leq \int_{\bd N} \phi^2\frac{1-|u|^2}{\epsilon}\,ds_g + \int_N\frac{1}{2} \phi^2|du|^2_g + 2|d\phi|_g^2\,dv_g
\end{split}
\end{equation*}
%
%
In particular, we deduce that 
$$
\int_N|du|^2\phi^2\,dv_g\leq 2\int_{\partial N}\phi^2ds_{g_u}+4\int_N|d\phi|^2_g\,dv_g,
$$
and applying this in the preceding formula for the trace of $w\mapsto J_u(w,\phi)$, we obtain the estimate
$$\sum_{j=1}^mJ_u(w_j,\phi)\leq 9\int_N|d\phi|^2+5\int_{\partial N}\phi^2 ds_{g_u}.$$
Assuming without loss of generality that $\sigma_1(N,g_u)\leq 1$ and letting $\phi=\phi_0$ or $\phi_1$, it follows that
\begin{equation}
\sum_{j=1}^mJ_u(w_j,\phi)\leq 14\int_{\partial N}\phi^2ds_{g_u}.
\end{equation}

In particular, for any $\delta>0$, it follows that the set $S_{\delta}(\phi)\subset\{1,\ldots,m\}$ of indices for which $J_u(w_j,\phi)\geq \delta \|\phi\|_{L^2_{g_u}(\partial N)}^2$ must have size at most
$$
|S_{\delta}(\phi)|\leq \frac{8}{\delta},
$$
and therefore the space
$$
Z_{\delta}(\phi):=\mathrm{Span}\{w_j\mid j\notin S_{\delta}(\phi)\}\subset W_m
$$
on which $J_u(w,\phi)\leq \delta |w|^2\|\phi\|_{L^2_{g_u}(\partial N)}^2$ must have complementary dimension
$$
\dim Z_{\delta}(\phi)\geq m-|S_{\delta}(\phi)|\geq m-\frac{14}{\delta}.
$$
Once again setting
$$
V_{\delta}(\phi):=\{v_{\phi,w}\mid w\in Z_{\delta}(\phi)\}\subset W_{\Gamma}^{1,2}(N,A_{\Gamma}^m),
$$
it follows from the injectivity of $w\mapsto v_{\phi,w}$ that
$$
\dim(V_{\delta}(\phi))\geq m-\frac{14}{\delta},
$$
and letting
$$V_{\delta}:=V_{\delta}(\phi_0)+V_{\delta}(\phi_1),$$
we therefore have
$$\dim(V_{\delta})\geq 2m-\frac{30}{\delta}-\dim(V_{\delta}(\phi_0)\cap V_{\delta}(\phi_1)).$$
Moreover, if $v=v_{\phi_0,w}=v_{\phi_1,z}\in V_{\delta}(\phi_0)\cap V_{\delta}(\phi_1),$ where $w\in Z_{\delta}(\phi_0)$ and $z\in Z_{\delta}(\phi_1)$, then by \eqref{v.lower} and the definition of $J_u$, we have
\begin{eqnarray*}
\|v_{\phi_0,w}\|_{L^2(\partial N, g_u)}^2& \geq & |\Gamma||w|^2\|\phi_0\|_{L^2_{g_u}(\partial N)}^2-C|\Gamma|J_u(w,\phi_0)\\
&\geq& |\Gamma||w|^2\|\phi_0\|_{L^2_{g_u}(\partial N)}^2-C|\Gamma|\delta |w|^2\|\phi_0\|_{L^2_{g_u}(\partial N)}^2,
\end{eqnarray*}
and similarly
\begin{eqnarray*}
\|v_{\phi_1,z}\|_{L^2(\partial N, g_u)}^2& \geq & |\Gamma||z|^2\|\phi_1\|_{L^2_{g_u}(\partial N)}^2-C|\Gamma|J_u(z,\phi_1)\\
&\geq& |\Gamma||z|^2\|\phi_1\|_{L^2_{g_u}(\partial N)}^2-C|\Gamma|\delta |z|^2\|\phi_1\|_{L^2_{g_u}(\partial N)}^2.
\end{eqnarray*}
On the other hand, by \eqref{pt.cross}, and the orthogonality of $\phi_0,\phi_1$ in $L^2_{g_u}(\partial N)$, we see that
\begin{eqnarray*}
\langle v_{\phi_0,w},v_{\phi_1,z}\rangle_{L^2_{g_u}(\partial N)}&\leq& C|\Gamma|\sqrt{J_u(w,\phi_0)J_u(z,\phi_1)}\\
&\leq & 8C|\Gamma|\delta|w||z|\|\phi_0\|_{L^2_{g_u}(\partial N)}\|\phi_1\|_{L^2_{g_u}(\partial N)},
\end{eqnarray*}
and since $v=v_{\phi_0,w}=v_{\phi_1,z}$, the left-hand sides of the preceding three inequalities are equal, so we can combine these three estimates to obtain an estimate of the form
$$|w|^2\|\phi_0\|_{L^2_{g_u}(\partial N)}^2+|z|^2\|\phi_1\|_{L^2_{g_u}(\partial N)}^2\leq C'\delta(|w|^2\|\phi_0\|_{L^2_{g_u}(\partial N)}^2+|z|^2\|\phi_1\|_{L^2_{g_u}(\partial N)}^2),$$
which clearly cannot hold for $\delta<\frac{1}{C'}$ unless $v=0$. Thus, for $0<\delta<\frac{1}{C'}$, we must have
$$V_{\delta}(\phi_0)\cap V_{\delta}(\phi_1)=0,$$
and therefore
$$\dim(V_{\delta})\geq 2m-\frac{30}{\delta}.$$
Taking $m$ large enough that $\delta_m=\frac{30}{m}<\frac{1}{C'}$, it follows in particular that
\begin{equation}\label{dim.big}
\dim(V_{\delta_m})>m.
\end{equation}

Finally, it follows from \eqref{dim.big} and the index bound $\ind_{F_{\epsilon},\Gamma}(u)\leq m$ that there exists $v=v_{\phi_0,w}+v_{\phi_1,z}\in V_{\delta}$ for which
$$F_{\epsilon}''(u)(v,v)>0,$$
while by \eqref{fj.1}-\eqref{fj.3} and the definition of $V_{\delta_m}$, we see that
\begin{eqnarray*}
F_{\epsilon}''(u)(v,v)&\leq &-|\Gamma||w|^2\|\phi_0\|_{L^2_{g_u}(\partial N)}^2+(\sigma_1(N,g_u)-1)|\Gamma||z|^2\|\phi_1\|_{L^2_{g_u}}\\
&&+C'|\Gamma|(J_u(w,\phi_0)+J_u(z,\phi_1))\\
&\leq &|\Gamma|(C'\delta_m-1)|w|^2\|\phi_0\|_{L^2_{g_u}(\partial N)}^2\\
&&+|\Gamma|(C'\delta_m+[\sigma_1(N,g_u)-1])|z|^2\|\phi_1\|_{L^2_{g_u}}.
\end{eqnarray*}
Clearly $C'\delta_m=C'\cdot\frac{30}{m}<|\Gamma|$ for $m$ sufficiently large, so combining the two inequalities, we see that we must have
$$C'\cdot\frac{30}{m}=C'\delta_m\geq 1-\sigma_1(N,g_u),$$
which we can rearrange to arrive at an estimate of the desired form
\begin{equation*}
\sigma_1(N,g_u)\geq 1-\frac{30C'}{m}.\qedhere
\end{equation*}
\end{proof}

Just as Proposition \ref{index.eigen.bd} allowed us to rule out energy concentration for the min-max critical points of $E_{\epsilon}$ on the closed surface $M$, Lemma \ref{mm.stek.lower} will enable us to rule out energy concentration for the maps $u_{m,\epsilon}$ at the boundary $\partial N$, assuming $\Sigma_1^{\Gamma}(N,[g])>2\pi$. As a first step, we have the following.

\begin{lemma}\label{no.dir.conc}
Assume either $\Sigma_1^{\Gamma}(N,[g])>2\pi$ or $\Gamma$ has no fixed points on $\partial N$. Then there exists $r_0(N,g)>0$, $m_0(N,g)\in \mathbb{N}$ such that if $m\geq m_0$, $\epsilon<\epsilon_0(N,g,m)$ is sufficiently small, and $u=u_{m,\epsilon}\in W^{1,2}_{\Gamma}(N,A_{\Gamma}^m)$ is the min-max critical point of $F_{\epsilon}$ realizing $\mathcal{F}_{m,\epsilon}^{\Gamma}(N,g)$
then on any geodesic (half-)disk $D_r(p)\subset N$ with $r<r_0$, we have
$$
\frac{1}{2}\int_{\partial N}\phi^2\frac{(1-|u|^2)}{\epsilon}ds_g\leq \int_N|d\phi|^2\,dv_g
$$
for all $\phi\in C_c^{\infty}(D_r(p))$.
\end{lemma}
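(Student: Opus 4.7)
The plan is to argue by contradiction, mirroring the two-case dichotomy in the proof of Proposition~\ref{hm.ex}. Suppose the conclusion fails. After extracting subsequences we obtain $m_k\to\infty$, $\epsilon_k\to 0$ (with $\epsilon_k<\epsilon_0(N,g,m_k)$), points $p_k\in N$, radii $r_k\in(0,r_0)$, and violating test functions $\phi_k\in C_c^\infty(D_{r_k}(p_k))$. Writing $g_k:=\frac{(1-|u_k|^2)^2}{\epsilon_k^2}g$, so $ds_{g_k}=\frac{1-|u_k|^2}{\epsilon_k}ds_g$, we normalize $\int_{\partial N}\phi_k^2\,ds_{g_k}=1$, giving $\int_N|d\phi_k|^2\,dv_g<\tfrac{1}{2}$. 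Note that $p_k\in\partial N$ necessarily, since otherwise for $r_0$ less than the distance from $p_k$ to $\partial N$ the boundary integral vanishes and the inequality holds trivially.

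The first step converts the violating test function into a concentration statement.  By Lemma~\ref{mm.stek.lower}, $\sigma_1(N,g_k)\geq 1-C/m_k$.  Applying the variational characterization of $\sigma_1(N,g_k)$ to $\phi_k-c_k$, where $c_k:=\int_{\partial N}\phi_k\,ds_{g_k}/L_k$ with $L_k:=\mathrm{length}(\partial N,g_k)$, yields
\[
(1-C/m_k)(1-c_k^2 L_k)\leq \int_N|d\phi_k|^2\,dv_g<\tfrac{1}{2},
\]
so $c_k^2 L_k\geq \tfrac{1}{2}-O(1/m_k)$.  By Cauchy-Schwarz, $(c_k L_k)^2=\bigl(\int_{\partial N}\phi_k\,ds_{g_k}\bigr)^2\leq L_{k,\mathrm{loc}}$ where $L_{k,\mathrm{loc}}:=\int_{\partial N\cap D_{r_k}(p_k)}ds_{g_k}$; hence $L_{k,\mathrm{loc}}/L_k\geq \tfrac{1}{2}-O(1/m_k)$.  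That is, at least half of the $g_k$-boundary length of $N$ concentrates in $D_{r_k}(p_k)\cap\partial N$.

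The second step derives a contradiction via a Hersch-Weinstock argument in local conformal coordinates near $p_k$, analogous to the Hersch case in the proof of Proposition~\ref{hm.ex}.  For $r_0$ small, identify a neighborhood of $p_k\in\partial N$ conformally with a closed upper half-disk, with $\partial N$ going to the real axis. Composing with the Cayley transform $\pi\colon\overline{\mathbb{H}}\to\overline{\mathbb{D}}$ and a Möbius automorphism $G_a$ of $\mathbb{D}$ produces conformal maps $F_a=G_a\circ\pi$ with $|F_a|\equiv 1$ on the image of $\partial N\cap D_{r_k}(p_k)$ and $\int|dF_a|^2\,dv_g\leq 2\pi$.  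Using a logarithmic cutoff $\psi$ supported in $D_{r_k}(p_k)$ with $\psi\equiv 1$ on $D_{\rho_k}(p_k)$ for some $\rho_k\ll r_k$, the Hersch trick selects $a=a_k$ so that $\int_{\partial N}\psi F_{a_k}\,ds_{g_k}=0$.  Applying the variational characterization of $\sigma_1(N,g_k)$ to the two components of $\psi F_{a_k}$ then yields
\[
\sigma_1(N,g_k)\int_{\partial N\cap D_{\rho_k}(p_k)}ds_{g_k}\leq \int_N|d(\psi F_{a_k})|^2\,dv_g\leq 2\pi+O(1/\log(r_k/\rho_k)).
\]
Combined with the identity $F_\epsilon(u)=\tfrac{1}{2}L-\tfrac{1}{4}\int_{\partial N}\tfrac{(1-|u|^2)^2}{\epsilon}ds_g$ for critical points (obtained by integration by parts using the Euler-Lagrange equations from Proposition~\ref{fb.ps}) and Lemma~\ref{fb.lbd}, we have $L_k\geq 2\mathcal{F}_{m_k,\epsilon_k}^\Gamma(N,g)$ approaching $2\mathcal{F}_{m_k}^\Gamma(N,[g])\geq \Sigma_1^\Gamma(N,[g])$ as $\epsilon_k\to 0$.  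Under $\Sigma_1^\Gamma(N,[g])>2\pi$ this forces $\sigma_1(g_k)L_k>2\pi+\delta_0$ for some $\delta_0>0$ and $k$ large, contradicting the Hersch bound once the concentration is shown to persist from scale $r_k$ down to scale $\rho_k$.

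The main technical obstacle is precisely this last point: improving the Step~1 bound $L_{k,\mathrm{loc}}/L_k\geq \tfrac{1}{2}-o(1)$ at scale $r_k$ to near-unity concentration at the smaller scale $\rho_k$.  We expect to achieve this by iterating Step~1 across a dyadic sequence of nested scales inside $D_{r_k}(p_k)$, exploiting that the violation of the local Rayleigh inequality at one scale propagates, via a localized Hersch argument, to violation at smaller scales.  In the alternative case where $\Gamma$ has no fixed points on $\partial N$, $\Gamma$-equivariance of $u_k$ forces simultaneous concentration at every point of the orbit $\Gamma p_k$; taking $r_0$ smaller than half the minimum orbit separation makes these half-disks disjoint, so $|\Gamma|\cdot L_{k,\mathrm{loc}}\leq L_k$, which combined with Step~1 forces $|\Gamma|\leq 2+o(1)$.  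The residual $|\Gamma|=2$ case is then handled by a $\Gamma$-equivariant Hersch argument performed simultaneously at both orbit points, sharpened using the equivariance of the first eigenspace.
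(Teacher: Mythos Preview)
Your proposal has a genuine gap, and you have correctly identified where it lies: Step~1 only gives $L_{k,\mathrm{loc}}/L_k\geq \tfrac{1}{2}-o(1)$, and plugging this into the Weinstock bound yields $\sigma_1(g_k)L_k\leq 4\pi+o(1)$, which does not contradict $\Sigma_1^{\Gamma}(N,[g])>2\pi$. The proposed dyadic iteration is not substantiated: a violating test function at scale $r_k$ does not by itself produce a violating test function at a smaller scale, and the localized Hersch argument goes in the wrong direction (it gives upper, not lower, bounds on boundary mass).

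The missing idea is a simple disjoint-supports dichotomy. Since $\sigma_1(N,g_u)\geq \tfrac{1}{2}$ by Lemma~\ref{mm.stek.lower}, for any half-disk $D_r(p)$ one of the two alternatives must hold: either $\tfrac{1}{2}\int_{\partial N}\phi^2\,ds_{g_u}\leq \int_N|d\phi|^2$ for all $\phi\in W_0^{1,2}(D_r(p))$, or the same inequality holds for all $\phi\in W_0^{1,2}(N\setminus D_r(p))$. Indeed, if both fail, a suitable linear combination of the two disjointly-supported violators is $L^2_{g_u}(\partial N)$-orthogonal to constants and still has Rayleigh quotient $<\tfrac{1}{2}$, contradicting $\sigma_1\geq \tfrac{1}{2}$. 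Now if the first alternative fails at scale $r$, the second holds; testing it with a logarithmic cutoff equal to $1$ on $N\setminus D_{\sqrt{r}}(p)$ gives $\int_{\partial N\setminus D_{\sqrt{r}}(p)}ds_{g_u}\leq C/|\log r|$, i.e.\ near-total (not merely half) concentration in $D_{\sqrt{r}}(p)$. The Weinstock argument applied on a slightly larger half-disk then yields $(1-C/m)\cdot 2\mathcal{F}_{m,\epsilon}^{\Gamma}(N,g)\leq 2\pi+C/\sqrt{|\log r|}$, and letting $\epsilon\to 0$, $m\to\infty$ contradicts $\Sigma_1^{\Gamma}>2\pi$ unless $r$ is bounded below. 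For the no-fixed-points case the same dichotomy is decisive: if the first alternative fails at $p$, by equivariance it fails at every $q\in\Gamma p$; for $r_0$ small enough that $D_r(q)\subset N\setminus D_r(p)$, this forces the second alternative to fail at $p$ as well, contradicting the dichotomy outright---with no residual $|\Gamma|=2$ case to handle.
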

\begin{proof}
We first complete the proof in the case where $\Sigma_1^{\Gamma}(N,[g])>2\pi$.  
Since $\ind_{\Gamma,F_{\epsilon}}(u_{m,\epsilon})\leq m$, for $m>C$, Lemma \ref{mm.stek.lower} gives
$$\sigma_1(N,g_{u})\geq 1-\frac{C}{m}\geq \frac{1}{2},$$
and by the variational characterization of $\sigma_1(N,g_u)$, for any $D_r(p)\subset N$, we have either
\begin{equation}\label{no.eigen.conc}
\frac{1}{2}\int_{\partial N}\phi^2 \frac{(1-|u|^2)}{\epsilon}ds_g\leq \int_N|d\phi|^2\text{ for all }\phi\in W_0^{1,2}(D_r(p))
\end{equation}
or 
\begin{equation}\label{eigen.conc}
\frac{1}{2}\int_{\partial N}\phi^2 \frac{(1-|u|^2)}{\epsilon}ds_g\leq \int_N|d\phi|^2\text{ for all }\phi\in W_0^{1,2}(N\setminus D_r(p)).
\end{equation}
We wish to show that \eqref{no.eigen.conc} holds for small $r$ and small $\epsilon$. Suppose, to the contrary, that \eqref{no.eigen.conc} fails on some small (half-)disk $D_r(p)$; then \eqref{eigen.conc} holds, and taking $\phi\in W_0^{1,2}(N\setminus D_r(p))$ to be a logarithmic cutoff function such that $\phi\equiv 1$ on $N\setminus D_{\sqrt{r}}(p)$ and $\int_N |d\phi|^2\leq \frac{C}{|\log r|}$, we deduce that
$$
\int_{\partial N\setminus D_{\sqrt{r}}(p)}\frac{(1-|u|^2)}{\epsilon}\,ds_g\leq \frac{C}{|\log r|}.
$$
In particular, since 
\begin{align*}
\int_{\partial N}\frac{(1-|u|^2)}{\epsilon}\,ds_g
&=\int_{\partial N}\frac{(1-|u|^2)^2}{\epsilon}+\frac{1}{2}\left\langle u,\frac{\partial u}{\partial\nu}\right\rangle\,ds_g
\\
&=\int_{\partial N}\frac{(1-|u|^2)^2}{\epsilon}\,ds_g+\int_N|du|^2,
\end{align*}
it follows that
$$
\int_{\partial N\cap D_{\sqrt{r}}(p)}\frac{1-|u|^2}{\epsilon}\,ds_g\geq F_{\epsilon}(u)-\frac{C}{|\log r|}=\mathcal{F}_{m,\epsilon}^{\Gamma}(N,g)-\frac{C}{|\log r|}.
$$
Now, fix another log cutoff function $\psi\in C_c^{\infty}(D_{r^{1/4}}(p))$ such that $\psi\equiv 1$ on $D_{\sqrt{r}}(p)$ and $\int |d\psi|^2\leq \frac{C'}{|\log r|}.$ For $r<r_1(N)$ small enough, we can apply the Weinstock analog of \cite[Lemma 3.1]{Kok.var} to find a conformal map 
$$F=(F_1,F_2): (D_{r^{1/4}}(p),\partial N\cap D_{r^{1/4}}(p))\to (D_1(0),\partial D_1(0))$$
of energy $E(F)\leq \pi$ such that
$$\int_{\partial N}\frac{(1-|u|^2)}{\epsilon}\psi F_i ds_g=0\text{ for }i=1,2.$$
It follows that 
\begin{eqnarray*}
\int_N|d(\psi F_i)|^2&\geq& \sigma_1(N,g_u)\int_{\partial N}\frac{(1-|u|^2)}{\epsilon}\psi^2F_i^2 ds_g\\
&\geq& \left(1-\frac{C}{m}\right)\int_{\partial N}\psi^2F_i^2\frac{(1-|u|^2)}{\epsilon}ds_g,
\end{eqnarray*}
and summing over $i=1,2$ gives
\begin{eqnarray*}
\int_N|d(\psi F)|^2&\geq& \left(1-\frac{C}{m}\right)\int_{\partial N}\psi^2\frac{(1-|u|^2)}{\epsilon}ds_g\\
&\geq& \left(1-\frac{C}{m}\right)\left(2\mathcal{F}_{m,\epsilon}^{\Gamma}(N,g)-\frac{C}{|\log r|}\right).
\end{eqnarray*}
On the other hand, since $F$ maps $D_{r^{1/4}}(p)$ conformally onto a subset of $D_1(0)$, we see that
\begin{eqnarray*}
\int_N|d(\psi F)|^2&\leq & \int_N\psi^2|dF|^2+2\langle \psi dF, Fd\psi\rangle+|F|^2|d\psi|^2\\
&\leq & 2\pi+2\sqrt{2\pi}\|d\psi\|_{L^2(N)}+\|d\psi\|_{L^2(N)}^2\\
&\leq &2\pi+\frac{C}{\sqrt{|\log r|}},
\end{eqnarray*}
and combining this with the previous estimate gives
$$\left(1-\frac{C}{m}\right)2\mathcal{F}_{m,\epsilon}^{\Gamma}(N,g)\leq 2\pi +\frac{C}{\sqrt{|\log r|}}.$$

In particular, letting
$$r_{m,\epsilon}:=\max\{s\mid \eqref{no.eigen.conc}\text{ holds for all }r\leq s\},$$
it follows that
$$\left(1-\frac{C}{m}\right)2\mathcal{F}_{m,\epsilon}^{\Gamma}(N,g)\leq 2\pi+\frac{C}{\sqrt{|\log r_{m,\epsilon}|}},$$
and taking $\epsilon\to 0$ and applying Lemma \ref{fb.lbd}, we deduce that
$$\left(1-\frac{C}{m}\right)\Sigma_1^{\Gamma}(N,[g])\leq 2\pi+\frac{C}{\sqrt{|\log r_m|}},$$
where $r_m:=\liminf_{\epsilon\to 0}r_{m,\epsilon}.$ But by assumption, $\Sigma_1^{\Gamma}(N,[g])>2\pi$, and it follows that
$$\liminf_{m\to\infty}r_m>0.$$
Taking $r_0(N,g):=\frac{1}{2}\liminf_{m\to\infty}r_m$, we arrive at the desired conclusion.

By a straightforward analog of the argument at the end of Proposition \ref{fixpt.rk1}, we see that the  conclusion holds if $\Gamma$ has no fixed points on the boundary $\partial N$.
\end{proof}

Next, we wish to conclude that, for $m$ sufficiently large, as $\epsilon\to 0$, the min-max critical points $u_{m,\epsilon}\in W^{1,2}_{\Gamma}(N,A_{\Gamma}^m)$ converge smoothly to a free boundary harmonic map $u_m\colon (N,\partial N)\to (\B^{|\Gamma|m},\Sph^{|\Gamma|m-1})$ with $\Gamma$-equivariant Morse index $\leq m$ as a critical point for the energy functional. To this end, we record the following small-energy regularity result from \cite{MS13}.

\begin{proposition}[{\cite[Theorem 5.1]{MS13}}]
\label{fbgl.ereg}
There exists $C(m,N,g)<\infty$, \newline $\eta(m,N,g)>0$ and $r_0(N,g)>0$ such that if $u\in W^{1,2}_{\Gamma}(N,g)$ is a critical point of $F_{\epsilon}$ on $(N,g)$, and for some $p\in \partial N$ and $r<r_0(N)$,
\begin{equation}
\int_{D_r(p)}|du|^2\,dv_g+\int_{D_r(p)\cap \partial N}\frac{(1-|u|^2)^2}{4\epsilon}\,ds_g\leq \eta,
\end{equation}
then 
$$\|du\|_{L^{\infty}(D_{r/4}(p))}^2+\epsilon^{-2}\|(1-|u|^2)^2\|_{L^{\infty}(\partial N\cap D_{r/4}(p))}\leq \frac{C}{r^2}\eta.$$
\end{proposition}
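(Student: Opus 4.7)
The plan is to follow the blueprint of \cite{MS13}, adapting their Euclidean argument to the surface setting via a conformal chart. First I would note that because $r_0(N,g)$ can be chosen small depending only on the injectivity radius and curvature bounds of $(N,g)$, one may pass to isothermal coordinates near $p\in\partial N$: after a conformal change of the metric supported on $D_{r_0}(p)$, the half-disk becomes $D_r^+\subset\mathbb{R}^2_+$ with $\partial D_r^+\cap\{y=0\}$ identified with $D_r(p)\cap\partial N$, the Dirichlet energy is preserved, and the boundary length element is multiplied by a smooth conformal weight $\rho$ with $C^{-1}\leq \rho\leq C$. In these coordinates, Proposition~\ref{fb.ps}(v) gives
\[
\Delta u=0 \text{ in } D_r^+, \qquad \frac{\partial u}{\partial y}=\rho\,\frac{(1-|u|^2)}{\varepsilon}u \text{ on } D_r\cap\{y=0\},
\]
and the hypothesis becomes a smallness of $\int_{D_r^+}|du|^2 + \int_{D_r\cap\{y=0\}}\frac{(1-|u|^2)^2}{4\varepsilon}\rho$.

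Next I would establish the a priori bound $|u|\leq 1$ by a Kato-type maximum principle. Setting $\phi:=(|u|^2-1)_+$, one has $\Delta\phi\geq 0$ weakly in the interior (since $\Delta|u|^2=2|du|^2\geq 0$), while on the boundary, where $\phi>0$,
\[
\frac{\partial\phi}{\partial y}=2\langle u,\partial_y u\rangle = -\frac{2\rho}{\varepsilon}|u|^2\phi.
\]
Testing against $\phi$ and integrating by parts on any slightly enlarged half-disk yields $\int|d\phi|^2 + \int_{\{y=0\}}\frac{2\rho|u|^2}{\varepsilon}\phi^2\leq 0$, forcing $\phi\equiv 0$. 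This pointwise bound is crucial: it converts the boundary nonlinearity into a bounded Robin-type term, and in particular shows that the potential $f:=(1-|u|^2)/\varepsilon\geq 0$ is nonnegative.

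The heart of the argument is then a Moser iteration on the even reflection $\hat u$ of $u$ across $\{y=0\}$. This reflection is harmonic away from the segment and satisfies, in the distributional sense,
\[
-\Delta\hat u = 2\rho\,\frac{(1-|\hat u|^2)}{\varepsilon}\hat u\,\delta_{\{y=0\}}
\]
on $D_r$. Using $|\hat u|\leq 1$ and the Bochner identity for harmonic maps, $e:=|du|^2$ satisfies $\Delta e\geq -C\,e^2$ in the interior (where $C$ absorbs the Gauss curvature of the chart), while on $\{y=0\}$ a direct computation using the boundary condition and $|u|\leq 1$ gives a one-sided bound $\partial_y e\leq C(\rho^2 \varepsilon^{-2}(1-|u|^2)^2 + \rho f\,e)$. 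Combining the interior Bochner inequality with this boundary term and the equation $\Delta f = -2|du|^2|u|^2\delta_{\{y=0\}}/\varepsilon + \ldots$ (handled as in \cite[\S 4]{MS13}), one derives a Caccioppoli inequality of the form
\[
\int_{D_{r/2}^+}\eta^2\bigl(|du|^2+f^2\bigr) \leq C\int_{D_r^+}|d\eta|^2 \cdot\Bigl(\int_{D_r^+}|du|^2+\int_{\{y=0\}}\varepsilon f^2\Bigr)
\]
for cutoffs $\eta\in C_c^\infty(D_r)$, together with its $L^p$-analog. A standard Moser iteration then converts the smallness hypothesis $\int|du|^2+\int\varepsilon f^2\leq \eta$ into the pointwise bound $\|du\|_{L^\infty(D_{r/4}^+)}^2 + \|f\|_{L^\infty(\{y=0\}\cap D_{r/4})}^2 \leq Cr^{-2}\eta$.

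The main technical obstacle is the singular $\varepsilon^{-1}$ scaling in the boundary condition: all estimates must be uniform in $\varepsilon$, which forces one to track the boundary potential $\varepsilon f^2$ rather than $f$ itself in the Caccioppoli step, and to use the pointwise $|u|\leq 1$ bound essentially, since otherwise the boundary nonlinearity has no sign. Once one has the scale-invariant Caccioppoli inequality, the rest of the argument is routine, and this is precisely the content of \cite[Theorem~5.1]{MS13}; the surface setting introduces only the bounded conformal weight $\rho$ and lower-order curvature terms, which are absorbed by choosing $r_0$ small.
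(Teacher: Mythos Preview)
Your approach matches the paper's: the paper does not give an independent proof of this proposition but simply cites \cite[Theorem~5.1]{MS13} and follows it with a remark explaining exactly the conformal reduction you describe in your first paragraph---identifying $D_r(p)$ with a Euclidean half-disk so that $u$ solves $\Delta v=0$ with boundary condition $\partial_\nu v=\rho\frac{1-|v|^2}{\epsilon}v$ for a bounded conformal weight $\rho$. The remaining paragraphs of your sketch (the $|u|\le 1$ maximum principle, reflection, Bochner/Caccioppoli, Moser iteration) go beyond what the paper records and outline the content of \cite{MS13} itself; this is fine in spirit, though a couple of formulas in your sketch are imprecise (for instance your displayed Caccioppoli inequality mixes an interior $L^2$ norm of $f$ with a boundary $L^2$ norm of $\epsilon f^2$ in a way that is not dimensionally consistent, and the distributional expression for $\Delta f$ should read $-2|du|^2/\epsilon$ in the bulk with a separate boundary contribution $-4\rho f|u|^2/\epsilon$ rather than $-2|du|^2|u|^2\delta/\epsilon$). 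These are cosmetic issues in a sketch; the reduction step is the only thing the paper itself contributes, and you have that right.
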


\begin{remark}
Though \cite[Thm 5.1]{MS13} is stated for solutions on the half-disk $D_1^+(0)\subset \mathbb{R}^2_+$, the proof is easily adapted to the case of general surfaces with boundary; in particular, on small enough geodesic (half-)disks $D_r(p)$ for $p\in\partial N$, after identifying $D_r(p)$ conformally with the half-disk $D_1^+(0)$, critical points of $F_{\epsilon}$ correspond to solutions of
$$\Delta v=0\text{ on }D_1^+(0),\text{ }\frac{\partial v}{\partial \nu}=\rho \frac{(1-|v|^2)}{\epsilon}v\text{ on }[-1,1]\times\{0\},$$
with $\rho$ a conformal weight that can be taken arbitrarily close to $1$ in a $C^k$ sense.
\end{remark}

Combining Proposition \ref{fbgl.ereg} with Lemma \ref{no.dir.conc}, it is not difficult to show that, for $m$ sufficiently large, the min-max critical points $u_{m,\epsilon}\colon N\to A_{\Gamma}^m$ of $F_{\epsilon}$ converge as $\epsilon\to 0$ to free boundary harmonic maps $u_m\colon (N,\partial N)\to (\B^{m|\Gamma|},\Sph^{m|\Gamma|-1})$ satisfying the desired index control and quantative non-concentration estimates.

\begin{proposition}\label{fbhm.ex}
Suppose either $\Sigma_1^{\Gamma}(N,[g])>2\pi$ or  $\Gamma$ has no fixed points on $\bd N$. For $m$ sufficiently large, there exist $\Gamma$-equivariant free boundary harmonic maps 
$$
u_m\colon (N,\partial N)\to (\B^{m|\Gamma|},\Sph^{m|\Gamma|-1})
$$
of energy 
$$
E(u_m)=\mathcal{F}_m^{\Gamma}(N,[g]),
$$
satisfying the $\Gamma$-equivariant Morse index bound
$$
\ind_{E,\Gamma}(u_m)\leq m,
$$
and the non-concentration estimate
\begin{equation}\label{fbhm.no.conc}
\int_{D_r(p)}|du_m|^2+\int_{D_r(p)\cap \partial N}\left|\frac{\partial u_m}{\partial\nu}\right|\,ds_g\leq \frac{C}{|\log r|}
\end{equation}
for all $p \in \partial N$ and $r\in (0,r_0(N,g))$, for $C(N,[g])$ independent of $m$.
\end{proposition}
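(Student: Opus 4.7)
The plan is to extract the limiting free boundary harmonic map $u_m$ from the min-max critical points $u_{m,\epsilon}$ using the non-concentration result Lemma~\ref{no.dir.conc} combined with the small-energy regularity Proposition~\ref{fbgl.ereg}, mirroring the structure of Proposition~\ref{hm.ex} in the closed case but adapted to the free boundary setting. Throughout, fix $m$ large enough that Lemma~\ref{no.dir.conc} and Lemma~\ref{mm.stek.lower} both apply, and work with $\epsilon < \epsilon_0(N,g,m)$.

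First I would establish the non-concentration estimate \eqref{fbhm.no.conc}. Given $p \in \partial N$ and $r < r_0$, choose a logarithmic cutoff $\phi \in C_c^\infty(D_{\sqrt{r}}(p))$ with $\phi \equiv 1$ on $D_r(p)$ and $\int_N |d\phi|^2 \leq C/|\log r|$. Feeding $\phi$ into Lemma~\ref{no.dir.conc} produces
\[
\int_{\partial N \cap D_r(p)} \frac{1-|u_{m,\epsilon}|^2}{\epsilon}\,ds_g \leq \frac{C}{|\log r|}.
\]
Next, testing the equation $\Delta u_{m,\epsilon} = 0$ against $u_{m,\epsilon}$ times a cutoff $\psi$ supported in $D_{\sqrt{r}}(p)$ and using the boundary condition $\frac{\partial u_{m,\epsilon}}{\partial \nu} = \epsilon^{-1}(1-|u_{m,\epsilon}|^2)u_{m,\epsilon}$ gives an identity of the form
\[
\int_N \psi |du_{m,\epsilon}|^2\,dv_g = \int_{\partial N} \psi \frac{(1-|u_{m,\epsilon}|^2)|u_{m,\epsilon}|^2}{\epsilon}\,ds_g - \int_N \langle 2 d\psi \cdot u_{m,\epsilon}, du_{m,\epsilon}\rangle\,dv_g,
\]
and a standard Cauchy--Schwarz absorption on the last term, together with the pointwise bound $|u_{m,\epsilon}| \leq 1$ and the previous estimate, yields the desired Dirichlet and boundary bounds on $D_r(p)$, giving \eqref{fbhm.no.conc}.

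Next, I would use \eqref{fbhm.no.conc} to feed into Proposition~\ref{fbgl.ereg}: at every boundary point, taking $r$ small enough that $C/|\log r| < \eta(m,N,g)$ yields uniform pointwise control on $|du_{m,\epsilon}|$ and on $\epsilon^{-2}(1-|u_{m,\epsilon}|^2)^2$ on a smaller disk. Analogous (simpler) interior $\epsilon$-regularity estimates for solutions of $\Delta u = 0$ on the interior---which are immediate since $u_{m,\epsilon}$ is harmonic in the interior with $|u_{m,\epsilon}| \leq 1$---give uniform $C^k_{loc}$ bounds away from the boundary. Standard boundary Schauder estimates for the oblique problem then upgrade the $C^1$ bound to $C^{k}$ bounds uniformly in $\epsilon$. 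Thus a subsequence $u_{m,\epsilon} \to u_m$ converges smoothly on $N$, and by the uniform bound $\epsilon^{-1}(1-|u_{m,\epsilon}|^2)^2 \leq C\epsilon$ on $\partial N$, we have $|u_m| = 1$ on $\partial N$. Passing to the limit in the equation and in the boundary condition (where $\epsilon^{-1}(1-|u_{m,\epsilon}|^2)u_{m,\epsilon}$ converges smoothly to $|du_m|_g^2 u_m$ by the regularity bounds) shows $u_m \colon (N,\partial N) \to (\mathbb{B}^{m|\Gamma|}, \mathbb{S}^{m|\Gamma|-1})$ is free boundary harmonic, and $\Gamma$-equivariance is preserved by smooth convergence.

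Finally, for the energy identity, smooth convergence gives $\frac{1}{2}\int_N|du_{m,\epsilon}|^2 \to E(u_m)$, while the boundary term $\int_{\partial N}\frac{(1-|u_{m,\epsilon}|^2)^2}{4\epsilon}\,ds_g$ vanishes in the limit since $(1-|u_{m,\epsilon}|^2)^2/\epsilon \leq C\epsilon$ pointwise on $\partial N$; hence
\[
E(u_m) = \lim_{\epsilon \to 0} F_\epsilon(u_{m,\epsilon}) = \lim_{\epsilon \to 0}\mathcal{F}_{m,\epsilon}^\Gamma(N,g) = \mathcal{F}_m^\Gamma(N,[g]).
\]
The Morse index bound $\ind_{E,\Gamma}(u_m) \leq m$ follows from lower semi-continuity under smooth convergence, exactly as in \cite[Lemma 3.6]{KSminmax}: any $\Gamma$-equivariant variation $v$ with $\langle v, u_m\rangle = 0$ making $E''(u_m)(v,v) < 0$ can be slightly perturbed into a $\Gamma$-equivariant variation decreasing $F_\epsilon$ at $u_{m,\epsilon}$ for $\epsilon$ small, because $F_\epsilon''(u_{m,\epsilon})(v,v)$ includes the term $\epsilon^{-1}\int_{\partial N}(2\langle u_{m,\epsilon},v\rangle^2 - (1-|u_{m,\epsilon}|^2)|v|^2)\,ds_g$ which converges to $2\int_{\partial N}\langle u_m, v\rangle^2\,ds_{g_m} = 0$ in the limit. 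The main obstacle is the uniform non-concentration step: controlling the Dirichlet energy near $\partial N$ from the boundary spectral inequality in Lemma~\ref{no.dir.conc} requires carefully combining the logarithmic cutoff estimate with the Pohozaev-type identity relating bulk and boundary terms, since Lemma~\ref{no.dir.conc} controls only $\frac{1-|u|^2}{\epsilon}$ on $\partial N$, not the full Ginzburg--Landau energy directly.
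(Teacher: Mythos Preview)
Your overall strategy matches the paper's proof: non-concentration via Lemma~\ref{no.dir.conc} combined with the divergence identity for $\phi^2\langle u_{m,\epsilon},du_{m,\epsilon}\rangle$, then $\epsilon$-regularity from Proposition~\ref{fbgl.ereg}, then compactness and the energy identity. One minor slip: on $\partial N$ the limit of $\epsilon^{-1}(1-|u_{m,\epsilon}|^2)u_{m,\epsilon}$ is $\partial_\nu u_m=\left|\partial_\nu u_m\right|u_m$, not $|du_m|_g^2 u_m$; you have imported the closed-case relation $\Delta u=|du|^2 u$, which is not what holds here.

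The index bound step, however, has a genuine gap. Evaluating $F_\epsilon''(u_{m,\epsilon})$ directly at a field $v$ with $\langle v,u_m\rangle\equiv 0$ on $\partial N$ produces the term $2\epsilon^{-1}\int_{\partial N}\langle u_{m,\epsilon},v\rangle^2\,ds_g$. Since $\langle u_{m,\epsilon},v\rangle=\langle u_{m,\epsilon}-u_m,v\rangle$ tends to zero only in $C^0$ with no a priori rate in $\epsilon$, this quotient is uncontrolled; your claim that it converges to $2\int_{\partial N}\langle u_m,v\rangle^2\,ds_{g_m}=0$ is unjustified (and the expression itself is unclear). The paper resolves this by the explicit perturbation
\[
T_\epsilon v := v-\langle v,u_{m,\epsilon}-u_m\rangle\,u_{m,\epsilon},
\]
for which, on $\partial N$, one has $\langle u_{m,\epsilon},T_\epsilon v\rangle=\langle u_{m,\epsilon},v\rangle(1-|u_{m,\epsilon}|^2)$, so that
\[
\epsilon^{-1}\langle u_{m,\epsilon},T_\epsilon v\rangle^2\leq |v|^2\cdot\epsilon^{-1}(1-|u_{m,\epsilon}|^2)^2\to 0
\]
by the regularity bound. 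This is precisely the step that makes $\limsup_{\epsilon\to 0}F_\epsilon''(u_{m,\epsilon})(v_\epsilon,v_\epsilon)\leq E''(u_m)(v,v)$ go through. The reference to \cite[Lemma~3.6]{KSminmax} does not cover this, as the free boundary Hessian has a different structure from the closed case.
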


\begin{proof}
First, by Lemma \ref{no.dir.conc}, for $m$ sufficiently large and $\epsilon<\epsilon_0(N,g,m)$ sufficiently small, we know that for some $r_0(N,g)>0$ and all $p\in \partial N$, we have
$$
\frac{1}{2}\int_{\partial N}\phi^2\frac{(1-|u_{m,\epsilon}|^2)}{\epsilon}ds_g\leq \int_N|d\phi|^2\,dv_g,
$$
for all $\phi\in C_c^{\infty}(D_{r_0}(p))$. Moreover, writing
\begin{equation*}
\begin{split}
\int_{\partial N}\phi^2\frac{(1-|u_{m,\epsilon}|^2)}{\epsilon}\,ds_g&\geq \int_{\partial N}\phi^2\left\langle u_{m,\epsilon},\frac{\partial u_{m,\epsilon}}{\partial \nu}\right\rangle\,ds_g\\
=\int_N\mathrm{div}(\phi^2 \langle u_{m,\epsilon},du_{m,\epsilon}\rangle)\,dv_g
&=\int_N\phi^2|du_{m,\epsilon}|^2+2\langle u_{m,\epsilon} d\phi, \phi du_{m,\epsilon}\rangle\,dv_g\\
&\geq \frac{1}{2}\int_N\phi^2|du_{m,\epsilon}|^2\,dv_g-4\int_N|d\phi|^2\,dv_g,
\end{split}
\end{equation*}
we deduce that
$$
\frac{1}{2}\int_N\phi^2|du_{m,\epsilon}|^2\,dv_g\leq 6\int_N|d\phi|^2\,dv_g
$$
as well. Letting $\phi\in C_c^{\infty}(D_{r_0}(p))$ be a log cutoff function with $\phi\equiv 1$ on $D_{\delta}(p)$ and $\int |d\phi|^2\leq \frac{C}{\log(r_0/\delta)}$, we then see that
\begin{equation}
\label{m.e.no.conc}
\int_{D_{\delta}(p)}|du_{m,\epsilon}|^2\,dv_g+\int_{D_{\delta}(p)\cap \partial N}\frac{(1-|u_{m,\epsilon}|^2)}{\epsilon}\,ds_g\leq \frac{C}{\log(r_0/\delta)}
\end{equation}
for all $\delta<r_0(N,g)$. In particular, taking $r$ sufficiently small, it then follows from Proposition \ref{fbgl.ereg} that $u_{m,\epsilon}$ satisfies
$$\|u_{m,\epsilon}\|_{C^1(N)}+\left\|\frac{\partial u_{m,\epsilon}}{\partial \nu}\right\|_{C^0(\partial N)}\leq C(N,g,m)$$
for all $\epsilon<\epsilon_0(N,g,m)$. 

Passing to a subsequence as $\epsilon\to 0$, we can therefore find a map $u_m\colon N\to \B^{m|\Gamma|}$ such that 
$$
u_{m,\epsilon}\to u_m\text{ in }C^0(N)\text{ and weakly in }W^{1,2}(N).
$$
Clearly, the limit $u_m\colon N\to \B^{m|\Gamma|}$ must be $\Gamma$-equivariant and harmonic on $N$. Moreover, by the harmonicity of $u_{m,\epsilon}$ and $u_m$, we see that
\begin{eqnarray*}
\int_N|du_{m,\epsilon}|^2-|du_m|^2&=&\int_{\partial N}\left\langle u_{m,\epsilon},\frac{\partial u_{m,\epsilon}}{\partial\nu}\right\rangle-\left\langle u_m,\frac{\partial u_m}{\partial\nu}\right\rangle\,ds_g\\
&=&\int_{\partial N}\left\langle u_{m,\epsilon}-u_m,\frac{\partial u_{m,\epsilon}}{\partial\nu}\right\rangle+\left\langle u_m,\frac{\partial u_{m,\epsilon}}{\partial\nu}-\frac{\partial u_m}{\partial\nu}\right\rangle\\
&\leq &C\cdot \|u_{m,\epsilon}-u_m\|_{C^0}+\int_N\langle du,d u_{m,\epsilon}-du_m\rangle;
\end{eqnarray*}
since $u_{m,\epsilon}\to u_m$ in $C^0$ and weakly in $W^{1,2}$, it follows that the latter term vanishes as $\epsilon\to 0$, so that
$$u_{m,\epsilon}\to u_m\text{ strongly in }W^{1,2}$$
as well. In particular, since
$$
\int_{\partial N}\frac{(1-|u_{m,\epsilon}|^2)^2}{\epsilon}\,ds_g\to 0
$$
as $\epsilon\to 0$, it follows that
$$E(u_m)=\lim_{\epsilon\to 0}\int_N\frac{1}{2}|du_{m,\epsilon}|^2=\lim_{\epsilon\to 0}F_{\epsilon}(u_{m,\epsilon})=\mathcal{F}_m^{\Gamma}(N,[g]).$$

Next, note that if $v(x)\cdot u_m(x)\equiv 0$ on $\partial N$, then 
$$\|v(x)\cdot u_{m,\epsilon}(x)\|_{C^0(\partial N)}\to 0$$
as $\epsilon\to 0$, and consequently
$$
\int_{\partial N} \left\langle v,\frac{\partial u_m}{\partial\nu}\right\rangle=\lim_{\epsilon\to 0}\int_{\partial N}\left\langle v,\frac{\partial u_{m,\epsilon}}{\partial \nu}\right\rangle=\lim_{\epsilon\to 0}\int_{\partial N}\langle v,u\rangle\frac{(1-|u|^2)}{\epsilon}=0.
$$
Thus, in addition to being harmonic in the interior of $N$, $u_m$ must satisfy
$$
\frac{\partial u_m}{\partial\nu}=\left|\frac{\partial u_m}{\partial\nu}\right|u_m,
$$
so that $u_m$ is indeed a free boundary harmonic map $(N,\partial N)\to (\B^{m|\Gamma|},\Sph^{m|\Gamma|-1})$. 

To check that $\ind_{\Gamma}(u_m)\leq m$, let 
$$
\mathcal{V}(u_m):=\left\{v\in W^{1,2}_{\Gamma}(N,A_{\Gamma}^m)\cap C^{\infty}\mid \langle v,u_m\rangle \equiv 0\text{ on }\partial N\right\};
$$
we need to show that any subspace $V\subset \mathcal{V}(u_m)$ on which the quadratic form
\begin{equation*}
\begin{split}
E''(u_m)(v,v)&=\int_N|dv|^2\,dv_g-\int_{\partial N}\left|\frac{\partial u_m}{\partial \nu}\right||v|^2\,ds_g\\
&=\int_N|dv|^2-|du_m|^2|v|^2-\frac{1}{2}\left\langle d\left(|u_m|^2\right),d\left(|v|^2\right)\right\rangle\,dv_g
\end{split}
\end{equation*}
is negative-definite has dimension $\leq m$. To this end, for each $\epsilon>0$, consider the map
$$T_{\epsilon}:\mathcal{V}(u_m)\to W^{1,2}_{\Gamma}(N,A_{\Gamma}^m)$$
defined by 
$$T_{\epsilon}v=v_{\epsilon}:=v-\langle v,u_{m,\epsilon}-u_m\rangle u_{m,\epsilon}.$$
Then for any $v\in \mathcal{V}(u_m)$,
\begin{equation*}
\begin{split}
&F_{\epsilon}''(u_{m,\epsilon})(v_{\epsilon},v_{\epsilon})=\int_N|dv_{\epsilon}|^2\,dv_g+\int_{\partial N}\left(2\frac{\langle u_{m,\epsilon},v_{\epsilon}\rangle^2}{\epsilon}-\frac{(1-|u_{m,\epsilon}|^2)}{\epsilon}|v_{m,\epsilon}|^2\right)\,ds_g\\
&=\int_N|dv_{\epsilon}|^2\,dv_g+\int_{\partial N}2\frac{(1-|u_{m,\epsilon}|^2)^2\langle v,u_{m,\epsilon}\rangle^2}{\epsilon}\,ds_g
-\int_{\partial N}\frac{(1-|u_{m,\epsilon}|^2)}{\epsilon}|v_{m,\epsilon}|^2\,ds_g\\
&\leq \int_N|dv_{\epsilon}|^2\,dv_g-\int_{\partial N} \left\langle \frac{\partial u_{m,\epsilon}}{\partial \nu},u_{m,\epsilon}\right\rangle|v_{m,\epsilon}|^2\,ds_g
+C\|v\|_{C^0}\int_{\partial N}\frac{(1-|u_{m,\epsilon}|^2)^2}{\epsilon}\,ds_g\\
&\leq \int_N|dv_{\epsilon}|^2-|du_{m,\epsilon}|^2|v_{m,\epsilon}|^2-\frac{1}{2}\left\langle d|u_{m,\epsilon}|^2,d|v_{m,\epsilon}|^2\right\rangle\,dv_g \\
&+C\|v\|_{C^0}\int_{\partial N}\frac{(1-|u_{m,\epsilon}|^2)^2}{\epsilon}.
\end{split}
\end{equation*}
Now, since $u_{m,\epsilon}\to u_m$ in $C^0\cap W^{1,2}$, it's clear that, for any fixed $v\in \mathcal{V}(u_m)$, $v_{\epsilon}\to v$ in $W^{1,2}\cap C^0$ as well, and together with the fact that $\int_{\partial N}\frac{(1-|u_{m,\epsilon}|^2)^2}{\epsilon}\to 0$ as $\epsilon\to 0$, it follows from the preceding computations that
$$\liminf_{\epsilon\to 0}F_{\epsilon}''(u_{m,\epsilon})(v_{\epsilon},v_{\epsilon})\leq E''(u_m)(v,v).$$
Moreover, since
$$\|T_{\epsilon}v-v\|_{C^0}\leq \|v\|_{C^0}\|u_{m,\epsilon}-u_m\|_{C^0}\leq \frac{1}{2}\|v\|_{C^0}$$
for $\epsilon$ sufficiently small, $T_{\epsilon}$ must be injective for $\epsilon$ sufficiently small. Hence, if $V\subset \mathcal{V}(u_m)$ is a subspace such that
$$E''(u_m)(v,v)\leq -\delta \|v\|_{C^0}^2\text{ for all }v\in V,$$
then $T_{\epsilon}(V)\subset W^{1,2}_{\Gamma}(N,A_{\Gamma}^m)$ is a space of the same dimension such that
$$F_{\epsilon}''(u_{m,\epsilon})(v_{\epsilon},v_{\epsilon})\leq -\frac{\delta}{2}\|v\|_{C^0}^2\leq -\frac{\delta}{4}\|v_{\epsilon}\|_{C^0}^2$$
for all $v_{\epsilon}\in T_{\epsilon}(V)$. In other words, $F_{\epsilon}''(u_{m,\epsilon})$ must be negative definite on $T_{\epsilon}V$, and therefore
$$\dim(V)=\dim(T_{\epsilon}V)\leq m,$$
giving the desired index estimate for $u_m$. 

Finally, passing the estimate \eqref{m.e.no.conc} to the limit $\epsilon\to 0$ gives the desired non-concentration estimate
\begin{equation}
\int_{D_{\delta}(p)}|du_m|^2\,dv_g+\int_{D_{\delta}(p)\cap \partial N}\left|\frac{\partial u_m}{\partial\nu}\right|\,ds_g\leq \frac{C}{\log(r_0/\delta)}
\end{equation}
for all $\delta\in (0,r_0(N,g))$.
\end{proof}

Before showing in the next section that the maps $u_m$ stabilize to a Steklov eigenmap for a metric realizing $\Sigma_1^{\Gamma}(N,[g])$, we need one more analytic ingredient: the observation that the constants in the small-energy regularity theorem for free boundary harmonic maps $(N,\partial N)\to (\B^n,\Sph^{n-1})$ is independent of $n$. Recall that since the free boundary harmonic condition is conformally invariant, it suffices to consider the case of half-disks in $\mathbb{R}^2$.

\begin{lemma}[cf. \cite{Schev06, DR11, LP15}]
\label{fb.ereg}
Let $u\colon (D_1^+(0),I_1(0))\to (\B^n,\Sph^{n-1})$ solve 
$$
\Delta u=0\text{ on }D_1^+(0)=D_1(0)\cap(\mathbb{R}\times [0,\infty)),
$$
$$
\frac{\partial u}{\partial \nu}=\left|\frac{\partial u}{\partial \nu}\right|u\text{ on }I_1(0)=[-1,1]\times\{0\}.
$$
There exist $\eta>0$ and $C<\infty$ independent of $n$ such that if
$$
\int_{I_1(0)}\left|\frac{\partial u}{\partial\nu}\right|\,dx+\int_{D_1^+(0)}|du|^2\,dxdy\leq \eta,
$$
then $|du(x)|^2\leq C\int_{D_1^+(0)}|du|^2$ for all $x\in D_{1/2}^+(0)$.
\end{lemma}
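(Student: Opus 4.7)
Since $u$ is harmonic on $D_1^+$, each component is harmonic and $|du|^2$ is subharmonic ($\Delta|du|^2=2|\nabla^2 u|^2 \geq 0$), so the mean value inequality immediately gives $|du(x)|^2\leq C(K)\int_{D_1^+}|du|^2$ for $x$ in any compact $K\subset D_1^+\setminus I_1$. The entire difficulty is therefore concentrated at boundary points $x_0\in I_{1/2}$, and the goal is to establish a Morrey-type decay
\[
\int_{D_r^+(x_0)}|du|^2 + \int_{I_r(x_0)}\left|\tfrac{\partial u}{\partial\nu}\right|\,dx \leq C r^{\alpha}\!\left(\int_{D_1^+}|du|^2 + \int_{I_1}\left|\tfrac{\partial u}{\partial\nu}\right|\right)
\]
for some $\alpha\in(0,1)$ and $C,\eta$ independent of $n$, after which Hölder continuity follows by Morrey's theorem and the pointwise gradient bound follows from standard elliptic boundary regularity applied to the harmonic components.

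The key to dimension independence is the Hélein-type conservation law available here. Since $\Delta u=0$ on $D_1^+$, the antisymmetric currents $X^{ij}:=u^i\nabla u^j - u^j\nabla u^i$ satisfy $\operatorname{div} X^{ij}=0$ on $D_1^+$. On $I_1$ the free boundary condition $\tfrac{\partial u}{\partial\nu}=|\tfrac{\partial u}{\partial\nu}|u$ forces
\[
X^{ij}\cdot\nu = |\tfrac{\partial u}{\partial\nu}|\bigl(u^i u^j - u^j u^i\bigr)=0,
\]
so $X^{ij}$ is tangent to $I_1$. Consequently, even reflection $\tilde u(x,y):=u(x,|y|)$ yields a $W^{1,2}$ map on $D_1$ whose reflected currents $\tilde X^{ij}$ are distributionally divergence-free on the full disk $D_1$. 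In two dimensions, this produces potentials $B^{ij}\in W^{1,2}(D_1)$ with $\tilde X^{ij}=\nabla^\perp B^{ij}$, and $\|\nabla B^{ij}\|_{L^2(D_1)}\lesssim \|du\|_{L^2(D_1^+)}$.

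Next, I would rewrite the distributional Laplacian of $\tilde u$ using these potentials. The reflection introduces a singular source $\Delta\tilde u = -2|\tfrac{\partial u}{\partial\nu}|u\cdot\mathcal{H}^1\llcorner I_1$, which is proportional to $\tilde u$ on $I_1$; combined with $|\tilde u|^2=1$ on $I_1$, one can expand (following the Hélein/Rivière recipe for harmonic maps to spheres) $\Delta\tilde u^i = \sum_j \nabla^\perp B^{ij}\cdot\nabla\tilde u^j$ away from $I_1$, and absorb the singular piece on $I_1$ into a boundary correction since $u\cdot X^{ij}=0$ tangentially and $\sum_j u^j X^{ij}=\nabla u^i - (u\cdot\nabla u)u^i$. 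The right-hand sides are Jacobian combinations of $W^{1,2}$ functions to which Wente's $L^2$-$L^\infty$ estimate applies with universal constants, giving a Caccioppoli-type inequality of the form
\[
\int_{D_{r/2}^+}|du|^2 + \int_{I_{r/2}}\!|\tfrac{\partial u}{\partial\nu}| \leq \theta\!\left(\int_{D_r^+}|du|^2 + \int_{I_r}\!|\tfrac{\partial u}{\partial\nu}|\right)
\]
with $\theta<1$ whenever the left-hand side of the hypothesis is smaller than some absolute $\eta$. Iterating on dyadic half-balls yields the Morrey decay above with constants independent of $n$.

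The main technical obstacle is the careful bookkeeping of the Dirac-mass source $|\tfrac{\partial u}{\partial\nu}|u\,\mathcal{H}^1\llcorner I_1$ in the reflected equation: one must verify that its pairing against test fields is controlled by the weak-$L^2$ gradient norm via the tangentiality $X^{ij}\cdot\nu=0$, and that the Wente-type duality argument (which produces the dimension-free constants) still closes despite the boundary singular measure. This is essentially the free boundary half-harmonic adaptation of Hélein's argument carried out in~\cite{Schev06} and refined in~\cite{DR11,LP15}, and the only novelty required here is to track that all constants appearing depend only on $\eta$ and on universal Wente constants, not on the ambient dimension of the target sphere.
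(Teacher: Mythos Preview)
Your route (conservation laws $X^{ij}$, reflection, Wente) is in the spirit of the cited references but is genuinely different from the paper's proof, and your outline has a real gap at the key step.

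The paper's argument is a direct Bochner/point-picking method modeled on Chen--Struwe \cite{CS89}: introduce $\psi(x)=(1-|x|)^2|du(x)|^2$, locate its maximum at $x_0$, and use the interior subharmonicity $-\Delta\tfrac12|du|^2=|\mathrm{Hess}\,u|^2\geq 0$ together with the explicit boundary identity
\[
\frac{\partial}{\partial\nu}\frac{1}{2}|du|^2=2\left|\frac{\partial u}{\partial\nu}\right|\,\left|\frac{\partial u}{\partial\tau}\right|^2\quad\text{on }I_1
\]
to derive a monotonicity-type estimate for half-circle averages of $|du|^2$. Since everything is phrased in terms of the scalar quantities $|du|$, $|u_\nu|$, $|u_\tau|$, dimension-independence is automatic, and the $L^\infty$ gradient bound comes out directly without any detour through Morrey decay or H\"older continuity.

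The gap in your plan is the treatment of the reflected equation. You are right that the tangentiality $X^{ij}\cdot\nu=0$ makes $\tilde X^{ij}$ distributionally divergence-free on $D_1$, but the equation for the reflected map is $\Delta\tilde u^i=-2|\tfrac{\partial u}{\partial\nu}|u^i\,\mathcal H^1\llcorner I_1$: a singular measure on a line, not a two-dimensional Jacobian. Away from $I_1$ one has $\Delta\tilde u=0$, so the expansion $\Delta\tilde u^i=\sum_j\nabla^\perp B^{ij}\cdot\nabla\tilde u^j$ you write is vacuous there; the entire nonlinearity lives in the singular boundary term, and ``absorbing it into a boundary correction'' is precisely the step that needs an idea you have not supplied. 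The proofs in \cite{Schev06,DR11,LP15} handle this either via the half-Laplacian framework on $I_1$ or via more delicate half-disk integrations by parts, not through the interior-style Wente argument you sketch. A secondary issue: Morrey decay gives $C^\alpha$ for $u$, but promoting this to the pointwise bound $|du|^2\leq C\int|du|^2$ is not just ``standard elliptic boundary regularity for the harmonic components,'' since the Neumann data $|\tfrac{\partial u}{\partial\nu}|u^i$ contains the a priori unbounded factor $|\tfrac{\partial u}{\partial\nu}|$.
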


\begin{proof}
For the convenience of the reader, we opt for a simple, self-contained proof via Bochner identities modeled on \cite{CS89}, taking advantage of the a priori regularity established in, e.g., \cite{Schev06, DR11, LP15}. Introducing the function
$$
\psi(x)=(1-|x|)^2|du(x)|^2,
$$
which achieves its max at some point $x_0\in D_1^+(0)$, it suffices to show that
$$
\max \psi = (1-|x_0|)^2|du(x_0)|^2\leq C\int_{D_1^+(0)}|du|^2\,dxdy.
$$
To this end, let $\sigma_0:=\frac{1-|x_0|}{2}$ and set $\delta_0:=\mathrm{dist}(x_0,I_1(0))$. If $\sigma_0\leq 10\delta_0$, then $D_{\sigma_0/10}(x_0)\subset D_1^+(0)$, and the subharmonicity of $|du|^2$ gives
$$
|du(x_0)|^2\leq \frac{C}{\sigma_0^2}\int_{D_{\sigma_0/10}(x_0)}|du|^2\,dxdy\leq \frac{C}{\sigma_0^2}\int_{D_1^+(0)}|du|^2\,dxdy,
$$
from which the desired estimate follows. Thus, we can assume without loss of generality that $\delta_0<\frac{\sigma_0}{10}$. Subharmonicity of $|du|^2$ on the interior of $D_1^+(0)$ then gives 
$$
\delta_0^2|du(x_0)|^2\leq \int_{D_{\delta_0}(x_0)}|du|^2\,dxdy;
$$
moreover, for any $x\in B_{\sigma_0}(x_0)$, we have

$$
4\sigma_0^2|du|^2(x_0)\geq (1-|x|)^2|du(x)|^2\geq (1-|x_0|-|x-x_0|)^2|du(x)|^2\geq \sigma_0^2|du(x)|^2,
$$
so that 
$$|du|^2\leq 4|du(x_0)|^2\text{ on }B_{\sigma_0}(x_0).$$
Next, the Bochner identity gives $-\Delta \frac{1}{2}|du|^2=|\mathrm{Hess}(u)|^2$, while on the boundary $I_1(0)$, we have
\begin{equation*}
\begin{split}
\frac{\partial}{\partial \nu}\frac{1}{2}\left(|du|^2\right)=\mathrm{Hess}(u)(\nabla u,\nu)
&=\left\langle \frac{\partial u}{\partial \nu},\frac{\partial^2u}{\partial \nu^2}\right\rangle+\left\langle \frac{\partial u}{\partial \tau},\frac{\partial^2 u}{\partial \tau\partial\nu}\right\rangle  \\
\left(\text{using }\Delta u=0\text{ and }\frac{\partial u}{\partial\nu}=\left|\frac{\partial u}{\partial\nu}\right|u\right)&=-\left\langle \left|\frac{\partial u}{\partial\nu}\right|u,\frac{\partial ^2u}{\partial \tau^2}\right\rangle+\left\langle \frac{\partial u}{\partial \tau},\frac{\partial}{\partial \tau}\left(\left|\frac{\partial u}{\partial\nu}\right|u\right)\right\rangle\\
\left(\text{using }\left\langle\frac{\partial u}{\partial \tau},u\right\rangle=0\text{ repeatedly }\right)&=2\left|\frac{\partial u}{\partial\nu}\right|\left|\frac{\partial u}{\partial\tau}\right|^2.
\end{split}
\end{equation*}
Now, let $y_0\in I_1(0)$ be such that $|y_0-x_0|=\delta_0$, and for each radius $s<\frac{9\sigma_0}{10}$, so that $D_s^+(y_0)\subset D_{\sigma_0}(x_0)$, we compute
\begin{equation*}
\begin{split}
\frac{d}{ds}&\left(\frac{1}{s}\int_{\partial^+D_s^+(y_0)}\frac{|du|^2}{2}\right)=\frac{1}{s}\int_{D_s^+(y_0)}|\mathrm{Hess}(u)|^2\,dxdy-\frac{1}{s}\int_{I_s(y_0)}2\left|\frac{\partial u}{\partial\nu}\right|\left|\frac{\partial u}{\partial \tau}\right|^2\,dx \\
\geq& -\frac{2}{s}\cdot 4|du(x_0)|^{\frac{5}{2}}\int_{I_s(y_0)}\left|\frac{\partial u}{\partial\nu}\right|^{\frac{1}{2}}\,dx
\geq -\frac{C}{s}|du(x_0)|^{\frac{5}{2}}\cdot\sqrt{s}\left(\int_{I_s(y_0)}\left|\frac{\partial u}{\partial\nu}\right|\,dx\right)^{1/2} \\
\geq& -C\sqrt{\eta}\cdot |du(x_0)|^{\frac{5}{2}}s^{-\frac{1}{2}}.
\end{split}
\end{equation*}
Now, integrating the above over $[t,s]\subset [0,\sigma_0/2]$ gives
$$
\frac{1}{t}\int_{\partial^+D^+_t(y_0)}|du|^2 \leq \frac{1}{s}\int_{\partial^+D^+_s(y_0)}|du|^2+C\sqrt{\eta}|du(x_0)|^{5/2}s^{1/2}.
$$
Multiplying by $s$ and integrating the right hand side over $s\in [\sigma/2,\sigma]\subset [t,\sigma_0/2]$ then gives
$$
\frac{\sigma^2}{2t}\int_{\partial^+D^+_t(y_0)}|du|^2\leq C\int_{D^+_{\sigma}(y_0)}|du|^2\,dxdy+C\sigma^{5/2}\sqrt{\eta}|du(x_0)|^{5/2},
$$
and after multiplying by $t$ and integrating over $t\in [0,2\delta_0]$, we find
$$
\sigma^2\int_{D_{2\delta_0}^+(y_0)}|du|^2\leq C\delta_0^2\int_{D_1^+(0)}|du|^2\,dxdy+C(\sigma |du(x_0)|)^{5/2}\delta_0^2.
$$
In particular, since $|du(x_0)|^2\leq \frac{C}{\delta_0^2}\int_{D_{\delta_0}(x_0)}|du|^2\leq \frac{C}{\delta_0^2}\int_{D^+_{2\delta_0}(y_0)}|du|^2,$ it follows that
\begin{equation}
\label{near.ctr}
(\sigma|du(x_0)|)^2\leq C\int_{D_1^+(0)}|du|^2\,dxdy+C\sqrt{\eta}(\sigma|du(x_0)|)^{5/2}
\end{equation}
for all $\sigma \in [2\delta_0,\sigma_0/2]$, and in fact for all $\sigma \in [0,\sigma_0/2]$, since we already know that $\delta_0^2|du(x_0)|^2$ is bounded above by the energy. To conclude, let 
$$
\sigma^2:=2C\frac{\int_{D_1^+(0)}|du|^2\,dxdy}{|du(x_0)|^2}.
$$
If $\sigma \geq \sigma_0/2$, then we get the estimate $\sigma_0^2|du(x_0)|^2\leq 8C\int_{D_1^+(0)}|du|^2$, as desired; so suppose instead that $\sigma\in [0,\sigma_0/2]$. Then we can use this value of $\sigma$ in \eqref{near.ctr} to get
\begin{eqnarray*}
2C\int_{D_1^+(0)}|du|^2\,dxdy&\leq& C\int_{D_1^+(0)}|du|^2\,dxdy+C\sqrt{\eta}\cdot \left(2C\int_{D_1^+(0)}|du|^2\,dxdy\right)^{5/4}\\
&\leq& C\int_{D_1^+(0)}|du|^2\,dxdy+2^{5/4}C^{9/4}\eta^{3/4} \int_{D_1^+(0)}|du|^2\,dxdy.
\end{eqnarray*}
Finally, taking $\eta$ small enough so that $2^{5/4}C^{9/4}\eta^{3/4} < C$, we arrive at a contradiction, completing the proof.\qedhere

\end{proof}

\subsection{Stabilization and realization of $\Sigma_1^{\Gamma}(N,[g])$}
Now, we derive an analog of Lemma \ref{closed.stab.lem}, from which we will be able to deduce the desired equality $\Sigma_1^{\Gamma}(N,[g])=2\mathcal{F}_m^{\Gamma}(N,[g])$ for $m$ sufficiently large. Throughout, we continue to assume that either $\Sigma_1^{\Gamma}(N,[g])>2\pi$ or elements of $\Gamma$ have no fixed points on $\bd N$.

\begin{lemma}
\label{stek.stab}
There exists $K(N,[g])\in \mathbb{N}$ such that the free boundary harmonic maps $u_m: (N,\partial N)\to (\B^{m|\Gamma|},\Sph^{m|\Gamma|-1})$ of Proposition \ref{fbhm.ex} take values in an equatorial ball $\B^K\subset \B^{m|\Gamma|}$ of fixed dimension $K$.
\end{lemma}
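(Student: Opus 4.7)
The plan is to parallel the proof of Lemma~\ref{closed.stab.lem}, replacing the role of the conformal factor $|du_m|^2$ in the closed case with the boundary weight $h_m := |\partial u_m/\partial \nu|$ for which the coordinates of $u_m$ become weighted Steklov $1$-eigenfunctions. The analytic core will be a uniform $C^{1,\alpha}$ regularity estimate for $\{u_m\}$ up to the boundary, producing a limit weight $h$, and the spectral closing move will use the Fredholm property of the Dirichlet-to-Neumann operator perturbed by multiplication by $h$.

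First, I would establish a uniform bound $\|du_m\|_{L^\infty(N)} \leq C(N,[g])$ independent of $m$. Near $\partial N$, this follows from Lemma~\ref{fb.ereg}---whose constants are crucially independent of the target dimension---once the non-concentration estimate~\eqref{fbhm.no.conc} from Proposition~\ref{fbhm.ex} is used to verify the small-energy hypothesis uniformly in $m$. In the interior, mean value estimates for $|du_m|^2$, which satisfies $\Delta_g |du_m|^2 \geq -C|du_m|^2$ since $u_m$ is a harmonic map into Euclidean space, handle the remaining region using the uniform energy bound $E(u_m) \leq C$. The Bochner identity
\[
-\Delta_g \tfrac{1}{2}|du_m|^2 \geq |d|du_m||^2 - C|du_m|^2
\]
together with the boundary computation $\partial_\nu \tfrac{1}{2}|du_m|^2 = 2 h_m|\partial_\tau u_m|^2 \geq 0$ from the proof of Lemma~\ref{fb.ereg} then yields a uniform $W^{1,2}(N)$-bound on $|du_m|$. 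Combined with the $L^\infty$ bound, free boundary elliptic regularity for harmonic maps (e.g., \cite{Schev06, DR11, LP15}) upgrades this to a uniform $C^{1,\alpha}$-bound on $u_m$ up to the boundary; in particular $h_m$ is uniformly bounded in $C^\alpha(\partial N)$, so along a subsequence $h_m \to h$ uniformly on $\partial N$, with $h \in C^\alpha(\partial N)$ and $\int_{\partial N} h\,ds_g = 2\lim_m E(u_m) \geq \Sigma_1^\Gamma(N,[g]) > 0$ by Lemma~\ref{fb.lbd}.

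Suppose for contradiction that $\dim \mathcal{C}_m \to \infty$, where $\mathcal{C}_m \subset W^{1,2}(N)$ is the span of the coordinate functions of $u_m$. Each $v \in \mathcal{C}_m$ satisfies $\Delta_g v = 0$ in $N$ and $\partial_\nu v = h_m v$ on $\partial N$, so for any fixed $K$ and $m$ large we can select $v_{m,1}, \ldots, v_{m,K} \in \mathcal{C}_m$ orthonormal in $L^2(\partial N, h_m\,ds_g)$; the identity $\int_N |dv_{m,i}|^2_g = \int_{\partial N} h_m v_{m,i}^2\,ds_g = 1$ gives a uniform $W^{1,2}(N)$-bound. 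After passing to a further subsequence, $v_{m,i} \rightharpoonup v_i$ weakly in $W^{1,2}(N)$, hence strongly in $L^2(\partial N)$ by trace compactness; combined with the uniform convergence $h_m \to h$, this yields $\int_{\partial N} h v_i v_j\,ds_g = \delta_{ij}$ and, via passage to the weak limit, $\Delta_g v_i = 0$ in $N$ with $\partial_\nu v_i = h v_i$ on $\partial N$. Thus the $v_i$ comprise $K$ linearly independent elements of $\ker(\mathcal{D}_g - h)$ on $H^{1/2}(\partial N)$. Since $\mathcal{D}_g$ is elliptic of order one on the closed $1$-manifold $\partial N$, and multiplication by $h \in L^\infty(\partial N)$ is a compact perturbation via the compact Sobolev embedding $H^{1/2}(\partial N) \hookrightarrow H^{-1/2}(\partial N)$, the operator $\mathcal{D}_g - h$ is Fredholm with finite-dimensional kernel, bounding $K$ and giving the desired contradiction.

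The main obstacle in this plan is the uniform $C^{1,\alpha}$ boundary regularity for the $u_m$ with bounds independent of the codimension $m$, which requires the dimension-independent formulations of free boundary harmonic map regularity; once this is in hand, the spectral Fredholm step is stable under any bounded multiplicative perturbation by $h$, so no further hypothesis on the vanishing set of $h$ is needed.
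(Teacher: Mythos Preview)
Your overall strategy is exactly the paper's: argue by contradiction that the coordinate span has unbounded dimension, pass to a limit boundary weight, and use finite multiplicity of the eigenvalue $1$ for the limiting problem. The essential difference is in the compactness step for the weights $h_m=|\partial_\nu u_m|$, and here your route has a genuine gap.

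You claim that ``free boundary elliptic regularity for harmonic maps (e.g., [Schev06, DR11, LP15]) upgrades this to a uniform $C^{1,\alpha}$-bound on $u_m$,'' and you correctly flag this as the main obstacle. The problem is real: those references give smoothness of a single free boundary harmonic map, but do not directly yield $C^{1,\alpha}$ bounds \emph{on the vector $u_m$} that are uniform as the target dimension $m|\Gamma|\to\infty$. Componentwise Schauder estimates bound each $u_m^j$ in $C^{1,\alpha}$, but summing $\sum_j|du_m^j|^2$ over $m|\Gamma|$ components need not give a uniform H\"older bound on $|du_m|$. The paper avoids this entirely: it only proves the scalar $|du_m|_g$ is bounded in $W^{1,2}(N)$ (via the Bochner identity, which is dimension-free because it operates on $|du_m|^2$), then uses compactness of the trace $W^{1,2}(N)\to L^p(\partial N)$ to extract $|\partial_\nu u_m|\to f$ in $L^p(\partial N)$, and invokes the eigenvalue continuity result \cite[Prop.~4.11]{GKL} under $(W^{1,q})^*$-convergence of the boundary measures. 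No $C^\alpha$ control on $h_m$ is needed.

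Your argument can be repaired without the full $C^{1,\alpha}$ bound on $u_m$: set $w_m=1-|u_m|^2$, which solves the scalar Dirichlet problem $\Delta_g w_m=-2|du_m|^2\in L^\infty(N)$, $w_m=0$ on $\partial N$. Standard $W^{2,p}$ estimates then give $w_m\in C^{1,\alpha}(\overline N)$ uniformly in $m$, and on $\partial N$ one has $\partial_\nu w_m=-2\langle u_m,\partial_\nu u_m\rangle=-2h_m$, so $h_m\in C^\alpha(\partial N)$ uniformly. With this in hand, your direct limit-of-eigenfunctions and Fredholm argument goes through cleanly and is a legitimate alternative to the paper's appeal to \cite{GKL}.
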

\begin{proof}
Similar to Lemma \ref{closed.stab.lem}, note that if the desired conclusion failed, it would follow that the space 
 spanned by the coordinate functions $u_m^i$ of $u$ would have unbounded dimension as $m\to\infty$, and since the coordinate functions solve the Steklov problem
$$
\Delta u_m^j=0\text{ in }N;\text{ }\frac{\partial u_m^j}{\partial \nu}=\left|\frac{\partial u_m}{\partial \nu}\right|u_m^j\text{ on }\partial N,
$$
this would imply that the eigenvalues
$$
\lambda_k\left(N,[g],\left|\frac{\partial u_m}{\partial\nu}\right|ds_g\right):=\inf_{\dim(V)=k+1}\max\left\{\left.\frac{\int_N|d\phi|^2}{\int_{\partial N}|\frac{\partial u_m}{\partial \nu}|\phi^2}\right|\,\,\, \phi \in V\right\}
$$
satisfy
\begin{equation}
\label{eigen.trap}
\lim_{\ell\to\infty}\lambda_k\left(N,[g],\left|\frac{\partial u_{m_{\ell}}}{\partial\nu}\right|ds_g\right)\leq 1
\end{equation}
for each $k\in \mathbb{N}$ along a subsequence $m_{\ell}$. For simplicity of notation, we assume henceforth that this subsequence $m_{\ell}$ coincides with the full sequence $m$.

On the other hand, by \eqref{fbhm.no.conc} and Lemma \ref{fb.ereg}, we see that the maps $u_m$ satisfy a uniform gradient bound
$$\|du_m\|_{L^{\infty}(N,g)}\leq C$$
independent of $m$. Moreover, on any small half-disk $D_{r_0}(p)\subset N$ with $p\in \partial N$, identifying $D_{r_0}(p)$ conformally with $(D_1^+(0),g_0)\subset \mathbb{R}^2_+$, we see that $f_m^2:=|du_m|_{g_0}^2=\rho^2|du_m|_g^2$ satisfies
$$
\Delta_{g_0} \frac{1}{2}f_m^2=-|\mathrm{Hess}_{g_0}(u_m)|_{g_0}^2\text{ on }N
$$
and
$$
\left|\frac{\partial}{\partial\nu_{g_0}}\frac{1}{2}f_m^2\right|\leq 2f_m^4\leq C^4.
$$
Integrating against a test function $\phi\in C_c^{\infty}(D_1^+)$ then gives
\begin{equation*}
\begin{split}
&\int\phi^2|df_m|_{g_0}^2\,dv_{g_0}\leq \int\phi^2|\mathrm{Hess}(u_m)|_{g_0}^2\,dv_{g_0}
=-\int\phi^2\Delta_{g_0} \frac{1}{2}f_m^2\,dv_{g_0} \\
&=-\int 2\langle \phi d\phi, f_mdf_m\rangle_{g_0}\,dv_{g_0}
\leq \int 2f_m^2|d\phi|_{g_0}^2\,dv_{g_0}+\frac{1}{2}\phi^2|df_m|_{g_0}^2\,dv_{g_0}.
\end{split}
\end{equation*}
Fixing $\phi$ such that $|d\phi|_{g_0}\leq 3$ and $\phi \equiv 1$ on $D_{1/2}^+(0)$ gives
$$
\int_{D_{1/2}^+}|df_m|^2_{g_0}\,dv_{g_0}\leq \int \phi^2|df_m|_{g_0}^2\,dv_{g_0}\leq 4\int f_m^2|d\phi|^2\leq C\int_{D_{r_0}(p)}|du_m|_g^2dv_g.
$$
In particular, since $|du_m|_g$ differs from $|du_m|_{g_0}$ by a fixed conformal change, it follows that $|du_m|_g$ is bounded in $W^{1,2}$ norm in a neighborhood of $\partial N$, and by the harmonicity of $u_m$, this bound extends to the rest of $N$, giving
$$\||du_m|_g\|_{W^{1,2}(N,g)}\leq C.$$

By compactness of the trace embedding $W^{1,2}(N)\to L^p(\partial N)$ for all $p\in [1,\infty)$ in dimension two, it follows that there exists some $0\leq f\in L^{\infty}(\partial N)$ such that, after passing to a further subsequence,
$$
\left|\frac{\partial u_m}{\partial \nu}\right|\to f\text{ in }L^p(\partial N)
$$
for every $p\in [1,\infty)$. In particular, by boundedness of the trace embedding $W^{1,q}(N)\to L^q(\partial N)$, it follows that the measures 
$$
\mu_m:=\left|\frac{\partial u_m}{\partial \nu}\right|ds_g\in C^0(N)^*
$$
converge to $\mu:=fds_g$ in $(W^{1,q}(N))^*$ for every $q\in (1,\infty]$. In particular, since this holds for $q\in (1,2)$, we can apply \cite[Prop 4.11]{GKL} to conclude convergence of the eigenvalues
$$
\lambda_k\left(N,[g],\left|\frac{\partial u_m}{\partial\nu}\right|ds_g\right)\to \lambda_k(N,[g],fds_g)
$$
for every $k\in \mathbb{N}$, which together with \eqref{eigen.trap} implies
$$\lambda_k(N,[g],fds_g)\leq 1$$
for every $k\in \mathbb{N}$. On the other hand, since $f\in L^{\infty}(\partial N)$, it's clear that the map $W^{1,2}(N)\to L^2(\partial N, fds_g)$ is compact, and therefore $\lambda_k(N,[g],fds_g)\to +\infty$ as $k\to\infty$, giving the desired contradiction.
\end{proof}

Finally, we can argue as in Section \ref{closed.stab} to deduce that $\Sigma_1^{\Gamma}(N,[g])=2\mathcal{F}_m^{\Gamma}(N,[g])$, with $\left|\frac{\partial u_m}{\partial\nu}\right|$ giving the $\bar{\sigma}_1$-maximizing conformal factor on the boundary.

\begin{theorem}\label{stek.mm.char}
Let $u_m\colon (N,\partial N)\to (\B^{m|\Gamma|},\Sph^{m|\Gamma|-1})$ be the free boundary harmonic maps of Proposition \ref{fbhm.ex}, realizing $E(u_m)=\mathcal{F}_m^{\Gamma}(N,[g])$. For $m$ sufficiently large, metrics of the form $g_m=f g$, where $f\in C^{\infty}(N)$ is a positive, $\Gamma$-invariant extension of $\left|\frac{\partial u_m}{\partial \nu}\right|$, maximize $\bar{\sigma}_1$ among $\Gamma$-invariant metrics in $[g]$; that is,
$$\Sigma_1^{\Gamma}(N,[g])=\bar{\sigma}_1(N,g_m)=2\mathcal{F}_m^{\Gamma}(N,[g]).$$
\end{theorem}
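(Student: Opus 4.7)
The plan is to mirror the proof of Theorem \ref{lap.mm.char} in the Steklov setting, reducing the claim to showing that the first nontrivial Steklov eigenvalue of $(N, g_m)$ equals $1$ for $m$ large.

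First I would observe that each coordinate function $u_m^j$ of $u_m$ is harmonic in $N$ and satisfies the free boundary condition $\partial u_m^j/\partial\nu = |\partial u_m/\partial\nu|\, u_m^j$ on $\partial N$, so via the variational characterization of Steklov eigenvalues with boundary density $|\partial u_m/\partial\nu|$, each $u_m^j$ is a Steklov eigenfunction of $(N, g_m)$ with eigenvalue $1$. In particular, $\sigma_k(N, g_m) \leq 1$ for some $k \geq 1$. Moreover, using the free boundary condition together with harmonicity,
\[
\mathrm{Length}(\partial N, g_m) = \int_{\partial N}\left|\tfrac{\partial u_m}{\partial\nu}\right|\, ds_g = \int_{\partial N}\bigl\langle u_m, \tfrac{\partial u_m}{\partial\nu}\bigr\rangle\, ds_g = \int_N |du_m|^2\, dv_g = 2\mathcal{F}_m^\Gamma(N,[g]).
\]
Consequently, once we establish $\sigma_1(N, g_m) \geq 1$, we obtain $\bar{\sigma}_1(N, g_m) = 2\mathcal{F}_m^\Gamma(N,[g])$. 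Together with $\bar\sigma_1(N,g_m) \leq \Sigma_1^\Gamma(N,[g]) \leq 2\mathcal{F}_m^\Gamma(N,[g])$ from Lemma \ref{fb.lbd}, this yields the claimed chain of equalities.

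The crux is to prove $\sigma_1(N, g_m) \geq 1$ for $m$ sufficiently large, via the $\Gamma$-equivariant Morse index bound $\mathrm{ind}_{E,\Gamma}(u_m)\leq m$ from Proposition \ref{fbhm.ex}. Let $Y_m$ denote the span of Steklov eigenfunctions of $(N, g_m)$ with eigenvalue strictly less than $1$; the goal is to show $\dim Y_m = 1$ (i.e., $Y_m$ is spanned by constants). For each $\phi\in Y_m$, I would define the injective $\Gamma$-equivariant map
\[
T_m(\phi) := \sum_{\sigma\in \Gamma}(\phi\circ\sigma^{-1})\,e_\sigma \in W^{1,2}_\Gamma(N, A_\Gamma).
\]
By Lemma \ref{stek.stab}, $u_m$ takes values in a $\Gamma$-invariant subspace $V_K \subset A_\Gamma^m$ of dimension $K$ independent of $m$. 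A representation-theoretic counting argument (identical to the closed case) shows $V_K^\perp$ contains at least $m-K$ mutually orthogonal copies of the regular representation $A_\Gamma$. Placing $T_m(Y_m)$ into each such copy yields an $(m-K)\dim(Y_m)$-dimensional subspace of $W^{1,2}_\Gamma(N, V_K^\perp)$ consisting of variations $v$ with $\langle v, u_m\rangle \equiv 0$ pointwise on $N$, hence admissible for the free boundary harmonic map second variation.

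A direct calculation using the $\Gamma$-invariance of $|\partial u_m/\partial\nu|$ and the $\Gamma$-isometry of $N$ gives, for $v = T_m(\phi)$ placed in any one of the orthogonal $A_\Gamma$-copies,
\[
E''(u_m)(v,v) = \int_N |dv|^2 \, dv_g - \int_{\partial N}\left|\tfrac{\partial u_m}{\partial\nu}\right| |v|^2 \, ds_g = |\Gamma|\left(\int_N|d\phi|^2\, dv_g - \int_{\partial N}\left|\tfrac{\partial u_m}{\partial \nu}\right|\phi^2 \,ds_g\right),
\]
which is strictly negative for $0\neq \phi \in Y_m$ by definition of $Y_m$. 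Since different $A_\Gamma$-copies are orthogonal, the quadratic form $E''(u_m)$ is negative definite on the entire $(m-K)\dim(Y_m)$-dimensional subspace. The index bound $\mathrm{ind}_{E,\Gamma}(u_m)\leq m$ then forces $(m-K)\dim(Y_m) \leq m$, so $\dim Y_m \leq m/(m-K) < 2$ for $m > 2K$. Hence $\dim Y_m = 1$, giving $\sigma_1(N, g_m) \geq 1$ as required.

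The main obstacle mirrors the closed case: ensuring the stabilization dimension $K$ in Lemma \ref{stek.stab} is uniformly bounded (already established) and carrying out the representation-theoretic count giving $m-K$ orthogonal copies of $A_\Gamma$ in $V_K^\perp$. A minor additional subtlety compared to the Laplace setting is that the natural admissibility condition for free boundary harmonic map variations is only $\langle v, u_m\rangle \equiv 0$ on $\partial N$, not on all of $N$; however, by deliberately placing the variations in $V_K^\perp$, we obtain the stronger pointwise orthogonality on $N$, which sidesteps this issue and allows the boundary second variation formula to be applied cleanly.
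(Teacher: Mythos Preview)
Your proposal is correct and follows essentially the same route as the paper's proof: compute the boundary length, reduce to $\sigma_1(N,g_m)=1$, invoke the stabilization Lemma \ref{stek.stab}, embed copies of $T_m(Y_m)$ into $V_K^\perp$, and use the equivariant index bound to force $\dim Y_m=1$. Your treatment is in fact slightly more careful than the paper's on one point: you explicitly note that $V_K$ is $\Gamma$-invariant (which follows from the equivariance of $u_m$) and that a representation-theoretic count yields $m-K$ orthogonal copies of $A_\Gamma$ in $V_K^\perp$, whereas the paper writes ``$u_m(N)\subset \B^K\times\{0\}$'' and ``$u_m(x)\perp\{0\}\times A_\Gamma^{m-K}$'' with some notational imprecision about how these subspaces sit inside $A_\Gamma^m$.
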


\begin{proof}
Since $\Delta u_m=0$ in $N$ and $\frac{\partial u_m}{\partial \nu}=\left|\frac{\partial u_m}{\partial \nu}\right|u$ on $\partial N$, for any metric $g_m=f g$ where $f|_{\partial N}=\left|\frac{\partial u_m}{\partial\nu}\right|$, we have
\begin{equation*}
\begin{split}
\int_{\partial N}ds_{g_m}=\int_{\partial N}\left\langle \frac{\partial u_m}{\partial \nu},u_m\right\rangle ds_g
=\int_N|du_m|_g^2\,dv_g
=2\mathcal{F}_m^{\Gamma}(N,[g]),
\end{split}
\end{equation*}
so that
$$\bar{\sigma}_1(N,g_m)\geq 2\sigma_1(N,g_m)\mathcal{F}_m^{\Gamma}(N,[g]),$$
and by Lemma \ref{fb.lbd}, it follows that
$$\bar{\sigma}_1(N,g_m)\geq \sigma_1(N,g_m) \Sigma_1^{\Gamma}(N,[g]).$$
Thus, all that remains is to show that
$$\sigma_1(N,g_m)=1$$
for $m$ sufficiently large. To this end, let $Y_m\subset C^{\infty}(\partial N)$ be a space on which the quadratic form
$$
Q_m(\phi,\phi):=\int_N|d\hat{\phi}|_{g}^2\,dv_{g}-\int_{\partial N}\left|\frac{\partial u_m}{\partial\nu}\right|\phi^2ds_{g_m}
$$
is negative-definite, where $\hat{\phi}$ denotes the harmonic extension of $\phi$ to $N$, and consider the injective linear map
$$
T_m\colon Y_m\to W^{1,2}(N,A_{\Gamma})
$$
given by
$$
T_m(\phi):=\sum_{\sigma\in \Gamma}\left(\hat{\phi} \circ \sigma^{-1}\right)e_{\sigma}.
$$
We will show now that $\dim(Y_m)=1$, which then implies that $\sigma_1(N,g_m)=1$, completing the proof.

By Lemma \ref{stek.stab}, there exists some $K(N,[g])\in \mathbb{N}$ such that, without loss of generality,
$$
u_m(N)\subset \B^K\times\{0\}\subset \B^{m|\Gamma|}
$$
for all $m\in \mathbb{N}$, and therefore we have
$$
u_m(x) \perp \{0\}\times A_{\Gamma}^{m-K}
$$
pointwise. For each $1\leq i\leq m-K$, denote by 
$$
T_m^i\colon Y_m\to W^{1,2}(N,A_{\Gamma}^m)
$$
the maps given by composing $T_m\colon Y_m\to W^{1,2}(N,A_{\Gamma})$ with the inclusion of $A_{\Gamma}\to A_{\Gamma}^m$ in the $(K+i)$th summand. Since $v=T_m^i(\phi)$ is $\Gamma$-equivariant and pointwise orthogonal to $u$, we can test it against the second variation $E''(u_m)$ to obtain
\begin{eqnarray*}
E''(u_m)(v,v)&=&\int_N|dv|_g^2\,dv_g-\int_{\partial N}\left|\frac{\partial u_m}{\partial\nu}\right||v|^2ds_g\\
&=&\int_N|d(T_m(\phi))|_g^2\,dv_g-\int_{\partial N}\left|\frac{\partial u_m}{\partial\nu}\right||T_m(\phi)|^2ds_g\\
&=&|\Gamma|\cdot\left(\int_N|d\hat{\phi}|_g^2\,dv_g-\int_{\partial N}\left|\frac{\partial u_m}{\partial\nu}\right|\phi^2ds_g\right)<0
\end{eqnarray*}
whenever $\phi\neq 0$, by definition of $Y_m$. Thus, the second variation $E''(u_m)$ is negative definite on the space
$$T_m^1(W_m)\oplus\cdots \oplus T_m^{m-K}(Y_m)\subset W^{1,2}_{\Gamma}(N,A_{\Gamma}^m),$$
which has dimension $(m-K)\dim(Y_m)$, so that
$$(m-K)\dim(Y_m)\leq \ind_{E,\Gamma}(u_m)\leq m.$$
For $m>2K+1$, this clearly forces $\dim(Y_m)=1$, and hence $\sigma_1(N,g_m)=1$, completing the proof.
\end{proof}

In particular, summarizing, we have the following existence result.

\begin{theorem}\label{stek.conf.ex}
For any finite subgroup $\Gamma\leq \Isom (N,g)$ such that either $\Sigma_1^{\Gamma}(N,[g])>2\pi$ or $\Gamma$ has no fixed points on $\bd N$, there exists $m\in \mathbb{N}$ and a $\Gamma$-equivariant free boundary harmonic map
$$u\colon (N,\partial N)\to (\mathbb{B}^{m|\Gamma|},\mathbb{S}^{m|\Gamma|-1})$$
such that any $\Gamma$-invariant conformal metric of the form $\tilde{g}=\rho^2g$ where $\rho|_{\partial N}=\left|\frac{\partial u}{\partial \nu}\right|$ realizes
$$\bar{\sigma}_1(N,\tilde{g})=\Sigma_1^{\Gamma}(N,[g])$$
and the coordinates of $u$ are first Steklov eigenfunctions for $\tilde{g}$.
\end{theorem}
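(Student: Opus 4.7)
The plan is to follow the Ginzburg--Landau min-max scheme developed throughout Section \ref{SSprelim}, assembling the intermediate results into a single argument. I will work in the equivariant Sobolev spaces $X_\Gamma = W^{1,2}_\Gamma(N, A_\Gamma^m)$, to which the boundary Ginzburg--Landau functional
\[
F_\epsilon(u) = \int_N \tfrac{1}{2}|du|_g^2\,dv_g + \int_{\partial N}\tfrac{(1-|u|^2)^2}{4\epsilon}\,ds_g
\]
restricts as a $C^2$ functional satisfying the Palais--Smale condition with Fredholm Hessian at critical points (Proposition \ref{fb.ps}). For each $m \geq 3$ and each $\epsilon > 0$, Lemma \ref{fb.gl.ex} produces a $\Gamma$-equivariant critical point $u_{m,\epsilon} \in X_\Gamma$ realizing the min-max value $\mathcal{F}_{m,\epsilon}^\Gamma$, with equivariant Morse index bounded by $m$.

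Next I would sandwich the min-max energies between the two natural quantities. The Hersch/Weinstock-type balancing of Lemma \ref{fb.lbd} shows $2\mathcal{F}_m^\Gamma(N,[g]) \geq \Sigma_1^\Gamma(N,[g])$, while the construction of families via equivariant conformal dilations, mollified by the heat kernel, gives a finite upper bound $\mathcal{F}_m^\Gamma(N,[g]) < \infty$ for $m$ large. Combining the index bound with the variational test fields $v_{\phi,w}$ built from the regular representation yields Lemma \ref{mm.stek.lower}: the conformal metric $g_u = \tfrac{1-|u|^2}{\epsilon}g$ satisfies $\sigma_1(N, g_u) \geq 1 - C/m$.

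The main obstacle is the no-concentration step (Lemma \ref{no.dir.conc}), which is where the dichotomy in the hypothesis enters. If boundary energy could concentrate at a point $p \in \partial N$, then the variational characterization of $\sigma_1(N, g_u)$ together with a logarithmic cutoff would force the complementary integral over $\partial N \setminus D_{\sqrt r}(p)$ to be small; a Weinstock/Hersch balancing on the half-disk then produces a conformal test map of energy $\leq \pi$, yielding
\[
\left(1 - \tfrac{C}{m}\right) 2\mathcal{F}_{m,\epsilon}^\Gamma(N,g) \leq 2\pi + \tfrac{C'}{\sqrt{|\log r|}},
\]
which contradicts the hypothesis $\Sigma_1^\Gamma(N,[g]) > 2\pi$ in the limit. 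In the alternative case when $\Gamma$ has no fixed points on $\partial N$, equivariance forces simultaneous concentration at an entire orbit of boundary points, which rules out both of the two variational alternatives. This non-concentration, combined with the Millot--Sire boundary $\epsilon$-regularity (Proposition \ref{fbgl.ereg}), furnishes uniform $C^1$ bounds allowing one to pass to the $\epsilon \to 0$ limit and obtain $\Gamma$-equivariant free boundary harmonic maps $u_m\colon (N,\partial N) \to (\mathbb{B}^{m|\Gamma|}, \Sph^{m|\Gamma|-1})$ with equivariant Morse index $\leq m$ and the conformally invariant non-concentration estimate \eqref{fbhm.no.conc} (Proposition \ref{fbhm.ex}).

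To finish, I would invoke the $n$-independent boundary $\epsilon$-regularity of Lemma \ref{fb.ereg} to upgrade \eqref{fbhm.no.conc} to a uniform $L^\infty$ bound on $|du_m|_g$ independent of $m$, then apply the Bochner identity to obtain a uniform $W^{1,2}$ bound on $|du_m|_g$. Passing the measures $|\partial u_m/\partial\nu|\,ds_g$ to an $L^p$ limit and using continuity of Steklov eigenvalues in the weight (as in \cite{GKL}), one shows via contradiction that the coordinates of $u_m$ span a space of uniformly bounded dimension $K(N,[g])$ (Lemma \ref{stek.stab}). Once the target ball stabilizes, the equivariant embedding $T_m\colon Y_m \to W^{1,2}(N, A_\Gamma)$ of the negative eigenspace $Y_m$ of the associated Schr\"odinger/Steklov operator, copied into the $m-K$ complementary summands of $A_\Gamma^m$, forces $(m-K)\dim(Y_m) \leq \ind_{E,\Gamma}(u_m) \leq m$. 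For $m > 2K+1$ this yields $\dim(Y_m) = 1$, which translates to $\sigma_1(N, g_m) = 1$ for $g_m = |\partial u_m/\partial\nu|\, g$ (extended $\Gamma$-invariantly to the interior as any positive $\rho$ with the prescribed boundary trace). Combining this with the lower bound $2\mathcal{F}_m^\Gamma \geq \Sigma_1^\Gamma$ and the identity $\mathrm{Length}(\partial N, g_m) = 2\mathcal{F}_m^\Gamma$ (coming from the free boundary harmonic equation) gives $\bar\sigma_1(N, \tilde g) = \Sigma_1^\Gamma(N,[g])$, completing the proof.
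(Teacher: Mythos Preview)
Your proposal is correct and follows essentially the same route as the paper, which presents Theorem \ref{stek.conf.ex} as a direct summary of the chain Proposition \ref{fb.ps} $\to$ Lemma \ref{fb.gl.ex} $\to$ Lemma \ref{fb.lbd} $\to$ Lemma \ref{mm.stek.lower} $\to$ Lemma \ref{no.dir.conc} $\to$ Proposition \ref{fbhm.ex} $\to$ Lemma \ref{stek.stab} $\to$ Theorem \ref{stek.mm.char}. One minor slip: the conformal metric in Lemma \ref{mm.stek.lower} is $g_u = \tfrac{(1-|u|^2)^2}{\epsilon^2}g$, not $\tfrac{1-|u|^2}{\epsilon}g$, though this makes no difference for the Steklov problem since the induced boundary length element is $\tfrac{1-|u|^2}{\epsilon}\,ds_g$ either way.
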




\section{Basic reflection surfaces}
\label{S:BRS} 

\subsection{Isometries and functions}
Let $G$ be a group of isometries on $M$.  Then $G$ has a natural right action on the space of all real-valued functions on $M$, given by 
\begin{align*}
(u, \rho) \mapsto u \circ \rho = \rho^* u. 
\end{align*}
A function $u$ is \emph{$G$-invariant} if $\rho^* u = u$ for each $\rho \in G$.  If $X$ is a space of functions on $M$, denote by $X^G \subset X$ the subspace of $G$-invariant functions.

Let $\rho$ be an involutive isometry.   A function $u$ is \emph{even} (\emph{odd}) under $\rho$ if $\rho^* u = u$ ($\rho^*u = - u$), and a set of functions is \emph{$\rho$-even} (\emph{$\rho$-odd}) if each element is even (odd) under $\rho$.  Finally, note that if $X$ is a space of functions and $\rho^* X \subset X$, then $X$ admits a direct sum decomposition $X = \Acal_\rho(X) \oplus \Scal_\rho(X)$ into even and odd parts
\begin{align*}
 \Acal_\rho(X) := \{u \in X: \rho^* u =-u\} 
\quad
\text{and}
\quad
\Scal_\rho(X) := \{ u \in X : \rho^*u  = u\}.
\end{align*}

\begin{lemma}
\label{Lasymconj}
If $X$ is a space of functions on $M$ with  $\omega^* X \subset X$ for each isometry $\omega$ of $M$, if $\rho$ and $\tau$ are isometries, and if $\rho$ is involutive, then
\begin{enumerate}[label=\emph{(\roman*)}]
\item $\tau^{-1} \rho \tau$ is an involutive isometry, 
\item $\tau^* \Acal_{\rho}( X)  = \Acal_{\tau^{-1} \rho \tau} (X)$, and
\item $\tau^* \Scal_{\rho}( X)  = \Scal_{\tau^{-1} \rho \tau} (X)$.
\end{enumerate}
\end{lemma}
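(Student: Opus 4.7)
The plan is to verify all three items directly from the definitions, using the identity $(\phi\circ\psi)^*u = \psi^*\phi^* u$ and the fact that $\omega^* X\subset X$ for each isometry $\omega$ guarantees that the relevant pullbacks land in $X$.

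For (i), $\tau^{-1}\rho\tau$ is an isometry as a composition of isometries, and involutivity is the one-line computation
\[
(\tau^{-1}\rho\tau)(\tau^{-1}\rho\tau) = \tau^{-1}\rho^2\tau = \tau^{-1}\tau = \Id_M,
\]
using $\rho^2=\Id_M$.

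For (ii), I first check the inclusion $\tau^*\Acal_\rho(X)\subset \Acal_{\tau^{-1}\rho\tau}(X)$. Pick $u\in\Acal_\rho(X)$, so $\rho^*u=-u$. Since $(\phi\circ\psi)^*=\psi^*\phi^*$, we obtain the operator identity $(\tau^{-1}\rho\tau)^* = \tau^*\rho^*(\tau^*)^{-1}$, so
\[
(\tau^{-1}\rho\tau)^*(\tau^*u) = \tau^*\rho^*(\tau^*)^{-1}\tau^*u = \tau^*(\rho^*u) = \tau^*(-u) = -\tau^*u.
\]
Moreover $\tau^*u\in X$ because $\tau^* X\subset X$ by hypothesis, so $\tau^*u\in\Acal_{\tau^{-1}\rho\tau}(X)$. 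For the reverse inclusion, apply the same argument with $\tau$ replaced by $\tau^{-1}$ and $\rho$ replaced by $\tau^{-1}\rho\tau$, observing that $\tau(\tau^{-1}\rho\tau)\tau^{-1}=\rho$; this gives $(\tau^{-1})^*\Acal_{\tau^{-1}\rho\tau}(X)\subset \Acal_\rho(X)$, and pushing forward by $\tau^*$ yields $\Acal_{\tau^{-1}\rho\tau}(X)\subset \tau^*\Acal_\rho(X)$.

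Part (iii) is word-for-word identical, with the sign flipped: the same operator identity gives $(\tau^{-1}\rho\tau)^*(\tau^*u)=\tau^*(\rho^*u)=\tau^*u$ whenever $\rho^*u=u$. There is no real obstacle; the only thing to be careful about is the composition-order convention, but once the identity $(\phi\circ\psi)^*=\psi^*\phi^*$ is written down, everything is automatic. The hypothesis $\omega^*X\subset X$ for all isometries $\omega$ is used precisely to ensure $\tau^*u$ and $(\tau^{-1})^*u$ remain in $X$, so that the even/odd subspaces on both sides are genuinely defined as subspaces of $X$.
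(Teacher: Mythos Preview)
Your proof is correct and is exactly the straightforward verification the paper has in mind; the paper's own proof consists of the single word ``Straightforward.'' Your care with the composition-order identity $(\phi\circ\psi)^*=\psi^*\phi^*$ and the use of the invariance hypothesis $\omega^*X\subset X$ is precisely what is needed.
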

\begin{proof}
Straightforward.
\end{proof}

\subsection{Definitions and basic properties}

\begin{definition}
\label{dref}
If $M$ is a compact, connected Riemannian surface and $\tau$ is a reflection on $M$, the pair $(M, \tau)$ is called a \emph{reflection surface}.  
For a reflection surface $(M,\tau)$, denote by $\pi : M \rightarrow M / \langle \tau \rangle$ the canonical projection. 
\end{definition}

Reflection surfaces $(M, \tau_M)$ and $(N, \tau_{N})$ are \emph{equivariantly homeomorphic} if there exists a homeomorphism $f: M \rightarrow N$ with $f \circ \tau_M = \tau_N \circ f$. 

The following lemma goes back to Klein \cite{Klein}; a proof in modern notation follows from work of Dugger (see Corollary 3.2 and Proposition 2.2 in \cite{Dugger}).
\begin{lemma}
\label{Lsigconno}
If $(M, \tau)$ is a reflection surface and  $\pi(M)$ is orientable, 
then $M$ is orientable if and only if $M \setminus M^\tau$ is not connected. 
\end{lemma}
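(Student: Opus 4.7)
The plan is to prove the two directions separately, using Lemmas \ref{Ldis} and \ref{Lref} to control the local geometry of $\tau$, and then arguing on the level of orientations and the branched double cover $\pi \colon M \to \pi(M)$. The forward direction will be essentially a gluing argument; the contrapositive of the backward direction will be the delicate step, and will proceed by analyzing the $\tau$-action on orientations of $M \setminus M^\tau$.

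For the direction $M \setminus M^\tau \text{ not connected} \Rightarrow M \text{ orientable}$, I would invoke Lemma \ref{Ldis} to obtain that $M \setminus M^\tau$ has exactly two components $M_+, M_-$ with $\tau(M_\pm) = M_\mp$, and that $M^\tau$ is a disjoint union of codimension-one submanifolds. The map $\pi|_{\overline{M_+}} \colon \overline{M_+} \to \pi(M)$ is a continuous bijection between compact Hausdorff spaces (injectivity on $M^\tau$ is automatic since $\tau$ fixes $M^\tau$ pointwise), hence a homeomorphism. Orientability of $\pi(M)$ thus transfers to $\overline{M_+}$. I would then orient $\overline{M_-}$ so that the diffeomorphism $\tau \colon \overline{M_-} \to \overline{M_+}$ is orientation-reversing, and verify compatibility along $M^\tau$: by Lemma \ref{Lref}(ii) the differential $d\tau_p$ at a point $p$ of a codim-one component has determinant $-1$, so pulling back the orientation twist of $\tau$ shows the induced boundary orientations from $\overline{M_+}$ and $\overline{M_-}$ agree, giving a global orientation on $M$.

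For the reverse direction I would argue the contrapositive: assume $M \setminus M^\tau$ is connected and $M$ is orientable, and derive that $\pi(M)$ must be non-orientable. First, since $M$ is connected and orientable, any isometry of $M$ is either globally orientation-preserving or orientation-reversing; by Lemma \ref{Lref}(iii) there is a codim-one component of $M^\tau$, and by Lemma \ref{Lref}(ii) the map $d\tau_p$ is a hyperplane reflection at points of that component, so $\tau$ is globally orientation-reversing. This rules out isolated fixed points: at such a point $q$ the differential $d\tau_q$ would satisfy $d\tau_q = -\mathrm{Id}_{T_qM}$ with determinant $+1$, contradicting orientation-reversal. Hence $M^\tau$ is a disjoint union of embedded circles, and $\pi \colon M \setminus M^\tau \to \pi(M) \setminus \pi(M^\tau)$ is a free, orientation-reversing double cover with connected total space. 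Any orientation on the base would lift to a $\tau$-invariant orientation on the connected orientable surface $M \setminus M^\tau$, which is impossible since $\tau$ swaps the only two orientations. Therefore $\pi(M) \setminus \pi(M^\tau)$ is non-orientable. Since $\pi(M^\tau)$ is contained in $\partial \pi(M)$ (the codim-one fixed curves project to boundary circles) and deletion of a boundary subset does not affect orientability, $\pi(M)$ is also non-orientable, a contradiction.

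The main obstacle I anticipate is the last step of the backward argument — namely, cleanly justifying that the $\tau$-equivariant orientation obstruction on the cover $M \setminus M^\tau$ really corresponds to the (non-)orientability of the quotient $\pi(M)$, rather than merely of $\pi(M) \setminus \pi(M^\tau)$. This forces one to check carefully that no isolated fixed points are present (so that $\pi(M^\tau) \subset \partial \pi(M)$) and to recall that removing a boundary submanifold from a topological surface does not change its orientability class. Once that is in place, the rest of the argument reduces to standard facts about deck transformations and the orientation double cover.
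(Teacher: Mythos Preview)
Your argument is correct. The paper does not actually prove this lemma; it simply attributes the result to Klein and cites Dugger's Corollary~3.2 and Proposition~2.2 for a modern treatment. Your approach is therefore genuinely different in that it is self-contained, relying only on Lemmas~\ref{Lref} and~\ref{Ldis} already stated in the paper together with standard facts about orientations and covering maps.

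Both directions are sound. In the forward direction, the homeomorphism $\pi|_{\overline{M_+}} \colon \overline{M_+} \to \pi(M)$ is indeed a bijection (Lemma~\ref{Ldis}(iii) guarantees no isolated fixed points, so $\pi(M)$ is a genuine manifold with boundary), and the gluing of orientations via the orientation-reversing $\tau$ works exactly as you describe. In the backward direction, your key observation---that $\tau$ orientation-reversing on an orientable $M$ forbids isolated fixed points, hence $\pi(M^\tau) \subset \partial \pi(M)$---is the crux, and the descent argument (a $\tau$-invariant orientation on the connected cover would contradict orientation-reversal) is clean. Your anticipated obstacle is handled correctly: once isolated fixed points are excluded, $\pi(M^\tau)$ lies in $\partial \pi(M)$, and orientability of a manifold with boundary is determined by its interior, so non-orientability of $\pi(M) \setminus \pi(M^\tau)$ forces non-orientability of $\pi(M)$.

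What your approach buys is a transparent, elementary proof that avoids the classification machinery of Dugger; what the paper's citation buys is brevity and a connection to the broader classification of involutions on surfaces used elsewhere (e.g., in Theorem~\ref{Tclass}).
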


\begin{definition}
\label{dbasref}
A reflection surface $(M, \tau)$ is called \emph{basic} if
\begin{enumerate}[label={(\roman*)}]
\item $\pi(M)$ is an orientable surface of genus zero, and
\item $\pi(\partial M \cup M^\tau_\partial)$ is connected.
\end{enumerate}
\end{definition}
\noindent Note that (ii) holds vacuously if $\partial M = \varnothing$.

\begin{example}
For $M$ a flat rhombic torus and $\tau$ the reflection about one of the diagonals, the reflection surface $(M, \tau)$  is \emph{not} basic, because $M \setminus M^\tau$ is connected, so $\pi(M)$ is nonorientable by Lemma \ref{Lsigconno}. 
\end{example}

\subsection{Classification}

\begin{definition}
\label{dtop}
The \emph{topological type} of a compact, connected surface $M$ is $(\gamma, k, \epsilon)$, where $\gamma$ is the maximum number of pairwise disjoint, embedded loops in $M$ which can be removed without disconnecting $M$, $k$ is the number of boundary components, and $\epsilon = +$ if $M$ is orientable and $\epsilon=-$ otherwise. 
\end{definition}
\noindent A surface with topological type $(\gamma, k, +)$  (type $(\gamma, k, -)$), is homeomorphic to the connect sum of $\gamma$ tori ($\gamma$ projective planes) with $k$ points removed. 

Although  compact surfaces are classified up to homeomorphism by their topological type, more information is needed to classify reflection surfaces up to equivariant homeomorphism.  Such a classification can be found in \cite{Scherrer, Dugger, Bujalance} and depends on additional invariants related to the fixed-point sets of the involution.

\begin{definition}
Let $(M, \tau)$ be a reflection surface.  An embedded circle in $M^\tau$ is called an \emph{oval}.  A union $\Ccal$ of connected components of $M^\tau_\partial$ is called a \emph{chain} if for each component $B$ of $\partial M$, either $\Ccal \cap B = \varnothing$ or $\Ccal \cap B$ consists of two distinct points.
\end{definition}

\noindent By \cite{Kobayashi}, the fixed-point set $M^\tau$ of a reflection surface $(M, \tau)$ is the disjoint union of its isolated fixed-points, its ovals, and its chains. 

An embedded circle in a surface is called \emph{twisted} (\emph{untwisted}) if every sufficiently small neighborhood is nonorientable (orientable); thus each oval in $M^\tau$ is twisted or untwisted.  A chain $\Ccal$ is called \emph{twisted} (\emph{untwisted}) as follows: the closed surface $\hat{M}$ obtained from $M$ by capping off each boundary component with a disk can be made into a reflection surface in an obvious way, and $\Ccal$ is twisted (untwisted) if the  circle $\hat{\Ccal} \subset \hat{M}^\tau$ containing $\Ccal$ is twisted (untwisted).

\begin{definition}
\label{dspecies}
The \emph{species} associated to a reflection surface $(M, \tau)$ is
\begin{align*}
[\gamma, k, \epsilon : F, C_-, C_+, T_-, T_+]
\end{align*} 
where $(\gamma, k, \epsilon)$ is its topological type and $F, C_-, C_+, T_-, T_+$ are the number of isolated fixed-points, twisted ovals, untwisted ovals, twisted chains, and untwisted chains, respectively. 
Let $C=C_- + C_+$ be the number of ovals and $T= T_-+T_+$ be the number of chains.  Finally,  let $W$ be the number of fixed-point arcs (components of $M^\tau_\partial$).
\end{definition}

\begin{theorem}[Classification of basic reflection surfaces]
\label{Tclass}

The species
\begin{align*}
[\gamma, k, \epsilon : F, C_-, C_+, T_-, T_+]
\end{align*}
of a basic reflection surface $(M, \tau)$ satisfies the following:
\begin{enumerate}[label=\emph{(\roman*)}]
\item $C+W\geq 1$ and $T \in \{0, 1\}$.
\item  $T = 0$ only if $k \in \{0, 2\}$, and $k = 0$ only if $T =0$.
\item If $\epsilon = +$, then $F=C_-=T_-=0$ and $C_+ +T_+ = \gamma+1$. 
\item If $\epsilon = -$, then $F + 2(C+ T) = \gamma+ 2$.  
\end{enumerate}
Moreover, each species consistent with (i)-(iv) is realized by a basic reflection surface which is unique up to equivariant homeomorphism.  
\end{theorem}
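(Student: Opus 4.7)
\medskip

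\noindent\textbf{Proof proposal.}
My plan is to reduce the classification to an analysis of the quotient surface. Let $Q := M / \langle \tau \rangle$ with projection $\pi: M \to Q$. Since $(M,\tau)$ is basic, $Q$ is a compact, orientable, connected genus zero surface, and its boundary decomposes into two kinds of circles:
\begin{enumerate}[label=(\alph*)]
\item \emph{oval-circles}, each the image of a single oval of $M^\tau$; and
\item \emph{mixed circles}, each consisting of alternating arcs from $\pi(\partial M)$ and $\pi(M^\tau_\partial)$.
\end{enumerate}
Isolated fixed points of $\tau$ project to marked interior points of $Q$. In the interior of $Q$, local orientability data at each oval-circle records whether the corresponding oval of $M$ is twisted or untwisted; similarly for mixed circles and chains. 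By the definition of basic, $\pi(\partial M \cup M^\tau_\partial)$ is connected, so $Q$ has at most one mixed boundary circle. Since a component of type (b) is exactly what Definition~\ref{dspecies} calls a chain, this immediately gives $T \in \{0,1\}$. Further, if $T=0$ and $\partial M \ne \varnothing$, connectivity of $\pi(\partial M)$ in $Q$ forces $\tau$ either to swap two boundary circles (so $k = 2$) or to act as a fixed-point-free rotation on a single boundary circle (impossible for a nontrivial orientation-preserving involution of $S^1$ that must extend to an involution of a collar), yielding (ii). The fact that $\tau$ is a reflection produces at least one 1-dimensional fixed component on $M$, whose image is either a boundary oval-circle of $Q$ or (part of) the mixed circle, giving $C + W \ge 1$.

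Next, I will derive (iii) and (iv) by an Euler characteristic computation on the branched double cover $\pi$. For the topological type $(\gamma,k,\epsilon)$ write $\chi(M) = 2-2\gamma$ in the orientable case and $2-\gamma$ in the nonorientable case. The Riemann--Hurwitz-type identity for the quotient by a reflection gives $\chi(M) = 2\chi(Q) - F'$, where $F'$ counts the isolated branch points (equivalently, the isolated fixed points of $\tau$), while each boundary circle of $Q$ of types (a) or (b) lifts to a circle or doubled arrangement in $M$ in a way determined by its type. Writing $b(Q) = C + T$ for the total number of boundary circles of $Q$ and noting $\chi(Q) = 2-b(Q)$, this yields $2\gamma$ or $\gamma$ in terms of $F, C, T$, which rearranges to (iv) in the nonorientable case and to $C_+ + T_+ = \gamma+1$ in the orientable case. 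The vanishing $F = C_- = T_- = 0$ in the orientable case follows from Lemma~\ref{Ldis} together with Lemma~\ref{Lsigconno}: orientability of $M$ forces $\tau$ to be separating, which rules out both isolated fixed points and twisted 1-dimensional components.

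For realization, given any admissible species I will build a \emph{decorated disk model} of $Q$: start from a closed 2-disk $D$, remove $C$ disjoint open subdisks (whose boundaries become oval-circles) and, if $T = 1$, arrange alternating free and fixed arcs along $\partial D$ corresponding to a single mixed boundary circle; mark $F$ interior points as isolated branch locus; and record for each 1-dimensional component whether it is twisted or untwisted by attaching local M\"obius or annular neighborhoods. The surface $M$ is then obtained as the canonical branched double of $Q$ along the decorated locus. Checking that the resulting $(M,\tau)$ has the prescribed species is a direct computation. The restrictions (ii) on $(k, T)$ determine how many free arcs along $\partial D$ to use, which is where the case $k=0$ forces $T=0$ and the case $T=0$ forces $k\in\{0,2\}$ enter.

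For uniqueness, any basic reflection surface $(M,\tau)$ is equivariantly homeomorphic to the double of its quotient $Q$ with the decoration described above. Two planar surfaces with the same boundary combinatorics, the same number of marked interior points, and the same twist decoration are homeomorphic by a map respecting the decoration; such a homeomorphism lifts uniquely to an equivariant homeomorphism of the doubles. The main technical obstacle I anticipate is bookkeeping the twisted-versus-untwisted invariants $C_\pm, T_\pm$: one must carefully verify that the local decoration on $Q$ correctly determines, and is determined by, the global orientability type of $M$ near each 1-dimensional component of $M^\tau$. This is essentially the point where the classification of Klein surfaces enters, and I expect to invoke the results of Klein and Dugger cited after Lemma~\ref{Lsigconno} to conclude.
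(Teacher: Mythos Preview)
Your strategy—reading the invariants directly from the quotient $Q=M/\langle\tau\rangle$ and then rebuilding $M$ as a branched double of a decorated planar surface—is a reasonable alternative to the paper's route, which cites Dugger's classification for the closed case and handles $\partial M\ne\varnothing$ by capping off to reduce to the closed case. But your execution has concrete gaps once $\partial M\ne\varnothing$.

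First, your enumeration of $\partial Q$ omits a third type: \emph{free circles} lying entirely in $\pi(\partial M)$, with no arcs from $M^\tau_\partial$. These occur precisely when $T=0$ and $k>0$ (e.g.\ when $\tau$ swaps two boundary components of $M$), so your formula $b(Q)=C+T$ is wrong there. Second, your Riemann--Hurwitz identity $\chi(M)=2\chi(Q)-F$ fails whenever $M^\tau_\partial\ne\varnothing$: each of the $W$ fixed arcs also has a single preimage under $\pi$, so the correct count is $\chi(M)=2\chi(Q)-F-W$. Test both points on the disk with $\tau$ the reflection across a diameter ($\chi(M)=1$, $\chi(Q)=1$, $F=0$, $W=1$, one mixed circle). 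With the wrong $b(Q)$ and the missing $-W$ term, your derivation of (iii)--(iv) breaks down for every surface with boundary. Third, your justification of (ii) is not valid as written: a rotation of $S^1$ by $\pi$ certainly extends to an involution of the collar $S^1\times[0,\varepsilon)$. In the orientable case the correct argument is that $\tau$ is separating (Lemmas~\ref{Ldis} and~\ref{Lsigconno}), so a $\tau$-invariant boundary circle lying in $M\setminus M^\tau$ would be contained in one component yet mapped to the other; the nonorientable case requires a different argument.
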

\begin{proof}
Item (i) is immediate from the definitions and Lemma \ref{Lref}(iii).  For (ii), if $T = 0$, then $M^\tau_\partial = \varnothing$, and Definition \ref{dbasref} asserts that $\pi(\partial M)$ is connected.  Since $\tau$ is an involution, and $\tau$ has no fixed-points on $\partial M$, it follows that either $\partial M = \varnothing$ or $\partial M$ has two components which are exchanged by $\tau$, so $k=2$.  This proves (ii).

For (iii)-(iv), first suppose $\partial M = \varnothing$.  In this case, (iii) follows from  \cite[Theorem 1.7]{Dugger} and the definitions, so suppose $\epsilon=-$.  Note that $C> 0$ by (i) above, and that $\pi(M)$ is orientable by Definition \ref{dbasref}.  It follows that $(M, \tau )$ is as in either the third or fourth bullet point of Dugger's classification (see \cite[p. 926-927]{Dugger}) and the fact from Definition \ref{dbasref} that $\pi(M)$ has genus zero furthermore rules out the latter case.

Further, because $\pi(M)$ has genus zero, it follows from the third bullet point in Dugger's classification that $F+2C = \gamma+2$, completing the proof of (iv) in the closed case.  The corresponding existence part also follows from Dugger's classification. 

Now suppose $\partial M \neq \varnothing$.  Let $\hat{M}$ be the closed surface obtained from $M$ by capping off each boundary component with a disk.  $\hat{M}$ can be made into a reflection surface in an obvious way, and it is easy to see $(\hat{M}, \tau)$ is also basic.  The remaining conclusions are now easy to check by comparing with the results already proved in the closed case.  
\end{proof}

\begin{cor}
\label{CEuler}
If $(M,\tau)$ is a basic reflection surface, then
\begin{align*}
\chi(M) = 4 - 2C - 2T - F - k,
\end{align*}
where $\chi(M)$ is the Euler number and $C, T, F, k$ are as in Definition \ref{dspecies}.
\end{cor}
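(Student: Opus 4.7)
The plan is to derive the Euler number formula directly from the classification data in Theorem~\ref{Tclass}, splitting into the two cases $\epsilon=+$ and $\epsilon=-$. In each case I will express $\gamma$ in terms of $F,C,T$ via Theorem~\ref{Tclass}(iii)--(iv) and substitute into the standard formula $\chi(M)=2-2\gamma-k$ (respectively $\chi(M)=2-\gamma-k$).

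More precisely, suppose first that $\epsilon=+$, so that $M$ is orientable with $\chi(M)=2-2\gamma-k$. Theorem~\ref{Tclass}(iii) gives $F=0$, $C_-=0$, $T_-=0$, and $C_++T_+=\gamma+1$, hence $C+T=\gamma+1$. Substituting $\gamma=C+T-1$ yields
\[
\chi(M)=2-2(C+T-1)-k=4-2C-2T-k=4-2C-2T-F-k,
\]
since $F=0$. Next suppose $\epsilon=-$, so that $\chi(M)=2-\gamma-k$. Theorem~\ref{Tclass}(iv) gives $F+2(C+T)=\gamma+2$, i.e.\ $\gamma=F+2C+2T-2$. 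Substituting,
\[
\chi(M)=2-(F+2C+2T-2)-k=4-2C-2T-F-k.
\]
This exhausts both possibilities, completing the proof.

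There is essentially no obstacle here: the result is a direct bookkeeping consequence of the classification already established. The only thing to verify is that the two cases are treated consistently (the relation $F=0$ in the orientable case ensures that the single formula $4-2C-2T-F-k$ specializes correctly), which is immediate. If one wanted an independent check, one could alternatively compute $\chi$ via the quotient: the branched double cover $M\to M/\langle\tau\rangle$ over a genus-zero base with ramification along the ovals/chains and at the isolated fixed points gives, by a Riemann--Hurwitz type count for orbifold covers, the same expression; but the argument via Theorem~\ref{Tclass} above is the cleaner route.
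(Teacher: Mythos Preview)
Your proof is correct and follows essentially the same approach as the paper: both split into the orientable and nonorientable cases, invoke the standard formulas $\chi(M)=2-2\gamma-k$ and $\chi(M)=2-\gamma-k$, and substitute the relations from Theorem~\ref{Tclass}(iii)--(iv). The paper's version is just a one-line reference to Theorem~\ref{Tclass}, whereas you spell out the arithmetic explicitly.
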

\begin{proof}
This follows from Theorem \ref{Tclass}, using that $\chi(M) = 2- 2\gamma - k$ in the orientable case, and that $\chi(M) = 2- \gamma - k$ in the nonorientable case. 
\end{proof}

\begin{remark}
Note that Theorem \ref{Tclass} implies the following:
\begin{itemize}
\item Up to equivariant homeomorphism, there is a unique basic reflection surface with topological type $(\gamma, 0, +)$.
\item A surface with topological type $(\gamma, k, -)$ and $\gamma \geq 2$ may be realized by at least two basic reflection surfaces with distinct species. 
\end{itemize}
\end{remark}

\begin{remark}
\label{Rbas}
Basic reflection surfaces realizing each species from Theorem \ref{Tclass} may be constructed by the following surgery procedure:
\begin{enumerate}
\item For $(\gamma, k, \epsilon) = (\gamma, 0, +)$, double an orientable genus $0$ surface with $\gamma+1$ boundary components along its boundary.  See Figure \ref{Fdoub}.
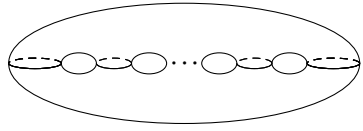
\begin{figure}[h]
\centering
\def\ra{4}
\begin{tikzpicture}[scale=.7,transform shape]
\draw [draw=black] (0, 0) ellipse ({\ra-\ra/6} and {\ra/3.5});
\foreach \j in {1, 2}
{
\foreach \i in {-1, 1}
{
\draw [draw=black] ({\i*(\j*\ra/3)-\i*\ra/6}, 0 ) ellipse ({\ra/12 and \ra/20}); 
\draw [draw=black, densely dashed] ({(5*\ra/6}, 0) arc (0:180:{\ra/8} and {\ra/40});
\draw [draw=black] ({(5*\ra/6}, 0) arc (0:-180:{\ra/8} and {\ra/40});
\draw [draw=black, densely dashed] ({\i*(\ra/2)-\i*\ra/6+\ra/12}, 0) arc (0:180:{\ra/12} and {\ra/40});
\draw [draw=black] ({\i*(\ra/2)-\i*\ra/6+\ra/12}, 0) arc (0:-180:{\ra/12} and {\ra/40});
\draw [draw=black, densely dashed] ({(-5*\ra/6+\ra/4}, 0) arc (0:180:{\ra/8} and {\ra/40});
\draw [draw=black] ({(-5*\ra/6+\ra/4}, 0) arc (0:-180:{\ra/8} and {\ra/40});
}
}
\node at (\ra/90, 0) {$\boldsymbol{\cdots}$}; 
\end{tikzpicture}  
        \caption{A basic reflection surface with topological type $(\gamma, 0, +)$.}
        \label{Fdoub}
\end{figure}
\item For $(\gamma, k, \epsilon) = (\gamma, 0, -)$, start with a genus $n: = (\gamma-C_- -2C_+)/2$ surface with an involution $\tau$ fixing $2n+2$ isolated points. Replace neighborhoods of $C_-$ of these points with crosscaps in an obvious way to extend $\tau$.  Finally, remove $C_+$ pairs of $\tau$-invariant disks, and identify the boundaries of these disks via $\tau$.  See Figure \ref{Fhandlebody}.
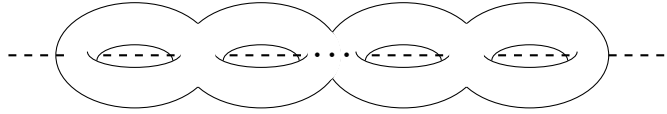
\begin{figure}[h]
\begin{tikzpicture}[scale=.7]
                \begin{scope}[xscale=.6]
         \draw[thick, dashed] (-4, 0)--(-2, 0);
         \draw[thick, dashed] (-1, 0)--(2, 0);
         \draw[thick, dashed] (3, 0)--(6, 0);
         \draw[thick, dashed] (7.5, 0)--(10, 0);
         \draw[thick, dashed] (11.5, 0)--(14, 0);
         \draw[thick, dashed] (15, 0)--(17, 0);
          \draw (0,0) ellipse (2.5cm and 1cm);
          \draw (-1.5,.07) arc (180:360:1.5cm and .3cm);
          \draw (1.2,-.1) arc (0:180:1.2cm and .3cm);
          \draw (4,0) ellipse (2.5cm and 1cm);
          \draw (2.5,.07) arc (180:360:1.5cm and .3cm);
          \draw (5.2,-.1) arc (0:180:1.2cm and .3cm);
          \fill[color=white](2,.6) to [bend right=50] (1.45,0) to [bend right = 50] (2,-.6) to [bend right=50] (2.55,.0) to [bend right=50] (2,.6);
          \begin{scope}[xshift=8.5cm]
          \draw (0,0) ellipse (2.5cm and 1cm);
          \draw (-1.5,.07) arc (180:360:1.5cm and .3cm);
          \draw (1.2,-.1) arc (0:180:1.2cm and .3cm);
          \draw (4,0) ellipse (2.5cm and 1cm);
          \draw (2.5,.07) arc (180:360:1.5cm and .3cm);
          \draw (5.2,-.1) arc (0:180:1.2cm and .3cm);
          \fill[color=white](2,.6) to [bend right=50] (1.45,0) to [bend right = 50] (2,-.6) to [bend right=50] (2.55,.0) to [bend right=50] (2,.6);
          \fill[color=white](-2.5,0) ellipse (.53cm and .5cm);
          \end{scope}
          \end{scope}
          \node at (3.8, 0) {$\boldsymbol{\cdots}$}; 
          \end{tikzpicture}
          \caption{An involution $\tau$ on a genus $n$ surface fixing $2n+2$ points.  As depicted, $\tau$ is the half-turn about the dashed line.}
                   \label{Fhandlebody}
          \end{figure}

\item For $(\gamma, k, \epsilon)=(\gamma, k, \pm)$ and $T, k > 0$, remove $k$ $\tau$-symmetric disks meeting an oval of a reflection surface $(M, \tau)$ as in as in (1), if $\epsilon = +$, or as in (2), if $\epsilon = -$.
\item For $(\gamma, k, \epsilon)=(\gamma, 2, \pm)$ and $T= 0$, remove $2$ $\tau$-symmetric disks disjoint from $M^\tau$ for a reflection surface $(M, \tau)$ as in (1), if $\epsilon = +$, or as in (2), if $\epsilon = -$.
\end{enumerate}
\end{remark}

\subsection{Multiplicity bounds for the first eigenvalue}

Let $(M, \tau)$ be a reflection surface, and recall the notation introduced in Section 2.2.  Note that the involutive isometry $\tau$ induces a linear involutive isometry $\tau^* : \Ecal \rightarrow \Ecal$ given by $\tau^* u = u \circ \tau$, hence the first eigenspace $\Ecal$ admits an orthogonal direct sum decomposition $\Ecal = \Acal_\tau(\Ecal) \oplus \Scal_\tau(\Ecal)$ into odd and even parts.  In order to bound the dimension of $\Ecal$, it therefore suffices to consider $\Acal_\tau(\Ecal)$ and $\Scal_\tau(\Ecal)$ separately.

\begin{notation}
\label{Nevenodd}
When the role of $\tau$ is clear, we adopt a simplified notation and write $\Ecal^-$ for $\Acal_\tau(\Ecal)$ and $\Ecal^+$ for $\Scal_\tau( \Ecal)$. 
\end{notation} 

\begin{prop}
\label{Lasymcl}
\label{Poddnon1}
\label{Poddnon2}
If $(M, \tau)$ is a reflection surface, then
\begin{enumerate}[label=\emph{(\roman*)}]
\item $M^\tau \subset \Ncal_u$ for each $\tau$-odd eigenfunction $u$. 
\end{enumerate}
Moreover, if $M \setminus M^\tau$ is not connected, then
\begin{enumerate}[label=\emph{(\roman*)}]
\item[\emph{(ii)}] $M^\tau = \Ncal_u$ for each $\tau$-odd eigenfunction $u$ with two nodal domains; 
\item[\emph{(iii)}] $\dim \Ecal^- \leq 1$.
\end{enumerate}
Finally, if $(M, \tau)$ is basic and $M \setminus M^\tau$ has genus zero, then $\dim \Ecal^- \leq 2$. 
\end{prop}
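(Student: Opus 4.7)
The plan is as follows. If $M \setminus M^\tau$ is disconnected, then part (iii) provides the stronger bound $\dim \Ecal^- \leq 1 \leq 2$, so I may assume $\Omega := M \setminus M^\tau$ is connected. Combined with the assumption that $\pi(M)$ has genus zero (Definition \ref{dbasref}(i)), Lemma \ref{Lsigconno} forces $M$ to be non-orientable; by hypothesis $\Omega$ is then a connected, orientable (genus-zero) surface carrying a free isometric involution $\tau$.

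The argument will proceed in two main steps. First, I identify $\Ecal^-$ with a Dirichlet eigenspace on $\Omega$: by part (i), every $u \in \Ecal^-$ vanishes on $M^\tau = \partial \Omega$, so restriction injects $\Ecal^-$ into the space of Dirichlet $\lambda_1(M)$-eigenfunctions on $\Omega$ satisfying the antisymmetry $u \circ \tau = -u$. Descending through the free action, these correspond to sections of the nontrivial flat real line bundle $L \to \Sigma' := \Omega / \langle \tau \rangle$ determined by the double cover $\Omega \to \Sigma'$, solving a twisted Dirichlet eigenvalue problem on $\Sigma'$. The Euler characteristic identity $\chi(\Omega) = 2 \chi(\Sigma')$, together with the genus-zero hypothesis on $\Omega$, forces $\Sigma'$ to be a genus-zero (non-orientable) surface with boundary.

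Second, I will bound the multiplicity by $2$ via a nodal-domain argument adapted to the $\tau$-equivariant setting. By Courant's theorem (Lemma \ref{Lcourant}) together with part (i), any nonzero $u \in \Ecal^-$ has exactly two nodal domains $U_+(u)$ and $U_-(u) = \tau(U_+(u))$, and $u|_{U_+(u)}$ is necessarily the first Dirichlet eigenfunction on $U_+(u)$, hence uniquely determined up to scalar. Thus the map $u \mapsto (U_+(u), \pm)$ realizes $\mathbb{P}(\Ecal^-)$ as an injection into the moduli of $\tau$-invariant separating 1-complexes $\Ncal_u \cap \Omega$ in the planar surface $\Omega$. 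It remains to verify that this moduli is at most $1$-dimensional, yielding $\dim \Ecal^- \leq 1 + 1 = 2$. This is consistent with the tight example $(M, \tau) = (\RP^2,\, [x\!:\!y\!:\!z] \mapsto [x\!:\!-y\!:\!z])$, where $\Ecal^-$ is spanned by the two degree-$2$ spherical harmonics $xy$ and $yz$ and each nodal configuration arises as the union of the fixed projective line with a second projective line through the isolated fixed point $[0\!:\!1\!:\!0]$, a family parameterized by $\RP^1$.

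The main obstacle will be quantifying the claim that the parameter space of $\tau$-invariant separating nodal configurations in the planar $\Omega$ is at most $1$-dimensional. An equivalent way to package this is via a contradiction: if $u_1, u_2, u_3 \in \Ecal^-$ are linearly independent, then for any two points $p, q \in \Omega$ a two-parameter linear constraint produces a nonzero $u \in \mathrm{span}(u_1, u_2, u_3)$ with $u(p) = u(q) = \tau^*u(p) = \tau^*u(q) = 0$; choosing $p, q$ in general position inside a single nodal domain of $u_1$, the planar topology of $\Omega$ and the $\tau$-equivariance should force $\Ncal_u$ to disconnect $\Omega$ into more than two $\tau$-related regions, violating Courant's bound. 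Making this geometric step rigorous — in particular, controlling how the nodal set of a $\tau$-antisymmetric perturbation bifurcates near interior vanishing points — is where the genus-zero hypothesis on $\Omega$ enters essentially.
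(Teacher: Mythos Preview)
Your reduction to the case where $M$ is nonorientable with $\Omega = M\setminus M^\tau$ connected and orientable is correct, and the $\RP^2$ example is helpful. But the argument after that has a genuine gap, and the paper's route is different and more direct.

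The difficulty is in your second step. The map $u \mapsto \{U_+(u),U_-(u)\}$ into some ``moduli of $\tau$-invariant separating $1$-complexes'' is not made precise, and more importantly your contradiction argument does not go through as written. Forcing $u(p)=u(q)=0$ at two \emph{generic} interior points of $\Omega$ only guarantees that the nodal set of $u$ passes \emph{smoothly} through $p,q,\tau p,\tau q$; a single $\tau$-invariant embedded arc in the planar $\Omega$ can do this while still separating $\Omega$ into exactly two pieces swapped by $\tau$. (Compare the sphere with the three coordinate functions: imposing two point constraints still leaves a great-circle nodal set with two nodal domains.) So no Courant violation is produced. The ``bifurcation near interior vanishing points'' you allude to would require forcing $du$ to vanish, not just $u$, and you have not arranged that.

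The paper avoids this by working at \emph{special} points of $M^\tau$ rather than generic points of $\Omega$, using the classification (Theorem~\ref{Tclass}) for basic nonorientable $(M,\tau)$. If $F\geq 1$ there is an isolated fixed point $p\in M^\tau$; every $u\in\Ecal^-$ already satisfies $u(p)=0$, so one studies the linear map $T\colon \Ecal^-\to T_p^*M$, $Tu=du_p$. If $T$ had nontrivial kernel, the corresponding $u$ would have a nodal \emph{crossing} at $p$, and the genus-zero hypothesis then forces $>2$ nodal domains---the standard connectivity argument you were reaching for, but now with an actual singular point to anchor it. If $F=0$, Theorem~\ref{Tclass}(iv) supplies a twisted oval $O\subset M^\tau$; picking $p,q\in O$ and using $T u = (du_p,du_q)$ (whose image lies in a $2$-dimensional subspace since $u|_O\equiv 0$) gives the same conclusion after compactifying the end of $\Omega$ at $O$ to a single point. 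The point is that the paper forces vanishing of $du$ at points where vanishing of $u$ is automatic, which is exactly what creates the extra nodal domains; your proposal imposes only zeroth-order conditions at points where nothing is known in advance.
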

\begin{proof}
Item (i) is immediate.  For (ii), observe that $M \setminus M^\tau$ has exactly two components by Lemma \ref{Ldis}(ii); however, if $M \setminus \Ncal_u$ has two components, it then follows from (i) that $M^\tau = \Ncal_u$. 

For the dimension bound (iii), fix $p \in M \setminus M^\tau$, consider the linear map 
\begin{align*}
T: \Ecal^- \rightarrow \R, 
\quad
Tu = u(p),
\end{align*}
and note by (ii) that $T$ has trivial kernel, so $\dim \Ecal^- \leq \dim \R = 1$.

For the last assertion, suppose $M \setminus M^\tau$ has genus zero.  By Definition \ref{dbasref}, Lemma \ref{Lsigconno}, and part (iii) above, it suffices to consider the case where $M$ is nonorientable, and $M \setminus M^\tau$ is connected.  The rest of the proof is split into two cases, depending on the number $F$ of isolated fixed-points of $\tau$. 

\emph{Case 1: $F \geq 1$.} Fix an isolated point  $p \in M^\tau$, consider the linear map
\begin{align*}
T: \Ecal^- \rightarrow T^*_p M, 
\quad
Tu = du_p,
\end{align*}
and note for any $u \in \Ecal^-$  that $u$ vanishes at $p$.  If $T$ were not injective, there would exist nonzero $u\in \Ecal^-$ for which at least two nodal lines pass through $p$; because $M \setminus M^\tau$ has genus zero, a standard connectivity argument now gives a contradiction. 

\emph{Case 2: $F =0$.} 
By Theorem \ref{Tclass}(iv), it follows that $M$ has a twisted oval $O$.  Fix distinct points $p, q \in O$ and consider the linear map 
\begin{align*}
T: \Ecal^- \rightarrow T^*_p M \times T^*_q M, 
\quad
Tu = (du_p, du_q).
\end{align*}
For any $u \in \Ecal^-$, $u$ vanishes on $O$, so the image $\mathrm{Im}\,  T$ lies in a $2$-dimensional subspace of $T^*_p M \times T^*_q M$.  To complete the proof, it suffices to prove $T$ is injective, for then $\dim \Ecal^- \leq \dim \mathrm{Im}\,  T \leq 2$. 

Because $O$ is twisted, its complement in any sufficiently small neighborhood is homeomorphic to a punctured disk.   Consider the space $M^\cut$ obtained from $M \setminus M^\tau$ by compactifying the end corresponding to $O$ in $M \setminus M^\tau$ by adding a single point $o$, and let $\iota: M \setminus M^\tau \rightarrow M^\cut$ be the inclusion map.  Because $M \setminus M^\tau$ is an orientable, genus zero surface, so is $M^\cut$. 

 If $T$ were not injective, there would exist a nonzero $u \in \Ecal^-$ containing nodal lines transverse to $M^\tau$ at $p$ and $q$, respectively.  Then $\Ncal^\cut_u : = \{ o\} \cup \iota(\Ncal_u \setminus M^\tau)$ would contain a ``cross" consisting of at least four arcs meeting at $o$.  Denoting by $U_1, U_2$ the nodal domains of $u$, it is easy to see the connected components of $M^\cut \setminus \Ncal^\cut_u$ are $\iota(U_1)$ and $\iota(U_2)$.  Because $M^\cut$ has genus zero, applying same connectivity argument in Case 1 results in a contradiction. 
\end{proof}

We next bound the dimension of $\Ecal^+$.  First, we introduce some notation. 
\begin{definition}
\label{dbullet}
Let $\pi_{\bullet} : \pi(M) \rightarrow \pi_{\bullet}(\pi(M))$ be the quotient obtained from $\pi(M)$ by collapsing each component of $\pi(M^\tau)$ to a point.  For any subset $S \subset M$, denote by $S^\bullet$ its image under $\pi_{\bullet} \circ \pi$.
\end{definition}

\begin{lemma}
\label{Lrhoquo}
If $(M, \tau)$ is a reflection surface and 
 $u$ is a $\tau$-even eigenfunction with nodal domains $U_1$ and $U_2$, then 
\begin{enumerate}[label=\emph{(\roman*)}]
\item the  components of $\pi(M) \setminus \pi(\Ncal_u)$ are $\pi(U_1)$ and $\pi(U_2)$;
\item the  components of $M^\bullet \setminus \Ncal_u^{\bullet}$ are $U^\bullet_1 \setminus \Ncal_u^{\bullet}$ and $U^\bullet_2 \setminus \Ncal_u^\bullet$.
\end{enumerate}
\end{lemma}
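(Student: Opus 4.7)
The first step in the plan is to observe that $\tau$-evenness of $u$ forces each nodal domain to be $\tau$-invariant. Since $\tau$ is a homeomorphism of $M$, it permutes the nodal domains $U_1$ and $U_2$; moreover, since $u \circ \tau = u$, the involution $\tau$ must preserve the sign of $u$. If $\tau$ swapped $U_1$ and $U_2$, then for $x \in U_1$ one would have $u(x) > 0$ while $u(\tau(x)) < 0$ (as $\tau(x) \in U_2$), contradicting $u \circ \tau = u$. Hence $\tau(U_i) = U_i$ for $i = 1,2$, and consequently $\Ncal_u$ is $\tau$-invariant as the complement of the $\tau$-invariant set $U_1 \cup U_2$.

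With this in hand, (i) follows quickly. The $\tau$-invariance of the $U_i$ yields disjointness $\pi(U_1) \cap \pi(U_2) = \varnothing$: a common image would lift to points $x_1 \in U_1$, $x_2 \in U_2$ with $\tau(x_1) = x_2$, contradicting $\tau(U_1) = U_1$. Each $\pi(U_i)$ is open in $\pi(M)$ since $\pi$ is an open map, and connected as the continuous image of the connected set $U_i$. The $\tau$-invariance of $\Ncal_u$ gives $\pi(M) \setminus \pi(\Ncal_u) = \pi(M \setminus \Ncal_u) = \pi(U_1) \cup \pi(U_2)$, so $\pi(U_1)$ and $\pi(U_2)$ are precisely the connected components of $\pi(M) \setminus \pi(\Ncal_u)$.

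For (ii), the key structural observation is that each connected component $C$ of $\pi(M^\tau)$ lies entirely in $\pi(U_1)$, entirely in $\pi(U_2)$, or meets $\pi(\Ncal_u)$: this follows from (i), since $\pi(U_1) \sqcup \pi(U_2) = \pi(M) \setminus \pi(\Ncal_u)$ is a disjoint union of open sets, and a connected set avoiding $\pi(\Ncal_u)$ must lie in one of them. From this, disjointness of $U_1^\bullet \setminus \Ncal_u^\bullet$ and $U_2^\bullet \setminus \Ncal_u^\bullet$ follows: a point in the intersection would either be a regular point in $\pi(U_1) \cap \pi(U_2) = \varnothing$, or the collapsed image of a component $C$ meeting both $\pi(U_1)$ and $\pi(U_2)$, in which case $C$ would meet $\pi(\Ncal_u)$ and the collapsed point would belong to $\Ncal_u^\bullet$. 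The union claim follows by a parallel argument: any regular point of $M^\bullet \setminus \Ncal_u^\bullet$ lies in some $\pi(U_i) \setminus \pi(M^\tau)$, while any collapsed point whose component $C$ avoids $\pi(\Ncal_u)$ must have $C \subset \pi(U_i)$ for some $i$.

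The main obstacle is verifying that each piece $U_i^\bullet \setminus \Ncal_u^\bullet$ is itself connected, so that the decomposition above yields the actual topological connected components of $M^\bullet \setminus \Ncal_u^\bullet$. The plan is to identify $U_i^\bullet \setminus \Ncal_u^\bullet$ with the image under $\pi_\bullet \circ \pi$ of the set $V_i := U_i \setminus K_i$, where $K_i$ is the closed $1$-dimensional subset formed by the union of all components $\tilde C \subset M^\tau$ that meet both $U_i$ and $\Ncal_u$. Since $U_i$ is an open connected subset of the surface $M$ and $U_i^\bullet$ is the continuous image of $U_i$ (hence connected), one then argues that removing the finitely many collapsed points of $U_i^\bullet \cap \Ncal_u^\bullet$ does not disconnect $U_i^\bullet$, either by a local perturbation argument around each such point, or by using that the complement of finitely many points in the $2$-dimensional quotient space $U_i^\bullet$ remains connected.
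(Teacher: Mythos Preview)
Your proof of (i) is correct and matches the paper's argument.

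For (ii), your setup is fine: you correctly identify the decomposition into $U_1^\bullet\setminus\Ncal_u^\bullet$ and $U_2^\bullet\setminus\Ncal_u^\bullet$, correctly reduce the problem to showing each piece is connected, and correctly identify $U_i^\bullet\setminus\Ncal_u^\bullet$ with $(\pi_\bullet\circ\pi)(V_i)$.  The gap is in your final step.  Neither of your proposed justifications works, because $U_i^\bullet$ need not be locally Euclidean at the collapsed points you are removing.  Concretely: a codimension-one component $S_\alpha\subset M^\tau$ projects to a boundary circle $C=\pi(S_\alpha)$ of $\pi(M)$, and if $C$ meets $\pi(\Ncal_u)$ then $C\cap\pi(U_i)$ may consist of several disjoint arcs.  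After collapsing $C$ to the point $q_C$, the set $U_i^\bullet$ near $q_C$ is a union of several sectors meeting only at $q_C$; removing $q_C$ disconnects these sectors \emph{locally}, so ``two-dimensional minus a point'' fails, and a path passing from one sector to another through $q_C$ cannot be perturbed off $q_C$ while staying in $U_i^\bullet$.

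The paper closes this gap by proving that $\pi(U_i\setminus S)$ is connected, where $S$ is the union of components of $M^\tau$ meeting $\Ncal_u$; the key point is a tubular-neighbourhood argument showing that for each component $S_\alpha$, the set $\pi(V_\alpha\setminus S_\alpha)$ is connected (because $\pi$ identifies the two local sides of a codimension-one $S_\alpha$), and then bootstrapping to all of $U_i$.  An equivalent route you could take: observe that each component $C$ of $\pi(M^\tau)$ lies in $\partial\pi(M)$, so $\pi(U_i)$ is a connected $2$-manifold-with-boundary and $\pi(U_i)\cap C$ is part of its boundary; removing boundary from a connected surface-with-boundary leaves the interior, which is connected.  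Either way, the substantive work is showing $\pi(U_i)\setminus S$ is connected---your final paragraph skips exactly this.
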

\begin{proof}
We have $\pi(M) \setminus \pi(\Ncal_u)= \cup_{i=1}^2\pi(U_i)$.  Since $u$ is even, $\tau(U_i) = U_i$ and so $\pi(U_1) \cap \pi(U_2) = \varnothing$.  On the other hand, since each $U_i$ is a connected open set, so is its image under the continuous open map $\pi$.  Item (i) follows from these observations. 

Next, we have $
M^\bullet \setminus \Ncal_u^{\bullet} = \cup_{i=1}^2 ( U^\bullet_i \setminus \Ncal_u^{\bullet}). 
$
The two sets in this union are clearly open and disjoint, so it remains to show that each one is connected.  

Consider a connected component $S_\alpha$ of $M^\tau$, and fix a tubular neighborhood $V_\alpha$ of $S_\alpha$ in $U_i$.  We claim that $\pi(V_\alpha \setminus S_\alpha)$ is connected.  If $S_\alpha$ has codimension at least $2$ in $M$, then $V_\alpha \setminus S_\alpha$ is path-connected, so $\pi(V_\alpha \setminus S_\alpha)$ is connected.  If, on the other hand, $S_\alpha$ has codimension $1$ in $M$, then using Lemma \ref{Lref}(ii) with the fact that $u$ is even, it follows that $\pi$ identifies points on opposite sides of the two (locally defined) sides of $S_\alpha$ in $M$, so $\pi(V_\alpha \setminus S_\alpha)$ is connected. 

Next, we claim that $\pi(U_i \setminus S_\alpha)$ is connected.  Let $W_\alpha \subset U_i$ be the largest open set containing $V_\alpha$ such that $\pi(W_\alpha\setminus S_\alpha)$ is connected, and suppose that $\pi(U_i \setminus S_\alpha) = \pi(W_\alpha \setminus S_\alpha) \cup V$ for some open set $V$ satisfying $\pi(W_\alpha \setminus S_\alpha) \cap V = \varnothing$.  Then $\pi(U_i ) = \pi(W_\alpha) \cup V$, and $\pi(W_\alpha)$ and $V$ are disjoint open sets.  Since $\pi(U_i)$ is connected, it follows that $W_\alpha = U_i$ and $V = \varnothing$, so $\pi(U_i \setminus S_\alpha)$ is connected.  

By arguing inductively, it follows that $\pi(U_i \setminus S)$ is connected, where $S \subset M^\tau$ is the union of the components of $M^\tau$ which meet $\Ncal_u$ nontrivially.  Consequently,  $( U_i \setminus S)^\bullet$ is connected.
Since $U^\bullet_i \setminus \Ncal^\bullet_u = ( U_i \setminus S)^\bullet$, the proof is complete. 
\end{proof}

\begin{prop}
\label{Lrhoquo2}
\label{Lrhoquo3}
If $(M, \tau)$ is a basic reflection surface, then
\begin{enumerate}[label=\emph{(\roman*)}]
\item If $u$ is a $\tau$-even eigenfunction with two nodal domains, 
	\begin{enumerate}[label=\emph{(\alph*)}]
	\item the intersection number of $\Ncal_u$ with each oval is either $0$ or $2$;
	\item  $\pi(\Ncal_u \cap ( \partial M \cup M^\tau_\partial))$ consists of two points if
		$\partial M \neq \varnothing$; 
\item  $du_p \neq 0$ for each $p \in \Ncal_u \setminus M^\tau$; 
\end{enumerate}
\item  $\dim \Ecal^+ \leq 3$ if $\partial M = \varnothing$, and $\dim \Ecal^+ \leq 2$ if $\partial M \neq \varnothing$.
\end{enumerate}
\end{prop}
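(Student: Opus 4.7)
The whole proof proceeds by transferring the analysis of $u \in \Ecal^+$ to the quotient $M^\bullet$ defined in Definition~\ref{dbullet}, where $\bar{u}:= u \circ (\pi_\bullet\circ\pi)^{-1}$ makes sense and $\Ncal^\bullet_u$ has, by Lemma~\ref{Lrhoquo}(ii) and Lemma~\ref{Lcourant}, exactly two complementary regions. Because $(M,\tau)$ is basic, $\pi(M)$ has genus zero, and so does $M^\bullet$ (possibly with boundary from $\pi(\partial M)$ and with singular points at the collapsed components of $\pi(M^\tau)$). The key idea is to regard $\Ncal^\bullet_u$ as a planar graph embedded in $M^\bullet$, with the faces being the two nodal domains of $\bar u$, and then exploit Euler's formula $V-E+F = \chi(M^\bullet)$ together with the handshake inequality $2E \geq \sum_v \deg(v)$.

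For (i)(c), suppose for contradiction that $p \in \Ncal_u \setminus M^\tau$ has $du_p=0$. Then $u$ vanishes to order $k \geq 2$ at $p$, so near $\bar p := (\pi_\bullet\circ\pi)(p)$ the set $\Ncal^\bullet_u$ consists of $k$ smooth arcs crossing transversally, giving $\bar p$ degree $2k \geq 4$. Using $F = 2$ and the fact that every vertex of $\Ncal^\bullet_u$ has even degree $\geq 2$, Euler plus handshake force $F \geq V + \chi(M^\bullet) \geq 3$ whenever a vertex of degree $\geq 4$ exists, a contradiction. Once (i)(c) is established, $\Ncal^\bullet_u$ away from the collapsed points is a disjoint union of smooth arcs and simple closed curves, so its singular vertices must all lie among the collapsed images in $M^\bullet$ of components of $M^\tau \cup \partial M$.

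For (i)(a), each oval $O$ collapses to a point $o_O$ which becomes a vertex of $\Ncal^\bullet_u$ of degree $2\,|\Ncal_u \cap O|$, each transverse intersection $\pi(\Ncal_u) \cap \pi(O)$ contributing two arcs into $o_O$ after collapse. The same Euler--handshake count now bounds the \emph{total} excess degree across all vertices of $\Ncal^\bullet_u$, pinning each oval's contribution to be either $0$ or $2$; the fact that $\Ncal_u \cdot O$ is even is also independently forced by unique continuation and the $\tau$-even symmetry (since $du_p \in T_p^*O$ at any $p\in \Ncal_u \cap O$, forcing transverse crossings). For (i)(b), the defining hypothesis that $\pi(\partial M \cup M^\tau_\partial)$ is connected means that the union of the boundary of $M^\bullet$ and the collapsed arc-vertices forms a single connected subset of $M^\bullet$; since $\Ncal^\bullet_u$ is a single simple separating $1$-cycle through this subset, it meets it in exactly two projected points.

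For (ii), with (i) in hand, I will build an injective linear evaluation map $T\colon \Ecal^+ \to \R^d$ with $d=3$ (closed) or $d=2$ (boundary). In the closed case, pick three test points $p_1,p_2,p_3 \in M$ whose images $\bar p_i \in M^\bullet$ are chosen in ``general position'' with respect to Jordan curves (e.g., not all three lying on any common embedded circle of $M^\bullet$ with the correct separation properties), and set $T(u) := (u(p_1),u(p_2),u(p_3))$; if $T(u)=0$ for some nonzero $u \in \Ecal^+$, then the simple separating curve $\Ncal^\bullet_u$ from (i) would have to pass through all three $\bar p_i$ while also bounding the two nodal domains of $\bar u$, which is ruled out by the choice. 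In the boundary case, the connectedness conclusion of (i)(b) forces $\Ncal^\bullet_u$ to meet the boundary-locus of $M^\bullet$ in exactly two points, so evaluation at just two carefully chosen points on $\pi(\partial M \cup M^\tau_\partial)$ suffices for injectivity.

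\textbf{Main obstacle.} The structural parts (i)(a)(b)(c) amount to Euler-characteristic bookkeeping once the planar-graph picture on $M^\bullet$ is set up, with the only subtlety being a careful treatment of the non-manifold singular points at collapsed ovals. The genuinely delicate step is (ii): making the choice of evaluation points canonical enough to produce a clean injectivity proof, rather than a generic-position argument, and handling uniformly the various sub-cases of species in Theorem~\ref{Tclass} (with or without isolated fixed points, ovals, and chains).
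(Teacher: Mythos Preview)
Your Euler--handshake treatment of part (i) is morally equivalent to the paper's argument, but the paper does it more directly: it shows $\Ncal^\bullet_u$ is a single embedded circle in $M^\bullet \cong \Sph^2$ (or a single arc in $M^\bullet \cong \B^2$) by finding such a circle $S \subset \Ncal^\bullet_u$, invoking Jordan to get two complementary regions of $S$, and concluding $S = \Ncal^\bullet_u$ since $\Ncal^\bullet_u$ already has exactly two complementary regions by Lemma~\ref{Lrhoquo}(ii). Once $\Ncal^\bullet_u$ is a simple curve, every point on it has valence two, which immediately gives (i)(a) and (i)(c). Your handshake count reaches the same valence-two conclusion, but note a factor-of-two slip: each point of $\Ncal_u \cap O$ contributes \emph{one} arc-end to $o_O$, not two, since $\tau$ identifies the two halves of the nodal arc on either side of $O$. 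With the correct degree $|\Ncal_u \cap O|$, the Euler argument gives $|\Ncal_u \cap O| \in \{0,2\}$ as desired; with your stated degree $2|\Ncal_u \cap O|$ it would erroneously force the intersection to be zero.

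The real gap is in (ii). Your proposal to evaluate at three points $p_1,p_2,p_3$ and argue that no embedded circle in $\Sph^2$ passes through all three cannot work: any three points in $\Sph^2$ lie on some embedded circle, so there is no ``general position'' choice ruling this out. Concretely, if $\dim \Ecal^+ \geq 4$ then three linear conditions always leave a nonzero $u$, and nothing in (i) prevents that $u$'s nodal circle from passing through your three images. The paper instead takes a \emph{single} point $p \in M \setminus M^\tau$ and uses the map $T(u) = (u(p), du_p) \in \R \times T_p^*M \cong \R^3$. If $T(u)=0$ then $p \in \Ncal_u \setminus M^\tau$ with $du_p = 0$, contradicting (i)(c); hence $T$ is injective and $\dim\Ecal^+ \leq 3$. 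In the boundary case one takes $p \in \partial M \setminus M^\tau$, and the Steklov relation $\partial_\eta u(p) = \sigma_1 u(p)$ cuts the image of $T$ down to a two-dimensional subspace, giving $\dim\Ecal^+ \leq 2$. The point is that (i)(c) is precisely the tool that makes the first-jet map injective; your three-point map discards this derivative information and cannot be made injective by any choice of points.
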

\begin{proof}
\emph{Case 1:} $M$ is a closed surface.
For $u$ is as in (i), note first that $\pi_{\bullet} \circ \pi|_{M \setminus M^\tau}$ is a local homeomorphism.  Therefore, since $\Ncal_u$ is a union of piecewise $C^1$ arcs, so is $\Ncal^\bullet_u$.

For each oval $O \subset M^\tau$,  $u$ changes sign across each nodal arc meeting $O$, so there are an even number of such arcs, hence an even number of arcs emanate from $O^\bullet$ in $\Ncal^\bullet_u$. 

\emph{Claim:} $\Ncal^\bullet_u$ is an embedded circle in $M^\bullet$.  To prove the claim, note by the preceding discussion, we can find an embedded, piecewise $C^1$ circle $S \subset \Ncal^\bullet_u$.   Because $(M, \tau)$ is basic, $M^\bullet$ is homeomorphic to a $2$-sphere, so  $M^\bullet \setminus S$ is the disjoint union of two domains.  On the other hand, $M^\bullet \setminus \Ncal_u^{\bullet}$ has two components by Lemma \ref{Lrhoquo}(ii), so $\Ncal^\bullet_u = S$.  This proves the claim, and implies (i).

For the dimension bound (ii), fix $p \in M \setminus M^\tau$ and consider the linear map \begin{align}
\label{EevenT}
T: \Ecal^+ \rightarrow \R\times T^*_p M, 
\quad
T u = (u(p), d u_p). 
\end{align}
By (i), $T$ is injective, so $\dim \Ecal^+ \leq \dim ( \R \times T^*_p M) = 3$.  This completes the proof in this case. 

\emph{Case 2:} $M$ has nonempty boundary.  Because $(M, \tau)$ is basic, Definition \ref{dbasref} implies $M^\bullet$ is homeomorphic to a $2$-disk.  

For $u$ as in (i),  recall that $u$ is $L^2(\partial M)$-orthogonal to the constant functions, that is, that $\int_{\partial M} u = 0$.  By using this in combination with the facts that $\partial M^\bullet$ is a circle and that $u$ is even, it follows that $\Ncal_u^{\bullet} \cap \partial M^\bullet \neq \varnothing$.  Moreover, by using that $M^\bullet$ is a disk and arguing as in Case 1, it follows that an even number of arcs in $\Ncal_u^{\bullet}$ meet $\partial M^\bullet$. 

By the preceding, we can find an embedded arc $S \subset \Ncal^\bullet_u$ joining two distinct points of $\partial M^\bullet$.  Because $M^\bullet$ is a disk,  $M^\bullet \setminus S$ is the disjoint union of two domains.  On the other hand, $M^\bullet \setminus \Ncal^\bullet_u$ has two components by Lemma \ref{Lrhoquo}(ii), so $\Ncal^\bullet_u = S$.  This implies (i).

For the dimension bound (ii), fix $p \in \partial M \setminus M^\tau$ and define $T$ as in \eqref{EevenT}. 
By combining (i) with the fact that $\pi_{\bullet} \circ \pi|_{M \setminus M^\tau}$ is a local diffeomorphism, $T$ is injective, so $\dim \Ecal^+\leq \dim (\mathrm{Im}\, T)$.  Finally,  the Steklov eigenvalue condition gives the linear relation $\partial_\eta u(p) = \sigma_1 u(p)$, so the image $\mathrm{Im} \, T$ of $T$ lies in a $2$-dimensional subspace of $\R\times T^*_p M$.  
\end{proof}

\begin{remark}
The conclusion of Proposition \ref{Lrhoquo3}(ii) need not hold without the assumption from Definition \ref{dbasref} that $\pi(\partial M \cup M^\tau_\partial)$ is connected.
\end{remark}

\subsection{Critical points for even Steklov eigenfunctions}

In Lemma \ref{Lbdgraph} below, we give sufficient criteria for all of the critical points of a $\tau$-even Steklov eigenfunction to lie on the fixed-point set $M^\tau$.  The proof of Lemma \ref{Lbdgraph} relies on the following Morse-type inequality, which is proved using work of Hoffman-Mart{\'i}n-White \cite{White}. 

\begin{lemma}[Morse inequality for Steklov eigenfunctions]
\label{Lmorse}
Suppose $M$ is a compact surface with boundary and $u$ is a Steklov eigenfunction such that
\begin{enumerate}[label=\emph{(\alph*)}]
\item  $u|_{\partial M}$  has finitely many local minima, and
\item $\partial M \cap \{u = t\}$ is finite for each $t \in \R$. 
\end{enumerate}
Let $\Nserif$ be the number of interior critical points of $u$ and $B^{\leq 0}_{\min}$ ($B^{\leq 0}_{\max}$) be the set of points where $u|_{\partial M}$ achieves nonpositive minima (maxima).  Then
\begin{align*}
\Nserif + \chi(M) \leq |B^{\leq 0}_{\min}| - |B^{\leq 0}_{\max}|. 
\end{align*}
Equality holds if and only if each (interior or boundary) saddle point has multiplicity one. 
\end{lemma}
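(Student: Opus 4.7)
The strategy is to express $\chi(M)$ as a sum of localized contributions from the critical values of $u$, via the Morse theory of sublevel sets developed by \cite{White}. Set $\Omega_t := \{u \leq t\}$. Since $u$ is harmonic in the interior, the maximum principle forces every interior critical point to be isolated and saddle-like; combined with hypotheses (a)--(b), this implies the set of critical values of $u$ is finite. Because $\Omega_t = \varnothing$ for $t < \min_M u$ and $\Omega_t = M$ for $t > \max_M u$, telescoping yields
\begin{equation*}
\chi(M) = \sum_{c} \Delta_c\chi, \qquad \Delta_c\chi := \chi(\Omega_{c+\varepsilon}) - \chi(\Omega_{c-\varepsilon}),
\end{equation*}
where $c$ ranges over critical values of $u$ and $\varepsilon > 0$ is small.

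Each jump $\Delta_c\chi$ is then evaluated by a local analysis. At an interior critical point $p$, isothermal coordinates put $u - u(p) = \mathrm{Re}(\alpha(z-z_p)^{k_p}) + O(|z-z_p|^{k_p+1})$ for some integer $k_p \geq 2$, a saddle with $2k_p$ nodal rays contributing exactly $-(k_p-1)$. At a boundary critical point $p$ of $u|_{\partial M}$, the Steklov relation $\partial u/\partial\eta = \sigma u$ ties the sign of the outward normal derivative to that of $u(p)$: (i) if $u(p) > 0$ then $\partial_\eta u(p) > 0$ and the event amounts to attaching or excising a half-disk, contributing $0$; (ii) if $u(p) < 0$ then $\partial_\eta u(p) < 0$ and $u$ strictly increases inward, so a boundary minimum (resp.\ maximum) of $u|_{\partial M}$ is a genuine local minimum of $u$ on $M$ (resp.\ a boundary saddle) and contributes $+1$ (resp.\ $-1$); (iii) if $u(p) = 0$, then $\partial_\eta u(p) = 0$ forces $p$ to be a full critical point of $u$, whose local behavior is determined by its leading homogeneous harmonic polynomial, necessarily of even degree. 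Summing all contributions and using $-(k_p-1) \leq -1$ at each interior saddle yields
\begin{equation*}
\chi(M) \leq -\Nserif + |B^{\leq 0}_{\min}| - |B^{\leq 0}_{\max}|,
\end{equation*}
with equality iff every interior or boundary saddle has the corresponding multiplicity-one structure.

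The main technical obstacle is the borderline regime $u(p) = 0$ at a boundary critical point, where the vanishing normal derivative mixes boundary-extremum and interior-saddle behavior and the preceding three-case dichotomy breaks down. Here one must perform a direct nodal-domain analysis of the leading harmonic Taylor polynomial of $u$ in the half-disk near $p$: as this polynomial has even degree $k_p \geq 2$ and its normal derivative vanishes on $\partial M$, one can enumerate the sectors in which $u \lessgtr 0$ and check that the resulting local change in $\chi(\Omega_t)$ never exceeds the contribution predicted by the corresponding multiplicity-one configuration. This is exactly the content of the Hoffman-Mart{\'i}n-White framework \cite{White}, which supplies the local Euler-characteristic accounting for harmonic functions on surfaces with boundary in the presence of such mixed critical points. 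With this in hand, the sum of contributions is bounded above by $-\Nserif + |B^{\leq 0}_{\min}| - |B^{\leq 0}_{\max}|$, and the equality clause reduces to each interior or boundary saddle being of multiplicity one.
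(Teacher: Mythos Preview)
Your sublevel-set approach is valid in principle, but the paper's proof is considerably more economical and you should compare. Rather than rebuild the Morse theory of $\Omega_t$ from scratch, the paper first invokes \cite[Theorem~46]{White} to certify that $u$ is a Rad\'o function on all of $M$ (using hypotheses (a)--(b)), and then applies the ready-made inequality \cite[Theorem~26]{White}:
\[
\Nserif \;\leq\; |B| - |A| - \chi(M),
\]
where $B$ is the set of local minima of $u|_{\partial M}$ and $A$ is the set of boundary saddle points. The Steklov condition $\partial_\eta u = \sigma u$ then enters only once, to show that every $p\in B$ with $u(p)>0$ lies in $A$, and that no local maximum $q$ with $u(q)>0$ lies in $A$. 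This immediately yields $|B|-|A|\leq |B^{\leq 0}_{\min}|-|B^{\leq 0}_{\max}|$, and the equality clause is inherited from \cite[Theorem~26]{White}. Your route essentially re-derives what \cite{White} already packages, and in the end you defer the delicate $u(p)=0$ boundary case to that same reference anyway.

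One technical correction: in your case (iii), the assertion that the leading homogeneous harmonic polynomial at a boundary critical point with $u(p)=0$ is ``necessarily of even degree'' is not right. Writing the leading term as $\mathrm{Re}(a z^k)$ in a half-disk chart, the Steklov condition at leading order forces $\mathrm{Im}(a)=0$ (so that $\partial_\eta$ of the leading term vanishes on $\partial M$), but places no parity restriction on $k$. An odd $k$ gives $u|_{\partial M}\sim a x^k$, an inflection point rather than a local extremum; such a point is not counted in $B^{\leq 0}_{\min}$ or $B^{\leq 0}_{\max}$, but you would still need to verify its contribution to $\Delta_c\chi$ directly. This is fixable, but it illustrates why outsourcing the local accounting to \cite{White} from the outset, as the paper does, is the cleaner move.
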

\begin{proof}
Since $u$ is a harmonic function, it is clear that the restriction of $u$ to the interior of $M$ is a Rad{\'o} function in the sense of \cite[Definition 5]{White}.   Moreover, the assumptions of the Lemma and \cite[Theorem 46]{White} imply that $u$ is a Rad{\'o} function on all of $M$.  
The hypotheses of \cite[Theorem 26]{White}  apply, and it follows that
\begin{align}
\label{Emorsefb}
\Nserif \leq |B| - |A| - \chi(M),
\end{align}
where $B$ is the set of local minima of $u|_{\partial M}$ and $A$ is the set of boundary saddle points. 

Next, recall that $\partial_{\eta} u = \sigma u$ on $\partial M$.
It follows that if $p \in B$ and $u(p) >0$, then $p \in A$; similarly if $u|_{\partial M}$ attains a local maximum at $q \in \partial M$ and $u(q) >0$, then $q \notin A$. The desired inequality now follows from combining these observations with \eqref{Emorsefb}.  Finally, the assertion regarding the equality case follows from the corresponding result in \cite[Theorem 26]{White}.
\end{proof}

\begin{lemma}
\label{Lbdgraph}
If $(M, \tau)$ is a basic reflection surface with boundary
and $u$ is an $\tau$-even Steklov eigenfunction such that
\begin{enumerate}[label=\emph{(\alph*)}]
\item  $u$ has exactly two nodal domains,
\item   $u|_{\partial M}$  has finitely many local minima and
\item $\partial M \cap \{u = t\}$ is finite for each $t \in \R$, 
\end{enumerate}
then each interior critical point of $u$ lies on $M^\tau$ and is a multiplicity-$1$ saddle; moreover $u$ has exactly two critical points on each oval in $M^\tau$. 
\end{lemma}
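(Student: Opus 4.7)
The plan is to combine the maximum principle with local analysis along $M^\tau$ and the Morse-type counting from Lemma~\ref{Lmorse}.

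First, I would locate the interior critical points of $u$. Since $u$ is harmonic, the maximum principle forbids interior extrema, so every interior critical point is either a saddle off $\Ncal_u$, or lies on $\Ncal_u$. Proposition~\ref{Lrhoquo2}(i)(c) shows that critical points on $\Ncal_u$ must lie on $M^\tau$. Moreover, because $u$ is $\tau$-even and $\tau$ acts freely on $M \setminus M^\tau$, interior critical points off $M^\tau$ come in $\tau$-conjugate pairs; write $2m \ge 0$ for their count. The goal is then to force $m=0$ and simultaneously saturate the Morse inequality.

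Second, I would carry out the local analysis along $M^\tau$. In Fermi coordinates $(s,t)$ near a codimension-one component $S$ of $M^\tau$ with $S=\{t=0\}$ and $\tau(s,t)=(s,-t)$, the $\tau$-evenness of $u$ kills all odd powers of $t$, giving $u(s,t) = u(s,0) + \tfrac{1}{2}u_{tt}(s,0)t^2 + O(t^4)$. Harmonicity $\Delta u = 0$ at $(s_0,0)$ then yields $u_{tt}(s_0,0) = -(u|_S)''(s_0)$ to leading order, so critical points of $u$ on $S$ coincide with critical points of $u|_S$, and are nondegenerate multiplicity-one saddles of $u$ precisely when $(u|_S)''(s_0)\neq 0$. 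Each oval $O\cong S^1$ contributes at least two such points (a max and a min of the continuous function $u|_O$), and each isolated fixed point $p \in F$ is automatically a critical point since $d\tau_p = -\mathrm{Id}$. This produces the lower bound $\Nserif \ge 2C + F + 2m + N_{\mathrm{arc}}$, where $N_{\mathrm{arc}}\ge 0$ counts interior critical points on chain arcs.

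Third, I would apply Lemma~\ref{Lmorse} to both $u$ and $-u$ and sum. A boundary arc analysis---on each component of $\partial M$ the zeros of $u|_{\partial M}$ divide it into alternating positive/negative arcs, and each sign arc contributes exactly one net extremum to the corresponding side---shows that the combined right-hand side equals $|\Ncal_u \cap \partial M|$. Corollary~\ref{CEuler} gives $\chi(M) = 4-2C-2T-F-k$. Proposition~\ref{Lrhoquo2}(i)(b) states that $\pi(\Ncal_u \cap (\partial M \cup M^\tau_\partial))$ consists of two points, and since $\pi$ is at most $2$-to-$1$ this forces $|\Ncal_u \cap \partial M| \le 4$. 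Combining with the lower bound yields $4m + 2N_{\mathrm{arc}} \le 2T + k - 2$.

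The final step is a case analysis via Theorem~\ref{Tclass}(ii). In the case $T=0$, we must have $k=2$, and the inequality collapses to $4m + 2N_{\mathrm{arc}} \le 0$, forcing $m = N_{\mathrm{arc}} = 0$ and saturating the Morse bound; the equality clause of Lemma~\ref{Lmorse} then gives the multiplicity-one conclusion and the exact count of two critical points per oval. The hard part will be the case $T=1$, where the crude bound permits small positive values of $m$ or $N_{\mathrm{arc}}$. I plan to close this case by sharpening the boundary count: using that $M^\bullet$ is a disk with $\Ncal_u^\bullet$ a single arc across it (from the proof of Proposition~\ref{Lrhoquo2}(i)), the $\tau$-action on each boundary component touched by the chain (a reflection with two fixed points at chain-arc endpoints, which pairs up local extrema of $u|_{\partial M}$), and the specific distribution of the two points of $\pi(\Ncal_u \cap (\partial M \cup M^\tau_\partial))$ across the chain-arc and non-chain portions of the boundary. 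The bookkeeping is intricate but the strategy is the same: sharpen the combined Morse bound enough to force $m = 0$, then read off the remaining conclusions from the equality clause.
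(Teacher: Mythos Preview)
Your general strategy—combine Lemma~\ref{Lmorse} with local analysis along $M^\tau$ and the Euler-characteristic formula—matches the paper's, and your $T=0$ case is fine. But the $T=1$ case cannot be closed by the plan you sketch, and there is an arithmetic slip that obscures this. Summing Lemma~\ref{Lmorse} over $u$ and $-u$ gives $2\Nserif + 2\chi(M)\le |\Ncal_u\cap\partial M|$; substituting your lower bound for $\Nserif$ and Corollary~\ref{CEuler} yields $4m+2N_{\mathrm{arc}}\le 4T+2k-4$, not $2T+k-2$. For $T=1$ (where $W=k$) this allows $m$ as large as $\lfloor k/2\rfloor$, so the crude bound is far from tight and your ``sharpen the boundary count'' sketch would have to recover a great deal.

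The paper closes both cases simultaneously with two ingredients you are missing. First, instead of summing over $\pm u$, it computes the right-hand side of Lemma~\ref{Lmorse} \emph{exactly}: a short case analysis on $n_\partial:=|\pi(\Ncal_u\cap\partial M)|\in\{0,1,2\}$ shows $|B^{\le 0}_{\min}|-|B^{\le 0}_{\max}|=n_\partial$, and Proposition~\ref{Lrhoquo2}(i)(b) gives the exact relation $n_\partial+n_\tau=2$ where $n_\tau:=|\pi(\Ncal_u\cap M^\tau_\partial)|$. Second—and this is the crucial Steklov input your argument lacks—on each chain arc $S\subset M^\tau_\partial$ where $u$ has a sign, the Steklov condition $\partial_\eta u=\sigma u$ forces $(u|_S)'$ to have opposite signs at the two endpoints of $S$ (which lie on $\partial M$ and are perpendicular to it), so $u|_S$ has an interior critical point, which by your own evenness argument is a critical point of $u$. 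Since at most $n_\tau$ chain arcs carry a zero of $u$, this gives $N_{\mathrm{arc}}\ge W-n_\tau$. Feeding this into a \emph{single} application of Lemma~\ref{Lmorse} and using the identity $W+2-2T-k=0$ collapses everything to $\Nserif_{M\setminus M^\tau}\le 0$, with the equality clause delivering the multiplicity-one and exact-count statements.
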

\begin{proof}
Given $u$ as in the statement of the lemma, let $n_\partial $ be the number of points in $\pi( \Ncal_u \cap \partial M)$ and $n_\tau$  be the number in $\pi( \Ncal_u \cap M^\tau_\partial)$.  Finally, write the number $\Nserif$ of interior critical points of $u$ as
\begin{align*}
\Nserif =  \Nserif_{M \setminus M^\tau}+  \Nserif^{\out}_{M^\tau}+ \Nserif^{\inn}_{M^\tau},
\end{align*}
with  $\Nserif_{M \setminus M^\tau}$, $\Nserif^{\out}_{M^\tau}$, and $\Nserif^{\inn}_{M^\tau}$  the number occurring on $M \setminus M^\tau$,  $M^\tau_\partial$, and $M^\tau \setminus M^\tau_\partial$, respectively.  We claim that
\begin{enumerate}[label={(\roman*)}]
\item $n_\partial + n_\tau = 2$,
\item $\Nserif^{\inn}_{M^\tau} \geq 2 C+ F$,
\item $\Nserif^{\out}_{M^\tau} + n_\tau \geq W$, and
\item $n_\partial = |B^{\leq 0}_{\min}| - |B^{\leq 0}_{\max}|$,
\end{enumerate}
where $C, F, W$ are respectively the number of ovals, isolated fixed-points, and fixed-point intervals for $M$ (components of $M^\tau_\partial$), as in Definition \ref{dspecies}.

Item (i) follows from Proposition \ref{Lrhoquo3}(i).  Next, $M^\tau \setminus M^\tau_\partial$ consists of $C$ ovals and $F$ isolated fixed-points. One one hand, a smooth function on a circle must have at least two critical points; on the other, each isolated fixed-point is easily seen to be a critical point since $u$ is $\tau$-even.  Item (ii) follows from these observations.  

For (iii), observe first from the symmetry and the facts that $\int_{\partial M} u = 0$ and $u$ has two nodal domains that $u$ has at most one zero on each component of $M^\tau_\partial$.  Hence there are $W-n_\tau$ components of $M^\tau_\partial$ where $u$ has a sign; to complete the proof of (iii), it suffices to prove $u$ has at least one critical point on each such component.  Now fix a component $S$ of $M^\tau_\partial$ and suppose that $u|_S$ has a sign.  By the Steklov condition, the outward normal derivative $\partial_\eta u =\sigma u$ has the same sign on each point of $\partial S$, so $u|_S$ has a critical point $q \in S$.  By the $\tau$-symmetry, $q$ is also a critical point for $u$, and (iii) follows.  

For (iv), note that $n_\partial \in \{ 0, 1, 2\}$ by (i).  The proof is split up into cases.

\emph{Case 1:} $n_\partial = 0$.  Then for each component $S$ of $\partial M$, $S \cap \Ncal_u = \varnothing$ and $|B^{\leq 0}_{\min}| - |B^{\leq 0}_{\max}| = 0$ since the number of maxima and minima of $u|_S$ on each circle $S$ are equal. 

\emph{Case 2:} $n_\partial = 1$.  Then a single nodal line $\ell$ meets $\partial M$, so $\tau(\ell) = \ell$ and    $\ell$ bounds a segment $S$ on a component of $\partial M$ with $u|_S \leq 0$.  The two local extrema of $u|_S$ nearest to $\partial S$ are then minima, so $|B^{\leq 0}_{\min} \cap S| - |B^{\leq 0}_{\max}\cap S| = 1$ because the local extrema on $S$ alternate.  The proof is completed by arguing as in Case 1 for the remaining components of $\partial M$.

\emph{Case 3:} $n_\partial =2$.  We first consider the case where $W = 0$ and hence $k = 2$.  By the symmetry, $\Ncal_u$ meets one of the boundary components at two points, and the conclusion easily follows using the symmetry.  
Next suppose $W> 0$, and fix a nodal line $\ell$ meeting $\partial M$.  Since $n_\partial = 2$, $W > 0$, and  $u$ has exactly two nodal domains,  the endpoints of $\ell$ lie in different components of $\partial M$, and $\tau(\ell)$ is the other nodal line meeting $\partial M$.  Together, $\ell$ and $\tau(\ell)$ bound two segments on different components of $\partial M$ where $u$ restricts to be nonpositive.  It follows by arguing as in Case 1 and Case 2 that  $|B^{\leq 0}_{\min}| - |B^{\leq 0}_{\max}| =2$.

By combining (i)-(iv) with Lemma \ref{Lmorse} and Corollary \ref{CEuler}, 
\begin{equation*}
\begin{split}
n_\partial &=  |B^{\leq 0}_{\min}| - |B^{\leq 0}_{\max}| 
\geq  \Nserif_{M \setminus M^\tau}+  \Nserif^{\out}_{M^\tau}+\Nserif^{\inn}_{M^\tau} + \chi(M) \\
&\geq \Nserif_{M \setminus M^\tau} + n_\partial + W + 2 - 2T - k
= \Nserif_{M \setminus M^\tau} + n_\partial.
\end{split}
\end{equation*}
The final equality uses that $W + 2 - 2T - k = 0$, which follows since (recalling Theorem \ref{Tclass}) $W = 0$ implies $T = 0$ and $k= 2$, while $W> 0$ implies $T = 1$ and $W=k$.

Thus  $\Nserif_{M \setminus M^\tau} =0$.  Since equality must hold in each inequality above, Lemma \ref{Lmorse} implies each critical point is a multiplicity-$1$ saddle; furthermore $\Nserif^{\inn}_{M^\tau} = 2 C+ F$, which implies the last assertion.
\end{proof}

\subsection{Bounds on the first normalized eigenvalue}

\begin{lemma}
\label{Levalbdcl}
\label{Levalbdbd}
If $(M, \tau)$ is a basic reflection surface, then
\begin{enumerate}[label=\emph{(\roman*)}]
\item  $\lambda_1(M) |M| < 16\pi$, if $\partial M = \varnothing$.
\item $\sigma_1(M) |\partial M| < 4\pi$, if $\partial M \neq \varnothing$.
\end{enumerate}
\end{lemma}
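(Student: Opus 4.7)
The plan is to adapt the classical arguments of Hersch \cite{Hersch} (for (i)) and Weinstock \cite{Weinstock} (for (ii)) to the quotient surface $\pi(M) = M/\langle\tau\rangle$, exploiting the fact that $\pi(M)$ has genus zero by Definition \ref{dbasref}. Let $\bar{g}$ be the pushforward metric, which is a smooth Riemannian metric on $\pi(M)$ away from finitely many isolated orbifold points coming from the isolated fixed points of $\tau$; since these points are measure-zero, they play no role in what follows. Because $\pi : M \to \pi(M)$ is locally an isometry on $M \setminus M^\tau$, for any function $f$ on $\pi(M)$ one has
\begin{equation*}
\int_M (f\circ\pi)^2\,dv_g = 2\int_{\pi(M)} f^2\,dv_{\bar g},\qquad \int_M |d(f\circ\pi)|_g^2\,dv_g = 2\int_{\pi(M)} |df|_{\bar g}^2\,dv_{\bar g},
\end{equation*}
and an analogous identity on $\partial M$.

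\textbf{Step 1 (conformal embedding):} For (i), since $\tau$ is a reflection, $M^\tau$ contains a codimension-$1$ component (Lemma \ref{Lref}(iii)), so $\pi(M)$ is a planar surface with at least one boundary component and hence uniformizes as a proper circle domain $\Omega\subsetneq \Sph^2$. For (ii), the basic condition ensures that $\pi(\bd M\cup M^\tau_\bd)$ is a single circle in $\bd\pi(M)$, so $\pi(M)$ uniformizes as a circle domain $\Omega\subseteq \B^2\subset\R^2$ whose outer boundary $\bd\B^2$ corresponds to $\pi(\bd M\cup M^\tau_\bd)$. Let $\Psi : \pi(M)\to \Omega$ denote the conformal embedding in either case.

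\textbf{Step 2 (balancing):} Using the standard degree argument (as in \cite{Hersch} for the closed case and \cite{Weinstock} for the Steklov case), there is a conformal automorphism $F_a$ of $\Sph^2$ (respectively $\B^2$) such that the $\tau$-invariant map $u = F_a\circ\Psi\circ\pi : M\to \Sph^2$ (resp.\ $M\to\B^2$) satisfies $\int_M u\,dv_g = 0$ (resp.\ $\int_{\bd M} u\,ds_g = 0$).

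\textbf{Step 3 (variational estimate):} The components of $u$ are admissible test functions for $\lambda_1(M,g)$ (resp.\ $\sigma_1(M,g)$). Using $\sum_i u_i^2 \equiv 1$ on $M$ for case (i), and $\sum_i u_i^2 \equiv 1$ on $\bd M$ for case (ii) (since $F_a\circ\Psi$ maps the outer boundary of $\pi(M)$ into $\bd \B^2$), summing the Rayleigh quotient inequality over $i$ and applying the covering identities of Step 1, together with the fact that $F_a\circ\Psi$ is conformal (so $\int_{\pi(M)}|d(F_a\circ\Psi)|^2\,dv_{\bar g} = 2\,\mathrm{Area}(F_a(\Omega))$), yields
\begin{equation*}
\lambda_1(M,g)|M|_g \leq 4\,\mathrm{Area}(F_a(\Omega)),\qquad \sigma_1(M,g)|\bd M|_g \leq 4\,\mathrm{Area}(F_a(\Omega)),
\end{equation*}
in cases (i) and (ii) respectively. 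Since $\Omega \subsetneq \Sph^2$ in case (i), we immediately get $\mathrm{Area}(F_a(\Omega))<4\pi$, proving (i). In case (ii), if $\Omega\subsetneq \B^2$ (equivalently, $\pi(M)$ has an inner boundary circle coming from an oval of $M^\tau$), then $\mathrm{Area}(F_a(\Omega))<\pi$ and (ii) follows.

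\textbf{Step 4 (remaining Steklov subcase):} The main obstacle is case (ii) when $\pi(M)$ has no ovals, so that $\Omega = \B^2$ and $\mathrm{Area}(F_a(\Omega)) = \pi$; here the argument so far gives only $\sigma_1|\bd M|\leq 4\pi$. To upgrade to strict inequality, assume equality. Then each $u_i$ must be a first Steklov eigenfunction and $u : (M,\bd M)\to(\B^2,\Sph^1)$ must be a free boundary harmonic map realizing the sharp Rayleigh quotient. By the characterization of such critical maps (cf.\ Theorem \ref{thm:symm_crit_S}), $u$ is a branched free boundary minimal immersion into $\B^2\subset\R^2$; being conformal and harmonic into a planar target, it is locally (anti)holomorphic, hence $u$ is a two-sheeted branched conformal covering $M\to\B^2$, and $g$ is a constant multiple of the pullback of the Euclidean metric. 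A direct computation on this pullback metric (or equivalently, analyzing the Steklov spectrum of the double cover of $\B^2$ branched along the image of $M^\tau$) shows that its first normalized Steklov eigenvalue is strictly less than $4\pi$, contradicting the assumed equality and completing the proof of (ii).
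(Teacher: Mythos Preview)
Your approach is essentially the paper's: pass to the genus-zero quotient $\pi(M)$ and run Hersch/Weinstock there. Part (i) and Steps 1--3 of part (ii) are correct and match the paper.

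The gap is in Step 4. Your equality-case analysis is unnecessarily elaborate, and the final sentence (``a direct computation on this pullback metric shows $\bar\sigma_1 < 4\pi$'') is not justified---it amounts to re-proving the lemma for a specific family of metrics, and you give no indication of how that computation goes. You have overlooked the much simpler mechanism: when there are no ovals, the classification (Theorem~\ref{Tclass}(i), $C+W\ge 1$) forces $M^\tau_\partial \neq \varnothing$, so $\pi(\partial M)$ is a \emph{proper} subset of the outer circle $\partial\mathbb{B}^2$, the complementary arcs being $\pi(M^\tau_\partial)$. Strict inequality then follows directly: if both Rayleigh quotients were equalities, both $u_i$ would be first Steklov eigenfunctions on $M$; being $\tau$-even, they descend to eigenfunctions of the mixed Steklov--Neumann problem on $\pi(M)=\mathbb{B}^2$ (Steklov on $\pi(\partial M)$, Neumann on $\pi(M^\tau_\partial)$). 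But the $\bar u_i$ are coordinates of a conformal automorphism of $\mathbb{B}^2$, so on $\partial\mathbb{B}^2$ their outward normal derivative is a positive multiple of $\bar u_i$ itself; the Neumann condition on the nonempty arc $\pi(M^\tau_\partial)$ would then force both $\bar u_i=0$ there, contradicting $\bar u_1^2+\bar u_2^2=1$.

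This is how the paper proceeds: it formulates part (ii) as a Weinstock-type bound for the mixed Steklov--Neumann problem on $\pi(M)$ and reads off strictness from the fact that $\varphi(\pi(\partial M))\subsetneq \partial\mathbb{B}^2$. Framing things on the quotient with the mixed problem from the outset makes the boundary decomposition $\pi(\partial M)\sqcup\pi(M^\tau_\partial)$ and the source of strictness transparent, and avoids your detour through branched covers.
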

\begin{proof}
The assumptions,  \cite[Fact 5]{LiYau}, and \cite[Theorem 1]{LiYau} imply that
\begin{align*}
\lambda^+_1 | \pi(M)| < 8\pi, 
\end{align*}
where $\lambda_1^+$ is the first eigenvalue of the laplacian, with Neumann boundary conditions, on $\pi(M)$, for $M$ as in (i).  Since $|\pi(M)| = |M|/2$, it follows that 
\begin{align*}
\left( \inf \frac {\int_{M} |du|^2}{\int_{M} u^2} \right) |M| < 16\pi,
\end{align*}
where the infimum is over all $\tau$-even Lipschitz functions $u$ with average zero,  $\int_M u = 0$.  Since $\lambda_1$ is the infimum of $\int_M |du|^2/ \int_M u^2$ over \emph{all} Lipschitz functions with average zero, the conclusion follows. 

For (ii), let $(M, \tau)$ be a basic reflection surface with nonempty boundary, and consider the mixed Steklov-Neumann problem
\begin{align}
\label{Emixed}
\begin{cases}
\Delta u = 0 \hfill & \text{on} \quad \pi( M)\\
\frac{\partial u}{\partial \nu} = \sigma u \hfill & \text{on} \quad \pi(\partial M) \\
\frac{\partial u}{\partial \nu} = 0 \hfill & \text{on} \quad  \pi(M^\tau).
\end{cases}
\end{align}
It is straightforward to see that the first eigenvalue $\sigma^+_1$ of \eqref{Emixed} satisfies 
\begin{align}
\label{Evarmix} 
\sigma^+_1 = \inf \frac{\int_{\pi(M)} | du |^2}{\int_{\pi(\partial M)} u^2},
\end{align}
where the infimum is over all Lipschitz functions $u$ with $\int_{\pi(\partial M)} u =0$. 

By Definition \ref{dbasref}, $\pi(M)$ is a genus zero surface with $\pi(\partial M)$ a proper subset of a component $S$ of $\partial \pi(M)$.  Consequently, there is a conformal map $\varphi : \pi(M) \rightarrow \B^2$ with the property that $\varphi|_{S} : S \rightarrow \partial \B^2$ is a homeomorphism.  By a well-known balancing argument \cite{Weinstock, Hersch}, we can use the conformal  automorphisms of the disk $\B^2$ and use the coordinate functions of $\varphi$ as test functions in \eqref{Emixed} to conclude that $\sigma_1^+| \pi(\partial M)| < 2\pi$.  Note the strict inequality holds because $\varphi (\pi (\partial M))$ is a proper subset of $\partial \B^2$. 

Finally, since $|\pi(\partial M)| = |\partial M|/2$, the preceding implies 
\begin{align*}
\left( \inf \frac{\int_M |du|^2}{\int_{\partial M} u^2}\right) |\partial M| < 4\pi,
\end{align*}
where the infimum is over all $\tau$-even Lipschitz functions $u$ with $\int_{\partial M}u = 0$.  Since $\sigma_1$ is the infimum of $\int_{M} |du|^2/ \int_{\partial M} u^2$ over \emph{all} Lipschitz functions $u$ satisfying $\int_{\partial M}u =0$, the conclusion follows. 
\end{proof}

\begin{remark}
The conclusion of Lemma \ref{Levalbdbd}(ii) need not hold without the assumption from Definition \ref{dbasref} that $\pi(\partial M \cup M^\tau_\partial)$ is connected.  In particular, 
for a genus zero surface $M$ with four boundary components and tetrahedral symmetry studied in \cite{KOO}, item (i) of Definition \ref{dbasref} (with $\tau$ one of the reflections in the tetrahedral group) holds, but $\pi(\partial M \cup M^\tau_\partial)$ is \emph{not} connected, and $\sigma_1 |\partial M| \approx 13.67 > 4\pi$.  
\end{remark}

\subsection{Reflection groups and chambers}
We recall some classical facts \cite{Davis} concerning groups generated by reflections on surfaces. 

Let $M$ be a connected surface.  A \emph{topological reflection} is a smooth involution $\tau: M \rightarrow M$ whose fixed-point set $M^\tau$ separates $M$.  If $\Gamma$ is a finite group acting properly, smoothly, and effectively on $M$ and $\Gamma$ is generated by topological reflections,  $\Gamma$ is called a \emph{reflection group} on $M$. 
Given $x \in M$, denote by $R(x)$ the set of all (topological) reflections in $\Gamma$ fixing $x$.  A point $x \in M$ is called \emph{nonsingular} if $R(x)=\varnothing$.  A \emph{chamber} of $\Gamma$ on $M$ is the closure of a connected component of nonsingular points.

Let $\Omega$ be a chamber, and denote by $V$ the set of reflections $v$ such that $\{v\} = R(x)$ for some $x \in \Omega$.  If $v \in V$, the set $\Omega^v : = M^v \cap \Omega$ is called a \emph{panel} of $\Omega$, and $V$ is the set of \emph{reflections through the panels of} $\Omega$.  The pair $(\Gamma, V)$ is called a \emph{reflection system}.

Reflection systems are related to Coxeter systems through Lemma \ref{Lrefcox} below, which is essentially contained in \cite{Davis} (see in particular \cite[Theorem 4.1]{Davis}); in order to state it precisely, we recall some more notation. 

 A \emph{Coxeter system} is a pair $(\Gamma, V)$ consisting of a group $\Gamma$ with a set of generators $V$ such that the relations $v^2 = 1$  ($v \in V$) and $(vw)^{m(v, w)} = 1$ ($v, w \in V$ and $m(v, w) \leq \infty$) form a presentation for $\Gamma$.

Let $\Omega$ be a surface with corners, and $S$ the set of corners.  An \emph{edge} of $\partial \Omega$ is the closure of a connected component of $\partial \Omega \setminus S$.  A surface with corners is called \emph{nice} if each corner is contained in two edges.

 \begin{lemma}
 \label{Lrefcox}
If $(\Gamma, V)$ is a reflection system on a closed surface $M$, then 
\begin{enumerate}[label=\emph{(\roman*)}]
\item $(\Gamma, V)$ is a Coxeter system. 
\item $\Gamma$ acts freely and transitively on the set of chambers. 
\item Each chamber $\Omega$ is a fundamental domain: for $\pi : M \rightarrow M/ \Gamma$ the quotient, $\pi|_\Omega$ is a homeomorphism. 
\item Each chamber $\Omega$ is a nice connected surface with corners equipped with a collection $(\Omega^v)_{v \in V}$ of panels, each panel being a pairwise disjoint union of edges, and each edge contained in exactly one panel. 
\end{enumerate}
\end{lemma}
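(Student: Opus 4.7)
The plan is to reduce the lemma to the well-known theory of Coxeter systems acting on manifolds, as developed in Davis~\cite{Davis}, by first establishing the local smooth structure of the action. First, I would average a Riemannian metric on $M$ to obtain a $\Gamma$-invariant metric~$g$, so that every topological reflection in $\Gamma$ is realized as an isometric involution. By Lemma~\ref{Lref}(iii), applied to this isometric involution, each such element is a (smooth) reflection in the sense of Definition~\ref{drefl}, and its fixed-point set is a totally geodesic $1$-submanifold of $M$.

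Next, I would analyze the local structure at each point $x \in M$. The stabilizer $\Gamma_x$ acts linearly on $T_x M$ via $d$, giving an injective representation $\Gamma_x \hookrightarrow O(T_x M, g_x) \cong O(2)$ (injectivity follows from the effectiveness of the action and the fact that isometries with trivial differential at a point are trivial). Under this embedding, each element of $R(x)$ becomes a linear reflection across a line in $T_x M$. The subgroup generated by $R(x)$ is therefore a finite reflection subgroup of $O(2)$, hence a dihedral group $D_n$ for some $n \geq 1$ (with the convention $D_1 = \Z_2$). Using the exponential map of $g$, a small $\Gamma_x$-invariant neighborhood of $x$ is equivariantly diffeomorphic to a disk in $T_xM$ on which $\langle R(x)\rangle$ acts by the standard dihedral representation, with the components of the complement of the mirrors forming $2n$ sectors. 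A standard connectedness argument then shows that $\Gamma_x = \langle R(x)\rangle$: any other element would permute these sectors and, by examining a path crossing only one fixed-point curve, can be written as a product of elements of $R(x)$.

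With this local picture established, the remaining assertions follow quickly. For (iii), each chamber $\Omega$ is locally modeled on the closure of a single sector in $T_xM$, so $\pi|_\Omega \colon \Omega \to M/\Gamma$ is a local homeomorphism onto its image; combined with the fact that orbits of points in $\Omega^\circ$ are disjoint from $\Omega^\circ$ except at the base point, this gives that $\pi|_\Omega$ is a homeomorphism onto $M/\Gamma$ and that $\Omega$ is a fundamental domain. For (ii), given any two chambers, one can connect their interiors by a smooth path transverse to the fixed-point set; reflecting successively across each wall crossed produces an element of $\Gamma$ sending one chamber to the other, and the transitive action is free because any $\gamma$ fixing $\Omega$ must fix an interior (nonsingular) point, forcing $\gamma = 1$. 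Item (iv) is a direct consequence of the local dihedral model: corners of $\Omega$ correspond to points $x$ with $|R(x)|=2$, and each such corner lies on exactly two edges, each contained in a single panel $\Omega^v$. Finally, for (i), the relations $v^2 = 1$ are automatic, and the relations $(vw)^{m(v,w)} = 1$ are read off from the dihedral local structure at each corner where the panels $\Omega^v$ and $\Omega^w$ meet: the angle there is $\pi/m(v,w)$, and Davis's main presentation theorem (see \cite[Chapter 4]{Davis}, in particular \cite[Theorem 4.1.6]{Davis}) applied to this ``mirror structure'' on the fundamental domain $\Omega$ yields precisely the Coxeter presentation.

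The main obstacle I expect is the passage from the local dihedral picture to the global Coxeter presentation in (i); while (ii)--(iv) are largely formal once the local structure is understood, verifying that no extra global relations appear requires either invoking Davis's reconstruction theorem (which reconstructs $M$ as the Coxeter complex of $(\Gamma,V)$ glued from copies of $\Omega$) or running a direct van Kampen-style argument using a spine of the chamber complex. Citing \cite[Theorem 4.1.6]{Davis} is the most efficient route and avoids reproving this classical fact.
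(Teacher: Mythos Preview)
The paper does not give a proof of this lemma at all: it is stated as a classical fact ``essentially contained in \cite{Davis} (see in particular \cite[Theorem 4.1]{Davis}),'' with no further argument. Your proposal is therefore not so much a different route as an expansion of what the paper leaves implicit---you are sketching the Davis argument rather than merely citing it. The outline you give (average to a $\Gamma$-invariant metric, classify stabilizers via the $O(2)$ representation on tangent spaces, deduce the local dihedral model, and read off (ii)--(iv) from this local structure while invoking Davis's presentation theorem for (i)) is the standard approach and is correct in broad strokes.

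One minor point: the paper's reference \cite{Davis} is the 1983 \emph{Annals} paper, not Davis's later book, so your citation ``\cite[Theorem 4.1.6]{Davis}'' (which has a book-style numbering) is inconsistent with the bibliography; the paper points to \cite[Theorem 4.1]{Davis}. Also, your claim that $\Gamma_x = \langle R(x)\rangle$ deserves slightly more care: you need that any element of $\Gamma_x$ is a product of reflections \emph{in $\Gamma$} fixing $x$, which follows from the fact that $\Gamma$ itself is generated by reflections and a path-crossing argument on the whole surface, not just from the linear picture in $T_xM$. But since the paper treats the entire lemma as a black-box citation, these are refinements of a proof the paper never asked you to write.
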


Conversely, a surface with a reflection system may be obtained from a Coxeter system and an appropriate fundamental domain, as follows. 

\begin{lemma}
\label{LMomega}
Given data $(\Gamma, V, \Omega)$ consisting of 
\begin{enumerate}[label=\emph{(\alph*)}]
\item A Coxeter system $(\Gamma, V)$ with $\Gamma$ a finite group; and
\item A nice connected surface with corners $\Omega$ equipped with a collection $(\Omega^v)_{v \in V}$ of \emph{panels}, each panel being a pairwise disjoint union of edges, and each edge contained in exactly one panel,  
\end{enumerate}
the quotient space $M = M(\Gamma, \Omega)$ of $\Gamma \times \Omega$ by the relation $\sim$ defined by $(g, x) \sim (h, y) \iff x = y$ and $g^{-1} h \in \langle v\in V : x \in \Omega^v \rangle$ satisfies:
\begin{enumerate}[label=\emph{(\roman*)}]
\item $M$ is a connected, smooth surface, orientable if and only if $\Omega$ is; 
\item $(\Gamma, V)$ is a reflection system on $M$;
\item The image in $M$ of each $\{\gamma\} \times \Omega, \gamma \in \Gamma$ is a chamber. 
\end{enumerate}
\end{lemma}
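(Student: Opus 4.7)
My approach is to carry out the Davis--Vinberg basic construction \cite{Davis} specialised to our surface-with-corners setting. For each $x\in\Omega$ set $V_x:=\{v\in V:x\in\Omega^v\}$ and $\Gamma_x:=\langle V_x\rangle\leq\Gamma$; since $\Omega$ is a nice surface with corners and each edge belongs to exactly one panel, $|V_x|\in\{0,1,2\}$, with corresponding $\Gamma_x$ trivial, of order $2$, or dihedral of order $2m(v,w)$. The equivalence class of $(g,x)$ is the coset $g\Gamma_x\times\{x\}$, and transitivity of $\sim$ follows from the subgroup property of $\Gamma_x$. With the quotient topology on $M$, a fundamental system of neighborhoods of $[(g,x)]$ is the image of $\bigsqcup_{\gamma\in\Gamma_x}\{g\gamma\}\times U$ as $U$ shrinks to $x$ in $\Omega$.

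A case analysis on $|V_x|$ shows the local model near each class is a topological disk: a single open disk when $|V_x|=0$; two half-disks glued along a common diameter when $|V_x|=1$; and $2m(v,w)$ wedges glued cyclically around a central vertex when $|V_x|=2$, which is a disk regardless of wedge angle. Combined with Hausdorffness inherited from $\Omega$, this equips $M$ with a topological $2$-manifold structure; I would make it smooth by pulling back charts from $\Omega$ on interiors and edges and applying a radial angular reparametrization $\theta\mapsto\theta/m(v,w)$ on sectors near corners (or equivalently invoke uniqueness of smooth structure on a topological surface). The $\Gamma$-action $h\cdot[(g,x)]:=[(hg,x)]$ is well-defined by the coset description, smooth, and effective since at interior points $\Gamma_x=\{e\}$ forces $h=e$. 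Connectedness of $M$ follows because $\Omega$ is connected and $V$ generates $\Gamma$, so adjacent copies join via panels.

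Each $v\in V$ is an involution whose fixed set $M^v$ contains the image of $\{e\}\times\Omega^v$, and the local model shows $v$ acts as a Euclidean reflection transverse to it. The main obstacle is verifying the global \emph{separating} property of $M^v$, which I would deduce from the standard Coxeter combinatorics (\cite[Chapter 4]{Davis}). Concretely, for any reflection $r\in\Gamma$ the Cayley graph of $(\Gamma,V)$ admits a well-defined $\{\pm1\}$-valued vertex coloring $\epsilon_r$ whose sign flips exactly across edges with label $u\in V$ satisfying $u=g^{-1}rg$ at vertex $g$ (these being precisely the edges on the $r$-wall in the Coxeter complex). Applied to $r=v$, this descends to a locally constant function on $M\setminus M^v$ which changes sign across $v$-panels, so $M^v$ separates $M$. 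Thus each $v\in V$ is a topological reflection on $M$, verifying (ii).

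Finally, interior points $x\in\mathrm{int}(\Omega)$ have trivial stabilizer in $\Gamma$, so the images of $\{g\}\times\mathrm{int}(\Omega)$ for $g\in\Gamma$ are $|\Gamma|$ disjoint open subsets of the nonsingular set of $M$; edge and corner points have nontrivial (reflection-containing) stabilizer, hence are singular. These images are therefore precisely the connected components of the nonsingular set, and their closures (the images of $\{g\}\times\Omega$) are the chambers, proving (iii). For orientability in (i), if $\Omega$ is oriented I would orient each copy $\{g\}\times\Omega$ by the original orientation twisted by $(-1)^{\ell(g)}$, where $\ell$ is the Coxeter word length: the parity of $\ell$ flips across every panel exactly once, cancelling the orientation-reversal of the reflection gluing, so the local orientations assemble into a global one on $M$. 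Conversely, if $\Omega$ is non-orientable, the image of $\{e\}\times\mathrm{int}(\Omega)$ is an open non-orientable submanifold of $M$, forcing $M$ to be non-orientable.
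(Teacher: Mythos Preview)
Your proposal is correct and follows precisely the Davis--Vinberg basic construction that the paper invokes: the paper's own proof is simply the one-line citation ``Classical, see for example \cite{Davis}, in particular \cite[Theorem 15.2]{Davis},'' so you have reconstructed in detail the argument the authors defer to. Your treatment of the local models, the separating property via the wall-crossing/word-length parity, and the orientability via $(-1)^{\ell(g)}$ are all standard components of that classical proof.
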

\begin{proof}
Classical, see for example \cite{Davis}, in particular \cite[Theorem 15.2]{Davis}. 
\end{proof}

\subsection{Reflection groups on basic reflection surfaces} 
\label{Ss:fund1} 
\label{sec:or_Z2}

\label{Ss:fund2}
We now study reflection groups of the form $\Gamma = \langle \tau \rangle \times G$ on closed orientable basic reflection surfaces $(M, \tau)$.  After characterizing all such surfaces by properties of their fundamental chambers, we enumerate the possible groups $G$ and parametrize the collection of such surfaces by their \emph{type} $\bunder$, introduced in Definition \ref{dplatclass}.

\begin{prop}
\label{Agamma}
Let $(M, \tau)$ be a closed orientable basic reflection surface.  If $\Gamma = \langle \tau \rangle \times G = \Z_2 \times G$ is a reflection group on $M$,   $\Omega$ is a chamber, and $(\Gamma, V)$ is the corresponding reflection system, then
\begin{enumerate}[label=\emph{(\roman*)}]
\item $V = \{\tau\} \cup V_G$ for a set $V_G$ of generators for $G$. 
\item $G$ is isomorphic to a subgroup of $O(3)$ generated by reflections. 
\item $\Omega$ has genus $0$, and $\cup_{v \in V_G} \Omega^v$ lies in a single component of $\partial \Omega$.
\item For each $v \in V_G$, any two points in $\Omega^{v}$ can be joined by a path in $\Omega^{v} \cup \Omega^\tau$. 
\end{enumerate}
Conversely, if  $(\Gamma, V, \Omega)$ is as in Lemma \ref{LMomega}, $\Gamma = \langle \tau \rangle \times G = \Z_2 \times G$, and items (i)-(iv) above hold, then $(M(\Gamma, \Omega), \tau)$ is a closed orientable basic reflection surface and $\Gamma$ is a reflection group on $M$. 
\end{prop}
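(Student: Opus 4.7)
The plan is to use the branched quotient $\pi : M \to \pi(M) = M/\langle \tau \rangle$ to reduce the analysis of $\Gamma$-actions on $M$ to $G$-actions on the genus-zero surface $\pi(M)$, and then to extend $G$ to a reflection subgroup of $O(3)$ acting on the sphere $\Sph^2$ obtained by capping off the boundary circles $\pi(M^\tau) \subset \partial \pi(M)$. Since $\tau$ is central in $\Gamma$, $G$ descends to $\pi(M)$; preserving its boundary, $G$ extends to $\Sph^2$, and Proposition~\ref{s2.gp} places $G$ (up to conjugacy) inside $O(3)$. Under this conjugation the reflections in $V \cap G$ become linear reflections, and since $V$ generates $\Gamma$ by Lemma~\ref{Lrefcox}(i), they generate $G$, yielding (ii).

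For (i), the key observation is that if $\tau$ and $g \in G \setminus \{e\}$ are commuting involutive isometries and $M^{\tau g}$ has a codimension-one component $C$, then along $C$ both $\tau$ and $g$ must act as the identity while perpendicularly one acts as $-1$ and the other as $+1$. The isometry acting as $+1$ perpendicular to $C$ must then equal the identity on a neighborhood of $C$, hence on all of $M$ by real-analyticity, contradicting nontriviality. Thus no product $\tau g$ with $g \in G \setminus \{e\}$ is a reflection, so every panel reflection of $\Omega$ lies in $\{\tau\} \cup G$, proving $V = \{\tau\} \cup V_G$ with $V_G := V \cap G$.

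For (iii) and (iv), $\pi|_\Omega : \Omega \to \pi(\Omega)$ is a homeomorphism onto a fundamental domain for $G$ on $\pi(M)$, which is the intersection $P \cap \pi(M)$ of a spherical polygonal $G$-chamber $P \subset \Sph^2$ with $\pi(M)$. Since $P$ is a topological disk and $\pi(M)$ is $\Sph^2$ minus a disjoint union of open cap disks, the region $\pi(\Omega)$ is planar, and $\Omega$ inherits genus zero. The panels $\pi(\Omega^v)$ for $v \in V_G$ lie on the edges of $\partial P$, hence on the outer boundary component of $\pi(\Omega)$ (the one containing $\partial P \cap \pi(M)$), yielding (iii). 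For (iv), each edge $e_v \subset \partial P$ is a connected geodesic arc; its subarcs outside the caps are the components of $\Omega^v$, and consecutive subarcs separated by a cap $C$ are joined by the arc $\partial C \cap P \subset \pi(M^\tau) \cap \pi(\Omega)$, corresponding to an arc in $\Omega^\tau$.

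For the converse, apply Lemma~\ref{LMomega} to $(\Gamma, V, \Omega)$ to construct a smooth connected surface $M = M(\Gamma, \Omega)$ with reflection system $(\Gamma, V)$; $M$ is orientable by (iii) (since $\Omega$ has genus zero) and Lemma~\ref{LMomega}(i), and closed since every edge of $\Omega$ lies in a panel. The quotient $\pi(M) = M/\langle \tau \rangle$ is built by gluing $|G|$ copies of $\Omega$ along the $V_G$-panels only; by (iii) these panels lie in a single boundary component of $\Omega$, so the resulting surface is planar, with boundary given by the $G$-orbit images of the $\Omega^\tau$-arcs. Hypothesis (iv) is essential in guaranteeing that these arcs piece together into disjoint circles rather than weaving across the gluing in a way that would introduce handles, so $\pi(M)$ has genus zero and $(M,\tau)$ is a basic reflection surface. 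The main obstacle is precisely this last step: extracting the planarity of $\pi(M)$ from (iii) and (iv) requires careful combinatorial tracking of how the $V_G$- and $\tau$-panels meet along $\partial \Omega$, and the cleanest route is probably to verify directly that in the $G$-equivariant gluing the cycles bounding the $\tau$-boundary components of $\pi(M)$ are realized by the connected components of $(\cup_v \Omega^v \cup \Omega^\tau)$ via (iv).
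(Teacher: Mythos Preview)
Your overall strategy---passing to the quotient $\pi(M)=M/\langle\tau\rangle$, capping off to $\Sph^2$, and invoking the classification of finite reflection groups on $\Sph^2$---is exactly the route the paper takes (in two sentences). Your treatment of (ii)--(iv) and the sketch of the converse are in line with that approach.

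The argument for (i), however, has a real gap. You assert that if $C$ is a codimension-one component of $M^{\tau g}$ then ``along $C$ both $\tau$ and $g$ must act as the identity while perpendicularly one acts as $-1$ and the other as $+1$.'' But a point $p\in C$ satisfies only $\tau g(p)=p$, i.e.\ $\tau(p)=g(p)$; nothing forces $\tau(p)=p$ or $g(p)=p$ individually. Hence the differentials $d\tau_p$ and $dg_p$ on $T_pM$ are not even defined, and your eigenvalue analysis cannot begin. Centrality of $\tau$ only tells you that $\tau$ and $g$ preserve $M^{\tau g}$ \emph{setwise}; they may permute its components or act nontrivially on $C$.

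In fact (i), read literally, is sensitive to which complement $G$ one chooses in $\Gamma=\langle\tau\rangle\times G$. On the square torus $M=\R^2/\Z^2$ with $\tau(x,y)=(x,-y)$ and $g(x,y)=(-x,-y)$, the pair $(M,\tau)$ is basic and $\Gamma=\langle\tau,g\rangle$ is a reflection group, but the panel reflections are $V=\{\tau,\tau g\}$, so $V\setminus\{\tau\}\not\subset G$ when $G=\langle g\rangle$. The paper's ``follows easily from the definitions'' is implicitly taking $G$ to be generated by $V\setminus\{\tau\}$ (equivalently, assuming the elements of $G$ already act by reflections on $\pi(M)$); with that convention (i) is essentially tautological, and the content of the proposition lies in (ii)--(iv), which your quotient argument handles correctly.
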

\noindent The component of $\partial \Omega$ in (iii) is denoted by $\partial \Omegaout$.

\begin{proof}
Let $(M, \tau)$ be as in the first part of the Proposition.  Item (i) follows easily from the definitions.  For (ii), let $\pi : M \rightarrow M/ \langle \tau \rangle$ be the quotient, and note from the assumptions that $G$ acts by reflections on the genus zero surface $\pi(M)$.  Therefore $G$ is isomorphic to one of the classical reflection groups acting on the sphere \cite[page 53]{Conway}, proving (ii).  Items (iii)-(iv) follow easily from (ii) and the definitions, by considering the action of $G$ on $\pi(M)$. 

Conversely, if  $(\Gamma, V, \Omega)$ are as in the second part of the Proposition, items (ii)-(iv) imply $M/ \langle \tau \rangle$ has genus zero, hence $(M, \tau)$ is basic.  The remaining conclusions follow from Lemma \ref{LMomega}.
\end{proof}



The classification of the groups $G$ which arise in Proposition \ref{Agamma} is well-known \cite[page 53]{Conway}; to fix notation, we enumerate the groups as follows. 
\begin{definition}[Symmetry groups]
\label{dplat}
Given
\begin{align*}
(k_{12}, k_{13}, k_{23}) \in \{ (2, 3, 3), (2, 3, 4), (2, 3, 5)\} \cup \{ (2, 2, m) : m \in \N, m \geq 2\},
\end{align*}
 define the group $*k_{12}k_{13}k_{23}$ with generators $\{\rho_i\}_{i=1}^3$ satisfying the relations 
\begin{align*}
\rho_1^2 = \rho^2_2 = \rho^2_3 = (\rho_1 \rho_2)^{k_{12}}  = (\rho_1 \rho_3)^{k_{13}} = (\rho_2 \rho_3)^{k_{23}} = \Id.
\end{align*}
For $k \geq 2$, define the group $*kk$ with generators $\{\rho_i\}_{i=1}^2$ and relations
\begin{align*}
\rho_1^2 = \rho_2^2 = (\rho_1\rho_2)^k = \Id.
\end{align*}
Finally, let $1*$ be the group $\Z_2$ of order two, with generator $\rho_1$, and let $\{\Id\}$ be the trivial group. 
\end{definition}

A group $*k_{12}k_{13}k_{23}$ is called a \emph{platonic} symmetry group, and the groups  $*233, *234, *235$, and $*22m$ are called respectively \emph{tetrahedral}, \emph{octahedral}, \emph{icosahedral}, and \emph{prismatic dihedral}  groups.  The group $*kk$ is called the $k$-fold \emph{dihedral group}, and is also denoted by $D_k$. 

Each group $G$ in \ref{dplat} acts in a standard way \cite[page 53]{Conway} on $\R^3$, with the generators acting by reflections; the fixed-point sets of all reflections in $G$ induce a tesselation of $\Sph^2$ by fundamental regions whose edges are geodesic segments.  For a platonic group $G = *k_{12} k_{13} k_{23}$, each fundamental region is a triangle with vertex angles $\pi/k_{12}, \pi/k_{13}$, and $\pi/k_{23}$, while for a dihedral group $G = *kk$, each fundamental region is a digon with each vertex angle equal to $\pi/k$.  Finally, for $G = 1* = \Z_2$, each fundamental region is a hemisphere.

\begin{definition}
\label{dplatclass}
The \emph{type} $\bunder = \bunder(M)$ associated  to $M$ as in Proposition \ref{Agamma} is the formal  linear combination 
 \begin{align*}
 \bunder = f + \sum_{i} e_i \rho_i + \sum_{i< j} v_{ij} \rho_i \rho_j
 \end{align*}
 where $f, e_i$, and $v_{ij}$ are the number of components $S$ of $M^\tau$ meeting $\Omega$ satisfying respectively $\Stab_\Gamma(S) = \langle \tau\rangle$, $\Stab_\Gamma(S) = \langle \tau, \rho_i \rangle$, and $\Stab_\Gamma(S) = \langle \tau, \rho_i, \rho_j\rangle$.  The indices in the sums are understood to run over the indices corresponding to the generators for $G$.  
\end{definition}

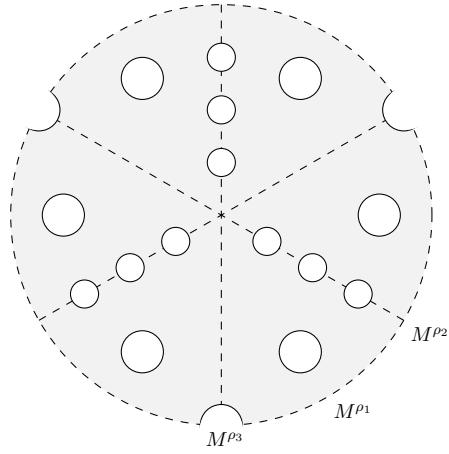
\begin{figure}[h]
\begin{tikzpicture}[scale=.7,transform shape]
\centering
\def\ra{4} 
\fill [fill = light-gray] (0, 0) circle (\ra); 
\draw [dashed] (0, 0) circle (\ra); 
\foreach \i in {0, 1, 2}
{
\draw [dashed] ({\ra*sin( \i * 120)}, {\ra*cos(\i * 120)})--({-\ra*sin( \i * 120)}, {-\ra*cos(\i * 120)}); 
	\foreach \j in {0, 1, 2}
	{
	\draw [fill =white] ({ (\ra/4+ \j*\ra/4)*sin(\i*120)}, {(\ra/4+ \j*\ra/4)*cos(\i*120)}) circle (\ra/15); 
	}
}

\foreach \i in {0, 1, 2}
{
\fill [fill= white] ({\ra*sin(60+\i*120)}, {\ra*cos(60+\i*120)}) circle (\ra/10); 
\draw [black] ({\ra*sin(60+\i*120)}, {\ra*cos(60+\i*120)}) circle (\ra/10); 
\fill [fill= white] ({\ra*1.02*sin(60+\i*120)}, {\ra*1.02*cos(60+\i*120)}) circle (\ra*1.04/10); 
}

\foreach \i in {0, 1, 2, 3, 4, 5}
{
\draw [fill =white] ({ .75*\ra*sin(30 + \i*60) }, { .75*\ra*cos(30 + \i*60)}) circle (\ra/10); 
}
\draw[] ({\ra*cos(271)}, {\ra*sin(271)}) node[below]{$M^{\rho_3}$};
\draw[] ({\ra*cos(300)}, {\ra*sin(300)}) node[below right]{$M^{\rho_1}$};
\draw[] ({\ra*cos(330)}, {\ra*sin(330)}) node[below right]{$M^{\rho_2}$};

\end{tikzpicture}
\caption{Schematic for one quarter of a surface $M(\Gamma, \Omega)$ with $\Gamma = \Z_2 \times *223$ and configuration $1+ 3\rho_2+\rho_1\rho_3$, depicting six fundamental chambers for $\Gamma$.  
}
\label{Fplat}
\end{figure}

\begin{remark}
The type $\bunder$ can be equivalently described as follows: $f$ is the number of components of $\partial \Omega$ other than $\partial \Omegaout$, $e_i$ is the number of edges of $\Omega^{\tau}$ which meet only $\Omega^{\rho_i}$ along their endpoints, and $v_{ij}$ is the number of edges of $\Omega^\tau$ which meet $\Omega^{\rho_i} \cap \Omega^{\rho_j}$ along their endpoints.  See Figure \ref{Fplat}.
 \end{remark}

\begin{notation}
\label{NMgb}
Clearly, the decomposition $\partial \Omega = \cup_{i=1}^n \Omega^{\rho_i}$ into panels can be reconstructed from $\bunder$, and we sometimes write $\Omega = \Omega(\ub)$ and  $M = M(G, \bunder)$ to denote the dependence of $M$ on $(G, \bunder)$.  In the case where $G$ is trivial, the type $\bunder$ is a positive integer $a$, and we adopt the simplified notation $M(a)$. 
\end{notation}

The genus of $M(G, \bunder)$ is recorded below for the reader's convenience.

\begin{lemma}
\label{Lbcplat}
\phantom{ab}
\begin{enumerate}[label=\emph{(\roman*)}]
\item $*k_{12}k_{13}k_{23}$ has order $4/( \sum_{i < j} \frac{1}{k_{ij}}-1)$, and $*kk$ has order $2k$. 
\item The subgroup $\langle \rho_i , \rho_j \rangle  \leq *k_{12}k_{13}k_{23}$ has order $2k_{ij}$. 
\item The genus of $M(G, \bunder)$ is $|G| \big(f+ \frac{1}{2}\sum_i e_i + \frac{1}{2} \sum_{i< j} \frac{v_{ij}}{k_{ij}}\big) - 1$,
with the same convention on sums as in Definition \ref{dplatclass}.
\end{enumerate}
\end{lemma}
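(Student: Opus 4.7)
Parts (i) and (ii) are standard facts about finite reflection groups. For (i), $*k_{12}k_{13}k_{23}$ is the orientation-reversing extension of a spherical Coxeter group whose fundamental triangle has angles $\pi/k_{ij}$; summing the excess angle over the $|G|/2$ triangles tiling $\Sph^2$ via Gauss-Bonnet gives $|G|\cdot 2 \big(\sum_{i<j}\frac{1}{k_{ij}}-1\big) = 8\pi$ divided by $2\pi$, whence the stated order. Similarly $|*kk| = 2k$ is immediate from Definition \ref{dplat}. For (ii), $\langle \rho_i, \rho_j\rangle$ has the presentation of a dihedral group of order $2k_{ij}$.

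For (iii) the main step is a count of the ovals in $M^\tau$. Let $n_{\text{ovals}}$ be the number of connected components of $M^\tau$. By Definition \ref{dbasref}, $\pi(M)$ has genus zero; since $\partial M = \varnothing$ and $M^\tau$ is exactly the branching set of the double cover $\pi\colon M \to \pi(M)$, the boundary components of $\pi(M)$ are in bijection with the ovals of $M^\tau$, so $\chi(\pi(M)) = 2 - n_{\text{ovals}}$. Using that $M^\tau$ is a disjoint union of circles (Euler characteristic $0$), the Riemann-Hurwitz-type identity for the branched double cover gives $\chi(M) = 2\chi(\pi(M)) = 4 - 2n_{\text{ovals}}$, so
\[
g(M) = 1 - \chi(M)/2 = n_{\text{ovals}} - 1.
\]
It remains to express $n_{\text{ovals}}$ in terms of $\bunder$ via orbit-stabilizer.

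The ovals split into three families according to their $\Gamma$-stabilizer, indexed by the terms of $\bunder$. For each $f$-type circle in $\Omega$ (interior boundary component), the stabilizer in $\Gamma$ is $\langle \tau \rangle$, so the $\Gamma$-orbit has size $|\Gamma|/2 = |G|$, contributing $f|G|$ ovals. For each $e_i$-type arc in $\Omega$, the ambient oval $S$ has $\Stab_\Gamma(S) = \langle \tau, \rho_i\rangle$ of order $4$, giving $\Gamma$-orbit of size $|G|/2$ and a contribution of $e_i|G|/2$. For each $v_{ij}$-type arc, $\Stab_\Gamma(S) = \langle \tau, \rho_i, \rho_j\rangle = \langle \tau \rangle \times \langle \rho_i, \rho_j \rangle$ has order $4k_{ij}$ by (ii), so the orbit has size $|G|/(2k_{ij})$, contributing $v_{ij}|G|/(2k_{ij})$. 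One small but essential verification is that each arc-type in the chamber does lie on a distinct $\Gamma$-orbit of ovals (so that different ``slots'' in $\bunder$ do not over-count); this follows because any two arcs in $\Omega$ related by an element of $\Gamma$ must in fact coincide, since $\Omega$ is a fundamental domain. Summing:
\[
n_{\text{ovals}} = f|G| + \tfrac{|G|}{2}\sum_i e_i + \tfrac{|G|}{2}\sum_{i<j}\frac{v_{ij}}{k_{ij}},
\]
and subtracting $1$ yields the formula in (iii).

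The main point that needs verification is the claim $\Stab_\Gamma(S) = \langle\tau,\rho_i,\rho_j\rangle$ for a $v_{ij}$-oval $S$, in particular that $\rho_i,\rho_j$ act on the circle $S$ as genuine reflections (not as rotations by $\pi$), so that their restrictions generate a dihedral group of order $2k_{ij}$ cutting $S$ into $2k_{ij}$ chamber-arcs. This follows from the fact that the endpoints of the $v_{ij}$-arc in $\Omega$ lie in $M^{\rho_i} \cap M^\tau$ and $M^{\rho_j} \cap M^\tau$ respectively, so $\rho_i|_S$ and $\rho_j|_S$ have fixed points, hence are reflections of $S$.
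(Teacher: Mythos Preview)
Your argument is correct and fleshes out what the paper leaves implicit: the paper's proof of (iii) is the single sentence ``follows from Proposition~\ref{Agamma} and the definitions,'' so your orbit--stabilizer count of the ovals in $M^\tau$ together with $g = n_{\text{ovals}} - 1$ is exactly the routine computation the authors have in mind.

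Two minor points of presentation. First, your Gauss--Bonnet sentence for (i) is garbled (there are $|G|$ chamber triangles on $\Sph^2$, not $|G|/2$, each of area $\pi(\sum_{i<j}1/k_{ij}-1)$, and equating the total to $4\pi$ gives the order); since (i) is standard this does not matter. Second, the one-line justification that distinct ovals meeting $\Omega$ lie in distinct $\Gamma$-orbits (``any two arcs in $\Omega$ related by an element of $\Gamma$ coincide'') is not quite a proof as stated: if $S$ and $\gamma S$ both meet $\Omega$, the arc $\gamma(S\cap\Omega)$ lies in $\gamma\Omega$, not in $\Omega$, so the fundamental-domain property does not apply directly. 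The clean way is to observe that the set of chambers meeting a given oval $S$ is exactly the $\Stab_\Gamma(S)$-orbit of any one of them (your final paragraph essentially establishes this by showing $\rho_i,\rho_j$ act as genuine reflections on $S$, cutting it into $|\Stab_\Gamma(S)|/2$ arcs, each bordered by two $\tau$-related chambers); then if $S$ meets $\Omega$ and $\gamma S$ meets $\Omega$, one has $\gamma\in\Stab_\Gamma(S)$, so $\gamma S = S$.
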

\begin{proof}
Items (i) and (ii) are standard, and (iii) follows from Proposition \ref{Agamma} and the definitions.  
\end{proof}

The following two examples are used to demonstrate Notation \ref{NMgb} and will be used in the proof of Theorem \ref{thm:group_existence}.

\begin{example}
\label{EMGtori}
We classify the $M(G, \bunder)$ from Proposition \ref{Agamma} with genus one:  using the definitions and Lemma \ref{Lbcplat}, all such  examples are
$M(2)$, $M(\Z_2, 1)$, $M(D_k, 2\rho_1 \rho_2)$, $M(D_2, \rho_1) = M(D_2, \rho_2)$, $M(\Z_2 \times D_k, \rho_2\rho_3)$, and $M(\Z_2 \times D_2, \rho_1 \rho_3) = M(\Z_2 \times D_2, \rho_1 \rho_2)$. 
\end{example}

\begin{example}
The Lawson surfaces $\xi_{g, 1}$ are examples of various $M(G, \bunder)$ from Proposition \ref{Agamma}, as follows:
\begin{itemize}
\item $\xi_{2, 1}$ is an $M(3)$, an $M(\Z_2, 1+ \rho_1)$, and an  $M(D_2, \rho_1+\rho_1\rho_2)$. 
\item $\xi_{3, 1}$ is an $M(D_2,1)$. 
\item $\xi_{k-1, 1}$ is an $M(D_k, \rho_1)$ and an $M(\Z_2\times D_k, \rho_1 \rho_2)$. 
\end{itemize}
\end{example}

The following lemma is used in Section~\ref{sec:global_existence}.

\begin{lemma}
\label{lem:2basic}
Let $M$ be a closed orientable surface of genus $\gamma>1$. Suppose that $\tau\ne \rho_1$ are two reflections on $M$ such that $(M,\tau)$ and $(M,\rho_1)$ are both basic reflection surfaces. Then $\langle \tau,\rho_1\rangle = \Z_2\times\Z_2 $ and $M$ with this action of  $\Z_2\times\Z_2$ is of type $M(\Z_2,(\gamma+1)\rho_1)$.
\end{lemma}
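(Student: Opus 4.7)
The plan is to prove the lemma in two stages: first showing $\tau\rho_1 = \rho_1\tau$ (so $\langle\tau,\rho_1\rangle \cong \Z_2\times\Z_2$), then computing the type of the resulting $\Z_2\times\Z_2$-action.

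By Theorem \ref{Tclass}(iii) and Lemma \ref{Ldis} applied to both basic orientable closed surfaces $(M,\tau)$ and $(M,\rho_1)$, each of $M^\tau$ and $M^{\rho_1}$ consists of exactly $\gamma+1$ disjoint ovals, and $M\setminus M^\tau = M_+\sqcup M_-$ with $\tau$ swapping the two pieces $M_\pm$, each diffeomorphic to a $(\gamma+1)$-holed sphere; the same structure holds for $\rho_1$. A standard rigidity observation — two reflections with the same codimension-one fixed set must coincide, by linearizing their product around the fixed hypersurface in an invariant chart, since that product has trivial differential there and so is the identity — reduces commutativity $\rho_1\tau\rho_1^{-1} = \tau$ to the set-theoretic identity $\rho_1(M^\tau) = M^\tau$.

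To establish $\rho_1(M^\tau) = M^\tau$, I would argue by case analysis on $M^\tau \cap M^{\rho_1}$. If $M^{\rho_1} \subseteq M^\tau$, matching component counts forces $M^{\rho_1} = M^\tau$, hence $\rho_1 = \tau$ by the rigidity, contradicting $\tau\neq\rho_1$. If some whole component of $M^\tau$ lies in $M^{\rho_1}$ (or vice versa), consider the commutator $[\tau,\rho_1] = (\tau\rho_1)^2$, which is orientation-preserving: every $p\in M^\tau\cap\rho_1^{-1}(M^\tau)$ is fixed by $[\tau,\rho_1]$, and a codimension-one component of this fixed set forces $[\tau,\rho_1] = \mathrm{Id}$, i.e.\ commutativity. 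The remaining ``generic'' case, where $M^\tau$ and $M^{\rho_1}$ are either disjoint or meet only in transverse points, is the main technical obstacle. Here the hypothesis $\gamma > 1$ enters essentially: for the torus ($\gamma=1$) there exist non-commuting basic reflections (e.g.\ $\tau_0$ and $\tau_s$ on $\mathbb{R}^2/\mathbb{Z}^2$ with parallel fixed axes), so any proof must invoke higher-genus rigidity. My approach would be to use that each of $M^\tau$ and $M^{\rho_1}$ represents a Lagrangian subspace of $H_1(M;\mathbb{Z}/2)$ (generated by its $\gamma+1$ disjoint circles with the single relation that their sum bounds) and to analyze the configuration $M^\tau\cup M^{\rho_1}\cup\rho_1(M^\tau)$ on $M$, deriving a contradiction from the requirement that both $M/\tau$ and $M/\rho_1$ have genus zero — essentially, ``too many'' disjoint Lagrangian circle systems cannot coexist on a genus $\geq 2$ surface compatibly with two genus-zero quotient structures.

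For the type, once commutativity is known, set $G = \langle\rho_1\rangle \cong \Z_2$ so that $\Gamma = \Z_2 \times G$, and pick a fundamental chamber $\Omega$. Then $\Omega$ is a connected genus-zero surface with panels $\Omega^\tau$ and $\Omega^{\rho_1}$, and the type takes the form $\ub = f + e_1\rho_1$ in the notation of Definition \ref{dplatclass} (there are no $v_{ij}$-terms since $G$ has a single generator). Lemma \ref{Lbcplat}(iii) then gives $\gamma = 2(f + e_1/2) - 1$, i.e.\ $2f + e_1 = \gamma+1$. To pin down $f = 0$, observe that $M/\langle\rho_1\rangle$ is obtained by doubling $\Omega$ along $\Omega^\tau$: the $f$ ``interior'' boundary circles of $\Omega$ (each entirely in $\Omega^\tau$, by the Remark following Definition \ref{dplatclass}) become interior in the double, while the $e_1$ pairs of $\Omega^{\rho_1}$-arcs on $\partial\Omegaout$ join up along their common corner endpoints on $\Omega^\tau$ to produce exactly $e_1$ boundary circles in the double. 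Since $(M,\rho_1)$ basic forces $M/\rho_1$ to have $\gamma+1$ boundary circles, we conclude $e_1 = \gamma+1$ and hence $f = 0$, giving $\ub = (\gamma+1)\rho_1$ as desired.
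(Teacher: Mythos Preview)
Your argument has a genuine gap in the first stage: the ``generic case'' where $M^\tau$ and $M^{\rho_1}$ meet only transversally (or not at all) is precisely the heart of the matter, and you do not prove it. You write ``My approach would be to use\ldots'' and gesture at a homological argument via Lagrangian subspaces of $H_1(M;\Z/2)$, but no actual contradiction is derived. Since this case is the typical one, the commutativity claim is left unproven.

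The paper's proof avoids this difficulty entirely by a different and much cleaner route. Rather than attempting to prove $\tau\rho_1=\rho_1\tau$ directly, one first fixes a hyperbolic metric for which both $\tau$ and $\rho_1$ are isometries; finiteness of the isometry group then gives $\langle\tau,\rho_1\rangle\cong D_k$ for some $k\geq 2$. One then studies the $D_k$-fundamental chamber $\Omega$: since $(M,\tau)$ is basic, $\Omega$ has genus zero, and a short connectivity argument (using that \emph{both} quotients $M/\langle\tau\rangle$ and $M/\langle\rho_1\rangle$ have genus zero) forces $\Omega$ to be a disk. Letting $m$ denote the number of components of $\Omega^\tau$, an Euler-characteristic count gives $\chi(M)=2m-2km+2k$, while the basic-reflection hypothesis gives $\gamma+1\leq m$; combining these yields $m(k-2)\geq k-2$, and since $m>1$ (else $M$ is a sphere) and $k>1$, one concludes $k=2$ and $m=\gamma+1$. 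Thus commutativity and the type computation fall out simultaneously from the chamber combinatorics, with no need for the case analysis or the homological machinery you propose.

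Your second stage (deducing $f=0$ from the genus-zero condition on $M/\langle\rho_1\rangle$ once commutativity is known) is correct in outline, and indeed matches what the paper obtains as the statement ``$\Omega$ is a disk.''
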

\begin{proof}
As is explained in Section~\ref{sec:prelim}, one can always find a hyperbolic metric $h$ such that $\tau,\rho_1$ are both isometries of $(M,h)$. Since the isometry group of hyperbolic surfaces is finite, there exists $k>1$ such that $(\tau\rho_1)^k = e$ so that $\langle \tau,\rho_1\rangle = D_k$. Then $M=M(D_k,\Omega)$ as in Lemma~\ref{Agamma}. Since $(M,\tau)$ is basic, we conclude that $\Omega$, which can be identified with a subset of $M/\langle\tau\rangle$, has genus zero. Suppose that $\Omega$ has more than one boundary component. Then either there exists a connected component $S$ of $\bd\Omega$ properly contained in $\Omega^\tau$ (or $\Omega^{\rho_1}$), or two points $p,q$ in $\Omega^\tau$ (or $\Omega^{\rho_1}$) lie on different connected components. Indeed, otherwise $\bd \Omega$ has exactly two components, one in $\Omega^\tau$, one in $\Omega^{\rho_1}$ and, hence, $M(D_k,\Omega)$ is a torus, a contradiction. 
If $S$ is a connected component of $\bd \Omega$ such that $S\varsubsetneq \Omega^\tau$, then $\Omega\cup(\tau\Omega)\setminus S$ is connected. Since $\Omega\cup(\tau\Omega)$ can be identified with a subset of $M/\langle\rho_1\rangle$, we obtain a contradiction with the fact that the latter has genus $0$. Similarly, if $p,q$ are in $\Omega^\tau$ and lie on different connected components of $\bd\Omega$, we repeat the same argument with $S = I\cup\tau(I)$, where $I\subset\overline\Omega$ is a segment connecting $p$ and $q$.

Thus, we have proved that $\Omega$ is diffeomorphic to a disk. Let $m$ be the number of connected components of $\Omega^\tau$, which coincides with the number of connected components of $\Omega^{\rho_1}$. Note that $m>1$ as otherwise, $M$ is a sphere.
On one hand, using Euler's formula we compute $\chi(M) = 2m-2km+2k$. On the other hand, the number of connected components of $M^\tau$ equals $\gamma+1$ and does not exceed $m$, so one has $4-2m\geq 2m-2km+2k$ or $m(k-2)\geq k-2$. Using $m>1,k>1$, we conclude that $k=2$ and then it is easy to see that $m=\gamma+1$, completing the proof.
\end{proof}

\subsection{Surfaces with boundary} 
\label{sec:actions_Sex}
For each surface $N$ with boundary one can construct a closed surface $\wt N$ obtained by gluing two copies of $N$ along the identity map of the boundary $\partial N$. The surface $\wt N$ is naturally endowed with a reflection $\iota$ interchanging the two copies of $N$, so that the fixed set $\wt N^{\iota}$ is the image of the boundary $\partial N$. This way $\iota$ is a separating reflection of $\wt N$ and the factor space can be naturally identified with $N$. We call $\wt N$ with this action of $\mathbb{Z}_2 = \langle\iota\rangle$ a {\em double} of $N$. Conversely, any closed surface with a separating reflection is a double of some surface with boundary.

A proper (that is, boundary-preserving) action $T\colon \Gamma\times N\to N$ of a group $\Gamma$ on $N$ induces an action $\wt T$ of $\wt \Gamma = \Gamma\times \mathbb{Z}_2$ on $\wt N$. Conversely, if $\wt\Gamma$ acts on a closed surface $M$ with the nontrivial element of $\mathbb{Z}_2$ acting by a separating reflection $\iota$, then $\Gamma$ acts properly on the surface with boundary $N=M/ \langle \iota \rangle$. It is easy to see that this bijection preserves topological equivalence of surfaces with group actions. 

\begin{example}
If the action $T$ is trivial, then we recover topological degenerations of surfaces with boundary. For example, the surface $M(a)$ from Section~\ref{sec:or_Z2} can be naturally identified with $\wt{N(a)}$, where $N(a)$ is a surface of genus $0$ with $a$ boundary components.
\end{example}

\begin{example}
The closed surfaces $M(G,\ub)$ can be viewed as doubles of $N_\tau(G,\ub)$, where the subscript $\tau$ indicates that we choose $\iota=\tau$ to be the doubling involution. Surfaces $N_\tau(G,\ub)$
have genus $0$ and have number of boundary components equal to the genus of $M(G,\ub)$ plus $1$. Examples of surfaces of this type are proper surfaces of genus $0$ in $\mathbb{B}^3$ with boundary components arranged according to the standard action of $G$ on $\mathbb{S}^2 = \bd \mathbb{B}^3$.
\end{example}

\begin{example}
If the group $G$ splits off a $\Z_2$ factor, then it is possible to view $M(G,\ub)$ as a double in a different way by taking $\iota$ to be the non-trivial element of that factor (provided it is a separating reflection). This happens when $G = \Z_2, D_2$ or $\Z_2\times D_k$ in which case we can set $\iota = \rho_1$ without loss of generality. The corresponding surface is denoted as $N_{\rho_1}(G,\ub)$.

For $G =\mathbb{Z}_2$ the surface $N_{\rho_1}(\Z_2,f+e_1\rho_1)$ has $e_1$ boundary components and genus $f$ if $e_1>0$, or $2$ boundary components and genus $f-1$ otherwise. In particular, for different values of $\{f,e\}$ these surfaces cover all possible topological types of surfaces with boundary.
For $G = D_2$ the surface $N_{\rho_1}(D_2, f+e_1\rho_1+e_2\rho_2 + v_{12}\rho_1\rho_2)$ has $2e_1+v_{12}$ boundary components and genus $2f+e_2$ if $e_1+v_{12}>0$, or $2$ boundary components and genus $2f+e_2-1$ otherwise. Finally, for $G = \Z_2\times D_k$ the surface $N_{\rho_1}(G,\ub)$ has $k(2e_1+v_{12}+v_{13})$ boundary components and genus $k(2f+e_2+e_3)+v_{23}$ if $2e_1+v_{12}+v_{13}>0$, or 2 boundary components and genus $k(2f+e_2+e_3)+v_{23}-1$ otherwise. Examples of surfaces of this type are proper surfaces in $\mathbb{B}^3$ that look like two parallel copies of the disk connected by tubes and half-tubes that are arranged in $G/\Z_2$-invariant way on $\mathbb{D}^2$ for the standard action of $G/\Z_2$ on $\mathbb{D}^2$.

\end{example}

%
%
%

\subsection{Minimal immersions of basic reflection surfaces}
\label{ssdoub}
Here we study isometric minimal immersions of basic reflection surfaces $(M, \tau)$ by first eigenfunctions; in particular minimal immersions into the sphere $\Sph^n$, and  free boundary minimal immersions into the ball $\B^n$.  

We prove that such immersions are necessarily embeddings; more strongly, such immersions are minimal \emph{doublings}, either of a $2$-sphere $\Sph^2 \subset \Sph^n$ or of a $2$-ball $\B^2\subset \B^n$, in the sense of \cite[Definition 1.1]{LDG}.  To be precise, we recall the definition from \cite{LDG} before proceeding.

\begin{definition}
\label{ddoub}
Given a Riemannian manifold $N$ and a surface $\Sigma \subset N$, a (surface) \emph{doubling} of $\Sigma$ in $N$ is a smooth surface $M$ in $N$ satisfying
\begin{enumerate}[label={(\roman*)}]
\item The nearest-point projection $\pi : M \rightarrow \Sigma$ is well-defined;
\item $\pi(M)$ is a disjoint union  $\Sigma_1 \cup \Sigma_2$, with $\Sigma_1$ a disjoint union of curves and isolated points, $\Sigma_2\subset \Sigma$  a domain, and moreover
\item $\pi|_{\pi^{-1}(\Sigma_1)} : \pi^{-1}(\Sigma_1) \rightarrow \Sigma_1$ is a diffeomorphism; and
\item $\pi|_{\pi^{-1}(\Sigma_2)} : \pi^{-1}(\Sigma_2) \rightarrow \Sigma_2$ is a smooth $2$-sheeted covering map.
\end{enumerate}
The doubling is called \emph{minimal} if $M$ is minimal, and called \emph{symmetric} if $\Sigma$ is fixed pointwise and $M$ is fixed setwise by an involutive isometry of $N$.
\end{definition}

Actually, Definition \ref{ddoub} modifies \cite[Definition 1.1]{LDG} slightly in order to allow doublings in codimension bigger than one.  For nonorientable $(M, \tau)$, the corresponding doublings of $\Sph^2$ or of $\B^2$ have codimension at least two. 

In each case, we show that $M$ has area less than twice the area of the surface it doubles, less than $2|\Sph^2| = 8\pi$ in the $\Sph^n$ case, and less than $2|\B^2| = 2\pi$ in the $\B^n$ case. 

An important difference between the $\Sph^n$ and $\B^n$ settings is the space of ambient coordinate functions on $\Sph^n$ is $(n+1)$-dimensional, while on $\B^n$ it is only $n$-dimensional. 
For this reason, the proofs in the $\B^n$ case are more involved, and in particular rely on the Morse-inequality type results Lemma \ref{Lmorse} and Lemma \ref{Lbdgraph}.

Before proceeding, we introduce some notation.
Given a vector subspace $V\subset \R^n$, denote by $V^\perp$ the orthogonal complement of $V$ in $\R^n$, and define the reflection $\Rcapunder_V : \R^n \rightarrow \R^n$ with respect to $V$, by
\begin{align*}
\Rcapunder_V : = \Pi_V - \Pi_{V^\perp},
\end{align*}
where $\Pi_V$ and $\Pi_{V^\perp}$ are the orthogonal projections of $\R^n$ onto $V$ and $V^\perp$ respectively.  

Define also the nearest point projection $\Pi^\Sph_V : \R^n \setminus V^\perp \rightarrow \Sph(V)$ by
\begin{align*}
\Pi^\Sph_V = \Pi^\Sph \circ \Pi_V, 
\quad
\text{where}
\quad
\Pi^\Sph (p ) = p / |p|,
\end{align*}
and we denote $\Sph(V): = \Sph^{n-1} \cap V$.

\begin{lemma}
\label{Lgraphsph}
If $\iota: M \rightarrow \Sph^n$ is a full branched minimal immersion with
\begin{enumerate}[label=\emph{(\alph*)}]
\item $(M, \tau)$ is a closed basic reflection surface with genus $>0$, and
\item $\iota$ is a (branched) isometric immersion by first eigenfunctions,
\end{enumerate}
then the following hold:
\begin{enumerate}[label=\emph{(\roman*)}]
\item $n \geq 3$, with $n = 3$ if and only if $M$ is orientable. 
\item $\tau = \iota^* \Rcapunder_P$ for some $3$-dimensional subspace $P \subset \R^{n+1}$; 
\item For each oval $O \subset M^\tau$, $\iota(O) \subset \Sph(P)$ is embedded and strictly convex; 
\item $M$ is a symmetric minimal doubling of $\Sph(P)$ in  $\Sph^n$ as in \ref{ddoub};
\item The area of $M$ is strictly less than $8\pi$.
\end{enumerate}
In particular, $\iota$ is an embedding.  Finally, if $M$ is $\langle \tau \rangle \times G$-invariant as in \ref{Agamma},  then the $\langle \tau \rangle \times G$-action  induced by $\iota$ is conjugate in $O(4)$ to the standard product action.
\end{lemma}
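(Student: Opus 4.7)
The plan is to establish the conclusions in the order (i)--(ii), (iii)--(iv) with embeddedness, and finally (v) with the equivariance, using the multiplicity bounds of Propositions~\ref{Lasymcl} and~\ref{Lrhoquo2}, nodal domain analysis of first eigenfunctions, the area bound of Lemma~\ref{Levalbdcl}, and a Hopf boundary-point argument for the strict convexity of ovals.

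For (i) and (ii), since the coordinate functions of $\iota$ lie in the first eigenspace $\Ecal$ and $\tau$ acts on $\Ecal$ by an involution, they split orthogonally into $\tau$-even and $\tau$-odd parts. Let $P \subset \R^{n+1}$ be the span of the $\tau$-even coordinates, so that $\iota \circ \tau = \Rcapunder_P \circ \iota$ by construction. Proposition~\ref{Lrhoquo2}(ii) gives $\dim P \leq 3$, while Proposition~\ref{Lasymcl} together with Lemma~\ref{Lsigconno} gives $\dim P^\perp \leq 1$ when $M$ is orientable and $\dim P^\perp \leq 2$ when $M$ is nonorientable. The lower bound $n \geq 3$ follows from fullness and positive genus, since a positive-genus closed surface admits no full minimal immersion into $\Sph^2$. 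In the orientable case this pins down $n = 3$ and $\dim P = 3$. In the nonorientable case, the upper bound gives $n \leq 4$; once the doubling/embedding conclusion of (iv) is in hand, the topological fact that closed nonorientable surfaces cannot embed as hypersurfaces in the orientable $\Sph^3$ rules out $n = 3$, so $n = 4$ and again $\dim P = 3$.

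For (iii), (iv), and embeddedness, I focus on the orientable case, the nonorientable case being analogous with $\dim P^\perp = 2$. Let $\phi$ span $P^\perp$; by Proposition~\ref{Lasymcl}(ii), $\Ncal_\phi = M^\tau$, and the two nodal domains of $\phi$ coincide with the two components $M_\pm$ of $M \setminus M^\tau$ provided by Lemma~\ref{Ldis}. Consequently $\iota(M_\pm)$ lie in the two open hemispheres of $\Sph^n \setminus \Sph(P)$, while $\iota(M^\tau) \subset \Sph(P)$ by $\tau$-equivariance, so $\Pi^\Sph_P \circ \iota$ is well-defined on all of $M$. To show each oval $O \subset M^\tau$ maps to an embedded strictly convex curve in $\Sph(P)$, I would apply Proposition~\ref{Lrhoquo2}(i)(a) to arbitrary linear combinations of the $\tau$-even coordinates: for every linear functional $\ell$ on $P$, the nodal set of the first eigenfunction $\ell \circ \iota$ meets $O$ in at most two points, which Morse-theoretically forces $\ell \circ \iota|_O$ to have a unique maximum and a unique minimum, yielding embeddedness and nonstrict convexity. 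Strict convexity is then obtained by a Hopf boundary-point argument applied to $\phi$ on the free boundary minimal surface $\iota(M_+) \subset \Sph^n_+$ (the orthogonality at the boundary coming from the $\tau$-symmetry): any higher-order tangency of $\iota(O)$ with a supporting great circle in $\Sph(P)$ would violate the boundary-point lemma for the nonnegative first eigenfunction $\phi$. The $2$-to-$1$ covering condition in Definition~\ref{ddoub} and the embeddedness of $\iota$ follow by combining the sign condition on $\phi$ with minimality, which forces $\iota|_{M_\pm}$ to be a local diffeomorphism onto a graph over the domain in $\Sph(P)$ bounded by $\iota(M^\tau)$; branch points are thereby excluded as well.

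For (v), Lemma~\ref{Levalbdcl}(i) gives $\lambda_1(M,g)\,|M| < 16\pi$, and the standard identity $\Delta \iota = 2 \iota$ for minimal immersions into $\Sph^n$ by first eigenfunctions implies $\lambda_1(M,g) = 2$, hence $|M| < 8\pi$. For the final $\langle \tau \rangle \times G$-equivariance assertion, applying the argument of (ii) to each generator $\rho \in G$ yields an orthogonal transformation $\iota^* \rho \in O(4)$ that commutes with $\iota^* \tau = \Rcapunder_P$, and therefore preserves the splitting $\R^4 = P \oplus P^\perp$; since $\rho$ fixes the components $M_\pm$ setwise (its fundamental chamber lies in one component by Proposition~\ref{Agamma}(iii)), the restriction of $\iota^*\rho$ to $P^\perp$ is trivial, and on $P$ it is a reflection. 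These reflections together generate a subgroup of $O(P) \cong O(3)$ isomorphic to the finite reflection group $G$ classified in Proposition~\ref{Agamma}(ii), which is unique up to conjugacy in $O(3)$, so the $\langle \tau\rangle \times G$-action on $\R^4$ is conjugate to the standard product action. \textbf{The main obstacle} is the combined strict-convexity/$2$-to-$1$-covering analysis in (iii)--(iv): it requires matching the nodal counts from Proposition~\ref{Lrhoquo2} with a Hopf boundary-point argument uniformly over all ovals and handling potential branch points of $\iota$; in contrast, (i), (ii), (v), and the equivariance are more direct consequences of the multiplicity bounds and the spherical identity.
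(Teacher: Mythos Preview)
Your structure is largely right, but you are making the hardest part harder than it needs to be by postponing embeddedness. The paper proves the area bound (v) first --- exactly as you do, from Lemma~\ref{Levalbdcl}(i) and $\lambda_1=2$ --- but then \emph{immediately} invokes Li--Yau to conclude that $\iota$ is an embedding (and hence unbranched, since a branch point has density $\geq 2$). With embeddedness in hand, (iii) and (iv) become short: for (iii), since the $\tau$-even coordinate functions span $\Ecal^+$, Proposition~\ref{Lrhoquo2}(i)(a) says every great circle of $\Sph(P)$ meets the (already embedded) curve $\iota(O)$ in $0$ or $2$ points, which yields strict convexity directly --- no Hopf boundary-point argument is needed. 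For (iv), the paper uses Proposition~\ref{Lrhoquo2}(i) to obtain $(T_pM+\mathrm{span}(p))\cap P^\perp=\{0\}$ on $M\setminus M^\tau$, so the spherical projection $\Pi^\Sph_P|_M$ is a local diffeomorphism there; this factors through the genus-zero quotient $M/\langle\tau\rangle$ and, using (iii), extends across the holes to a local homeomorphism $\Sph^2\to\Sph(P)$, which is a global homeomorphism because $\Sph(P)$ is simply connected. That is the substitute for your ``graph over the domain bounded by $\iota(M^\tau)$'' step, and it handles global injectivity without further analysis.

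Two smaller points. First, your upper bound $n\leq 4$ in the nonorientable case appeals to $\dim\Ecal^-\leq 2$, but in Proposition~\ref{Lasymcl} that bound requires the extra hypothesis that $M\setminus M^\tau$ has genus zero; the paper does not use or claim such an upper bound, and instead rules out $n=3$ for nonorientable $M$ via embeddedness (from Li--Yau) together with Alexander duality. Second, for the final equivariance statement the paper does essentially what you outline (each $\rho_i$ acts trivially on $P^\perp$ and by a reflection on $P$), but it also verifies that the reflecting $2$-planes in $P$ meet at angle $\pi/k_{ij}$, by exhibiting an embedded $\langle\rho_i,\rho_j\rangle$-invariant loop in $M$ that winds once around the common axis; your appeal to uniqueness-up-to-conjugacy of finite reflection groups in $O(3)$ is correct in spirit but skips this check.
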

\begin{proof}
By the assumptions, Propositions  \ref{Lasymcl} and \ref{Lrhoquo2}(ii) imply that
\begin{align*}
n+1 \leq \dim \Ecal_2 = \dim \Ecal_2^- + \dim \Ecal_2^+ \leq \dim \Ecal_2^-+ 3, 
\end{align*}
and $n+1 \leq 4$ if $M$ is orientable.  By the assumptions, Lemma \ref{Levalbdbd} implies $|M|< 8\pi$, proving (v); thus, work of Li-Yau \cite[Corollary 10]{LiYau} implies $\iota : M \rightarrow \Sph^n$ is an embedding.  Combined with the preceding dimension bounds, it follows that $n \geq 3$, with $n > 3$ only if $M$ is nonorientable.  Finally, Alexander duality implies that no closed nonorientable surface may be embedded into $\Sph^3$, concluding the proof of (i).

Item (ii) follows from the proof of (i), because $\iota$ is an isometric embedding. 

For convenience, in the remainder of the proof we identify $M$ with its image in $\Sph^n$ under $\iota$.  For (iii), note from (ii) that $\Ecal_2^+$ is spanned by the restrictions to $M$ of the nonzero coordinate functions on the $2$-sphere $\Sph(P)$.  It follows from this and Proposition \ref{Lrhoquo2}(i) that for each oval $O \subset M^\tau$, each great circle in $\Sph(P)$ has intersection number either $0$ or $2$ with $O$.  This implies (iii). 
Next, because $\Ecal_2^+$ is $3$-dimensional and the coordinate functions in $\Ecal_2^+$ all vanish on $P^\perp$,  it follows from  Proposition \ref{Lrhoquo2}(i) that
\begin{align} 
\label{Etanvert}
(T_p M + \mathrm{span}(p) ) \cap P^\perp = \{0\}
\quad
\forall p \in M \setminus M^\tau.
\end{align}
In particular, $M \cap P^\perp = \varnothing$, proving the nearest-point projection $\Pi^\Sph_P$ restricts to a local homeomorphism on $M \setminus M^\tau$.

For ease of notation, now set $\pi : = \Pi^{\Sph}_P|_M$, and in this proof denote by $\pi_G: M \rightarrow M / \langle \tau\rangle$ the quotient projection.  For any $p \in M$, the differential $d\pi_p : T_p M \rightarrow T_{\pi(p)} \Sph(P)$ satisfies
\begin{align*}
d \pi_p = \frac{\Pi_{T_{\pi(p)} \Sph^n} \circ\Pi_P }{|\Pi_P(p)|}.
\end{align*}
In particular, $\ker d \pi_p = T_p M \cap  \Pi_{T_p \Sph^n} P^\perp$, so $d\pi_p$ is an isomorphism for each $p \in M \setminus M^\tau$ by \eqref{Etanvert}.  Thus, $\pi|_{M \setminus M^\tau}$ is a local diffeomorphism by the inverse function theorem.

Since $\pi$ is constant on the fibers of $\pi_G$, there is a unique continuous map $\varphi : \pi_G(M) \rightarrow \Sph(P)$ satisfying $\varphi \circ \pi_G = \pi$.  By Definition \ref{dbasref}, $\pi_G(M)$ may be identified with a subset of $\Sph^2$, with each component $S\subset \partial \pi_G(M)$ bounding a disk $D_S \subset \Sph^2 \setminus \pi_G(M)$.  For any such component $S$, item (iii) shows that $\varphi (S)$ is an embedded circle bounding a strongly convex disk $D_{\varphi(S)} \subset \Sph(P)$.  By minimality of $M$, the image under $\varphi$ of a small enough neighborhood of $S$ in $\pi_G(M)$ is disjoint from the interior of $D_{\varphi(S)}$.  From this and the fact that $\pi|_{M \setminus M^\tau}$ is a local diffeomorphism, $\varphi$ can be extended to a local homeomorphism $\Phi : \Sph^2 \rightarrow \Sph(P)$ with the property that $\Phi(D_S) = D_{\varphi(S)}$ for each component $S$ of $\partial \pi_G(M)$. 
Since $\Sph(P)$ is simply connected and $\Sph^2$ is connected, the local homeomorphism $\Phi$ is actually a global homeomorphism, so $\varphi = \Phi |_{\pi_G(M)}$ is a homeomorphism onto its image.  Since $\pi_G|_{M \setminus M^\tau}$ is a $2$-sheeted covering map, so is $\varphi \circ \pi_G|_{M \setminus M^\tau} = \pi|_{M \setminus M^\tau}$, proving (iv).

Finally, suppose $n=3$ and $M$ is as in Proposition \ref{Agamma}.  As shown above, $ \R^4 \cong \Ecal_{\lambda_1} (M, g)$ decomposes as
\begin{align*}
\R^4 = \Ecal^-_{\lambda_1} \oplus \Ecal^+_{\lambda_1}
\quad
\text{with} 
\quad
\dim \Ecal^-_{\lambda_1} = 1,
\quad
\dim \Ecal^+_{\lambda_1} = 3, 
\end{align*}
where $\Ecal^-_{\lambda_1} : = \Acal_{\tau}(\Ecal_{\lambda_1})$ and $\Ecal^+_{\lambda_1} : = \Scal_{\tau} (\Ecal_{\lambda_1})$. 
We may therefore identify $\Ecal^+_{\lambda_1}$ with $\R^3 \subset \R^4$ and identify $\{\Id\} \times G \cong G$ with a subgroup of $O(3)$.  

Let $\rho_i$ be a generator for $G$.  From the $\Z_2 \times G$-invariance,  $\R^3 \cong \Ecal^+_{\lambda_1}$ admits an orthogonal direct sum decomposition
\begin{align*}
\R^3 \cong \Ecal^+_{\lambda_1} = \Acal_{\rho_i} ( \Ecal^+_{\lambda_1}) \oplus \Scal_{\rho_i}(\Ecal^+_{\lambda_1}).
\end{align*}
From Proposition \ref{Agamma}, $\rho_i$ is a separating reflection, so from \ref{Lasymcl}(iii) and arguing as in the proof of Lemma \ref{Lrhoquo3}, it follows that $\dim \Acal_{\rho_i} ( \Ecal^+_{\lambda_1}) = 1$ and $\dim \Scal_{\rho_i}(\Ecal^+_{\lambda_1}) = 2$.  In particular, each $\rho_i$ is a reflection through a $2$-dimensional subspace of $\R^3$.  This completes the proof when $G = 1*$, so suppose $G$ has at least two generators, and fix distinct generators $\rho_i, \rho_j$.  



Because $\rho_i, \rho_j$ are reflections in $2$-planes meeting along a line $\ell$, the product $\rho_i \rho_j$ is a rotation fixing $\ell$ with order $k_{ij}$ (recall Definition \ref{dplat}).  Now let $\gamma$ be an embedded loop in $M$ which is stabilized by $\langle \rho_i, \rho_j\rangle$; such a loop clearly exists, for $\gamma$ can be taken to be a small loop around a point $p \in  M^{\rho_i} \cap M^{\rho_j}$ if the latter is nonempty, while if $M^{\rho_i} \cap M^{\rho_j} = \varnothing$, there is an oval $O \subset M^{\tau}$ with $\mathrm{Stab}_G(O) = \langle \rho_i, \rho_j\rangle$, and $\gamma$ can be taken to be an embedding of $O$. 

 Because $\iota$ is an embedding, $\iota ( \gamma)$ winds once around $\ell \subset \R^3$, hence the planes in $\R^3$ fixed by $\rho_i$ and $\rho_j$ meet at angle $\pi/ k_{ij}$.  Thus, each $\rho_i$ is a reflection through a $2$-dimensional subspace, and the planes corresponding to each pair $\rho_i, \rho_j$ of generators meet at angle $\pi/ k_{ij}$, so the action of $G$ is conjugate in $O(3)$ to the standard action of $G$ on $\R^3$. 
\end{proof}

\begin{remark}
In combination with results \cite{ChoeSoret, KusnerMcGrath} proving that $\lambda_1 =2$, Lemma \ref{Lgraphsph}(v) shows that the following minimal surfaces in $\Sph^3$ have area below $8\pi$:
\begin{itemize}
\item The Karcher-Pinkall-Sterling doublings of $\Sph^2$ \cite{KPS}. 
\item Doublings of $\Sph^2$ constructed by Kapouleas and McGrath \cite{Kapouleas, KapMcG, LDG}.  
\item The Lawson surfaces $\xi_{\gamma, 1}$ \cite{Lawson}.
\end{itemize}
This result for the doublings from \cite{KPS} appears to be new.  
\end{remark}

\begin{cor}
\label{Prealcl}
Let $(M, T)$ be a closed surface with group action of the type described in Proposition \ref{Agamma}.  If there exists a $T$-invariant $\bar{\lambda}_1^T$-maximal metric $g$ on $M$, then there exists an isometric minimal embedding $\iota: (M,g) \rightarrow \Sph^3$ by first eigenfunctions satisfying the conclusions of Lemma \ref{Lgraphsph}.
\end{cor}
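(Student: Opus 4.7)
The plan is to combine Theorem \ref{thm:symm_crit_L} (symmetric criticality) with Lemma \ref{Lgraphsph} (structure of minimal immersions of basic reflection surfaces). Since $g$ is $\bar\lambda_1^T$-maximal on $\Met_T(M)$, it is in particular critical for $\bar\lambda_1$ among $T$-invariant metrics, so Theorem \ref{thm:symm_crit_L} yields an orthogonal representation $\rho\colon \Gamma \to O(n+1)$ and a $\Gamma$-equivariant branched minimal immersion $\Phi\colon (M,g) \to \Sph^n$ whose components are $\lambda_1(M,g)$-eigenfunctions and which satisfies $\Phi^* g_{\Sph^n} = \alpha g$ for some $\alpha>0$. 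Rescaling $g$ by $\alpha$ preserves $T$-invariance, the first eigenspace, and the conclusion to be proved, so without loss of generality $\Phi$ is a branched isometric immersion.

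Next I would arrange $\Phi$ to be full by passing to the smallest totally geodesic subsphere $\Sph^{n'} \subset \Sph^n$ containing $\Phi(M)$. The span of $\Phi(M)$ in $\R^{n+1}$ is $\rho$-invariant by equivariance of $\Phi$, so $\rho$ restricts to an orthogonal representation on that span, and $\Phi$ becomes a full $\Gamma$-equivariant branched isometric immersion by first eigenfunctions into $\Sph^{n'}$.

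Assuming $M$ has genus at least one, the hypotheses of Lemma \ref{Lgraphsph} are now met, and its conclusions furnish the corollary at once: $n' = 3$ (since $M$ is orientable by Proposition \ref{Agamma}), the identification $\tau = \Phi^* \Rcapunder_P$ for some $3$-plane $P \subset \R^4$, embeddedness of $\Phi$ as a symmetric minimal doubling of $\Sph(P)$, the strict area bound $< 8\pi$, and conjugacy of the induced $\Z_2 \times G$-action to the standard product action in $O(4)$. If instead $M$ has genus zero, then $M = \Sph^2$, and Corollary \ref{cor:max_sphere} forces $g$ to be the round metric, giving the equatorial embedding $\Sph^2 \hookrightarrow \Sph^3$ by first eigenfunctions, with $\rho(\Gamma)$ conjugate into $O(3) \hookrightarrow O(4)$ by Proposition \ref{s2.gp}.

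Since the proof is essentially an assembly of earlier results, no step is a serious obstacle. The only mildly delicate point is the fullness reduction so that Lemma \ref{Lgraphsph} applies, which is immediate from the $\rho$-invariance of $\mathrm{span}(\Phi(M))$ guaranteed by equivariance of $\Phi$.
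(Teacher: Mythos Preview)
Your proof is correct and follows the same approach as the paper: apply Theorem \ref{thm:symm_crit_L} to obtain an equivariant branched minimal immersion by first eigenfunctions, then invoke Lemma \ref{Lgraphsph}. You fill in a few details the paper leaves implicit (the rescaling to make $\Phi$ isometric, the fullness reduction via the $\rho$-invariant span of $\Phi(M)$, and the degenerate genus-zero case), but the core argument is identical.
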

\begin{proof}
The existence of an isometric branched minimal immersion $\iota: M \rightarrow \Sph^n$ by first eigenfunctions follows from Theorem~\ref{thm:symm_crit_L}.  The fact that $n = 3$ and the remaining conclusions follow from Lemma \ref{Lgraphsph}.
\end{proof}

\begin{lemma}
\label{Lgraphbd}
If $\iota : M \rightarrow \B^n$ is a full branched minimal immersion with free boundary such that
\begin{enumerate}[label=\emph{(\alph*)}]
\item $(M, \tau)$ is a basic reflection surface which is not a disk, and
\item $\iota$ is a (branched) isometric immersion by first eigenfunctions,
\end{enumerate}
then the following hold: 
\begin{enumerate}[label=\emph{(\roman*)}]
\item $n \geq 3$, with $n=3$ if and only if $M$ is orientable;
\item $\tau = \iota^* \Rcapunder_P$ for some $2$-dimensional subspace $P \subset \R^{n}$; 
\item Each component of $M^\tau$ is an embedded convex curve in $P$; 
\item The radial projection $p \mapsto p/ | p|$ restricts to a homeomorphism from $\Pi_P(\partial M \cup M^\tau_\partial)$ to the unit circle $\partial \B^2 \subset P$;
\item $M$ is a symmetric doubling of $\B^2=P\cap \B^n$ as in \ref{ddoub};
\item The area of $M$ is strictly less than $2\pi$.
\end{enumerate}
In particular, $\iota$ is an embedding.
\end{lemma}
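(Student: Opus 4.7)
The plan is to carry out the free-boundary, Steklov analog of Lemma~\ref{Lgraphsph}, substituting the Morse-theoretic Lemma~\ref{Lbdgraph} for the nodal-domain argument, and combining the multiplicity bounds of Propositions~\ref{Lasymcl} and~\ref{Lrhoquo3} with the area bound of Lemma~\ref{Levalbdbd}(ii).

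First I would establish (vi) and the embeddedness. Since the components of $\iota$ are first Steklov eigenfunctions, harmonic on $M$ with $|\iota|\equiv 1$ on $\partial M$, integration by parts combined with the fact that an isometric immersion of a surface has $|d\iota|^2\equiv 2$ gives
\[
\area(M) \;=\; \tfrac{1}{2}\int_M |d\iota|^2 \;=\; \tfrac{1}{2}\sigma_1(M,g)\int_{\partial M}|\iota|^2\,ds_g \;=\; \tfrac{1}{2}\sigma_1(M,g)\,L(\partial M) \;<\; 2\pi
\]
by Lemma~\ref{Levalbdbd}(ii), which proves (vi); the embeddedness of $\iota$ then follows from \cite{Volkmann}. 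As $M$ is not a disk, an embedded free boundary minimal surface cannot sit inside $\B^2$, so $n\geq 3$. Propositions~\ref{Lasymcl} and~\ref{Lrhoquo3}(ii) give the upper bound $n\leq\dim\Ecal^-+\dim\Ecal^+\leq 1+2=3$ in the orientable case (using Lemma~\ref{Lsigconno} to see $M\setminus M^\tau$ is disconnected) and $n\leq 2+2=4$ in the nonorientable case. Combined with embeddedness this proves $n=3$ when $M$ is orientable. For the nonorientable case, I would rule out $n=3$ by doubling across $\partial\B^3=\Sph^2$: free-boundary orthogonality makes this doubling smooth and produces a closed nonorientable minimal surface in $\Sph^3$, contradicting Alexander duality (the separating-versus-nonseparating behavior of $\partial M$ in $M$ has to be tracked, but in each case a nonorientable closed quotient arises).

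For (ii), the $\tau$-equivariance of $\iota$ and the fullness of the immersion give an induced orthogonal involution $\widetilde\tau$ of $\R^n$ with $\iota\circ\tau=\widetilde\tau\circ\iota$; its $\pm 1$-eigenspaces split $\R^n=P\oplus P^\perp$, with $P$ spanned by the coordinate functions in $\Ecal^+$ and $P^\perp$ by those in $\Ecal^-$, so $\widetilde\tau=\Rcapunder_P$. The multiplicity bound $\dim\Ecal^+\leq 2$ from \ref{Lrhoquo3}(ii) gives $\dim P\leq 2$. Since $(M,\tau)$ is basic and not a disk, at least one nontrivial component $S$ of $M^\tau$ must exist (an oval if $\partial M=\varnothing$ is ruled out by hypothesis, otherwise a chain or an oval since $W+2C>0$ by Theorem~\ref{Tclass}), and $\iota(S)\subset P$ is a nondegenerate connected embedded set, forcing $\dim P\geq 2$ and hence $\dim P=2$.

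For items (iii)--(v) I would apply Lemma~\ref{Lbdgraph} to the nonzero coordinate functions $u\in\Ecal^+$, which are $\tau$-even Steklov eigenfunctions with exactly two nodal domains by Lemma~\ref{Lcourant} combined with the analysis in Lemma~\ref{Lrhoquo}; the finiteness hypotheses of Lemma~\ref{Lbdgraph} hold by unique continuation for the harmonic function $u$. The Morse inequality then forces every interior critical point of $u$ to lie on $M^\tau$, with exactly two on each oval. Reinterpreting $u$ as a linear coordinate on $P$, this says every line in $P$ meets each component of $\iota(M^\tau)$ in at most two points, which together with Proposition~\ref{Lrhoquo3}(i)(a) gives (iii). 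For (iv) I would combine Proposition~\ref{Lrhoquo3}(i)(b) with the Steklov boundary condition to see that $\Pi_P\circ\iota$ sends $\partial M\cup M^\tau_\partial$ onto $\partial\B^2\subset P$ and factors through a homeomorphism on the quotient $\pi(\partial M\cup M^\tau_\partial)$. Finally (v) follows by a covering argument analogous to the one in Lemma~\ref{Lgraphsph}(iv): the absence of interior critical points of each $u\in\Ecal^+$ off $M^\tau$ makes $\Pi_P\circ\iota$ a local diffeomorphism on $M\setminus M^\tau$, and connectedness of $\B^2$ combined with (iii)--(iv) upgrades this to the desired $2$-sheeted covering with $\iota|_{M^\tau}$ mapping homeomorphically onto its image.

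The main obstacle I anticipate is the uniform treatment of chain components of $M^\tau_\partial$ alongside ovals, both in the convexity step for (iii) and in the covering argument for (v), since chains interact with $\partial M$ in a way that ovals do not, and Lemma~\ref{Lbdgraph} counts their critical points via a different mechanism. Verifying the hypotheses of Lemma~\ref{Lbdgraph} for eigenfunction coordinates and reconciling the doubling picture at $\partial M$ with the free-boundary geometry will require some careful case analysis, though no genuinely new analytic input beyond the ingredients already assembled in Sections~\ref{S:BRS}.
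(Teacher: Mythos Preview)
Your approach is essentially the paper's: area bound via Lemma~\ref{Levalbdbd}(ii), embeddedness via Volkmann, dimension count from Propositions~\ref{Lasymcl} and~\ref{Lrhoquo3}(ii), then Lemma~\ref{Lbdgraph} for (iii)--(v) followed by the same covering argument as in Lemma~\ref{Lgraphsph}. Two points deserve tightening.

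First, your nonorientable argument for (i) is off: the Euclidean ball $\B^3$ does not double across $\partial\B^3$ to give $\Sph^3$, so ``producing a closed nonorientable minimal surface in $\Sph^3$'' does not follow from free-boundary orthogonality alone. The paper instead invokes Alexander duality directly in $\B^3$: a properly embedded compact surface in $\B^3$ separates, hence has trivial normal bundle, hence is orientable. This is the clean way to rule out $n=3$ in the nonorientable case.

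Second, ``unique continuation'' is not quite enough to verify hypotheses (b)--(c) of Lemma~\ref{Lbdgraph}; you need that $u|_{\partial M}$ has finitely many critical points and that each level set on $\partial M$ is finite. The paper obtains this from analyticity of the free boundary minimal surface up to $\partial M$ (Gr\"uter--Hildebrandt--Nitsche), which is the right ingredient. Also, a small slip in (iv): $\Pi_P\circ\iota$ does not send $\partial M$ into $\partial\B^2$ (only $|\iota|=1$ on $\partial M$, not $|\Pi_P\iota|=1$); the correct statement, which the paper proves, is that each ray from the origin in $P$ meets $\Pi_P(\partial M\cup M^\tau_\partial)$ exactly once.
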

\begin{proof}

By the assumptions, Propositions \ref{Lasymcl} and \ref{Lrhoquo3}(ii) imply that 
\begin{align*}
n \leq \dim \Ecal_1 = \dim \Ecal_1^- + \dim \Ecal_1^+ \leq \dim \Ecal_1^- + 2, 
\end{align*}
and $n \leq 3$ if $M$ is orientable.  By the assumptions, Lemma \ref{Levalbdbd} implies $|M| < 2\pi$, proving (vi); thus work of Volkmann \cite[Corollary 4.4]{Volkmann} implies $\iota : M \rightarrow \B^n$ is an embedding.  Combined with the preceding dimension bounds, it follows that $n \geq 3$ with $n >3$, only if $M$ is nonorientable.  Finally, Alexander duality implies that no nonorientable surface with boundary may be properly embedded in the ball $\B^3$, concluding the proof of (i).  

Item (ii) follows from the proof of (i), because $\iota$ is an isometric embedding.   In particular, note that $\Ecal_1^+$ is spanned by the restrictions to $M$ of the nonzero coordinate functions on $P$.

For convenience, in the remainder of the proof we identify $M$ with its image in $\B^n$ under $\iota$.
The proofs of parts (iii) and (iv) involve the application of Lemma \ref{Lbdgraph} to a nonzero $u \in \Ecal_1^+$, so we first check that its hypotheses (a)-(c) hold: $(M, \tau)$ is basic by assumption, \ref{Lbdgraph}(a) holds for $u \in \Ecal_1^+$ by assumption (b) above, and \ref{Lbdgraph}(b)-(c) hold because  the free boundary minimal surface $M \subset \B^n$ is analytic up to its boundary by \cite[Theorem 1]{Gruter}.   

For (iii), fix $p \in M^\tau \setminus \partial M$.  By the symmetry and because $\dim \Ecal_1^+ = 2$, $p$ is a critical point for some nonzero $u \in \Ecal_1^+$. By Lemma \ref{Lbdgraph}, this critical point is a multiplicity-$1$ saddle.  In particular, the principal curvature of $M^\tau$ at $p$ is not zero.  The second conclusion of Lemma \ref{Lbdgraph} implies each component of $M^\tau$ is embedded, and (iii) follows. 

For (iv), fix a $1$-dimensional linear subspace $\ell \subset P$, and let $u \in \Ecal_1^+$ be the restriction to $M$ of a $\Rcapunder_P$-even coordinate function vanishing on $\ell$.  Proposition \ref{Lrhoquo3}(i) then implies that $\ell$ meets $\Pi_P(\partial M \cap M^\tau_\partial)$ in exactly two points.  More  strongly, we claim each of the two rays from the origin in $\ell \cap P$ meets $\Pi_P (\partial M \cap M^\tau_\partial)$.  To see this, let $u' \in \Ecal_1^+$ be the restriction to $M$ of a $\Rcapunder_P$-even linear function vanishing on the line $\ell^\perp \subset P$ orthogonal to $\ell$.  If the claim were not true, $u'$ would attain an interior critical point at a point of $M \setminus M^\tau$, contradicting Lemma \ref{Lbdgraph}.  Thus, each ray from the origin meets $\Pi_P(\partial M \cap M^\tau_\partial)$ exactly once, and (iv) follows. 

For (v) we first show $\pi : = \Pi_P|_\Sigma$ restricts to a local diffeomorphism on $M \setminus M^\tau$.  By Lemma \ref{Lbdgraph}, the restriction to $M$ of each coordinate function on $P$ has no interior critical points on $M \setminus M^\tau$, so $\pi$ restricts to a local diffeomorphism on $\mathrm{int}(M) \setminus M^\tau$. 

Next we show $\pi$ is a diffeomorphism up to the boundary: fix $p \in \partial M \setminus P$, and let $u \in \Ecal_1^+$ be a nonzero function vanishing at $p$.  By Proposition \ref{Lrhoquo3}(i), $du_p  \neq 0$.  On the other hand, $T_p M \subset \R^n$ is a linear subspace containing $p$ by the free boundary condition; combining with the preceding reveals that $T_p M \cap P^\perp = \{0\}$.  Therefore $d\pi_p : T_p M \rightarrow T_{\pi(p)} P$ is surjective; combined with the preceding, it follows that $\pi$ restricts to a local diffeomorphism on $M \setminus M^\tau$.  

Since $\pi$ is constant on the fibers of $\pi_G$, there exists a unique continuous map $\varphi : \pi_G(M) \rightarrow \B^2$ satisfying $\varphi \circ \pi_G = \pi$.  Because $(M, \tau)$ is a basic reflection surface (recall Definition \ref{dbasref}), $\pi_G( M)$ may be identified with a subset of the unit disk $\B^2$ obtained by removing a finite collection of disjoint subdisks.  Fix a component $S \subset \partial \pi_G(M)$ disjoint from $\pi_G(\partial M)$.  By (ii), $\varphi(S)$ is an embedded, strictly convex circle bounding a disk $D_{\varphi(S)} \subset \B^2$.  By the minimality, the image under $\varphi$ of a neighborhood of $S$ in $\pi_G(M)$ is disjoint from the interior of $D_{\varphi(S)}$.  From this and the fact that $\pi|_{M \setminus \Sigma^\tau}$ is a local diffeomorphism, $\varphi$ can be extended to a local homeomorphism $\Phi : \B^2 \rightarrow \B^2$ with the property that $\Phi(D_S) = D_{\varphi(S)}$ for each component $S$ of $\partial \pi_G(M)$ disjoint from $\pi_G(\partial M)$. 

Since $\B^2$ is simply connected, the local homeomorphism $\Phi$ is actually a global homeomorphism, so $\varphi = \Phi|_{\pi_G(\Sigma)}$ is a homeomorphism onto its image.  Since $\pi_G|_{M \setminus M^\tau}$ is a $2$-sheeted covering map, then so is $\varphi \circ \pi_G|_{M \setminus M^\tau} = \pi|_{M \setminus M^\tau}$, proving (v).
\end{proof}

\begin{remark}
In combination with results \cite{McGrath, KusnerMcGrath} proving that $\sigma_1 = 1$, Lemma \ref{Lgraphbd}(v) shows that the following free boundary minimal surfaces in $\B^3$ have area below $2\pi$:
\begin{itemize}
\item Doublings of the disk constructed in \cite{Zolotareva}.
\item Desingularizations of two disks announced in \cite{KapLi}.
\end{itemize}
\end{remark}

As in the closed case, the existence of a $\bar{\sigma}_1^T$-maximizing metric on one of the surfaces defined earlier leads to a free boundary minimal embedding into $\B^3$. 
\begin{cor}
\label{Prealbd}
Let $(M, T)$ be a closed surface with group action as defined in \ref{sec:actions_Sex}.  If there exists a $T$-invariant $\bar{\sigma}_1^T$-maximal metric $g$ on $M$, then there exists an isometric free boundary minimal embedding $\iota: (M,g) \rightarrow \B^3$ by first eigenfunctions satisfying the conclusions of Lemma \ref{Lgraphbd}.
\end{cor}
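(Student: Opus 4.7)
The strategy is the obvious Steklov counterpart of the proof of Corollary \ref{Prealcl}: first extract a free boundary harmonic map from the maximality of $g$, then invoke Lemma \ref{Lgraphbd} to upgrade it to an embedding of the desired form. Concretely, I would proceed as follows.

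Because $g$ realizes $\bar\sigma_1^T(M,g) = \Sigma_1^T(M)$, it is in particular a critical point of $\bar\sigma_1$ among $T$-invariant metrics on $M$. Theorem \ref{thm:symm_crit_S} then produces an integer $n$, an orthogonal representation $\rho\colon\Gamma\to O(n+1)$, and a $\Gamma$-equivariant free boundary branched minimal immersion $\Psi\colon(M,g)\to\B^{n+1}$ whose components are $\sigma_1(M,g)$-eigenfunctions and which satisfies $(\Psi|_{\partial M})^*g_{\Sph^n} = \alpha\, g|_{\partial M}$. After rescaling the metric $g$ (which does not affect $\bar\sigma_1$ or the maximality), one may assume $\alpha=1$, so that $\Psi$ is isometric on $\partial M$; standard arguments (as in \cite{FraserSchoen}) combined with the free boundary harmonic map equation and the fact that the coordinates of $\Psi$ are first Steklov eigenfunctions then force $\Psi$ to be a (branched) isometric immersion on all of $M$.

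Next, the description of $(M,T)$ in Section \ref{sec:actions_Sex} provides a distinguished reflection $\tau\in\Gamma$ (the doubling involution, or one of the reflections corresponding to a $\Z_2$ factor of $\Gamma$) which makes $(M,\tau)$ into a basic reflection surface. Since by hypothesis $M$ is not a disk (the disk case is handled separately by Corollary \ref{cor:max_disk} and gives only the trivial equatorial embedding), the hypotheses (a) and (b) of Lemma \ref{Lgraphbd} are satisfied by $\Psi$ and $(M,\tau)$. Applying that lemma we conclude $n+1=3$ (so $\Psi$ maps to $\B^3$), $\Psi$ is an embedding realizing $M$ as a symmetric doubling of an equatorial disk $\B^2\subset\B^3$, and all the remaining structural conclusions of Lemma \ref{Lgraphbd}--convexity of the components of $M^\tau$, the radial-projection statement, and the area bound--hold automatically.

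The only genuinely non-routine step is verifying that the equivariant free boundary harmonic map supplied by Theorem \ref{thm:symm_crit_S} actually meets the hypotheses of Lemma \ref{Lgraphbd}; this reduces to (i) arranging that the map be isometric (handled by the rescaling together with the standard structure of branched free boundary minimal immersions by first Steklov eigenfunctions), and (ii) identifying, from the group-theoretic data of Section \ref{sec:actions_Sex}, a reflection $\tau\in\Gamma$ whose action on $(M,g)$ exhibits $M$ as a basic reflection surface. Once these two points are verified the rest of the argument is a direct citation.
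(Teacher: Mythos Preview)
Your approach is correct and matches the paper's one-line proof (``Analogous to the proof of Corollary \ref{Prealcl}''), which amounts to citing Theorem \ref{thm:symm_crit_S} for the equivariant branched free boundary minimal immersion by first Steklov eigenfunctions and then applying Lemma \ref{Lgraphbd}. One small clarification on your step (i): the cleanest way to arrange that $\Psi$ is isometric on all of $M$, not just on $\partial M$, is to note that the induced metric $\Psi^*g_{\mathrm{Euc}}$ is conformal to $g$ and coincides with it on $\partial M$, hence has identical Steklov data and is itself $\bar\sigma_1^T$-maximal---so one may simply take it in place of $g$.
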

\begin{proof}
Analogous to the proof of Corollary \ref{Prealcl}
\end{proof}



\section{Topological degenerations} 

\subsection{Equivariant surgery and degenerations}
\label{sec:top_degen}

Let $M$ be a closed surface with an action $T:\Gamma\times M\to M$ of the group $\Gamma$, and let $\fC = \cup c_i\subset M$ be a $\Gamma$-invariant union of disjoint non-contractible simple closed curves in $M$. The action of $\Gamma$ can be extended to the boundary of $M\setminus \fC$, and we denote by $M'_\fC$ the surface obtained by contracting each boundary component of  $M\setminus \fC$ to a point, and by $T'_\fC:\Gamma\times M'_\fC\to M'_\fC $ the corresponding action of $\Gamma$ on $M'_\fC$. We say that $(M'_\fC, T'_\fC)$ is obtained from $(M,T)$ by collapsing $\fC$, and we denote by $P_\fC\subset M'_\fC$ the $T'_{\fC}$-invariant set of points corresponding to the contracted boundary components. 

Equivalently, $(M'_\fC,T'_{\fC})$ can be obtained by an equivariant surgery along a neighbourhood of $\fC$. Indeed, let $\fU = \cup_i \fU_i$ be a $T$-invariant neighbourhood of $\fC$, where the sets $\fU_i$ are disjoint tubular neighborhoods of the curves $c_i$'. If the normal bundle to $c_i$ is orientable, then $\fU_i\approx\mathbb{S}^1\times \mathbb{B}^1$, so that $\del \fU_i$ is homeomorphic to two copies of $\mathbb{S}^1$. The surgery along $\fU_i$ then amounts to gluing two copies of a disk to $M\setminus \fU_i$ along the common boundary.
If the normal bundle to $c_i$ is not orientable, then $\fU_i$ is homeomorphic to the M\"obius strip, so that $\del \fU_i$ is a single circle. Surgery along $\fU_i$ in this case amounts to gluing a single copy of a disk to $M\setminus \fU_i$.
Since $\fU$ is $T$-invariant, one can arrange all the gluings to be equivariant, so that the restriction of $T$ to $M\setminus \fU$ and $\fU$ respectively extends to the glued disks as well. In this interpretation, the role of $P_\fC$ is played by the centers of disks glued to $M\setminus \fU$.

 
\begin{definition}
\label{def:degeneration}
Let $M$, $M'$ be closed connected surfaces on which the group $\Gamma$ acts by $T,T'$ respectively. We say that $(M',T')$ is a topological degeneration of $(M,T)$, and write
$(M',T')\prec (M,T)$, if there exists a non-empty $\Gamma$-invariant union of non-contractible disjoint simple closed curves $\fC\subset M$ such that $(M',T')$ is topologically equivalent to a $\Gamma$-invariant connected component $M'_{\fC,0}$ of $(M'_\fC,T'_\fC)$.
\end{definition}

\begin{definition}
We say that $(M',T')$ is an {\em elementary degeneration} of $(M,T)$ if $(M',T')\prec (M,T)$ and there does not exist $(M'',T'')$ such that $(M',T')\prec (M'',T'')\prec (M,T)$.
\end{definition}

\subsection{Collapsed sets and the inverse surgery.}
\label{sec:inverse_surgery}

If $(M',T')\prec (M,T)$, then there exists a non-empty $T'$-invariant collection of points $P'\subset M'$, corresponding to the points of $P_{\fC}\cap M'_{\fC,0}$, which we refer to as a {\em collapsed set}. Furthermore, the set $P'$ can be partitioned as $P'=\sqcup P'_i$ in the following way. Let $M_0\subset M$ be a connected component of $M\setminus \fU$ corresponding to $M'_{\fC,0}$. We say that $p'\sim q'$ for $p',q'\in P'$ if the corresponding boundary components of $M_0$ can be joined by a path in $M\setminus M_0$. The partition $P'=\sqcup P'_i$ is then formed by the equivalence classes. We remark that given $(M',T')\prec (M,T)$ the set $P'$ and its partition $P'=\sqcup P'_i$ is not unique as different choices of $\fC$ could correspond to topologically different collapsed sets.

If $(M',T')\prec (M,T)$ is a topological degeneration, then it is possible to construct $(M,T)$ from $(M',T')$ using equivariant surgery along the neighbourhoods of a collapsed set. We make use of a particular procedure described below. Let
$P'$ be a collapsed set for $(M',T')\prec (M,T)$, $P' =\sqcup P'_i$ be the corresponding partition, $\fC = \cup c_i$ be the collection of curves given by the Definition~\ref{def:degeneration}, and $M_0$ be the connected component of $M\setminus \fU$ corresponding to $M'_{\fC,0}$. For each connected component $M_i$ of $M\setminus M_0$, let $M''_i$ be a closed surface obtained by gluing a disk along each boundary component of $M''_i$ and let $P_i''$ be the collection of centers of those disks. Define a (possibly disconnected) surface $M'' = \sqcup M_i''$ and the collection $P'' = \sqcup P_i''\subset M''$, with the gluings done in such a way that the restriction of $T$ to $M\setminus M_0 = \cup M_i$ extends to an action $T''$ of $\Gamma$ on $M''$ for which $P''$ is $T''$-invariant. 

Each boundary component of $M_0$ is also a boundary component of some $M_i\subset M\setminus M_0$, and by identifying boundary components of $M_0$ and $M_i$ with points in $P'$ and $P_i''\subset P''$, respectively, this yields a natural assignment $\alpha\colon P'\to P''$. By definition of the equivalence relation on $P'$ one has that $p'\sim q'$ iff $p''$ and $q''$ belong to the same connected component of $M''$. This is the reason we use the same subscripts $i$ for $P_i'$ and $M_i''$, and, up to relabeling, we have that $p'\in P_i'$ iff $\alpha(p')\in M_i''$. Finally, we observe that performing equivariant surgery along $\Gamma$-invariant neighborhoods of $\{p',\alpha(p')\}_{p'\in P'}$ results in a surface with an action of $\Gamma$ of the same topological type as $(M,T)$. 

To summarize our construction, if $(M',T')\prec (M,T)$ with collapsed set $P'$ and the partition $P' =\sqcup P'_i$, then there exist connected closed surfaces $M_i''$ and collections of points $P_i''\subset M_i''$ together with bijections $\alpha_i\colon P'_i\to P_i''$ such that
\begin{itemize}
\item The (possibly disconnected) surface $M'' = \sqcup M_i''$ and the collection $P'' = \sqcup P_i''\subset M''$ admit an action $T''$ of $\Gamma$ such that such that $P''$ is preserved and  $\alpha = \sqcup \alpha_i$ is equivariant.
\item Performing equivariant surgery at pairs $\{p',\alpha(p')\}$ for all $p\in P'$ on $(M',T')\sqcup (M'',T'')$ leads to a surface with a group action topologically equivalent to $(M,T)$.
\end{itemize}

\begin{example}
In the absence of the group action, these notions reduce to the topological degenerations studied e.g. in~\cite{Pet1} in the context of classical eigenvalue optimization. If $M$ is an orientable surface of genus $\gamma$, then $M'\prec M$ iff $M'$ is orientable of genus $\gamma'<\gamma$. In particular, $M'\prec M$ is an elementary degeneration iff $\gamma' = \gamma-1$. For such elementary degenerations there are two possibilities for the collapsed set $P'$. If $\fC$ is a single separating curve such that one of the components of $M\setminus \fU$ has genus $1$, then $P'$ is a single point. To recover $M'$ from $M$ one then takes $M''$ to be a torus with $P''$ consisting of a single point and connects $p'\in P'$ to $p''\in P''$ with a cylindrical tube, which is the same as forming a connected sum of $M'$ with $M''$.
If $\fC$ is a single non-separating curve, then $P'$ is any two-point set in $M'$. To recover $M'$ from $M$ one takes $M''$ to be a sphere, $P''\subset M''$ to be any two points, and then connects $p'\in P'$ to $p''\in P''$ and $q'\in P'$ to $q''\in P''$ with two cylindrical tubes. Both points in $P'$ belong to the same equivalence class.
\end{example}


\subsection{Examples of topological degeneration} 
\label{sec:actions_ex}
In this section we classify topological degeneration for the types of group actions considered in Section~\ref{Ss:fund2}. Our main tool is the following proposition describing the intersection of $\fC$ with the fundamental domain.

\begin{proposition}
\label{prop:geo_FundDomain}
Let $M = M(\Gamma, \Omega)$ be as in Lemma \ref{LMomega} and $\fC$ be an embedded, $\Gamma$-invariant collection of disjoint simple closed curves consisting of a single $\Gamma$-orbit.  Then the intersection $\fC \cap \Omega$ is one of the following:
\begin{enumerate}
\item a closed curve in $\mathrm{int}(\Omega)$; 
\item a connected component of $\Omega^{\g_i}$ for some generator $\g_i$ of $\Gamma$;
\item a segment in $\mathrm{int}(\Omega)$ connecting two boundary points of $\Omega$, none of which are end-points of segments in $\Omega^{\g_j}$. 
\end{enumerate}
\end{proposition}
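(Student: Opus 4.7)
The plan is to combine a careful local analysis of how $\fC$ meets $\partial\Omega$ with a global connectedness argument driven by the single-orbit hypothesis. Let $c$ be a connected component of $\fC$, and set $H = \Stab_\Gamma(c)$, so that $\fC = \Gamma\cdot c$.

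First I will analyze the local structure of $\fC$ at a point $p$ lying in the interior of a panel $\Omega^{\g_i}$. Since $\g_i$ fixes $p$ and $\fC$ is $\Gamma$-invariant, $\g_i(c)$ is a component of $\fC$ also passing through $p$; pairwise disjointness of the components of $\fC$ then forces $\g_i(c) = c$, so the smooth simple arc $c$ is locally invariant under the reflection $\g_i$. This yields the dichotomy: either $c \subset M^{\g_i}$ near $p$, or $c$ crosses $\Omega^{\g_i}$ transversely at $p$.

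Next I will handle the corners, which I expect to be the subtlest step. At a corner $p \in \Omega^{\g_i}\cap \Omega^{\g_j}$, the same disjointness argument shows that the full dihedral stabilizer $\Stab_\Gamma(p) = \langle \g_i,\g_j\rangle$, of order $2k_{ij}$, preserves $c$. In particular the tangent line to $c$ at $p$ must be fixed by the rotation $\g_i\g_j$, which acts on $\mathbb{P}(T_pM)\cong S^1$ as a rotation of order $k_{ij}/\gcd(k_{ij},2)$. This forces $k_{ij}\leq 2$; the remaining case $k_{ij}=2$, combined with $\g_i$- and $\g_j$-invariance and the fact that a simple arc has a single tangent direction, will force $c$ to locally coincide with $M^{\g_i}$ or $M^{\g_j}$. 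In either subcase, $c$ lies along a panel near $p$, so there are no transverse crossings of $\partial\Omega$ at corners.

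Combining these local observations, $\fC\cap\Omega$ is a smooth $1$-submanifold of $\Omega$ whose boundary (if any) is contained in the interiors of the panels, and each connected component is of one of the three types described in (1)--(3). Finally, to upgrade this to the statement that $\fC\cap\Omega$ itself has a single component, I will use that $\Omega$ is a strict fundamental domain: the quotient map $\pi\colon M\to M/\Gamma$ restricts to a continuous bijection from the compact set $\Omega$ onto $M/\Gamma$, hence to a homeomorphism, and therefore restricts further to a homeomorphism $\fC\cap\Omega\to \fC/\Gamma = c/H$. The latter is connected as the continuous image of the circle $c$ under the $H$-action, so $\fC\cap\Omega$ is connected and the classification follows.
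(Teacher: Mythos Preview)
Your proof is correct and takes a genuinely different route from the paper's. The paper works with a $\Gamma$-invariant tubular neighborhood $\fU_0$ of $c$, identifies the subgroup $\Gamma' \leq \Gamma$ generated by those $\g_i$ preserving $c$, and then invokes the classification of reflections on a cylinder to see that $\Gamma'$ acts transitively on the connected components of $\fU_0$ minus the fixed-point loci; this forces $\mathrm{int}(\Omega) \cap \fU_0$ to be a single component, and the three cases then correspond to whether $\Gamma'$ is trivial, contains a generator fixing $c$ pointwise, or neither. Your argument replaces the tubular-neighborhood step with a pointwise tangent-line analysis at panels and corners, and replaces the transitivity argument with the clean observation that $\pi|_\Omega$ is a homeomorphism onto $M/\Gamma$, so that $\fC \cap \Omega \cong \fC/\Gamma = c/H$ is automatically connected as the quotient of a circle. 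Your approach is more elementary---it avoids both tubular neighborhoods and the cylinder classification---at the cost of a slightly longer case analysis; the paper's approach is more geometric and packages the local structure into a single statement about the $\Gamma'$-action on the collar. One small point worth making explicit when you write this up: once you know $\fC \cap \Omega$ is connected and that the distinct $\Gamma$-translates of $c$ are pairwise disjoint, it follows that only one translate of $c$ can meet $\Omega$, so $\fC \cap \Omega = c \cap \Omega$; this is what lets you reduce to a single curve $c$ when finishing the verification of cases (1)--(3).
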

\begin{proof}


Without loss of generality, we may assume $\fC = \Gamma(c)$ for $c=c_0$ a simple closed curve meeting $\Omega$.  Let $\fU_0$ be a $T$-invariant tubular neighborhood of $c$,  $\Gamma'$ be the subgroup of $\Gamma$ generated by those $\g_i$ that preserve $\fU_0$. Each $\g_i\in \Gamma'$ is a separating reflection, hence, their restriction to $\fU_0$ is a separating reflection as well. Let 
 $\fU_0' = \cup_{\gamma_i \in \Gamma'} \fU_0^{\gamma_i}$ the union of the fixed-point sets of the $\gamma_i$'s on $\fU_0$. By the classification of reflections on a cylinder, we observe that $\Gamma'$ acts transitively on connected components of $\fU_0\setminus\fU_0'$. Since $\partial \Omega \subset \cup_{\gamma_i \in \Gamma} M^{\g_i}$, one has that  $\mathrm{int}(\Omega) \cap \fU_0$ is a union of  connected components of $\fU_0\setminus\fU_0'$. The transitivity of the action of $\Gamma'$ on connected components together with the fact that by construction $\mathrm{int}(\Omega)\cap \mathrm{int}(\g(\Omega)) = \varnothing$ for $\g\ne e$, implies that $\mathrm{int}(\Omega) \cap \fU_0$ is, in fact, a single connected component of $\fU_0\setminus\fU_0'$. Furthermore, $\fU_0 \subset \cup_{\gamma' \in \Gamma'}\gamma'(\Omega)$ yields that if $\gamma \in \Gamma$ and $c$ meets $\gamma(\Omega)$, then $\gamma \in \Gamma'$. Therefore, if $\gamma \in \Gamma \setminus \Gamma'$, then $\gamma(c) \cap \Omega = \gamma( c \cap \gamma^{-1} (\Omega) ) = \emptyset$ since $\gamma \notin \Gamma'$. 
 
%

There are now three cases.

{\bf Case 1.} $\Gamma'$ is trivial.  Then $\partial \Omega \cap \fU_0 \subset \fU_0' = \varnothing$, so $\fU_0 \subset \mathrm{int}(\Omega)$. 

{\bf Case 2.} $c$ is fixed pointwise by one of the generators $\gamma_i$ of $\Gamma'$.  Then $c \cap \Omega \subset \partial \Omega$ and therefore must be a connected component of $\Omega^{\gamma_i}$. 

{\bf Case 3.} $\Gamma'$ is nontrivial, but no generator of $\Gamma'$ fixes $c$ pointwise.  Then $c \cap \partial \Omega$ is exactly two points, none of which is the end-point of a segment in one of the $\Omega^{\gamma_i}$. 
\end{proof}

\begin{proposition}
If $(M',T')$ is such that $(M',T')\prec M(a)$, then $(M',T')$ is of type $a'$ for some $a'<a$. In particular, $(M',T')$ is an elementary degeneration of $M(a)$ if and only if it is of type $M(a-1)$.
\end{proposition}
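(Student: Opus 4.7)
The plan is to apply Proposition~\ref{prop:geo_FundDomain} to the fundamental domain $\Omega = \Omega(a)$ of $M(a)$, which is a genus-zero surface with $a$ boundary circles satisfying $\Omega^{\tau} = \partial\Omega$, and then to iterate on the number of $\Gamma$-orbits contained in $\fC$. For a single $\Gamma$-orbit, the proposition yields exactly three possibilities for $\fC \cap \Omega$: (i) a closed curve in $\mathrm{int}(\Omega)$; (ii) a full boundary circle of $\Omega$; or (iii) a segment in $\mathrm{int}(\Omega)$ connecting two distinct points of $\partial\Omega$.

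The key step is to analyze each of these three cases and verify that the equivariant surgery produces a disjoint union of surfaces of type $M(a')$ with $a'<a$. In case (i), the curves $c$ and $\tau(c)$ together separate $M$, and non-contractibility forces each side to contain at least one boundary circle of $\Omega$, producing components of types $M(k)$ and $M(a-k)$ with $1\leq k\leq a-1$. In case (ii), the surgery collapses a single fixed circle of $M^{\tau}$ and gives the connected surface $M(a-1)$. In case (iii), the invariant curve $c$ meets $M^{\tau}$ transversely at two points; using an equivariant parametrization of a tubular annulus $\fU \cong S^{1} \times [-\eps,\eps]$ of $c$ with $\tau(\theta,t) = (-\theta,t)$, each boundary circle of $\fU$ is $\tau$-invariant with two fixed points, so each equivariant cap inherits a reflection with a fixed diameter. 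Splitting into subcases according to whether the endpoints of $\gamma = c \cap \Omega$ lie on the same or on distinct boundary circles of $\Omega$ yields respectively $M(k+1) \sqcup M(a-k)$ with $1\leq k\leq a-2$ (non-contractibility again excluding the extreme values), or the connected surface $M(a-1)$.

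The main obstacle is the bookkeeping of $\tau$-fixed circles in case (iii): one must verify that the fixed diameters of the two capping disks splice the truncated arcs of $M^{\tau}$ into complete new fixed circles, so that $\tau$ is still a separating reflection on the resulting surface whose quotient is genus zero. I would carry this out via the explicit local model above, combined with an Euler-characteristic check $\chi(M'_{\fC}) = \chi(M(a)) + |\pi_{0}(\partial\fU)|$ cross-referenced against the predicted component counts. Once the single-orbit analysis is in place, the general case follows by induction on the number of orbits in $\fC$, as disjoint orbits can be processed one at a time. This proves the first assertion: every $\Gamma$-invariant connected component of $M'_{\fC}$ is of type $M(a')$ with $a' < a$.

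For the elementary degeneration claim, observe that $M(a-1) \prec M(a)$ via case (ii), and this degeneration is elementary because no integer lies strictly between $a-1$ and $a$. Conversely, for any $M(a') \prec M(a)$ with $a' \leq a-2$, applying the same construction to $M(a-1)$ produces an intermediate degeneration $M(a') \prec M(a-1) \prec M(a)$, so such a degeneration is not elementary. Hence the elementary degenerations of $M(a)$ are exactly those of type $M(a-1)$.
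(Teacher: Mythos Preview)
Your proposal is correct and follows essentially the same route as the paper: reduce to a single $\Gamma$-orbit, apply Proposition~\ref{prop:geo_FundDomain}, and analyze the three cases (closed curve in $\mathrm{int}(\Omega)$; boundary circle; interior segment with endpoints on $\partial\Omega$, subdivided by whether the endpoints lie on the same or different boundary components). Your case outcomes $M(k)\sqcup M(a-k)$, $M(a-1)$, $M(k+1)\sqcup M(a-k)$, and $M(a-1)$ match the paper's exactly, and your treatment of the elementary-degeneration claim is the obvious one.

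The only substantive difference is presentational: the paper works entirely at the level of the fundamental domain $\Omega(a)$ and reads off the type of the new $\Omega$ directly after collapsing, so the fact that the result is again some $M(a')$ is automatic from the construction $M(a') = M(\Gamma,\Omega(a'))$ of Lemma~\ref{LMomega}. You instead work on the surface $M$ and propose to verify by hand that $\tau$ remains a separating reflection with genus-zero quotient, via the local model of the equivariant caps and an Euler-characteristic check. This is correct but more work than necessary; tracking the fundamental domain makes that bookkeeping disappear.
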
 
\begin{proof}
It is sufficient to consider the case when 
$\fC$ is a single orbit of a noncontractible simple closed curve $c$ such that $c\cap \Omega \ne\varnothing$. We consider different cases according to Proposition~\ref{prop:geo_FundDomain}.

{\bf Case 1. }  $c\subset \mathrm{int}(\Omega)$. Since $c$ is not contractible, it encircles at least one boundary component of $\Omega(a)$. Thus, collapsing $c$ leads to two fundamental domains of types $a'$ and $a''$, where $a',a''>0$ and $a'+a'' = a$. 

{\bf Case 2.} The intersection is a connected component of $\Omega^{\rho}$, hence, a connected component of $\partial\Omega$. Thus, collapsing it leads to a fundamental domain of type $a-1$.

{\bf Case 3.} The intersection is a segment $I$ connecting two points $p,q\in \partial \Omega$.

{\bf Case 3.(a).} Points $p,q$ lie on the same boundary component. Since $c$ is not contractible, the segment $I$ together with a segment of the boundary connecting $p,q$ encloses at least one boundary component of $\Omega(a)$. Thus, collapsing $I$ leads to two fundamental domains of types $a'$ and $a''$, where $a',a''>1$ and $a'+a'' = a+1$. 

{\bf Case 3.(b).}  Points $p,q$ lie on different boundary components. Then $\Omega\setminus I$ is connected, hence, collapsing $I$ leads to a fundamental domain of type $a-1$. 
\end{proof}

Let us discuss the possible collapsing sets arising in the relation $M(a-1)\prec M(a)$. Analyzing the proof, one observes that there are at least two different possibilities for the collapsing set: either a pair of points interchanged by $\rho$ or a pair of points on the {\em same} connected component of $M(a-1)^\rho$. 

\begin{proposition}
\label{prop:bunder_degen}
If $(M',T')$ is such that $(M',T')\prec M(G, \bunder)$, then $(M',T')$ is of type $\ub'$ for some $\ub'$. The only elementary degenerations of $M(G,\ub)$ are $M(G,\ub-\underline v)$, where $\underline v$ could be equal to  $1 - \rho_i$, or $\rho_i - \rho_i \rho_j$, or $\rho_i \rho_j$.
\end{proposition}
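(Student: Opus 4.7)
The plan is to follow the strategy used for $M(a)$ in the previous proposition. Any $\Gamma$-invariant $\fC$ decomposes into $\Gamma$-orbits, and a degeneration by $\fC$ factors as a composition of single-orbit collapses; so it suffices to show that for $\fC = \Gamma\cdot c$ a single orbit, the resulting degeneration is one of the listed elementary types or factors through them. I would then apply Proposition~\ref{prop:geo_FundDomain} to reduce to three cases for $c\cap\Omega$, and in each case track the change in the three data $(f, \{e_i\}, \{v_{ij}\})$ using the geometric description of $\ub$ from the remark following Definition~\ref{dplatclass}.

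First I would handle Case~2. If $\gamma = \tau$, then $c$ is a component $S$ of $M^\tau$ meeting $\Omega$ and collapsing $\Gamma\cdot S$ removes exactly one term from $\ub$, of type $1$, $\rho_i$, or $\rho_i\rho_j$ according to whether $\Stab_\Gamma(S)$ is $\langle\tau\rangle$, $\langle\tau,\rho_i\rangle$, or $\langle\tau,\rho_i,\rho_j\rangle$. If $\gamma = \rho_i$, then $c\cap\Omega$ is an edge $E$ of $\Omega^{\rho_i}$ whose two vertices lie at endpoints of adjacent $\Omega^\tau$-edges and/or on $\Omega^{\rho_i}\cap\Omega^{\rho_j}$. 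A direct inspection of the local boundary combinatorics of $\Omega$ shows that collapsing $\Gamma\cdot c$ merges an $e_i$-type $\Omega^\tau$-edge with its $\rho_i$-reflection, promoting its stabilizer from $\langle\tau,\rho_i\rangle$ to $\langle\tau,\rho_i,\rho_j\rangle$, which is precisely $\underline v = \rho_i - \rho_i\rho_j$. Next, for Case~3, I would enumerate the possible locations of the two endpoints of $I := c\cap\Omega$ on the edges of $\partial\Omega$ (interior of a free boundary circle, of an $\Omega^{\rho_i}$-edge, or of an $\Omega^\tau$-edge), and check using non-contractibility that the only geometrically new outcome is when both endpoints lie in the interior of a single $\Omega^{\rho_i}$-edge and $I$ separates off a free boundary circle $S$ of $\Omega$ from $\partial\Omegaout$; in this case $c = I\cup\rho_i(I)$ pinches the free $M^\tau$-component corresponding to $S$ onto $M^{\rho_i}$, realizing $\underline v = 1 - \rho_i$. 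Finally Case~1, where $c$ is a closed curve in $\mathrm{int}(\Omega)$, yields $\ub' = \ub - (\text{sum of free circles pinched off})$ by the same analysis used in the $M(a)$ case, and will be seen to never be elementary.

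The crucial algebraic observation for identifying the elementary degenerations is that the only ``atomic'' effects not on the listed set, namely $\underline v = 1$ and $\underline v = \rho_i$, can be written as sums of listed elementary operations:
\[
1 = (1-\rho_i) + (\rho_i - \rho_i\rho_j) + \rho_i\rho_j,
\qquad
\rho_i = (\rho_i - \rho_i\rho_j) + \rho_i\rho_j.
\]
Realizing each summand above by an explicit single-orbit collapse as in Case~2 or Case~3 above produces an intermediate degeneration, showing that Case~2 degenerations of stabilizer types $\langle\tau\rangle$ or $\langle\tau,\rho_i\rangle$ are not elementary. Case~1 similarly factors: one may first detach each pinched-off free circle from the rest of $\Omega$ by a $\underline v = 1-\rho_i$ Case~3 collapse, and then apply compositions as above. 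Conversely, the three listed $\underline v$'s are each elementary because any intermediate $\ub''$ with $\ub - \underline v \prec M(G,\ub'') \prec M(G,\ub)$ would have to come from a collapse of a $\Gamma$-orbit of a single simple closed curve with one of the above geometric types, and the enumeration shows that no such collapse strictly refines $\underline v$.

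The main obstacle is the careful bookkeeping in Case~2 with $\gamma=\rho_i$ and in Case~3, where one must track how the combinatorial data $\ub$ transforms under the surgery: in particular, the change in the stabilizer of an $M^\tau$-component after merging with its $\rho_i$-reflection, and the creation of a new $\Omega^\tau$-edge when pinching off a free boundary circle, must both be justified via a local model for the $\Gamma$-action near the collapsed curve together with the description of $\Stab_\Gamma$ on components of $M^\tau$ given in Definition~\ref{dplatclass}.
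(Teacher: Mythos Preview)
Your overall strategy---reduce to single $\Gamma$-orbits, apply Proposition~\ref{prop:geo_FundDomain}, and track changes in $\ub$---is exactly the paper's approach. However, your case analysis contains concrete errors and omissions.

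First, your identification of the $\underline v = 1 - \rho_i$ degeneration is incorrect. A segment $I$ with both endpoints in the interior of a single $\Omega^{\rho_i}$-edge and separating off a free boundary circle $S$ is the paper's Case~4(a) (both endpoints on $\partial\Omegaout$). Collapsing such a $c$ discards the non-$\Gamma$-invariant piece containing $S$ and yields $\ub' = \ub - 1$, not $\ub - (1-\rho_i)$: in the surviving fundamental domain, the $\rho_i$-edge is simply shortened and one free circle is lost. The actual realization of $1-\rho_i$ is the paper's Case~3(c): $I$ has one endpoint $q$ on an inner $\tau$-circle $S$ and the other endpoint $p$ on a $\rho_i$-edge of $\partial\Omegaout$. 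Collapsing $I$ merges $S$ into $\partial\Omegaout$ as a new $e_i$-type $\tau$-edge, so $f' = f-1$ and $e_i' = e_i+1$.

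Second, your treatment of Case~2 with $\gamma=\rho_i$ is incomplete. Collapsing a $\rho_i$-edge $E$ gives $\underline v = \rho_i - \rho_i\rho_j$ only when exactly one endpoint of $E$ is a $\rho_i$-$\rho_j$ vertex; if instead both endpoints of $E$ are $\tau$-$\rho_i$ vertices, the two adjacent $e_i$-type $\tau$-edges merge into one and $\underline v = \rho_i$. The paper's Case~2(b) lists all three possibilities $\rho_i$, $\rho_i\rho_j$, $\rho_i-\rho_i\rho_j$.

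Third, you do not carry out the enumeration of Case~3 beyond one (mis-identified) sub-case, and you omit Case~4 (both endpoints on $\partial\Omegaout$) entirely. The paper splits these into sub-cases 3(a)--(c) and 4(a)--(b), each yielding a different $\ub'$. The first part of the proposition---that every degeneration is again of type $M(G,\ub')$---requires this full enumeration. Your algebraic decomposition $1 = (1-\rho_i)+(\rho_i-\rho_i\rho_j)+\rho_i\rho_j$ is a clean way to verify non-elementarity once the full list is in hand, but it cannot substitute for the enumeration itself.
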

\begin{proof}
It is sufficient to consider the case when $\fC$ is a single orbit of a noncontractible simple closed curve $c$. We consider different cases according to Proposition~\ref{prop:geo_FundDomain}.

{\bf Case 1.} $c\subset \mathrm{int}(\Omega)$.  Since $c$ is not contractible, one of the connected components of $\Omega \setminus c$ does not contain the outer boundary component $\partial \Omegaout$, but does contain at least one boundary component in $\Omega^\tau$. This connected component is disjoint from each $\Omega^{\rho_i}$ for $i \in \{1, \dots, n\}$, $n=2,3$, hence, $\rho_i$ does not preserve it.  Therefore, the corresponding closed manifold with the action of $\Gamma$ is not connected and can not be a degeneration of $M(\Gamma,\ub)$ according to Definition~\ref{def:degeneration}. 
 As a result, collapsing $\fC$ yields only one $\Gamma$-invariant component and it has fundamental domain of type $\ub -k$, for some $k\in \mathbb{N}$.

{\bf Case 2.(a)} The intersection is contained in $\partial\Omega$, but is disjoint from $\partial\Omegaout$. Then the intersection is one of the remaining boundary components, hence, collapsing it leads to a fundamental domain of type $\ub - 1$.

{\bf Case 2.(b)} The intersection is contained in $\partial \Omegaout$, hence is one of the segments.  Then, collapsing it leads to a fundamental domain of type $\bunder - \uv$, where depending on the segment $\uv$ could be equal to $\rho_i$ if $e_i>0$, or $\rho_i \rho_j$ if $v_{ij}>0$, or $\rho_i-\rho_i \rho_j$ if $e_i>0$ and $v_{ij}=0$.

{\bf Case 3.} The intersection is a segment $I$ connecting two points $p,q\in \partial \Omega$ such that at least one of $p,q$ does not lie on $\partial \Omegaout$.

{\bf Case 3.(a).} Points $p,q$ lie on the same boundary component, which is not $\partial \Omegaout$, so $p,q\in\Omega^\tau$. This case similar to Case 1. Since $c$ is not contractible, one of the connected components of $\Omega\setminus I$ does not contain $\partial \Omegaout$, therefore, the corresponding closed surface with the action of $\Gamma$ is not connected. As a result, collapsing $\fC$ yields only one $\Gamma$-invariant connected component and its fundamental domain is of type $\ub-k$ for some $k\in \mathbb{N}$.

{\bf Case 3.(b).}  Points $p,q$ lie on different boundary components, neither of which is $\partial \Omegaout$. Then $\Omega\setminus I$ is connected, hence, collapsing $I$ leads to a fundamental domain of type $\ub-1$.

{\bf Case 3.(c).}  Points $p,q$ lie on different boundary components, only one of which (say, $p$) is on the outer boundary component. Then $\Omega\setminus I$ is connected, hence, collapsing $I$ leads to a fundamental domain of type $\ub-1$ if $p\in\Omega^\tau$, or of type $\ub - 1+\rho_i$ if $p\in\Omega^{\rho_i}$.

{\bf Case 4.} Points $p,q$ lie on $\partial \Omegaout$. 
Then $\Omega \setminus I$ has two connected components $\Omega_1$ and $\Omega_2$. One of these components, which we may assume to be $\Omega_2$, meets $\Omega^{\rho_i}$ for $i \in \{1, \dots n\}$, for otherwise, collapsing $\fC$ does not result in any $\Gamma$-invariant connected components.  Finally, note that if $n=3$, then $\Omega_1$ can meet at most two of the $\Omega^{\rho_i}$.  There are therefore two cases. 

{\bf Case 4.(a).} $\Omega_1\cap \Omega^{\rho_j}\ne\varnothing$ for only one $j \in \{1, \dots, n\}$. Since $c$ is not contractible,  $\Omega_1$ must contain a connected component of $\Omega^\tau$ or $\Omega^{\rho_j}$. As a result, collapsing $\fC$ leads to a single $\Gamma$-invariant connected component with fundamental domain  of type $\ub- k - l \rho_j$ for some $k,l\in\mathbb{N}_0$ with at least one of $k,l$ positive.

{\bf Case 4.(b).} $\Omega_1\cap \Omega^{\rho_i}\ne\varnothing$ and $\Omega_1 \cap \Omega^{\rho_j} \ne \varnothing$ for  distinct generators $\rho_i, \rho_j$ of $G$.  There are different cases depending on the location of points $p,q$. 

\begin{itemize}
\item If $p,q\in\Omega^{\tau}$, then collapsing $I$ leads to a fundamental domain of type $\ub-k-l_i\rho_{i} - l_j\rho_{j}$, where $k\in \mathbb{N}_0$, $l_i,l_j\in\mathbb{N}$.
\item If $p\in \Omega^{\tau}$, $q\in \Omega^{\rho_{j}}$, then collapsing $I$ leads to a fundamental domain of type $\ub'$ with $v_{ij}'=1$, $e_{i}'<e_{i}$, $e_{j}'\leq e_{j}$, $f'\leq f$ and the remaining coordinates of $\ub'$ agree with those of  $\ub$.
\item  If $p\in \Omega^{\tau}$, $q\in \Omega^{\rho_{i}}$, then it is the same as in the previous case with $i$ and $j$ interchanged.
\item If $p\in \Omega^{\rho_i}$, $q\in\Omega^{\rho_j}$, then collapsing $I$ leads to a fundamental domain of type $\ub-k-l_i \rho_{i} - l_j\rho_{j}- m \rho_{i}\rho_{j}$, where $k,l_i,l_j\in \mathbb{N}_0$, $m \in \{0,1\}$ and at least one of those numbers is positive.\qedhere
\end{itemize}
\end{proof}

We conclude this section by discussing some choices of collapsed sets for the elementary degenerations described in Proposition~\ref{prop:bunder_degen}. We do not list all possible choices, but rather focus on those that are used later in the text. Set $\ub' = \ub-\uv$.

If $\uv = 1-\rho_i$, then $e_i'>0$ and the collapsed set $P'=\sqcup P_k'$ can be chosen in the following way. Set one equivalence class to be $P_1' = \{p',q'\}\in \Omega^{\rho_i}\cap\Omega^\tau$, where $p',q'$ lie on the same connected component of $\Omega^\tau$. Such points exist because $e_i'>0$. The remaining equivalence classes are formed by $\Gamma$-orbits of $P_1'$.

If $\uv = \rho_i-\rho_i\rho_j$, then $v'_{ij}>0$. Set one equivalence class $P_1'$ to be $\langle \rho_i,\rho_j\rangle$-orbit of a point $p\in S\cap \Omega^{\rho_j}$, where $S$ is a connected component of $M^\tau$ with $\Stab_\Gamma(S) = \langle \rho_i,\rho_j\rangle$ (which exists because $v'_{ij}>0$). The remaining equivalence classes of $P'$ are formed by $\Gamma$-orbits of $P_1'$.

If $\uv = \rho_i\rho_j$, then there exists a point $p'\in\Omega^{\rho_i}\cap \Omega^{\rho_j}$. Set $P_1' = \{p',\tau (p')\}$ to be one equivalence class, the remaining ones are $\Gamma$-orbits of $P_1'$.

\subsection{Degenerations on surfaces with boundary}
\label{sec:top_degen_boundary}
Recall the construction of the double $(\wt N,\wt T)$ from Section~\ref{sec:actions_Sex}  of a surface with boundary $(N,T)$. It established a bijection between $\Gamma$-group actions on surfaces with boundary and $\Z_2\times\Gamma = \wt\Gamma$-group actions on closed surfaces with the property that the nontrivial element $\iota$ of $\mathbb{Z}_2$ acts by a separating reflection. Let $(M^*,T^*)\prec (\wt N,\wt T)$ be a degeneration, then considering the $\iota$-fundamental domain on $M^*$, Proposition~\ref{prop:geo_FundDomain} implies that $\iota$ still acts by a separating involution on $M^*$. Therefore, $(M^*,T^*)$ is a double of some surface $(N',T')$ and it is natural to write $(N',T')\prec(N,T)$. To summarize, we say that $(N',T')\prec(N,T)$ iff $(\wt{N'},\wt{T'})\prec (\wt N,\wt T)$, so that the results of the previous section also describe degenerations of surfaces $N_\tau(G,\ub)$ and $N_{\rho_1}(G,\ub)$ considered in Section~\ref{sec:actions_Sex}.

We finish the section by discussing the collapsed set of a degeneration $(N',T')\prec (N,T)$. Let $(\wt{N'},\wt{T'})\prec (\wt N,\wt T)$ be the corresponding degeneration of the doubles and denote by $\wt P' = \sqcup \wt P'_j$ a collapsed set and its partition. Recall that $\wt P'$ is $\wt T'$-invariant whereas the equivalence classes $\wt P_j'$ are permuted by elements of $\wt \Gamma$. Thus, viewing $(N',T')$ as the factor space $\wt N'/\wt T'(\iota)$, we can define $P'$ as $\wt P'/\wt T'(\iota)$ and the equivalence classes $P_i'$ to be the images of $\wt P_j'$ under the factorization. Note that there are two different types of $P_i'$ depending on whether their preimage in $\wt P'$ consists of one or two equivalence classes. We define $ P'^b$ to be the union of those $P_i'$ whose preimages are a single $\iota$-invariant equivalence class in $\wt P'$. 

Finally, we define $P'^\iota:= P'^b\setminus \bd N'$. This set (or, more precisely, it non-emptiness) plays an important role in the existence theory of Section~\ref{sec:global_Sexistence}.

We can also interpret the set $P'^\iota$ in the language of the inverse surgery construction of Section~\ref{sec:inverse_surgery}. Let $M_j''$ be connected closed surfaces that are attached to $\wt P'_j$ to form $(\wt N,\wt T)$ from $(\wt{N'},\wt{T'})$. If $\wt P_j'$ is $\iota$-invariant and projects onto $P_i'\subset P'^b$, then $M_j''$ is also $\iota$-invariant and, hence, the surface $N_i'' = M_j''/T''(\iota)$ that is glued to $P_i'$ has non-empty boundary.  If $\wt P_j'$ is not $\iota$-invariant and projects onto $P_i'\subset P'^b$, then $N_i''$ can be identified with $M_j''$ and is a closed surface. Altogether, the inverse surgery construction for $(N',T')\prec (N,T)$ can be described using the same notation as the inverse surgery construction for closed surfaces: there exist connected surfaces $N_i''$, sets $P_i''\subset N_i''$, maps $\alpha_i\colon P_i'\to P_i''$, and an action $T''$ of $\Gamma$ on $N'' = \sqcup N_i''$, which satisfy the same conditions as in Section~\ref{sec:inverse_surgery}. The only differences are in the interactions of this data with the boundary. First, $P_i'\subset P'^b$ iff $N_i''$ has non-empty boundary. And second, if $p'\in \bd N'$, then $p'\in P'^b$ and $\alpha(p')\in \bd N''$, so that $p'$ is glued to $\alpha(p')$ by a half-cylinder (or a ribbon) as opposed to the usual cylinder.

\section{Conformal degenerations}
\label{sec:conf}
As before, let $M$ be a closed surface and $T\colon \Gamma\times M\to M$ be a fixed action of the group $\Gamma$ on $M$. Additionally, let $\mC$ be a conformal class of metrics on $M$ such that $T$ acts by conformal transformations. Then there exists a metric $h\in \mC$ of constant curvature $\pm 1$ or $0$ (and unit area in the latter case) such that $T$ acts by $h$-isometries. Since for $M=\mathbb{S}^2,\mathbb{RP}^2$ the solution to the equivariant eigenvalue optimization problem is completely described by Corollaries~\ref{cor:max_sphere} and~\ref{cor:max_RP2}, for the remainder of this section we assume $M\ne \mathbb{S}^2$ or $\mathbb{RP}^2$. Then, by the uniqueness part of uniformization theorem, the metric $h$ is also unique, so we can identify the space of $T$-invariant conformal classes with its constant curvature representatives. In the following $h$ always refers to this canonical representative of the conformal class. 

\subsection{Limits of conformal classes} 
\label{sec:cc_limits}
We define the moduli space $\mM^T(M)$ as the set of equivalence classes of $T$-invariant metrics of constant curvature (of unit area if the curvature is $0$) on $M$, where $h_1\sim h_2$ if there exists a diffeomorphism $F$ of $M$ commuting with $T$ such that $F^*h_1=h_2$. Equivalently, $\mM^T(M)$ is the set of equivalence classes of $T$-invariant conformal classes on $M$. We denote by $\bar h\in\mM^T(M)$ the equivalence class of the metric $h$. We say that a sequence 
$\{\bar h_n\}_{n=1}^\infty\subset \mM^T(M)$ converges to $\bar h$ if there exists a sequence of $T$-invariant diffeomorphisms $F_n$ such that $F_n^*h_n$ converges to $h$ in the $C^\infty$-topology.

In the absence of group actions there is a well-known compactness theory for the moduli space $\mM(M)$, see e.g.~\cite{Hummel}. Below we review the adaptation of this theory to the equivariant setting presented in~\cite{BSS}.

\begin{remark}
\label{rmk:app_BSS}
In paper~\cite{BSS} the authors consider orientable hyperbolic surfaces and provide details of the proof only for  orientation preserving actions $T$. Their arguments, however, can be adapted with minor modifications to the case of the torus, non-orientable surfaces, and actions $T$ that are allowed to reverse orientation (the latter is also remarked by the authors). We outline the proof of~\cite{BSS} as well as necessary adaptations in Appendix~\ref{app:moduli_space}.
\end{remark}

\begin{theorem}[Equivariant Mahler compactness theorem~\cite{BSS}]
\label{thm:Mahler}
 For any $\varepsilon>0$ the set
$$
\mM_{\varepsilon}^T(M):=\{\bar h\in \mM_\gamma^T,\,\,\mathrm{inj}(h)\geq\varepsilon \}
$$
is sequentially compact.
\end{theorem}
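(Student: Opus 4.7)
The plan is to reduce to the classical (non-equivariant) Mahler compactness theorem for $\mM(M)$ and then extract a limit group action using the compactness of isometry groups of smoothly convergent metrics, together with the local rigidity of finite group actions.

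Let $\{h_n\}\subset \Met_T(M)$ be a sequence of $T$-invariant metrics of constant curvature (unit area in the flat case) with $\mathrm{inj}(h_n)\geq \varepsilon$. First I apply the classical Mahler compactness theorem to $\{h_n\}$, forgetting the group action: there exist diffeomorphisms $F_n\colon M\to M$ and a constant curvature metric $h_\infty$ on $M$ such that $\tilde h_n:=F_n^*h_n \to h_\infty$ in $C^\infty$. Conjugating the action, define $T_n(\gamma):=F_n^{-1}\circ T(\gamma)\circ F_n$ for each $\gamma\in\Gamma$; then each $T_n$ is an isometric action of $\Gamma$ on $(M,\tilde h_n)$.

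Next I extract a limit of the $T_n$. Since isometries of a Riemannian metric on a compact manifold are determined by their $1$-jet at a point, and any sequence of isometries of $C^\infty$-converging metrics has, by an Arzel\`a--Ascoli / Myers--Steenrod argument, a $C^\infty$-convergent subsequence, and since $\Gamma$ is finite, I may pass to a further subsequence (diagonalizing over the finitely many $\gamma\in\Gamma$) so that $T_n(\gamma)\to T_\infty(\gamma)$ in $C^\infty$ for every $\gamma\in\Gamma$. The limit defines an isometric action $T_\infty$ of $\Gamma$ on $(M,h_\infty)$ of the same order, and effectiveness passes to the limit because the $T_n$ are uniformly nontrivial on each $\gamma\ne e$ (their non-identity elements cannot $C^\infty$-converge to the identity without contradicting discreteness of finite isometry groups of fixed topological type).

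Now comes the main step: showing $T_\infty$ is smoothly conjugate to $T$. Since $T_n\to T_\infty$ in $C^\infty$ and $\Gamma$ is finite acting smoothly on the compact manifold $M$, by local rigidity of smooth compact group actions (Palais's theorem), there exist diffeomorphisms $\Psi_n\to \mathrm{Id}$ with $\Psi_n^{-1}\circ T_n\circ \Psi_n = T_\infty$ for all $n$ sufficiently large. The standard proof constructs $\Psi_n$ as a fixed point of a contraction mapping in the space of $\Gamma$-equivariant diffeomorphisms-near-identity, using the exponential map for $h_\infty$ and averaging over $\Gamma$; this is where the finiteness of $\Gamma$ is essential. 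Assuming this, set $H_n:=F_n\circ \Psi_n$. Then $H_n^{-1}\circ T\circ H_n = \Psi_n^{-1} T_n \Psi_n=T_\infty$, so $H_n$ intertwines $T$ with $T_\infty$, while $H_n^* h_n = \Psi_n^* \tilde h_n \to h_\infty$ in $C^\infty$ (since $\Psi_n\to\mathrm{Id}$ and $\tilde h_n\to h_\infty$ smoothly).

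To conclude, I fix once and for all a single diffeomorphism $H_\infty$ with $H_\infty^{-1}\circ T\circ H_\infty = T_\infty$ (which exists by the existence of any one $H_n$), and set $k_\infty:=(H_\infty^{-1})^*h_\infty$. A direct computation shows $k_\infty$ is $T$-invariant. The diffeomorphisms $G_n:=H_n\circ H_\infty^{-1}$ then satisfy $G_n^{-1}\circ T\circ G_n = H_\infty T_\infty H_\infty^{-1} = T$, i.e.\ each $G_n$ commutes with $T$, and
\[
G_n^* h_n = (H_\infty^{-1})^* H_n^* h_n \longrightarrow (H_\infty^{-1})^* h_\infty = k_\infty \quad \text{in } C^\infty.
\]
This is exactly the statement that $\bar h_n\to \bar k_\infty$ in $\mM^T(M)$, proving sequential compactness.

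The main obstacle will be verifying the local rigidity step for $T_n$ and $T_\infty$; some care is needed to carry out Palais's construction in an equivariant way for actions that reverse orientation or fix the curvature sign, but in all cases the argument reduces to a standard contraction-mapping argument on $\Gamma$-equivariant vector fields. As noted in Remark~\ref{rmk:app_BSS}, the details parallel those in~\cite{BSS} with straightforward modifications for the torus and non-orientable settings.
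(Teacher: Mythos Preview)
Your proof is correct and takes a genuinely different route from the paper's argument. The paper, following \cite{BSS}, proves Theorem~\ref{thm:Mahler} by directly constructing a geometrically controlled $T$-invariant geodesic triangulation of $(M,h)$: the key technical step is to build a $T$-invariant maximal $\varepsilon/2$-separated set of points, which requires careful treatment of neighborhoods of fixed points of elements of $\Gamma$ (the ``crowns'' of isolated fixed points, and fixed curves of reflections), after which one extracts subsequential limits of the finitely many triangles and reassembles.

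Your approach instead outsources the metric compactness to the classical (non-equivariant) Mahler theorem, then extracts a limiting action $T_\infty$ via Arzel\`a--Ascoli for isometries of converging metrics, and finally invokes Palais's rigidity theorem for nearby compact group actions to realign $T_n$ with $T_\infty$ by diffeomorphisms $\Psi_n\to\mathrm{Id}$. All the steps are sound; effectiveness of $T_\infty$ follows cleanly once you observe that Palais gives $\Psi_n^{-1}T_n\Psi_n=T_\infty$, so $T_\infty$ is conjugate to $T_n$ and hence to the effective action $T$. (Your phrase about ``discreteness of finite isometry groups of fixed topological type'' is vague; Newman's theorem, or simply the conjugacy just noted, is the clean justification.)

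The trade-off: your argument is shorter and more modular, treating metric convergence and action convergence as separate black boxes. The paper's triangulation approach, however, is designed to handle Theorem~\ref{them_EquivDM} (the degenerating case) in the same framework, by treating the thick and thin parts uniformly---a setting where there is no classical compactness result to fall back on, and where your reduction strategy would not apply without substantial additional work.
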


To describe the compactness properties of the full space $\mM^T(M)$ we restrict ourselves to the surfaces of negative Euler characteristic, and defer the detailed statements for other surfaces to the appendix. Recall the following facts from hyperbolic geometry.

The first fact is the collar lemma, see e.g.~\cite[Chapter IV, Proposition 4.2]{Hummel} or ~\cite[Lemma 4.2]{Mzhu} for the exact version we use below, stating that any disjoint collection $\{c_i\}$ of simple closed geodesics have disjoint tubular neighbourhoods $\fU_i\supset c_i$, usually referred to as {\em collars}, such that each $\fU_i$ is conformally equivalent to a flat cylinder $[-\mu_i,\mu_i]\times \Sph^1$, where $\mu_i$ only depends on $\ell_h(c_i)$ and $\mu_i\to\infty$ as $\ell_h(c_i)\to 0$. As a corollary, on a closed hyperbolic surface of genus $\gamma$ any collection of pairwise disjoint simple closed geodesics has at most $3\gamma-3$ elements. Furthermore, as a corollary of a thick-thin decomposition~\cite[Theorem 4.1.6]{Buser} one has the following lemma.

\begin{lemma}
\label{lemma:short_geo}
There exists a constant $\varepsilon_0>0$ such that for any hyperbolic metric $h$ on $M$ any two simple closed geodesics $c_1,c_2$ of length $\ell_h(c_i)<\varepsilon_0$ are disjoint.
\end{lemma}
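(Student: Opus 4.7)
The plan is to deduce the claim from the collar lemma that was already invoked just above, together with the fact that the collar width tends to infinity as the geodesic length tends to zero. Recall that for any simple closed geodesic $c$ on a closed hyperbolic surface, the standard collar about $c$ is an embedded tubular neighborhood conformally equivalent to a cylinder $[-\mu(\ell_h(c)),\mu(\ell_h(c))]\times\Sph^1$, whose hyperbolic width $2\mu(\ell_h(c))$ is a decreasing function of $\ell_h(c)$ satisfying $\mu(\ell)\to\infty$ as $\ell\to 0$ (explicitly, $\sinh(\mu(\ell))\sinh(\ell/2)=1$).

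Suppose, toward a contradiction, that simple closed geodesics $c_1,c_2$ with $\ell_h(c_i)<\varepsilon_0$ meet at a point $p$. Let $\fU_1$ denote the standard collar about $c_1$, of hyperbolic half-width $\mu_1:=\mu(\ell_h(c_1))$. Following $c_2$ from $p$ in both directions, since $c_2$ is closed, simple, and cannot be trapped inside the proper subset $\fU_1\subset M$, there is a subarc $\gamma\subset c_2$ containing $p$ whose endpoints lie on the two distinct boundary components of $\fU_1$ (these boundary components are distinct precisely because $c_1$ separates $\fU_1$ into two halves and $c_2$ crosses $c_1$ transversally at $p$). As $\gamma$ is a minimizing geodesic joining the two boundary components of the tube, its length is at least $2\mu_1$, and hence $\ell_h(c_2)\geq 2\mu(\ell_h(c_1))>2\mu(\varepsilon_0)$. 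Now choose $\varepsilon_0>0$ once and for all small enough that $2\mu(\varepsilon_0)\geq \varepsilon_0$, which is possible because $\mu(\varepsilon_0)\to\infty$ as $\varepsilon_0\to 0$. Then $\ell_h(c_2)>\varepsilon_0$, contradicting $\ell_h(c_2)<\varepsilon_0$.

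There is no substantive obstacle: the argument is the standard consequence of the collar lemma, and is in fact precisely the content of Buser's thick-thin decomposition \cite[Theorem~4.1.6]{Buser}, which states that for $\varepsilon_0$ smaller than the two-dimensional Margulis constant, the $\varepsilon_0$-thin part of any closed hyperbolic surface is a disjoint union of embedded tubular neighborhoods about the simple closed geodesics of length less than $\varepsilon_0$. Any two such geodesics must therefore lie in distinct connected components of the thin part, and are in particular disjoint. Either formulation yields the lemma; I would include the short self-contained argument above for completeness, as it also makes explicit the relationship between $\varepsilon_0$ and the collar width that is used implicitly elsewhere in Section~\ref{sec:conf}.
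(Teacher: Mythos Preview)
Your proof is correct and aligns with the paper's own treatment: the paper simply states the lemma as a corollary of the thick--thin decomposition \cite[Theorem~4.1.6]{Buser}, and you both cite that reference and spell out the standard collar-lemma argument behind it. Two minor points of phrasing: the arc $\gamma$ is a geodesic segment but need not be \emph{minimizing}---the bound $\ell_h(\gamma)\geq 2\mu_1$ holds simply because any path joining the two boundary components of the collar has length at least the collar width; and your parenthetical justification that $\gamma$ reaches \emph{distinct} boundary components (``$c_2$ crosses $c_1$ transversally at $p$'') is correct but implicitly uses that in the universal cover of $\fU_1$ the lift of $c_2$ meets the lift of $c_1$ exactly once, which is what rules out $\gamma$ re-crossing $c_1$ before exiting.
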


Consider now a sequence $\{\bar h_n\}_{n=1}^\infty\subset \mM_\gamma^T$ with $\mathrm{inj}(h_n)\to 0$. Up to a choice of a subsequence, we may assume that $c_{n,1},\ldots,c_{n,m}$, $m\leq 3\gamma-3$ are  simple closed geodesics on $(M,h_n)$ such that $\ell_{h_n}(c_{i,n})\to 0$ and the lengths of all other simple closed geodesics on $(M,h_n)$ are uniformly bounded away from zero. Furthermore, we may assume that $\ell_{h_n}(c_{n,i})<\varepsilon_0$, where $\varepsilon_0$ is the constant from Lemma~\ref{lemma:short_geo}.
Since $\Gamma$ acts by isometries, for any $\g\in\Gamma$ one has $\ell_{h_n}(\g(c_{n,i})) = \ell_{h_n}(c_{n,i})\leq \varepsilon_0$ and, therefore, for each $n$ the group $\Gamma$ permutes curves $c_{n,i}$ and their collars. In particular, 
one has that $\fC_n = \cup_i c_{n,i}$ is a $\Gamma$-invariant collection of disjoint non-contractible simple closed curves, as in Section~\ref{sec:top_degen}. As we have seen before, 
$\Gamma$ acts on $M_n:=M\setminus \fC_n$ by $h_n$-isometries. Note that $(M_n,h_n)$ is a (possibly disconnected) hyperbolic surface with geodesic boundary. Up to a choice of a subsequence, all $M_n$ have the same topological type and the actions of $\Gamma$ are topologically the same, i.e. there exist $\Gamma$-equivariant diffeomorphisms $F_{n,m}\colon M_n\to M_m$. The latter can be seen by constructing $\Gamma$-invariant triangulations on $M_n$ with uniformly bounded number of vertices (as is done in~\cite{BSS} and reviewed in Appendix~\ref{app:moduli_space}) and extracting a subsequence with the same combinatorial structure and the same action of $\Gamma$ on the discrete set of vertices of the triangulation (see Appendix~\ref{app:moduli_space} for details). 

Since all $(M_n, T|_{M_n})$ are topologically equivalent, the topological type of $(\widehat M_\infty,\widehat T_\infty):=(M'_{\fC_n}, T'_{\fC_n})$ does not depend on $n$.
 Finally, let $M_\infty = \widehat M_\infty\setminus P_{\fC_n}$.

\begin{theorem}[Equivariant Baily-Mumford compactification~\cite{BSS}]
\label{them_EquivDM}
Up to a choice of a subsequence, there exist $\Gamma$-equivariant diffeomorphisms $\Psi_n\colon M_\infty\to \mathrm{int} (M_n)$ such that the sequence $\Psi_n^*h_n$ of hyperbolic metric converges in $C^\infty_{loc}$ to the complete $\Gamma$-invariant hyperbolic metric $h_\infty$ on $M_\infty$. 

Furthermore, there exists a $\Gamma$-invariant metric of locally constant curvature $\widehat h_\infty$ in $\widehat M_\infty$ such that its restriction to $M_\infty$ is conformal to $h_\infty$, and any $\widehat T_\infty$-invariant conformal class on $\widehat M_\infty$ can be realised as $[\widehat h_\infty]$ for some degenerating sequence of hyperbolic metrics. 
\end{theorem}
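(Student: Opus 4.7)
The plan is to leverage the setup already established: after extracting a subsequence, the cut-open surfaces $(M_n, T|_{M_n})$ are all equivariantly diffeomorphic via $\Gamma$-equivariant diffeomorphisms, and the geodesics in $\fC_n$ have length $<\varepsilon_0$ so their collars $\fU_{n,i}$ are disjoint by Lemma~\ref{lemma:short_geo}. The proof will then proceed by (a) applying equivariant Mahler compactness on the thick part, (b) analyzing the pinching collars to exhibit hyperbolic cusps at each point of $P_{\fC_n}$, and (c) performing a conformal compactification to pass from $h_\infty$ on $M_\infty$ to $\widehat h_\infty$ on $\widehat M_\infty$.

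For step (a), I would first produce, for each fixed $\delta\in(0,\varepsilon_0)$, the $\Gamma$-invariant truncated thick part $M_n^\delta\subset M_n$ obtained by removing the sub-collar of each $c_{n,i}$ on which the conformal coordinate $t\in[-\mu_{n,i},\mu_{n,i}]$ satisfies $|t|>\mu_{n,i}-\log(1/\delta)$.  By the choice of $\fC_n$, every simple closed geodesic on $(M_n^\delta,h_n)$ has length bounded below by a uniform $\varepsilon(\delta)>0$, so $M_n^\delta$ has uniformly bounded injectivity radius away from its boundary.  Applying the equivariant Mahler compactness of Theorem~\ref{thm:Mahler} to a doubling of $M_n^\delta$ (or directly to the interior using the boundary-version outlined in~\cite{BSS}) and passing to a subsequence produces $\Gamma$-equivariant diffeomorphisms $\Psi_n^\delta$ from a fixed model to $\mathrm{int}(M_n^\delta)$ along which $h_n$ converges smoothly.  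Taking $\delta=1/k$ and diagonalizing yields $\Gamma$-equivariant diffeomorphisms $\Psi_n\colon M_\infty\to \mathrm{int}(M_n)$ with $\Psi_n^*h_n\to h_\infty$ in $C^\infty_{loc}$; equivariance of $h_\infty$ under $\widehat T_\infty|_{M_\infty}$ follows because each $\Psi_n$ intertwines the $\Gamma$-actions.

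For step (b), I would analyze what happens inside each collar $\fU_{n,i}\cong [-\mu_{n,i},\mu_{n,i}]\times \Sph^1$ with $\mu_{n,i}\to\infty$.  After reparametrizing so that the core $c_{n,i}$ corresponds to $t=0$, the hyperbolic metric on the collar has an explicit form converging in $C^\infty_{loc}$ on each of the two half-cylinders $\{\pm t>0\}$ to the standard cusp metric $\frac{dt^2+d\theta^2}{t^2}$.  This gives a canonical identification of each end of $M_\infty$ with a hyperbolic cusp, with the two ends corresponding to a separating (resp.\ nonseparating) $c_{n,i}$ attached to two distinct (resp.\ the same) point of $P_{\fC_n}$.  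Completeness of $h_\infty$ is immediate from the cusp description.  The $\Gamma$-action on $\fC_n$ permutes the collars isometrically, so the identification of ends with points of $P_{\fC_n}$ is $\widehat T_\infty$-equivariant.

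For step (c), in a neighborhood of each cusp end the metric $h_\infty$ is conformally equivalent to the punctured flat disk $\{0<|z|<1\}$, and the conformal factor extends smoothly across the puncture.  Gluing these punctured-disk charts to $M_\infty$ produces the closed surface $\widehat M_\infty$ and a conformal class $[\widehat h_\infty]$ whose $\widehat T_\infty$-invariance follows from the equivariance of the cusp identification; choosing the constant-curvature representative in this class gives $\widehat h_\infty$.  For the converse realization, I would start with any $\widehat T_\infty$-invariant $[\widehat h_\infty]$, take the unique complete $\widehat T_\infty$-equivariant hyperbolic metric $h_\infty$ on $M_\infty=\widehat M_\infty\setminus P_{\fC_n}$ obtained by equivariant uniformization, and perform the standard equivariant grafting/plumbing construction that replaces pairs of cusps (or single cusps, in the nonseparating case) related by the $\widehat T_\infty$-action by short hyperbolic collars of lengths $\ell_{n,i}\to 0$ chosen to be constant on $\Gamma$-orbits.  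The main obstacle is the bookkeeping of equivariance throughout: ensuring that the cutoff radii $\delta$, the Mahler diffeomorphisms, the cusp identifications, and the grafting parameters are all chosen compatibly with the $\Gamma$-action.  This is manageable because the $\Gamma$-orbits of short geodesics, and hence of collars, are finite and the relevant uniformization and collar data are canonical, so any choice can be symmetrized without loss.
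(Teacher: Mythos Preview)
Your overall strategy—split into thick and thin parts, get convergence on the thick part via a compactness black box, handle the collars explicitly, then conformally compactify—is a reasonable and well-known route in the non-equivariant setting, but it differs substantially from what the paper does, and your step~(a) has a real technical issue.

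The paper (following \cite{BSS}, in Appendix~\ref{app:moduli_space}) does \emph{not} reduce Theorem~\ref{them_EquivDM} to Theorem~\ref{thm:Mahler}. Instead it proves both by the same direct method: constructing a geometrically controlled $\Gamma$-invariant \emph{geodesic triangulation} of $(M,h_n)$. The thin part (each collar $\mC_{n,i}$) is tessellated by an explicit $\Gamma'$-invariant pattern of long thin rectangles whose limit as $\ell_{h_n}(c_{n,i})\to 0$ produces the two cusps; the thick part is triangulated by first building a $\Gamma$-invariant maximal $\eps/2$-separated net $\mP$. The delicate work is in making $\mP$ equivariant: isolated fixed points of high-order stabilizer are surrounded by ``crowns'', and in the non-orientation-preserving extension one additionally handles tubular neighborhoods of fixed curves of reflections before filling in the rest by adding $\Gamma$-orbits one at a time. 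Convergence is then extracted at the level of triangles, so thick and thin parts are treated in a single combinatorial framework.

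The gap in your step~(a) is the doubling. The boundary curves of your truncated thick part $M_n^\delta$ are equidistant curves from the core geodesics $c_{n,i}$; these are \emph{not} geodesics in $(M,h_n)$ (they have geodesic curvature $\tanh\rho\neq 0$), so doubling $M_n^\delta$ along them does not yield a smooth hyperbolic metric, and Theorem~\ref{thm:Mahler} does not apply to the double. Your fallback, ``the boundary-version outlined in~\cite{BSS}'', is not something the paper states or proves, so invoking it is not self-contained; in fact the way \cite{BSS} handles boundaries is precisely via the triangulation construction the paper reproduces. Your approach is salvageable—for instance, by proving an equivariant compactness statement for hyperbolic surfaces with (non-geodesic) boundary of controlled geometry—but as written the reduction to the closed case does not go through, and carrying it out would amount to redoing much of the triangulation argument anyway.
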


\begin{remark}
A version of this theorem for surfaces of vanishing Euler characteristic is stated in Appendix~\ref{app:moduli_space}. The only difference is that the word ``hyperbolic'' is replaced by ``constant curvature''. 
\end{remark}

%

We conclude this section with a Riemannian version of the inverse surgery of Section~\ref{sec:inverse_surgery}, which can also be viewed as an explicit construction of the sequence of conformal classes $\mC_n$ on $M$ converging to a given class $\widehat\mC_\infty = [\widehat h_\infty]$ on $\widehat M_\infty$. This particular version is used later in Sections~\ref{sec:global_existence} and~\ref{sec:global_Sexistence}.

In the notation of Section~\ref{sec:inverse_surgery} let $(M',T')\prec (M,T)$, let $\mC'$ be a $T'$-invariant conformal class and $g'\in\mC'$ is a $T'$-invariant metric. Let also $g''$ be any $T''$-invariant metric on $M''$. We construct the metric $g_{\eps, L}$ on $M$ in the following way. For each $p\in P'\sqcup P''$ we can find an open neighbourhood $p\in U_p$ and a flat metric $g_p$ defined on $U_p$ such that 
\begin{enumerate}
\item $g_p$ is conformal to $g'$ if $p\in P'$, and conformal to $g''$ if $p\in P''$;
\item for all $\g\in\Gamma$ one has $U_{\gamma\cdot p} = \gamma\cdot U_p$ and $g_{\gamma\cdot p} = (\gamma\cdot)_*g_p$;
\item for some constants $c,C>0$  on $U_p$ one has
\[
cg'\leq g_p\leq Cg',\text{ if } p\in P';\qquad cg''\leq g_p\leq Cg'',\text{ if } p\in P''.
\] 
\end{enumerate}
Let $D_\eps(p)\subset U_p$ denote the disk of radius $\eps$ around $p$ in the metric $g_p$, which is defined for sufficiently small $\eps>0$. For all $p\in P'\sqcup P''$ we cut out $D_\eps(p)$ and glue $S_\eps(p):=\partial D_\eps(p)$ to $S_\eps(\alpha(p))$  using a flat cylinder $\mathbb{S}^1_\eps\times[0,L\eps]$, which we denote by $C_{\eps, L}(p)$. Furthermore, the isometric action of $\Gamma$ on $S_{\epsilon}(p)\sqcup S_{\epsilon}(\alpha(p))$ extends uniquely to an action by isometries on $C_{\epsilon,L}(p)$, so that there is an action $T$ of $\Gamma$ on the resulting surface $M$ that restricts to $T'$ on $M\cap M'$ (and $T''$ on $M\cap M''$, respectively), for which the resulting metric $g_{\eps,L}$ is $T$-invariant. The metric $g_{\eps,L}$ is bounded, but not necessarily continuous. Nevertheless, using cut-off functions it is easy to construct a function $\omega_{\eps,L}$ such that $e^{2\omega_{\eps,L}}g_{\eps,L}$ is smooth and, hence, $\mC_{\eps,L}:=[g_{\eps,L}] = [e^{2\omega_{\eps,L}}g_{\eps,L}]$ is a well-defined conformal class of metrics on $M$. Along a sequence $\eps_n\to 0$, $L_n\to \infty$ we recover the original conformal classes on $M'$, $M''$.
%
%

\subsection{Convergence properties of $\Lambda_1^T(M,\mC)$}
\label{sec:LT1_convergence}
 In this section we study the behavior of conformal maxima $\Lambda_1^T(M,\mC)$. Namely, we are concerned with the convergence properties of $\Lambda_1^T(M,\mC_n)$ for a sequence of $T$-invariant conformal classes $\mC_n$.

Let us first consider the case of a precompact sequence $\mC_n$, i.e. such that canonical representatives $h_n\in \mC_n$ have a uniform lower bound on their injectivity radius. Then by Theorem~\ref{thm:Mahler}, there exists a subsequence, which we continue to denote $\mC_n$, such that $\mC_n\to \mC$, i.e. $h_n\to h$ in $C^\infty$, where $h\in\mC$ is the canonical representative. Since for any $\omega\in C^\infty(M)$ this also implies $e^{2\omega}h_n\to e^{2\omega}h$, we can conclude that $\Lambda_1^T(M,\mC_n)\to \Lambda_1^T(M,\mC)$ as $n\to \infty$, see e.g.~\cite[Proposition 4.2]{KM}.

Assume now that the sequence $\mC_n$ is such that the corresponding sequence of canonical representatives $h_n\in\mC_n$ degenerates. Then by Theorem~\ref{them_EquivDM}, up to a subsequence, $\mC_n$ converges to a conformal class $\wh\mC_\infty$ on $(\wh M_\infty, \wh T_\infty)$. The next theorem describes the behavior of $\Lambda_1^T(M,\mC_n)$ as $n\to\infty$. It is an equivariant version of the results~\cite{Pet1, KM} for orientable and non-orientable surfaces respectively.

\begin{theorem}
\label{thm:DM_limit}
Let $\mC_n$ be a degenerate sequence of $T$-invariant conformal classes on $M$ such  that $(M,T,\mC_n)$ converges to $(\widehat M_\infty, \wh T_\infty, \wh \mC_\infty)$ in the sense of Theorem~\ref{them_EquivDM}. Then one has 
\begin{equation}
\label{eq:DM_limit1}
\limsup \Lambda_1^T (M,\mC_n)\leq \max\left\{\Lambda_1^{\wh T_\infty}(\widehat M_\infty,\wh\mC_\infty), 8\pi\epsilon_{or},12\pi\epsilon_{nor}  \right\},
\end{equation}
where $\epsilon_{or} = 1$ if, along a subsequence, one of the collapsing two-sided geodesics is $T$-invariant, $\epsilon_{or} = 0$ otherwise, and, similarly, $\epsilon_{nor} = 1$ if if for an infinite sequence of $n$'s at least one of the collapsing one-sided geodesics is $T$-invariant, $\epsilon_{nor} = 0$ otherwise. Note that if $M$ is orientable, then $\epsilon_{nor} = 0$.
\end{theorem}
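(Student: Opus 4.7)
The plan is to adapt the bubbling analysis of Petrides~\cite{Pet1} and Karpukhin--Mondino~\cite{KM} to the equivariant setting, using Theorem~\ref{lap.conf.ex} to realize the conformal maxima by equivariant harmonic maps. First I would reduce to the case $\limsup\Lambda_1^T(M,\mC_n)>8\pi$, since otherwise the conclusion is immediate. Along a subsequence, Theorem~\ref{lap.conf.ex} then provides $T$-invariant metrics $g_n\in\mC_n$ with $\bar\lambda_1(M,g_n)=\Lambda_1^T(M,\mC_n)$ together with $T$-equivariant harmonic maps $\Phi_n\colon(M,g_n)\to\Sph^{N_n}$ by first eigenfunctions. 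After scaling so that $\lambda_1(g_n)=2$, one has $\Phi_n^*g_{\Sph^{N_n}}=g_n$ and $E(\Phi_n)=\mathrm{Area}(M,g_n)=\tfrac{1}{2}\Lambda_1^T(M,\mC_n)$. A dimension-stabilization argument analogous to Lemma~\ref{closed.stab.lem}, combined with the uniform energy bound, allows us to take $N_n=N$ fixed after passing to a further subsequence.

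Next I would transfer the analysis to a fixed domain via the diffeomorphisms $\Psi_n\colon M_\infty\to\mathrm{int}(M_n)$ from Theorem~\ref{them_EquivDM}, setting $\widetilde\Phi_n:=\Phi_n\circ\Psi_n$. The $\widetilde\Phi_n$ are $\widehat T_\infty$-equivariant harmonic maps for the metrics $\Psi_n^*g_n$, which agree up to conformal factor with $\Psi_n^*h_n\to h_\infty$ in $C^\infty_{loc}$. Standard bubbling compactness for sphere-valued harmonic maps (see e.g.~\cite{LinWang}), combined with the removable singularity theorem applied at the points of $P_{\fC_n}\subset\widehat M_\infty$, yields a $\widehat T_\infty$-equivariant harmonic limit $\widehat\Phi_\infty\colon(\widehat M_\infty,\widehat h_\infty)\to\Sph^N$, a finite $\widehat T_\infty$-invariant concentration set $\Scal\subset\widehat M_\infty$, and finitely many harmonic bubbles $\phi^p_j\colon\Sph^2\to\Sph^N$ at each $p\in\Scal$, together with the energy identity
\[
\lim_{n\to\infty}\Lambda_1^T(M,\mC_n)=2E(\widehat\Phi_\infty)+2\sum_{p\in\Scal}\sum_j E(\phi^p_j).
\]

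The main obstacle, and the heart of the argument, is to promote this additive identity to the asserted \emph{max}-type bound. The key dichotomy, in the spirit of \cite{Pet1}, is that first-eigenfunction maps on a degenerating family ``either fully survive in the limit or fully concentrate into a single bubble.'' Concretely, the coordinate functions $\Phi_n^i$ are $L^2_{g_n}$-orthogonal elements of the $2$-eigenspace satisfying $|\Phi_n|^2\equiv 1$; using test functions supported on the thick part, one shows that every non-vanishing weak limit $\widehat\Phi_\infty^i$ is a first Laplace eigenfunction for $\widehat h_\infty$, so that $2E(\widehat\Phi_\infty)\leq\Lambda_1^{\widehat T_\infty}(\widehat M_\infty,\widehat\mC_\infty)$ as soon as $\widehat\Phi_\infty$ is nonconstant. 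An orthogonality argument between the bulk limit and the rescaled bubble components in the first eigenspace then shows that the two cases cannot coexist for a \emph{maximizing} sequence: if $\widehat\Phi_\infty$ is nonconstant, then no bubble occurs, while if any bubble is present, $\widehat\Phi_\infty$ must be constant and all the energy is concentrated in $\Scal$.

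Finally, I would estimate the individual bubble energies and exploit equivariance. On each bubble, the rescaled metric is conformally round and the components of $\phi^p_j$ are first eigenfunctions of $\Sph^2$, so Hersch's inequality yields $2E(\phi^p_j)\leq\bar\lambda_1(\Sph^2)=8\pi$. When the underlying collapsing geodesic is one-sided, the M\"obius collar forces $\phi^p_j$ to descend to a first-eigenfunction map on $\mathbb{RP}^2$, and Corollary~\ref{cor:max_RP2} improves this to $2E(\phi^p_j)\leq 12\pi$. Since $\Scal$ must be $\widehat T_\infty$-invariant, a bubble can only persist along a subsequence in which one of the collapsing geodesics underlying its orbit is $T$-invariant; summing over orbits and invoking the dichotomy of the previous paragraph gives $\limsup\Lambda_1^T(M,\mC_n)\leq 8\pi\epsilon_{or}$ in the two-sided case and $\leq 12\pi\epsilon_{nor}$ in the one-sided case, completing the proof.
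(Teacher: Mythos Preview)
Your outline follows the right general template (realize maximizers by equivariant harmonic maps, pass to the thick-part limit, analyze concentration), but the concentration analysis has a genuine gap and a misattribution of the various terms in the max.

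First, you conflate two distinct concentration mechanisms. In the paper's argument (following \cite[Claims 11--13]{Pet1}), one first shows that $\bar\lambda_1(M,g_n)\to 0$ unless the area measure of $g_n$ concentrates either in a single collar $\fU_{n,i}$ \emph{or} in a single component of the thick part. These are handled separately. The $8\pi\epsilon_{or}$ and $12\pi\epsilon_{nor}$ terms come \emph{only} from the collar case: if $g_n$ concentrates in a collar that is not $T$-invariant, the eigenvalue goes to zero (the measure would have to split equally among an orbit of collars); if it is $T$-invariant, one gets $8\pi$ or $12\pi$ according to whether the core geodesic is two- or one-sided. Your proposal never isolates this collar case and instead tries to attribute $\epsilon_{or},\epsilon_{nor}$ to bubbles at points of $\widehat M_\infty$, which is incorrect: bubbles at interior points have nothing to do with collapsing geodesics.

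Second, your bubble estimate is wrong in both direction and mechanism. Harmonic maps $\phi\colon\Sph^2\to\Sph^N$ satisfy $E(\phi)\geq 4\pi$, not $\leq 4\pi$, so ``$2E(\phi^p_j)\leq 8\pi$ by Hersch'' is false in general. The paper's argument for the thick-part bubble case is quite different: if exactly one bubble energy is nonzero, equivariance forces the bubble point $x_i$ to be a $\widehat T_\infty$-fixed point, whence Proposition~\ref{prop:Lambda_fixedpt} gives $\Lambda_1^{\widehat T_\infty}(\widehat M_\infty,\widehat\mC_\infty)\geq 8\pi$; meanwhile, concentration of $g_n$ at a single point forces $\limsup\bar\lambda_1(M,g_n)\leq 8\pi$ by \cite[Lemma 3.1]{Kok.var}. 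Thus this case is absorbed into the $\Lambda_1^{\widehat T_\infty}$ term, not the $\epsilon_{or}$ term. Likewise, the $12\pi$ does not arise from bubbles descending to $\mathbb{RP}^2$; it comes from collar concentration around a one-sided geodesic, handled via \cite[\S 5.2]{KM}. Finally, the dichotomy you need is sharper than ``nonconstant limit versus bubbles'': one must show that exactly one term in the energy identity survives (otherwise $\bar\lambda_1\to 0$), which is \cite[Lemma 2.1]{Kok.var} applied to the limiting measure.
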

\begin{proof}
First we observe that if the action $T$ on $M$ has fixed points, then either there is an invariant collapsing geodesic (if the fixed point is on a collapsing geodesic) or the action of $\wh T_\infty$ on  $\wh M_\infty$ has fixed points (if the fixed point is not on a collapsing geodesic).
By the existence of equivariant maximizers of Theorem~\ref{lap.conf.ex}, either $\Lambda_1^T (M,\mC_n) = 8\pi$ for large enough $n$ (and then~\eqref{eq:DM_limit1} vacuously holds) or, up to a subsequence, there exist $T$-invariant metrics $g_n$ attaining $\Lambda_1^T(M,\mC_n)$. If there are no fixed points, then the maximal metric always exists. Thus, without loss of generality, we can assume that there exist $T$-invariant metrics $g_n\in\mC_n$ of unit area such that $\bar\lambda_1(M,g_n) = \Lambda_1^T(M,\mC_n)$ and the corresponding $T$-equivariant harmonic maps $\Phi_n\colon (M,\mC_n)\to\mathbb{S}^k$, where we can assume that $k$ is independent of $n$ by multiplicity bounds, see e.g.~\cite{Besson}.

The rest of the proof is analogous to that in~\cite[Section 4]{Pet1} (and~\cite[Section 5.2]{KM} for the non-orientable case), where the classical eigenvalue optimization problem is treated. Below we provide the main steps and outline the differences.

{\bf Step 1.} Let $\fU_{n,i}$ denote the collars of $c_{n,i}$ in $(M,h_n)$. In~\cite[Claim 11]{Pet1} it is proven that $\bar\lambda_1(M,g_n)\to 0$ unless (up to a subsequence) the area of $g_n$ is concentrated either in one of the collars or in one of the components of $M\setminus\fU_n$, where $\fU_n:=\cup_i\fU_{n,i}$. This statement can be directly applied to the sequence $\{g_n\}$. Note that since $g_n$ is $T$-invariant, the connected set $g_n$ is concentrating on has to be $T$-invariant as well.

{\bf Step 2.} Suppose that $g_n$ concentrates on one of the collars $\fU_{n,i}$. One then has $\bar\lambda_1(M,g_n)\to 0$ unless $\fU_{n,i}$ is preserved by $\Gamma$. Since $\Gamma$ acts by isometries on $(M,h_n)$ and $c_{n,i}$ is the unique simple closed geodesic on $\fU_{n,i}$, one has that $c_{n,i}$ is $T$-invariant. If $c_{n,i}$ are two-sided, then $\epsilon_{or} = 1$ and~\cite[Claim 12]{Pet1} yields $\limsup\bar\lambda_1(M,g_n)\leq 8\pi$. If $c_{n,i}$ are one-sided, then $\epsilon_{nor} = 1$ and the proof~\cite[Section 5.2]{KM} gives $\limsup\bar\lambda_1(M,g_n)\leq 12\pi$.

{\bf Step 3.} Suppose now that $g_n$ concentrates on a $T$-invariant component $M_n$ of $M\setminus\fU_n$. The proof of this step closely follows~\cite[Claim 13]{Pet1}. 

Let us denote by $X$ the corresponding component of $M_\infty$ containing $\Psi_n^{-1}(M_n)$ and denote $h_n' =\Psi_n^*h_n$ and $\Phi_n' = \Phi_n\circ\Psi_n|_X$, so that $\Phi_n'\colon(X,h_n')\to \mathbb{S}^k$ is a sequence of harmonic maps with $h_n'\to h_\infty$ in $C^\infty_{loc}$. By the results of~\cite{Mzhu} one has that up to a choice of a subsequence $\Phi_n'$ bubble converges to a harmonic map $\Phi\colon (X,h_\infty)\to \mathbb{S}^k$, i.e. there exists a collection of points $x_1,\ldots, x_N\in X$ such that $\Phi_n'\to \Phi$ in $C^\infty_{loc}(X\setminus\{x_1,\ldots,x_N\})$ and 
\begin{equation}
\label{eq:bubble_conv}
E_{h_n'}(\Phi_n')\to E_{h_\infty}(\Phi) + \sum_{i=1}^N E_i,
\end{equation}
where $E_i$ are are energies of bubbles at $x_i$. By the classical argument~\cite[Lemma 2.1]{Kok.var}, we have $\lambda_1(M,g_n)\to 0$ unless there is exactly one non-zero term in the r.h.s. of~\eqref{eq:bubble_conv}. Suppose that it is one of the $E_i$. Since the measures $|d\Phi_n'|^2_{h_n'}dv_{h_n'}$ are $T$-invariant, the points $x_i$ are permuted by $\Gamma$ and the bubble energies $E_i$ are preserved by $\Gamma$. Therefore, if exactly one of $E_i$ is non-zero, then $x_i$ is fixed by $\wh T_\infty$, hence, by Proposition~\ref{prop:Lambda_fixedpt} $\Lambda_1^{\wh T_\infty}(\widehat M_\infty,\mC_\infty)\geq 8\pi$. At the same time, since $g_n$ concentrates in a neighborhood of $x_i$, ~\cite[Lemma 3.1]{Kok.var} implies that the l.h.s. of~\eqref{eq:DM_limit1} does not exceed $8\pi$.

Suppose now that all $E_i$ vanish, i.e. $N=0$ and $\Phi_n'\to \Phi$ in $C^\infty_{loc}(X)$. Let $\widehat X$ be a connected component of $\widehat M_\infty$ containing $X$. By conformal invariance of energy, $\Phi$ has finite energy with respect to $\widehat h_\infty$ and, hence, by the removable singularity theorem, extends to a smooth harmonic map $\widehat\Phi\colon (\widehat X,\widehat h_\infty)\to\mathbb{S}^k$ of energy $E_{\widehat h_\infty}(\widehat\Phi) = \frac{1}{2}\limsup \Lambda_1^T(M,[h_n])$ and, furthermore $|d\Phi'_n|^2_{h_n'}\,dv_{h_n'}\rightharpoonup^*|d\widehat \Phi|^2_{\widehat h_\infty}dv_{\widehat h_\infty}$
as measures on $\widehat X$. Upper semi-continuity of eigenvalues~\cite[Proposition 1.1]{Kok.var} then implies that 
\[
\limsup_{n\to\infty}\bar\lambda_1(M,g_n)\leq \bar\lambda_1(\widehat X, |d\hat\Phi|^2_{\widehat h_\infty}\widehat h_\infty)\leq \Lambda_1^{\wh T_\infty}(\widehat X, \wh\mC_\infty)\leq \Lambda_1^{\wh T_\infty}(\widehat M_\infty, \wh\mC_\infty).\qedhere
\]
\end{proof}

%

\begin{theorem}
\label{thm:existence}
For a surface with a finite group action $(M,T)$ set $\delta_{or} = 1$ if it has a $T$-invariant simple closed $2$-sided curve, $\delta_{or} = 0$ otherwise, and set $\delta_{nor} = 1$ if it has a $T$-invariant simple closed $1$-sided curve, $\delta_{nor} = 0$ otherwise. Then one has
\begin{equation}
\label{ineq:MM'}
\Lambda_1^T(M)\geq\max_{(M',T')\prec (M,T)} \left\{\Lambda_1^{T'}(M'), 8\pi\delta_{or}, 12 \pi \delta_{nor}\right\}.
\end{equation}

 Furthermore, if ~\eqref{ineq:MM'} is strict, then
 there exists a maximal metric achieving $\Lambda_1^T(M)$, smooth up to possibly finitely many conical singularities.
\end{theorem}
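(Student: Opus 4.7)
The argument splits into two independent parts: the lower bound~\eqref{ineq:MM'}, and the existence of a global maximizer whenever~\eqref{ineq:MM'} is strict.

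For the lower bound, I would construct $T$-invariant test metrics on $M$ via the inverse-surgery procedure at the end of Section~\ref{sec:cc_limits}. Given $(M', T') \prec (M, T)$ with collapsed set $P' = \sqcup P'_i$ and auxiliary data $(M'', T'', P'', \alpha)$, I would take a near-maximizing $g' \in \mathrm{Met}_{T'}(M')$ and any $T''$-invariant $g''$ on $M''$ and form $g_{\eps, L}$ on $M$ by excising $\eps$-disks around each point of $P' \sqcup P''$ and inserting thin flat cylinders of length $L\eps$. Logarithmic-cutoff capacity estimates on the long thin cylinders decouple $M'$ from $M''$ in the limit, giving $\bar\lambda_1(M, g_{\eps, L}) \to \bar\lambda_1(M', g')$ as $\eps \to 0$ with $L \to \infty$ slowly, and hence $\Lambda_1^T(M) \geq \Lambda_1^{T'}(M')$. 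For $\Lambda_1^T(M) \geq 8\pi\delta_{or}$, given a $T$-invariant two-sided simple closed curve $c \subset M$, I would pull back, via stereographic projection, a round sphere metric onto a $T$-equivariant tubular neighborhood of $c$, scaled so that nearly all of the area of the resulting $T$-invariant metric concentrates on the spherical ``bubble''. The induced $T$-action on the bubble is conformal, hence by Proposition~\ref{s2.gp} isometric for the round metric, so Corollary~\ref{cor:max_sphere} yields $8\pi$ in the limit. The bound $\Lambda_1^T(M) \geq 12\pi\delta_{nor}$ is established identically using an $\mathbb{RP}^2$ bubble across a one-sided curve together with Corollary~\ref{cor:max_RP2}.

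Assume now the strict form of~\eqref{ineq:MM'}, and take a maximizing sequence of $T$-invariant conformal classes $\mC_n$ with canonical constant-curvature representatives $h_n \in \mC_n$; I claim $\{\bar h_n\} \subset \mM^T(M)$ is pre-compact. Suppose not: after passing to a subsequence, Theorem~\ref{them_EquivDM} yields a limiting conformal class $\wh\mC_\infty$ on $(\wh M_\infty, \wh T_\infty)$ obtained by collapsing a $T$-invariant family of simple closed geodesics on $M$. Every $\Gamma$-invariant connected component $(M', T')$ of $(\wh M_\infty, \wh T_\infty)$ is a topological degeneration $(M', T') \prec (M, T)$ in the sense of Definition~\ref{def:degeneration}, and by concentrating area on a single such component while keeping the metric $T$-invariant one obtains
\begin{equation*}
\Lambda_1^{\wh T_\infty}(\wh M_\infty, \wh \mC_\infty) \leq \max_{(M',T') \prec (M,T)} \Lambda_1^{T'}(M').
\end{equation*}
Since every collapsing $T$-invariant geodesic is \emph{a fortiori} a $T$-invariant simple closed curve of the same sidedness, $\epsilon_{or} \leq \delta_{or}$ and $\epsilon_{nor} \leq \delta_{nor}$, so Theorem~\ref{thm:DM_limit} forces
\begin{equation*}
\Lambda_1^T(M) = \lim_n \Lambda_1^T(M, \mC_n) \leq \max\bigl\{\max_{(M',T') \prec (M,T)} \Lambda_1^{T'}(M'),\, 8\pi\delta_{or},\, 12\pi\delta_{nor}\bigr\},
\end{equation*}
contradicting the strict inequality. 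Hence $\{\bar h_n\}$ is pre-compact.

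By Theorem~\ref{thm:Mahler}, a further subsequence of $\{\bar h_n\}$ converges smoothly, modulo equivariant diffeomorphisms, to some $T$-invariant $h_\infty$; standard upper semicontinuity of $\Lambda_1^T$ under smooth convergence of the conformal class then gives $\Lambda_1^T(M, [h_\infty]) = \Lambda_1^T(M)$. To apply Theorem~\ref{lap.conf.ex} and extract a conformal maximizer in $[h_\infty]$, I need either $\Lambda_1^T(M, [h_\infty]) > 8\pi$ or that $T$ has no fixed point on $M$. If $T$ does have a fixed point $p$, any sufficiently small geodesic circle around $p$ is a $T$-invariant two-sided simple closed curve, so $\delta_{or} = 1$ and the strict form of~\eqref{ineq:MM'} forces $\Lambda_1^T(M) > 8\pi$; either way Theorem~\ref{lap.conf.ex} produces the required $\bar\lambda_1$-maximizing metric in $[h_\infty]$, which is the desired global maximizer. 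The main obstacle in executing this plan will be the careful bookkeeping in the degeneration step — specifically, verifying that bubble-concentration within a single $T$-invariant component of $\wh M_\infty$ is the only alternative to the collar concentration already covered by $8\pi\epsilon_{or}$ and $12\pi\epsilon_{nor}$, and correctly reducing limits on possibly disconnected $\wh M_\infty$ with nontrivial $\Gamma$-permutations of components to single-component degenerations of the required sort.
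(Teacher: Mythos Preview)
Your proposal is correct and follows essentially the same approach as the paper: the lower bound via inverse surgery and sphere/$\mathbb{RP}^2$ bubbles on a $T$-invariant tubular neighborhood, and the existence via the dichotomy precompact/degenerating for a maximizing sequence of conformal classes, invoking Theorem~\ref{thm:DM_limit} in the degenerate case and Theorem~\ref{lap.conf.ex} in the precompact one. The only cosmetic differences are that the paper routes the surgery lower bound through a Neumann-eigenvalue comparison (citing \cite[Lemmas 4.1, 4.6]{KM}) rather than asserting $\bar\lambda_1(M,g_{\eps,L}) \to \bar\lambda_1(M',g')$ directly, and that your ``main obstacle'' is already absorbed into the statement and proof of Theorem~\ref{thm:DM_limit}, so no further bookkeeping is needed here.
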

\begin{proof}
The idea of the proof is similar to that of Proposition~\ref{prop:Lambda_fixedpt}. For the classical eigenvalue optimization problem (with trivial group action) the inequality~\eqref{ineq:MM'} is well-known, with one of the first versions appearing in~\cite[Theorem 3]{CES}. The proof in the equivariant setting can be easily adapted; we use the ideas from~\cite{KM} to do so. Let $(M',T')\prec (M,T)$ and $g'$ be a $T'$-invariant metric on $M'$ such that $\bar\lambda_1(M',g')\geq \Lambda_1^{T'}(M')-\delta$. We use the Riemannian inverse surgery construction described at the end of Section~\ref{sec:cc_limits} and consider the metric $g_{\eps,L}$ on $M$. By~\cite[Lemma 4.6]{KM}, one has $\Lambda^T_1(M,[g_{\eps,L}])\geq \bar\lambda^N_1(M'\setminus D_\eps(P'), g')$, where $\lambda_1^N$ refers to the first non-trivial Neumann eigenvalue. At the same time, by~\cite[Lemma 4.1]{KM} one has $\bar\lambda^N_1(M'\setminus D_\eps(P'), g')\to \bar\lambda_1(M',g')$. Hence, for small enough $\eps>0$ one has $\Lambda_1^T(M)\geq \Lambda_1^{T'}(M')-2\delta$. A similar argument can be used to show  $\Lambda_1^T(M)\geq\max\{8\pi\delta_{or},12\pi\delta_{nor}\}$. Indeed, let $c$ be a $T$-invariant simple closed curve and let $\fU$ be its $T$-invariant tubular neighbourhood. Consider a $T$-invariant metric $g_\eps$ on $\fU$ that makes it isometric to $\mathbb{S}^2\setminus (D_{\eps}(N)\cup D_\eps(S))$, where $N,S$ are north and south poles (or to $\mathbb{RP}^2\setminus D_{\eps}(p)$ if $c$ is one sided) and extend it arbitrarily to a $T$-invariant metric $\wt g_\eps$ on $M$. Then the same arguments as before give
\[
\Lambda^T_1(M,[\wt g_\eps])\geq \bar\lambda^N_1(\fU,g_\eps)\to \Lambda_1(M_0),
\]
where $M_0 = \mathbb{S}^2$ if $c$ is two-sided and $M_0=\mathbb{RP}^2$ otherwise.

The second part of the theorem is the usual consequence of the convergence properties of $\Lambda_1^T(M,\mC)$. Indeed, let $\mC_n$ be a sequence of conformal classes such that $\Lambda_1^T(M,\mC_n)\to\Lambda_1^T(M,\mC)$. Assume first that this sequence is precompact, so that, up to a choice of a subsequence $\mC_n\to\mC$, by the discussion at the beginning of this section one has $\Lambda_1^T(M,\mC_n)\to\Lambda_1^T(M,\mC) = \Lambda_1^T(M)$. It remains to refer to Theorem \ref{lap.conf.ex} to conclude that there exists a conformally maximal metric achieving $\Lambda_1^T(M,\mC)$. Indeed, If $(M,T)$ has no fixed points, then the conformally maximal metric always exists. If $(M,T)$ has a fixed point, then a small circle around it is a simple closed $T$-invariant metric, hence, $\delta_{or} =1$ and strictness of~\eqref{ineq:MM'} implies $\Lambda_1^T(M,\mC)>8\pi$, which is sufficient for the existence of the conformally maximal metric.

In the case that $\mC_n$ degenerates, we have the upper bound~\eqref{eq:DM_limit1}. Note that $\delta_{or} = 0$ implies $\epsilon_{or}=0$, and similarly for $\delta_{nor}$ and $\epsilon_{nor}$. Furthermore, $\Lambda_1^{\wh T_\infty}(\wh M_\infty,\wh \mC_\infty)=\Lambda_1^{\wh T_\infty}(M',\wh \mC_\infty)$ for some $\wh T_\infty$-invariant connected component $M'$ of $\wh M_\infty$ and by construction $(M',\wh T_\infty)\prec (M,T)$. It is now easy to see that~\eqref{eq:DM_limit1} is in contradiction with the strict version of~\eqref{ineq:MM'}.\qedhere
\end{proof}

\subsection{Convergence properties of $\Sigma_1^T(N,\mC)$}

In this section we discuss the analogs of the results in Section~\ref{sec:LT1_convergence} for Steklov eigenvalues. Let $(N,T)$ be a manifold with boundary endowed with an action $T:\Gamma\times N\to N$ of the finite group $\Gamma$. Recall the construction of a double $(\wt N,\wt T)$ from Section~\ref{sec:actions_Sex}. Since the case $N=\mathbb{D}^2$ is covered by Corollary~\ref{cor:max_disk}, we assume henceforth that $N\neq \mathbb{D}^2$, so that for any $T$-invariant conformal class $\mC$ on $N\neq\mathbb{D}^2$, the uniformization theorem gives a unique representative with constant curvature and geodesic boundary (and fixed area if $\chi(N) = 0$). Doubling this metric gives a smooth $\wt T$-invariant metric of constant curvature and, conversely, the restriction of any $\wt T$-invariant constant curvature metric on $\wt N$ to $N$ gives a constant curvature metric on $N$ with geodesic boundary.
Thus, there is a bijection between $T$-invariant conformal classes on $N$ and $\wt T$-invariant conformal classes on $\wt N$. As a result, we can use the results of Section~\ref{sec:cc_limits} to introduce the topology on the moduli space of conformal classes and talk about degenerating sequences $\mC_n$ and their limits.

\begin{theorem}
\label{thm:SDM_limit}
Let $\mC_n$ be a degenerate sequence of $T$-invariant conformal classes on a surface with boundary $N$ such that $(N,T,\mC_n)$ converges to $(\wh N_\infty,\wh T_\infty, \wh\mC_\infty)$, meaning that their doubles converge in the sense of Theorem~\ref{them_EquivDM}. Then one has
\begin{equation}
\label{ineq:SDM_limit}
\limsup\Sigma_1^T(N,\mC_n)\leq \max\left\{\Sigma_1^{\wh T_\infty}(\wh{N_\infty},\wh \mC_\infty), 2\pi\wt\epsilon_{or}\right\}, 
\end{equation}
where $\wt\epsilon_{or} = 1$ if, along a subsequence, at least one of the collapsing geodesics in $\wt N$ is $\wt T$-invariant, or, equivalently there is a $T$-invariant collapsing segment in $N$ connecting two points of the boundary, and $\wt\eps_{or} = 0$ otherwise.
\end{theorem}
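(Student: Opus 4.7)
My plan is to adapt the proof of Theorem~\ref{thm:DM_limit} to the Steklov/free-boundary setting. First I would handle fixed points on $\bd N$: if $T$ has a fixed point $p\in\bd N$ that persists in the limit (i.e., does not lie on any collapsing boundary-to-boundary arc), then $\wh T_\infty$ has a fixed point on $\bd \wh N_\infty$ and Proposition~\ref{prop:Sigma_fixedpt} gives $\Sigma_1^{\wh T_\infty}(\wh N_\infty,\wh\mC_\infty)\geq 2\pi$; otherwise $p$ forces a $T$-invariant collapsing arc and $\wt\eps_{or}=1$. In either case the conclusion is trivial whenever $\Sigma_1^T(N,\mC_n)\leq 2\pi$ eventually, so we may assume $\Sigma_1^T(N,\mC_n)>2\pi$. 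By Theorem~\ref{stek.conf.ex}, we then obtain $T$-invariant metrics $g_n\in\mC_n$ of unit boundary length realizing $\bar\sigma_1(N,g_n)=\Sigma_1^T(N,\mC_n)$, together with $T$-equivariant free boundary harmonic maps $\Psi_n\colon(N,\bd N)\to(\B^{k_n},\Sph^{k_n-1})$ by first Steklov eigenfunctions; by standard multiplicity bounds for $\sigma_1$ (e.g., of Karpukhin--Jammes type) we may assume $k_n\equiv k$ after passing to a subsequence.

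Second, I would localize the boundary length measures $ds_{g_n}$ using the collar lemma applied to the double $\wt N$. The degeneration is encoded by a $T$-invariant collection of collapsing simple closed geodesics $\fC_n\subset N$ together with collapsing geodesic arcs joining boundary points, whose collars and half-collars are permuted by $\Gamma$. A direct Steklov adaptation of~\cite[Claim 11]{Pet1} shows that $\bar\sigma_1(N,g_n)\to 0$ unless, along a subsequence, the boundary length concentrates in a single $T$-invariant connected region, which must be one of: (i) an interior collar of a collapsing closed geodesic $c_n\subset\mathrm{int}(N)$; (ii) a $T$-invariant half-collar of a collapsing boundary-to-boundary arc $I_n$; or (iii) a $T$-invariant thick component $N_n\subset N\setminus\fU_n$. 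Case (i) is impossible because such a collar is disjoint from $\bd N$ and contributes zero boundary length. In case (ii), the $T$-invariance of $I_n$ (forced by $\Gamma$ acting isometrically and permuting collapsing arcs of equal length) gives $\wt\eps_{or}=1$, and a Weinstock comparison on the half-collar (which is conformally a long thin rectangle with two opposite sides on $\bd N$ and embeds conformally into $\mathbb{D}^2$ carrying those sides into an arc of $\bd\mathbb{D}^2$) together with an invariant three-point balancing as in the proof of Lemma~\ref{Levalbdbd} yields $\limsup\bar\sigma_1(N,g_n)\leq 2\pi$, which is consistent with the stated bound.

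Third, in case (iii) I would carry out a free boundary analog of Zhu's bubble convergence on the thick component. Pulling back $\Psi_n$ via the diffeomorphisms from Theorem~\ref{them_EquivDM}, the constant-curvature metrics on the component converge in $C^\infty_{loc}$, and the small-energy regularity of Lemma~\ref{fb.ereg} combined with a standard neck analysis yields, up to a subsequence, bubble convergence to a free boundary harmonic limit $\wh\Psi\colon(\wh N_\infty,\bd\wh N_\infty)\to(\B^k,\Sph^{k-1})$ (smooth after removing the punctures via the free boundary removable singularity theorem) plus finitely many sphere bubbles at interior points and half-disk bubbles at boundary points, with the full boundary-length energy splitting accordingly. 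A Kokarev-type $L^1$-concentration argument, transcribed from~\cite[Lemma 2.1]{Kok.var} to the Steklov normalization, forces $\bar\sigma_1\to 0$ unless exactly one contribution survives; interior sphere bubbles carry no boundary measure, so the only surviving contribution is either a half-disk bubble at a boundary point $q$ or the global limit $\wh\Psi$. In the half-disk bubble case, $\Gamma$-equivariance forces $q$ to be fixed by $\wh T_\infty$, so Proposition~\ref{prop:Sigma_fixedpt} gives $\Sigma_1^{\wh T_\infty}(\wh N_\infty,\wh\mC_\infty)\geq 2\pi$, and a small-disk Weinstock estimate (cf.~\cite[Lemma 3.1]{Kok.var}) supplies the matching upper bound $\limsup\bar\sigma_1\leq 2\pi$. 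In the remaining case, the measures $|\del_\nu\Psi_n|\,ds_{g_n}$ converge weakly-$*$ to $|\del_\nu\wh\Psi|\,ds_{\wh h_\infty}$ and upper semicontinuity of Steklov eigenvalues under weak-$*$ convergence of boundary measures (the Steklov analog of~\cite[Proposition 1.1]{Kok.var}, provable via the variational formulation) gives $\limsup\bar\sigma_1(N,g_n)\leq\Sigma_1^{\wh T_\infty}(\wh N_\infty,\wh\mC_\infty)$. The main obstacle will be the free boundary bubbling/energy quantization step, which requires gluing the interior analysis of~\cite{Mzhu} with boundary regularity and a half-neck length estimate; once this framework is in place the argument mirrors Theorem~\ref{thm:DM_limit}.
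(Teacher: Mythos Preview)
Your proposal is correct and follows essentially the same route as the paper. The paper's own proof is a brief sketch that simply points to the Steklov analogs (Proposition~\ref{prop:Sigma_fixedpt} and \cite{Kokarev, PetridesS, Medvedev, LP15}) of the ingredients used for Theorem~\ref{thm:DM_limit}, together with the two structural observations you have already identified: interior collars carry no boundary measure and hence cannot contribute, and the collar of a boundary-to-boundary arc in $N$ is always a disk, so no $1$-sided versus $2$-sided distinction arises. One small slip: the Weinstock argument on the half-collar uses a two-coordinate balancing (for $\mathbb{D}^2\subset\R^2$), not a three-point one; the reference to Lemma~\ref{Levalbdbd} is otherwise apt. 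The free boundary bubbling step you flag as the main obstacle is exactly what the paper delegates to \cite{LP15}.
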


\begin{proof}
For the classical eigenvalue problem without group action, this result is proven in~\cite{PetridesS} (and~\cite{Medvedev} for the non-orientable case). The modifications required for the equivariant case are analogous to those needed for Theorem~\ref{thm:DM_limit}. Namely, the latter proof is based on Proposition~\ref{prop:Lambda_fixedpt} and the papers~\cite{Besson, Pet1, KM, Mzhu}, while the former uses the analogous results in Steklov settings: Proposition~\ref{prop:Sigma_fixedpt} and the papers~\cite{Kokarev, PetridesS, Medvedev, LP15}.

We leave the details to the reader. Let us, however, address some of the differences in the formulation of Theorem~\ref{thm:SDM_limit} compared to Theorem~\ref{thm:DM_limit}. First, closed $T$-invariant geodesics in $N$ do not affect the r.h.s. of~\eqref{ineq:SDM_limit}. This is because such geodesics do not intersect the boundary, and therefore the boundary measure (which is the measure appearing in the denominator of the Rayleigh quotient for the Steklov problem) can not concentrate in their collars. Second, it makes no difference whether the $\wt T$-invariant collapsing geodesics in $\wt N$ are $1$ or $2$-sided, since the collar neighborhood of the corresponding segment in $N$ is always diffeomorphic to a disk. 
\end{proof}

\begin{theorem}
\label{thm:Sexistence}
For a surface with boundary $N$ endowed with a group action $T$ set $\wt\delta_{or} = 1$ if its double $(\wt N,\wt T)$ contains a $\wt T$-invariant simple closed curve, and $\wt\delta_{or} = 0$ otherwise. Then one has 
\begin{equation}
\label{ineq:NN'}
\Sigma_1^T(N)\geq \max_{(N',T')\prec (N,T)}\left\{\Sigma_1^{T'}(N'), 2\pi\wt\delta_{or}\right\}.
\end{equation}

Furthermore, if the inequality~\eqref{ineq:NN'} is strict, then there exists a smooth maximal metric achieving $\Sigma_1^T(N)$.
\end{theorem}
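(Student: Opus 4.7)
The plan is to mirror the strategy used for Theorem~\ref{thm:existence}, replacing Theorem~\ref{thm:DM_limit} by its Steklov counterpart Theorem~\ref{thm:SDM_limit} and Theorem~\ref{lap.conf.ex} by Theorem~\ref{stek.conf.ex}. First I would establish the (non-strict) lower bound \eqref{ineq:NN'} by an explicit surgery construction, and then use the conformal-compactness dichotomy to promote the strict version to the existence of a maximizer.

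For the lower bound, fix $(N',T')\prec(N,T)$ and a $T'$-invariant metric $g'$ on $N'$ with $\bar\sigma_1(N',g')\geq \Sigma_1^{T'}(N')-\delta$. Apply the equivariant Riemannian inverse surgery from Section~\ref{sec:inverse_surgery} (in the boundary case described at the end of Section~\ref{sec:top_degen_boundary}): excise small $g'$-flat disks around the collapsed set $P'\subset N'$ and the corresponding points $P''\subset N''$ of the auxiliary surface, join the resulting boundary circles by flat cylinders (or half-cylinders, when $p'\in P'^b\cap \partial N'$ is joined to $\alpha(p')\in \partial N''$ by a flat ribbon), and let $g_{\eps,L}$ be the resulting $T$-invariant bounded metric on $N$, with conformal class $\mC_{\eps,L}$. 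Exactly as in the proof of Theorem~\ref{thm:existence}, the Steklov analog of~\cite[Lemma 4.6]{KM} gives
\[
\Sigma_1^T(N,\mC_{\eps,L})\geq \bar{\sigma}^{\mathrm{mixed}}_1(N'\setminus D_\eps(P'),g'),
\]
with a mixed Steklov/Neumann boundary condition (Steklov on $\partial N'$, Neumann on the small interior circles), and the Steklov analog of~\cite[Lemma 4.1]{KM} shows that this mixed quantity converges to $\bar\sigma_1(N',g')$ as $\eps\to 0$. Taking $\delta\to 0$ yields $\Sigma_1^T(N)\geq \Sigma_1^{T'}(N')$. For the term $2\pi\wt\delta_{or}$, observe that a $\wt T$-invariant simple closed curve in $\wt N$ corresponds to either a $T$-invariant simple closed curve in the interior of $N$ or a $T$-invariant arc in $N$ with endpoints on $\partial N$; in either case, one builds a test conformal class on $N$ whose collar neighborhood is isometric to a disk with a small hole removed (using $g_\eps$ on a tubular neighborhood and extending $T$-equivariantly) and applies the Steklov analog of the mixed-eigenvalue argument together with Weinstock's sharp bound $\Sigma_1(\mathbb{D}^2)=2\pi$ from Corollary~\ref{cor:max_disk} to obtain $\Sigma_1^T(N)\geq 2\pi$ in the limit.

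For the existence assertion, choose a maximizing sequence $\mC_n$ of $T$-invariant conformal classes with $\Sigma_1^T(N,\mC_n)\to \Sigma_1^T(N)$. Using the bijection with $\wt T$-invariant conformal classes on $\wt N$, either the canonical representatives $h_n\in\mC_n$ have a uniform positive lower bound on injectivity radius, in which case Theorem~\ref{thm:Mahler} gives a subsequential limit $\mC_n\to\mC$ and the continuity $\Sigma_1^T(N,\mC_n)\to \Sigma_1^T(N,\mC)$ holds by the Steklov analog of~\cite[Proposition 4.2]{KM}; or $\mC_n$ degenerates and Theorem~\ref{thm:SDM_limit} applies. In the degenerate case, the upper bound \eqref{ineq:SDM_limit} contradicts the strict version of \eqref{ineq:NN'}, since $\wt\eps_{or}\leq \wt\delta_{or}$ and any $\wh T_\infty$-invariant connected component $N'$ of $\wh N_\infty$ realizing $\Sigma_1^{\wh T_\infty}(\wh N_\infty,\wh \mC_\infty)$ satisfies $(N',\wh T_\infty)\prec (N,T)$ by construction. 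Thus the maximizing sequence must be precompact, and the limiting class $\mC$ satisfies $\Sigma_1^T(N,\mC)=\Sigma_1^T(N)$.

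To conclude via Theorem~\ref{stek.conf.ex}, we must verify that either $\Sigma_1^T(N,\mC)>2\pi$ or $\Gamma$ has no fixed points on $\partial N$. If $\Gamma$ does have a fixed point $p\in \partial N$, then a small $T$-invariant arc across $p$ is a $T$-invariant segment in $N$ joining two boundary points, so $\wt\delta_{or}=1$; by the strict version of \eqref{ineq:NN'} this forces $\Sigma_1^T(N)>2\pi$, which is precisely the hypothesis needed. The main obstacle — and the reason we must argue via the dichotomy rather than by direct compactness — is precisely the possible appearance of fixed boundary points, since these force $\Sigma_1^T(N,\mC)\geq 2\pi$ (Proposition~\ref{prop:Sigma_fixedpt}) and prevent us from ruling out boundary concentration in the conformal-maximization step without the strict inequality; once this is handled, Theorem~\ref{stek.conf.ex} produces the desired metric $g\in\mC$, possibly with finitely many conical singularities, realizing $\bar\sigma_1(N,g)=\Sigma_1^T(N)$.
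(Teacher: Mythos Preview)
Your proposal is correct and follows essentially the same approach as the paper, whose own proof is a one-line pointer: ``analogous to the proof of Theorem~\ref{thm:existence}, with the statements from~\cite{KM} replaced by their counterparts in~\cite{Medvedev}.'' You have faithfully expanded this outline, correctly invoking Theorem~\ref{thm:SDM_limit} in place of Theorem~\ref{thm:DM_limit}, Theorem~\ref{stek.conf.ex} in place of Theorem~\ref{lap.conf.ex}, and the doubling correspondence from Section~\ref{sec:top_degen_boundary} to transfer the conformal-degeneration analysis.

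One small imprecision: your dichotomy ``a $\wt T$-invariant simple closed curve in $\wt N$ corresponds to either a $T$-invariant simple closed curve in the interior of $N$ or a $T$-invariant arc in $N$ with endpoints on $\partial N$'' is not quite right. A closed curve in the interior of $N$ is moved by $\iota$ to the other sheet of $\wt N$, so it is never $\wt T$-invariant as a single curve; the genuine cases are a $T$-invariant arc with endpoints on $\partial N$ (as you say), or a $T$-invariant boundary component of $N$ sitting inside $\wt N^{\iota}$. This does not affect your argument, since your arc case already yields $\Sigma_1^T(N)\geq 2\pi$, and the boundary-component case is handled the same way; but you should correct the geometric description. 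Similarly, your justification that a fixed boundary point forces $\wt\delta_{or}=1$ is cleaner if phrased as in the proof of Theorem~\ref{thm:existence}: a fixed point $p\in\partial N$ becomes a fixed point of $\wt T$ in $\wt N$, and a small circle around it is a $\wt T$-invariant simple closed curve.
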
   
\begin{proof}
Again, the proof is analogous to the proof of Theorem~\ref{thm:existence}, with the statements from~\cite{KM} replaced by their counterparts in~\cite{Medvedev}.
\end{proof}



\section{Existence of maximizers on closed surfaces}

\label{sec:global_existence}

In the present section we provide a novel tool for proving inequalities of the type $\Lambda_1^T(M)>\Lambda^{T'}_1(M')$ whenever $(M',T')\prec (M,T)$ (recall Definition \ref{def:degeneration}). It is then used to prove the existence of metrics achieving $\Lambda_1^T(M)$ for many topological types of actions $(M,T)$. To formulate the main technical result, recall that to a given degeneration $(M',T')\prec (M,T)$ we associate a collapsed set $P'\subset M'$ together with its partition $P' = \sqcup P'_i$, and that for the same degeneration there could be different choices of $P',P_i'$;  see Section~\ref{sec:inverse_surgery}.

\begin{theorem}
\label{thm:general_existence}
Suppose that $(M',T')\prec (M,T)$. Let $\mC'$ be a $T'$-invariant conformal class on $M'$. Then 
$$
\Lambda_1^T(M)>\Lambda^{T'}_1(M',\mC')
$$ 
provided that there exists a collapsed set $P'\subset M'$ with partition $P'=\sqcup P'_i$ and a $T'$-invariant $\bar\lambda_1^{T'}$-maximal metric $g'\in\mC'$ such that for any $T'$-equivariant map $\Phi\colon (M',g')\to\mathbb{S}^n$ by first eigenfunctions, the image $\Phi(P_i)$ contains more than one point for some $i$.
\end{theorem}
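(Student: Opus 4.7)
My plan is to argue by contradiction, adapting the strategy outlined for Proposition~\ref{intro.handle} to the present general setting. Assume for contradiction that $\Lambda_1^T(M)\leq \Lambda_1^{T'}(M',\mC')$. Since by hypothesis $g'\in\mC'$ realizes $\bar\lambda_1(M',g')=\Lambda_1^{T'}(M',\mC')$, this yields
\[
\Lambda_1^T(M,\mC_{\eps,L})\leq \Lambda_1^T(M)\leq \bar\lambda_1(M',g')
\]
for every $\eps>0$ and $L>0$, where $\mC_{\eps,L}$ is the $T$-invariant conformal class on $M$ constructed by the Riemannian inverse surgery of Section~\ref{sec:cc_limits}, gluing $(M',g')$ to an auxiliary $(M'',g'')$ (with $g''$ any fixed $T''$-invariant metric) along the partition $P'=\sqcup P_i'$ by flat cylinders of modulus $L$ at scale $\eps$.

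For each pair $(\eps,L)$ I would invoke Theorem~\ref{lap.conf.ex}, in combination with Theorem~\ref{lap.mm.char}, to obtain a $T$-equivariant harmonic map
\[
u_{\eps,L}\colon (M,\wt g_{\eps,L})\to \Sph^n
\]
by first eigenfunctions for some $\bar\lambda_1$-maximizer $\wt g_{\eps,L}\in \mC_{\eps,L}$; by Lemma~\ref{closed.stab.lem} the target dimension $n$ may be taken independent of $(\eps,L)$. The total energy satisfies $2E(u_{\eps,L})=\Lambda_1^T(M,\mC_{\eps,L})\leq \bar\lambda_1(M',g')$. Two complementary estimates then pin down the distribution of this energy. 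First, a log-cutoff test of the type used in the proof of Theorem~\ref{thm:DM_limit} shows that the energy on each flat cylinder $C_{\eps,L}(p)$ vanishes as $L\to\infty$. Second, testing the min-max families on $\mC_{\eps,L}$ with $g'$-first eigenfunctions composed with a $T$-equivariant mollification of the conformal dilations from Lemma~\ref{mm.ubd} provides the matching lower bound $\liminf 2E(u_{\eps,L})\geq \bar\lambda_1(M',g')$. Together these force equality in the limit and vanishing energy both on the handle pieces and on the auxiliary surface $M''$.

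Passing to the limit along a subsequence $\eps\to 0$, $L\to\infty$ and using the bubbling analysis of~\cite{Mzhu} in the equivariant setting, the restrictions $u_{\eps,L}|_{M'}$ converge in $C^\infty_{loc}(M'\setminus P')$, and in energy on all of $M'$, to a smooth $T'$-equivariant harmonic map $F\colon (M',g')\to \Sph^n$ whose components realize $\bar\lambda_1(M',g')$ and are therefore first Laplace eigenfunctions on $(M',g')$. The hard part, which I expect to require the most delicate analysis, will be to show that $F$ must collapse each equivalence class $P_i'$ to a single point. For any $p,q\in P_i'$ with $p\sim q$, the inverse surgery construction of Section~\ref{sec:inverse_surgery} provides a $T$-equivariant chain of cylinders and intermediate components of $M''$ that joins the preimages of $p$ and $q$ inside $M$ via paths of uniformly bounded $\wt g_{\eps,L}$-length. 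Since the $u_{\eps,L}$-energy on each cylinder and on each component of $M''$ tends to $0$, a mean-oscillation estimate for harmonic maps into $\Sph^n$ in the vein of the bounds used throughout Section~\ref{lap.mm} forces the oscillation of $u_{\eps,L}$ along any such chain to vanish as well, so that in the limit $F(p)=F(q)$. Thus $F$ is constant on each $P_i'$, contradicting the hypothesis and establishing $\Lambda_1^T(M)>\Lambda_1^{T'}(M',\mC')$.
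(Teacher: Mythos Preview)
Your overall architecture—contradiction, inverse surgery to produce $\mC_{\eps,L}$, extract a limiting map on $M'$, and show it collapses each $P_i'$—matches the paper's. But there is a real gap in the central step, and it is exactly the point where the paper's argument is most delicate.

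You take $u_{\eps,L}$ to be the actual $\bar\lambda_1$-maximizing harmonic map on $(M,\mC_{\eps,L})$, and then assert that its limit $F$ on $M'$ is a map ``by first Laplace eigenfunctions on $(M',g')$.'' This does not follow. The components of $u_{\eps,L}$ are first eigenfunctions for the \emph{maximizing} metric $\wt g_{\eps,L}=|du_{\eps,L}|^2 g_{\eps,L}$, not for $g_{\eps,L}$; after restriction and limit you obtain a harmonic map $F\colon (M',[g'])\to\Sph^n$ whose components are eigenfunctions for the metric $|dF|^2 g'$ with eigenvalue $1$. There is no a priori reason why $|dF|^2$ should be constant, nor why these should be \emph{first} eigenfunctions, nor why the relevant metric should be $g'$. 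The energy identity $2E(F)=\bar\lambda_1(M',g')$ only tells you that $|dF|^2 g'$ has the same area as the maximizer; it does not pin down the metric.

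The paper sidesteps this entirely by \emph{not} using the maximizing harmonic map. Instead, via the min-max characterization $\Lambda_1^T(M,\mC_{\eps,L})=2\mathcal E_m^T$ and Remark~\ref{gen.hersch}, it selects from the min-max family a map $F$ whose harmonic extension $\hat F$ to $M'$ satisfies the balancing condition $\int_{M'}\hat F\,dv_{g'}=0$. This lets one use the variational characterization of $\lambda_1(M',g')$ directly, introduce the quadratic form $Q(\Psi,\Psi)=\int|d\Psi|^2-\lambda_1(g')\int|\Psi-\bar\Psi|^2$, and show via Cauchy--Schwarz that $Q(\hat F,\hat F)$ is small. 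Decomposing $\hat F=\Phi_{\eps,L}+R_{\eps,L}$ with $\Phi_{\eps,L}$ the $L^2_{g'}$-projection onto the $\lambda_1(g')$-eigenspace, the spectral gap forces $\|R_{\eps,L}\|_{W^{1,2}}\to 0$, so the limit is a first-eigenfunction map for $g'$ \emph{by construction}. Your bubbling route does not supply this.

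Your sketch for $F(p)=F(q)$ is also too loose: the cylinders have length $L\eps$ and the ``paths of uniformly bounded length'' through $M''$ are not what carries the argument. The paper instead compares circle averages $\frac{1}{\eps}\int_{\partial D_\eps(p)}\Phi_{\eps,L}$ to the mean $\bar F_i$ of $\tilde F$ over the component $M_i''$, using a $|\log\eps|^{1/2}$-weighted trace estimate (Lemma~\ref{lem:average}) together with the Poincar\'e inequality on $(M_i'',g'')$ and the energy smallness on $M''$ (Corollary~\ref{cor:''bound}). This yields the quantitative bound~\eqref{ineq:Phi_bound}, which goes to zero only for a carefully tuned choice $L=L(\eps)\to\infty$ with $L=o(|\log\eps|)$.
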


We postpone the proof of this theorem to the end of the section, and first explore its applications 
to the existence of maximizers for $\Lambda_1^T(M)$.

\subsection{Partial existence without symmetries} Theorem~\ref{thm:general_existence} yields new results even for the trivial group, that is, for the classical eigenvalue optimisation problem. Let $M$ be an oriented surface of genus $\gamma\geq 0$ and denote $\Lambda_1(\gamma):=\Lambda_1(M)$.

\begin{corollary}
\label{cor:classical_existence}
Let $M$ be a surface of genus $\gamma$. Suppose there is a  $\bar\lambda_1$-maximal metric $g'$ on $M$ and two points $p,q\in M$ such that for any map $\Phi\colon (M,g')\to\mathbb{S}^n$ by first eigenfunctions one has $\Phi(p)\ne \Phi(q)$. Then
$$
\Lambda_1(\gamma+1)>\Lambda_1(\gamma),
$$
and, in particular, there exists a maximal metric achieving $\Lambda_1(\gamma+1)$.
\end{corollary}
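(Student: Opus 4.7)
The plan is to obtain Corollary \ref{cor:classical_existence} as a direct application of Theorem \ref{thm:general_existence} with the trivial group $\Gamma = \{e\}$, taking as the ambient surface the genus $\gamma+1$ surface $M_{\gamma+1}$ and as the degeneration the genus $\gamma$ surface $M$ itself, realized by pinching a single non-separating simple closed curve in $M_{\gamma+1}$. Concretely, let $c\subset M_{\gamma+1}$ be a non-separating embedded loop; collapsing $c$ (Definition \ref{def:degeneration}) produces a genus $\gamma$ surface carrying two marked points, which we identify with the given pair $(p,q)\in M$. Since $c$ is non-separating, the closed-off component $M''$ that one attaches in the inverse surgery of Section \ref{sec:inverse_surgery} is a sphere and it contains both marked points, so the partition of the collapsed set $P'=\{p,q\}$ consists of a single equivalence class $P'_1=\{p,q\}$. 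Thus $(M,\{e\})\prec (M_{\gamma+1},\{e\})$ with this preferred partition.

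Next, take $g'$ to be the hypothesized $\bar\lambda_1$-maximal metric on $M$, and set $\mathcal{C}'=[g']$, so that $\Lambda_1^{\{e\}}(M,\mathcal{C}')=\bar\lambda_1(M,g')=\Lambda_1(\gamma)$. Since the group is trivial, the $T'$-equivariance condition in Theorem \ref{thm:general_existence} is vacuous, and the remaining hypothesis reduces to the assertion that for every map $\Phi\colon (M,g')\to\mathbb{S}^n$ by first eigenfunctions one has $\Phi(p)\neq\Phi(q)$; this is exactly what is assumed. (Such maps exist because $g'$ is $\bar\lambda_1$-maximal, via Theorem \ref{lap.cf.ex}.) Theorem \ref{thm:general_existence} then yields
\[
\Lambda_1(\gamma+1)\,=\,\Lambda_1^{\{e\}}(M_{\gamma+1})\,>\,\Lambda_1^{\{e\}}(M,[g'])\,=\,\Lambda_1(\gamma).
\]

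Finally, to upgrade the strict inequality to existence of a maximizer on $M_{\gamma+1}$, I would invoke Theorem \ref{thm:existence} (or equivalently Theorem \ref{pet.glob1}). In the trivial-group setting, every elementary degeneration $(M',\{e\})\prec(M_{\gamma+1},\{e\})$ is an orientable surface of genus at most $\gamma$, so the non-strict monotonicity $\Lambda_1(\gamma')\leq \Lambda_1(\gamma)$ for $\gamma'\leq\gamma$ combined with the inequality just obtained gives $\Lambda_1(\gamma+1)>\Lambda_1(\gamma')$ for all such $\gamma'$. The orientable-collapsing term is controlled by $\Lambda_1(\gamma+1)\geq \Lambda_1(1)=8\pi^2/\sqrt{3}>8\pi$, and $\delta_{nor}=0$ since $M_{\gamma+1}$ is orientable. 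Thus inequality \eqref{ineq:MM'} is strict, and Theorem \ref{thm:existence} produces a maximizing metric on $M_{\gamma+1}$.

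The conceptual content and all analytic work are packaged inside Theorem \ref{thm:general_existence}, so the only real points to verify carefully are (i) the correct identification of the partition of the collapsed set, in particular that the two marked points lie in the \emph{same} equivalence class (which uses that $c$ is non-separating, so the attached $M''$ is connected), and (ii) that the auxiliary gap conditions in Theorem \ref{thm:existence} for lower-genus degenerations and for the sphere-bubbling threshold $8\pi$ all hold automatically once the one key strict inequality $\Lambda_1(\gamma+1)>\Lambda_1(\gamma)$ is known. There is no substantive analytic obstacle in the corollary itself; the main subtlety in the broader program is the verification of the hypothesis of Corollary \ref{cor:classical_existence} (the existence of an obstructing pair $p,q$), which is deferred to Conjecture \ref{somepair.lap}.
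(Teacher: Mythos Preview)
Your proof is correct and follows essentially the same route as the paper: both apply Theorem \ref{thm:general_existence} with the trivial group to the degeneration $M\prec M_{\gamma+1}$ obtained by collapsing a non-separating curve, noting that the collapsed set $\{p,q\}$ forms a single equivalence class so that the hypothesis $\Phi(p)\neq\Phi(q)$ verifies condition (2) of the theorem. Your version is slightly more explicit than the paper's in spelling out the existence conclusion via Theorem \ref{thm:existence} and checking the $8\pi$ threshold, but the argument is the same.
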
 

\begin{proof}
Choose $\mC'$ to be the conformal class of the $\bar\lambda_1$-maximal metric $g'$.
The surface of genus $\gamma+1$ can be obtained from a surface of genus $\gamma$ by gluing a cylinder at two arbitrary points $p,q\in M$. For such degeneration $P=\{p,q\}$ and $p\sim q$, hence if $\Phi(p)\ne\Phi(q)$ the image of the equivalence class contains more than one point. Thus, the conditions of Theorem~\ref{thm:general_existence} are satisfied, and the conclusion follows.
\end{proof}

\begin{corollary}
\label{cor:gamma2}
For any $\gamma\geq 0$ one has
\[
\Lambda_1(\gamma+2)>\Lambda_1(\gamma).
\]
In particular, for all $\gamma\geq 0$ either $\Lambda_1(\gamma)$ or $\Lambda_1(\gamma+1)$ is achieved by a smooth metric (up to a finite number of conical singularities).
\end{corollary}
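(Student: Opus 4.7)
I will prove $\Lambda_1(M_{\gamma+2}) > \Lambda_1(M_\gamma)$ by induction on $\gamma$ combined with a contradiction argument. The strategy is to use Theorem~\ref{thm:general_existence} applied to a three-tube degeneration (the three-point analog of the handle construction behind Corollary~\ref{cor:classical_existence}) to extract a ``triple-coincidence'' condition on a maximizing eigenmap, and then rule it out. The existence assertion for maximizers then follows from the strict inequality via Theorem~\ref{pet.glob1}.

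\textbf{Base case and induction setup.} The base case $\gamma = 0$ follows from Berger--Nadirashvili's identification $\Lambda_1(M_1) = 8\pi^2/\sqrt{3} > 8\pi$ (realized by the equilateral flat torus) together with the nonstrict monotonicity $\Lambda_1(M_2) \geq \Lambda_1(M_1)$. For the inductive step, assume $\Lambda_1(M_{\gamma'+2}) > \Lambda_1(M_{\gamma'})$ for every $\gamma' < \gamma$, and suppose for contradiction that $\Lambda_1(M_{\gamma+2}) = \Lambda_1(M_\gamma)$. The induction hypothesis applied to $\gamma' = \gamma - 1$ yields $\Lambda_1(M_{\gamma+1}) > \Lambda_1(M_{\gamma-1})$, which combined with nonstrict monotonicity gives $\Lambda_1(M_\gamma) \geq \Lambda_1(M_{\gamma+1}) > \Lambda_1(M_{\gamma-1})$. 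Theorem~\ref{pet.glob1} then produces a $\bar{\lambda}_1$-maximizing metric $g$ on $M_\gamma$, induced by a branched minimal immersion $\Phi_0 \colon (M_\gamma, g) \to \mathbb{S}^n$ by first eigenfunctions.

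\textbf{Reduction to a triple-coincidence condition.} For any three distinct points $p, q, r \in M_\gamma$, I realize the topological degeneration $(M_\gamma, \mathrm{triv}) \prec (M_{\gamma+2}, \mathrm{triv})$ via the inverse-surgery procedure of Section~\ref{sec:inverse_surgery}: attach an auxiliary sphere $\mathbb{S}^2$ to $M_\gamma$ at $p, q, r$ through three thin cylindrical tubes, then collapse those tubes. The complementary component, a thrice-punctured sphere, joins $p, q, r$, so the collapsed set $P' = \{p, q, r\}$ consists of a single equivalence class. Applying the contrapositive of Theorem~\ref{thm:general_existence} in the conformal class $[g]$ under the assumption $\Lambda_1(M_{\gamma+2}) = \Lambda_1(M_\gamma, [g])$ forces the existence, for every such triple, of a first-eigenfunction map $F = F_{p,q,r} \colon (M_\gamma, g) \to \mathbb{S}^m$ (with $m$ possibly depending on the triple) satisfying $F(p) = F(q) = F(r)$.

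\textbf{Impossibility of the triple-coincidence (main obstacle).} Ruling out this condition is the crux of the proof. After fixing an orthonormal basis of $\mathcal{E}_{\lambda_1}(M_\gamma, g)$ to obtain a universal eigenmap $\Phi_0 \colon M_\gamma \to \mathbb{S}^N$, every first-eigenfunction map has the form $F = A\Phi_0$ for a linear $A$, and the normalization $|F|^2 \equiv 1$ forces $A^T A \in I + \mathcal{R}$, where $\mathcal{R}$ is the finite-dimensional space of symmetric matrices $R$ satisfying the quadratic relation $\sum_{i,j} R_{ij} \Phi_0^i \Phi_0^j \equiv 0$ on $M_\gamma$. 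The triple-coincidence then requires both vectors $\Phi_0(p) - \Phi_0(q)$ and $\Phi_0(p) - \Phi_0(r)$ to lie in $\ker(I + R)$ for some $R \in \mathcal{R}$. Since $\Phi_0$ is a branched immersion, the map $(p, q, r) \mapsto (\Phi_0(p) - \Phi_0(q),\, \Phi_0(p) - \Phi_0(r))$ has six-dimensional image in $(\mathbb{R}^{N+1})^2$ away from a lower-dimensional branch locus, while $\bigcup_{R \in \mathcal{R}} \ker(I + R) \times \ker(I + R)$ is a proper subvariety whose dimension is controlled by $\dim \mathcal{R}$ and by the maximal value of $\dim \ker(I + R)$, stratified according to the rank of $I + R$. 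The key technical point — and the principal obstacle — is to verify, stratum by stratum, that these finite-dimensional constraints cannot absorb the full six-dimensional variation of $(p, q, r)$, thereby producing an open set of triples for which no admissible $R$ exists. This contradicts the conclusion of the previous paragraph and completes the proof.
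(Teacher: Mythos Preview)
Your overall architecture---induction, the contradiction hypothesis $\Lambda_1(\gamma+2)=\Lambda_1(\gamma)$, extraction of a maximizer $g$ on $M_\gamma$, and the three-tube degeneration producing for every triple $(p,q,r)$ a first-eigenfunction map $F_{p,q,r}\colon(M_\gamma,g)\to\mathbb{S}^m$ with $F(p)=F(q)=F(r)$---matches the paper. (One small slip: nonstrict monotonicity gives $\Lambda_1(M_\gamma)\le\Lambda_1(M_{\gamma+1})\le\Lambda_1(M_{\gamma+2})$, so under your contradiction hypothesis all three are equal; the inequality you wrote points the wrong way, but the conclusion $\Lambda_1(M_\gamma)>\Lambda_1(M_{\gamma-1})$ survives.)

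The genuine gap is your final step. You propose to rule out the triple coincidence by a stratified dimension count over $\bigcup_{R\in\mathcal{R}}\ker(I+R)\times\ker(I+R)$, but you do not carry it out, and it is far from clear that it can be completed: the space $\mathcal{R}$ of quadratic relations among the components of $\Phi_0$ has no a~priori bound independent of $\gamma$ (the multiplicity of $\lambda_1$ may grow linearly with genus), so neither $\dim\mathcal{R}$ nor the maximal $\dim\ker(I+R)$ is controlled. Your ``proper subvariety'' claim is therefore unsubstantiated, and this is not a deferrable technicality---it is the entire content of the corollary.

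The paper bypasses this obstacle with a short idea you missed. Instead of seeking a single bad triple, one fixes a \emph{smooth} point $p$ of $g$, an orthonormal frame $\{\xi,\eta\}\subset T_pM_\gamma$, and applies the triple-coincidence conclusion to the \emph{degenerating} triples $(p,q_n,r_n)$ with $q_n=\exp_p(t_n\xi)$, $r_n=\exp_p(t_n\eta)$, $t_n\to 0$. The maps $F_n$ lie in the fixed finite-dimensional eigenspace with $\|F_n\|_{L^2}^2=\mathrm{Area}(M_\gamma,g)$, so a subsequence converges in $C^\infty$ to a sphere-valued first-eigenfunction map $\Phi$. The coincidences $F_n(p)=F_n(q_n)$ and $F_n(p)=F_n(r_n)$ pass, via $C^1$ convergence, to $d_p\Phi(\xi)=d_p\Phi(\eta)=0$, hence $d_p\Phi=0$. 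But applying $\Delta_g$ to $|\Phi|^2\equiv 1$ gives $|d\Phi|_g^2\equiv\lambda_1(M_\gamma,g)>0$, a contradiction at the smooth point $p$. This limiting argument replaces your entire dimension-counting program with a two-line observation.
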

\begin{proof}
We show both conclusions at the same time by induction on $\gamma$. If $\gamma=0$, then it is known \cite{Hersch, Nadirashvili} that $\Lambda_1(1)>\Lambda_1(0)$ and they are both achieved by a smooth metric. Moreover, $\Lambda_1(2)=16\pi >\Lambda_1(0)=8\pi$ by~\cite{NaySh}.

Suppose that the statement is known for $\gamma$. If $\Lambda_1(\gamma+2)>\Lambda_1(\gamma+1)$, then $\Lambda_1(\gamma+2)$ is achieved by a smooth metric (up to a finite number of conical singularities) and $\Lambda_1(\gamma+3)\geq \Lambda_1(\gamma+2)>\Lambda_1(\gamma+1)$, so in this case the corollary is proved.

If $\Lambda_1(\gamma+2)=\Lambda_1(\gamma+1)$, then $\Lambda_1(\gamma+1)>\Lambda_1(\gamma)$ and $\Lambda_1(\gamma+1)$ is achieved by a smooth metric (up to a finite number of conical singularities). It remains to show that $\Lambda_1(\gamma+3)>\Lambda_1(\gamma+1)$. 

Arguing by contradiction, assume that $\Lambda_1(\gamma+3)=\Lambda_1(\gamma+1)$.  Let $M'$ be an orientable surface of genus $\gamma+1$ and let $g'$ be a $\bar\lambda_1$-maximal metric on $M'$, which is smooth up to a finite number of conical singularities. Choose $p\in M'$ to be a smooth point of $g'$ and consider normal geodesic coordinates centered at $p$ associated to an orthonormal basis $\{\xi,\eta\}\in T_pM'$. 
Fix a sequence $\{t_n\}$ tending to $0$ and set $q_n = \exp_p(t_n \xi), r_n = \exp_p(t_n \eta)$. We now construct a surface $M_n$ of genus $\gamma+3$ by connecting $M'$ to a sphere along 3 cylinders based at $p,q_n,r_n$. 

For the degeneration $M'\prec M_n$ one has $P = \{p, q_n, r_n\}$ with all points lying in the same equivalence class. As we are assuming that  $\Lambda_1(\gamma+3)=\Lambda_1(\gamma+1)$, Theorem~\ref{thm:general_existence} implies that there exists a sphere-valued map $\Phi_n$ by $\lambda_1(M',g')$-eigenfunctions such that $\Phi_n(p) = \Phi_n(q_n) = \Phi_n(r_n)$. Since any such map satisfies $\|\Phi_n\|_{L^2} = \area(M',g')$ and the $\bar\lambda_1(M',g')$-eigenspace is finite-dimensional, one can choose a subsequence converging in $C^\infty$ to $\Phi$, which is also a sphere-valued map by $\lambda_1(M',g')$-eigenfunctions.
Since $\Phi_n(p) = \Phi_n(q_n)$, it follows from the definitions and the $C^1$ convergence $\Phi_n\to\Phi$ that $d_p\Phi(\xi)=\lim_{n\rightarrow \infty} d_p \Phi_n(\xi) = 0$, and similarly $d_p\Phi(\eta)=\lim_{n \rightarrow \infty } d_p \Phi_n (\eta) = 0$, so $d_p \Phi = 0$.  At the same time, $|d \Phi|_g^2\equiv \lambda_1(M',g')$, which is impossible at $p$ if it is a smooth point of $g'$. This contradiction completes the proof. 
\end{proof}

\subsection{Existence of group-invariant maximal metrics}
\label{ss:gamma}
Let us now consider surfaces with group actions introduced in Section~\ref{Ss:fund1}. 
We first record two auxiliary statements, that are used in the proof of existence. 

\begin{lemma}
\label{lemma:collapsed_sets}
Let $(M,T)$ be one of the surfaces with group actions considered in Section~\ref{Ss:fund1}.
Then for any elementary degeneration $(M',T')\prec (M,T)$ there exists a choice of a collapsed set $P$ and its partition $P=\sqcup P_i$, such that all $P_i$ have at least two elements. 
\end{lemma}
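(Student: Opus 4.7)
The plan is to verify the lemma case-by-case using the classification of elementary degenerations provided by Proposition~\ref{prop:bunder_degen}, together with the explicit constructions of collapsed sets sketched at the end of Section~\ref{sec:actions_ex}.  The key simplification is that in every case the collapsed set $P'$ may be chosen as a single $\Gamma$-orbit of a prototype equivalence class $P_1'$; since $\Gamma$ acts by bijections, every remaining class $P_i' = \gamma_i P_1'$ has cardinality $|P_1'|$, so the entire lemma reduces to checking $|P_1'|\geq 2$.  For the trivial-group case $M = M(a)$, with elementary degeneration $M(a-1)\prec M(a)$, one may take $P_1' = \{p',q'\}$ with $p',q'$ either interchanged by $\tau$ or lying on the same $\tau$-fixed arc, giving $|P_1'|=2$ immediately.

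For general $(G,\ub)$, the three elementary degeneration types from Proposition~\ref{prop:bunder_degen} are handled as follows.  When $\uv = 1-\rho_i$, the condition $e_i'\geq 1$ supplies an arc of $M'^\tau$ with distinct endpoints $p',q'\in\Omega'^{\rho_i}$, so that $P_1' = \{p',q'\}$ has two elements.  When $\uv = \rho_i-\rho_i\rho_j$, the condition $v_{ij}'\geq 1$ supplies a component $S\subset M'^\tau$ with $\Stab_\Gamma(S)=\langle\tau,\rho_i,\rho_j\rangle$; choosing $p'\in S\cap\Omega'^{\rho_j}$, the stabilizer of $p'$ in $\langle\rho_i,\rho_j\rangle$ is exactly $\langle\rho_j\rangle$, so the orbit $P_1' = \langle\rho_i,\rho_j\rangle\cdot p'$ has cardinality $|\langle\rho_i,\rho_j\rangle|/2 = k_{ij}\geq 2$.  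When $\uv = \rho_i\rho_j$, the collapsing curve $c$ is constructed as in Case~4(b) of the proof of Proposition~\ref{prop:bunder_degen}, from a segment $I\subset\mathrm{int}(\Omega)\cup\partial\Omega_{\mathrm{out}}$ joining a point of $\Omega^{\rho_i}$ to a point of $\Omega^{\rho_j}$; since $I$ avoids $\Omega^\tau$ entirely, one has $I\cap\tau I=\varnothing$, and propagating by $\langle\rho_i,\rho_j\rangle$-reflections shows $c\cap\tau c=\varnothing$, so $c$ and $\tau c$ collapse to distinct points $p', \tau p'\in M'$, yielding $|P_1'|=2$.

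The most delicate step is the third case, where one must exclude the scenario in which the new vertex $p'\in\Omega'^{\rho_i}\cap\Omega'^{\rho_j}$ produced by the surgery coincides with $\tau p'$.  This potential obstruction is ruled out by the observation that the collapsing curve $c$ lies entirely in the ``upper half'' $\langle\rho_i,\rho_j\rangle\cdot\Omega$ of a local neighborhood, while $\tau c$ lies in the opposite half $\tau\langle\rho_i,\rho_j\rangle\cdot\Omega$; their disjointness is the essential geometric input.  Putting the three cases together then produces the required collapsed set $P' = \sqcup P_i'$ with $|P_i'|\geq 2$ in every instance of an elementary degeneration $(M',T')\prec (M,T)$.
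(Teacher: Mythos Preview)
Your proof is correct and follows the same approach as the paper: the paper's proof consists entirely of the sentence ``Follows from the discussion of collapsed sets at the end of Section~\ref{sec:actions_ex},'' and you have simply expanded that reference into an explicit case-by-case verification using the three elementary degeneration types from Proposition~\ref{prop:bunder_degen}.

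One remark on your handling of the case $\uv=\rho_i\rho_j$: you work harder than necessary by tracking the collapsing curve $c$ in $M$ and arguing that $c\cap\tau c=\varnothing$. The paper's discussion works directly in $M'$: the point $p'\in\Omega'^{\rho_i}\cap\Omega'^{\rho_j}$ is a corner of the nice surface-with-corners $\Omega'$, and such a corner lies in exactly two panels, namely $\Omega'^{\rho_i}$ and $\Omega'^{\rho_j}$. In particular $p'\notin\Omega'^\tau$; since $\tau$ commutes with both $\rho_i$ and $\rho_j$, a short tangent-space argument shows $p'\notin M'^\tau$, whence $\tau(p')\neq p'$ immediately. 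Your curve-based argument reaches the same conclusion but is more roundabout.
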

\begin{proof}
Follows from the discussion of collapsed sets at the end of Section~\ref{sec:actions_ex}.
\end{proof}
\begin{proposition}
\label{prop:uniq_map}
Let $M$ be an orientable surface of genus $>1$.
Suppose that $\Phi\colon (M,g)\to\mathbb{S}^3$ is a minimal embedding by $\lambda_1(M,g)$-eigenfunctions such that the multiplicity of $\lambda_1(M,g)$ equals $4$. If $\Psi$ is another harmonic map to $\mathbb{S}^n$ for any $n$ by   
$\lambda_1(M,g)$-eigenfunctions, then $n=3$ and $\Psi = A\Phi$ for some $A\in O(4)$. 
\end{proposition}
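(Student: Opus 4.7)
The plan is to show that every other harmonic map by first eigenfunctions differs from $\Phi$ by an ambient orthogonal transformation. Since $\dim\mathcal{E}_{\lambda_1(M,g)}=4$ and the coordinates $\phi_0,\ldots,\phi_3$ of $\Phi$ are linearly independent first eigenfunctions, they form a basis of $\mathcal{E}_{\lambda_1(M,g)}$, so each coordinate of $\Psi$ is a linear combination of the $\phi_i$. Hence $\Psi=B\Phi$ for some $(n+1)\times 4$ matrix $B$, and the constraints $|\Psi|^2=1=|\Phi|^2$ translate into $\Phi^T Q\Phi\equiv 0$ on $M$, where $Q:=B^TB-I$ is a symmetric $4\times 4$ matrix. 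The central claim will be that $Q=0$; once established, $B$ has orthonormal columns, so $\Psi(M)=B\Phi(M)$ lies in the totally geodesic $B(\mathbb{S}^3)\subset\mathbb{S}^n$, which allows reduction to $n=3$ and gives $\Psi=A\Phi$ for some $A\in O(4)$.

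To prove $Q=0$, I would first differentiate the relation $\Phi^T Q\Phi\equiv 0$ along tangent directions of $M\subset\mathbb{S}^3$: this shows $Q\Phi\perp T_pM$ at every point, and combined with $\Phi^T Q\Phi=0$ it gives the pointwise splitting $Q\Phi=\beta\nu$, where $\nu$ is the unit normal and $\beta:=\Phi^T Q\nu$. Using $\Delta_g \phi_i=2\phi_i$ (valid since $\Phi$ is a minimal isometric immersion into $\mathbb{S}^3$ by first eigenfunctions) together with the identity $\langle\nabla_g\phi_i,\nabla_g\phi_j\rangle=\delta_{ij}-\phi_i\phi_j-\nu_i\nu_j$, applying $\Delta_g$ to $\Phi^T Q\Phi\equiv 0$ yields $\nu^T Q\nu\equiv\mathrm{tr}(Q)$. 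A second application---now to $\nu^T Q\nu$, using that the Gauss map $\nu$ is harmonic with $\Delta_g\nu_i=|A|^2\nu_i$ and that $A^2=\tfrac{1}{2}|A|^2\,\mathrm{Id}$ on $T_pM$ (for trace-free $A$)---produces $|A|^2\,\mathrm{tr}(Q)\equiv 0$ on $M$. Since $M$ has genus $>1$, the Gauss equation $K=1-\tfrac{1}{2}|A|^2$ combined with Gauss--Bonnet forces $|A|^2\not\equiv 0$, and hence $\mathrm{tr}(Q)=0$ and $\nu^T Q\nu\equiv 0$.

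Repeating the tangential argument for $\nu^T Q\nu\equiv 0$ gives $Q\nu=\beta\Phi$ with the same $\beta$, and for $X\in T_pM$ one checks $QX\in T_pM$. Thus $Q$ admits a pointwise block-diagonal decomposition on $\mathrm{span}(\Phi,\nu)\oplus T_pM$, with eigenvalues $\pm\beta$ on the first block and $\pm a$ (trace-free) on the second; since the eigenvalues of the fixed matrix $Q$ are independent of $p$, $|\beta|$ is globally constant. If $\beta\equiv 0$, then $Q$ annihilates $\mathrm{span}(\Phi(p),\nu(p))$ for every $p\in M$, and since $M$ is not contained in any great $2$-sphere (as $g(M)>1$), these subspaces span $\mathbb{R}^4$ as $p$ varies, forcing $Q=0$. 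If $\beta$ is a nonzero constant, there are two subcases: (i) $|\beta|\ne a$, so $E_\beta\subset\mathbb{R}^4$ is one-dimensional and $\Phi+\nu\in E_\beta$ has constant norm $\sqrt{2}$, forcing $\Phi+\nu$ to be a fixed vector---differentiating tangentially gives $A=\mathrm{Id}|_{T_pM}$, contradicting $\mathrm{tr}(A)=0$; (ii) $|\beta|=a$, so $E_{\pm\beta}$ are two-dimensional, and the map $p\mapsto\bigl((\Phi+\nu)(p),(\Phi-\nu)(p)\bigr)/\sqrt{2}$ lands in a product of two circles in $E_\beta\times E_{-\beta}$, giving a continuous injection $M\hookrightarrow\mathbb{S}^1\times\mathbb{S}^1$ (injective since $\Phi$ is an embedding), which by invariance of domain is a homeomorphism---contradicting $g(M)>1$.

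The main obstacle will be the final subcase (ii): identifying the two-dimensional eigenspaces, verifying that $\Phi\pm\nu$ lie in them with constant norms, and deducing via invariance of domain that the induced injection into $\mathbb{S}^1\times\mathbb{S}^1$ is in fact a homeomorphism. The genus hypothesis is essential precisely here and in ruling out $|A|^2\equiv 0$; the result is sharp, since the Clifford torus realizes the nontrivial relation with $Q=\mathrm{diag}(1,1,-1,-1)$.
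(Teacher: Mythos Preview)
Your proof is correct and takes a genuinely different route from the paper's. The paper diagonalizes the quadratic form so that $\Phi(M)\subset\mathbb{S}^3\cap\{\sum a_ix_i^2=0\}$ and runs a case analysis on the signs of the $a_i$: the degenerate cases (some $a_i=0$, all positive, exactly one negative) are dispatched directly, and when two $a_i$ are positive and two negative, an implicit function theorem argument shows all such intersections are smoothly isotopic to the Clifford torus, hence of genus $1$. Your approach instead exploits the differential-geometric structure: harmonicity of both $\Phi$ and the Gauss map $\nu$, together with $A^2=\tfrac{1}{2}|A|^2\,\mathrm{Id}$ for trace-free $2\times2$ operators, yields a pointwise block decomposition of the fixed matrix $Q$ relative to the moving frame $\mathrm{span}(\Phi,\nu)\oplus T_pM$, forcing $\beta$ to be constant; your endgame in subcase~(ii), exhibiting $M\cong\mathbb{S}^1\times\mathbb{S}^1$ via invariance of domain applied to $p\mapsto\bigl((\Phi+\nu)(p),(\Phi-\nu)(p)\bigr)/\sqrt{2}$, is a clean replacement for the isotopy argument and makes the Clifford-torus sharpness transparent. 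One small point to add: deducing $Q\nu\perp T_pM$ from differentiating $\nu^TQ\nu\equiv0$ uses that $A$ is invertible, so you should note that umbilic points are isolated (the Hopf differential is holomorphic and not identically zero since the genus is positive) and then extend by continuity.
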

\begin{proof}
Since $\Phi$ is an embedding and $M$ is not diffeomorphic to the sphere, the components of $\Phi$ are linearly independent and, hence, span the $\bar\lambda_1(M,g)$-eigenspace. As a result, the components of $\Psi$ are linear combinations of components of $\Phi$, and since $|\Psi|^2 \equiv 1$,  the image $\Phi(M)$ lies on the intersection of $\mathbb{S}^3$ with a level set $Q(x)=1$ of a non-negative definite quadratic form $Q$ on $\mathbb{R}^4$. If $Q(x) = |x|^2$, then $\Psi = A\Phi$ for some $A\in O(4)$. Otherwise, the intersection $\mathbb{S}^3\cap \{Q(x)=1\}$ is at most $2$-dimensional, and $\Phi(M)$ must be a connected component of this intersection. At the same time, we claim that such an intersection has genus at most $1$, a contradiction.

It remains to prove the claim. By subtracting $|x|^2=1$ from $Q(x)$ we may assume that $\Phi(M)\subset \Sph^3\cap\{Q'(x)=0\}$, where 
\[
Q'(x) = a_1x_1^2 + a_2x_2^2 + a_3x_3^2 + a_4x_4^2.
\]
Suppose that at least one of the $a_i$, say $a_4$, vanishes. Then $p =(0,0,0, 1)\in \Phi(M)$ and the set $L_\eps = \Phi(M)\cap \{x_4=1-\eps\}$ for $\eps>0$ small enough is homeomorphic to a link of $p$ in $\Phi(M)$. Since $\Phi$ is an embedding, one has $L_\eps \approx \Sph^1$. At the same time, substituting $x_4 = 1-\eps$ into equations for $\Sph^3$ and $Q'(x)$ gives that $L_\eps$ is an intersection of a $2$-sphere (of small radius) in $\R^3$ with $Q'(x_1,x_2,x_3) = 0$. The latter is either empty, a cone (and then $L_\eps$ is two circles rather than one), two intersecting planes (and then $L_\eps$ is two intersecting circles), or a double plane and then only one of the $a_i$'s is non-zero. In the latter case the intersection of $\Sph^3$ with $Q'(x)=0$ is an equator, so has genus $0$. Thus, we may assume that none of the $a_i$ vanish. Without loss of generality, at most two of them are negative. If $a_i>0$ for all $i$, then $\{Q'(x)=0\}$ is the origin and the $\Phi(M)$ is empty, a contradiction. If only one $a_i<0$, say $a_4<0$, then $\{Q'(x)=0\}\cap \{x_4=0\}\cap\Sph^3$ is empty, hence, $\{Q'(x)=0\}\cap\Sph^3$ splits into two connected components according to the sign of $x_4$ and $\Phi(M)$ is one of those connected components. This leads to a contradiction as $x_4$ is a non-constant Laplace eigenfunction and, therefore, has to change sign on $\Phi(M)$. 

It remains to consider the case where two of the $a_i$'s are positive and the other two are negative, say $a_1,a_2>0$ and $a_3,a_4<0$. We claim that all intersections with such values of $a_i$ are isotopic. Indeed, consider the map $F\colon \R^8\to \R^2$ given by $F(\vec a,\vec x) = \{Q'(\vec a,\vec x), |\vec x|^2-1\}$. The determinant of the matrix $A_{ij} = \left(\frac{\bd F}{\bd x_k\bd x_l}\right)_{k,l=i,j}$ equals $4(a_i-a_j)x_ix_j$. If $(\vec a,\vec x)$ is such that $F(\vec a,\vec x) = 0$, then there exists $i\in\{1,2\}$, $j\in\{3,4\}$ such that $x_i\ne 0$, $x_j\ne 0$. At the same time, $a_i$ and $a_j$ have different signs and, hence, $\det A_{ij}\ne 0$. By the implicit function theorem, $x_i$ and $x_j$ are locally smooth functions of the remaining arguments and a standard partition of unity argument implies that the set $\{\vec x : \,F(\vec a,\vec x) = 0 \}$ varies smoothly with $\vec a$ as long as none of $a_i$ change sign. Finally, observe that if $a_1=a_2 = 1$, $a_3=a_4 = -1$, then $\{Q'(x)=1\}\cap \Sph^3$ is a Clifford torus, hence has genus $1$.     
\end{proof}

Using these two observations, we can prove the existence result for $\bar{\lambda}_1$-maximal metrics for a large class of surfaces with group actions. The proof uses an induction argument based on a numerical invariant which plays a role analogous to that of the genus for the trivial group $\Gamma=1$.

\begin{definition}\label{cdef}
Let $(M,T)$ be a closed, oriented surface equipped with a group action $T: \Gamma\times M\to M$. We define the \emph{topological complexity} $c(M,T)$ to be the largest integer $k$ for which there exists a chain of degenerations $(M_k,T_k)\prec \cdots\prec (M_1,T_1)\prec (M_0,T_0)=(M,T)$.
\end{definition}
Note that we always have $c(M,T)\leq \genus(M)$, since $\genus(M')< \genus(M)$ whenever $(M',T')\prec (M,T)$. Moreover, it's clear that $c(M',T')<c(M,T)$ whenever $(M',T')\prec (M,T)$.

\begin{theorem}
\label{thm:group_existence}
Let $(M,T)$ be one of the surfaces with group actions considered in  Section~\ref{Ss:fund1} 
Then for any $(M',T')\prec (M,T)$ one has 
\begin{equation}
\label{ineq:toprove_1}
\Lambda_1^{T'}(M')<\Lambda_1^T(M).
\end{equation}

In particular, there exists a $T$-invariant $\bar\lambda_1^T$-maximal metric on $M$ and a corresponding equivariant minimal embedding $\Phi_{(M,T)}\colon M\hookrightarrow\mathbb{S}^3$. 
\end{theorem}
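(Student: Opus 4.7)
The plan is to induct on the topological complexity $c(M, T)$ of Definition~\ref{cdef}, using Theorem~\ref{thm:general_existence} to convert the strict inequality~\eqref{ineq:toprove_1} into a uniqueness statement for equivariant maps by first eigenfunctions that can then be refuted via Proposition~\ref{prop:uniq_map}. Once~\eqref{ineq:toprove_1} is established for every degeneration of $(M, T)$, Theorem~\ref{thm:existence} produces a $T$-invariant metric realising $\Lambda_1^T(M)$, and Corollary~\ref{Prealcl} upgrades the associated equivariant branched minimal immersion into $\mathbb{S}^n$ to an honest equivariant minimal embedding $\Phi_{(M, T)} \colon M \hookrightarrow \mathbb{S}^3$ whose first eigenspace is $4$-dimensional; this then furnishes the inductive data needed for the next step.

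For the inductive step, fix an elementary degeneration $(M', T') \prec (M, T)$. By Proposition~\ref{prop:bunder_degen}, $(M', T')$ lies again in the class considered in Section~\ref{Ss:fund2}, and since $c(M', T') < c(M, T)$ the inductive hypothesis applied to $(M', T')$, together with Theorem~\ref{thm:existence} and Corollary~\ref{Prealcl}, gives a $T'$-invariant $\bar\lambda_1^{T'}$-maximal metric $g'$ on $M'$ and a $T'$-equivariant minimal embedding $\Phi' \colon (M', g') \hookrightarrow \mathbb{S}^3$ with $\dim \Ecal_{\lambda_1(M', g')} = 4$. Suppose for contradiction that~\eqref{ineq:toprove_1} fails for this particular degeneration; by Lemma~\ref{lemma:collapsed_sets} one may choose a collapsed set $P' = \sqcup_i P'_i$ with $|P'_i| \geq 2$ for every $i$, and Theorem~\ref{thm:general_existence} then delivers a $T'$-equivariant map $\Psi \colon (M', g') \to \mathbb{S}^n$ by first eigenfunctions that is constant on each $P'_i$. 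If $\genus(M') \geq 2$, Proposition~\ref{prop:uniq_map} forces $n = 3$ and $\Psi = A \Phi'$ for some $A \in O(4)$, contradicting the embeddedness of $\Phi'$; Lemma~\ref{lem:2basic} is used here to enumerate the admissible reflection structures on $M'$ inherited from the degeneration.

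It remains to treat the case where $M'$ has genus zero or one. When $\genus(M') = 0$, Corollary~\ref{cor:max_sphere} gives $\Lambda_1^{T'}(M') = 8\pi$; when $\genus(M') = 1$, the surface $(M', T')$ is one of the finitely many models listed in Example~\ref{EMGtori}, and a direct analysis of the flat $T'$-invariant conformal classes on each such model (realised, for instance, by the Clifford torus) produces the uniform bound $\Lambda_1^{T'}(M') \leq 4\pi^2$. The verification of~\eqref{ineq:toprove_1} in these residual cases therefore reduces to the single quantitative estimate
\[
\Lambda_1^T(M) > 4\pi^2.
\]

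The main obstacle is to establish this last inequality for each $(M, T)$ in the class of Section~\ref{Ss:fund2}. The plan is to test $\bar\lambda_1^T$ against the metric induced by a suitably chosen Lawson surface $\xi_{\gamma, 1} \subset \mathbb{S}^3$: since $\xi_{\gamma, 1}$ is minimally embedded by first eigenfunctions with $\lambda_1 = 2$ and area tending to $8\pi$ as $\gamma \to \infty$, any realisation of the topological type of $M$ as $\xi_{\gamma, 1}$ compatible with the symmetry group $T(\Gamma) \leq O(4)$ yields $\bar\lambda_1^T(M) \geq 2 \Area(\xi_{\gamma, 1}) > 4\pi^2$ as soon as $\gamma$ is large enough that $\Area(\xi_{\gamma, 1}) > 2\pi^2$. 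The isometry groups of the Lawson surfaces are rich enough to absorb all but finitely many of the $\Z_2 \times G$ actions enumerated in Theorem~\ref{Tsph1}, and the residual low-complexity cases are handled by an explicit construction of a $T$-invariant test conformal class on $M$ (for instance, by gluing a $T$-equivariant configuration of thin handles onto the round $\mathbb{S}^2$ and computing the first eigenvalue of the resulting metric). With this bound secured, the induction closes and both conclusions of the theorem follow at once from Theorem~\ref{thm:existence} and Corollary~\ref{Prealcl}.
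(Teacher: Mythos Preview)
Your overall strategy---induction on $c(M,T)$, invoking Theorem~\ref{thm:general_existence} together with Proposition~\ref{prop:uniq_map} when $\genus(M') \geq 2$---matches the paper's. The treatment of the low-genus residual cases, however, has a genuine gap.

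The reduction of both the genus-$0$ and genus-$1$ cases to the single bound $\Lambda_1^T(M) > 4\pi^2$ is incorrect. When $(M,T)$ is itself one of the tori in the class (Example~\ref{EMGtori}), one has $\Lambda_1^T(M) = 4\pi^2$ exactly, realised by the Clifford torus; yet such an $(M,T)$ has a genus-$0$ elementary degeneration $(M',T')$ with $\Lambda_1^{T'}(M') = 8\pi$, so your merged bound fails precisely where you need it. The paper keeps the two cases separate: for $\genus(M') = 0$, every map $\mathbb{S}^2 \to \mathbb{S}^n$ by first eigenfunctions is the restriction of a linear isometry and hence separates points, so Theorem~\ref{thm:general_existence} applies directly, with no quantitative lower bound on $\Lambda_1^T(M)$ required.

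For $\genus(M')=1$ the bound $\Lambda_1^T(M)>4\pi^2$ is indeed what is needed, but your plan for obtaining it is too loose. The paper does not attempt to realise an arbitrary $(M,T)$ by a Lawson surface, nor does it build ad hoc test metrics by handle attachment. Instead it enumerates the finitely many $(M,T)$ admitting a genus-$1$ elementary degeneration (these are $M(3)$, $M(\mathbb{Z}_2,1+\rho_1)$, $M(D_2,1)$, $M(D_k,\rho_i+\rho_1\rho_2)$, $M(\mathbb{Z}_2\times D_k,\rho_i)$), and for each exhibits either a Lawson surface $\xi_{\gamma,1}$ with $\gamma\geq 2$ of that exact symmetry type, or a degeneration $(M'',T'')\prec(M,T)$ so realised; then inequality~\eqref{ineq:MM'} together with the Willmore conjecture~\cite{MN} gives $\Lambda_1^T(M)\geq 2\Area(\xi_{\gamma,1})>4\pi^2$. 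Note in particular that $2\Area(\xi_{\gamma,1})>4\pi^2$ holds for \emph{every} $\gamma\geq 2$, not only for large $\gamma$.

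Two smaller points: you never address the base case $c(M,T)=0$ (the paper checks that each such $M$ is either a sphere or has $\delta_{or}=0$ in the sense of Theorem~\ref{thm:existence}); and Lemma~\ref{lem:2basic} plays no role in this proof---it is used only in Proposition~\ref{prop:distinct_MS}.
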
 
\begin{proof}
We prove the theorem by induction on $c(M,T)$. Assume first that $c(M,T)=0$. One then has the following cases depending on the family of examples:
\begin{enumerate}
\item  $(M,T)$ is of type $M(a)$.  Then $a=1$, hence $M$ is diffeomorphic to $\Sph^2$ and  Corollary~\ref{cor:max_sphere} implies $\Lambda_1^{T}(M)$ is attained on the round metric.
\item  $(M,T)$ is of type $M(\mathbb{Z}_2, f+e_1\rho_1)$.  Then $f=0$, $e_1=1$, hence $M$ is once again diffeomorphic to $\Sph^2$ and Corollary~\ref{cor:max_sphere} implies that $\Lambda_1^{T}(M)$ is attained on the round metric.
\item  $(M,T)$ is of type $M(D_k, \bunder)$.  Then $\bunder = \rho_1\rho_2$, so that $M$ is a sphere, and we conclude as in the previous cases. 
\item $(M,T)$ is of the type $M(G,\ub)$, where $G$ is one of the remaining groups with $3$ generators.  Then $\ub$ must be equal to $\rho_i \rho_j$ for some $1\leq i < j \leq 3$. 
We claim that for such surfaces $\delta_{or} = 0$, so that $\Lambda_1^{T}(M)$ is attained by Theorem~\ref{thm:existence}.
Indeed, by Proposition~\ref{prop:geo_FundDomain} any simple closed $2$-sided curve must meet the fundamental domain $\Omega(\Gamma,\ub)$ in a piece of the boundary or in a segment connecting two boundary points. But then it is easy to observe that none of these situations give rise to a single invariant curve.
\end{enumerate}

Suppose now that  the theorem holds for all $(M',T')$ with $c(M',T')\leq k$. Let $c(M,T) = k+1$ and 
$(M',T')\prec (M,T)$ be a topological degeneration, which we can always assume to be elementary.
Then $c(M',T')\leq k$, so there exists a $T'$-invariant $\bar\lambda_1^{T'}$-maximal metric $g$ on $M'$ and a corresponding minimal embedding $\Phi_{(M',T')}$ into $\mathbb{S}^3$. If the genus of $M'$ is at least two, then Proposition~\ref{prop:uniq_map} implies that $\Phi_{(M',T')}$ is the unique map by $\lambda_1(M,g)$-eigenfunctions to $\mathbb{S}^3$ (up to elements of $O(4)$) and since it is an embedding, it separates points. Thus, combining Lemma~\ref{lemma:collapsed_sets} with  Theorem~\ref{thm:general_existence} yields inequality~\eqref{ineq:toprove_1}.

Suppose that the genus of $M'$ is $0$. By Lemma~\ref{lemma:collapsed_sets}, we can choose a collapsed set $P$ with a partition $P=\sqcup P_i$ such that each $P_i$ has more than $1$ element. Since all maps $\Sph^2\to\mathbb{S}^n$ by $\lambda_1$-eigenfunctions separate points, Theorem~\ref{thm:general_existence} implies that $\Lambda_1^T(M)>\Lambda_1^{T'}(M') = 8\pi$.

Suppose now that $M'$ is a torus.  Recall from Example \ref{EMGtori} that
the only tori in our families are $M(2)$, $M(\mathbb{Z}_2, 1)$, $M(D_k,2\rho_1\rho_2)$, $M(D_2,\rho_1) = M(D_2,\rho_2)$ and $M(\mathbb{Z}_2\times D_k,\rho_2\rho_3)$ (for $k=2$, by symmetry, one also has $M(\mathbb{Z}_2\times D_2,\rho_1\rho_3) = M(\mathbb{Z}_2\times D_k,\rho_1\rho_2)$). There is a corresponding minimal embedding to $\mathbb{S}^3$ by first eigenfunctions. According to the classification of minimal embeddings of tori by first eigenfunctions~\cite[Theorem 4]{MR}, the minimal surface must be the Clifford torus, hence, $\Lambda_1^T(M')=4\pi^2$.
Using results of Section~\ref{sec:actions_ex} and our classification of tori in the families in \ref{EMGtori}, for any elementary topological degeneration $(M',T')\prec (M,T)$ one has the following cases for $(M,T)$:
\begin{enumerate}
\item $(M,T)$ is $M(3)$ or $M(\mathbb{Z}_2,1+\rho_1)$. In this case $M$ has genus $2$ and we observe that the metric on the Lawson's surface $\xi_{2,1}$ is of the corresponding type, so that by the resolution of the Willmore conjecture~\cite{MN} one has
\[
\Lambda_1^T(M)\geq 2\area(\xi_{2,1})>4\pi^2 = \Lambda^{T'}_1(M').
\]
\item $(M,T)$ is $M(D_2, 1)$. In this case $M$ has genus $3$ and we observe that the metric on the Lawson's surface $\xi_{3,1}$ is of the corresponding type, so that by~\cite{MN}
\[
\Lambda_1^T(M)\geq 2\area(\xi_{3,1})>4\pi^2 = \Lambda^{T'}_1(M').
\]
\item $(M,T)$ is $M(D_k,\rho_1 + \rho_1\rho_2) = M(D_k,\rho_2 + \rho_1\rho_2)$.
\begin{enumerate}
\item If $k=2$, then $\xi_{2,1}$ is of the corresponding type and we conclude as in the case 1);
\item If $k>2$, then $\xi_{k-1,1}$ is of type $M(D_k,\rho_1)$, and $M(D_k,\rho_1)\prec M(D_k,\rho_1 + \rho_1\rho_2)$. Thus, by~\eqref{ineq:MM'} and~\cite{MN} one has
\[
\Lambda_1^T(M)\geq 2\area(\xi_{k-1,1})>4\pi^2 = \Lambda^{T'}_1(M').
\]
\end{enumerate}
\item $(M,T)$ is $M(\mathbb{Z}_2\times D_k, \rho_2)=M(\mathbb{Z}_2\times D_k,\rho_3)$.
\begin{enumerate}
\item If $k=2$, then $\xi_{3,1}$ is of the corresponding type and we conclude as in the case 2);
\item If $k> 2$, then $\xi_{k-1,1}$ is of type $M(\mathbb{Z}_2\times D_k, \rho_1\rho_2)$, and $M(\mathbb{Z}_2\times D_k, \rho_1\rho_2)\prec M(\mathbb{Z}_2\times D_k, \rho_2)$. Thus, by~\eqref{ineq:MM'} and~\cite{MN} one has
\[
\Lambda_1^T(M)\geq 2\area(\xi_{k-1,1})>4\pi^2 = \Lambda^{T'}_1(M').\qedhere
\]
\end{enumerate}
\end{enumerate}
%
\end{proof}

Observe that many surfaces covered in Section~\ref{sec:actions_ex} have the same topology. For example, $M(\mathbb{Z}_2,a+b\rho_1)$ has genus $\gamma = 2a+b-1$. As a result, there are at least $\lfloor \frac{\gamma}{2}\rfloor+1$ topologically different actions of this type on a surface of genus $\gamma$. The following proposition asserts that equivariant eigenvalue optimization in these classes often yields distinct maximal metrics, hence, distinct minimal surfaces.

\begin{proposition}
\label{prop:distinct_MS}
Let $\gamma+1 = 2a+b = 2a'+b'$, and $g_{a,b}$, $g_{a',b'}$ be metrics achieving $\Lambda_1(M(\mathbb{Z}_2,a+b\rho_1))$, $\Lambda_1(M(\mathbb{Z}_2,a'+b'\rho_1))$ respectively. If $(a',b')\ne (a,b)$ and $\gamma>2(a+a')+1$, then $g_{a,b}$ and $g_{a',b'}$ are not isometric.

In particular, there exists at least $\lfloor\frac{\gamma-1}{4}\rfloor+1$ embedded non-isometric minimal surfaces of genus $\gamma$ and area below $8\pi$ in $\mathbb{S}^3$.
\end{proposition}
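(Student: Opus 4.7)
The plan is to suppose for contradiction that $g_{a,b}$ and $g_{a',b'}$ are isometric and deduce $(a,b)=(a',b')$. By Corollary~\ref{Prealcl} and Theorem~\ref{thm:group_existence}, each maximal metric is realized by an embedded minimal surface in $\mathbb{S}^3$, and by Propositions~\ref{Lasymcl} and~\ref{Lrhoquo2} the multiplicity of $\lambda_1$ is exactly $4$, so (noting that $\gamma\geq 2$) Proposition~\ref{prop:uniq_map} applies: the associated embedding is unique modulo $O(4)$. Hence an isometry $g_{a,b}\cong g_{a',b'}$ produces a single minimal surface $\Sigma\subset\mathbb{S}^3$ on which both $\mathbb{Z}_2\times\mathbb{Z}_2$-actions act by elements of $\Isom(\Sigma)\subset O(4)$; write $\langle\tau,\rho_1\rangle$ and $\langle\tau',\rho_1'\rangle$ for the two generating pairs, with $\tau,\tau'$ the doubling involutions (each fixing a great $2$-sphere $\mathbb{S}^2_\tau=\mathbb{S}(P_\tau)$ or $\mathbb{S}^2_{\tau'}=\mathbb{S}(P_{\tau'})$).

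In Case 1, $\tau=\tau'$: since any reflection of $\mathbb{R}^4$ commuting with $\tau=R_{P_\tau}$ lies in $O(P_\tau)\times O(P_\tau^\perp)$, a direct determinant and fixed-set count shows $\rho_1=(A,1)$ and $\rho_1'=(A',1)$ with $A,A'\in O(P_\tau)\cong O(3)$ reflections across $2$-planes; consequently $\rho_1|_{\mathbb{S}^2_\tau}$ and $\rho_1'|_{\mathbb{S}^2_\tau}$ are reflections across great circles $C,C'$. Since $\gamma\geq 2$ forces $\Isom(\Sigma)$ to be finite, these two restrictions generate a dihedral subgroup $D_k\subset O(3)$ whose $k$ reflection axes all pass through a common pair of antipodal poles $\{N,S\}$. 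The hole set $\mathcal{H}\subset\mathbb{S}^2_\tau$ is $D_k$-invariant, and each non-pole $D_k$-orbit of an axis point meets every axis in exactly one point; together with the fact that $\{N,S\}\subset C\cap C'$, this yields $|\mathcal{H}\cap C|=|\mathcal{H}\cap C'|$, i.e.\ $b=b'$, so $(a,b)=(a',b')$, contradicting distinctness.

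In Case 2, $\tau\ne\tau'$: I would apply Lemma~\ref{lem:2basic} to the pair of doubling involutions $\tau,\tau'$ to conclude that $\langle\tau,\tau'\rangle=\mathbb{Z}_2\times\mathbb{Z}_2$ and that every one of the $\gamma+1$ ovals of $M^\tau$ is preserved setwise by $\tau'$. Since each oval is strictly convex in $\mathbb{S}^2_\tau$ by Lemma~\ref{Lgraphsph}(iii), and $\tau'|_{\mathbb{S}^2_\tau}$ is a reflection across some great circle $\widetilde C\subset\mathbb{S}^2_\tau$, each oval meets $\widetilde C$ in exactly two points, contributing $2(\gamma+1)\geq 6$ distinct points on $\widetilde C$. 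Because $\rho_1$ permutes the ovals, it permutes this set of points, so $\rho_1(\widetilde C)$ contains at least $6$ points of $\widetilde C$; as two distinct great circles meet in at most two points, this forces $\rho_1(\widetilde C)=\widetilde C$. Thus either $C=\widetilde C$, in which case $\rho_1$ fixes every oval and $b=\gamma+1$, $a=0$, or $C\perp\widetilde C$, in which case $\rho_1|_{\widetilde C}$ is a half-turn with only $|C\cap\widetilde C|=2$ fixed points, giving $b\leq 2$ and $a\geq(\gamma-1)/2$. The same dichotomy applies to $(a',b')$; combined with the hypothesis $a+a'<(\gamma-1)/2$, every possibility except $a=a'=0$ is excluded, and the remaining case $a=a'=0$ gives $(a,b)=(a',b')=(0,\gamma+1)$, again contradicting distinctness.

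For the counting, I would take $a\in\{0,1,\dots,\lfloor(\gamma-1)/4\rfloor\}$: for any distinct $a,a'$ in this range one has $a+a'\leq 2\lfloor(\gamma-1)/4\rfloor-1<(\gamma-1)/2$, so the non-isometry statement above yields $\lfloor(\gamma-1)/4\rfloor+1$ pairwise non-isometric maximal metrics, and hence the same number of pairwise non-congruent embedded minimal surfaces of genus $\gamma$ in $\mathbb{S}^3$, each of area below $8\pi$ by Lemma~\ref{Levalbdcl}(i). The main obstacle is Case~2: without a rigidity statement for pairs of doubling involutions on the same surface, one cannot directly compare the $(a,b)$- and $(a',b')$-hole configurations, which live on different great $2$-spheres in $\mathbb{S}^3$. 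Lemma~\ref{lem:2basic} supplies exactly this rigidity by forcing all $\gamma+1$ ovals of one doubling structure to be preserved by the other involution, after which the joint analysis of $\rho_1$ and $\tau'$ on $\mathbb{S}^2_\tau$ reduces to the elementary ``parallel versus perpendicular axes'' dichotomy and closes the argument.
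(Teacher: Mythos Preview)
Your geometric approach—working with the ovals on $\mathbb{S}^2_\tau\subset\mathbb{S}^3$—is natural, but both cases contain genuine gaps.

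\textbf{Case 1 ($\tau=\tau'$).} The claim that ``each non-pole $D_k$-orbit of an axis point meets every axis in exactly one point'' is false when $k$ is even: the two reflections $\rho_1,\rho_1'$ generating $D_k$ then lie in distinct conjugacy classes, and the $D_k$-orbit of a non-pole point on $C$ stays entirely on axes in the conjugacy class of $\rho_1$, never touching $C'$. For instance, with $D_2$ acting on $\mathbb{S}^2$ via perpendicular axes $C,C'$, two holes at $(\pm1,0,0)$ form a $D_2$-invariant configuration with $b=2$, $b'=0$. More fundamentally, you never invoke the hypothesis $\gamma>2(a+a')+1$ in this case, yet without it the conclusion $b=b'$ is simply false. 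The paper instead analyzes the fundamental domain of $\langle\tau\rangle\times\langle\rho_1,\rho_1'\rangle$ as an $M(G,\ub)$ surface (via Proposition~\ref{Agamma}) and extracts the weaker bound $b'\le 2a+2$, which \emph{does} contradict the hypothesis, since $b'=\gamma+1-2a'>2a+2$.

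\textbf{Case 2 ($\tau\neq\tau'$).} The step ``because $\rho_1$ permutes the ovals, it permutes this set of points'' is circular: $\rho_1$ maps $(\text{ovals})\cap\widetilde C$ to $(\text{ovals})\cap\rho_1(\widetilde C)$, which lies on $\rho_1(\widetilde C)$ but has no a priori intersection with $\widetilde C$ beyond the generic two points—you are assuming precisely what you want to prove. Even granting $\rho_1(\widetilde C)=\widetilde C$, the subsequent dichotomy conflates the $\rho_1$-fixed points \emph{on} $\widetilde C$ with the $\rho_1$-invariant ovals (which are those meeting $C$, not $\widetilde C$); also, when $C\perp\widetilde C$ the restriction $\rho_1|_{\widetilde C}$ is a reflection fixing $C\cap\widetilde C$, not a half-turn. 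The paper again proceeds combinatorially: after invoking Lemma~\ref{lem:2basic}, it passes to the larger group $\langle\tau\rangle\times\langle\rho_1,\tau'\rangle$, reads off from the fundamental-domain type that $b\le 2$, and this contradicts $b=\gamma+1-2a>2a'+2\ge 2$.

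Your final counting paragraph is correct and matches the paper's.
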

\begin{remark}
One can use other types of group actions to improve the lower bound on the number of different minimal surfaces of given genus. However, such arguments do not result in a superlinear lower bound for $\gamma$, so we choose to not include them.
\end{remark}
\begin{proof}
Suppose that $g_{a,b}$ and $g_{a',b'}$ are isometric, and denote this metric by $g$. Then $g$ admits an isometric action $T$ of $\Z_2\times\Z_2 = \langle\tau,\rho_1\rangle$ of type $M(\mathbb{Z}_2,a+b\rho_1)$, where $(M,\tau)$ is a basic reflection surface, and, at the same time, $g$ admits an isometric action $T'$ of $\Z_2\times\Z_2 = \langle\tau',\rho'_1\rangle$ of type $M(\mathbb{Z}_2,a'+b'\rho_1)$, where $(M,\tau')$ is a basic reflection surface. Suppose that $\tau\ne\tau'$. Then by Lemma~\ref{lem:2basic} $(M,\tau)$ has type $M(\Z_2,(\gamma+1)\tau')$. Consider now $G = \langle \rho_1,\tau'\rangle$ and the action of $\langle\tau\rangle\times G$ and suppose that  this action is of type $M(G,f+e_1\rho_1 + e'\tau' + v_1'\rho_1\tau')$. Since the fundamental domain for $\langle\tau,\tau'\rangle$ is obtained by a repeated reflection of $\Omega(G,f+e_1\rho_1 + e'\tau' + v_1'\rho_1\tau')$, one observes that $f=0=e_1$.
 At the same time, it is given that $M$ is of type $M(\Z_2,a+b\rho_1)$ with respect to the action of $\langle\tau\rangle\times\langle\rho_1\rangle$ and similar arguments yield $b\leq 2$. However, $\gamma>2(a+a')+1$ implies $b>2a'+2\geq 2$, a contradiction.

Hence, $\tau = \tau'$ and $\tau$ commutes with both $\rho_1$ and $\rho_1'$. Since $(a',b')\ne(a,b)$ one has $\rho_1\ne\rho_1'$. Set $G = \langle\rho_1,\rho_2\rangle$, so that $\Gamma = \langle\tau\rangle\times G$ acts on $M$, and $M$ has type $M(G,\ub)$. By arguments analogous to the previous paragraph, one observes that the number of connected components of $M^\tau$ fixed by $\rho_1'$ does not exceed $2a+2$. At the same time, this number is $b'$ by definition, so we conclude $b'\leq 2a+2$. Since $\gamma>2(a+a')+1$ implies $b>2a'+2\geq 2$, a contradiction.
 
If $0\leq a,a'\leq \lfloor\frac{\gamma-1}{4}\rfloor$ are two distinct numbers, then $2(a+a')<\gamma-1$ and $4a, 4a'\leq \gamma-1$. The latter inequality implies that $b = \gamma-2a\geq 0$ and analogously for $b'$ and, hence, metrics achieving 
$\Lambda_1(M(\mathbb{Z}_2,a+b\rho_1))$ for distinct values of $a$, where $0\leq a\leq \lfloor\frac{\gamma-1}{4}\rfloor$ yield distinct minimal surfaces.  The embeddedness in $\Sph^3$ and area bound follows from Lemma~\ref{Lgraphsph}. 
\end{proof}
\subsection{Proof of Theorem~\ref{thm:general_existence}}
\label{sec:proof_general_existence}

%

Let $(M',T')\prec (M,T)$ with collapsed set $P'$ and the partition $P' =\sqcup P'_i$. We make use of the Riemannian inverse surgery construction discussed in Sections~\ref{sec:inverse_surgery} and~\ref{sec:cc_limits}. Recall that 
 there exist connected surfaces $M_i''$ and collections of points $P_i''\subset M_i''$ together with the bijections $\alpha_i\colon P'_i\to P_i''$ such that
\begin{itemize}
\item The (possibly disconnected) surface $M'' = \sqcup M_i''$ and the collection $P'' = \sqcup P_i''\subset M''$ admit action $T''$ of $\Gamma$ such that such that $P''$ is preserved and  $\alpha = \sqcup \alpha_i$ is equivariant.
\item Performing equivariant surgery at pairs $\{p,\alpha(p)\}$ for all $p\in P'$ on $(M',T')\sqcup (M'',T'')$ leads to a surface with a group action topologically equivalent to $(M,T)$. To simplify the notation we identify this surface with $(M,T)$.
\end{itemize}

Let $g'\in\mC$ be a unit area $T'$-invariant metric achieving $\Lambda^{T'}(M',\mC')$ and let $g''$ be any $T''$-invariant metric on $M''$. We construct the metric $g_{\eps, L}$ on $M$ as follows. For each $p\in P'\sqcup P''$ there are neighborhoods $U_p$ of $p$ and smooth conformal factors $\frac{1}{2}\leq \phi_p\leq 2$ depending equivariantly on $p$, such that $g_p=\phi_pg$ is flat on $U_p$.
Denote by $D_\eps(p)$ the disk of radius $\eps$ around $p$ in the metric $g_p$. The metric $g_{\eps,L}$ on $M$ is obtained by gluing a flat cylinder  $C_{\eps, L}(p) = \mathbb{S}^1_\eps\times[0,L\eps]$ to $\partial D_\eps(p)$ and $\partial D_\eps(\alpha(p))$ in an equivariant way, where $\mathbb{S}^1_\eps$ denotes a circle of radius $\eps$. Then $g_{\eps,L}$ is $T$-invariant, and we set $\mC_{\eps,L}=[g_{\eps,L}]$.

We ntroduce the following notation:
\[
D_\eps(P'):=\bigcup\limits_{p\in P'} D_\eps(p);\quad D_\eps(P''):=\bigcup\limits_{p\in P''} D_\eps(p);\quad C_{\eps,L}:=\bigcup\limits_{p\in P'} C_{\eps,L}(p).
\]

For now, we fix $\eps$ and $L$; later, $\eps>0$ will be chosen to be small, whereas $L$ will be large depending on $\eps$. In the following we sometimes suppress the dependence on $\eps, L$ to simplify the notation. Moreover, we use $C$ to denote any positive constant that is independent of parameters $\eps, L$. The constant may change from line to line.

\begin{lemma}
\label{lem:good_map}
Given $\eps, L$ there exists $n\geq 2$ and a $T$-equivariant map $F\in W^{1,2}_{g_{\eps,L}}(M, \mathbb{B}^{n+1})$ such that 
\begin{equation}
\label{ineq:gm1}
\int_M |dF|^2_{g_{\eps,L}}\,dv_{g_{\eps,L}}<\Lambda^T_1(M,\mC_{\eps,L}) + \eps^2,\quad
\int_{M} (1-|F|^2)^2\,dv_{g_{\eps,L}}<\eps^2.
\end{equation}
Furthermore, the harmonic extension $\hat F\in W^{1,2}_{g'}(M', \mathbb{B}^{n+1})$ of the restriction $F|_{M'\setminus D_\eps(P')}$ to $M'$ is a $T'$-equivariant map satisfying
\begin{equation}
\label{ineq:gm2}
\Lambda_1^{T'}(M',\mC')\leq \int_{M'}|d\hat F|^2_{g'}\,dv_{g'} + C\eps,\quad 
\int_{M'} (1-|\hat F|^2)^2\,dv_{g'}<C\eps^2,
\end{equation}
\begin{equation}
\label{ineq:gm3}
\int_{M'}\hat F\,dv_{g'} = 0.
\end{equation}
\end{lemma}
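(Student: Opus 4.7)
\emph{Strategy.} I would build $F$ as an explicit controlled extension of the $T'$-equivariant $\bar\lambda_1^{T'}$-maximizing sphere-valued eigenmap $F'\colon(M',g')\to\Sph^n\subset\mathbb{B}^{n+1}$ provided by Theorem~\ref{thm:symm_crit_L}. Smoothness of $F'$ at each $p\in P'$ (which we may arrange by choosing $P'$ to avoid the finite conical singular set of $g'$) guarantees that the Fourier coefficients $a_k^p$ of the trace $F'|_{\partial D_\eps(p)}$ satisfy $a_0^p=F'(p)+O(\eps^2)$, $|a_{\pm 1}^p|=O(\eps)$, and $|a_k^p|=O(\eps^{|k|})$, so the single $H^{1/2}$-type quantity $\sum_{k\neq 0}|k||a_k^p|^2=O(\eps^2)$ will control both the cost of extending $F$ across the surgery region and the cost of harmonically extending $F|_{M'\setminus D_\eps(P')}$ back into each disk $D_\eps(p)\subset M'$.

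\emph{Construction of $F$ and \eqref{ineq:gm1}.} Set $F=F'$ on $M'\setminus D_\eps(P')$, and on each cylinder $C_{\eps,L}(p)\cong \Sph^1\times[0,L]$ with $p\in P'_i$, put
\begin{equation*}
F(\theta,\tau)=\left(1-\tfrac{\tau}{L}\right)a_0^p+\tfrac{\tau}{L}\,v_i^*+\sum_{k\neq 0}a_k^p\,e^{-|k|\tau}e^{ik\theta},
\end{equation*}
where $v_i^*\in \Sph^n$ is a unit vector in the $\rho$-fixed subspace of the stabilizer of the connected component $M''_i$ of $M''$ to which $\alpha(p)$ belongs (such a $v_i^*$ is available after enlarging the target by embedding everything into a sufficiently large direct sum of copies of the regular representation $A_\Gamma$, in which $|\Gamma|^{-1/2}\sum_{\gamma}e_\gamma$ provides a universal $\Gamma$-invariant unit vector); then extend $F\equiv v_i^*$ over the rest of $M''_i$ after an equivariant smoothing collar. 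The non-zero Fourier modes on each cylinder contribute $2\pi\sum_{k\neq 0}|k||a_k^p|^2(1-e^{-2|k|L})=O(\eps^2)$ to the Dirichlet energy; the $k=0$ linear interpolation contributes $2\pi\sum_{p\in P'}|v_{i(p)}^*-a_0^p|^2/L=O(1/L)$, which is absorbed into the $\eps^2$ error once $L\geq C\eps^{-2}$; the smoothing collar adds $O(e^{-L})$, and the constant parts on $M''$ contribute nothing. Combined with Theorem~\ref{thm:existence}, I then get $\int_M|dF|^2\leq\Lambda_1^{T'}(M',\mC')+O(\eps^2)\leq\Lambda_1^T(M,\mC_{\eps,L})+\eps^2$. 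The expansion $|a_0^p|^2=1-\eps^2\lambda_1(g')/2+O(\eps^4)$ together with $|v_i^*|=1$ gives $|F|\leq 1$ after at most a $1+O(\eps^2)$ rescaling, and mode-by-mode $L^2$ estimates yield $\int_M(1-|F|^2)^2\leq C\eps^2$, establishing~\eqref{ineq:gm1}.

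\emph{Harmonic extension and \eqref{ineq:gm2}--\eqref{ineq:gm3}.} On each disk $D_\eps(p)\subset M'$ the $g'$-harmonic extension $\hat F$ of $F|_{M'\setminus D_\eps(P')}=F'|_{M'\setminus D_\eps(P')}$ is the Poisson integral $\hat F(r,\theta)=\sum_k a_k^p(r/\eps)^{|k|}e^{ik\theta}$, with Dirichlet energy $2\pi\sum_{k\neq 0}|k||a_k^p|^2=O(\eps^2)$ and $|\hat F-a_0^p|=O(\eps)$ uniformly on the disk, giving $\int_{M'}(1-|\hat F|^2)^2\leq C\eps^4<C\eps^2$. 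Since the components of $F'$ are first $\Delta_{g'}$-eigenfunctions, $\int_{M'}F'\,dv_{g'}=0$; the disk replacement perturbs this by $\sum_p\int_{D_\eps(p)}(\hat F-F')\,dv_{g'}=O(\eps^3)$, which I zero out by subtracting an $O(\eps^3)$ $\rho$-invariant constant from $F$, yielding~\eqref{ineq:gm3} without disturbing the prior bounds. Applying the variational characterization of $\lambda_1(g')=\Lambda_1^{T'}(M',\mC')$ componentwise to the now zero-average $\hat F$, together with $\int_{M'}|\hat F|^2\geq 1-C\eps$, then yields
\begin{equation*}
\int_{M'}|d\hat F|^2_{g'}\geq\lambda_1(g')\int_{M'}|\hat F|^2\geq\Lambda_1^{T'}(M',\mC')-C\eps,
\end{equation*}
proving~\eqref{ineq:gm2}. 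The main technical obstacle will be the equivariant choice of $v_i^*$ for those equivalence classes $P'_i$ with $|P'_i|>1$, together with the accompanying $O(1/L)$ bookkeeping from the $k=0$ linear mode; this effectively forces $L$ to be large in terms of $\eps$, which is harmless in the application to Theorem~\ref{thm:general_existence} where $\eps\to 0$ and $L\to\infty$ are sent jointly.
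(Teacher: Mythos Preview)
Your construction runs in the wrong direction and the first inequality in \eqref{ineq:gm1} cannot be recovered. You build $F$ from the eigenmap $F'$ on $(M',g')$, so $\int_M|dF|^2=\Lambda_1'+O(\eps^2)+O(1/L)$, and then invoke Theorem~\ref{thm:existence} to bound this by $\Lambda_1^T(M,\mC_{\eps,L})+\eps^2$. But Theorem~\ref{thm:existence} only gives $\Lambda_1^T(M)\ge\Lambda_1^{T'}(M')$; it says nothing quantitative about the particular conformal class $\mC_{\eps,L}$, and the proof there only yields $\Lambda_1^T(M,\mC_{\eps,L})\ge\Lambda_1'-o(1)$, not $\ge\Lambda_1'-O(\eps^2)$. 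Worse, in the application to Theorem~\ref{thm:general_existence} the argument proceeds by assuming $\Lambda_1^T(M,\mC_{\eps,L})\le\Lambda_1'$ for all $\eps,L$, so the step $\Lambda_1'+C\eps^2\le\Lambda_1^T(M,\mC_{\eps,L})+\eps^2$ is exactly what is \emph{not} available. The second inequality in \eqref{ineq:gm1} also fails: your linear interpolation between $a_0^p$ and $v_i^*$ passes through the interior of the ball, so $(1-|F|^2)^2=O(1)$ over a cylinder of area $2\pi L\eps^2$, giving $\int_M(1-|F|^2)^2=O(L\eps^2)$, which with your choice $L\gtrsim\eps^{-2}$ is $O(1)$.

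The paper goes the other way. It uses the min-max identity $\Lambda_1^T(M,\mC_{\eps,L})=2\mathcal{E}_m^T(M,\mC_{\eps,L})$ from Theorem~\ref{lap.mm.char}: by definition of the min-max, for any $\delta>0$ there is a whole family $(F_y)_{y\in\mathbb{B}^m}\in\mathcal{B}_m$ with $\sup_y\int_M|dF_y|^2\le\Lambda_1^T(M,\mC_{\eps,L})+\delta$ and $\int_M(1-|F_y|^2)^2<\delta$, so \eqref{ineq:gm1} comes for free with $\delta=\eps^2$. The balancing condition \eqref{ineq:gm3} is then obtained by applying the Hersch-type selection of Remark~\ref{gen.hersch} to the linear functional $\phi\mapsto\int_{M'}\hat\phi\,dv_{g'}$, picking the correct $y$ from the family. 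The point is that $F$ must come from the variational problem on $(M,\mC_{\eps,L})$, not from $(M',g')$: this is precisely what makes the downstream estimate $\|dF\|_{L^2(C_{\eps,L})}^2\le(\Lambda_1-\Lambda_1')+C/|\log\eps|$ of Lemma~\ref{lem:Feb_cyl} informative under the contradiction hypothesis $\Lambda_1\le\Lambda_1'$, and ultimately forces $\Phi(p)=\Phi(q)$. Your $F$, being an explicit extension of $F'$, would simply return $\hat F\approx F'$ and carry no information linking $\Lambda_1$ to $\Lambda_1'$.
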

\begin{proof}

First, by Theorem \ref{lap.mm.char}, there exists $m\in \mathbb{N}$ such that 
$$\Lambda_1^T(M,\mC_{\eps,L})=2\mathcal{E}_m^T(M,\mC_{\eps,L})=2\sup_{\delta>0}\mathcal{E}_{m,\delta}^T(M,g_{\eps,L}),$$
so in particular, for any $\delta>0$, we can find a family $(F_y)_{y\in \mathbb{B}^{m|\Gamma|}}\in \mathcal{B}_m$ such that 
$$
\sup_{y\in \mathbb{B}^{m|\Gamma|}}\int_M |dF_y|_{g_{\eps,L}}^2\,dv_{g_{\eps,L}}\leq \Lambda_1(M,\mC_{\eps,L})+\delta
$$
and
\begin{equation}
\label{ineq:almostSn}
\int_{M} (1-|F_y|^2)^2\,dv_{g_{\eps,L}}<\delta.
\end{equation}
Set $\delta=\eps^2$, and fix such a family $(F_y)\in \mathcal{B}_m$; without loss of generality, we may also assume $|F_y|\leq 1$ pointwise for all $y\in \mathbb{B}^{m|\Gamma|}$.

Next, consider the bounded linear functional $\ell: W^{1,2}_{g_{\epsilon,L}}(M)\to \mathbb{R}$ defined as follows: for any $\phi\in W^{1,2}(M)$, let $\hat{\phi}\in W^{1,2}_{g'}(M')$ be given by restricting $\phi$ to $M'\setminus D_{\epsilon}(P')\subset M$ and extending it harmonically into $D_{\epsilon}(P')$, then set $\ell(\phi):=\int_{M'}\hat{\phi}dv_{g'}$. Applying Remark \ref{gen.hersch} to the family $(F_y)$ and the functional $\ell\in (W^{1,2}(M))^*$, it follows that there exists $y\in \mathbb{B}^{m|\Gamma|}$ such that the map $\hat{F}_y\in W^{1,2}_{\Gamma}(M',A_{\Gamma}^m)$ satisfies 
\begin{equation}
\label{ineq:hat_balance}
\int_{M'} \hat F_y\,dv_{g'} = 0.
\end{equation}
Moreover, $|F_y|\leq 1$ implies $|\hat F_y|\leq 1$, so that by~\eqref{ineq:almostSn}
$$
\int_{M'} (1-|\hat F_y|^2)^2\,dv_{g'}< C\eps^2.
$$
Combining this estimate with~\eqref{ineq:hat_balance}, the same computation used in the proof of Proposition \ref{mm.lower} yields
$$
\Lambda^{T'}_1(M',\mC') = \bar\lambda_1(M',g')\leq \int_{M'}|d\hat F|_{g'}^2\,dv_{g'} + C\eps.
$$ 
Thus, the map $F_y$ satisfies all the conclusions of the lemma with $n+1=m|\Gamma|$, and the proof is complete. 
\end{proof}

Note that inequalities in Lemma~\ref{lem:good_map} imply that 
\begin{equation}
\label{ineq:fhllb}
\begin{split}
\int_{M'}|\hat F|^2\,dv_{g'} &= 1-\int_{M'}(1-|\hat F|^2)\,dv_{g'}\\
&\geq 1 - \left(\int_{M'}(1-|\hat F|^2)^2\,dv_{g'}\right)^{1/2}\geq 1 - C\eps.
\end{split}
\end{equation}

We would like to obtain an upper bound for the energy of $\hat F$ in terms of the energy of $F$. To that end, we record the following lemma.

\begin{lemma}
\label{lem:ext_bound}
Let $C_L = \mathbb{S}^1\times[0,L]$ be the flat cylinder with $L \geq \frac{3}{2}\log(2)$, and let 
$$
H\colon W^{1,2}(C_L)\to W^{1,2}(\mathbb{B}^2)
$$
be the map sending $\phi$ to the harmonic extension $H(\phi) = \hat\phi$ of $\phi|_{\mathbb{S}^1\times\{0\}}$. Then
\[
\|dH(\phi)\|^2_{L^2(\mathbb{B}^2)}\leq (1+Ce^{-2L})\|d\phi\|^2_{L^2(C_L)}
\]
for some universal constant C.
\end{lemma}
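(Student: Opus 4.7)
The plan is to reduce the estimate to a mode-by-mode comparison via Fourier decomposition on $\mathbb{S}^1$. Writing $\phi(\theta,t)=\sum_{n\in\mathbb{Z}} a_n(t)e^{in\theta}$, orthogonality of $\{e^{in\theta}\}$ gives the explicit expressions
\begin{equation*}
\|d\phi\|_{L^2(C_L)}^2 = 2\pi\sum_{n\in\mathbb{Z}}\int_0^L\left(n^2|a_n(t)|^2+|a_n'(t)|^2\right)dt,
\end{equation*}
and, since the harmonic extension of $e^{in\theta}$ to $\mathbb{B}^2$ is $r^{|n|}e^{in\theta}$,
\begin{equation*}
\|dH(\phi)\|_{L^2(\mathbb{B}^2)}^2 = 2\pi\sum_{n\in\mathbb{Z}}|n|\,|a_n(0)|^2.
\end{equation*}
The $n=0$ mode contributes nothing on the disk side (constant extension), so the problem reduces to bounding each Fourier mode of $H(\phi)$ by the corresponding cylindrical mode for $n\neq 0$.

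For fixed $n\neq 0$, I would minimize $\int_0^L(n^2|a_n|^2+|a_n'|^2)\,dt$ over all $a_n\in W^{1,2}(0,L)$ with prescribed value $a_n(0)$ and free endpoint at $t=L$. The Euler--Lagrange equation is $a_n''=n^2a_n$ with the natural condition $a_n'(L)=0$, whose unique minimizer is $a_n(t)=a_n(0)\cosh(n(L-t))/\cosh(nL)$. A single integration by parts (using $a_n'(L)=0$) yields
\begin{equation*}
\int_0^L\left(n^2|a_n|^2+|a_n'|^2\right)dt = -\overline{a_n(0)}a_n'(0) = |n|\,|a_n(0)|^2\tanh(|n|L).
\end{equation*}
Summing over $n\neq 0$ gives the lower bound $\|d\phi\|_{L^2(C_L)}^2\geq 2\pi\sum_{n\neq 0}|n|\,|a_n(0)|^2\tanh(|n|L)$.

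It then remains to bound $1/\tanh(|n|L)$ uniformly in $|n|\geq 1$. Using $1/\tanh(x)=(1+e^{-2x})/(1-e^{-2x})=1+2e^{-2x}/(1-e^{-2x})$ and the monotonicity in $x$, together with the hypothesis $L\geq \tfrac{3}{2}\log 2$ (which ensures $e^{-2L}\leq 1/8$, keeping the denominator bounded away from $0$), one obtains
\begin{equation*}
\frac{1}{\tanh(|n|L)}\leq \frac{1}{\tanh(L)} \leq 1+\frac{2e^{-2L}}{1-e^{-2L}}\leq 1+\tfrac{16}{7}e^{-2L}
\end{equation*}
for all $|n|\geq 1$. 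Combining with the two mode-by-mode identities above gives the claimed inequality with a universal constant $C=16/7$.

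This argument is essentially elementary; there is no real obstacle beyond bookkeeping, since the one-dimensional variational problems split and admit closed-form minimizers. The only place where the hypothesis $L\geq \tfrac{3}{2}\log 2$ is genuinely used is to keep $1-e^{-2L}$ bounded below by an absolute positive constant, so that the exponential correction $Ce^{-2L}$ has a universal constant independent of $L$.
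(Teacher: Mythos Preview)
Your proof is correct and follows essentially the same approach as the paper's: both reduce via Fourier decomposition on $\mathbb{S}^1$ to the energy-minimizing extension on the cylinder (your variational minimization with free endpoint at $t=L$ is exactly the Dirichlet--Neumann problem the paper solves), compute the minimal cylinder energy per mode as $|n|\,|a_n(0)|^2\tanh(|n|L)$, and compare with the disk energy $|n|\,|a_n(0)|^2$ using $\tanh(|n|L)\geq\tanh(L)\geq 1-Ce^{-2L}$. Your presentation is slightly more streamlined (complex Fourier modes and a direct integration-by-parts for the minimum), but the argument is the same.
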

\begin{proof}
It suffices to consider the case where $\phi$ is the energy-minimizing extension of $\beta:=\phi|_{\mathbb{S}^1\times\{0\}}$ to $C_L$. This extension is the unique solution of the boundary value problem
\begin{equation}
\label{eq:Hbeta}
\begin{cases}
\Delta \phi = 0 &\text{ on }C_L;\\
\phi  \phantom{\Delta}=\beta &\text{ on }\mathbb{S}^1\times \{0\};\\
\frac{\partial\phi}{\partial t}\phantom{ \,\,}=0 &\text{ on }\mathbb{S}^1\times \{L\},
\end{cases}
\end{equation}
where $t$ denotes the coordinate along $[0,L]$ and we use $\theta\in[0,2\pi]$ for the coordinate on $\mathbb{S}^1$.  

Let us first consider the case where $\beta(\theta)=\beta_k(\theta):= a\sin(k\theta)+b\cos(k\theta)$ is an eigenfunction of the Laplacian on $\mathbb{S}^1$ with eigenvalue $k^2$. The case $k=0$ is trivial, so we assume $k\geq 1$. In this case the solution $\phi_k$ to \eqref{eq:Hbeta} can be determined by separation of variables, and is given by
\[
\phi_k(\theta,t) = \psi_k(t)\beta_k(\theta) = \left(\frac{e^{kt}}{1+e^{2kL}} + \frac{e^{-kt}}{1+e^{-2kL}} \right)\beta_k(\theta).
\]  
We then see that
\begin{equation}
\label{eq:cyl_energy}
\begin{split}
\int_{C_L}|d\phi_k|^2d\theta dt&=-\int_{\mathbb{S}^1}\psi_k(0)\frac{\partial \psi_k}{\partial t}(0)\beta_k^2(\theta)d\theta\\
&=\left(\frac{k}{1+e^{-2kL}}-\frac{k}{1+e^{2kL}}\right)\int_{\mathbb{S}^1}\beta_k^2(\theta)d\theta.
\end{split}
\end{equation}
Using that $k\geq 1$, we observe
\[
\frac{1}{1+e^{-2kL}}-\frac{1}{1+e^{2kL}}\geq \frac{1}{1+e^{-2L}}-\frac{1}{1+e^{2L}} = \frac{1-e^{-2L}}{1+e^{-2L}}\geq 1-4e^{-2L},
\]
hence
\[
\int_{C_L}|d\phi_k|^2d\theta dt\geq k(1-4e^{-2L})\int_{\mathbb{S}^1}\beta_k^2(\theta)d\theta.
\]
On the other hand, the harmonic extension of $\beta_k$ into $\mathbb{B}^2$ is given in polar coordinates by $\hat\phi_k(r,\theta) = r^k\beta(\theta)$, for which
\[
|d\hat{\phi}_k|^2(r,\theta)=k^2r^{2k-2}\beta_k(\theta)^2+r^{2k-2}(\partial_\theta\beta_k)^2,
\]
and therefore
\begin{eqnarray*}
\int_{\mathbb{B}^2} |d\hat\phi_k|^2 r dr d\theta&=&\int_0^1 \left(k^2r^{2k-1}\int_{\mathbb{S}^1}\beta_k^2d\theta+r^{2k-1}\int_{\mathbb{S}^1}(\partial_\theta\beta_k)^2d\theta\right)dr\\
&=&\frac{k^2}{2k}\int_{\mathbb{S}^1}\beta_k^2d\theta+\frac{k^2}{2k}\int_{\mathbb{S}^1}\beta_k^2d\theta
=k\int_{\mathbb{S}^1}\beta_k^2d\theta.
\end{eqnarray*}
As a result, we see that
\begin{equation}
\label{ineq:for_bases}
(1-4e^{-2L})\int_{\mathbb{B}^2}|dH(\phi_k)|^2\leq \int_{C_L}|d\phi_k|^2,
\end{equation}
from which the desired estimate follows for $L\geq \frac{3}{2}\log(2)$.

By the Sobolev trace inequality, for arbitrary $\phi\in W^{1,2}(C_L)$ the function 
$\beta(\theta) = \phi(\theta, 0)$ belongs to the space $W^{\frac{1}{2},2}(\mathbb{S}^1)$. Any element in this space can be written as 
\[
\beta(\theta) = \sum_{k=0}^\infty c_k\beta_k(\theta),
\]
where $\sum kc^2_k<\infty$. Similarly to~\eqref{eq:cyl_energy} we have
\begin{equation*}
\begin{split}
\int_{C_L}\langle d\phi_k,d\phi_l\rangle d\theta dt&=-\int_{\mathbb{S}^1}\psi_k(0)\frac{\partial \psi_l}{\partial t}(0)\beta_k(\theta)\beta_l(\theta)d\theta = 0,
\end{split}
\end{equation*}
so that the minimal energy extension of $\beta$ to $C_L$ is given by $\sum_k c_k\phi_k$ and has the energy equal to 
\[
\sum_{k=0}^\infty c_k^2\int_{C_L}|d\phi_k|^2d\theta dt.
\]
In a similar fashion, since the $\beta_k$'s form a basis of eigenfunctions on $\mathbb{S}^1$,
\[
\int_{\mathbb{B}^2}|dH(\phi)|^2 = \sum_{k=0}^\infty c_k^2\int_{\mathbb{B}^2}|dH(\phi_k)|^2.
\]
Thus,~\eqref{ineq:for_bases} implies that 
\[
(1-4e^{-2L})\int_{\mathbb{B}^2}|dH(\phi)|^2\leq \int_{C_L}|d\phi|^2d\theta dt,
\]
from which the conclusion of the lemma follows.
\end{proof}
\begin{remark}
\label{rem:ext_bound}
Note that the inequality of the lemma is conformally invariant, so it can be applied to any domain conformally equivalent to a cylinder. Below we mostly use it on $C_{\eps,L}(p)$, which is simply a scaled copy of $C_L$. However, one can also apply the lemma to annular neighbourhoods in flat metrics $g'_p$ to yield
\[
\|d\hat \phi\|^2_{L^2(D_{\eps}(p))}\leq C\|d \phi\|^2_{L^2(D_{2\eps}(p)\setminus D_{\eps}(p))}.
\] 
\end{remark}

Let us introduce the notation 
\[
\Lambda_1' = \Lambda_1^{T'}(M',\mC');\qquad \Lambda_1 = \Lambda_1^{T}(M,\mC_{\eps,L}),
\]
where we continue to omit the dependence on $\eps, L$. Let $F\in W^{1,2}(M,\mathbb{R}^{n+1})$ be the map supplied by Lemma~\ref{lem:good_map}.
For all points $p\in P'$, Lemma~\ref{lem:ext_bound} implies that
\[
\int_{D_\eps(p)} |d\hat F|^2_{g'}\,dv_{g'}\leq (1 + Ce^{-2L})\int_{C_{\eps,L}(p)}|dF|^2.
\]
Summing over $p\in P$ we arrive at
\begin{equation}
\label{ineq:fhueb}
\begin{split}
\int_{M'}|d\hat{F}|_{g'}^2\,dv_{g'}&\leq \int_{(M'\setminus D_\eps(P'))\cup C_{\eps,L}}|dF|^2\,dv_{g_{\eps,L}}+Ce^{-2L}\int_{C_{\eps,L}}|dF|^2\\
&\leq \Lambda_1+Ce^{-2L}\int_{C_{\epsilon,L}}|dF|^2 +\eps^2,
\end{split}
\end{equation}
where we used Lemma~\ref{lem:good_map} in the last inequality.

Let $Q$ denote the  nonnegative-definite quadratic form on $W^{1,2}(M', \mathbb{R}^{n+1})$ given by
\[
Q(\Phi,\Phi):=\int_{M'}|d\Phi|_{g'}^2\,dv_{g'}-\Lambda_1'\int_{M'}\left|\Phi-\int_{M'}\Phi\,dv_{g'}\right|^2\,dv_{g'},
\]
where we recall that $g'$ is a unit-area metric such that $\lambda_1(M',g')=\Lambda_1'$.
Applying the Cauchy-Schwarz inequality to the corresponding bilinear form, 
we observe that for any $\Phi\in W^{1,2}(M', \mathbb{R}^{n+1})$ satisfying $\int_{M'}\Phi\,dv_{g'} = 0$ one has
\begin{equation}
\label{ineq:CS1}
 \int_{M'}\langle d\hat{F},d\Phi\rangle_{g'}\,dv_{g'}-\Lambda_1'\int_{M'}\langle \hat{F},\Phi\rangle\,dv_{g'}= Q(\hat F,\Phi) \leq \sqrt{Q(\hat{F},\hat{F})}\sqrt{Q(\Phi,\Phi)},
\end{equation}
where we used~\eqref{ineq:gm3} in the first equality.

In particular, it follows from~\eqref{ineq:fhueb} and~\eqref{ineq:fhllb} that
\[
Q(\hat{F},\hat{F})\leq (\Lambda_1-\Lambda_1') + Ce^{-2L}\|dF\|_{L^2(C_{\eps,L})}^2+C\eps,
\]
and the Cauchy-Schwarz estimate~\eqref{ineq:CS1} gives
\begin{equation}
\label{ineq:CS2}
\int_{M'}(\langle d\hat{F},d\Phi\rangle_{g'}-\Lambda_1'\langle \hat{F},\Phi\rangle)\,dv_{g'}\leq C((\Lambda_1-\Lambda_1')^{\frac{1}{2}}+e^{-L}\|dF\|_{L^2(C_{\eps,L})}+\eps^{\frac{1}{2}})\sqrt{Q(\Phi,\Phi)}
\end{equation}
for all $\Phi$ satisfying $\int_{M'}\Phi\,dv_{g'} = 0$.

\begin{lemma}
\label{lem:Feb_cyl}
One has 
\[
\|dF\|^2_{L^2(C_{\eps,L})}\leq \Lambda_1-\Lambda_1' + \frac{C}{|\log\eps|}.
\]
\end{lemma}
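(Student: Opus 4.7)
The plan is to decompose the energy as $\int_M |dF|^2 = E_M + E_c + E_{M''}$ with $E_M := \int_{M' \setminus D_\eps(P')} |dF|^2$ and $E_{M''} := \int_{M'' \setminus D_\eps(P'')} |dF|^2$, and then to prove the sharp lower bound $E_M \geq \Lambda_1' - C/|\log\eps|$. Combined with $\int_M |dF|^2 \leq \Lambda_1 + \eps^2$ from~\eqref{ineq:gm1} and the trivial inequality $E_{M''} \geq 0$, this would immediately give $E_c \leq \Lambda_1 - \Lambda_1' + C/|\log\eps|$.

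To produce the lower bound on $E_M$, I will build a test function for the Rayleigh quotient on $(M', g')$ by applying a log cutoff to $F$ near $P'$. Let $\chi : M' \to [0,1]$ be a $T'$-equivariant function which vanishes on $D_\eps(P')$, equals $1$ on $M' \setminus D_{\sqrt\eps}(P')$, and is given in flat conformal coordinates around each $p \in P'$ by the log profile $\chi(r) = 2\log(r/\eps)/|\log\eps|$ on the annulus $D_{\sqrt\eps}(p) \setminus D_\eps(p)$; the essential feature is that this profile is classically \emph{harmonic} on each annulus. Setting $F^* := \chi F$ on $M' \setminus D_\eps(P')$ and extending by zero on $D_\eps(P')$ produces a function in $W^{1,2}_{T'}(M', \R^{n+1})$. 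The mean-zero condition $\int_{M'} \hat F\,dv_{g'} = 0$ from~\eqref{ineq:gm3}, the agreement of $F^*$ and $\hat F$ outside the region $D_{\sqrt\eps}(P')$ of area $O(\eps)$, the pointwise bound $|F| \leq 1$, and the $L^2$ estimate~\eqref{ineq:fhllb} together give $|\overline{F^*}| \leq C\eps$ and $\int_{M'} |F^*|^2 \geq 1 - C\eps$, so that the Rayleigh characterization applied to $F^* - \overline{F^*}$ yields $\int_{M'} |dF^*|^2_{g'} \geq \Lambda_1'(1 - C\eps)$.

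The crux of the proof will be the matching upper bound $\int_{M'} |dF^*|^2 \leq E_M + C/|\log\eps|$. Expanding $|d(\chi F)|^2$ and using $2\langle F, dF\rangle = d|F|^2$, I will integrate the resulting cross term by parts on $M' \setminus D_\eps(P')$ (the boundary contribution at $\partial D_\eps(P')$ vanishes since $\chi \equiv 0$ there), to obtain
\[
\int \chi \langle d|F|^2, d\chi\rangle\,dv_{g'} = -\int |F|^2|d\chi|^2\,dv_{g'} + \int |F|^2\chi\Delta\chi\,dv_{g'}.
\]
Because $\chi$ is classically harmonic on each transition annulus and constant outside, the distribution $\Delta\chi$ on $M' \setminus D_\eps(P')$ is supported only on the circles $\partial D_{\sqrt\eps}(p)$, where $\chi = 1$, with weight given by the jump in $\partial_r\chi$ of order $(\sqrt\eps |\log\eps|)^{-1}$. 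A direct calculation---using $|F|\leq 1$ and integrating the resulting delta contribution against the length element $\sqrt\eps\,d\theta$---then bounds $\int |F|^2\chi\Delta\chi$ by $C/|\log\eps|$. The term $-\int|F|^2|d\chi|^2$ from the integration by parts cancels precisely the remaining $\int|F|^2|d\chi|^2$ contribution in the original expansion, yielding $\int_{M'}|dF^*|^2 \leq \int \chi^2 |dF|^2 + C/|\log\eps| \leq E_M + C/|\log\eps|$, as desired.

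The main obstacle---and the reason for the specific log-harmonic shape of the cutoff---is that a naive Cauchy--Schwarz bound on the cross term in $\int |d(\chi F)|^2$ would produce an error only of order $1/\sqrt{|\log\eps|}$, which is insufficient to feed into the bound~\eqref{ineq:CS2} that drives the remainder of the argument. It is the exact cancellation afforded by the harmonicity of $\chi$ on the annulus, leaving only a small boundary delta contribution at $r = \sqrt\eps$, that upgrades the error rate to the required $1/|\log\eps|$.
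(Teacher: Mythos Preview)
Your proof is correct and follows essentially the same approach as the paper: both arguments use a logarithmic cutoff $\chi$ (the paper's $\psi_\eps$) vanishing on $D_\eps(P')$ to build a test function $\chi F$ on $M'$, estimate its mean and $L^2$-norm via~\eqref{ineq:gm3} and~\eqref{ineq:fhllb}, and handle the cross term in $|d(\chi F)|^2$ by an integration by parts that exploits the harmonicity of the log profile on the annulus, leaving only the $O(1/|\log\eps|)$ boundary contribution at $\partial D_{\sqrt\eps}(p)$. Your presentation is slightly more conceptual in emphasizing the exact cancellation of the $|F|^2|d\chi|^2$ terms and the role of the delta at $r=\sqrt\eps$, while the paper writes out the boundary integrals explicitly, but the content is the same.
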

\begin{proof}
Let $\rho_{\eps,p}\in W^{1,2}(M')$ be a logarithmic cut-off function given in polar coordinates centered at $p$ with respect to $g_p$ as
\[
\rho_{\eps,p} (r) = 
\begin{cases}
1, &\text{ if } r\leq\eps;\\
\frac{\log (r/\eps^{1/2})}{\log(\eps^{1/2})}, &\text{ if }\eps^{\frac{1}{2}}\geq r\geq\eps;\\
0, &\text{ if } r\geq\eps^{\frac{1}{2}}.\\
\end{cases}
\]
Choose $\eps$ small enough so that the supports of $\rho_{\eps,p}$ are disjoint for $p\in P'$ and define $\psi_\eps:= 1 - \sum_{p\in P}\rho_{\eps,p}$. Since the Dirichlet integral is conformally invariant, one has
\[
\int_{M'}|d\psi_\eps|^2_{g'}\,dv_{g'}\leq \frac{C}{|\log\eps|}.
\]

Next, observe that
\begin{eqnarray*}
\left|\int_{M'}\psi_{\eps}F\,dv_{g'}\right|&=&\left|\int_{M'}\hat{F}\,dv_{g'}+\int_{M'}(\psi_{\eps}-1)\hat{F}\,dv_{g'}\right|\\
&=&\left|\int_{M'}(\psi_{\eps}-1)\hat{F}\,dv_{g'}\right|
\leq \area_{g'}(\mathrm{supp}(\psi_\eps-1))\leq C\eps,
\end{eqnarray*}
so that
\begin{equation}
\label{ineq:fllbM'}
\int_{M'}|d(\psi_{\eps}F)|_{g'}^2\,dv_{g'}\geq \Lambda_1'\left(\int_{M'}\psi_{\eps}^2|F|^2\,dv_{g'}-C\eps\right)\geq \Lambda_1'-C'\eps,
\end{equation}
where we used~\eqref{ineq:fhllb} in the last inequality.

On the other hand, using conformal invariance of the Dirichlet integral, on $A_\eps(p):= D_{\sqrt{\eps}}(p)\setminus D_{\eps}(p) =\mathrm{supp}(d\rho_{\eps,p})$ one has
\begin{equation*}
\begin{split}
&\int_{A_\eps(p)}\langle d(|\psi_{\eps}|^2),d(|F|^2)\rangle_{g'}\,dv_{g'} = \int_{A_\eps(p)} |F|^2 \Delta_{g'}
(|\psi_{\eps}|^2)\,dv_{g'} - \\
2&\int_{\partial D_{\sqrt{\eps}}(p)}\frac{1}{r\log(\eps^{1/2})}\left( 1- \frac{\log (r/\eps^{1/2})}{\log(\eps^{1/2})}\right)|F|^2d\theta +\\ 2&\int_{\partial D_{\eps}(p)}\frac{1}{r\log(\eps^{1/2})}\left( 1- \frac{\log (r/\eps^{1/2})}{\log(\eps^{1/2})}\right)|F|^2d\theta = \\
-2&\int_{A_{\eps}(p)}|F|^2|d\psi_{\eps}|^2_{g'}\,dv_{g'} + \frac{2}{\eps^{1/2}\log(\eps^{-1/2})}\int_{\partial D_{\sqrt{\eps}}(p)} |F|^2\,d\theta\leq \frac{C}{|\log\eps|},
\end{split}
\end{equation*}
where we used $|F|\leq 1$ in the last step. Therefore, we have 
\begin{equation*}
\begin{split}
\int_{M'}|d(\psi_{\eps}F)|_{g'}^2\,dv_{g'}&=\int_{M'} \psi_{\eps}^2|dF|_{g'}^2+\frac{1}{2}\langle d(|\psi_{\eps}|^2),d(|F|^2)\rangle_{g'} + |d\psi_{\eps}|_{g'}^2|F|^2\,dv_{g'}\leq \\
&\leq\int_{M'\setminus D_{\eps}(P')} |dF|^2_{g'}\,dv_{g'}+\frac{C}{|\log\eps|}.
\end{split}
\end{equation*}
Together with~\eqref{ineq:fllbM'} this implies  
\[
\int_{M'\setminus D_{\eps}(P')}|dF|^2_{g_{\eps,L}}\,dv_{g_{\eps,L}} = \int_{M'\setminus D_{\eps}(P')}|dF|^2_{g'}\,dv_{g'}\geq \Lambda'_1-\frac{C}{|\log\epsilon|}.
\]
At the same time, by Lemma~\ref{lem:good_map} one has
\[
\int_M|dF|^2_{g_{\eps,L}}\,dv_{g_{\eps,L}}\leq \Lambda_1 +\eps^2,
\]
so the combining the last two inequalities concludes the proof.
\end{proof}

Lemma~\ref{lem:Feb_cyl} allows us to rewrite inequality~\eqref{ineq:CS2} as
\begin{equation}
\label{ineq:CS3}
\int_{M'}(\langle d\hat{F},d\Phi\rangle_{g'}-\Lambda_1'\langle \hat{F},\Phi\rangle)\,dv_{g'}\leq C\left((\Lambda_1-\Lambda_1')^{\frac{1}{2}}+\frac{e^{-L}}{|\log\eps|^{\frac{1}{2}}}+\eps^{\frac{1}{2}}\right)\sqrt{Q(\Phi,\Phi)}.
\end{equation}

We also record the following corollary of Lemma~\ref{lem:Feb_cyl}.
\begin{corollary}
\label{cor:''bound}
One has
\[
\int_{M''\setminus D_\eps(P'')}|dF|^2\,dv_{g_{\eps,L}} \leq (\Lambda_1 - \Lambda_1') + \eps + Ce^{-2L}\left(\Lambda_1-\Lambda_1' + \frac{1}{|\log\eps|}\right).
\]
\end{corollary}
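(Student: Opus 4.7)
The plan is to combine the total energy bound for $F$ on $M$, the Rayleigh-type lower bound for the harmonic extension $\hat F$ on $M'$, the conformal cylinder-to-disk energy estimate of Lemma~\ref{lem:ext_bound}, and the cylinder upper bound of Lemma~\ref{lem:Feb_cyl}. The argument is essentially bookkeeping once these four inputs are in hand.

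First, I would start from the additive decomposition
\begin{equation*}
\int_M|dF|^2\,dv_{g_{\eps,L}} \;=\; \int_{M'\setminus D_\eps(P')}|dF|^2 + \int_{C_{\eps,L}}|dF|^2 + \int_{M''\setminus D_\eps(P'')}|dF|^2,
\end{equation*}
and invoke the total upper bound $\int_M|dF|^2\leq \Lambda_1+\eps^2$ from Lemma~\ref{lem:good_map}. The plan is then to extract the term $\int_{M''\setminus D_\eps(P'')}|dF|^2$ by subtracting lower bounds on the other two summands.

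For the $M'$-summand, I would use the key variational fact obtained earlier: since $\hat F$ is the harmonic extension of $F|_{M'\setminus D_\eps(P')}$ to $M'$, equals $F$ off the cut disks, and satisfies $\int_{M'}\hat F\,dv_{g'}=0$ by~\eqref{ineq:gm3}, the Rayleigh characterization of $\lambda_1(M',g')=\Lambda_1'$ combined with~\eqref{ineq:fhllb} gives
\begin{equation*}
\int_{M'}|d\hat F|_{g'}^2\,dv_{g'} \;\geq\; \Lambda_1'\int_{M'}|\hat F|^2\,dv_{g'} \;\geq\; \Lambda_1'(1-C\eps).
\end{equation*}
On the other hand, Lemma~\ref{lem:ext_bound} applied to each disk $D_\eps(p)$ (and the conformal identification with its matching cylinder $C_{\eps,L}(p)$) yields
\begin{equation*}
\int_{M'}|d\hat F|_{g'}^2\,dv_{g'}\;\leq\;\int_{M'\setminus D_\eps(P')}|dF|^2 + (1+Ce^{-2L})\int_{C_{\eps,L}}|dF|^2,
\end{equation*}
so rearranging gives the desired lower bound
\begin{equation*}
\int_{M'\setminus D_\eps(P')}|dF|^2 \;\geq\; \Lambda_1'(1-C\eps)-(1+Ce^{-2L})\int_{C_{\eps,L}}|dF|^2.
\end{equation*}

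Substituting this lower bound together with the total upper bound $\int_M|dF|^2\leq \Lambda_1+\eps^2$ into the decomposition, the term $\int_{C_{\eps,L}}|dF|^2$ with coefficient $1$ cancels, leaving
\begin{equation*}
\int_{M''\setminus D_\eps(P'')}|dF|^2 \;\leq\; (\Lambda_1-\Lambda_1')+C\eps+Ce^{-2L}\int_{C_{\eps,L}}|dF|^2.
\end{equation*}
Finally, the plan is to bound the cylinder term via Lemma~\ref{lem:Feb_cyl}, obtaining
\begin{equation*}
\int_{M''\setminus D_\eps(P'')}|dF|^2 \;\leq\; (\Lambda_1-\Lambda_1')+\eps+Ce^{-2L}\!\left(\Lambda_1-\Lambda_1'+\frac{1}{|\log\eps|}\right),
\end{equation*}
after absorbing $\eps^2$ and the constants $C$ into the conventions fixed at the beginning of Section~\ref{sec:proof_general_existence} (and replacing $\eps$ by $\eps/C$ if one wishes to produce the literal coefficient $1$ on $\eps$). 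No part of this is delicate; the only subtlety is ensuring that the Rayleigh lower bound for $\hat F$ uses the correct orthogonality to constants, which is precisely what~\eqref{ineq:gm3} was arranged to supply via the Hersch-type balancing in the proof of Lemma~\ref{lem:good_map}.
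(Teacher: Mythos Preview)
Your proposal is correct and follows essentially the same approach as the paper. The paper organizes the bookkeeping by first combining the $M'\setminus D_\eps(P')$ and $C_{\eps,L}$ pieces and citing~\eqref{ineq:gm2} for the lower bound on $\int_{M'}|d\hat F|^2$, whereas you rederive that lower bound from~\eqref{ineq:gm3} and~\eqref{ineq:fhllb} and separate the three regions first; the mathematical content is identical.
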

\begin{proof}
Using the first line of~\eqref{ineq:fhueb}, Lemma~\ref{lem:Feb_cyl} implies
\[
\int_{M'}|d\hat{F}|_{g'}^2\,dv_{g'}\leq \int_{(M'\setminus D_{\eps}(P'))\cup C_{\eps,L}}|dF|^2\,dv_{g_{\eps,L}}+Ce^{-2L}\left(\Lambda_1-\Lambda_1' + \frac{1}{|\log\eps|}\right).
\]
Together with~\eqref{ineq:gm2}, one arrives at
\[
\int_{(M'\setminus D_\eps(P'))\cup C_{\eps,L}}|dF|^2\,dv_{g_{\eps,L}}\geq \Lambda_1' - \eps - Ce^{-2L}\left(\Lambda_1-\Lambda_1' + \frac{1}{|\log\eps|}\right).
\]
Using the upper bound~\eqref{ineq:gm1} on the energy of $F$, we finally obtain
\[
\int_{M''\setminus D_\eps(P'')}|dF|^2\,dv_{g_{\eps,L}}\leq (\Lambda_1 - \Lambda_1') + \eps + Ce^{-2L}\left(\Lambda_1-\Lambda_1' + \frac{1}{|\log\eps|}\right).\qedhere
\]
\end{proof}

By~\eqref{ineq:gm3} we can decompose 
\[
\hat F=\Phi_{\eps,L}+R_{\eps,L},
\]
where $\Phi_{\eps,L}$ is a map by $\lambda_1(M',g')$-eigenfunctions, and $R_{\eps,L}$ is the projection of $\hat F$ onto the higher-frequency eigenspaces with eigenvalues $\lambda\geq \lambda_m(M',g')$, where $\lambda_m(M',g')>\lambda_{m-1}(M',g')=\ldots=\lambda_1(M',g')$. 

Since the kernel of $Q$ is exactly the $\lambda_1(M',g')$-eigenspace, we have $Q(\hat F,R_{\eps,L})=Q(R_{\eps,L},R_{\eps,L})$,
and \eqref{ineq:CS3} gives
\[
Q(\hat F,R_{\eps,L})\leq C\left((\Lambda_1-\Lambda_1')^{\frac{1}{2}}+\frac{e^{-L}}{|\log\eps|^{\frac{1}{2}}}+\eps^{\frac{1}{2}}\right)\sqrt{Q(R_{\eps,L},R_{\eps,L})},
\]
so that
\[
Q(R_{\eps,L},R_{\eps,L})\leq C\left(\Lambda_1-\Lambda_1' + \frac{e^{-2L}}{|\log\epsilon|} + \eps\right).
\]
On the other hand, since $R$ is orthogonal to the  constants and the $\lambda_1(M',g')$-eigenspace, one has
\[
Q(R_{\eps,L},R_{\eps,L})\geq \left(1-\frac{\lambda_1(M',g')}{\lambda_m(M',g')}\right)\|dR_{\eps,L}\|_{L^2(M')}^2,
\]
hence,
\begin{equation}
\label{ineq:Reub}
\|R_{\eps,L}\|^2_{W^{1,2}(M')} \leq C\left(\Lambda_1-\Lambda_1' + \frac{e^{-2L}}{|\log\epsilon|} + \eps\right).
\end{equation}

\begin{lemma} 
\label{lem:average}
Let $(N,h)$ be a closed surface, $p\in N$ and let $h_p$ be a flat metric conformal to $h$ defined in a neighbourhood $U_p$ of $p$. If $D_\eps(p)$ is a disk of radius $\eps$ around $p$ in the metric $h_p$, then
\[
\left|\frac{1}{\eps}\int_{\partial D_{\eps}(p)}u\,ds_{h_p}\right|\leq C|\log\eps|^{\frac{1}{2}}\|u\|_{W^{1,2}(U_p\setminus D_{\eps}(p),h)},
\]
where $C$ depends only on $h$ and the choice of $h_p$.
\end{lemma}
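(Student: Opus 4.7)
The plan is to work in polar coordinates $(r,\theta)$ on the flat disk associated to $h_p$, centered at $p$, and to reduce the statement to a one-dimensional fundamental theorem of calculus argument combined with Cauchy--Schwarz. Since $h_p$ is flat, the boundary integral reads $\frac{1}{\eps}\int_{\partial D_\eps(p)}u\,ds_{h_p} = \int_0^{2\pi} u(\eps,\theta)\,d\theta$. Fix once and for all a small $r_0>0$ with $\overline{D_{r_0}(p)}\subset U_p$ (depending only on $h$ and $h_p$).

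For each fixed angle $\theta$, the fundamental theorem of calculus gives
\[
u(\eps,\theta) = u(r_0,\theta) - \int_\eps^{r_0}\partial_r u(r,\theta)\,dr,
\]
and integration in $\theta$ yields
\[
\int_0^{2\pi}u(\eps,\theta)\,d\theta = \int_0^{2\pi}u(r_0,\theta)\,d\theta - \int_0^{2\pi}\int_\eps^{r_0}\partial_r u(r,\theta)\,dr\,d\theta.
\]
The first term on the right is bounded by $C\|u\|_{L^2(\partial D_{r_0}(p),h_p)}$ by Cauchy--Schwarz on the circle, and this in turn is controlled by $C\|u\|_{W^{1,2}(U_p\setminus D_\eps(p),h)}$ via the usual $W^{1,2}$ trace inequality on the fixed annulus $D_{r_0}(p)\setminus D_{r_0/2}(p)$, where $h$ and $h_p$ are mutually bounded.

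For the second term, insert weights $r^{-1/2}\cdot r^{1/2}$ and apply Cauchy--Schwarz:
\[
\left|\int_0^{2\pi}\!\!\int_\eps^{r_0}\partial_r u(r,\theta)\,dr\,d\theta\right|^2 \leq \left(\int_0^{2\pi}\!\!\int_\eps^{r_0}\frac{1}{r}\,dr\,d\theta\right)\left(\int_0^{2\pi}\!\!\int_\eps^{r_0}|\partial_r u|^2\,r\,dr\,d\theta\right).
\]
The first factor equals $2\pi(\log r_0 - \log\eps) \leq C|\log\eps|$, while the second is bounded by $\int_{U_p\setminus D_\eps(p)}|du|_{h_p}^2\,dv_{h_p}$, which by the conformal invariance of the Dirichlet energy in dimension two equals $\int_{U_p\setminus D_\eps(p)}|du|_{h}^2\,dv_{h}$ and hence by $\|u\|_{W^{1,2}(U_p\setminus D_\eps(p),h)}^2$.

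Combining the two estimates gives
\[
\left|\frac{1}{\eps}\int_{\partial D_\eps(p)}u\,ds_{h_p}\right| \leq C\|u\|_{W^{1,2}(U_p\setminus D_\eps(p),h)} + C|\log\eps|^{1/2}\|u\|_{W^{1,2}(U_p\setminus D_\eps(p),h)},
\]
and absorbing the first summand into the second (since $|\log\eps|^{1/2}\geq 1$ for $\eps$ small, and the statement is vacuous for $\eps$ bounded below) yields the claim. There is no real obstacle here; the only point requiring mild care is keeping track of the fact that $|du|^2\,dv$ is conformally invariant while $u^2\,dv$ is not, so that the $L^2$ contribution must be handled via a trace on the fixed circle $\partial D_{r_0}(p)$ where the metrics $h$ and $h_p$ are comparable with constants depending only on the data of the lemma.
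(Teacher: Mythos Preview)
Your proof is correct and follows essentially the same argument as the paper's: both compare the circle average at radius $\eps$ to that at a fixed radius $r_0$ via the fundamental theorem of calculus in the radial direction, then bound the difference by Cauchy--Schwarz against the function $\log r$ (your weighted split $r^{-1/2}\cdot r^{1/2}$ is exactly this), and handle the fixed-radius term by a trace inequality. The paper writes the key identity in the coordinate-free form $\int_{A_\eps(p)}\langle du, d\log r\rangle_{h_p}\,dv_{h_p}$, but this unwinds precisely to your polar-coordinate computation.
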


\begin{proof} At a fixed positive radius $r_0>0$, the estimate
\begin{equation}
\label{ineq:tr}
\left|\frac{1}{r_0}\int_{\partial D_{r_0}(p)}u\,ds_{h_p}\right|\leq C\left|\frac{1}{r_0}\int_{\partial D_{r_0}(p)}u\,ds_{h}\right|\leq C\|u\|_{W^{1,2}(U_p\setminus D_{r_0}(p),h)}
\end{equation}
follows, for example,  from the boundedness of the trace map 
\[
W^{1,2}(D_{2r_0}(p)\setminus D_{r_0}(p),h)\to L^2(\partial D_{r_0}(p),h).
\]
On the flat annulus $A_\eps(p) = D_{r_0}(p)\setminus D_{\eps}(p)$, it's easy to see that
\[
\left(\frac{1}{r_0}\int_{\partial D_{r_0}(p)}u\,ds_{h_p}-\frac{1}{\eps}\int_{\partial D_{\eps}(p)}u\,ds_{h_p}\right)=\int_{A_\eps(p)}\langle du, d \log r\rangle_{h_p}\,dv_{h_p},
\]
where $r$ is the distance function to point $p$ in the metric $h_p$. At the same time,
\[
\int_{A_\eps(p)}|d\log r|^2_{h_p}\,dv_{h_p}=2\pi |\log\eps|,
\]
which together with an application of Cauchy-Schwarz yields
\[
\left|\frac{1}{\eps}\int_{\partial D_{\eps}}u\,ds_{h_p}\right|\leq C|\log\eps|^\frac{1}{2}\|d u\|_{L^2(A_\eps,h_p)}+\left|\frac{1}{r_0}\int_{\partial D_{r_0}}u\,ds_{h_p}\right|.
\]
Combining with~\eqref{ineq:tr} completes the proof.
\end{proof}

We are now ready to prove the theorem. Assume by contradiction that $\Lambda_1\leq \Lambda_1'$ for all $\eps, L$. 
Let $P_i'\subset P'$ be an equivalence class, then by definition the set $\alpha(P'_i)=P_i''\subset P''\subset M''$ lies on the same connected component $M''_i$ of $M''$. Therefore, $\lambda_1(M_i'',g'')>0$. Let $\tilde F_i$ denote the harmonic extension of $F|_{M_i''\setminus D_\eps(P'')}$ to $M_i''$ and define
\[
\bar F_i: = \frac{1}{\area(M_i'',g'')}\int_{M_i''} \tilde F_i\,dv_{g''}.
\]
Then for any $q\in P_i''$, by Lemma~\ref{lem:average} one has 
\begin{equation*}
\begin{split}
\left|\frac{1}{\eps}\int_{\partial D_\eps(q)} F\,ds_{g''_p} - 2\pi\bar F_i \right|^2&\leq C|\log\eps|\|\tilde F_i-\bar F_i\|^2_{W^{1,2}(M_i'',g'')}\\
\text{(by Definition of $\bar F_i$)}&\leq C\left(1 + \frac{1}{\lambda_1(M_i'',g'')}\right)|\log\eps|\|d\tilde F_i\|^2_{L^2(M_i'',g'')}\\
\text{(by Remark~\ref{rem:ext_bound})}&\leq C|\log\eps|\|d F_i\|^2_{L^2(M_i''\setminus D_\eps(P''),g'')}\\
\text{(by Corollary~\ref{cor:''bound})}&\leq C|\log\eps| \left(\eps + \frac{e^{-2L}}{|\log\eps|}\right).
\end{split}
\end{equation*}
Similarly, by Lemma~\ref{lem:average} and~\eqref{ineq:Reub} one has for all $p\in P_i'$
\[
\left|\frac{1}{\eps}\int_{\partial D_\eps(p)}R_{\eps,L}\,ds_{g'_p}\right|^2\leq C\left(e^{-2L} + \eps|\log\eps|\right).
\]
Finally, one has
\begin{equation*}
\begin{split}
&\left|\frac{1}{\eps}\int_{\partial D_\eps(p)}F\,dv_{g'_p} - \frac{1}{\eps}\int_{\partial D_\eps(\alpha(p))}F\,dv_{g''_p}\right| = 
\frac{1}{\eps}\left|\int_{C_{\eps,L(p)}}\frac{\partial F}{\partial t}\,dtd\theta\right|\leq\\
&\frac{1}{\eps}\|dF\|_{L^2(C_{\eps,L}(p))}\area(C_{\eps,L}(p))^{\frac{1}{2}}\leq \frac{C\sqrt{L}}{|\log\eps|^\frac{1}{2}},
\end{split}
\end{equation*}
where we applied Lemma~\ref{lem:Feb_cyl} in the last step.

Combining all these inequalities, one obtains for all $p\in P_i'$
\begin{equation*}
\left|\frac{1}{\eps}\int_{\partial D_\eps(p)}\Phi_{\eps,L}\,dv_{g_p'} - 2\pi \bar F_i\right|\leq C\left(\frac{\sqrt{L}}{|\log\eps|^\frac{1}{2}} + e^{-L} + (\eps|\log\eps|)^{\frac{1}{2}}\right),
\end{equation*}
and, therefore, for all $p,q\in P_i'$ one has
\begin{equation}
\label{ineq:Phi_bound}
\left|\frac{1}{\eps}\int_{\partial D_\eps(p)}\Phi_{\eps,L}\,dv_{g_p'} - \frac{1}{\eps}\int_{\partial D_\eps(q)}\Phi_{\eps,L}\,dv_{g_q'}\right|\leq C\left(\frac{\sqrt{L}}{|\log\eps|^\frac{1}{2}} + e^{-L} + (\eps|\log\eps|)^{\frac{1}{2}}\right).
\end{equation}

Choose $L = L(\eps)$ such that as $\eps\to 0$ one has $L(\eps)\to\infty$ while $L = o(|\log\eps|)$. This way the right hand side of~\eqref{ineq:Phi_bound} tends to $0$ as $\eps\to 0$. 

Consider the maps $\Phi_{\eps,L(\eps)}\colon M'\to \mathbb{R}^{n_{\eps,L}+1}$ by first eigenfunctions. Since the $\lambda_1(M',g')$-eigenspace has dimension $m$ (independent of $\eps,L$) there exists a rotation $A_{\eps}$ of $\mathbb{R}^{n_{\eps,L}+1}$ such that the only non-vanishing components of the map $A_\eps\Phi_{\eps,L(\eps)}$ are the first $m$ and, hence, it can be viewed as a map to $\mathbb{R}^m$. We set 
\[
\Phi_\eps:=A_\eps\Phi_{\eps,L(\eps)}\colon M'\to \mathbb{R}^m.
\] 
By~\eqref{ineq:Reub},~\eqref{ineq:fhueb} and~\eqref{ineq:gm2} one has
\[
\int_{M'}|d\Phi_\eps|^2_{g'}\,dv_{g'} = \Lambda_1' + o(1)
\]
as $\eps\to 0$. Since $|1-|u||\leq|1-|v|| + |u-v|$,~\eqref{ineq:Reub} and~\eqref{ineq:gm2} imply that
\[
\int_{M'}|1-|\Phi_\eps||\,dv_{g'}\leq\int_{M'} |1-|\hat F|| + |R_{\eps,L(\eps)}|\,dv_{g'} = o(1).
\]
In particular, $\Phi_\eps$ are bounded in $W^{1,2}(M',g')$ and, since $\Phi_\eps$ are maps by first eigenfunctions, they are 
precompact in $C^k$ for any $k$. Thus, we can extract a $C^\infty$-convergent subsequence $\Phi_\eps\to \Phi$. 
By construction
$\Phi$ is a map by first eigenfunctions. Passing to the limit as $\eps\to 0$ in the previous inequalities yields that $\Phi\colon (M',\mC")\to\mathbb{S}^{m-1}$ is a map to a sphere of energy $E_{g'}(\Phi) = 2\Lambda_1'$. From~\eqref{ineq:Phi_bound} we obtain that for all $p,q\in P_i'$ one has $\Phi(p) = \Phi(q)$, i.e. the image $\Phi(P_i)$ contains only one point. Moreover, we may take $\Phi$ to be $T'$-equivariant, if necessary by replacing $\Phi$ with the $T'$-equivariant map $\Phi_\Gamma\colon M'\to\mathbb{R}^{m|\Gamma|}$ given by
\[
\Phi_\Gamma(x) = \frac{1}{\sqrt{|\Gamma|}}(\Phi(\g_1 \cdot x),\Phi(\g_2\cdot x),\ldots,\Phi(\g_{|\Gamma|}\cdot x)),
\]
where $\g_i$ is an arbitrary enumeration of elements in $\Gamma$. Since the first eigenspace is $\Gamma$-invariant, $\Phi_\Gamma$ is a map by $\lambda_1(M',g')$-eigenfunctions, and maps to the unit sphere with $E(\Phi) = E(\Phi_\Gamma)$. Moreover, if $p,q\in P'$ belong to the same equivalence class, then $\g\cdot p, \g\cdot q\in P'$ belong to the same equivalence class. Hence, if $p,q\in P_i'$, then $\Phi(\g\cdot p) = \Phi(\g\cdot q)$ for all $\g\in \Gamma$, i.e. the image $\Phi_\Gamma(P_i')$ still contains only one point for all $i$. Thus, $\Phi_{\Gamma}:(M',g')\to \Sph^n$ is a $T'$-equivariant map by first eigenfunctions such that $\Phi_{\Gamma}(P_i)$, violating the condition of Theorem~\ref{thm:general_existence} on the images of $P_i$. 
 This contradiction completes the proof. \qed

\section{Existence of maximizers for surfaces with boundary}

\label{sec:global_Sexistence}
In this section we provide results for equivariant Steklov eigenvalue optimization analogous to those of Section~\ref{sec:global_existence}. The section is structured in the same way: we first formulate the Steklov version of Theorem~\ref{thm:general_existence}, then explore its corollaries, and, finally, prove the theorem at the end of the section.  

For a collapsed set $P'\sqcup P'_i$ of the degeneration $(N',T')\prec (N,T)$ recall the definition of the set $P'^\iota$ from Section~\ref{sec:top_degen_boundary}.
One has $p'\in P_i'\cap P'^{\iota}$ if $p'$ is an interior point of $N'$ and the surface $N''_i$, which is being attached to $P_i'$ to form $N$, has non-empty boundary, see Section~\ref{sec:top_degen_boundary} for details.

\begin{theorem}
\label{thm:general_Sexistence}
Suppose that $(N',T')\prec (N,T)$. Let $\mC'$ be a $T'$-invariant conformal class on $N'$. Then 
\[
\Sigma_1^T(N)>\Sigma^{T'}_1(N',\mC')
\]
provided that there exists a collapsed set $P'\subset M'$ with partition $P'=\sqcup P'_i$ and $T'$-invariant $\bar\sigma_1^{T'}$-maximal metric $g'\in\mC'$ such that either
\begin{enumerate}
\item $P'^{\iota}$ is nonempty or
\item for any $T'$-equivariant map $\Psi\colon (N',\partial N')\to(\mathbb{B}^{n+1},\Sph^n)$ by first Steklov eigenfunctions for $g'$, the image $\Psi(P_i)$ contains more than one point for some $i$.
\end{enumerate}
\end{theorem}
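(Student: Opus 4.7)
The strategy is to adapt the contradiction argument from Section~\ref{sec:proof_general_existence} to the Steklov setting, with Theorem~\ref{stek.mm.char} replacing Theorem~\ref{lap.mm.char} and the inverse surgery of Section~\ref{sec:top_degen_boundary} replacing its closed counterpart. First I construct, for each small $\eps>0$ and large $L>0$, a $T$-invariant metric $g_{\eps,L}$ on $N$ obtained by excising flat disks $D_\eps(p)$ around points $p\in P'\sqcup P''$ from $(N',g')\sqcup (N'',g'')$ and inserting flat cylinders (and ribbons/half-cylinders at $\partial N'$-points) $C_{\eps,L}(p)$ of length $L\eps$, where $g'$ is the assumed $\bar\sigma_1^{T'}$-maximal metric in $\mC'$ and $g''$ is any fixed $T''$-invariant metric on $N''$; set $\mC_{\eps,L}:=[g_{\eps,L}]$. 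Arguing by contradiction, I assume $\Sigma_1^T(N,\mC_{\eps,L})\leq \Sigma_1':=\Sigma_1^{T'}(N',\mC')$ for all $\eps,L$.

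Next, invoking Theorem~\ref{stek.mm.char} and a Hersch-type balancing argument analogous to the one in Lemma~\ref{lem:good_map}, I produce a $T$-equivariant map $F\in W^{1,2}_T(N,\mathbb{B}^{n+1})$ satisfying
\begin{equation*}
\int_N |dF|^2_{g_{\eps,L}}\,dv_{g_{\eps,L}}<\Sigma_1^T(N,\mC_{\eps,L})+\eps^2,\qquad \int_{\partial N}(1-|F|^2)^2\,ds_{g_{\eps,L}}<\eps^2,
\end{equation*}
where the balancing is applied to the bounded linear functional $\phi\mapsto \int_{\partial N'}\hat\phi\,ds_{g'}$, with $\hat\phi$ denoting the harmonic extension to $N'$ of $\phi|_{N'\setminus D_\eps(P')}$; this ensures that the harmonic extension $\hat F\in W^{1,2}(N',\mathbb{R}^{n+1})$ satisfies $\int_{\partial N'}\hat F\,ds_{g'}=0$. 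The variational characterization of $\sigma_1(N',g')=\Sigma_1'$ then yields the lower bound $\int_{N'}|d\hat F|^2_{g'}\,dv_{g'}\geq \Sigma_1'-C\eps$.

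I then establish the Steklov cylinder-energy estimate: a logarithmic cutoff paralleling Lemma~\ref{lem:Feb_cyl}, now comparing boundary integrals on $\partial N'$ and $\partial N$ rather than area integrals, combined with Lemma~\ref{lem:ext_bound} (and its free-boundary analog for half-cylinders, obtained by reflection across the Neumann side) gives $\|dF\|^2_{L^2(C_{\eps,L})}\leq \Sigma_1^T(N,\mC_{\eps,L})-\Sigma_1'+C/|\log\eps|$, and an analog of Corollary~\ref{cor:''bound} controls $\|dF\|^2_{L^2(N''\setminus D_\eps(P''))}$. Decomposing $\hat F=\Psi_{\eps,L}+R_{\eps,L}$, where $\Psi_{\eps,L}$ projects onto the $\sigma_1(N',g')$-Steklov eigenspace, a Cauchy--Schwarz estimate using the Steklov quadratic form
\begin{equation*}
Q(\Phi,\Phi)=\int_{N'}|d\Phi|^2_{g'}\,dv_{g'}-\Sigma_1'\int_{\partial N'}|\Phi-\textstyle\fint_{\partial N'}\Phi\,ds_{g'}|^2\,ds_{g'}
\end{equation*}
yields $\|R_{\eps,L}\|^2_{W^{1,2}(N')}\leq C(\Sigma_1^T(N,\mC_{\eps,L})-\Sigma_1'+e^{-2L}/|\log\eps|+\eps)\to 0$ as $\eps\to 0$ with $L=L(\eps)\to\infty$ chosen so that $L=o(|\log\eps|)$. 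Applying the boundary-averaging lemma analogous to Lemma~\ref{lem:average} (with traces on $\partial D_\eps(p)\subset N'$ controlled by $\|\cdot\|_{W^{1,2}(N')}$) and extracting a $C^\infty$ subsequential limit, $\Psi_{\eps,L(\eps)}$ converges to a free-boundary harmonic map $\Psi\colon(N',\partial N')\to(\mathbb{B}^m,\Sph^{m-1})$ by first Steklov eigenfunctions, which after $\Gamma$-symmetrization (formula~\eqref{eq:make_it_equiv}) becomes $T'$-equivariant.

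In case (2), the same asymptotic averaging forces $\Psi(P_i)$ to be a single point for each $i$, directly contradicting the hypothesis. In case (1), for $p'\in P'^{\iota}\cap P_i'$, the auxiliary surface $N_i''$ has nonempty boundary, so $\lambda_1(N_i'',g'')>0$ and the averaging identities of Lemma~\ref{lem:average} applied to the harmonic extension $\tilde F_i$ on $N_i''$ yield that, in the limit, $\Psi(p')$ must equal the mean $\bar{\Psi}_i''$ of the limit harmonic extension across $N_i''$; this limit extension is in turn determined up to a constant by the unit-length condition $|\Psi|=1$ on $\partial N_i''\subset \partial N$, forcing $\Psi(p')$ to coincide with a distinguished point independent of $p'\in P_i'$ and hence a second-moment quadratic relation on $\Psi$ that cannot hold for a full free-boundary immersion by first eigenfunctions. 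The main obstacle is the rigorous treatment of case (1): unlike the closed setting, the Steklov problem treats interior and boundary points asymmetrically, so tracking the averaging identities across a surface-with-boundary $N_i''$ attached at an interior collapsed point requires a careful free-boundary version of the harmonic-extension inequality of Lemma~\ref{lem:ext_bound} together with a uniform control on the Steklov spectrum of $(N_i'',g'')$ as the gluing parameters $(\eps,L)$ vary; arranging these so that the final contradiction emerges cleanly, without losing the extra geometric information coming from the newly created boundary components, is the most delicate part of the argument.
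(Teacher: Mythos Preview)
Your treatment of case~(2) and the surrounding scaffolding (the inverse surgery, Lemma~\ref{lem:good_Smap}, the cylinder-energy estimate, the decomposition $\hat F=\Psi_{\eps,L}+R_{\eps,L}$, and the passage to a limiting first Steklov eigenmap) matches the paper's proof essentially line for line. Where your proposal diverges, and where it contains a genuine gap, is case~(1).

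For $p\in P'^{\iota}$ the paper does \emph{not} average the vector-valued extension $\tilde F_i$ and chase a ``second-moment quadratic relation'' on the eigenmap. Instead it tracks the \emph{scalar} quantity
\[
\check F_i:=\frac{1}{\length(\partial N_i'')}\int_{\partial N_i''}|\tilde F_i|^2\,ds_{g''},
\]
which, since $N_i''$ has nonempty boundary and $|F|\approx 1$ there, satisfies $\check F_i\geq 1-C\eps^{1/2}$. Controlling the deviation of $|\tilde F_i|^2$ from $\check F_i$ uses the \emph{Steklov} gap $\sigma_1(N_i'',g'')>0$, not the Neumann Laplace gap $\lambda_1$ you invoke; the latter would only bound the difference from the \emph{interior} average, which carries no information about the boundary constraint $|F|\approx 1$. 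The second new ingredient is a short Steklov--Neumann lemma on the cylinder $C_L$,
\[
\Bigl|\int_{\{t=L\}}\phi^2\,d\theta-\tfrac{1}{2\pi}\Bigl(\int_{\{t=L\}}\phi\,d\theta\Bigr)^2\Bigr|\leq \int_{C_L}|d\phi|^2,
\]
which, applied componentwise to $F$ on $C_{\eps,L}(p)$, converts the circle-average of $|F|^2$ into $\bigl|\tfrac{1}{2\pi\eps}\int_{S_\eps(p)}F\bigr|^2$ up to $o(1)$. Combining these with the earlier averaging estimates yields
\[
\Bigl|\Psi(p)\Bigr|^2=\lim_{\eps\to 0}\Bigl|\tfrac{1}{2\pi\eps}\!\int_{S_\eps(p)}\Psi_{\eps,L(\eps)}\Bigr|^2=\lim_{\eps\to 0}\check F_i=1,
\]
and the contradiction is immediate from the maximum principle: $p$ is an \emph{interior} point of $N'$, so the harmonic map $\Psi\colon(N',\partial N')\to(\mathbb{B}^{n+1},\Sph^n)$ by first Steklov eigenfunctions satisfies $|\Psi(p)|<1$. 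Your proposed mechanism (forcing $\Psi(p')$ to equal some common mean $\bar\Psi_i''$ and then extracting a quadratic relation on the eigenmap) neither identifies this scalar averaging trick nor the Steklov--Neumann cylinder inequality, and without them the argument in case~(1) does not close.
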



\subsection{Partial existence without group action} Theorem~\ref{thm:general_Sexistence} yields new results evenfor the trivial group, i.e. for the classical $\bar{\sigma}_1$-maximization problem. Let $\Sigma_1(\gamma,b)=\Sigma_1(N)$ for the orientable surface $N$ with genus $\gamma$ and $b$ boundary components.

\begin{corollary}
\label{cor:classical_Sexistence}
Let $N$ be an orientable surface of genus $\gamma$ with $b$ boundary components admitting a $\bar\sigma_1(N)$-maximal metric $g'$. Then 
\begin{equation}
\label{ineq:classical_Sexistence1}
\Sigma_1(\gamma,b+1)>\Sigma_1(\gamma,b).
\end{equation}
Suppose, additionally, that $b\geq 2$ and there exist two points $p,q$ lying on different connected components of $\partial N$ such that for any $T'$-equivariant map $\Psi\colon (N',\partial N')\to(\mathbb{B}^{n+1},\Sph^n)$ by first Steklov eigenfunctions for $g'$ one has $\Psi(p)\ne \Psi(q)$. Then
\begin{equation}
\label{ineq:classical_Sexistence2}
\Sigma_1(\gamma+1,b-1)>\Sigma_1(\gamma,b).
\end{equation}
\end{corollary}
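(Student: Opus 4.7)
The plan is to apply Theorem~\ref{thm:general_Sexistence} twice, with the trivial group $\Gamma = \{1\}$, so that ``$T'$-equivariance'' imposes no constraint on the maps $\Psi$. In each of the two inequalities, I would exhibit a topological degeneration $(N, T') \prec (N_{\mathrm{new}}, T)$ via the inverse surgery construction from the end of Section~\ref{sec:top_degen_boundary}, choose an appropriate collapsed set $P \subset N$ with partition $P = \sqcup P_i$, and then verify that one of conditions (1) or (2) of the theorem holds. Throughout I would take $\mC' = [g']$, so that $\Sigma_1^{T'}(N,\mC') = \bar\sigma_1(N,g') = \Sigma_1(\gamma,b)$.

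For inequality~\eqref{ineq:classical_Sexistence1}, I would fix any interior point $p \in \mathrm{int}(N)$, take $N_1'' = \mathbb{D}^2$ with marked interior point $\alpha(p) \in \mathrm{int}(N_1'')$, and glue $p$ to $\alpha(p)$ via a full cylinder. An Euler characteristic count shows the resulting surface is $N_{\gamma, b+1}$, with $N = N_{\gamma, b}$ recovered by collapsing the cylindrical neck. The collapsed set is the single equivalence class $P = P_1 = \{p\}$: since $N_1'' = \mathbb{D}^2$ has nonempty boundary, $P_1 \subset P^b$, and since $p$ is an interior point of $N$ one has $p \in P^\iota$, so $P^\iota$ is nonempty. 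Condition~(1) of Theorem~\ref{thm:general_Sexistence} then yields $\Sigma_1(\gamma, b+1) > \Sigma_1(\gamma, b)$.

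For inequality~\eqref{ineq:classical_Sexistence2}, I would use the supplied $p, q \in \partial N$ on distinct boundary components, take $N_1'' = \mathbb{D}^2$ with two marked boundary points $\alpha(p), \alpha(q) \in \partial \mathbb{D}^2$, and glue $p$ to $\alpha(p)$ and $q$ to $\alpha(q)$ via two ribbons (half-cylinders), as dictated by the surgery convention when the collapsed points lie in $\partial N$. Topologically the two ribbons plus $\mathbb{D}^2$ amount to an orientation-preserving band joining the two boundary components of $N$ containing $p$ and $q$, which an Euler characteristic check confirms yields $N_{\gamma+1, b-1}$. The collapsed set $P = P_1 = \{p, q\}$ forms a single equivalence class, and because $p, q \in \partial N$ one has $P^\iota = \varnothing$, so condition~(1) fails. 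However, the hypothesis says exactly that $\Psi(p) \ne \Psi(q)$ for every such $\Psi$, i.e.\ $|\Psi(P_1)| > 1$ for all $\Psi$, so condition~(2) applies and gives $\Sigma_1(\gamma+1, b-1) > \Sigma_1(\gamma, b)$.

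The corollary follows so directly from Theorem~\ref{thm:general_Sexistence} that there is no substantial obstacle. The only point that requires any care is arranging the band in inequality~\eqref{ineq:classical_Sexistence2} to be orientation-preserving so that the glued surface is the orientable $N_{\gamma+1, b-1}$ rather than a nonorientable surface; this is always possible since $p$ and $q$ lie on distinct boundary circles of the orientable $N$.
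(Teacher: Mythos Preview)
Your proposal is correct and follows essentially the same approach as the paper's proof: both apply Theorem~\ref{thm:general_Sexistence} with the trivial group, using the same collapsed sets (a single interior point with $P'^\iota \neq \varnothing$ for \eqref{ineq:classical_Sexistence1}, and the pair $\{p,q\}$ on distinct boundary components for \eqref{ineq:classical_Sexistence2}). Your version is simply more explicit about the auxiliary piece $N'' = \mathbb{D}^2$ and the topological bookkeeping, whereas the paper states the constructions tersely.
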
 

\begin{proof} The proof is similar to that of Corollary~\ref{cor:classical_existence}. Choose $\mC'$ to be the conformal class of the metric $g'$. To prove~\eqref{ineq:classical_Sexistence1} observe that the surface with $b+1$ boundary components can be obtained from $N$ by gluing a cylinder at an interior point of $N$; the corresponding collapsed set has a single point and it belongs to $P'^\iota$, so that condition (1) of Theorem~\ref{thm:general_Sexistence} is satisfied.

Similarly, to prove inequality~\eqref{ineq:classical_Sexistence2} observe that the surface of genus $\gamma+1$ and $b-1$
boundary components can be obtained from $N$ by gluing a half-cylinder at two points $p,q$ from different boundary components of $N$. For such a degeneration $p\sim q$, so that the condition of the corollary implies that (2) of Theorem~\ref{thm:general_Sexistence} is satisfied.
\end{proof}

\begin{corollary}
\label{cor:classical_Sexistence2}
If $\gamma = 0$, then for any $b\geq 1$ inequality~\eqref{ineq:classical_Sexistence1} holds and, as a result, the quantity $\Sigma_1(0,b)$ is achieved by a smooth metric. Moreover, for any $\gamma>0$ there exists $B(\gamma)$ such that for all $b\geq B(\gamma)$ at least one of $\Sigma_1(\gamma,b)$, $\Sigma_1(\gamma+1, b)$ is achieved by a smooth metric.
\end{corollary}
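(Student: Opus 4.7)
For the first part I proceed by induction on $b\geq 1$. The base case $b=1$ is Corollary~\ref{cor:max_disk}: the Euclidean disk realizes $\Sigma_1(0,1) = 2\pi$. For the inductive step, given a smooth $\bar{\sigma}_1$-maximizing metric on $N_{0,b}$, Corollary~\ref{cor:classical_Sexistence}(1) (applied with trivial group) yields the strict monotonicity $\Sigma_1(0, b+1) > \Sigma_1(0, b)$. As described in Section~\ref{sec:top_degen_boundary}, every topological degeneration $(N')\prec N_{0, b+1}$ is, after selecting the distinguished $\Gamma$-invariant connected component, a genus-zero surface $N_{0, b'}$ with $b'\leq b$; by the inductive hypothesis and the already-established monotonicity for smaller $b$, $\Sigma_1(N')\leq \Sigma_1(0, b) < \Sigma_1(0, b+1)$. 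Since $\Sigma_1(0, b+1) > 2\pi$, the strict gap condition of Theorem~\ref{thm:Sexistence} is satisfied and yields a smooth maximizing metric on $N_{0, b+1}$.

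For the second part I induct on $\gamma\geq 0$, the base case being Part~1 with $B(0)=1$. Suppose by contradiction that for some $\gamma\geq 1$ there are arbitrarily large $b$ for which neither $\Sigma_1(\gamma, b)$ nor $\Sigma_1(\gamma+1, b)$ is realized. Theorem~\ref{thm:Sexistence} produces degenerations $N_i'\prec N_{\gamma+i-1, b}$ with $\Sigma_1^{T'}(N_i') = \Sigma_1(\gamma+i-1, b)$ for $i=1, 2$. Theorem~\ref{thm:general_Sexistence}(1) applies, with no condition on maps, to every elementary degeneration whose collapsed set satisfies $P'^{\iota}\neq\varnothing$: these include the boundary shrink and every separating interior pinch whose complementary piece retains at least one original boundary component. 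Combined with the inductive hypothesis on $\gamma$ and Part~1, which ensure that the relevant predecessor maximizers exist for $b$ large, this rules out all such degenerations. Hence $N_1' = N_{\gamma-1, b}$ and $N_2' = N_{\gamma, b}$, each equipped with a single equivalence class of two interior marked points arising from a non-separating interior pinch.

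Combining these equalities with the non-strict monotonicity $\Sigma_1(\gamma+1, b)\geq \Sigma_1(\gamma, b)\geq \Sigma_1(\gamma-1, b)$ supplied by the same interior-pinch degenerations via~\eqref{ineq:NN'} forces the chain $\Sigma_1(\gamma-1, b) = \Sigma_1(\gamma, b) = \Sigma_1(\gamma+1, b)$. By the induction on $\gamma$, at least one of $\Sigma_1(\gamma-1, b)$ and $\Sigma_1(\gamma, b)$ is achieved once $b$ is large enough; since $\Sigma_1(\gamma, b)$ is not, a smooth metric $g^*$ on $N_{\gamma-1, b}$ realizes $\Sigma_1(\gamma-1, b)$ and gives a free boundary harmonic map $\Psi\colon (N_{\gamma-1, b}, g^*)\to \mathbb{B}^n$ by first Steklov eigenfunctions. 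Because $\Psi$ is nonconstant, one can choose a non-separating simple closed curve $c\subset N_{\gamma, b}$ so that the corresponding pair of interior pinch points $p, q\in N_{\gamma-1, b}$ satisfies $\Psi(p)\neq \Psi(q)$. Theorem~\ref{thm:general_Sexistence}(2) then delivers the strict inequality $\Sigma_1(\gamma, b) > \Sigma_1^{T'}(N_{\gamma-1, b}, [g^*]) = \Sigma_1(\gamma-1, b) = \Sigma_1(\gamma, b)$, which is the desired contradiction.

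The main obstacle is the last step: matching the topological freedom in choosing the non-separating pinch, which is parametrized up to isotopy by pairs of distinct interior points in $N_{\gamma-1, b}$, with the set of pairs that are \emph{not} identified by \emph{every} first Steklov eigenmap on $g^*$. Heuristically the latter locus has positive codimension because $\Psi$ is a nonconstant free boundary minimal immersion, but a rigorous verification requires a nondegeneracy statement ruling out simultaneous identification of a prescribed pinch pair by all first Steklov eigenmaps; this is the Steklov analog of the three-point obstruction used in the proof of Corollary~\ref{cor:gamma2}, adapted to the nonlocal boundary setting of the Dirichlet-to-Neumann operator.
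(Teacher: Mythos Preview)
Your argument for the genus-zero case is essentially the paper's: induction on $b$ with the disk as base case, using Corollary~\ref{cor:classical_Sexistence} for the inductive step and Theorem~\ref{thm:Sexistence} to extract the maximizer.

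For the second part, however, your approach has a genuine gap---which you yourself identify in the final paragraph. The contradiction you seek requires a pair of \emph{interior} points $p,q\in N_{\gamma-1,b}$ that are separated by \emph{every} free boundary harmonic map $\Psi$ by first Steklov eigenfunctions on the maximizer $g^*$, not just by one fixed $\Psi$. Nothing in the paper establishes this; it is precisely the content of Conjecture~\ref{somepair.stek} restricted to interior points (in fact a harder variant, since the paper's conjecture concerns boundary points on distinct components). Your heuristic that the bad locus has positive codimension is not a proof, and the three-point argument from Corollary~\ref{cor:gamma2} does not transfer: that argument exploits $|d\Phi|^2_g\equiv\lambda_1$ at smooth interior points of a $\bar\lambda_1$-maximizer, whereas for Steklov maximizers the analogous rigidity $|\partial_\nu\Psi|\equiv\sigma_1$ holds only on $\partial N$, giving no control over interior pinch points. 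There are also earlier soft spots: to invoke Theorem~\ref{thm:general_Sexistence} you need a maximizing metric on each degeneration $N'$, and your appeal to ``the inductive hypothesis on $\gamma$ and Part~1'' does not actually supply maximizers on all the surfaces $N_{\gamma',b'}$ that arise.

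The paper's proof of the second part takes an entirely different route, bypassing the interior-point obstruction by importing information from the \emph{closed} problem. The key external input is \cite[Theorem~5.9]{KSminmax}, which shows that the strict Laplace inequality $\Lambda_1(\gamma)>\Lambda_1(\gamma-1)$ implies, for all $b\geq B(\gamma)$, the Steklov inequality $\Sigma_1(\gamma,b)>\Sigma_1(\gamma-1,b+1)$ together with $\Sigma_1(\gamma,B(\gamma))>\Sigma_1(\gamma,B(\gamma)-1)$. These are exactly the hypotheses of Petrides' existence criterion~\eqref{ineq:Petrides_existence}, so one gets a maximizer on $N_{\gamma,B(\gamma)}$ and then inducts on $b$ via Corollary~\ref{cor:classical_Sexistence}. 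Since Theorem~\ref{thm:gamma+2} guarantees that at least one of $\Lambda_1(\gamma)>\Lambda_1(\gamma-1)$ or $\Lambda_1(\gamma+1)>\Lambda_1(\gamma)$ holds, this yields the stated dichotomy for $\Sigma_1(\gamma,b)$ and $\Sigma_1(\gamma+1,b)$.
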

\begin{proof}
The first conclusion of the corollary is known to follow from~\eqref{ineq:classical_Sexistence1} and Theorem~\ref{thm:Sexistence} (which for $\Gamma=\{1\}$ was previously proved in~\cite{FraserSchoen, PetridesS}) by induction on $b$. Namely, for $b=1$ one has that $\Sigma_1(0,1)$ is achieved by a standard disk, which is the base of induction. If the result is known for $b$, then existence of the metric achieving $\Sigma_1(0,b)$ implies~\eqref{ineq:classical_Sexistence1}, which in turn implies existence of the metric achieving $\Sigma_1(0,b+1)$ by Theorem~\ref{thm:Sexistence}.

For $\gamma>0$~\cite[Theorem 1]{PetridesS} gives existence of the maximum for $\Sigma_1(\gamma,b)$ provided
\begin{equation}
\label{ineq:Petrides_existence}
\Sigma_1(\gamma,b)>\max\{\Sigma_1(\gamma,b-1),\Sigma_1(\gamma-1,b+1)\}.
\end{equation}
The arguments in~\cite[Theorem 5.9]{KSminmax} establish that if $\Lambda_1(\gamma)>\Lambda_1(\gamma-1)$, then for $b\geq B(\gamma)$ one has $\Sigma_1(\gamma,b)>\Sigma_1(\gamma-1,b+1)$ and $\Sigma_1(\gamma,B(\gamma))>\Sigma_1(\gamma,B(\gamma)-1)$. This observation together with Corollary~\ref{cor:classical_Sexistence} allow for the same type of induction argument starting with $b=B(\gamma)$. 
\end{proof}

\begin{corollary}
For any $\gamma\geq 0$ and any $b\geq 1$ one has
\[
\Sigma_1(\gamma,b+1)> \Sigma_1(\gamma,b).
\]
\end{corollary}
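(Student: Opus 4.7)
The plan is to proceed by strong induction on the complexity $c(\gamma, b) := 2\gamma + b$, which strictly decreases under any elementary degeneration by the Euler characteristic formula $\chi(N_{\gamma, b}) = 2 - 2\gamma - b$. The base case $c = 1$ corresponds to the disk $N_{0,1}$, where Weinstock's theorem gives $\Sigma_1(0,1) = 2\pi$, while the critical catenoid provides a metric on the annulus showing $\Sigma_1(0,2) > 2\pi$.

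For the inductive step with $(\gamma, b)$ satisfying $c(\gamma, b) \geq 2$, I would first observe that $\Sigma_1(\gamma, b) > 2\pi$, which follows from the chain of degenerations $N_{0, 2} \prec N_{\gamma, b}$ (obtained by collapsing handles and boundary-parallel curves one at a time) together with the easy direction of Theorem~\ref{thm:Sexistence}. I would then split into two cases. In \emph{Case A}, where $\Sigma_1(\gamma, b)$ is realized by a metric $g$ on $N_{\gamma, b}$, the result follows directly from Corollary~\ref{cor:classical_Sexistence} applied with $\mathcal{C}' = [g]$. In \emph{Case B}, where no global maximizer exists on $N_{\gamma, b}$, I would apply Theorem~\ref{thm:SDM_limit} to a sequence of conformal classes $\mathcal{C}_n$ with $\Sigma_1(N_{\gamma, b}, \mathcal{C}_n) \to \Sigma_1(\gamma, b)$; since $\Sigma_1(\gamma, b) > 2\pi$ rules out the boundary $2\pi \widetilde{\epsilon}_{or}$ term in \eqref{ineq:SDM_limit}, one extracts a connected degeneration $N' = N_{\gamma', b'} \prec N_{\gamma, b}$ with $b' \geq 1$, $c(\gamma', b') < c(\gamma, b)$, and $\Sigma_1(\gamma, b) = \Sigma_1(\gamma', b')$.

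Applying the inductive hypothesis at $(\gamma', b')$ then yields $\Sigma_1(\gamma', b' + 1) > \Sigma_1(\gamma', b')$. Combined with the auxiliary claim $N_{\gamma', b' + 1} \prec N_{\gamma, b + 1}$ (which yields $\Sigma_1(\gamma, b + 1) \geq \Sigma_1(\gamma', b' + 1)$ by the easy direction of Theorem~\ref{thm:Sexistence}), this chains to
$$\Sigma_1(\gamma, b + 1) \geq \Sigma_1(\gamma', b' + 1) > \Sigma_1(\gamma', b') = \Sigma_1(\gamma, b),$$
closing the induction.

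The main obstacle will be justifying the compatibility claim $N_{\gamma', b' + 1} \prec N_{\gamma, b + 1}$. Via the doubling correspondence of Section~\ref{sec:top_degen_boundary}, this reduces to showing that the $\iota$-invariant curve-collapse surgeries realizing $\widetilde{N_{\gamma', b'}} \prec \widetilde{N_{\gamma, b}}$ can be arranged disjointly from the additional $\iota$-invariant handle in $\widetilde{N_{\gamma, b + 1}}$ corresponding to the new boundary component. This is topologically plausible by the flexibility in choosing the isotopy class of the collapsing curves, but requires careful bookkeeping given the $\iota$-equivariance constraint and the need to handle every possible form of elementary degeneration $N' \prec N_{\gamma, b}$ uniformly.
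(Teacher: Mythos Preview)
Your approach is correct and the compatibility claim $N_{\gamma',b'+1}\prec N_{\gamma,b+1}$ can indeed be verified: place the extra boundary disk in $N_{\gamma,b}$ away from the collapsing curves and inside the region that survives as $N_{\gamma',b'}$, then collapse the same curves in $N_{\gamma,b+1}$. However, the paper takes a shorter route that sidesteps both the induction and this compatibility bookkeeping.

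The paper argues as follows. Let $N''\preccurlyeq N_{\gamma,b}$ be \emph{minimal} (with respect to $\prec$) among surfaces satisfying $\Sigma_1(N'')=\Sigma_1(\gamma,b)$. By minimality, every strict degeneration of $N''$ has strictly smaller $\Sigma_1$, so Theorem~\ref{thm:Sexistence} furnishes a $\bar\sigma_1$-maximal metric on $N''$ directly---no induction needed. Now observe that $N''\preccurlyeq N_{\gamma,b}\prec N_{\gamma,b+1}$; in the composite degeneration $N''\prec N_{\gamma,b+1}$, the extra boundary component of $N_{\gamma,b+1}$ becomes, in the inverse surgery picture, a surface with nonempty boundary attached at an \emph{interior} point of $N''$. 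This is exactly the condition $P'^\iota\neq\varnothing$, so Theorem~\ref{thm:general_Sexistence} (case~(1)) applies immediately to give
\[
\Sigma_1(\gamma,b+1)>\Sigma_1(N'')=\Sigma_1(\gamma,b).
\]

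The difference is structural: you recurse down via Case~B and then climb back up via the compatibility claim, whereas the paper jumps straight to the bottom (the minimal $N''$) and invokes Theorem~\ref{thm:general_Sexistence} once. Both need the same underlying topological observation---that the ``extra'' boundary in $N_{\gamma,b+1}$ can be threaded through any degeneration---but the paper packages it as the condition $P'^\iota\neq\varnothing$, which plugs directly into the existence machinery rather than requiring a separate $\prec$-relation between $N_{\gamma',b'+1}$ and $N_{\gamma,b+1}$.
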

\begin{proof}
Let $N$ be a surface of genus $\gamma$, $b+1$ boundary components; $N'$ be a surface of genus $\gamma$, $b$ boundary components. Let $N''\preccurlyeq N'$ be the smallest surface (w.r.t. the order induced by $\prec$) such that $\Sigma_1(N'') = \Sigma_1(N')$. Then by definition, for any $N'''\prec N''$ one has $\Sigma_1(N''')<\Sigma_1(N'')$ and, hence, there is a $\bar\sigma_1$-maximal metric on $N''$. Since $N''\preccurlyeq N'\prec N$, we observe that for the degeneration $N''\prec N$ there exists a collapsed set $P'$ with non-empty $P'^\iota$, so that by Theorem~\ref{thm:general_Sexistence} one has
\[
\Sigma_1(\gamma,b+1) = \Sigma_1(N)>\Sigma_1(N'') = \Sigma_1(N') = \Sigma_1(\gamma,b).
\]
\end{proof}


\subsection{Existence of group-invariant maximal metrics}
Let us now consider surfaces with group actions introduced in Section~\ref{sec:actions_Sex}. 
\begin{lemma}		
\label{lemma:Scollapsed_sets}
Let $(N,T)$ be one of the surfaces with group actions considered in Section~\ref{sec:actions_ex}. Then for any elementary degeneration $(N',T')\prec (N,T)$ there exists a choice of a collapsed set $P'$ and its partition $P'=\sqcup P'_i$, such that either all $P'_i$ have at least two elements or $P'^\iota$ is not empty.
\end{lemma}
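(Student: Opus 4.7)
The plan is to reduce the lemma to a case analysis of elementary closed-surface degenerations of the double $(\wt N, \wt T)$. By Section~\ref{sec:actions_Sex}, every surface in the scope of the lemma arises as $\wt N / \langle \iota \rangle$ for a closed surface $(\wt N, \wt T)$ of type $M(a)$ or $M(G, \ub)$ from Section~\ref{Ss:fund2}, equipped with a distinguished separating reflection $\iota \in \{\tau, \rho_1\}$. An elementary Steklov degeneration $(N', T') \prec (N, T)$ corresponds to an elementary closed degeneration $(\wt N', \wt T') \prec (\wt N, \wt T)$ on which $\iota$ remains a separating reflection; by Proposition~\ref{Agamma} this holds automatically within the families classified in Proposition~\ref{prop:bunder_degen} and its $M(a)$ analogue, so it suffices to verify the lemma's conclusion in each such case.

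For $(\wt N, \iota) = (M(a), \tau)$, the pinched $\tau$-invariant curve in each elementary degeneration pattern has a tubular neighborhood whose two boundary circles are $\tau$-exchanged. Contracting them produces a collapsed pair $\wt P'_1 = \{p, \tau p\}$ that is $\iota$-invariant and projects to a single interior point of $N' = N(a-1)$; since $\wt P'_1$ is $\tau$-invariant, the inverse-surgery construction of Section~\ref{sec:inverse_surgery} attaches a $\tau$-invariant closed surface $M''$, whose quotient $N'' = M''/\langle\tau\rangle$ has nonempty boundary, placing the projected point in $P'^\iota$ and establishing condition (2). A parallel argument handles $N_\tau(G, \ub)$ via the explicit collapsed sets from the end of Section~\ref{sec:actions_ex}: when $\uv = 1 - \rho_i$ or $\uv = \rho_i - \rho_i \rho_j$, the set $\wt P'_1$ is contained in $M^\tau$ and projects injectively to $|\wt P'_1| \geq 2$ boundary points of $N'$, giving condition (1); when $\uv = \rho_i \rho_j$, the $\tau$-exchanged pair $\wt P'_1 = \{p', \tau p'\}$ with $p' \notin M^\tau$ projects to a single interior point, which lies in $P'^\iota$ by the same doubling argument, giving condition (2).

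For $N = N_{\rho_1}(G, \ub)$ with $G \in \{\Z_2, D_2, \Z_2 \times D_k\}$ and $\iota = \rho_1$, the same three cases require an additional analysis of the $\langle\rho_1\rangle$-action on $\wt P'_1$. Using that $\rho_1$ commutes with every other generator of $G$, in each subcase $\wt P'_1$ is one of the following: pointwise $\rho_1$-fixed (projecting injectively to $\partial N'$ with $\geq 2$ elements), $\rho_1$-invariant as a set but not pointwise fixed (projecting to $\lceil |\wt P'_1|/2 \rceil$ elements), or exchanged with a distinct class $\rho_1 \wt P'_1$ (the two together projecting to the same $|\wt P'_1| \geq 2$-element set in $N'$). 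Condition (1) therefore holds in all subcases except $\uv = \rho_1 - \rho_1 \rho_j$ with $k_{1j} = 2$, where the $\langle\rho_1,\rho_j\rangle$-orbit reduces to $\wt P'_1 = \{p, \rho_1 p\}$ with $p \in \Omega^{\rho_j} \setminus \Omega^{\rho_1}$; this orbit is $\rho_1$-invariant and projects to a single interior point, which lies in $P'^\iota$ since the $\rho_1$-invariance of $\wt P'_1$ forces $M''$ to be $\rho_1$-invariant and hence $N'' = M''/\langle\rho_1\rangle$ to have nonempty boundary, giving condition (2).

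The main technical obstacle is the orbit-structure bookkeeping for $N_{\rho_1}$-type surfaces: one must confirm, in each subcase producing a singleton projected class, that the underlying point is interior to $N'$ and that the $\rho_1$-invariance of the corresponding $\wt$-class forces the attached surface $N''$ to have nonempty boundary, so that the singleton lies in $P'^\iota$ rather than merely in $P'^b \cap \partial N'$. Both conclusions follow by combining the explicit location of the representative point (off $\Omega^{\rho_1}$) with the invariance of $\wt P'_1$ under $\rho_1$ inherited from the classification in Proposition~\ref{prop:bunder_degen}.
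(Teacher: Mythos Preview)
Your approach works in spirit but is far more laborious than the paper's, and the case enumeration in paragraph three is incomplete. The paper bypasses all case analysis with a two-line cardinality argument: apply Lemma~\ref{lemma:collapsed_sets} to the double to get a collapsed set $\wt P' = \sqcup \wt P'_j$ with each $|\wt P'_j| \geq 2$; if some $P'_i \subset N'$ is a singleton $\{p'\}$, its preimage in $\wt N'$ has at most two points, so it cannot be a union of two $\iota$-exchanged classes (that would need $\geq 4$ points) and must be a single $\iota$-invariant class of size exactly two. In particular $p' \notin \partial N'$ (else the preimage would be one $\iota$-fixed point) and $p' \in P'^b$, hence $p' \in P'^\iota$.

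Your paragraph four essentially rediscovers this argument---the observation that $\rho_1$-invariance of $\wt P'_1$ forces $N''$ to have boundary is exactly the content of ``$P'_i \subset P'^b$''---but you attribute it to Proposition~\ref{prop:bunder_degen}, which says nothing about $\rho_1$-invariance of collapsed sets. The actual reason is the cardinality count above: if $\wt P'_j$ were not $\iota$-invariant then $P'_i$ would be the image of $\wt P'_j \cup \iota\wt P'_j$ and hence have at least two elements. Once you have this observation, the entire case analysis in paragraphs two and three becomes unnecessary. Your claim that singletons arise \emph{only} from $\uv = \rho_1 - \rho_1\rho_j$ with $k_{1j}=2$ is also not correct as stated: for instance $\uv = \rho_i\rho_j$ with $i,j \neq 1$ gives $\wt P'_1 = \{p', \tau p'\}$, and if $\rho_1 p' = \tau p'$ this too projects to a singleton. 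These extra cases are harmless once the abstract argument is in place, but they show the case list cannot be trusted on its own.
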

\begin{proof}
Follows immediately from Lemma~\ref{lemma:collapsed_sets} using the considerations from Section~\ref{sec:top_degen_boundary}. Indeed, Lemma~\ref{lemma:collapsed_sets} implies that there exists a collapsed set $\wt P' = \sqcup \wt P'_j$ for the degeneration of the doubles $(\wt N',\wt T')\prec (\wt N,\wt T)$ such that each $ \wt P'_j$ has at least two points. Suppose that for some $i$ the set $P'_i$ has a single point $p'$, then its preimage in $\wt P$ has at most two points. Since this preimage is either a single equivalence class or a union of two classes interchanged by $\iota$, we conclude that it has to be a single equivalence class with two elements. This implies $p'\in P'^\iota$.
\end{proof}

\begin{proposition}
\label{prop:Suniq_map}
Let $N$ be an orientable surface whose number of boundary components does not equal $2$.
Suppose that $\Psi\colon (N,g)\to\mathbb{B}^3$ is a free boundary minimal embedding by $\sigma_1(N,g)$-eigenfunctions such that multiplicity of $\sigma_1(N,g)$ equals $3$. If $\Xi$ is another free boundary harmonic map to $\mathbb{B}^3$ by   
$\sigma_1(N,g)$-eigenfunctions, then $\Xi = A\Psi$ for some $A\in O(3)$. 
\end{proposition}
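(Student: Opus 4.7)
The plan is to follow the strategy of Proposition~\ref{prop:uniq_map} from the closed case, adapted to the boundary-only nature of the Steklov constraint. Since $\Psi$ is a full embedding with $\sigma_1(N,g)$ of multiplicity $3$, its three coordinates $\Psi^1,\Psi^2,\Psi^3$ are linearly independent---otherwise $\Psi(N)$ would lie in a $2$-plane, forcing $N$ to be a disk, which has Steklov multiplicity $2$ and is thus ruled out by hypothesis---and so they span the first Steklov eigenspace. Hence any other free boundary harmonic map $\Xi\colon N\to\B^3$ by $\sigma_1$-eigenfunctions has the form $\Xi=A\Psi$ for some $3\times 3$ matrix $A$, and setting $Q:=A^\top A-I$, the boundary condition $|\Xi|^2\equiv 1\equiv |\Psi|^2$ on $\partial N$ is equivalent to
$$
\langle Q\Psi,\Psi\rangle\equiv 0\quad\text{on }\partial N.
$$
Conversely, any symmetric $Q$ with $I+Q>0$ satisfying this relation yields a valid $\Xi$, since $\partial_\nu\Xi=\sigma_1\Xi$ is inherited from $\Psi$, and $|\Xi|\leq 1$ inside follows from the maximum principle applied to the subharmonic function $|\Xi|^2$. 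So it suffices to prove $Q=0$.

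Assume $Q\ne 0$, and case-analyze by rank and signature. When $Q$ is semi-definite of rank $r\geq 1$, the relation forces $r$ linearly independent Steklov eigenfunctions $\langle v_i,\Psi\rangle$ to vanish on $\partial N$, hence on $N$ by uniqueness of harmonic extensions, contradicting fullness of $\Psi$. When $Q$ has rank $2$ and indefinite signature, $\langle Q\Psi,\Psi\rangle$ factors as a product $f_+ f_-$ of Steklov eigenfunctions $f_\pm=\langle v_1\pm v_2,\Psi\rangle$, and analyticity on each boundary circle forces one factor to vanish identically on each component; if the same factor vanishes on every component we recover the previous contradiction, and otherwise the image $\Psi(C)$ of each component where $f_\pm$ vanishes lies on the great circle $\{\langle v_1\mp v_2,\cdot\rangle=0\}\cap\Sph^2$. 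Since an embedded closed curve in a great circle must coincide with the whole circle, and $\Psi|_{\partial N}$ is an embedding, at most one boundary component can be mapped onto each great circle, giving $b\leq 2$. When $Q$ has rank $3$ and indefinite signature, the zero set $\Sph^2\cap\{\langle Qx,x\rangle=0\}$ consists of exactly two disjoint embedded closed curves, and the same embeddedness argument gives $b\leq 2$. Under the hypothesis $b\ne 2$, the cases $b\geq 3$ are thereby immediately ruled out; when $b=1$, the semi-definite and rank-$2$ indefinite subcases still produce a Steklov eigenfunction vanishing on the single boundary component, hence on all of $N$, contradicting fullness.

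The hard part will be the remaining case $b=1$ with $Q$ of rank $3$ and indefinite signature, in which $\Psi(\partial N)$ is a single embedded closed curve coinciding with one of the two components of $\Sph^2\cap\{\langle Qx,x\rangle=0\}$. Direct unique continuation fails here because $\langle Q\Psi,\Psi\rangle$ is quadratic rather than linear in Steklov eigenfunctions. A promising strategy is to double $(N,g)$ across $\partial N$ (via the equivariant conformal identification $\B^3\hookrightarrow\Sph^3_+$ followed by reflection across the equator) to obtain a closed minimal surface $\tilde\Psi\colon\tilde N\to\Sph^3$ of genus $2\gamma\geq 2$ (since $N$ is not a disk) with coordinate functions in the first Laplace eigenspace. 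If $\lambda_1(\tilde N)$ has multiplicity $4$, Proposition~\ref{prop:uniq_map} then characterizes $\tilde\Psi$ up to $O(4)$, and one aims to combine the quadratic relation on the equator $\partial N\subset\tilde N$ with the $\Z_2$-symmetry of $\tilde N$ to contradict either the embedding or the genus $\geq 2$ of $\tilde N$; doing so requires careful tracking of how the three-dimensional range of $\Xi$ is augmented by a function odd under the equatorial reflection and how the resulting constraint propagates off the fixed-point set. Once this last subcase is resolved, combining all cases gives $Q=0$, so $A\in O(3)$.
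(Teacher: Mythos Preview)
Your overall strategy matches the paper's: reduce to showing that $\Psi(\partial N)\subset\Sph^2\cap\{\langle Qx,x\rangle=0\}$ forces $b=2$. Your case analysis by rank is more elaborate than the paper's (which simply diagonalizes $Q$ and walks through the signs of the diagonal entries), but for $b\geq 3$ and for the semi-definite and rank-$2$ cases the content is the same.

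The gap is exactly where you flag it: $b=1$ with $Q$ of rank $3$ and indefinite signature. Your proposed doubling into $\Sph^3$ does not work as stated---the conformal identification $\B^3\hookrightarrow\Sph^3_+$ does not send free boundary minimal surfaces in $\B^3$ to minimal surfaces in $\Sph^3_+$, since minimality is not conformally invariant. So there is no reason the doubled map $\tilde\Psi$ is minimal in $\Sph^3$, nor that $\lambda_1(\tilde N)$ has multiplicity $4$ or has the coordinates as eigenfunctions; Proposition~\ref{prop:uniq_map} simply does not apply.

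The paper dispatches this case in one line with an argument you are missing. After diagonalizing $Q$, the two components of $\Sph^2\cap\{\langle Qx,x\rangle=0\}$ are separated by the coordinate hyperplane $\{x_i=0\}$ corresponding to the eigenvalue of isolated sign (e.g.\ if the signature is $(+,+,-)$ then $x_3\ne 0$ on the quadric, and the components are $\{x_3>0\}$ and $\{x_3<0\}$). If the single boundary circle $\Psi(\partial N)$ lies in one component, then the first Steklov eigenfunction $\Psi^i$ has a definite sign on $\partial N$, contradicting $\int_{\partial N}\Psi^i\,ds_g=0$. This orthogonality-to-constants observation is the missing idea; drop the doubling route and use it.
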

\begin{remark}
\label{rmk:disk}
Note that by the multiplicity condition one has that $N$ is not diffeomorphic to a disk~\cite{Kokarev}. Moreover, if $N$ is a disk, then as follows from the arguments below, an analogous statement holds. Namely, if $\Psi\colon (N,g)\to\mathbb{B}^2$ is a free boundary minimal embedding by $\sigma_1(N,g)$-eigenfunctions, then any other free boundary harmonic map by $\sigma_1(N,g)$-eigenfunctions is $A\Psi$ for some $A\in O(2)$.
\end{remark}
\begin{proof}
Since $\Psi$ is an embedding and $N$ is not diffeomorphic to the disk, the components of $\Psi$ are linearly independent and, hence, span the $\sigma_1(N,g)$-eigenspace. As a result, the components of $\Xi$ are linear combinations of components of $\Psi$, and since $|\Xi|^2 \equiv 1$ on $\partial N$, the image $\Psi(\partial N)$ lies on the intersection of $\mathbb{S}^2$ with a level set $Q(x)=1$ of a non-negative definite quadratic form $Q$ on $\mathbb{R}^3$. If $Q(x) = |x|^2$, then $\Xi = A\Psi$ for some $A\in O(3)$. Otherwise, the intersection $I = \mathbb{S}^2\cap \{Q(x)=1\}$ is at most $1$-dimensional. Since $\Psi(\partial N)\subset I$  one has that $\Psi(\partial N)$ is, in fact, a union of connected components of $I$. We claim that this implies that $N$ has exactly two boundary components, which would contradict the assumptions of the proposition.

Indeed, by diagonalizing $Q$ in an orthonormal basis, we may assume that 
\[
Q(x) = ax_1^2+bx_2^2+cx_3^2,
\]
where $a\geq b\geq c$. Subtracting the equation of the sphere from $Q$, one has that $I = \mathbb{S}^2\cap \{Q'(x)=0\}$, where
\[
Q'(x) = (a-1)x_1^2 + (b-1)x_2^2 + (c-1)x_3 =: a'x_1^2+b'x_2^2+c'x_3^2,
\]
where $a'\geq b'\geq c'$. If $c'> 0$ or $a'<0$, then $I$ is empty, which is impossible. Without loss of generality, we may assume that $a'>0$. If $c'=0$, then since $b'\geq c'=0$, $I = \{0,0,\pm1\}$ unless $b'=0$. In the latter case $I$ is an equator in $\mathbb{S}^2$ and, hence, $\Psi(N)$ is a disk, which contradicts the fact that $N$ is not diffeomorphic to a disk. Assume now that $c'<0$. Up to changing $Q'\mapsto (-Q')$, one has $b'\geq 0$. Then $x_3\ne 0$ on $I$ and it is easy to observe that $I$ has two connected components $I^\pm$ according to the sign of $x_3$. If $\Psi(\partial N) = I^+$, then $x_3\circ\Psi$ is a $\sigma_1(N,g)$-eigenfunction, which is positive on $\partial N$, hence, not orthogonal to constants, a contradiction. Thus, $\Psi(\partial N) = I$ and $\Psi(\partial N)$ (therefore, $N$ as well) has exactly two boundary components.
\end{proof}
\begin{remark}
\label{rmk:quadric_catenoid}
It is easy to see that the boundary of the critical catenoid lies in the intersection of a sphere with another quadric, e.g. two parallel planes. And, indeed, one observes that there are other free boundary harmonic maps from the critical catenoid to $\mathbb{B}^3$ by $\sigma_1$-eigenfunctions. If the coordinates are chosen so that the catenoid is rotationally symmetric in $xy$-plane, then all such harmonic maps can be written as $(ax,ay,bz)$ for some $a,b$.
\end{remark}

To illustrate the value of Proposition~\ref{prop:Suniq_map}, let us show the following.

\begin{corollary}
\label{cor:classical_Sexistence3}
If $\gamma=0$ then for any $b\geq 2$ inequality~\eqref{ineq:classical_Sexistence2} holds and, as a result, the quantity $\Sigma_1(1,b-1)$ is achieved by a smooth metric.
\end{corollary}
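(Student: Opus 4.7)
The plan is to apply case (2) of Corollary~\ref{cor:classical_Sexistence}, by exhibiting, for each $b\geq 2$, a pair of points $p,q$ on distinct boundary components of a $\bar\sigma_1$-maximizer of $N_{0,b}$ for which every free boundary harmonic map to some $\B^n$ by first Steklov eigenfunctions has $\Psi(p)\ne \Psi(q)$. This yields inequality~\eqref{ineq:classical_Sexistence2}, and then Theorem~\ref{pet.glob2}, combined with Proposition~\ref{intro.strip}(i), delivers the asserted maximizer for $\Sigma_1(1,b-1)$.

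First I would invoke Corollary~\ref{cor:classical_Sexistence2} to produce a smooth metric $g'$ on $N_{0,b}$ realizing $\Sigma_1(0,b)$, and then Fraser--Schoen's theorem to realize $g'$ by a free boundary minimal embedding $\Psi\colon(N_{0,b},g')\hookrightarrow\B^3$ by first Steklov eigenfunctions. The Kokarev/Fraser--Schoen multiplicity bound for genus zero forces $\dim\Ecal_{\sigma_1} = 3$, so in the range $b\geq 3$ Proposition~\ref{prop:Suniq_map} applies: any other eigenmap to a ball is $A\Psi$ for some $A\in O(3)$, and because $\Psi$ is an embedding, $A\Psi$ separates \emph{every} pair of distinct points on $\partial N_{0,b}$. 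Any choice of $p,q$ on distinct boundary components thus satisfies the hypothesis of Corollary~\ref{cor:classical_Sexistence}.

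The main obstacle is the case $b=2$, where the maximizer is the critical catenoid and Proposition~\ref{prop:Suniq_map} fails by Remark~\ref{rmk:quadric_catenoid}. Here I would argue directly: writing $\Psi = M(x,y,z)^{T}$ and imposing $|\Psi|^2\equiv 1$ on each boundary circle $\{(R\cos\theta,R\sin\theta,\pm z_0)\}$, matching the $\cos 2\theta$, $\sin 2\theta$, $\cos\theta$, and $\sin\theta$ Fourier coefficients forces $M^{T}M=\operatorname{diag}(a^2,a^2,c^2)$ with $R^2a^2+z_0^2c^2=1$. Hence every such eigenmap factors as $O\circ(ax,ay,cz)$ for some $O\in O(3)$. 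Choosing $p=(R\cos\theta_1,R\sin\theta_1,z_0)$ and $q=(R\cos\theta_2,R\sin\theta_2,-z_0)$ with $\theta_1\ne\theta_2$, the equation $\Psi(p)=\Psi(q)$ would require $c=0$ and $(\cos\theta_1,\sin\theta_1)=(\cos\theta_2,\sin\theta_2)$, which is impossible. Thus Corollary~\ref{cor:classical_Sexistence} applies for $b=2$ as well.

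With \eqref{ineq:classical_Sexistence2} in hand, the companion monotonicity $\Sigma_1(1,b-1)>\Sigma_1(1,b-2)$ needed for $b\geq 3$ is provided by Proposition~\ref{intro.strip}(i), while for $b=2$ the corresponding hypothesis of Theorem~\ref{pet.glob2} is vacuous since $k=1$. Theorem~\ref{pet.glob2} then supplies a metric on $N_{1,b-1}$ (smooth up to finitely many conical singularities) realizing $\Sigma_1(1,b-1)$. The crux is the explicit quadric analysis on the critical catenoid; the $b\geq 3$ case is more or less immediate once the multiplicity and embeddedness are granted.
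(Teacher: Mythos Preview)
Your proposal is correct and follows essentially the same route as the paper: both split into the case $b=2$ (handled by the explicit structure of Steklov eigenmaps on the critical catenoid from Remark~\ref{rmk:quadric_catenoid}) and $b\geq 3$ (handled by the embeddedness of the Fraser--Schoen maximizer together with Proposition~\ref{prop:Suniq_map}). The one organizational difference is that the paper runs an induction on the number of boundary components, using the freshly-obtained maximizer on $N_{1,b'}$ together with~\eqref{ineq:classical_Sexistence1} to get $\Sigma_1(1,b'+1)>\Sigma_1(1,b')$, whereas you cite the unconditional $k$-monotonicity (Proposition~\ref{intro.strip}(i)) directly; both are valid, and your version is slightly more streamlined.
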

\begin{proof}
According to~\cite[Theorem 1]{PetridesS} it sufficient to check that inequality~\eqref{ineq:Petrides_existence} holds for $\gamma=1$ and any $b\geq 1$. Similarly to Corollary~\ref{cor:classical_Sexistence}, the proof is by induction on $b$. For $b=1$ it is sufficient to prove $\Sigma_1(1,1)>\Sigma_1(0,2)$. 
By Remark~\ref{rmk:quadric_catenoid}, if we choose $p,q$ on different connected components of the catenoid and with different $(x,y)$-coordinates, then for any free boundary harmonic map $\Psi$ from the catenoid to $\mathbb{B}^3$ by $\sigma_1$-eigenfunctions, one has $\Psi(p)\ne\Psi(q)$. Therefore, Corollary~\ref{cor:classical_Sexistence} implies that 
$\Sigma_1(1,1)>\Sigma_1(0,2)$.

Suppose that the Corollary is proved for $b\geq 1$ boundary components. Then there exists a smooth metric achieving $\Sigma_1(1,b)$ and, hence, by~\eqref{ineq:classical_Sexistence1} one $\Sigma_1(1,b+1)>\Sigma_1(1,b)$. It remains to show $\Sigma_1(1,b+1)>\Sigma_1(0,b+2)$. By Corollary~\ref{cor:classical_Sexistence2} $\Sigma_1(0,b+2)$ is achieved by a smooth metric and by~\cite[Proposition 8.1]{FraserSchoen} the corresponding free boundary minimal surface is embedded in $\mathbb{B}^3$. Since $b+2>2$, Proposition~\ref{prop:Suniq_map} together with Corollary~\ref{cor:classical_Sexistence} implies $\Sigma_1(1,b+1)>\Sigma_1(0,b+2)$.
\end{proof}

In view of Remark~\ref{rmk:quadric_catenoid} it is natural to conjecture that the critical catenoid is the only free boundary minimal surface in $\mathbb{B}^3$ violating the conclusion of Proposition~\ref{prop:Suniq_map}, but we are unable to confirm it. A successful resolution of this conjecture would significantly simplify some of the arguments below. The best we can say is the following.

\begin{proposition}
\label{prop:non_uniq}
Let  $N$ be an orientable surface with two boundary components. Suppose that $\Psi\colon (N,g)\to\mathbb{B}^3$ is a free boundary minimal embedding by $\sigma_1(N,g)$-eigenfunctions such that multiplicity of $\sigma_1(N,g)$ equals $3$. If there exists another free boundary harmonic map to $\mathbb{B}^3$ by   
$\sigma_1(N,g)$-eigenfunctions, such that $\Xi \ne A\Psi$ for all $A\in O(3)$, then $(N,g)$ admits the action of the group $\Gamma = \mathbb{Z}_2\times D_2$ of type $N_{\rho_1}(\Gamma, e_2\rho_2+v_{23}\rho_2\rho_3)$ 
\end{proposition}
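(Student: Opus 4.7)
\textbf{Proof plan for Proposition~\ref{prop:non_uniq}.} The plan is to argue, following the lines of Proposition~\ref{prop:Suniq_map}, that the existence of a second, non-equivalent free boundary harmonic map forces $\Psi(\partial N)$ to sit in the intersection of $\Sph^2$ with a quadric invariant under three coordinate reflections, and then to lift these reflections to isometries of $(N,g)$.

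\emph{Step 1 (quadric and diagonalization).} Since $\dim\Ecal_{\sigma_1}=3$ and $\Psi$ is an embedding, the components $\Psi^1,\Psi^2,\Psi^3$ form a basis of $\Ecal_{\sigma_1}$. Writing $\Xi = M\Psi$ with $M\in GL(3)\setminus O(3)$, the constraint $|\Xi|^2=1=|\Psi|^2$ on $\partial N$ gives a nonzero quadratic form $Q'(x):=x^T(M^TM-I)x$ with $Q'(\Psi)=0$ on $\partial N$. Diagonalize $Q'$ in an orthonormal basis (absorbing the orthogonal change of coordinates into $\Psi$) to get $Q'(x)=a\,x_1^2+b\,x_2^2+c\,x_3^2$ with $a\geq b\geq c$. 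The classification of $\Sph^2\cap\{Q'=0\}$ from the proof of Proposition~\ref{prop:Suniq_map}, combined with the fact that $N$ has two boundary components, forces $a>0>c$ and $b\ne 0$; after a possible permutation we arrange $a\geq b>0>c$, so that $\Psi(\partial N)\subset\Sph^2\cap\{Q'=0\}$ consists of two disjoint curves separated by $\{x_3=0\}$.

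\emph{Step 2 (reflections preserve the boundary).} Both $Q'$ and $|\cdot|^2$ are invariant under each coordinate reflection $R_i\colon x_i\mapsto -x_i$, hence so is $\Psi(\partial N)$ as a set. Here $R_1$ and $R_2$ preserve each boundary component, while $R_3$ swaps them. For each $i$, $R_i\circ\Psi\colon(N,g)\to \B^3$ is another free boundary harmonic map by $\sigma_1(N,g)$-eigenfunctions.

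\emph{Step 3 (lifting to isometries of $N$).} Define $\tau_i^{\partial}:=\Psi|_{\partial N}^{-1}\circ R_i\circ\Psi|_{\partial N}\colon\partial N\to\partial N$, which is well-defined and smooth by Step~2 and embeddedness. We claim $\tau_i^{\partial}$ extends to a conformal involution $\tau_i\colon N\to N$ with $R_i\circ\Psi=\Psi\circ\tau_i$. Equivalently, $\Psi(N)$ is $R_i$-invariant as a subset of $\B^3$. To see this, observe that $\Psi$ and $R_i\circ\Psi$ are both conformal free boundary minimal immersions of $N$ inducing the same pullback metric on $N$ (since $R_i$ is a Euclidean isometry) and agreeing up to the boundary reparametrization $\tau_i^{\partial}$ on $\partial N$. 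By the real-analytic reflection principle for free boundary minimal surfaces (reflection across $\Sph^2$ extends $\Psi(N)$ analytically past $\partial N$, with the extension determined by the boundary curve and the orthogonality condition), the images $\Psi(N)$ and $R_i\Psi(N)$, sharing the same analytic boundary data, must coincide. Hence $\tau_i:=\Psi^{-1}\circ R_i\circ\Psi$ is a well-defined smooth involution of $N$, conformal with respect to $\Psi^*g_{\mathrm{eucl}}$ and therefore with respect to $g$ (after an equivariant conformal representative is chosen, which exists by averaging).

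\emph{Step 4 (group and type).} The involutions $\tau_1,\tau_2,\tau_3$ pairwise commute (since the $R_i$ do) and are nontrivial (since $R_i$ act as nontrivial diagonal matrices on the basis $(\Psi^1,\Psi^2,\Psi^3)$ of $\Ecal_{\sigma_1}$), so they generate $\Z_2\times\Z_2\times\Z_2\cong \Z_2\times D_2=:\Gamma$. The involution $\tau_3$ (corresponding to $R_3$) swaps the two boundary components, playing the role of the doubling reflection $\rho_1$; thus we identify $\rho_1=\tau_3$, $\rho_2=\tau_1$, $\rho_3=\tau_2$. To identify the type as $N_{\rho_1}(\Gamma,e_2\rho_2+v_{23}\rho_2\rho_3)$, we examine the fixed sets: $\rho_1$ has no fixed points on $\partial N$ (no arcs of $N^{\rho_1}_{\partial}$), so $e_1=v_{12}=v_{13}=0$; the components of $N^{\rho_1}$ meeting only $\Omega^{\rho_2}$ yield $e_2$, and those meeting $\Omega^{\rho_2}\cap\Omega^{\rho_3}$ yield $v_{23}$; finally $f=e_3=0$ follows by analyzing which segments of $N^{\rho_1}\cap\partial\Omega$ occur, using that $R_3$ swaps boundary components while $R_1,R_2$ stabilize them.

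\emph{Main obstacle.} The critical step is Step~3: showing that $R_i\Psi(N)=\Psi(N)$, i.e.\ that the boundary $R_i$-symmetry propagates to the interior of the minimal surface. Our sketched argument invokes a reflection/unique-continuation principle, but making this rigorous requires care: one must confirm that the free boundary analytic continuation of $\Psi(N)$ across each component of $\Psi(\partial N)$ is uniquely determined by the boundary curve, and that this uniqueness rules out the two minimal surfaces $\Psi(N)$ and $R_i\Psi(N)$ being distinct while sharing the same free boundary. An alternative, more algebraic route is to exploit that the space of matrices $M$ with $(M^TM-I)(\Psi)\equiv 0$ on $\partial N$ contains the group generated by $R_1,R_2,R_3$, combined with a rigidity argument for free boundary harmonic maps by first Steklov eigenfunctions with prescribed image of $\partial N$. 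In either approach, avoiding an exception corresponding to the critical catenoid (where $a=b$ yields continuous $O(2)$-symmetry in place of the discrete $\Z_2\times D_2$) is what makes the statement of the proposition weaker than the conjecture preceding it.
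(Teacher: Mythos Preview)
Your Steps 1--3 are on a valid track: the paper explicitly remarks that the invariance $R_i\Psi(N)=\Psi(N)$ can alternatively be obtained from Bj\"orling's theorem (uniqueness for free boundary minimal surfaces sharing the same boundary curve), so your unique-continuation sketch, while not fully rigorous as stated, points to a correct argument for producing the three commuting reflections.

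The genuine gap is Step 4. Knowing that $\mathbb{Z}_2\times D_2$ acts does not by itself determine the \emph{type} $N_{\rho_1}(\Gamma, e_2\rho_2+v_{23}\rho_2\rho_3)$: one must also show that the halves $N\cap\{x_3>0\}$ and $N\cap\{x_1>0\}$ each have genus zero. Your boundary analysis correctly gives $e_1=v_{12}=v_{13}=0$ (since the reflection swapping boundary components has $N^\tau\cap\partial N=\varnothing$), but nothing in your argument rules out interior components of $N^\tau$ stabilized only by $\rho_3$ (contributing $e_3>0$) or by neither $\rho_2$ nor $\rho_3$ (contributing $f>0$). The paper obtains the missing genus information from Schoen's uniqueness theorem \cite[Theorem~2]{Schoen}: since $\Psi(\partial N)$ lies on an elliptic cylinder $\partial\Omega\times\mathbb{R}$ (taking $t=x_3$), is symmetric under $t\mapsto -t$, and $(\partial N)_+$ is a graph of bounded slope, Schoen concludes not only that $\Psi(N)$ is symmetric but that the half $\Psi(N)_+$ is a \emph{graph} over the convex domain $\Omega$, hence genus zero. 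Applying this a second time with $t=x_1$ yields the same for $\rho_2$, and these two genus-zero facts are what pin down the type. The case $a'=b'$ is dispatched separately via \cite[Corollary~3]{Schoen}, forcing the catenoid; note the catenoid does satisfy the proposition's conclusion (with $e_2=0$, $v_{23}=1$) and is not an exception to be avoided.
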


\begin{remark}
Note that up to changing the roles of $\tau$ and $\rho_2$ one has that $N_{\rho_1}(\Gamma, e_2\rho_2+v_{23}\rho_2\rho_3)$ is topologically equivalent to $N_{\rho_1}(\Gamma, (e_2-1+v_{23})\rho_2 + v_{12}\rho_1\rho_2 + (1-v_{23})\rho_2\rho_3)$. 
\end{remark}
\begin{proof}
Using the proof of Proposition~\ref{prop:Suniq_map} we have that $\Psi(\partial N) = I$, where $I = \mathbb{S}^2\cap \{Q'(x) = 0\}$,
\[
Q'(x)=a'x_1^2+b'x_2^2+c'x_3^2,
\]
$a'>0$, $b'\geq 0$, $c<0$. Since $\Psi$ is an embedding, for the rest of the proof we identify $N$ with $\Psi(N)$ so that $\partial N = I$. 

The main ingredient of the proof is a result of Schoen in~\cite{Schoen}. To state it, we recall the following notions from~\cite{Schoen}. Split $\mathbb{R}^3 = \mathbb{R}_{x,y}^2\times \mathbb{R}_t$ and view $t$ as a distinguished coordinate. For a subset $S\subset \mathbb{R}^3$ we write $S_\pm = S\cap \{\pm t>0\}$. If $S$ is a (not necessarily closed) manifold, we  say that $S$ is a graph of locally finite slope if the projection onto $\{t=0\}$ plane is one-to-one and for any interior point $p\in S$ the tangent plane $T_pS$ does not contain the vertical vector $(0,0,1)$. The following is a weaker version of~\cite[Theorem 2]{Schoen}, which is sufficient for our purposes.

\begin{theorem}[Schoen~\cite{Schoen}] 
\label{thm:Schoen_uniq}
Let $\Omega\subset \mathbb{R}^2$ be a smooth convex domain and $N\subset\mathbb{R}^3$ be a minimal surface with boundary such that (i) $\partial N\subset \partial\Omega\times\mathbb{R}$; (ii) $\partial N$ is symmetric with respect to reflection across $\{t=0\}$; (iii) $(\partial N)_+$ is a graph of locally bounded slope.

Then $N$ is symmetric with respect to reflection across $\{t=0\}$ and $N_+$ is a graph of locally bounded slope.
\end{theorem}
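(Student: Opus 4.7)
The plan is to prove the theorem by an Alexandrov moving planes argument applied to horizontal planes $P_c = \{t = c\}$. Let $\sigma_c$ denote reflection across $P_c$. For $c$ large enough that $P_c$ lies above all of $N$, the reflected surface $\sigma_c(N \cap \{t\geq c\})$ is vacuous. I would decrease $c$ monotonically and define the critical level
\[
c_0 := \inf\{c \geq 0 : \sigma_{c'}(N \cap \{t\geq c'\}) \text{ lies on one side of } N \cap \{t\leq c'\} \text{ for all } c' \geq c\}.
\]
The goal is to show $c_0 = 0$ and that at $c_0$ the reflected surface coincides with $N_-$, which is exactly the symmetry statement. A key point is that assumption (i) makes the vertical cylinder $\partial \Omega \times \mathbb{R}$ invariant under every $\sigma_c$, so the reflected surface has its boundary in the same cylinder and no tangential contact with $\partial\Omega \times \mathbb{R}$ can be forced by the ambient boundary. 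Assumption (iii), that $(\partial N)_+$ is a graph of locally bounded slope, ensures that the reflected boundary $\sigma_c((\partial N)\cap \{t\geq c\})$ projects injectively to $\Omega$ and stays below $(\partial N)_-$ for all $c > 0$, which is what allows the moving planes process to even start and to continue down to $c=0$ without the boundaries crossing prematurely.

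At the critical level $c_0$, the two minimal surfaces $\sigma_{c_0}(N\cap \{t\geq c_0\})$ and $N\cap \{t\leq c_0\}$ have a tangential contact, either at an interior point or at a boundary point in $\partial \Omega \times \mathbb{R}$. The standard maximum principle for minimal surfaces (interior case) or the Hopf boundary lemma (boundary case, using that $\partial \Omega \times \mathbb{R}$ is a smooth vertical cylinder and both surfaces meet it transversally by the locally bounded slope hypothesis) then forces the two surfaces to coincide on the connected component containing the contact point, which by connectedness of $N$ gives global coincidence. Combined with hypothesis (ii), which guarantees that the reflected boundary matches the original boundary precisely at $c=0$, this forces $c_0 = 0$ and hence the symmetry $N = \sigma_0(N)$.

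For the graph property of $N_+$, I would run a second, independent moving planes argument using \emph{vertical} reflection planes. Fix a unit horizontal vector $\nu$; for each $s$ let $\Pi_s = \{x \cdot \nu = s\}$ and $\sigma_{\nu,s}$ be reflection across $\Pi_s$. Since $\Omega$ is convex, $\partial\Omega$ has a unique supporting hyperplane in direction $\nu$, and for $s$ large the half-space $\{x\cdot \nu > s\}$ is disjoint from $N$. Sliding $s$ inward and running the same reflection-plus-maximum-principle argument, using that $\sigma_{\nu,s}(\partial\Omega \cap \{x\cdot \nu \geq s\}) \subset \overline{\Omega}$ precisely when $\Omega$ is convex, shows that any vertical tangent plane on $N_+$ would produce a forbidden tangential contact between $N_+$ and its reflection. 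Hence the vertical projection is an immersion of $N_+$ onto (a subset of) $\Omega$, and combining this with the fact that $(\partial N)_+$ is a graph over $\partial \Omega$ upgrades this to the statement that $N_+$ is a graph of locally bounded slope over its image in $\Omega$.

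The main obstacle will be the boundary contact step. Unlike the classical Alexandrov argument for closed surfaces, here the tangential contact at $c_0$ can occur on $\partial N \subset \partial\Omega \times \mathbb{R}$, and there is no free boundary condition pinning the geometry at these points. Making the Hopf-type boundary argument rigorous requires regularity of $N$ up to $\partial\Omega \times \mathbb{R}$ together with the locally bounded slope hypothesis (iii), which ensures that the comparison of the two minimal surfaces near a boundary contact point reduces to the standard strong maximum principle for minimal graphs over a domain in $\mathbb{R}^2$. A secondary obstacle is that $\Omega$ is not assumed bounded, so some additional work is needed to justify starting the moving planes at a truly noncontacting configuration; this can be handled by first restricting to vertical slabs $\{|t|\leq T\}$ for large $T$ and using that $(\partial N)_+$ is a graph to control $N$ vertically inside the boundary data.
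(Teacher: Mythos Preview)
The paper does not prove this theorem. It is stated as a quotation of \cite[Theorem~2]{Schoen} (``The following is a weaker version of~\cite[Theorem 2]{Schoen}, which is sufficient for our purposes''), and the paper simply invokes it as a black box inside the proof of Proposition~\ref{prop:non_uniq}. So there is no ``paper's own proof'' to compare against.

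That said, your sketch is a faithful outline of Schoen's original argument: the horizontal moving-planes sweep for the reflection symmetry, followed by a separate argument to rule out vertical tangent planes and obtain the graph property. Your identification of the delicate step---boundary contact on $\partial\Omega\times\mathbb{R}$, where one must use the locally bounded slope hypothesis to reduce to a strong maximum principle for minimal graphs---is exactly the point Schoen handles with care. One correction to your second paragraph: in Schoen's argument the graph property of $N_+$ is not obtained by a second independent moving-planes sweep with vertical planes, but rather is a byproduct of the first sweep---once the horizontal reflection comparison runs all the way to $c=0$ without interior touching before $c_0=0$, the fact that the reflected upper piece stays on one side of the lower piece for every $c>0$ already forces $N_+$ to have no vertical tangent planes. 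Your vertical-plane argument would also need $\Omega$ to be bounded (or at least bounded in the $\nu$ direction) to start the sweep, which is not assumed here.
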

\begin{remark}
Note that convex hull property for minimal surfaces implies that if $\partial N\subset \partial\Omega\times\mathbb{R}$, then $N\subset \Omega\times\mathbb{R}$.
\end{remark}

We start by setting $x_3=t$ to be the distinguished coordinate and observe that one can write $I = \mathbb{S}^2\cap \{Q''=0\}$, where
\[
Q''(x)=(a'-c')x_1^2+(b'-c')x_2^2+c'
\]
and $(a'-c'), (b'-c')>0$ by our assumptions. Hence, $\Omega = \{(a'-c')x_1^2+(b'-c')x_2^2+c'<0\}\subset \mathbb{R}^2$ is a smooth convex domain with $\partial N\subset \Omega\times \mathbb{R}$. It is easy to see that $\partial N$ is invariant with respect to reflection $x_3\mapsto (-x_3)$. Finally, since $\mathbb{S}^2_+$ is a graph of locally bounded slope, so is $(\partial N)_+\subset \mathbb{S}^2_+$. Therefore, Theorem~\ref{thm:Schoen_uniq} implies that $N$ is symmetric with respect to $x_3\mapsto (-x_3)$ and $N_+$ is graph and, hence, has genus $0$. In particular, if we denote by $\tau$ the restriction of $x_3\mapsto (-x_3)$ to $N$, then $\tau$ is a separating involution such that $N\setminus N^\tau$ has genus $0$ and $N^\tau\cap \partial N = \varnothing$.
\begin{remark}
In fact, one does not require Theorem~\ref{thm:Schoen_uniq} to conclude that $N$ is preserved by the reflection across $\{t=0\}$. Instead, one can use Bj{\"o}rling's Theorem~\cite{Hildebrandt} stating that different free boundary minimal surfaces can not have common boundary.
\end{remark} 
Furthermore, if $a'=b'$, then $\{Q''=0\}$ is a round cylinder and, hence, $I$ consists of two circles lying in parallel planes. Then~\cite[Corollary 3]{Schoen} implies that $N$ is rotationally symmetric, hence, a catenoid. Thus, in the following we may assume $a'>b'$.

Let us now set $x_1=t$ to be the distinguished coordinate. One can write  $I = \mathbb{S}^2\cap \{Q'''=0\}$, where
\[
Q'''(x)=(a'-b')x_2^2+(a'-c')x_3^2-a'
\]
and $(a'-b'), (a'-c')>0$ by our assumptions. Denoting by $\rho_2$ the restriction of $x_1\mapsto (-x_1)$, the same arguments as above can be used to conclude that $N\setminus N^{\rho_2}$ has two connected components of genus $0$. However, this time $N^\rho_2$ does intersect the boundary $\partial N$. In particular, this implies that the action of $\mathbb{Z}_2\times\mathbb{Z}_2$ on $N$ generated by $\tau$ and $\rho_2$ is of type $N_{\rho_1}(\mathbb{Z}_2\times\mathbb{Z}_2,(\gamma+1)\rho_2)$, where $\gamma$ is the genus of $N$. 

Finally, denoting by $\rho_3$ the restriction of $x_2\mapsto (-x_2)$ and analyzing the symmetries of the fundamental domain of $N_{\rho_1}(\mathbb{Z}_2\times\mathbb{Z}_2,(\gamma+1)\rho_2)$, we arrive at the statement of the theorem. 
\end{proof}

Using these observations we can prove the existence result for $\sigma_1$-maximal metrics for a large class of surfaces with group actions. Note that the arguments involved in the proof of the theorem below are significantly more complicated compared to Theorem~\ref{thm:group_existence} in the closed case. There are two main reasons for this. First, compared to Proposition~\ref{prop:uniq_map}, Proposition~\ref{prop:Suniq_map} does not completely determine the topology of surfaces with multiple different free boundary harmonic maps by $\sigma_1$-eigenfunctions. Second, even if we take for granted that the critical catenoid is the only such surface, then there would still be difficulties with starting the induction argument as in the proof of Theorem~\ref{thm:group_existence} due the lack of examples of free boundary minimal surfaces with required symmetry groups and area above that of the critical catenoid. Below, we are able to circumvent these complications in most cases by using some ad hoc arguments together with an additional information provided by Proposition~\ref{prop:non_uniq}. Still, our results are not quite complete and new ideas are required to prove the existence in greater generality.

\begin{theorem}
\label{thm:group_Sexistence}
Let $(N,T)$ be one of the following surfaces with group actions considered in Section~\ref{sec:actions_Sex}: 
\begin{enumerate}
\item $N_\tau(\mathbb{Z}_2, f+e_1\rho_1)$;
\item $N_\tau(D_2, f + e_1\rho_1 + e_2\rho_2 + v_{12}\rho_1\rho_2)$, where $f+e_1+e_2+v_{12}>1$;
\item $N_\tau(D_k, f + e_1\rho_1 + e_2\rho_2 + v_{12}\rho_1\rho_2)$, where $k>2$ and $f+e_1+e_2+v_{12}>2$;
\item $N_{\tau}(\mathbb{Z}_2\times D_k,\ub)$, where $|\ub|>1$;
\item $N_\tau(\Gamma,\ub)$, where $\Gamma$ is one of the platonic groups;
\item $N_{\rho_1}(\mathbb{Z}_2,f+e_1\rho_1)$;
\item $N_{\rho_1}(\mathbb{Z}_2\times D_k,\ub)$, where $k>2$, $\ub\ne f,\rho_2,\rho_3$, and if $v_{23}\ne 0$, then $|\ub-\rho_2\rho_3|\ne 1$.
\item $N_{\rho_1}(D_2, f + e_1\rho_1 + e_2\rho_2 + v_{12}\rho_1\rho_2)$, where either $f,e_1 = 0$ or $e_1\geq 1$, $f<e_2-1$.
\end{enumerate}
Then for any $(N',T')\prec (N,T)$ one has 
\begin{equation}
\label{ineq:toprove_S}
\Sigma_1^{T'}(N')<\Sigma_1^T(N).
\end{equation}
In particular, there exists a $T$-invariant $\bar\sigma_1^T$-maximal metric on $N$ induced by an equivariant free boundary minimal embedding $\Psi_{(N,T)}\colon N\hookrightarrow\mathbb{B}^3$. 
\end{theorem}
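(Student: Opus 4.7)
The plan is to mirror the inductive strategy from Theorem~\ref{thm:group_existence}, using topological complexity $c(N,T)$ defined (as in Definition~\ref{cdef}) to be the length of the longest chain of degenerations starting at $(N,T)$; since $(N',T')\prec (N,T)$ implies $c(N',T')<c(N,T)$, induction on $c(N,T)$ reduces \eqref{ineq:toprove_S} to checking the base cases together with the inductive step, after which Theorem~\ref{thm:Sexistence} upgrades strict inequality to existence of a $\bar\sigma_1^T$-maximizing metric. Embeddedness and the bound on codimension then follow from Lemma~\ref{Lgraphbd} (in categories (1)--(5) the target is $\mathbb{B}^3$ via the doubling $\wt N$ being a BRS; in categories (6)--(8) directly). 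For the base cases one is reduced to disks or annuli: the disk is handled by Corollary~\ref{cor:max_disk}, while the annular base cases will be handled by direct comparison with the critical catenoid using Corollary~\ref{cor:classical_Sexistence3} or the $\wt\delta_{or}$ term in \eqref{ineq:NN'}.

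For the inductive step, fix an elementary degeneration $(N',T')\prec (N,T)$; by induction there is a $T'$-invariant $\bar\sigma_1^{T'}$-maximizing metric $g'$ on $N'$. By Lemma~\ref{lemma:Scollapsed_sets} we may choose a collapsed set $P'=\sqcup P_i'$ realizing either condition~(1) or condition~(2) of Theorem~\ref{thm:general_Sexistence}. If $P'^{\iota}\neq\varnothing$ we are immediately in case~(1) and done, so the issue is precisely when $P'^{\iota}=\varnothing$ and every $P_i'$ has at least two elements. In that case we would like to run the analogue of the closed-case argument: $g'$ is induced by a free boundary minimal embedding $\Psi_{(N',T')}\colon N'\hookrightarrow\mathbb{B}^3$ by Corollary~\ref{Prealbd}, and if $\Psi_{(N',T')}$ is the unique such map up to $O(3)$ (in the sense of Proposition~\ref{prop:Suniq_map}) then, being an embedding, it separates the points of each $P_i'$, confirming condition~(2) of Theorem~\ref{thm:general_Sexistence} and giving \eqref{ineq:toprove_S}.

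The main obstacle, just as flagged in the preamble, is that Proposition~\ref{prop:Suniq_map} requires the number of boundary components of $N'$ to be different from two, so when $N'$ has exactly two boundary components we are forced to reason using Proposition~\ref{prop:non_uniq} instead. That proposition pins down the ambiguous case: either $(N',g')$ is the critical catenoid, or $N'$ admits an action of $\mathbb{Z}_2\times D_2$ of type $N_{\rho_1}(\mathbb{Z}_2\times D_2,e_2\rho_2+v_{23}\rho_2\rho_3)$. The catenoid case will be handled by comparing $\Sigma_1^T(N)$ with $2\pi$ using the $\wt\delta_{or}$ term in Theorem~\ref{thm:Sexistence} (noting that each degeneration of the surfaces in (1)--(8) that could land on the catenoid produces a $\wt T$-invariant simple closed curve in $\wt N$), combined with the sharp bound $\sigma_1(N')\length(\partial N')<2\pi$ for the critical catenoid. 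The induced $\mathbb{Z}_2\times D_2$-symmetric case is excluded explicitly by the combinatorial hypotheses in~(2)--(4), (7), (8) (e.g. the conditions $f+e_1+e_2+v_{12}>1$ in~(2), $|\ub-\rho_2\rho_3|\neq 1$ in~(7), and the inequalities in~(8)): these hypotheses are precisely what guarantees that any degeneration with $P'$ a two-boundary surface is either not of the exceptional $\mathbb{Z}_2\times D_2$ form, or else the symmetry group on $N'$ is strictly larger than the one compatible with the critical catenoid/cylinder, again yielding an extra $\wt T$-invariant geodesic that supplies~\eqref{ineq:toprove_S} through the $2\pi$-term. The platonic cases~(5) are easiest: the tetrahedral/octahedral/icosahedral groups cannot act on a twice-bounded surface preserving the required $\mathbb{Z}_2\times D_2$ quadric structure, so the ambiguity never arises.

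I expect the hard part to be the bookkeeping in the mixed case where $N'$ has exactly two boundary components \emph{and} is not the critical catenoid. In that case one has no analogue of the Lawson surfaces $\xi_{\gamma,1}$ to produce a test metric strictly above $2\pi$, so one must argue that the exceptional $\mathbb{Z}_2\times D_2$ symmetry on $N'$ is incompatible with $(N',T')\prec (N,T)$ for every $(N,T)$ on the list~(1)--(8). This is a case-by-case combinatorial verification using Proposition~\ref{prop:bunder_degen} (adapted via the doubling $\wt N$ of Section~\ref{sec:top_degen_boundary}) to enumerate which elementary degenerations actually produce a two-boundary $N'$, and then matching these against the combinatorial hypotheses on $\ub$ listed in the statement. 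The numerical hypotheses ($f+e_1+e_2+v_{12}>1$, $k>2$, $|\ub|>1$, $v_{23}\neq 0 \Rightarrow |\ub-\rho_2\rho_3|\neq 1$, etc.) are exactly tuned so that the exceptional case never occurs; verifying this tuning, family by family, is where the bulk of the work lies.
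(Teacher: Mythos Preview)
Your overall architecture (induction on $c(N,T)$, Lemma~\ref{lemma:Scollapsed_sets} plus Theorem~\ref{thm:general_Sexistence} for the generic step, and Proposition~\ref{prop:Suniq_map} when $N'$ does not have two boundary components) matches the paper. The genuine gap is in your treatment of the two-boundary case, and it is not just bookkeeping.

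First, your catenoid argument is wrong: you write ``the sharp bound $\sigma_1(N')\length(\partial N')<2\pi$ for the critical catenoid,'' but $\Sigma_1(\mathbb{A})>\Sigma_1(\mathbb{D}^2)=2\pi$. So invoking the $\wt\delta_{or}$-term in \eqref{ineq:NN'}, which only gives $\Sigma_1^T(N)\geq 2\pi$, cannot yield $\Sigma_1^T(N)>\Sigma_1(\mathbb{A})$. This is exactly the difficulty the paper flags: unlike in the closed case (where the Lawson surfaces give area above $4\pi^2$), there is no off-the-shelf free boundary surface with the right symmetry and area above that of the critical catenoid. The paper gets past the catenoid not via $2\pi$ but by exhibiting, in each family (1)--(4) and (7), a specific degeneration chain whose first step has $P'^{\iota}\neq\varnothing$, and then running a \emph{modified} induction on a new complexity $\hat c$ whose base (``initial'') surfaces are those with $\Sigma_1>\Sigma_1(\mathbb{A})$; the numerical hypotheses in (2)--(4), (7) are tuned to guarantee such a chain exists, not to exclude an exceptional quadric.

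Second, you are missing two substantial constructions that have no analogue in your outline. In Case~(6) the paper proves $\Sigma_1(N_{\rho_1}(\mathbb{Z}_2,f))<\Sigma_1(N_{\rho_1}(\mathbb{Z}_2,f+\rho_1))$ (and a companion inequality) by an explicit cut-and-reglue construction: take the $\mathbb{Z}_2\times D_2$-fundamental domain $U$ of the maximizing metric, form $\hat U=U\cup U^*$ with the $N^\tau/N^{\rho_2}$ labels swapped on $U^*$, double twice, and show via mixed Steklov bracketing and a unique-continuation argument that the resulting $\hat N$ has strictly larger $\bar\sigma_1$. In Case~(8) there is an entire \emph{family} of problematic two-boundary degenerations (the ``bad'' surfaces), and the paper introduces a quantity $B^T(N)=\max\Sigma_1^{T'}(N')$ over bad predecessors, redefines the induction in terms of $\Sigma_1^T(N)>B^T(N)$, and proves several auxiliary lemmas (using Proposition~\ref{prop:non_uniq} plus the Case~(6) inequalities) to verify this for the parameter range in (8). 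Your proposal does not anticipate either mechanism; the claim that the combinatorial hypotheses simply ``exclude'' the exceptional $\mathbb{Z}_2\times D_2$ form is not what happens.
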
 
\begin{remark}
\label{Rstek8}
The conditions in item (8) are not optimal and it is likely that clever modifications of our ad hoc arguments could widen the range of parameters. We chose not to pursue this direction any further for the sake of brevity, and because we expect that future developments in free boundary minimal surface theory may allow for a much shorter universal argument similar to the one in Theorem~\ref{thm:group_existence}.
\end{remark}
\begin{proof}
The overall strategy of the proof is analogous to that of Theorem~\ref{thm:group_existence}, where the main difference is in the treatment of the cases to which Proposition~\ref{prop:Suniq_map} can not be applied.
To begin, we define the complexity as in Definition \ref{cdef}--equivalently, we set $c(N,T) = c(\wt{N},\wt{T})$, where $(\wt{N}, \wt{T})$ is a double of $(N,T)$--and prove the theorem by induction on $c(N,T)$. Some of the cases below require a modified definition of complexity, which we introduce later.

Assume first that $c(N,T)=0$. This happens iff $c(\wt{N}, \wt{T}) = 0$, so the cases 2)-4) described in the proof of Theorem~\ref{thm:group_existence} are the possibilities for $(\wt{N}, \wt{T})$. In cases 2) and 3) one has that $N$ is a disk, hence, we can conclude with the help of Corollary~\ref{cor:max_disk}. In case 4) $\wt{N}$ has no $\wt\Gamma$-invariant closed curves, therefore, the existence of a $\bar\sigma_1^T(N)$-maximizer follows from Theorem~\ref{thm:Sexistence}.

Suppose now that the theorem holds for all $(N',T')$ with $c(N',T')\leq k$. Let $c(N,T) = k+1$ and let
$(N',T')\prec (N,T)$ be a topological degeneration, which we can always assume to be elementary.
Then $c(N',T')\leq k$, so that there exists a $T'$-invariant $\bar\sigma_1^{T'}$-maximal metric $g$ on $N'$ induced by a free boundary minimal embedding $\Psi_{(N',T')}$ into $\mathbb{B}^3$. Suppose that $N'$ does not have exactly $2$ boundary components. Then
by Proposition~\ref{prop:Suniq_map}, $\Psi_{(N',T')}$ is the unique map by $\sigma_1(N',g)$ eigenfunctions to $\mathbb{B}^n$ (up to an element of orthogonal group) and since it is embedding, it separates points. Thus, combining Lemma~\ref{lemma:Scollapsed_sets} with  Theorem~\ref{thm:general_Sexistence} yields inequality~\eqref{ineq:toprove_S}.

It remains to consider the case when $N'$ has exactly two boundary components. This is where the particular properties of each family of group actions come in. We consider these families separately.

{\bf Case 1.} $(N',T') = N_\tau(\mathbb{Z}_2, f'+e_1'\rho_1)$. Since $N'$ has $2$ boundary components, one has $(f',e_1') = (1,0)$ or $(0,2)$. In both cases, the critical catenoid admits a $\mathbb{Z}_2$ action of the corresponding type, hence, $\Sigma_1^{T'}(N') = \Sigma_1(\mathbb{A})$ and $\Sigma_1^{T'}(N')$ admits a maximal metric. If $(N',T')\prec (N,T)$ is an elementary degeneration, then $(N,T) = N_\tau(\mathbb{Z}_2,1+\rho_1)$. The collapsed set for the degeneration $N_\tau(\mathbb{Z}_2,1)\prec N_\tau(\mathbb{Z}_2,1+\rho_1)$ can be chosen to consist of a single point in $P'^{\iota}$, hence, by Theorem~\ref{thm:general_Sexistence}
\[
\Sigma_1^{T'}(N') = \Sigma_1(N_\tau(\mathbb{Z}_2,2\rho_1)) = \Sigma_1(N_\tau(\mathbb{Z}_2,1))<\Sigma_1(\mathbb{Z}_2,1+\rho_1) = \Sigma_1^T(N).
\]  

{\bf Case 2.} $(N',T') = N_\tau(D_2, f' + e'_1\rho_1 + e'_2\rho_2 + v'_{12}\rho_1\rho_2)$. Then $(f',e_1',e_2',v_{12}') = (0,1,0,0), (0,0,1,0)$ or $(0,0,0,2)$ and in all cases the critical catenoid admits a $D_2$ action of the corresponding type, so that $\Sigma_1^{T'}(N') = \Sigma_1(\mathbb{A})$ is realized by the critical catenoid. In this case, we can not show~\eqref{ineq:toprove_S} for the degeneration $N_\tau(D_2,\rho_2)\prec N_\tau(D_2,1)$, so we need to modify the induction argument by choosing a different base case.
If $(N,T) = N_\tau(D_2, f + e_1\rho_1 + e_2\rho_2 + v_{12}\rho_1\rho_2)$ with $f+e_1+e_2+v_{12}>1$, we can assume that $f+e_1+e_2>0$, since otherwise $(N,T) = N_\tau(D_2,2\rho_1\rho_2)$, a case that is covered above. Then one has $N_\tau(D_2,\rho_1)\prec N_\tau(D_2,\rho_1+\rho_1\rho_2)\preccurlyeq (N,T)$ or $N_\tau(D_2,\rho_2)\prec N_\tau(D_2,\rho_2 + \rho_1\rho_2) \preccurlyeq (N,T)$ and in both cases there exists a maximal metric on the first surface in the chain, and the collapsed set in the first degeneration can be chosen to have $P'^\iota\neq \varnothing$, so that by Theorem~\ref{thm:general_Sexistence},
\begin{equation}
\label{ineq:cat_comp1}
\Sigma_1(\mathbb{A}) = \Sigma_1^{T'}(N')<\Sigma_1^T(N)
\end{equation}
Let now $(\hat N,\hat T)$ be a surface in the family such that $\Sigma_1^{\hat T}(\hat N)>\Sigma_1(\mathbb{A})$, but for all $(\hat N',\hat T')\prec(\hat N,\hat T)$ one has $\Sigma_1^{\hat T'}(\hat N') \leq \Sigma_1(\mathbb{A})$. In particular, by Theorem~\ref{thm:Sexistence} there exists a $\bar\sigma_1^{\hat T}(\hat N)$-maximal metric. The idea now is to use such $(\hat N,\hat T)$ as a base of induction, namely, we define $\hat c(N,T)$ to be the maximal length of the decreasing chain starting at $(N,T)$ and terminating at one of the $(\hat N,\hat T)$. 

\begin{lemma}
\label{lem:hatc_1}
Let $(N,T) = N_\tau(D_2, f + e_1\rho_1 + e_2\rho_2 + v_{12}\rho_1\rho_2)$ with $f+e_1+e_2+v_{12}>1$. Then  $\hat c(N,T)$ is well-defined. In particular, for any $(N'',T'')\prec (N,T),$ either $\Sigma_1^{T''}(N'') \leq \Sigma_1(\mathbb{A})$ or the quantity $\hat c(N'',T'')$ is well-defined.
\end{lemma}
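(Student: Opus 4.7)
The plan is to deduce well-definedness from the fact that the partial order $\prec$ is well-founded on the family $N_\tau(D_2,\ub)$. First I would record that by Proposition~\ref{prop:bunder_degen}, every elementary degeneration of $M(D_2,\ub)$ has the form $M(D_2,\ub-\uv)$ with $\uv\in\{1-\rho_i,\,\rho_i-\rho_i\rho_j,\,\rho_i\rho_j\}$, and a direct computation from Lemma~\ref{Lbcplat}(iii) shows that in each case the genus of the double strictly decreases. Since the involution $\tau$ is preserved under degeneration, the family $N_\tau(D_2,\ub)$ is closed under $\prec$, and every strictly descending chain has length at most $\genus(\wt N)$, hence terminates.

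For the first assertion I would introduce the set
\[
S(N,T) := \{(N',T')\preccurlyeq (N,T) \,:\, \Sigma_1^{T'}(N')>\Sigma_1(\mathbb{A})\},
\]
which contains $(N,T)$ by inequality~\eqref{ineq:cat_comp1} and so is nonempty. Well-foundedness of $\prec$ then yields a $\prec$-minimal element $(\hat N,\hat T)\in S(N,T)$. For any $(\hat N',\hat T')\prec (\hat N,\hat T)$, transitivity gives $(\hat N',\hat T')\preccurlyeq (N,T)$, so minimality forces $\Sigma_1^{\hat T'}(\hat N')\leq \Sigma_1(\mathbb{A})$; hence $(\hat N,\hat T)$ is a base of the type used in the modified induction, and any descending chain from $(N,T)$ to $(\hat N,\hat T)$ witnesses that the collection of chains appearing in the definition of $\hat c(N,T)$ is nonempty, while their lengths are uniformly bounded by $\genus(\wt N)$. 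This gives the first conclusion.

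For the second assertion, given $(N'',T'')\prec (N,T)$ with $\Sigma_1^{T''}(N'')>\Sigma_1(\mathbb{A})$, I would apply the same argument to the set
\[
S(N,T)\cap \{(N',T'):(N',T')\preccurlyeq (N'',T'')\},
\]
which is nonempty since it contains $(N'',T'')$, to extract a $\prec$-minimal element serving as a valid base and a chain from $(N'',T'')$ down to it; this witnesses well-definedness of $\hat c(N'',T'')$.

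The argument is essentially a formal well-foundedness observation; the only substantive input is the strict monotonicity of the genus under $\prec$ together with~\eqref{ineq:cat_comp1}. There is no genuine obstacle, but one must verify that $N_\tau(D_2,\ub)$ is closed under elementary degenerations so that the base $(\hat N,\hat T)$ stays in the family—this is immediate from Proposition~\ref{prop:bunder_degen}.
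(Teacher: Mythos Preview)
Your proof is correct and takes essentially the same approach as the paper: both arguments use \eqref{ineq:cat_comp1} to ensure $(N,T)$ itself satisfies $\Sigma_1^T(N)>\Sigma_1(\mathbb{A})$, then descend through the well-founded order $\prec$ until reaching a minimal element, which is by definition one of the base surfaces $(\hat N,\hat T)$. The paper phrases this as an iterative descent guaranteed to terminate after $c(N,T)$ steps, while you phrase it via a $\prec$-minimal element of $S(N,T)$ and bound chain lengths by $\genus(\wt N)$; these are the same argument.
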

\begin{proof}
 To prove the first assertion, we need to show that there exists a decreasing chain starting at $(N,T)$ and terminating at one of the $(\hat N,\hat T)$. By~\eqref{ineq:cat_comp1}, one has $\Sigma_1^T(N)>\Sigma_1(\mathbb{A})$. If for all $(N_1,T_1)\prec (N,T)$ one has $\Sigma_1^{T_1}(N_1)\leq\Sigma_1(\mathbb{A})$, then $(N,T)$ is one of $(\hat N, \hat T)$ and $\hat c(N,T) = 0$. Otherwise, there exists $(N_1,T_1)\prec (N,T)$ such that $\Sigma_1^{T_1}(N_1)>\Sigma_1(\mathbb{A})$ and we can repeat the process with $(N_1,T_1)$. The process stops only if we arrive at $(\hat N,\hat T)$ and it is guaranteed to stop after $c(N,T)$ steps.
The second assertion is proved in the same way. Indeed, in the previous paragraph we proved that if $\Sigma_1^T(N)>\Sigma_1(\mathbb{A})$, then $\hat c(N,T)$ is well-defined. 
\end{proof}
We can now finish the proof by induction on $\hat c(N,T)$ in the same way as before. Indeed, as we mentioned before, by construction surfaces with $\hat c(N,T)=0$ satisfy the conclusion of the theorem. Moreover, similarly to $c(N,T)$ one has $\hat c(N',T')<\hat c(N,T)$ as soon as $(N',T')\prec (N,T)$. And, finally, again by construction, surfaces for which $\hat c(N,T)$ is defined have more than $2$ boundary components, so that one can combine Lemma~\ref{prop:Suniq_map} with Theorem~\ref{thm:general_Sexistence} to prove the induction step.


{\bf Case 3.} $(N',T')=N_\tau(D_k, f' + e'_1\rho_1 + e'_2\rho_2 + v'_{12}\rho_1\rho_2)$ with $k>2$. Then $(f',e_1',e_2',v_{12}') = (0,0,0,2)$ and the critical catenoid admits a $D_k$ action of the corresponding type, so that $\Sigma_1^{T'}(N') = \Sigma_1(\mathbb{A})$ is realized by the critical catenoid. The strategy of the proof is similar to the previous case in that we need to modify the definition of complexity to account for some degenerations where we can not directly show~\eqref{ineq:toprove_S}, for example $N_\tau(D_k,2\rho_1\rho_2)\prec N_\tau(D_k,\rho_1+\rho_1\rho_2)$.

Let $(N,T) = N_\tau(D_k, f + e_1\rho_1 + e_2\rho_2 + v_{12}\rho_1\rho_2)$, where $k>2$ and $f+e_1+e_2+v_{12}>2$. Then one has $N_\tau(D_k,2\rho_1\rho_2)\prec N_\tau(D_k,\rho_1+2\rho_1\rho_2)\preccurlyeq (N,T)$ or $N_\tau(D_k,\rho_2+ 2\rho_1\rho_2)\prec N_\tau(D_k,2\rho_1\rho_2)\preccurlyeq (N,T)$ and in both cases the collapsed set of the first degeneration can be chosen to have non-empty $P'^\iota$, so that by Theorem~\ref{thm:general_Sexistence}
\begin{equation*}
\label{ineq:cat_comp2}
\Sigma_1(\mathbb{A}) = \Sigma_1^{T'}(N')<\Sigma_1^T(N).
\end{equation*}
With \eqref{ineq:cat_comp2} established, the rest of the proof is exactly as in Case 2.

{\bf Case 4.} $(N',T') = N_\tau(\mathbb{Z}_2\times D_k,\ub')$. Assume for now that $k>2$, then $\ub' = \rho_2\rho_3$ and the critical catenoid admits a $\mathbb{Z}_2\times D_k$ action of the corresponding type, so that $\Sigma_1^{T'}(N') = \Sigma_1(\mathbb{A})$ is realized by the critical catenoid.

If $(N,T) = N_{\tau}(\mathbb{Z}_2\times D_k,\ub)$ with $k>2$, $|\ub|>1$, then one has $N_\tau(\mathbb{Z}_2\times D_k,\rho_2\rho_3)\prec N_\tau(\mathbb{Z}_2\times D_k,\rho_1\rho_2+\rho_2\rho_3)\preccurlyeq (N,T)$ or $N_\tau(\mathbb{Z}_2\times D_k,\rho_2\rho_3)\prec N_\tau(\mathbb{Z}_2\times D_k,\rho_1\rho_3+\rho_2\rho_3)\preccurlyeq (N,T)$. In both cases the collapsed set can be chosen to have non-empty $P'^\iota$, so that by Theorem~\ref{thm:general_Sexistence}
\begin{equation*}
\label{ineq:cat_comp2}
\Sigma_1(\mathbb{A}) = \Sigma_1^{T'}(N')<\Sigma_1^T(N).
\end{equation*}
The rest of the proof is the same as in Case 2.

If $k=2$ the only difference is that the $\rho_i$ are all interchangeable, so that one could also have $\ub'=\rho_1\rho_2$ or $\rho_2\rho_3$. However, it does not affect the remainder of the proof.

{\bf Case 5.} $(N',T') = N_\tau(\Gamma,\ub')$, where $\Gamma$ is a platonic group. In this case there are no values of $\ub'$ for which $N'$ has exactly two boundary components, so the induction on $c(N,T)$ is enough to conclude.

{\bf Case 6.} $(N',T') = N_{\rho_1}(\mathbb{Z}_2,f'+e'_1\rho_1)$, then either $e_1'=2$ or $e_1'=0$. The key new observation is the following lemma.

\begin{lemma}
One has
\begin{equation}
\label{ineq:gap_cond1}
\begin{split}
\Sigma_1(N_{\rho_1}(\mathbb{Z}_2,f+2\rho_1))&<\Sigma_1(N_{\rho_1}(\mathbb{Z}_2,(f+1)+\rho_1));\\
\Sigma_1(N_{\rho_1}(\mathbb{Z}_2,f))&<\Sigma_1(N_{\rho_1}(\mathbb{Z}_2,f+\rho_1)),
\end{split}
\end{equation}
and 
\begin{equation}
\label{ineq:gap_cond2}
\begin{split}
\Sigma_1(N_{\rho_1}(D_2,e_2\rho_2+2\rho_1\rho_2))&<\Sigma_1(N_{\rho_1}(D_2,(e_2+1)\rho_2 + \rho_1\rho_2));\\
\Sigma_1(N_{\rho_1}(D_2,e_2\rho_2))&<\Sigma_1(N_{\rho_1}(D_2,e_2\rho_2+\rho_1\rho_2)),
\end{split}
\end{equation}
provided that the quantities on the l.h.s. are achieved by a smooth metric.
\end{lemma}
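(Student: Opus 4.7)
The plan is to derive all four inequalities from Theorem~\ref{thm:general_Sexistence}, applied to equivariant degenerations of the doubled surfaces classified in Section~\ref{sec:actions_ex}. For the first inequality I take $\uv = 1 - \rho_1$, yielding the degeneration $M(\Z_2, f+2\rho_1) \prec M(\Z_2,(f+1)+\rho_1)$ on the doubles; for the third I take the analogous $D_2$-degeneration; and for the second and fourth I take the degenerations corresponding to subtracting $\rho_1$ and $\rho_1\rho_2$ respectively. Following the prescription at the end of Section~\ref{sec:actions_ex}, the collapsed set can in each case be chosen so that $P_1' = \{p,q\}$ forms a single equivalence class consisting of two points both fixed by the residual $\tau$-action and lying on distinct boundary components of $N'$. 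In particular $P'^\iota = \varnothing$, so the proof proceeds via condition~(2) of Theorem~\ref{thm:general_Sexistence}.

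Using the hypothesis that a smooth $T'$-invariant metric $g'$ attains $\Sigma_1^{T'}(N')$, together with the fact that $(N',\tau)$ (respectively its $D_2$-refinement) is a basic reflection surface, Corollary~\ref{Prealbd} produces a free boundary minimal embedding $\iota\colon (N',g')\to \B^3$ by first Steklov eigenfunctions, and injectivity of $\iota$ gives $\iota(p)\ne\iota(q)$. By Proposition~\ref{Lrhoquo3}~(ii), the multiplicity of $\sigma_1(N',g')$ equals $3$, so every $T'$-equivariant free boundary harmonic map $\Psi\colon(N',\partial N')\to(\B^{n+1},\Sph^n)$ by first Steklov eigenfunctions is of the form $\Psi = L\iota$ for a unique linear map $L\colon\R^3\to\R^{n+1}$. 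Verifying condition~(2) therefore reduces to showing that $L(\iota(p))\ne L(\iota(q))$ for every such $\Psi$.

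The main obstacle is that, since $N'$ has two boundary components, Proposition~\ref{prop:Suniq_map} does not apply directly, and eigenmaps with $L\ne cA$ for $A\in O(3)$ can a priori exist. If such a $\Psi$ satisfies $L(\iota(p))=L(\iota(q))$, then in particular $\Psi\ne A\iota$, and Proposition~\ref{prop:non_uniq} forces $(N',g')$ to admit an additional $\Z_2\times D_2$-action of type $N_{\rho_1}(\Z_2\times D_2, e_2'\rho_2 + v_{23}'\rho_2\rho_3)$. The hard part will be to rule this out: the extra reflection identifying $\iota(p)$ with $\iota(q)$ must interchange the two boundary circles of $\iota(N')\subset \B^3$, and combined with the existing $\tau$-action fixing $p$ and $q$ individually, this forces $\iota(N')$ into a rigid configuration symmetric about two orthogonal planes whose boundary lies in the intersection of $\Sph^2$ with an ellipsoid of revolution. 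A careful case analysis, separately for each of the four types $f+2\rho_1$, $f$, $e_2\rho_2+2\rho_1\rho_2$, $e_2\rho_2$, shows that this configuration is incompatible with the prescribed number of $\tau$-fixed points on each boundary component, thereby yielding condition~(2) of Theorem~\ref{thm:general_Sexistence} and completing the proof of all four inequalities.
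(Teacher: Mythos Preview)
Your proposal follows the paper's argument up through the invocation of Proposition~\ref{prop:non_uniq}: if the strict inequality fails, Theorem~\ref{thm:general_Sexistence} forces an equivariant eigenmap $\Psi$ with $\Psi(p)=\Psi(q)$, hence $\Psi\neq A\iota$, hence $(N',g')$ carries an additional $\Z_2\times D_2$ action. But your final step is wrong. You assert that a ``careful case analysis'' shows this extra symmetry is ``incompatible with the prescribed number of $\tau$-fixed points on each boundary component.'' It is not: the paper explicitly identifies the resulting action as the type $N_{\rho_2}(\Z_2\times D_2,\lfloor f/2\rfloor\rho_2 + [f]_2\rho_2\rho_3)$ and even draws a picture (Figure~\ref{FM0}) showing how this $\Z_2\times D_2$ structure sits perfectly consistently on $N_{\rho_1}(\Z_2,f)$. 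There is no topological obstruction to rule out; your case analysis would find nothing.

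The paper instead \emph{exploits} the extra reflections $\rho_2,\rho_3$. From the proof of Proposition~\ref{prop:non_uniq} it extracts the eigenvalue identities $\sigma_1(N,g)=\sigma_1^{++-}=\sigma_1^{+-+}=\sigma_1^{-++}$ on the $\langle\tau,\rho_2,\rho_3\rangle$-fundamental domain $U$. It then performs a transplantation: take a second copy $U^*$ of $U$ with the $N^\tau$ and $N^{\rho_2}$ labels swapped, glue $U$ to $U^*$ along the $N^{\rho_3}$-edge to get $\hat U$, and double twice to obtain a surface $\hat N$ of type $N_{\rho_1}(D_2,f\rho_2+\rho_1\rho_2)$, which is also of type $N_{\rho_1}(\Z_2,f+\rho_1)$. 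The induced metric $h$ on $\hat N$ satisfies $\bar\sigma_1(\hat N,h)>\bar\sigma_1(N,g)$ by a combination of Neumann bracketing (Claim~1), a parity decomposition using the $\rho_3$-symmetry of $\hat U$ (Claim~2), and a perturbation argument to break the remaining equality (Claim~3). This directly yields $\Sigma_1(N_{\rho_1}(\Z_2,f+\rho_1))>\Sigma_1(N_{\rho_1}(\Z_2,f))$, contradicting the assumption. The rearrangement-and-bracketing idea is the essential content of the proof, and your proposal does not contain it.
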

\begin{proof}
We provide the argument for the second inequality in~\eqref{ineq:gap_cond1}; the others are proved in the same way.
If the strict inequality fails, then by Proposition~\ref{prop:non_uniq} there is a $\bar \sigma_1^T$-maximal metric $g$ for $(N,T) = N_{\rho_1}(\mathbb{Z}_2,f)$ with a $\mathbb{Z}_2\times D_2$ action of type $N_{\rho_2}(\mathbb{Z}_2\times D_2,\lfloor f/2\rfloor\rho_2 + [f]_2\rho_2\rho_3)$, where $[f]_2$ is a residue of $f$ modulo $2$ and $D_2$ is generated by $\rho_2,\rho_3$, see Figure~\ref{FM0}.

\begin{figure}[h]
\centering
\def\ra{4}
\begin{tikzpicture}[scale=.7, rotate=90,transform shape] 
\draw [draw=black, fill=light-gray] (0, 0) ellipse ({\ra/sqrt(2)} and {\ra});
\draw[dashed] (0, -\ra)--(0, \ra); 
\draw[dashed] (-{\ra/sqrt(2)}, 0)--({\ra/sqrt(2)}, 0); 


\foreach \j in {0, 1, 2}
{
\foreach \i in {-1, 1}
{
\fill [fill=white] (0, {\i*(\j*\ra/3)}) circle ({\ra/12}); 
\draw [draw=black, densely dashed] (0, {\i*(\j*\ra/3)}) circle ({\ra/12}); 
}
}
\end{tikzpicture} 
\caption{A fundamental domain for the action of $\tau$ on $N = N_{\rho_1}(\Z_2,5)$.  Fixed-point sets are dashed, with $N^\tau, N^{\rho_2}$, and $N^{\rho_3}$ respectively consisting of the circular curves, horizontal segments, and the vertical segments. A $\Z_2\times D_2$-fundamental domain $U$ is one of the quarters.}
\label{FM0}
\end{figure}
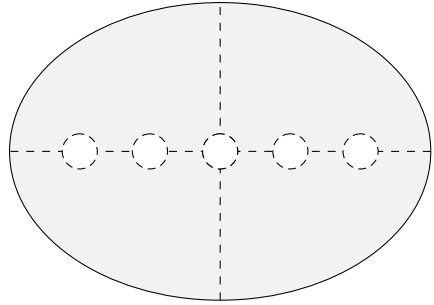

From the proof of Proposition~\ref{prop:non_uniq} one has that 
\begin{align}
\label{EM0}
\sigma_1(N, g) = \sigma_1^{++-}(N, g) = 
\sigma_1^{+-+}(N, g)
=
\sigma_1^{-++}(N, g),
\end{align}
where each $\sigma^{\pm \pm \pm}_k(N, g)$ denotes the $k$-th nonzero Steklov eigenvalues in the space of 
 even ($+$) or odd ($-$)  functions with respect to the generators $\tau, \rho_2, \rho_3$ in that order. 

To prove the inequality~\eqref{ineq:gap_cond1} we construct a metric $h$ on a surface $(\hat N, \hat T) = N_{\rho_1}(D_2,f\rho_2 + \rho_1\rho_2)$ such that $\bar\sigma_1(\hat N,h)>\bar\sigma_1(N,g)$. Since a surface of type $N_{\rho_1}(D_2,f\rho_2 + \rho_1\rho_2)$ is also a surface of type $N_{\rho_1}(\Z_2, f+\rho_1)$, this would complete the proof.

To each edge of the boundary $\partial U$ of fundamental domain $U$ of $N$ assign a \emph{label}, given by the name of the set, either $\partial N, N^{\tau}, N^{\rho_2}$, or $N^{\rho_3}$, containing it. This way, for example, $\sigma_1^{++-}$ corresponds to the first eigenvalue of the mixed Steklov problem on $U$ with Steklov condition on $\bd N$, Neumann conditions on $N^\tau, N^{\rho_2}$ and Dirichlet condition on $N^{\rho_3}$.
Now let $U^*$ be another copy of $U$, but with the labels on the $N^{\tau}$ and $N^{\rho_2}$ edges interchanged, and let $\Uhat$ be the quotient of $U\cup U^*$ obtained by identifying the points along the side with label $N^{\rho_3}$. Now define $\hat N$ to be the surface obtained from doubling $\Uhat$ along the sides with label $N^\tau$, and then doubling the result along the sides with label $N^{\rho_2}$, see Figure~\ref{Fbb}. 

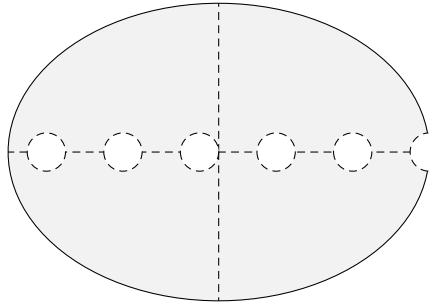
\begin{figure}[h]
\begin{tikzpicture}[scale=.7, rotate=180, transform shape]
\centering
\def\ra{4} 

   \fill [fill=light-gray, domain=-83:180+83, samples=100] plot ( {\ra*sin(\x)},{\ra/sqrt(2)*cos(\x)}); 
   \draw [black, domain=-83:180+83, samples=100] plot ( {\ra*sin(\x)},{\ra/sqrt(2)*cos(\x)}); 


\draw[densely dashed, black] (-\ra+.1, 0)--(\ra, 0 ); 
\draw[densely dashed, black] (0, 0)--(0, {\ra/sqrt(2)}); 
\draw[densely dashed, black] (0, 0)--(0, {-\ra/sqrt(2)}); 

   \fill [fill=white, domain=3:177, samples=100] plot ( {-4+\ra/11*sin(\x)},{\ra/11*cos(\x)});
      \draw [black, densely dashed, domain=3:177, samples=100] plot ( {-4+\ra/11*sin(\x)},{\ra/11*cos(\x)});


\foreach \j in {0, 1, 2}
{
\fill [fill=white]({\j*4*\ra/11+\ra/11}, 0 ) circle ({\ra/11}); 
\draw [black, densely dashed]({\j*4*\ra/11+\ra/11}, 0 ) circle ({\ra/11}); 

}
\foreach \j in {0, 1}
{
\fill [fill=white] ( {-\j*4*\ra/11-3*\ra/11}, 0) circle ({\ra/11});
\draw [black, densely dashed] ( {-\j*4*\ra/11-3*\ra/11}, 0) circle ({\ra/11});

}
\end{tikzpicture}
\caption{A fundamental domain for $\tau$ on $\hat N$ with $f=5$. Pieces on the left are isomorphic to $U$, while pieces on the right are isomorphic to $U^*$, the vertical line indicates the line, where $U$ is attached to $U^*$.}
\label{Fbb}
\end{figure}

 Let $\tau,\rho_2$ be the obvious involutions on $\hat N$ arising from the successive doubling and we consider the action $\hat T$ of $D_2$ on $\hat N$ generated by these involutions. It is easy to see that $(\hat N,\hat T)$ is of type $N_{\rho_1}(D_2,f\rho_2+\rho_1\rho_2)$. The metric $h$ on $\hat N$ is defined to be the $D_2$-invariant metric that agrees with $g$ on $U$ and $U^*$.

It follows from Proposition~\ref{Lasymcl}.(ii) that $\sigma_1(\hat N,h)$-eigenfunction can not be $\tau$-odd  and $\rho_2$-odd, i.e. $\sigma_1(\hat N,h)<\sigma_1^{--}(\hat N,h)$. In particular,
\begin{equation}
\label{Eb1m}
\sigma_1(\hat N, h) \geq \min (\sigma^{+-}_1(\hat N, h),\sigma^{-+}_1(\hat N,h), \sigma^{++}_1(\hat N, h)).
\end{equation}

\emph{Claim 1:} $\sigma_1^{-+}(\hat N,h) = \sigma_1^{+-}( \hat N, h) > \sigma_1(N, g)$. The first equality follows from the fact that the reflection across the image of $N^{\rho_3}$ in $\Uhat$ (the vertical line on Figure~\ref{Fbb}) is an isometry of $\hat U$ that interchanges the labels for $N^{\tau},N^{\rho_2}$. Next, one has
\begin{equation}
\label{EM1}
\sigma_1^{+-}(\hat N,h)\geq \sigma_1^{+-+}(N, g) = \sigma_1^{-++}(N,g) = \sigma_1(N,g),
\end{equation}
where the first inequality follows from the classical bracketing argument, stating that introducing Neumann boundary condition along the image of $N^{\rho_3}$ can not increase the eigenvalue, as the Neumann condition widens the space of admissible test functions~\cite[\S 3.2.1]{LMP}, and the equalities follow from~\eqref{EM0}. Furthermore, if the inequality in~\eqref{EM1} is an equality, then the restrictions of $\sigma_1^{+-+}(N, g)$ and $\sigma_1^{-++}(N,g)$-eigenfunctions to $U$ glue together to the restriction of $\sigma_1^{+-}(\hat N,h)$-eigenfunction to $\Uhat$. In particular, the former two eigenfunctions agree on $N^{\rho_3}$. Since they both satisfy Neumann conditions on $N^{\rho_3}$, they have the same Dirichlet and Neumann data there, which contradicts unique continuation for harmonic functions. This proves Claim 1.

\emph{Claim 2:} $\sigma_1^{++}(\hat N,h) = \sigma_1^{++-}(N,g)$. As we noted in the proof of Claim 1, reflection across $N^{\rho_3}$ is an isometry of $\Uhat$ that interchanges the labels. Since in order to compute $\sigma_1^{++}(\hat N,h)$ both labels are assigned the same boundary conditions on $\Uhat$, one has that the corresponding eigenspace splits into $\rho_3$-odd and $\rho_3$-even eigenfunctions, i.e. $\sigma_1^{++}(\hat N,h) = \min\{\sigma_1^{++-}(N,g),\sigma_1^{+++}(N,g)\}$. At the same time, by~\eqref{EM0} $\sigma_1^{++-}(N,g) = \sigma_1(N,g)<\sigma_1^{+++}(N,g)$ and the claim is proved.

\emph{Claim 3:} There exists a $\tau,\rho_2$-invariant perturbation of $h$ with the property that $\sigmabar^{++}_1( \hat N, h) > \sigmabar_1(N, g)$.  If this were not the case, then by standard perturbation results \cite[Proposition 5.2]{FraserSchoen}, there would exist a collection of $\tau,\rho_2$-even eigenfunctions on $(\hat N, h)$ whose sum of squares is equal to $1$ on $\partial \hat N$.  This is impossible by Claim $2$ since any such eigenfunction vanishes on $\bd\hat N\cap N^{\rho_3}$.

By choosing the perturbation in Claim 3 to be small enough, we may ensure that the inequality from Claim 1 still holds.  Combining these facts with \eqref{Eb1m} completes the proof of the lemma. 
\end{proof}

The proof in Case 6 is concluded by observing that~\eqref{ineq:gap_cond1} are the only conditions left to check. Indeed, if $N_{\rho_1}(\mathbb{Z}_2,f'+2\rho_1)\prec N_{\rho_1}(\mathbb{Z}_2,f+e_1\rho_1)$ is an elementary deformation, then $(f,e_1) = (f'+1,1)$ and~\eqref{ineq:toprove_S} follows from~\eqref{ineq:gap_cond1}. If $N_{\rho_1}(\mathbb{Z}_2,f')\prec N_{\rho_1}(\mathbb{Z}_2,f+e_1\rho_1)$ is an elementary deformation, then $(f,e_1) = (a',1)$ and once again~\eqref{ineq:toprove_S} follows from~\eqref{ineq:gap_cond1}.

{\bf Case 7.} $(N',T') = N_{\rho_1}(\mathbb{Z}_2\times D_k,\ub')$, $k>2$. Since $N'$ has $2$ boundary components, one has $e_1' = v'_{12} = v'_{13}= 0$. Furthermore, using Proposition~\ref{prop:non_uniq} we can conclude that~\eqref{ineq:toprove_S} holds unless $(N',T')$ admits an additional $\mathbb{Z}_2$-symmetry. It is then easy to see that the only surface admitting an action of the required type is $N_{\rho_1}(\mathbb{Z}_2\times D_k,\rho_2\rho_3)$ in which case the critical catenoid is of this topological type, so that $\Sigma_1(N_{\rho_1}(\mathbb{Z}_2\times D_k,\rho_2\rho_3)) = \Sigma_1(\mathbb{A})$. We can not show~\eqref{ineq:toprove_S} for the degeneration $N_{\rho_1}(\mathbb{Z}_2\times D_k,\rho_2\rho_3)\prec N_{\rho_1}(\mathbb{Z}_2\times D_k,\rho_2)$, so we follow the strategy similar to Case 2.

First, we have the following lemma, which is an analogue of~\eqref{ineq:cat_comp1}.
\begin{lemma} One has the following
\begin{equation}
\label{ineq:gap_cond3}
\begin{split}
\Sigma_1(\mathbb{A})&<\Sigma_1(N_{\rho_1}(\mathbb{Z}_2\times D_k,\rho_2+\rho_1\rho_2)) =\Sigma_1(N_{\rho_1}(\mathbb{Z}_2\times D_k,\rho_3+\rho_1\rho_3));\\
\Sigma_1(\mathbb{A})&<\Sigma_1(N_{\rho_1}(\mathbb{Z}_2\times D_k,\rho_2+\rho_1\rho_3)) =\Sigma_1(N_{\rho_1}(\mathbb{Z}_2\times D_k,\rho_3+\rho_1\rho_2)).
\end{split}
\end{equation}
\end{lemma}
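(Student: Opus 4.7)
My plan for proving the lemma on $\Sigma_1(\mathbb{A})$ compared to $\Sigma_1(N_{\rho_1}(\mathbb{Z}_2\times D_k,\ldots))$:

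The two equalities follow from the automorphism of $D_k$ interchanging the generators $\rho_2$ and $\rho_3$ (which is always available since the defining relations $\rho_2^2=\rho_3^2=(\rho_2\rho_3)^k=1$ are symmetric in the generators). This extends trivially to an automorphism of $\mathbb{Z}_2\times D_k$ fixing $\rho_1$, and produces a $\Gamma$-equivariant homeomorphism carrying the conformal/Steklov data of one surface onto the other.

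For the strict inequalities, I would mimic the proof of \eqref{ineq:cat_comp1} in Case 2: observe that the critical catenoid realizes $\Sigma_1(\mathbb{A})$ and carries a $\mathbb{Z}_2\times D_k$-action of type $N_{\rho_1}(\mathbb{Z}_2\times D_k,\rho_2\rho_3)$. Using Proposition~\ref{prop:bunder_degen}, I assemble the chain of elementary degenerations
\[
N_{\rho_1}(\mathbb{Z}_2\times D_k,\rho_2\rho_3)\;\prec\; N_{\rho_1}(\mathbb{Z}_2\times D_k,\rho_1\rho_2+\rho_2\rho_3)\;\prec\; N_{\rho_1}(\mathbb{Z}_2\times D_k,\rho_2+\rho_1\rho_2)
\]
by first collapsing along $\uv=\rho_2-\rho_2\rho_3$ and then along $\uv=\rho_1\rho_2$, and dually for the second inequality using $\rho_1\rho_3$ in place of $\rho_1\rho_2$. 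A $\bar\sigma_1$-maximal metric on the intermediate surface is available via a modified complexity $\hat c$-induction adapted from Case 2, in which the present lemma serves as the base case of the induction (so circularity is avoided). Theorem~\ref{thm:general_Sexistence} then upgrades the chain to the strict inequality $\Sigma_1(\mathbb{A})<\Sigma_1^T(N)$, provided the collapsed set of the \emph{first} elementary step can be chosen with $P'^\iota\neq\varnothing$.

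The technical crux—and the step I expect to be the main obstacle—is verifying $P'^\iota\neq\varnothing$ for the first elementary degeneration. The collapsed curves come from a $\Gamma$-orbit of segments lying in $\Omega^\tau$ whose endpoints sit at corners in $\Omega^{\rho_1}\cap\Omega^{\rho_2}$. Thus the curve $c$ in the double $M(\mathbb{Z}_2\times D_k,\rho_1\rho_2+\rho_2\rho_3)$ is contained in $M^\tau$ and meets $M^{\rho_1}$ only at finitely many corner points, so $c\not\subset M^{\rho_1}$. The plan is to trace through the surgery description in Section~\ref{sec:top_degen_boundary} to establish three facts: (i) the collapsed point in $N'=M/\rho_1$ is interior, since $c$ is not contained in the doubling locus $M^{\rho_1}$; (ii) its preimage in $\widetilde{P'}$ is a single $\rho_1$-invariant equivalence class—this uses that $\rho_1\in\mathrm{Stab}(c)$ (because $\rho_1$ fixes the corner endpoints of the defining segment) while $\rho_1$ swaps the two sides of $c$; and (iii) the surface piece $N''_i$ attached at this point inherits nonempty boundary from the $\rho_1$-fixed corners lying on $c$, placing the collapsed point in $P'^b$ and therefore in $P'^\iota$. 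Once (i)–(iii) are checked, the hypothesis of Theorem~\ref{thm:general_Sexistence} is satisfied and the proof is complete; the rest is bookkeeping to align this verification with the data of the intermediate chain.
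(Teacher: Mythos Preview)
Your plan has a genuine gap at the ``technical crux.'' You claim that $\rho_1$ swaps the two sides of the collapsed curve $c$, but the opposite is true. The curve $c$ is a component of $M^\tau$ with $\rho_1\in\Stab(c)$, and $\rho_1|_c$ is a reflection of the circle $c$ fixing two points. At such a fixed point $p\in M^\tau\cap M^{\rho_1}$, since $\tau\rho_1$ has order $2$ the lines $T_pM^\tau$ and $T_pM^{\rho_1}$ are orthogonal; hence $d\rho_1$ reflects across the normal line to $c$, reversing the tangent and \emph{preserving} the normal direction. Thus $\rho_1$ preserves each side of $c$, so both collapsed points lie in $M'^{\rho_1}$ and project to $\partial N'$. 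Since $\rho_1$ is central in $\tilde\Gamma$, the same holds for every curve in the orbit, and $P'^\iota=\varnothing$ for this degeneration. Condition~(1) of Theorem~\ref{thm:general_Sexistence} is therefore unavailable along your chain.

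The paper instead works with the direct degeneration $N_{\rho_1}(\mathbb{Z}_2\times D_k,\rho_2\rho_3)\prec N_{\rho_1}(\mathbb{Z}_2\times D_k,\rho_2+\rho_1\rho_2)$ and verifies condition~(2). The collapsed set is taken to be $P'=P_0'\sqcup\bigsqcup_{i=1}^k P_i'$, with $P_0'$ the $\Gamma$-orbit of a point $p\in N'^\tau\cap N'^{\rho_3}$ and each $P_i'$ a $\tau$-orbit of a point in $N'^{\rho_2}\cap\partial N'$. Using that every free boundary harmonic map from the critical catenoid by first eigenfunctions has the form $(ax,ay,bz)$ (Remark~\ref{rmk:quadric_catenoid}), one argues: since $z$ is $\tau$-odd and nonvanishing on $\partial N'$, collapsing any $P_i'$ forces $b=0$; but then $(x,y)$ separates the points of $P_0'$, so no such map collapses every class. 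This contradiction with condition~(2) gives the strict inequality. Your argument, by contrast, never engages with the classification of maps from the catenoid, which is the essential input here.
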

\begin{proof}
Both inequalities are proved in the same way; we provide details for the first one. Consider the degeneration $(N',T')=N_{\rho_1}(\mathbb{Z}_2\times D_k,\rho_2\rho_3)\prec N_{\rho_1}(\mathbb{Z}_2\times D_k,\rho_2+\rho_1\rho_2)$. Let $\Gamma = \mathbb{Z}_2\times D_k$, then the collapsed set $P'$ and its partition can be chosen as $P' = P_0'\sqcup \bigsqcup_{i=1}^kP_i'$, where $P_0'$ is the orbit $\Gamma p$ of a point $p\in N'^\tau\cap N'^{\rho_3}$ and $P_i'$ are formed by $\tau$-orbits of points in $N'^{\rho_2}\cap N'^{\rho_1}$. 


The required inequality follows from the conclusion of Theorem~\ref{thm:general_Sexistence}, if we can show that condition 2) is satisfied. Assume the contrary, i.e. there exists a free boundary harmonic map from the catenoid to $\mathbb{B}^3$ by $\sigma_1$-eigenfunctions such that each of $P_0',P_i'$ are sent to a single point.
By Remark~\ref{rmk:quadric_catenoid} any such map has the form $(ax,ay,bz)$, where $z$ is a $\tau$-odd eigenfunction whereas $x,y$ are $\tau$-even. Since $z$ never vanishes on $\bd N'$ and, in particular, has opposite signs on the two elements of $P_i'$, we must have $b=0$. At the same time, the map $(x,y)$ assumes different values on elements of $P_0'$, a contradiction.
\end{proof}

The rest of the proof proceeds as in Case 2: we use the same definition of complexity $\hat c(N,T)$ and induct on $\hat c$. To be precise, we consider two possibilities for $(N,T) = N_{\rho_1}(\mathbb{Z}_2\times D_k,\ub)$. If $f+e_2+e_3+v_{23} = 0$, then $N_{\rho_1}(\mathbb{Z}_2\times D_k,\rho_2\rho_3)\not\prec (N,T)$, hence, the existence in this case follows from the induction on $c(N,T)$. If $f+e_2+e_3+v_{23} >0$, then conditions on $\ub$ imposed in the formulation of the theorem imply that $(\hat N,\hat T)\prec (N,T)$, where $(\hat N,\hat T)$ is one of the surfaces in the r.h.s. of~\eqref{ineq:gap_cond3}. Then~\eqref{ineq:NN'} together with~\eqref{ineq:gap_cond3} imply that $\Sigma_1^T(N)>\Sigma_1(\mathbb{A})$ and, hence, $\hat c(N,T)$ is defined by Lemma~\ref{lem:hatc_1}.

{\bf Case 8.} $(N',T') = N_{\rho_1}(D_2,f'+e'_1\rho_1+e_2'\rho_2 + v'_{12}\rho_1\rho_2)$, then $(e_1',v_{12}') = (1,0), (0,0)$ or $(0,2)$. Furthermore, using Proposition~\ref{prop:non_uniq} we can conclude that~\eqref{ineq:toprove_S} holds unless $(N',T')$ satisfies an additional $\mathbb{Z}_2$-symmetry. The only surfaces that satisfy the additional symmetry of the required type are $N_{\rho_1}(D_2,e_2'\rho_2 + 2\rho_1\rho_2)$ or $N_{\rho_1}(D_2,f'+e'_1\rho_1+e_2'\rho_2)$, where in the latter case $e_1',e_2'\leq 1$. Let us refer to such surfaces as {\em bad}. Similarly to case 2, to complete the proof we modify the induction procedure. An additional challenge is that there is a whole countable family of surfaces for which we can not show inequality~\eqref{ineq:toprove_S} directly, so the arguments required are more involved. 

Consider the quantity
\[
B^T(N):=\max_{\substack{(N',T')\prec (N,T),\\ \text{$(N',T')$ is bad}}}\Sigma_1^{T'}(N').
\]
It is easy to see that $B^T(N)$ is defined for all $(N,T)$, except for $N_{\rho_1}(D_2,\rho_1\rho_2)$, which is diffeomorphic to a disk and is, thus, covered by the Remark~\ref{rmk:disk}. Set $B(N_{\rho_1}(D_2,\rho_1\rho_2)) = 0$ for convenience. 
It is easy to see that $(N',T')\prec(N,T)$ implies $B^{T'}(N')\leq B^T(N)$. 
We call $(N,T)$ {\em initial} if $\Sigma^T_1(N)>B^T(N)$, but for all $(N',T')\prec (N,T)$ one has $\Sigma_1^{T'}(N') = B^{T'}(N')$. Note that with this definition $N_{\rho_1}(D_2,\rho_1\rho_2)$ is initial. Furthermore, if $(N,T)$ is initial, then for all $(N',T')\prec (N,T)$ one has 
\[
\Sigma_1^T(N)>B^T(N)\geq B^{T'}(N') = \Sigma_1^{T'}(N'),
\] 
hence, by Theorem~\ref{thm:Sexistence}, there exists $\bar\sigma_1^T(N)$-maximal metric. The idea is to use initial surfaces as a base of induction.

\begin{lemma}
\label{lem:hatc_2}
If $\Sigma^T_1(N)>B^T(N)$, then $\Sigma^T_1(N)$ is achieved by a smooth metric.
\end{lemma}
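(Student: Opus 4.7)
The plan is to adapt the induction strategy used in Cases~2, 3, 4, and 7, replacing the single distinguished base case (a catenoidal comparison) by the class of all initial surfaces. To that end, I would define the modified complexity $\hat c(N,T)$ as the maximal length of a descending chain $(N_k,T_k)\prec\cdots\prec (N_1,T_1)\prec (N_0,T_0)=(N,T)$ terminating at an initial surface, and first check that $\hat c(N,T)$ is well-defined whenever $\Sigma_1^T(N)>B^T(N)$. This follows as in Lemma~\ref{lem:hatc_1}: either $(N,T)$ is itself initial, so $\hat c=0$, or by the definition of \emph{initial} there is $(N_1,T_1)\prec (N,T)$ with $\Sigma_1^{T_1}(N_1)>B^{T_1}(N_1)$, and the iteration must stop after at most $c(N,T)$ steps.

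I would then proceed by induction on $\hat c(N,T)$. The base case $\hat c(N,T)=0$ means $(N,T)$ is initial, so every $(N',T')\prec(N,T)$ satisfies $\Sigma_1^{T'}(N')=B^{T'}(N')\leq B^T(N)<\Sigma_1^T(N)$, and Theorem~\ref{thm:Sexistence} produces a smooth maximizer. For the inductive step, I would fix $(N,T)$ with $\hat c(N,T)=k+1$, assume the statement for all surfaces with smaller $\hat c$, and show $\Sigma_1^T(N)>\Sigma_1^{T'}(N')$ for every elementary $(N',T')\prec (N,T)$ by splitting into three cases. Case (a): $(N',T')$ is bad, in which case $\Sigma_1^{T'}(N')\leq B^T(N)<\Sigma_1^T(N)$ directly. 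Case (b): $(N',T')$ is not bad but $\Sigma_1^{T'}(N')=B^{T'}(N')$; since any bad $(N'',T'')\prec (N',T')$ is also a bad degeneration of $(N,T)$ by transitivity of $\prec$, we have $B^{T'}(N')\leq B^T(N)<\Sigma_1^T(N)$. Case (c): $(N',T')$ is not bad and $\Sigma_1^{T'}(N')>B^{T'}(N')$; then $\hat c(N',T')$ is defined and strictly less than $\hat c(N,T)$, so by the induction hypothesis $(N',T')$ carries a $\bar\sigma_1^{T'}$-maximal metric $g'$ inducing a free boundary minimal embedding $\Psi_{(N',T')}\colon N'\hookrightarrow \mathbb{B}^3$.

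The substantive point is case (c), where I would verify hypothesis~(2) of Theorem~\ref{thm:general_Sexistence} by appealing to the uniqueness results for free boundary harmonic maps by first Steklov eigenfunctions. If $N'$ does not have exactly two boundary components, Proposition~\ref{prop:Suniq_map} shows every such map equals $A\Psi_{(N',T')}$ for some $A\in O(3)$. If $N'$ does have two boundary components, then the non-badness of $(N',T')$ rules out the exceptional symmetric type isolated by Proposition~\ref{prop:non_uniq}, so the same uniqueness conclusion holds. In both subcases the embedding $\Psi_{(N',T')}$ separates points of $N'$. Selecting the collapsed set $P'=\sqcup P_i'$ supplied by Lemma~\ref{lemma:Scollapsed_sets}, we have either $P'^\iota\neq\varnothing$, triggering condition~(1) of Theorem~\ref{thm:general_Sexistence}, or else every $P_i'$ contains at least two elements, and the point-separation property of $\Psi_{(N',T')}$ produces an equivalence class whose image is not a single point, triggering condition~(2). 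Either way Theorem~\ref{thm:general_Sexistence} gives $\Sigma_1^T(N)>\Sigma_1^{T'}(N')$, completing the inductive step.

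The main obstacle I expect is the bookkeeping in case (c): one must be sure that the list of surfaces declared \emph{bad} exactly exhausts the exceptional types for which Proposition~\ref{prop:non_uniq} fails to give uniqueness, so that non-badness of $(N',T')$ genuinely forces uniqueness of the embedding up to $O(3)$. A careful inspection of the classification of $N_{\rho_1}(D_2,\ub')$ with two boundary components against the list $N_{\rho_1}(D_2,e_2'\rho_2+2\rho_1\rho_2)$ and $N_{\rho_1}(D_2,f'+e_1'\rho_1+e_2'\rho_2)$ with $e_1',e_2'\leq 1$ matches these two classes precisely, at which point the induction closes cleanly and delivers the lemma.
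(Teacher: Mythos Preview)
Your proposal is correct and follows essentially the same approach as the paper's proof: both define a modified complexity $\hat c$ using chains terminating at initial surfaces, establish well-definedness by the iteration argument from Lemma~\ref{lem:hatc_1}, and run an induction with the same three-way case split (bad; $\Sigma_1=B$; not bad with $\Sigma_1>B$), invoking Proposition~\ref{prop:non_uniq} together with Theorem~\ref{thm:general_Sexistence} in the last case. Two minor remarks: your restriction to \emph{elementary} degenerations in the inductive step is unnecessary, since none of your arguments use elementariness and Theorem~\ref{thm:Sexistence} requires the strict inequality for all degenerations---simply drop the word; and your definition of $\hat c$ omits the paper's requirement that intermediate surfaces in the chain satisfy $\Sigma_1>B$, but this is harmless, as your induction only needs $\hat c$ to be finite and to strictly decrease under $\prec$, both of which hold for your more permissive definition.
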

\begin{proof}
Define the induction parameter $\hat c(N,T)$ in the following way. It is $0$ for initial surfaces, and $c(N,T) = k>0$ if $k$ is the maximal number for which there exists a chain $(N_0,T_0)\prec (N_1,T_1)\prec \ldots \prec (N_{k-1},T_{k-1})\prec (N,T)$, where $(N_0,T_0)$ is initial and $\Sigma^{T_i}_1(N_i)>B^{T_i}(N)$.

Similarly to the proof of Lemma~\ref{lem:hatc_1} we observe that $\hat c(N,T)$ is defined. Indeed, if $\Sigma^T_1(N)>B^T(N)$, then either $(N,T)$ is initial (so that $\hat c(N,T)=0$) or there exists $(N',T')\prec (N,T)$ such that $\Sigma_1^{T'}(N')>B^{T'}(N')$. Repeat the same argument with $(N',T')$. This process has to terminate and it can only terminate at an initial surface.

We proceed by induction on $\hat c(N,T)$. We observed above that for surfaces with $\hat c(N,T)=0$, i.e. initial surfaces, the conclusion of the lemma is satisfied. Suppose that we have proved the lemma for all $(N',T')$ with $\Sigma^{T'}_1(N')>B^{T'}(N')$ and  $\hat c(N',T')\leq k$. Consider $(N,T)$ with $\Sigma^T_1(N)>B^T(N)$ and $\hat c(N,T) = k+1$ and let $(N'',T'')\prec (N,T)$. We show that $\Sigma_1^{T''}(N'')<\Sigma^T_1(N)$, then the conclusion follows from Theorem~\ref{thm:Sexistence}.

If $\Sigma_1(N'',T'') = B^{T''}(N'')$, then one has
\[
\Sigma_1^T(N)>B^T(N)\geq B^{T''}(N'') = \Sigma_1^{T''}(N'').
\] 
If $\Sigma_1(N'',T'')>B^{T''}(N'')$ and $(N'',T'')$ is bad, then by definition of $B^T(N)$ one has $B^T(N)\geq \Sigma_1(N'',T'')$ and, therefore,
\[
\Sigma_1^T(N)>B^T(N)\geq \Sigma_1(N'',T'').
\]
Finally, if $\Sigma_1(N'',T'')>B^{T''}(N'')$ and $(N'',T'')$ is not bad, then $\hat c(N'',T'')\leq k$, so by the induction hypothesis $\Sigma_1(N'',T'')$ is achieved by a smooth metric. Hence, by Theorem~\ref{thm:general_Sexistence} and Proposition~\ref{prop:non_uniq} one has  $\Sigma_1^{T''}(N'')<\Sigma^T_1(N)$.
\end{proof}

The rest of the proof is devoted to proving $\Sigma_1^T(N)>B^T(N)$ for surfaces $(N,T)$ described in the formulation of the theorem.

\begin{lemma}
\label{lem:ooc}
Let $(N,T) = N_{\rho_1}(D_2,e_2\rho_2+v_{12}\rho_1\rho_2)$. Then $\Sigma_1^T(N)>B^T(N)$ and, in particular, inequalities~\eqref{ineq:gap_cond2} hold unconditionally. Furthermore, one has
\begin{equation}
\label{ineq:gap_cond4}
\Sigma_1(N_{\rho_1}(D_2, e_2\rho_2 + 2\rho_1\rho_2))<\Sigma_1(N_{\rho_1}(D_2, \rho_1 + e_2\rho_2 + \rho_1\rho_2))
\end{equation}
\end{lemma}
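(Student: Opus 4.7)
The plan is to prove Lemma~\ref{lem:ooc} by a contradiction-and-construction argument modeled closely on the proof of \eqref{ineq:gap_cond1} in Case~6. Both \eqref{ineq:gap_cond2} and \eqref{ineq:gap_cond4} have the form $\Sigma_1(N') < \Sigma_1(N)$ with $N' \prec N$ an elementary degeneration of $\underline{v} = \rho_i - \rho_i\rho_j$ type, where $N'$ has exactly two boundary components and hence lies among the bad surfaces of Case~8. Assuming for contradiction that one of the inequalities is an equality, Proposition~\ref{prop:non_uniq} forces any $\bar\sigma_1^{T'}$-maximizing metric $g'$ on $N'$ to admit an additional $\Z_2$-symmetry $\tau'$; together with the ambient $D_2$-action, this enlarges the symmetry group to $\Gamma' = \Z_2 \times D_2$. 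The first Steklov eigenspace then decomposes under the three reflections into sign-characters, and the argument from Proposition~\ref{prop:non_uniq} together with \eqref{EM0} shows that $\sigma_1(N', g')$ coincides with several of the mixed Steklov values $\sigma_1^{\eps_\tau \eps_{\rho_1} \eps_{\rho_2}}$ on a $\Gamma'$-fundamental domain $U \subset N'$.

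The core step is to construct from $(U, g'|_U)$, via a labeled cut-and-paste as in Case~6, a model surface $(\hat N, \hat T)$ of the target topological type together with a natural $\hat T$-invariant metric $h$. For \eqref{ineq:gap_cond2} we take $\hat N$ of type $N_{\rho_1}(D_2, (e_2+1)\rho_2 + \rho_1\rho_2)$, obtained by joining copies of $U$ with the $N^\tau$ and $N^{\rho_2}$ edge-labels swapped along an internal seam; for \eqref{ineq:gap_cond4} we take $\hat N$ of type $N_{\rho_1}(D_2, \rho_1 + e_2\rho_2 + \rho_1\rho_2)$, swapping $N^\tau$ and $N^{\rho_1}$ labels instead. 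Decomposing $W^{1,2}(\hat N)$ under $\hat T$ and applying Proposition~\ref{Lasymcl}(ii) to exclude the doubly-odd type yields $\sigma_1(\hat N, h) \geq \min\{\sigma_1^{+-}(\hat N, h),\ \sigma_1^{-+}(\hat N, h),\ \sigma_1^{++}(\hat N, h)\}$, where each of the mixed eigenvalues on the right dominates the corresponding three-sign mixed Steklov value on $(U, g'|_U)$ equal to $\sigma_1(N', g')$. Strictness of the bracketing for the $\pm\mp$ types follows from unique continuation for harmonic functions whose Cauchy data vanish along the internal seam, while $\sigma_1^{++}(\hat N, h) > \sigma_1(N', g')$ is secured, exactly as in Claim~3 of Case~6, by a small generic $\hat T$-invariant conformal perturbation that rules out a $\hat T$-even family of eigenfunctions with sum-of-squares equal to $1$ on $\partial\hat N$. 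Together these estimates yield $\sigma_1(\hat N, h) > \sigma_1(N', g')$, contradicting the assumed equality.

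With \eqref{ineq:gap_cond2} and \eqref{ineq:gap_cond4} in hand, the conclusion $\Sigma_1^T(N) > B^T(N)$ follows by induction on $e_2 + v_{12}$: initial surfaces in the family satisfy it automatically by Theorem~\ref{thm:Sexistence}, and in the inductive step each bad degeneration $(N', T') \prec (N, T)$ is handled either by the inequalities just established (when $N'$ lies in the same $N_{\rho_1}(D_2, \ast\rho_2 + \ast\rho_1\rho_2)$ family) or by analogous cut-and-paste arguments for the remaining small-parameter bad surfaces with $e_1', e_2' \leq 1$. The main obstacle is the bookkeeping involved: with three reflections there are eight sign-types to manage, and in \eqref{ineq:gap_cond4} the internal seam meets $\partial N'$, so the unique-continuation step is more delicate than in Case~6---one must check that the Cauchy data of $\tau'$-odd and $\tau'$-even eigenfunctions genuinely disagree where the seam hits the boundary, so that the bracketing is strict.
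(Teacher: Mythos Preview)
Your induction strategy is in the right spirit, but you are working much harder than necessary, and in one place your proposed construction is genuinely problematic.

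For the first assertion $\Sigma_1^T(N)>B^T(N)$ and the unconditional \eqref{ineq:gap_cond2}: the paper does not redo any cut-and-paste. The key observation you miss is that the family $N_{\rho_1}(D_2, e_2\rho_2 + v_{12}\rho_1\rho_2)$ is \emph{closed under degeneration} (since $f=e_1=0$ forbids the degenerations $\uv = 1-\rho_i$ and $\uv = \rho_1-\rho_1\rho_2$). Hence every bad $(N',T')\prec (N,T)$ already lies in the same family, and the induction hypothesis together with Lemma~\ref{lem:hatc_2} guarantees $\Sigma_1^{T'}(N')$ is achieved. Then the \emph{conditional} \eqref{ineq:gap_cond2} from the previous lemma applies directly. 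There is no need to reprove \eqref{ineq:gap_cond2} via a fresh cut-and-paste, and no ``remaining small-parameter bad surfaces with $e_1', e_2'\leq 1$'' to worry about beyond what \eqref{ineq:gap_cond2} already covers.

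For \eqref{ineq:gap_cond4}: your proposed cut-and-paste swapping the $N^\tau$ and $N^{\rho_1}$ labels is the wrong tool. Since $\rho_1$ is the doubling involution, $N^{\rho_1}$ is $\partial N$ itself, not an interior curve, so the swap has no clean meaning; you correctly flag that ``the internal seam meets $\partial N'$'' as a difficulty, but you do not resolve it. The paper bypasses this entirely: once the first part gives that $\Sigma_1(N_{\rho_1}(D_2, e_2\rho_2+2\rho_1\rho_2))$ is achieved, the degeneration in \eqref{ineq:gap_cond4} has collapsed set $P'=P'^\iota=\{q\}$ with $q\in N^\tau\cap N^{\rho_2}$ an interior point, and the attached piece has nonempty boundary. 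Condition~(1) of Theorem~\ref{thm:general_Sexistence} then gives \eqref{ineq:gap_cond4} immediately, with no construction needed.
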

\begin{proof}
Again, we use induction on $\hat c(N,T)$. The main observation is that if $(N',T')\prec (N,T)$, then $(N',T')$ is in the same class as $(N,T)$, i.e. $(N',T') = N_{\rho_1}(D_2,e_2'\rho_2+v_{12}'\rho_1\rho_2)$. Thus, to complete the induction argument it is sufficient to prove that $\Sigma_1^{T'}(N')<\Sigma_1^T(N)$ provided that $(N',T')$ is bad and $\Sigma_1^{T'}(N')$ is achieved. But this easily follows from~\eqref{ineq:gap_cond2}.

To show inequality~\eqref{ineq:gap_cond4} observe that the first part of the lemma implies that $\Sigma_1(N_{\rho_1}(D_2, e_2\rho_2 + 2\rho_1\rho_2))$ is achieved. Furthermore, the collapsed set $P'$ for the degeneration $N_{\rho_1}(D_2, e_2\rho_2 + 2\rho_1\rho_2)\prec N_{\rho_1}(D_2, \rho_1 + e_2\rho_2 + \rho_1\rho_2)$ can be chosen to be $P'=P'^\iota=\{q\}$, where $q\in N^\tau\cap N^\rho_2$, so that~\eqref{ineq:gap_cond4} follows from Theorem~\ref{thm:general_Sexistence}. 
\end{proof}

\begin{lemma}
\label{lem:gap_cond5}
Let $(N',T') = N_{\rho_1}(D_2, f'+e_1'\rho_1 + e_2'\rho_2)$, where $e_1',e_2'\leq 1$, and suppose that $\Sigma_1^{T'}(N')$ is achieved. Then either $\Sigma_1^{T'}(N')<\Sigma_1^T(N)$ for all $(N',T')\prec (N,T)$ or  one has
\[
\Sigma_1^{T'}(N')\leq \Sigma_1(N_{\rho_1}(D_2, (2f'+e_1')\rho_1 + 2e_2'\rho_2)).
\]
\end{lemma}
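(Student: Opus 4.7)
If every $(N',T')\prec (N,T)$ satisfies $\Sigma_1^{T'}(N')<\Sigma_1^T(N)$ the first alternative already holds and there is nothing to prove; so assume instead that a bad $(N',T')$ of the stated form has $\Sigma_1^{T'}(N')\geq \Sigma_1^T(N)$ and, by hypothesis, carries a smooth $\bar\sigma_1^{T'}$-maximizing metric $g$. By Corollary~\ref{Prealbd}, $g$ is induced by an equivariant free boundary minimal embedding $\Psi\colon(N',g)\hookrightarrow\B^3$ by first Steklov eigenfunctions. Since $N'$ has exactly two boundary components, the argument of Theorem~\ref{thm:general_Sexistence} would already yield the strict inequality if $\Psi$ were the unique such map up to $O(3)$, so Proposition~\ref{prop:non_uniq} furnishes an additional involutive isometry $\sigma$ of $(N',g)$ commuting with $\rho_1$ and $\rho_2$, with $\langle\sigma\rangle\times D_2$-action of the type specified by the proposition and with the eigenspace coincidences of the form~\eqref{EM0}.

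Next, I would carry out a cut-and-paste construction in the spirit of the proof of~\eqref{ineq:gap_cond1}. Let $U$ be a fundamental domain for the $\langle\sigma,\rho_1,\rho_2\rangle$-action on $N'$, whose boundary edges carry labels in $\{\partial N',\,N'^{\sigma},\,N'^{\rho_1},\,N'^{\rho_2}\}$. Take a second isometric copy $U^*$ of $U$ in which the labels $N'^{\sigma}$ and $N'^{\rho_1}$ are interchanged, glue $U$ to $U^*$ along the relabeled $N'^{\rho_1}$-edges to form $\widehat U$, and then successively double along the remaining $N'^{\rho_1}$- and $N'^{\rho_2}$-edges to produce a closed surface $N''$ equipped with a $D_2=\langle\rho_1,\rho_2\rangle$-action $T''$. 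A direct edge-count, using that $e_1',e_2'\leq 1$ together with the constraint on the $\sigma$-action from Proposition~\ref{prop:non_uniq}, should show that $(N'',T'')$ has type $N_{\rho_1}(D_2,(2f'+e_1')\rho_1+2e_2'\rho_2)$: each of the $f'$ interior $\tau$-circles of the fundamental domain of $N'$ is relabeled to contribute two $\rho_1$-arcs to the fundamental domain of $N''$, the $e_1'$ boundary-touching $\rho_1$-arc is preserved, and $e_2'$ is doubled to $2e_2'$. The metric $g|_U$ extends $D_2$-equivariantly to a metric $h$ on $N''$ with $\length(\partial N'',h)=\length(\partial N',g)$.

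The final step is to show $\sigma_1(N'',h)\geq\sigma_1(N',g)$, which I would obtain by repeating verbatim the three-claim bracketing/perturbation argument from the proof of~\eqref{ineq:gap_cond1}: decompose the first eigenspace on $N''$ into $(\rho_1,\rho_2)$-parity sectors $\sigma_1^{\pm\pm}(N'',h)$, rule out the $(--)$-sector via Proposition~\ref{Lasymcl}\,(ii) and Courant nodal-domain counting, bound $\sigma_1^{\pm\mp}(N'',h)$ below by $\sigma_1^{\pm\mp\pm}(N',g)=\sigma_1(N',g)$ using Neumann bracketing across the relabeled $\sigma$-edges together with~\eqref{EM0} (with strict inequality supplied by unique continuation), and make the $\sigma_1^{++}$-sector strict by a small $D_2$-invariant perturbation of $h$ exactly as in Claim~3 of that proof. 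Combined with the length identity, these bounds give
\[
\Sigma_1(N_{\rho_1}(D_2,(2f'+e_1')\rho_1+2e_2'\rho_2))\geq \bar\sigma_1(N'',h)\geq \bar\sigma_1(N',g)=\Sigma_1^{T'}(N'),
\]
as desired. The main obstacle I anticipate is the combinatorial bookkeeping identifying the exact type of $(N'',T'')$---in particular the correct counting of $\tau$- and $\rho_1$-arcs under the label swap, which depends sensitively on $e_1',e_2'\leq 1$ and on the precise form of the $\sigma$-action furnished by Proposition~\ref{prop:non_uniq}; the eigenvalue comparison itself is then essentially routine once the cut-and-paste surface is pinned down.
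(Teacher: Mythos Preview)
Your approach takes a much harder route than necessary, and in doing so introduces some confusion about the group actions.

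The paper's proof is two lines: once Proposition~\ref{prop:non_uniq} supplies the extra reflection $\rho_3$, one simply observes that the \emph{same} surface $N'$ with the \emph{same} metric $g'$, viewed under the action $T''$ of $\langle\tau,\rho_3\rangle\cong D_2$, is already of type $N_{\rho_1}(D_2,(2f'+e_1')\rho_1+2e_2'\rho_2)$. Since $g'$ is $T''$-invariant, it is an admissible competitor for that optimization problem, so
\[
\Sigma_1^{T'}(N')=\bar\sigma_1(N',g')\leq \Sigma_1^{T''}(N')=\Sigma_1\bigl(N_{\rho_1}(D_2,(2f'+e_1')\rho_1+2e_2'\rho_2)\bigr)
\]
immediately. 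No cut-and-paste, no eigenvalue bracketing, no perturbation---the non-strict inequality is free once you identify the type of $(N',T'')$.

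Your plan instead mimics the proof of~\eqref{ineq:gap_cond1}, but that machinery was built to produce a \emph{strict} inequality between surfaces of genuinely \emph{different} topological type; here only $\leq$ is required and the target type is realized by $N'$ itself under a different action. Beyond being unnecessary, your construction has a concrete problem: you take $U$ to be a fundamental domain for a $\langle\sigma,\rho_1,\rho_2\rangle$-action on $N'$ with boundary labels including $N'^{\rho_1}$, but $\rho_1$ is the doubling involution used to form $N'=\widetilde{N'}/\langle\rho_1\rangle$ and does not act on $N'$ at all---the residual action $T'$ on $N'$ is by $\langle\tau,\rho_2\rangle$. This confusion propagates into your label swap $N'^\sigma\leftrightarrow N'^{\rho_1}$, which as written is not meaningful. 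Even interpreting charitably (working on the double), the subsequent parity bookkeeping and the analogue of~\eqref{EM0} you would need are not the same as in the earlier lemma, and you have not verified them.
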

\begin{proof}
Let $\Sigma_1^{T'}(N') = \bar\sigma_1(N',g')$ for some $T'$-invariant metric $g'$. Then by Proposition~\ref{prop:non_uniq} either there is a unique free boundary harmonic map to a ball by $\sigma_1(N',g')$-eigenfunctions, and then Lemma~\ref{lemma:Scollapsed_sets} and Theorem~\ref{thm:general_Sexistence} give $\Sigma_1^{T'}(N')<\Sigma_1^T(N)$ or $g'$ is invariant under an additional reflection $\rho_3$. Under the action $T''$ of the group generated by $\tau$ and $\rho_3$ (which is isomorphic $D_2$) one has that $(N',T'')$ is of type $N_{\rho_1}(D_2, (2f'+e_1')\rho_1 + 2e_2'\rho_2)$ and since $g'$ is $T''$-invariant, the inequality follows.
\end{proof}

\begin{lemma}
\label{lem:Sigma_B}
Let $(N,T) = N_{\rho_1}(D_2, f + e_1\rho_1 + e_2\rho_2 + v_{12}\rho_1\rho_2)$ such that $e_1\geq 1$ and $f<e_2-1$. Then $\Sigma^T_1(N)>B^T(N)$.
\end{lemma}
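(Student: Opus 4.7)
The plan is to show $\Sigma^T_1(N) > \Sigma_1^{T'}(N')$ for every bad surface $(N',T')\prec (N,T)$, which gives $\Sigma^T_1(N)>B^T(N)$. We treat the two flavors of bad surface separately: type~1 of the form $N_{\rho_1}(D_2, e_2'\rho_2+2\rho_1\rho_2)$, and type~2 of the form $N_{\rho_1}(D_2, f'+e_1'\rho_1+e_2'\rho_2)$ with $e_1',e_2'\leq 1$.

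For a type~1 bad surface $(N',T')\prec (N,T)$, introduce the intermediate surface $\hat N_1=N_{\rho_1}(D_2,\rho_1+e_2'\rho_2+\rho_1\rho_2)$. Since $N$ has $e_1\geq 1$ while $N'$ has $e_1'=0$, any degeneration $N\to N'$ must contain at least one elementary move of the form $\rho_1-\rho_1\rho_2$; stopping the chain one step before the last such move, one obtains a chain $N\succ\cdots\succ \hat N_1\succ N'$, so $\hat N_1\preccurlyeq N$. Lemma~\ref{lem:ooc} gives the unconditional inequality $\Sigma_1(N')<\Sigma_1(\hat N_1)$ (this is \eqref{ineq:gap_cond4}), and \eqref{ineq:NN'} gives $\Sigma_1(\hat N_1)\leq \Sigma^T_1(N)$, yielding the strict bound for type~1.

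For a type~2 bad surface $(N',T')\prec (N,T)$, we first need to know that $\Sigma_1^{T'}(N')$ is achieved. This is obtained inductively on the complexity $\hat c$ using Lemma~\ref{lem:hatc_2} together with the already-treated cases: type~2 bad surfaces have $v_{12}'=0$ and strictly smaller genus/boundary data than $N$, so by induction the hypothesis $\Sigma_1^{T'}(N')>B^{T'}(N')$ holds or $(N',T')$ is itself initial, in either case providing a maximizer. Now apply Lemma~\ref{lem:gap_cond5}: either $\Sigma_1^{T'}(N')<\Sigma^T_1(N)$ directly, or the maximizer admits an additional reflection $\rho_3$ and one has $\Sigma_1^{T'}(N')\leq \Sigma_1(\hat N_2)$ with $\hat N_2=N_{\rho_1}(D_2,(2f'+e_1')\rho_1+2e_2'\rho_2)$. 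In the second alternative we iterate the two mechanisms above: $\hat N_2$ has $v_{12}''=f''=0$ and $e_2''=2e_2'\leq 2$, so thanks to the assumption $f<e_2-1$ (which forces $e_2\geq f+2\geq 2$ plus strict slack) we can explicitly exhibit a chain $\hat N_2\preccurlyeq \hat N_1'\prec N$ through a type~1 companion $\hat N_1'=N_{\rho_1}(D_2,\rho_1+e_2''\rho_2+\rho_1\rho_2)$. Combining \eqref{ineq:gap_cond4} for $\hat N_1'$ with \eqref{ineq:NN'} for the chain to $N$ yields $\Sigma_1(\hat N_2)<\Sigma^T_1(N)$, and hence $\Sigma_1^{T'}(N')<\Sigma^T_1(N)$.

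The main obstacle is precisely this case~(ii) of Lemma~\ref{lem:gap_cond5}: because $\hat N_2$ can topologically coincide with $N$ when $v_{12}=0$ and $e_2=2e_2'$, one must use the strict inequality $f<e_2-1$ (rather than the non-strict $f\leq e_2-1$) to guarantee that $\hat N_2$ is a \emph{strict} predecessor of $N$ accessible via a chain that contains a type~1 intermediate surface, so that the sharp strict inequalities from Lemma~\ref{lem:ooc} can be inserted. Once both cases are handled, taking the maximum over bad predecessors completes the proof that $\Sigma^T_1(N)>B^T(N)$.
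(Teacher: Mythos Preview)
There are two genuine gaps.

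\textbf{Achievement of the maximizer for type~2.} Your induction to obtain a $\bar\sigma_1$-maximizing metric on a type~2 bad $(N',T')$ is circular: Lemma~\ref{lem:hatc_2} requires $\Sigma_1^{T'}(N')>B^{T'}(N')$, which is exactly the statement of the present lemma for $N'$ --- but a type~2 bad surface has $e_2'\leq 1$, so the hypothesis $f'<e_2'-1$ fails and the inductive hypothesis does not apply to it. The paper avoids this by a different reduction: if $\Sigma_1^{T'}(N')$ is not achieved, replace $N'$ by the $\prec$-smallest bad surface $(N'',T'')\preccurlyeq (N',T')$ with $\Sigma_1^{T''}(N'')=\Sigma_1^{T'}(N')$; minimality forces $\Sigma_1^{T''}(N'')>B^{T''}(N'')$, hence achievement, and it suffices to prove the strict inequality for $N''$.

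\textbf{The type~2 chain.} After invoking Lemma~\ref{lem:gap_cond5} you set $\hat N_2=N_{\rho_1}(D_2,(2f'+e_1')\rho_1+2e_2'\rho_2)$ and claim $\hat N_2\preccurlyeq \hat N_1'=N_{\rho_1}(D_2,\rho_1+2e_2'\rho_2+\rho_1\rho_2)$. This fails whenever $f'\geq 1$: both surfaces have genus $2e_2'$, and the only elementary moves fixing the genus (namely subtracting $\rho_1-\rho_1\rho_2$ or $\rho_1\rho_2$) \emph{decrease} the number of boundary components, yet $\hat N_2$ has $2(2f'+e_1')\geq 4$ boundary components while $\hat N_1'$ has only $3$. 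The paper's argument is quite different: it bounds $\Sigma_1^{T'}(N')$ by a surface of the form $N_{\rho_1}(D_2,(2f'+e_2')\rho_2+2e_1'\rho_1\rho_2)$ (the $f'$ absorbed into the $\rho_2$-coefficient, not $\rho_1$), which lies in the class covered by Lemma~\ref{lem:ooc}, and then exhibits an explicit chain
\[
N_{\rho_1}(D_2,(2f'+e_2')\rho_2+2e_1'\rho_1\rho_2)\prec\cdots\prec N_{\rho_1}(D_2,f'+e_1'\rho_1+(f'+e_2'+1)\rho_2)\preccurlyeq (N,T),
\]
where the last step uses $f'\leq f$ and $f'+e_2'+1\leq e_2$, the latter being precisely where the hypothesis $f<e_2-1$ enters (not, as you suggest, to make $\hat N_2$ a strict predecessor). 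Strictness then comes from \eqref{ineq:gap_cond2} at the first step of the chain.

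Your type~1 argument reaches the correct intermediate surface $\hat N_1$, but the justification (``stop one step before the last $\rho_1-\rho_1\rho_2$ move'') does not pin down $\hat N_1$ uniquely; the paper simply asserts $\hat N_1\preccurlyeq N$ from $e_1\geq 1$ and builds the chain directly.
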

\begin{proof}
Let $N_{\rho_1}(D_2,\ub') = (N',T')\prec (N,T)$ be a bad surface, we need to show $\Sigma_1^{T'}(N')<\Sigma_1^T(N)$. We may assume that $\Sigma_1^{T'}(N')>B^{T'}(N')$ and, hence, $\Sigma_1^{T'}(N')$ is achieved by a smooth metric. Indeed, otherwise let $(N'',T'')\prec (N',T')$ be the smallest (in the sense of ordering $\prec$) bad surface satisfying $\Sigma_1^{T''}(N'') = \Sigma_1^{T'}(N')$. Then for all bad surfaces $(N''',T''')\prec (N'',T'')$ one has $\Sigma_1^{T'''}(N''')<\Sigma_1^{T'}(N') = \Sigma_1^{T''}(N'')$ and, hence, $\Sigma_1^{T''}(N'')> B^{T''}(N'')$, so that 
$\Sigma_1^{T''}(N'')$ is achieved. Therefore, in order to prove $\Sigma_1^{T'}(N')<\Sigma_1^T(N)$ it is sufficient to prove  
$\Sigma_1^{T''}(N'')<\Sigma_1^T(N)$, where $\Sigma_1^{T''}(N'')$ is achieved.

Assume first that $f'\ne 0$, so that $\ub' = f'+e_1'\rho_1+e_2'\rho_2$, where $e_1',e_2'\leq 1$. Then by Lemma~\ref{lem:gap_cond5} either  
$\Sigma_1^{T'}(N')<\Sigma_1^T(N)$ or 
$\Sigma_1^{T'}(N') \leq \Sigma_1(N_{\rho_1}(D_2,(2f'+e_2')\rho_2+2e'_1\rho_1 \rho_2))$. If $e_1' = 0$, then one has
\begin{equation*}
\begin{split}
&N_{\rho_1}(D_2,(2f'+e_2')\rho_2)\prec N_{\rho_1}(D_2,(2f'+e_2')\rho_2+\rho_1\rho_2)\prec \\ 
&N_{\rho_1}(D_2,f' + (f'+e_2'+1)\rho_2)\preccurlyeq (N,T),
\end{split}
\end{equation*}
where the last degeneration follows from the fact that $f'\leq f < e_2-1$ and, hence, $f'+e_2'+1\leq e_2$. Since we can apply~\eqref{ineq:gap_cond2} to the first degeneration in the chain, it follows from Lemma~\ref{lem:ooc} that $\Sigma_1^{T'}(N')<\Sigma_1^T(N)$. If $e_1'=1$, then one has
 \begin{equation*}
\begin{split}
&N_{\rho_1}(D_2,(2f'+e_2')\rho_2+2\rho_1\rho_2)\prec N_{\rho_1}(D_2,(2f'+e_2'+1)\rho_2+\rho_1\rho_2)\prec \\ 
& N_{\rho_1}(D_2,f' + \rho_1 + (f'+e_2'+1)\rho_2)\preccurlyeq (N,T),
\end{split}
\end{equation*}
where the last degeneration follows from $e_1\geq 1$ and $f'\leq f, f'+e_2'+1\leq e_2$. We once again apply~\eqref{ineq:gap_cond2} to the first degeneration in the chain to conclude $\Sigma_1^{T'}(N')<\Sigma_1^T(N)$.

Suppose now that $f'=0$, therefore, $e_1'=0$. If $v_{12}'=0$, then 
\[
(N',T') = N_{\rho_1}(D_2, e_2'\rho_2)\prec N_{\rho_1}(D_2, e_2'\rho_2 + \rho_1\rho_2)\prec N_{\rho_1}(D_2, \rho_1+ e_2'\rho_2)\preccurlyeq (N,T),
\]
where the last degeneration follows from $e_1\geq 1$.
Applying~\eqref{ineq:gap_cond2} to the first degeneration yields $\Sigma_1^{T'}(N')<\Sigma_1^T(N)$. Finally, if $v_{12}'=2$, then 
\[
(N',T') = N_{\rho_1}(D_2, e_2'\rho_2 + 2\rho_1\rho_2)\prec N_{\rho_1}(D_2, \rho_1 + e_2'\rho_2 + \rho_1\rho_2)\preccurlyeq (N,T),
\]
where the last degeneration follows from $e_1\geq 1$. Applying~\eqref{ineq:gap_cond4} to the first degeneration yields $\Sigma_1^{T'}(N')<\Sigma_1^T(N)$ and completes the proof.
\end{proof}

%
\end{proof}

\begin{proposition}
\label{Pzeronum}
Let $b = 2f+e_1 = 2f'+e_1'$ and $g,g'$ be metrics achieving $\Sigma_1(N_{\tau}(\mathbb{Z}_2, f+e_1\rho_1))$ and $\Sigma_1(N_{\tau}(\mathbb{Z}_2, f'+e'_1\rho_1))$ respectively. If $(f',e_1')\ne (f',e_1')$ and $b>2(f+f'+1)$, then $g$ and $g'$ are not isometric.

In particular, there exists at least $\lfloor\frac{b-2}{4}\rfloor+1$ embedded non-isometric free boundary minimal surfaces of genus $0$ with $b$ boundary components and area below $4\pi$ in $\mathbb{B}^3$.
\end{proposition}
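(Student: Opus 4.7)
The strategy is to adapt Proposition \ref{prop:distinct_MS} to the Steklov setting. By Theorem \ref{thm:group_Sexistence} case (1), each admissible pair $(f, e_1)$ with $2f + e_1 = b$ yields a $\bar{\sigma}_1^T$-maximal metric $g_{f, e_1}$ on $N_\tau(\Z_2, f + e_1 \rho_1)$, inducing via Corollary \ref{Prealbd} and Lemma \ref{Lgraphbd} a free boundary minimal embedding into $\mathbb{B}^3$ of area strictly less than $2\pi$, hence below $4\pi$. The task thus reduces to proving the non-isometry claim and counting admissible $f$.

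For the non-isometry statement, I would argue by contradiction: suppose $g_{f, e_1}$ and $g_{f', e'_1}$ are isometric, and identify them with a single metric $g$ on the abstract genus-zero surface $N$ with $b$ boundary components. Then $(N, g)$ admits two reflection isometries $\sigma$ and $\sigma'$ of types $(f, e_1)$ and $(f', e'_1)$, i.e.~$\sigma$ fixes $e_1$ boundary circles setwise and swaps the other $2f$ in pairs, and analogously for $\sigma'$. If $\sigma = \sigma'$ then $(f, e_1) = (f', e'_1)$, contradicting the hypothesis; so $\sigma \neq \sigma'$. Lifting to the double $(\wt{N}, \wt{g})$ with doubling reflection $\iota$, both $\sigma$ and $\sigma'$ extend to $\wt g$-isometries $\wt\sigma, \wt\sigma'$ commuting with $\iota$ (since they act copy-by-copy), so $\wt\Gamma := \langle \iota\rangle \times \langle \wt\sigma, \wt\sigma'\rangle$ acts on $\wt{N} = M(\Z_2, f + e_1 \rho_1)$ by isometries. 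This is precisely the ``$\tau = \tau'$'' configuration in the proof of Proposition \ref{prop:distinct_MS}, with $\iota$ playing the role of $\tau = \tau'$, and $\wt\sigma, \wt\sigma'$ playing the roles of $\rho_1, \rho_1'$.

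Applying the counting argument from that proof, where $G = \langle\wt\sigma, \wt\sigma'\rangle$ is either $\Z_2 \times \Z_2$ or a dihedral group $D_k$, one classifies $\wt{N}$ as an $M(G, \ub)$-surface via Proposition \ref{Agamma} and bounds the number of $\iota$-fixed circles that are also $\wt\sigma'$-invariant, obtaining $e'_1 \leq 2f + 2$. Combined with $2f' + e'_1 = 2f + e_1 = b$, this gives $b = 2f' + e'_1 \leq 2f + 2f' + 2$, contradicting the hypothesis $b > 2(f + f' + 1)$. The main technical point is making the inequality $e'_1 \leq 2f + 2$ fully rigorous in both the commutative and dihedral sub-cases for $G$; the key input is that the $\iota$-fixed circles in $\wt N$ are precisely the doubled boundary components of $N$ and that the fundamental domain for $\langle\iota\rangle \times \langle\wt\sigma\rangle$ carries only $f$ inner $\iota$-circles plus the one outer boundary component. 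This is the only nontrivial step, but it parallels the closed-surface argument in Proposition \ref{prop:distinct_MS} without new complications.

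For the enumeration, as $f$ ranges over $\{0, 1, \ldots, \lfloor (b-2)/4\rfloor\}$ we obtain $\lfloor (b-2)/4\rfloor + 1$ admissible pairs $(f, e_1)$ with $e_1 = b - 2f \geq 2$. For any two distinct values $f \neq f'$ in this range, one checks that $f + f' \leq 2\lfloor (b-2)/4\rfloor - 1 < (b-2)/2$, so the hypothesis $b > 2(f+f'+1)$ holds for every such pair. Hence the corresponding $\bar{\sigma}_1^T$-maximizers are pairwise non-isometric and, via Corollary \ref{Prealbd} and Lemma \ref{Lgraphbd}, yield at least $\lfloor (b-2)/4\rfloor + 1$ embedded non-isometric free boundary minimal surfaces of genus zero with $b$ boundary components and area strictly less than $4\pi$ in $\mathbb{B}^3$.
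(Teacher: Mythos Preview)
Your strategy matches the paper's: both reduce the non-isometry claim to the ``$\tau=\tau'$'' branch of Proposition~\ref{prop:distinct_MS}, applied on the double with $\iota$ playing the role of the common basic reflection, and your enumeration at the end is correct.

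There is one genuine misstep. Your appeal to Corollary~\ref{Prealbd} and Lemma~\ref{Lgraphbd} for embeddedness into $\mathbb{B}^3$ and area $<2\pi$ does not go through: Lemma~\ref{Lgraphbd} requires $(N,\rho_1)$ to be a \emph{basic} reflection surface, but for $N_\tau(\Z_2,f+e_1\rho_1)$ with $f>0$, condition~(ii) of Definition~\ref{dbasref} fails. Indeed, in the quotient $\Omega=N/\langle\rho_1\rangle$ the set $\pi_{\rho_1}(\partial N\cup N^{\rho_1}_\partial)$ is $\partial\Omega_{\mathrm{out}}$ together with the $f$ disjoint inner boundary circles of $\Omega$, hence disconnected. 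This is precisely why the paper invokes \cite[Proposition~8.1]{FraserSchoen} instead: for genus-zero surfaces that result (together with the genus-zero multiplicity bound forcing $n=3$) yields both the embedding into $\mathbb{B}^3$ and the area bound $<4\pi$, with no basic-reflection hypothesis required. With this substitution your argument is complete.
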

\begin{proof}
The proof is analogous to the proof of Proposition~\ref{prop:distinct_MS}. The area bound follows from~\cite[Proposition 8.1]{FraserSchoen}.
\end{proof}

\begin{proposition}
\label{Pnumbd}
Let $e_1\geq 1$, $v_{12} = 0,1$, $\gamma = 2f+e_2 = 2f'+e'_2$
 and $g$, $g'$ be metrics achieving $\Sigma_1(N_{\rho_1}(\mathbb{Z}_2\times \mathbb{Z}_2, f + e_1\rho_1 + e_2\rho_2 + v_{12}\rho_1\rho_2))$, $\Sigma_1(N_{\rho_1}(\mathbb{Z}_2\times \mathbb{Z}_2, f' + e_1\rho_1 + e'_2\rho_2 + v_{12}\rho_1\rho_2))$ respectively. If $(f',e_2')\ne (f',e_2')$ and $\gamma>2(f+f'+1)$, then $g$ and $g'$ are not isometric.

In particular, if $b\geq 2$ there exists at least $\lfloor\frac{\gamma-2}{4}\rfloor+1$ embedded non-isometric free boundary minimal surfaces of genus $\gamma$ with $b$ boundary components and area below $2\pi$ in $\mathbb{B}^3$.
\end{proposition}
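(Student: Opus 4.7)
The plan is to mimic the strategy of Proposition~\ref{prop:distinct_MS}, passing to the closed double of $N$. I would suppose for contradiction that $g$ and $g'$ are isometric, identify them via the isometry, and obtain a common metric $g$ on $N$ admitting two $\mathbb{Z}_2\times\mathbb{Z}_2$ actions by isometries, $T$ with generators $(\tau,\rho_2)$ and $T'$ with $(\tau',\rho_2')$, of respective types $\ub=f+e_1\rho_1+e_2\rho_2+v_{12}\rho_1\rho_2$ and $\ub'=f'+e_1\rho_1+e_2'\rho_2+v_{12}\rho_1\rho_2$. Here $\rho_1$ refers to the canonical doubling involution $\iota$ on the closed double $(\wt{N},\wt{g})$. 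Since $\iota$ is central in the extended isometry group, both actions lift to commuting $\mathbb{Z}_2\times D_2$ actions on $\wt{N}$ sharing $\iota$, of types $M(\mathbb{Z}_2\times D_2,\ub)$ and $M(\mathbb{Z}_2\times D_2,\ub')$, with $\wt{N}$ of genus $\wt{\gamma}=2\gamma+b-1$.

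I would then split into cases. If $\tau\ne\tau'$, both $(\wt{N},\tau)$ and $(\wt{N},\tau')$ are closed basic reflection surfaces, so Lemma~\ref{lem:2basic} forces $\langle\tau,\tau'\rangle=\mathbb{Z}_2\times\mathbb{Z}_2$ and gives $\wt{N}$ the type $M(\mathbb{Z}_2,(\wt{\gamma}+1)\tau')$ under this subaction. Introducing the enlarged group $\Gamma_1=\langle\tau\rangle\times\langle\rho_1,\tau'\rangle$ acting on $\wt{N}$ and examining how its fundamental chamber refines that of $\langle\tau,\tau'\rangle$, the same combinatorial analysis as in the closed-surface argument of Proposition~\ref{prop:distinct_MS} should force the type $\ub$ to satisfy $e_2+v_{12}\leq 2$. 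This contradicts $\gamma>2(f+f'+1)$, which rearranges to $e_2=\gamma-2f>2f'+2\geq 2$, hence $e_2+v_{12}\geq 3$.

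If $\tau=\tau'$, then $\rho_2\ne\rho_2'$ and both commute with $\tau$ and $\rho_1$. I would apply the analogous fundamental chamber argument to the enlarged group $\Gamma_2=\langle\tau\rangle\times\langle\rho_1,\rho_2,\rho_2'\rangle$ acting on $\wt{N}$ to bound the number of components of $\wt{N}^\tau$ fixed setwise by $\rho_2'$ by $2f+2$; this number equals $e_2'+v_{12}$ in the $T'$-type, since the components with stabilizer containing $\rho_2'$ split into $e_2'$ interior ones and $v_{12}$ ones meeting $\iota(\partial N)$. Combined with $\gamma=2f+e_2=2f'+e_2'$, the resulting bound $e_2'\leq 2f+2-v_{12}$ gives $\gamma\leq 2(f+f'+1)-v_{12}\leq 2(f+f'+1)$, again contradicting the hypothesis.

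For the second assertion, I would let $f$ range over $\{0,1,\ldots,\lfloor(\gamma-2)/4\rfloor\}$ with $e_2=\gamma-2f$. Any two distinct such $f,f'$ satisfy $f+f'\leq 2\lfloor(\gamma-2)/4\rfloor-1\leq(\gamma-4)/2$, so $2(f+f'+1)\leq\gamma-2<\gamma$; the first part then gives pairwise non-isometric maximal metrics. These metrics exist via case~(8) of Theorem~\ref{thm:group_Sexistence}, since $f\leq\lfloor(\gamma-2)/4\rfloor\leq(\gamma-2)/3$ implies $f<e_2-1$, and each is induced by an embedded free boundary minimal surface in $\mathbb{B}^3$ of area $<2\pi$ by Lemma~\ref{Lgraphbd}, producing $\lfloor(\gamma-2)/4\rfloor+1$ distinct embeddings. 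The hardest step will be executing the fundamental chamber argument in both cases and carefully translating the closed-surface combinatorial bounds on $\wt{N}=M(\mathbb{Z}_2\times D_2,\ub)$ into the sharp inequalities for $e_2'$ and $v_{12}$, in particular tracking which components of $\wt{N}^\tau$ lie in $\iota(\partial N)$ and thus contribute to $v_{12}$ rather than to the interior count.
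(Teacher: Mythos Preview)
Your approach is essentially the same as the paper's. The paper's proof simply states that the argument is ``analogous to the proof of Proposition~\ref{prop:distinct_MS}'' and then devotes its attention to the one new wrinkle: verifying that the existence constraint $f<e_2-1$ from Theorem~\ref{thm:group_Sexistence}(8) is satisfied for the range of $f$ needed in the counting, and noting how $b\geq 2$ is realized via $b=2e_1+v_{12}$. You carry out the analogy more explicitly than the paper does---the case split $\tau\ne\tau'$ versus $\tau=\tau'$, the appeal to Lemma~\ref{lem:2basic}, and the chamber combinatorics on the double $\wt N=M(D_2,\ub)$ are exactly what ``analogous'' means here---and you correctly handle the existence check $f\leq\lfloor(\gamma-2)/4\rfloor\Rightarrow f<e_2-1$. (The paper's ``$f\leq\lfloor(\gamma-2)/2\rfloor$'' appears to be a typo for $\lfloor(\gamma-2)/4\rfloor$, which is the range you use.)
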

\begin{proof}
The proof is analogous to the proof of Proposition~\ref{prop:distinct_MS}. The main difference is that we only know existence of the corresponding maximal metrics for $e_1\geq 1$, $f<e_2-1$. Since in order to prove the lower bound on the number of minimal surfaces we only consider $f\leq\lfloor\frac{\gamma-2}{2}\rfloor$, the latter condition only causes problems when $e_2\leq 1$ (as $f$ can not be negative), but then $f=0$, which means that $\gamma=0,1$. The lower bound then asserts that there is at least one surface with such topology, which is known from Theorem~\ref{thm:group_Sexistence}.
 The number $b$ of boundary components equals $2e_1 + v_{12}$, so that for any $b\geq 2$ there exist $v_{12}$ and $e_1\geq 2$ realising the required number of boundary components. The area bound follows from Lemma~\ref{Lgraphbd}.
\end{proof}

\subsection{Proof of Theorem~\ref{thm:general_Sexistence}}

Let $(N',T')\prec (N,T)$ be the topological degeneration with collapsed set $P'$ and the partition $P'=\sqcup P_i'$. 
Below we use the notation introduced in Section~\ref{sec:top_degen_boundary} and Section~\ref{sec:proof_general_existence}, namely for the degeneration $(\wt N',\wt T')\prec (\wt N,\wt T)$ we have
%
%
\[
D_\eps(\wt P'):=\bigcup\limits_{p\in \wt P'} D_\eps(p);\quad D_\eps(\wt P''):=\bigcup\limits_{p\in \wt P''} D_\eps(p);\quad \wt C_{\eps,L}:=\bigcup\limits_{p\in \wt P'} C_{\eps,L}(p),
\]
and for the original degeneration $(N',T')\prec (N,T)$,
\[
D_\eps(P') = D_\eps(\wt P')/ \wt T'(\iota);\quad D_\eps(P'') = D_\eps(\wt P'')/ \wt T''(\iota);\quad C_{\eps,L} = \wt C_{\eps,L}/\wt T(\iota).
\]

The first steps of the proof below follow closely the exposition in Section~\ref{sec:proof_general_existence} of the proof of Theorem~\ref{thm:general_existence}.

\begin{lemma}
\label{lem:good_Smap}
Given $\eps,L\in (0,\infty),$ there exists $n\geq 2$ and a $T$-equivariant map $F\in W^{1,2}_{g_{\eps,L}}(N, \mathbb{B}^{n+1})$ such that 
\begin{equation}
\label{ineq:gSm1}
\int_N |dF|^2_{g_{\eps,L}}\,dv_{g_{\eps,L}}<\Sigma^T_1(N,\mC_{\eps,L}) + \eps^2,\quad
\int_{\partial N} (1-|F|^2)^2\,ds_{g_{\eps,L}}<\eps^2.
\end{equation}
Furthermore, the harmonic extension $\hat F\in W^{1,2}_{g'}(N', \mathbb{B}^{n+1})$ of $F$ restricted to $N'\setminus D_\eps(P')\subset N'$ is a $T'$-equivariant map satisfying
\begin{equation}
\label{ineq:gSm2}
\Sigma_1^{T'}(N',\mC')\leq \int_{N'}|d\hat F|^2_{g'}\,dv_{g'} + C\eps^\frac{1}{2},\quad 
\int_{\partial N'} (1-|\hat F|^2)^2\,dv_{g'}<C\eps,
\end{equation}
\begin{equation}
\label{ineq:gSm3}
\int_{\partial N'}\hat F\,ds_{g'} = 0.
\end{equation}
\end{lemma}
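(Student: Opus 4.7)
The plan is to mirror the proof of Lemma~\ref{lem:good_map} in the Steklov setting, replacing $\Lambda_1^T$ and the volume measure by $\Sigma_1^T$ and the boundary measure, and using the Ginzburg--Landau energy $F_\eta$ from Section~\ref{SSprelim} in place of $E_\eta$. First I would apply Theorem~\ref{stek.mm.char} to obtain, for $m$ sufficiently large, the identification $\Sigma_1^T(N,\mC_{\eps,L}) = 2\mathcal{F}_m^T(N,\mC_{\eps,L}) = 2\sup_{\eta>0}\mathcal{F}_{m,\eta}^T(N,g_{\eps,L})$. Fixing such $m$, I would then choose $\eta=\eta(\eps,L)>0$ small enough that some family $(F_y)_{y\in \mathbb{B}^{m|\Gamma|}}\in\mathcal{B}_m(\Gamma)$ satisfies $F_\eta(F_y)\leq \tfrac{1}{2}\Sigma_1^T(N,\mC_{\eps,L})+\eps^2/2$ for every $y$, with $|F_y|\leq 1$ pointwise. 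The bound $\int_{\partial N}(1-|F_y|^2)^2 \leq 4\eta\,F_\eta(F_y)$ then forces the second estimate of~\eqref{ineq:gSm1} as soon as $\eta$ is small relative to $\eps^2$, and the first is immediate since $2F_\eta(F_y)\geq \int_N|dF_y|^2$.

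Next I would perform a Hersch-type balancing against $\partial N'$. Define the $\Gamma$-invariant bounded linear functional $\ell\in (W^{1,2}_{g_{\eps,L}}(N))^*$ by $\ell(\phi) := \int_{\partial N'}\widehat{\phi|_{N'\setminus D_\eps(P')}}\,ds_{g'}$, where the hat denotes harmonic extension into $D_\eps(P')$. This is well-defined and $T'$-equivariant; crucially it sends $\Gamma$-equivariant maps to $F_m$. Applying the topological obstruction argument of Remark~\ref{gen.hersch} (which carries over verbatim, since it depends only on the nonexistence of a continuous retraction $\mathbb{B}^m\to \Sph^{m-1}$) then furnishes some $y\in \mathbb{B}^{m|\Gamma|}$ for which every component of $F:=F_y$ lies in $\ker\ell$, giving~\eqref{ineq:gSm3}.

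To establish~\eqref{ineq:gSm2}, I would split $\partial N' = (\partial N'\setminus D_\eps(P'))\cup(\partial N'\cap D_\eps(P'))$. On the first piece, $\hat F=F$ and the identification $\partial N'\setminus D_\eps(P')\subset \partial N$ from the surgery construction (Section~\ref{sec:cc_limits}) yields an $L^4$-contribution bounded by $\int_{\partial N}(1-|F|^2)^2\leq \eps^2$. On the second piece, the maximum principle gives $|\hat F|\leq 1$, hence $(1-|\hat F|^2)^2\leq 1$, and the $g'$-length of $\partial N'\cap D_\eps(P')$ is $O(\eps)$. Together these produce $\int_{\partial N'}(1-|\hat F|^2)^2\,ds_{g'}\leq C\eps$. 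Normalizing $|\partial N'|_{g'}=1$ so that $\sigma_1(N',g')=\Sigma_1^{T'}(N',\mC')$, Cauchy--Schwarz yields $\int_{\partial N'}(1-|\hat F|^2)\,ds_{g'}\leq C\eps^{1/2}$, hence $\int_{\partial N'}|\hat F|^2\,ds_{g'}\geq 1-C\eps^{1/2}$, and combining this with the balancing~\eqref{ineq:gSm3} and the variational characterization $\sigma_1(N',g')\int_{\partial N'}|\hat F|^2\leq \int_{N'}|d\hat F|^2$ completes the lower bound in~\eqref{ineq:gSm2}.

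The only delicate point is the treatment of boundary collapsed points $p'\in \partial N'\cap P'$, where $D_\eps(p')$ is a conformal half-disk rather than a disk and the ``harmonic extension'' must be defined with Dirichlet data only on the interior circular arc, leaving the remaining boundary arc along $\partial N'$ free. The $L^\infty$ bound $|\hat F|\leq 1$ persists by the maximum principle for this mixed problem (the free part inherits $|F|\leq 1$), and both the length estimate $|\partial N'\cap D_\eps(p')|_{g'}=O(\eps)$ and the $T'$-equivariance of the extension continue to hold by construction. With this routine check in place, every other step is a direct transcription of Lemma~\ref{lem:good_map} from the closed Laplace setting to the Steklov setting with boundary balancing.
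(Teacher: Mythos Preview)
Your proposal is correct and follows essentially the same approach as the paper: the same invocation of Theorem~\ref{stek.mm.char}, the same Hersch-type balancing via Remark~\ref{gen.hersch} applied to the linear functional $\phi\mapsto\int_{\partial N'}\hat\phi\,ds_{g'}$, and the same variational estimate for the lower bound in~\eqref{ineq:gSm2}. The only notable difference is in the phrasing of the harmonic extension $\hat F$: the paper packages it via the doubling construction (extend symmetrically to $\wt N$, restrict, harmonically fill the full disks in $\wt{N'}$, then restrict back to $N'$), whereas you work directly on $N'$ and treat boundary collapsed points $p'\in\partial N'\cap P'$ via the mixed Dirichlet--Neumann problem on the half-disk. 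These are equivalent --- a harmonic function on a half-disk with Neumann condition on the diameter is exactly the restriction of the even harmonic extension on the doubled full disk --- so your ``delicate point'' is precisely what the doubling language absorbs. Your splitting of $\partial N'$ into $\partial N'\setminus D_\eps(P')\subset\partial N$ and the $O(\eps)$-length piece $\partial N'\cap D_\eps(P')$ is the explicit content behind the paper's terse ``$|F_y|\leq 1$ implies $|\hat F_y|\leq 1$, so that $\int_{\partial N'}(1-|\hat F_y|^2)^2\,ds_{g'}<C\eps$''.
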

\begin{proof}
The proof is similar to the proof of Lemma~\ref{lem:good_map}.

By Theorem \ref{stek.mm.char}, there exists $m\in \mathbb{N}$ such that
$$\Sigma_1^T(N,\mC_{\eps,L})=2\mathcal{F}_m^T(N,\mC_{\eps,L})=2\sup_{\delta>0}\mathcal{F}_{m,\delta}^T(N,g_{\eps,L}).$$
Thus, for any $\delta>0$, there exists a family $(F_y)_{y\in \mathbb{B}^{m|\Gamma|}}\in \mathcal{B}_m$ for which
$$\sup_{y\in \mathbb{B}^{m|\Gamma|}}\int_N|dF_y|^2_{g_{\epsilon,L}}dv_{g_{\epsilon,L}}\leq \Sigma_1^G(\mC_{\epsilon,L})+\delta$$
and
\begin{equation}
\label{ineq:SalmostSn}
\int_{\partial N} (1-|F_y|^2)^2\,dv_{g_{\eps,L}}<\delta
\end{equation}
for all $y\in \mathbb{B}^{m|\Gamma|}$. Set $\delta=\epsilon^2$, and fix such a family $(F_y)\in \mathcal{B}_m$; as in the closed case, we may also assume that $|F_y|\leq 1$ pointwise.

Next, denote by $f\mapsto \hat{f}$ the assignment $W^{1,2}_{g_{\eps,L}}(N)\to W^{1,2}_{g'}(N')$ given by: 1) extending $f$ to a symmetric function $\wt{f}\in W^{1,2}_{\wt{g}_{\eps,L}}(\wt{N})$; 2) restricting $\wt{f}$ to $\wt{N'}\setminus\cup_{p\in \wt{P'}} D_\eps(p)$; 3) extending it harmonically into $\cup_{p\in \wt{P'}} D_\eps(p)$; 4) and finally restricting it back to $W^{1,2}_{g'}(N')$. Then, defining $\beta(f):=\int_{\partial N'}\hat{f}$, it is straightforward to check that $\beta$ defines a bounded linear functional on $W^{1,2}(N,g_{\epsilon,L})$. Applying Remark \ref{gen.hersch} to $\beta$, we deduce that there exists $y\in \mathbb{B}^{m|\Gamma|}$ such that
\begin{equation}
\label{ineq:Shat_balance}
\int_{\partial N'} \hat F_y\,ds_{g'} = 0.
\end{equation}
Moreover, $|F_y|\leq 1$ implies $|\hat F_y|\leq 1$, so that by~\eqref{ineq:SalmostSn}
$$
\int_{\partial N'} (1-|\hat F_y|^2)^2\,ds_{g'}< C\eps.
$$
Note the difference from the Laplace case in the last inequality. 
Combining this with~\eqref{ineq:Shat_balance} and estimates identical to those in the proof of Lemma \ref{fb.lbd} gives
$$
\Sigma^{T'}_1(N',\mC') = \bar\sigma_1(N',g')\leq \int_{N'}|d\hat F|_{g'}^2\,dv_{g'} + C\eps^\frac{1}{2}.
$$ 
Thus, the map $F_y$ satisfies all the conclusions of the lemma and the proof is complete, with $n+1=m|\Gamma|$.
\end{proof}

Note that the inequalities in Lemma~\ref{lem:good_Smap} imply that 
\begin{equation}
\label{ineq:Sfhllb}
\begin{split}
\int_{\bd N'}|\hat F|^2\,ds_{g'} &= 1-\int_{\bd N'}(1-|\hat F|^2)\,ds_{g'}\\
&\geq 1 - \left(\int_{\bd N'}(1-|\hat F|^2)^2\,ds_{g'}\right)^{1/2}\geq 1 - C\eps^\frac{1}{2}.
\end{split}
\end{equation}

Let us introduce the notation 
\[
\Sigma_1' = \Sigma_1^{T'}(N',\mC');\qquad \Sigma_1 = \Sigma_1^{T}(N,\mC_{\eps,L}),
\]
where we continue to omit the dependence on $\eps, L$.
For all points $p\in P'$, Lemma~\ref{lem:ext_bound} implies that
\[
\int_{D_\eps(p)} |d\hat F|^2_{g'}\,dv_{g'}\leq (1 + Ce^{-2L})\int_{C_{\eps,L}(p)}|dF|^2.
\]
Denoting $C_{\eps,L}:=\cup_p C_{\eps,L}(p)$ and summing over $p\in P$ we arrive at
\begin{equation}
\label{ineq:Sfhueb}
\begin{split}
\int_{N'}|d\hat{F}|_{g'}^2\,dv_{g'}&\leq \int_{(N'\setminus D_\eps(P'))\cup C_{\eps,L}}|dF|^2\,dv_{g_{\eps,L}}+Ce^{-2L}\int_{C_{\eps,L}}|dF|^2\\
&\leq \Sigma_1+Ce^{-2L}\int_{C_{\epsilon,L}}|dF|^2 +\eps^2
\end{split}
\end{equation}
where we used Lemma~\ref{lem:good_Smap} in the last inequality.

Let $Q$ denote the  nonnegative-definite quadratic form on $W^{1,2}(N', \mathbb{R}^{n+1})$ given by
\[
Q(\Psi,\Psi):=\int_{N'}|d\Psi|_{g'}^2\,dv_{g'}-\Sigma_1'\int_{\partial N'}\left|\Psi-\int_{\partial N'}\Psi\,ds_{g'}\right|^2\,ds_{g'},
\]
where we recall that $g'$ is a unit-area metric such that $\sigma_1(N',g')=\Sigma_1'$.
Applying Cauchy-Schwarz inequality to the corresponding bilinear form, 
we observe that for any $\Psi\in W^{1,2}(N', \mathbb{R}^{n+1})$ satisfying $\int_{\bd N'}\Psi\,ds_{g'} = 0$ one has
\begin{equation}
\label{ineq:SCS1}
 \int_{N'}\langle d\hat{F},d\Psi\rangle_{g'}\,dv_{g'}-\Sigma_1'\int_{\partial N'}\langle \hat{F},\Psi\rangle\,ds_{g'}= Q(\hat F,\Psi) \leq \sqrt{Q(\hat{F},\hat{F})}\sqrt{Q(\Psi,\Psi)},
\end{equation}
where we used~\eqref{ineq:gSm3} in the first equality.

In particular, it follows from~\eqref{ineq:Sfhueb} and~\eqref{ineq:Sfhllb} that
\[
Q(\hat{F},\hat{F})\leq (\Sigma_1-\Sigma_1') + Ce^{-2L}\|dF\|_{L^2(C_{\eps,L})}^2+C\eps^\frac{1}{2},
\]
and the Cauchy-Schwarz estimate~\eqref{ineq:SCS1} gives
\begin{equation}
\label{ineq:SCS2}
\int_{N'}\langle d\hat{F},d\Psi\rangle_{g'}\,dv_{g'}-\Sigma_1'\int_{\bd N'}\langle \hat{F},\Psi\rangle\,ds_{g'}\leq C((\Sigma_1-\Sigma_1')^{\frac{1}{2}}+e^{-L}\|dF\|_{L^2(C_{\eps,L})}+\eps^{\frac{1}{4}})\sqrt{Q(\Psi,\Psi)}
\end{equation}
for all $\Psi$ satisfying $\int_{\bd N'}\Psi\,ds_{g'} = 0$.

\begin{lemma}
\label{lem:SFeb_cyl}
One has 
\[
\|dF\|^2_{L^2(C_{\eps,L})}\leq \Sigma_1-\Sigma_1' + \frac{C}{|\log\eps|}.
\]
\end{lemma}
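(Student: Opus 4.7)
The proof will parallel Lemma \ref{lem:Feb_cyl} in the closed case, with the Steklov Rayleigh quotient (boundary $L^2$ norm in the denominator) replacing the Laplace one, and with half-disk cutoffs used near boundary points of $P'$. The only structural novelty is bookkeeping: $P'$ may contain points on $\partial N'$, and for these one uses half-disks $D_\eps(p)\cap N'$ of geodesic radius $\eps$ in the flat metric $g'_p$, with analogous logarithmic energy bounds.

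The plan is to construct a logarithmic cutoff $\psi_\eps\in W^{1,2}(N')$ that equals $1$ on $N'\setminus D_{\sqrt\eps}(P')$, vanishes on $D_\eps(P')$, and satisfies $\int_{N'}|d\psi_\eps|^2_{g'}\,dv_{g'}\leq C/|\log\eps|$ (by the closed-case formula near interior points of $P'$ and its half-disk analog near boundary points of $P'$). Since $\psi_\eps F$ agrees with $\hat F$ on $\partial N'$ away from the arcs $\partial N'\cap D_{\sqrt\eps}(P')$, whose total arclength is $O(\sqrt\eps)$, and since $\int_{\partial N'}\hat F\,ds_{g'}=0$ by \eqref{ineq:gSm3} together with $|F|\leq 1$, one obtains
\[
\left|\int_{\partial N'}\psi_\eps F\,ds_{g'}\right|\leq C\sqrt\eps.
\]
Applying the variational characterization of $\Sigma_1'=\sigma_1(N',g')$ to $\psi_\eps F$ minus its $\partial N'$-average therefore yields
\[
\Sigma_1'\int_{\partial N'}|\psi_\eps F|^2\,ds_{g'}\leq \int_{N'}|d(\psi_\eps F)|^2_{g'}\,dv_{g'}+C\eps.
\]
Combining \eqref{ineq:Sfhllb} with the $O(\sqrt\eps)$ arclength bound on the support of $1-\psi_\eps^2$ in $\partial N'$ gives $\int_{\partial N'}|\psi_\eps F|^2\geq 1-C\sqrt\eps$, so the left-hand side above is at least $\Sigma_1'-C\sqrt\eps$.

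To bound the right-hand side from above, I would expand
\[
\int_{N'}|d(\psi_\eps F)|^2 = \int_{N'}\psi_\eps^2|dF|^2+\tfrac12\int_{N'}\langle d(\psi_\eps^2),d(|F|^2)\rangle+\int_{N'}|d\psi_\eps|^2|F|^2,
\]
and then repeat verbatim the integration-by-parts computation from the proof of Lemma \ref{lem:Feb_cyl} on the (half-)annuli $A_\eps(p)=D_{\sqrt\eps}(p)\setminus D_\eps(p)$, which uses only $|F|\leq 1$ and the conformal invariance of the Dirichlet integral; this bounds both the cross term and the $|d\psi_\eps|^2|F|^2$ term by $C/|\log\eps|$. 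Since $\psi_\eps\equiv 1$ on $N'\setminus D_{\sqrt\eps}(P')$ and $\psi_\eps\leq 1$, one obtains
\[
\int_{N'\setminus D_\eps(P')}|dF|^2_{g'}\,dv_{g'}\geq \Sigma_1'-\frac{C}{|\log\eps|}.
\]
Finally, since $|dF|_{g_{\eps,L}}^2\,dv_{g_{\eps,L}}=|dF|_{g'}^2\,dv_{g'}$ on $N'\setminus D_\eps(P')$ and $N$ decomposes as the disjoint union of $N'\setminus D_\eps(P')$, $N''\setminus D_\eps(P'')$, and $C_{\eps,L}$, the energy bound \eqref{ineq:gSm1} yields
\[
\int_{C_{\eps,L}}|dF|^2\,dv_{g_{\eps,L}}\leq (\Sigma_1+\eps^2)-\left(\Sigma_1'-\frac{C}{|\log\eps|}\right)\leq \Sigma_1-\Sigma_1'+\frac{C}{|\log\eps|},
\]
which is the claim.

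The main obstacle is keeping track of boundary effects when $P'\cap\partial N'\ne\varnothing$: the cutoff $\psi_\eps$ must be built as a half-disk logarithmic cutoff in $g'_p$-polar coordinates, and the integration-by-parts computation in Lemma \ref{lem:Feb_cyl} acquires extra boundary terms along $\partial N'\cap A_\eps(p)$. However, these additional terms are controlled by $\frac{C}{|\log\eps|}$ using the same identity $\psi_\eps\tfrac{\partial\psi_\eps}{\partial r}=O(1/(r|\log\eps|))$ and the uniform bound $|F|\leq 1$, so the estimates go through without essential change.
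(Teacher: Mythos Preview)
Your argument follows the paper's proof essentially verbatim, and is correct, with one caveat concerning the boundary terms when $p\in\partial N'$. Your justification that these extra terms are $O(1/|\log\eps|)$ via the bound $\psi_\eps\,\partial_r\psi_\eps=O(1/(r|\log\eps|))$ is not quite right: the outward normal $\nu$ to $\partial N'$ at a point of $\partial N'\cap A_\eps(p)$ is \emph{not} the radial direction $\partial_r$, so the quoted bound is irrelevant; and even if it were, integrating $1/(r|\log\eps|)$ in $r$ over $[\eps,\sqrt{\eps}]$ yields $O(1)$, not $O(1/|\log\eps|)$. The correct (and simpler) observation, which the paper uses, is that in half-disk conformal coordinates centered at $p$ the boundary $\partial N'$ is a straight line through the origin, so the tangent direction along $\partial N'$ is radial; since $\psi_\eps$ is purely radial, $\partial_\nu(|\psi_\eps|^2)\equiv 0$ along $\partial N'$ and the extra boundary terms from integration by parts vanish identically. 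With this fix your argument is complete and coincides with the paper's.
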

\begin{proof}
The proof is very similar to Lemma~\ref{lem:Feb_cyl}; one essentially applies the same arguments to the symmetric function $\wt{F}$ on the double $\wt{N}$.

Let $\rho_{\eps,p}\in W^{1,2}(N')$ be a logarithmic cut-off function given in polar coordinates centered at $p$ with respect to $g_p$ as
\[
\rho_{\eps,p} (r) = 
\begin{cases}
1, &\text{ if } r\leq\eps;\\
\frac{\log (r/\eps^{1/2})}{\log(\eps^{1/2})}, &\text{ if }\eps^{\frac{1}{2}}\geq r\geq\eps;\\
0, &\text{ if } r\geq\eps^{\frac{1}{2}}.\\
\end{cases}
\]
Choose $\eps$ small enough so that supports of $\rho_{\eps,p}$ are disjoint for $p\in P'$ and define $\psi_\eps:= 1 - \sum_{p\in P}\rho_{\eps,p}$. Since the Dirichlet integral is conformally invariant, one has
\[
\int_{N'}|d\psi_\eps|^2_{g'}\,dv_{g'}\leq \frac{C}{|\log\eps|}.
\]

Next, observe that
\begin{eqnarray*}
\left|\int_{\bd N'}\psi_{\eps}F\,ds_{g'}\right|&=&\left|\int_{\bd N'}\hat{F}\,ds_{g'}+\int_{\bd N'}(\psi_{\eps}-1)\hat{F}\,ds_{g'}\right|\\
&=&\left|\int_{\bd N'}(\psi_{\eps}-1)\hat{F}\,ds_{g'}\right|
\leq \length_{g'}(\mathrm{supp}(\psi_\eps-1)|_{\bd N'})\leq C\eps^\frac{1}{2},
\end{eqnarray*}
so that
\begin{equation}
\label{ineq:SfllbM'}
\int_{N'}|d(\psi_{\eps}F)|_{g'}^2\,dv_{g'}\geq \Sigma_1'\left(\int_{\bd N'}\psi_{\eps}^2|F|^2\,ds_{g'}-C\eps^\frac{1}{2}\right)\geq \Sigma_1'-C'\eps^\frac{1}{2},
\end{equation}
where we used~\eqref{ineq:Sfhllb} in the last inequality. 

On the other hand, by conformal invariance of the Dirichlet integral, on $A_\eps(p):= D_{\sqrt{\eps}}(p)\setminus D_{\eps}(p) =\mathrm{supp}(d\rho_{\eps,p})$ one has
\begin{equation*}
\begin{split}
&\int_{A_\eps(p)}\langle d(|\psi_{\eps}|^2),d(|F|^2)\rangle_{g'}\,dv_{g'} = \int_{A_\eps(p)} |F|^2 \Delta_{g'}
(|\psi_{\eps}|^2)\,dv_{g'} - \\
2&\int_{S_{\sqrt{\eps}}(p)}\frac{1}{r\log(\eps^{1/2})}\left( 1- \frac{\log (r/\eps^{1/2})}{\log(\eps^{1/2})}\right)|F|^2d\theta +\\ 2&\int_{\partial D_{\eps}(p)}\frac{1}{r\log(\eps^{1/2})}\left( 1- \frac{\log (r/\eps^{1/2})}{\log(\eps^{1/2})}\right)|F|^2d\theta = \\
-2&\int_{A_{\eps}(p)}|F|^2|d\psi_{\eps}|^2_{g'}\,dv_{g'} + \frac{2}{\eps^{1/2}\log(\eps^{-1/2})}\int_{S_{\sqrt{\eps}}(p)} |F|^2\,d\theta\leq \frac{C}{|\log\eps|},
\end{split}
\end{equation*}
where we used $|F|\leq 1$ in the last step. Note that for $p\in \bd N'$ we additionally used that $\psi_\eps$ is a radial function, so that the normal derivative of $|\psi_\eps|^2$ vanishes along $\bd N'$. 
Therefore, the following inequality holds
\begin{equation}
\begin{split}
\int_{N'}|d(\psi_{\eps}F)|_{g'}^2\,dv_{g'}&=\int_{N'} \psi_{\eps}^2|dF|_{g'}^2+\frac{1}{2}\langle d(|\psi_{\eps}|^2),d(|F|^2)\rangle_{g'} + |d\psi_{\eps}|_{g'}^2|F|^2\,dv_{g'}\leq \\
&\leq\int_{N'\setminus D_{\eps}(P')} |dF|^2_{g'}\,dv_{g'}+\frac{C}{|\log\eps|}.
\end{split}
\end{equation}
Together with~\eqref{ineq:SfllbM'} this implies  
\[
\int_{N'\setminus D_{\eps}(P')}|dF|^2_{g_{\eps,L}}\,dv_{g_{\eps,L}} = \int_{N'\setminus D_{\eps}(P')}|dF|^2_{g'}\,dv_{g'}\geq \Sigma'_1-\frac{C}{|\log\epsilon|}.
\]
At the same time, by Lemma~\ref{lem:good_Smap} one has
\[
\int_M|dF|^2_{g_{\eps,L}}\,dv_{g_{\eps,L}}\leq \Sigma_1 +\eps^2,
\]
so combining the last two inequalities concludes the proof.
\end{proof}

Lemma~\ref{lem:SFeb_cyl} allows us to rewrite inequality~\eqref{ineq:SCS2} as
\begin{equation}
\label{ineq:SCS3}
\int_{N'}\langle d\hat{F},d\Psi\rangle_{g'}\,dv_{g'}-\Sigma'_1\int_{\bd N'}\langle \hat{F},\Psi\rangle\,ds_{g'}\leq C\left((\Sigma_1-\Sigma_1')^{\frac{1}{2}}+\frac{e^{-L}}{|\log\eps|^{\frac{1}{2}}}+\eps^{\frac{1}{4}}\right)\sqrt{Q(\Phi,\Phi)}
\end{equation}

We also record the following corollary of Lemma~\ref{lem:SFeb_cyl}.
\begin{corollary}
\label{cor:S''bound}
One has
\[
\int_{N''\setminus D_\eps(P'')}|dF|^2\,dv_{g_{\eps,L}} \leq (\Sigma_1 - \Sigma_1') + \eps^\frac{1}{2} + Ce^{-2L}\left(\Sigma_1-\Sigma_1' + \frac{1}{|\log\eps|}\right).
\]
\end{corollary}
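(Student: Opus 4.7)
The proof will parallel exactly the argument for Corollary~\ref{cor:''bound} in the closed case, with the only substantive change being that the Steklov error terms from Lemma~\ref{lem:good_Smap} are of order $\eps^{1/2}$ rather than $\eps$. The strategy is to use the decomposition
\[
N = \bigl(N'\setminus D_\eps(P')\bigr) \,\sqcup\, C_{\eps,L} \,\sqcup\, \bigl(N''\setminus D_\eps(P'')\bigr)
\]
(valid up to sets of measure zero) to bound the energy on $N''\setminus D_\eps(P'')$ from above by subtracting a lower bound on the energy over the first two pieces from the total energy bound on $N$.

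Concretely, I would proceed in three steps. First, I would apply the first line of~\eqref{ineq:Sfhueb} together with Lemma~\ref{lem:SFeb_cyl} to conclude that
\[
\int_{N'}|d\hat F|^2_{g'}\,dv_{g'} \leq \int_{(N'\setminus D_\eps(P'))\cup C_{\eps,L}} |dF|^2\,dv_{g_{\eps,L}} + Ce^{-2L}\Bigl(\Sigma_1-\Sigma_1' + \tfrac{1}{|\log\eps|}\Bigr).
\]
Second, combining this with the lower bound~\eqref{ineq:gSm2}, namely $\Sigma_1' \leq \int_{N'}|d\hat F|^2_{g'}\,dv_{g'} + C\eps^{1/2}$, yields
\[
\int_{(N'\setminus D_\eps(P'))\cup C_{\eps,L}} |dF|^2\,dv_{g_{\eps,L}} \geq \Sigma_1' - C\eps^{1/2} - Ce^{-2L}\Bigl(\Sigma_1-\Sigma_1' + \tfrac{1}{|\log\eps|}\Bigr).
\]
Third, subtracting this from the total energy upper bound $\int_N|dF|^2\,dv_{g_{\eps,L}} < \Sigma_1 + \eps^2$ from~\eqref{ineq:gSm1} and absorbing $\eps^2$ into $C\eps^{1/2}$ delivers the claimed inequality.

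There is essentially no obstacle here: all three ingredients—the harmonic extension comparison on the cylinder collars, the energy estimate on the collars, and the approximate Steklov eigenvalue bound for $\hat F$—have already been established in Lemmas~\ref{lem:good_Smap} and~\ref{lem:SFeb_cyl} and inequality~\eqref{ineq:Sfhueb}. The corollary is a bookkeeping consequence which packages together the three estimates into a form ready to be combined, in the main body of the proof of Theorem~\ref{thm:general_Sexistence}, with averaging estimates on $\partial D_\eps(q)$ for points $q\in P''$ to produce the contradiction to hypothesis (2) of the theorem. The only mild subtlety, compared to the closed case, is keeping track of the weaker boundary approximation error $\eps^{1/2}$ in place of $\eps$; this propagates harmlessly through the argument and only weakens the exponent of $\eps$ on the right-hand side.
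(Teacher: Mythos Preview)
Your proposal is correct and follows essentially the same three-step argument as the paper's own proof: combine the first line of \eqref{ineq:Sfhueb} with Lemma~\ref{lem:SFeb_cyl}, invoke the lower bound \eqref{ineq:gSm2}, and subtract from the total energy bound \eqref{ineq:gSm1}. Your commentary on the $\eps^{1/2}$ versus $\eps$ difference from the closed case is also accurate.
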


\begin{proof}
Using the first line of~\eqref{ineq:Sfhueb}, Lemma~\ref{lem:SFeb_cyl} implies
\[
\int_{N'}|d\hat{F}|_{g'}^2\,dv_{g'}\leq \int_{(N'\setminus D_\eps(P'))\cup C_{\eps,L}}|dF|^2\,dv_{g_{\eps,L}}+Ce^{-2L}\left(\Sigma_1-\Sigma_1' + \frac{1}{|\log\eps|}\right).
\]
Together with~\eqref{ineq:gSm2}, one arrives at
\[
\int_{(N'\setminus D_\eps(P'))\cup C_{\eps,L}}|dF|^2\,dv_{g_{\eps,L}}\geq \Sigma_1' - \eps^\frac{1}{2} - Ce^{-2L}\left(\Sigma_1-\Sigma_1' + \frac{1}{|\log\eps|}\right).
\]
Using the upper bound~\eqref{ineq:gSm1} on the energy of $F$, we finally obtain
\[
\int_{N''\setminus D_\eps(P'')}|dF|^2\,dv_{g_{\eps,L}}\leq (\Sigma_1 - \Sigma_1') + \eps^\frac{1}{2} + Ce^{-2L}\left(\Sigma_1-\Sigma_1' + \frac{1}{|\log\eps|}\right).
\]
\end{proof}

By~\eqref{ineq:gSm3} we can decompose 
\[
\hat F=\Psi_{\eps,L}+R_{\eps,L},
\]
where $\Psi_{\eps,L}$ is a map by $\sigma_1(N',g')$-eigenfunctions, and $R_{\eps,L}$ is the projection of $\hat F$ onto the higher-frequency eigenspaces with eigenvalues $\sigma\geq \sigma_m(N',g')$, where $\sigma_m(N',g')>\sigma_{m-1}(N',g')=\ldots=\sigma_1(N',g')$. 

Since the kernel of $Q$ is exactly the $\sigma_1(N',g')$-eigenspace, we have 
$Q(\hat F,R_{\eps,L})=Q(R_{\eps,L},R_{\eps,L})$,
and \eqref{ineq:SCS3} gives
\[
Q(\hat F,R_{\eps,L})\leq C\left((\Sigma_1-\Sigma_1')^{\frac{1}{2}}+\frac{e^{-L}}{|\log\eps|^{\frac{1}{2}}}+\eps^{\frac{1}{4}}\right)\sqrt{Q(R_{\eps,L},R_{\eps,L})},
\]
so that
\[
Q(R_{\eps,L},R_{\eps,L})\leq C\left(\Sigma_1-\Sigma_1' + \frac{e^{-2L}}{|\log\eps|} + \eps^\frac{1}{2}\right).
\]

On the other hand, since $R$ is orthogonal to constants and the $\sigma_1(N',g')$-eigenspace, one has
\[
Q(R_{\eps,L},R_{\eps,L})\geq \left(1-\frac{\sigma_1(N',g')}{\sigma_m(N',g')}\right)\|dR_{\eps,L}\|_{L^2(N')}^2,
\]
hence,
\begin{equation}
\label{ineq:SReub}
\|R_{\eps,L}\|^2_{W^{1,2}(N')} \leq C\left(\Sigma_1-\Sigma_1' + \frac{e^{-2L}}{|\log\eps|} + \eps^{\frac{1}{2}}\right).
\end{equation}

We are now ready to prove the theorem.
 Assume by contradiction that $\Sigma_1\leq \Sigma_1'$ for all $\eps, L$. 
 If the collapsed set $P'$ satisfies the condition 2), then the proof is similar to the proof of Theorem~\ref{thm:general_existence}.
Let $P_i'\subset P'$ be an equivalence class, then by definition the set $\alpha(P'_i)=P_i''\subset P''\subset N''$ lies on the same connected component $N''_i$ of $N''$. Therefore, the first  eigenvalue $\lambda_1(N_i'',g'')>0$ (with Neumann boundary conditions if $\bd N_i''\ne\varnothing$). Let $\tilde F_i$ denote the harmonic extension to $N_i''$ of $F|_{N_i''\setminus D_\eps(P'')}$ and define
\[
\bar F_i: = \frac{1}{\area(N_i'',g'')}\int_{N_i''} \tilde F_i\,dv_{g''}.
\]
Then for any $q\in P_i''$, by Lemma~\ref{lem:average} applied to the symmetric extensions, one has 
\begin{equation*}
\begin{split}
\left|\frac{1}{\eps}\int_{S_\eps(q)} F\,ds_{g''_p} - 2\pi\bar F_i \right|^2&\leq C|\log\eps|\|\tilde F_i-\bar F_i\|^2_{W^{1,2}(N_i'',g'')}\\
\text{(by Definition of $\bar F_i$)}&\leq C\left(1 + \frac{1}{\lambda_1(N_i'',g'')}\right)|\log\eps|\|d\tilde F_i\|^2_{L^2(N_i'',g'')}\\
\text{(by Remark~\ref{rem:ext_bound})}&\leq C|\log\eps|\|d F_i\|^2_{L^2(N_i''\setminus D_\eps(P''),g'')}\\
\text{(by Corollary~\ref{cor:S''bound})}&\leq C|\log\eps| \left(\eps^\frac{1}{2} + \frac{e^{-2L}}{|\log\eps|}\right).
\end{split}
\end{equation*}
Similarly, by Lemma~\ref{lem:average} and~\eqref{ineq:SReub} one has for all $p\in P_i'$
\begin{equation*}
\left|\frac{1}{\eps}\int_{S_\eps(p)}R_{\eps,L}\,ds_{g'_p}\right|^2\leq C\left(e^{-2L} + \eps^\frac{1}{2}|\log\eps|\right).
\end{equation*}
Finally, one has
\begin{equation*}
\begin{split}
&\left|\frac{1}{\eps}\int_{S_\eps(p)}F\,dv_{g'_p} - \frac{1}{\eps}\int_{S_\eps(\alpha(p))}F\,dv_{g''_p}\right| = 
\frac{1}{\eps}\left|\int_{C_{\eps,L(p)}}\frac{\partial F}{\partial t}\,dtd\theta\right|\leq\\
&\frac{1}{\eps}\|dF\|_{L^2(C_{\eps,L}(p))}\area(C_{\eps,L}(p))^{\frac{1}{2}}\leq \frac{C\sqrt{L}}{|\log\eps|^\frac{1}{2}},
\end{split}
\end{equation*}
where we applied Lemma~\ref{lem:SFeb_cyl} in the last step.

Combining all these inequalities, one obtains for all $p\in P_i'$
\begin{equation}
\label{ineq:PSi_avg_bound}
\left|\frac{1}{\eps}\int_{S_\eps(p)}\Psi_{\eps,L}\,dv_{g_p'} - 2\pi \bar F_i\right|\leq C\left(\frac{\sqrt{L}}{|\log\eps|^\frac{1}{2}} + e^{-L} + \eps^\frac{1}{4}|\log\eps|^{\frac{1}{2}}\right),
\end{equation}
and, therefore, for all $p,q\in P_i'$ one has
\begin{equation}
\label{ineq:PSi_bound}
\left|\frac{1}{\eps}\int_{S_\eps(p)}\Psi_{\eps,L}\,dv_{g_p'} - \frac{1}{\eps}\int_{S_\eps(q)}\Psi_{\eps,L}\,dv_{g_q'}\right|\leq C\left(\frac{\sqrt{L}}{|\log\eps|^\frac{1}{2}} + e^{-L} + \eps^\frac{1}{4}|\log\eps|^{\frac{1}{2}}\right).
\end{equation}
With this estimate in place, the proof is concluded in the same way as in Theorem~\ref{thm:general_existence}.

Suppose now that $p\in P'^\iota$; then $\bd N_i''\ne\varnothing$, so that $\sigma_1(N'',g'')>0$, and we define
\[
\check F_i:=\frac{1}{\length(\bd N_i'')}\int_{\bd N_i''}|\tilde{F}_i|^2\,dv_{g''}.
\]
Note that by an argument similar to~\eqref{ineq:Sfhllb} one has
\begin{equation}
\label{ineq:cFilb}
|\check F_i|\geq 1 - C\eps^\frac{1}{2}.
\end{equation}
Furthermore, for $q = \alpha(p)$ one has
\begin{equation*}
\begin{split}
\left|\frac{1}{\eps}\int_{S_\eps(q)} |F|^2\,ds_{g''_p} - 2\pi\check F_i \right|^2&\leq C|\log\eps|\||\tilde F_i|^2-\check F_i\|^2_{W^{1,2}(N_i'',g'')}\\
\text{(by Definition of $\check F_i$)}&\leq C\left(1 + \frac{1}{\sigma_1(N_i'',g'')}\right)|\log\eps|\|d(|\tilde F_i|^2)\|^2_{L^2(N_i'',g'')}\\
\text{(by Cauchy-Schwarz, $|\tilde F_i|\leq 1$)}&\leq C|\log\eps|\|d\tilde F_i\|^2_{L^2(N_i'',g'')}\\
\text{(by Remark~\ref{rem:ext_bound})}&\leq C|\log\eps|\|d F_i\|^2_{L^2(N_i''\setminus D_\eps(P''),g'')}\\
\text{(by Corollary~\ref{cor:S''bound})}&\leq C|\log\eps| \left(\eps^\frac{1}{2} + \frac{e^{-2L}}{|\log\eps|}\right).
\end{split}
\end{equation*}
Similarly to case 1), one also has a bound similar to~\eqref{ineq:PSi_avg_bound}
\[
\left|\frac{1}{\eps}\int_{S_\eps(p)}\Psi_{\eps,L}\,ds_{g_p'} - \frac{1}{\eps}\int_{S_\eps(q)}F\,ds_{g_p'}\right|\leq C\left(\frac{\sqrt{L}}{|\log\eps|^\frac{1}{2}} + e^{-L} + \eps^\frac{1}{4}|\log\eps|^{\frac{1}{2}}\right)
\] 
A novel ingredient is the following lemma.
\begin{lemma}
Let $C_L = \mathbb{S}^1\times [0,L]$ be the flat cylinder as in Lemma~\ref{lem:ext_bound}. Consider a mixed Steklov-Neumann eigenvalue problem  
\begin{equation*}
\begin{cases}
\Delta \phi = 0 &\text{ on }C_L;\\
\frac{\partial\phi}{\partial t}\phantom{ \,\,}=0 &\text{ on }\mathbb{S}^1\times \{0\};\\
\frac{\partial\phi}{\partial t}\phantom{ \,\,}=\sigma \phi &\text{ on }\mathbb{S}^1\times \{L\}.
\end{cases}
\end{equation*}
Then its first nontrivial eigenvalue is bounded from above by $1$. In particular, for any $\phi\in W^{1,2}(C_L)$ one has
\begin{equation}
\label{ineq:SN_cyl}
\left|\int_{t=L}\phi^2\,d\theta - \frac{1}{2\pi}\left(\int_{t=L} \phi\,d\theta\right)^2\right|\leq \int_{C_L}|d\phi|^2
\end{equation}
\end{lemma}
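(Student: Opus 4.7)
The plan is to diagonalize the mixed Steklov--Neumann problem on $C_L$ via separation of variables, identify the first nontrivial eigenvalue explicitly, and then deduce the integral inequality from its min--max characterization.

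First I would separate variables $\phi = \psi(t)\beta(\theta)$ and expand in circle-Laplacian eigenmodes $\beta_k(\theta) = e^{ik\theta}$, $k\in \Z_{\geq 0}$, with eigenvalues $k^2$. Harmonicity $\Delta\phi=0$ reduces to the ODEs $\psi_k''=k^2\psi_k$ on $[0,L]$; the Neumann condition at $t=0$ forces $\psi_k(t)=A\cosh(kt)$ for $k\geq 1$ (and $\psi_0$ constant), while the Steklov condition $\psi_k'(L)=\sigma\psi_k(L)$ then produces the eigenvalue relation $\sigma_k = k\tanh(kL)$ for $k\geq 1$ (with $\sigma_0=0$ corresponding to constant eigenfunctions). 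Since $k\tanh(kL)$ is strictly increasing in $k\geq 1$, the first nontrivial eigenvalue is $\sigma_1=\tanh(L)$, which lies in $(0,1)$ for every finite $L$, giving the claimed upper bound.

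For the integral inequality I would split $\phi = c + \tilde\phi$ with $c := \frac{1}{2\pi}\int_{t=L}\phi\,d\theta$, so that $\tilde\phi$ has zero mean on $\{t=L\}$ (hence is orthogonal to the trivial $k=0$ eigenspace) and is admissible as a test function for $\sigma_1$. A direct computation gives
\[
\int_{t=L}\phi^2\,d\theta - \frac{1}{2\pi}\left(\int_{t=L}\phi\,d\theta\right)^{2} = \int_{t=L}\tilde\phi^2\,d\theta,
\]
while $|d\tilde\phi|_g=|d\phi|_g$ since $c$ is constant. Applying the Rayleigh quotient characterization of $\sigma_1$ to $\tilde\phi$ together with the bound $\sigma_1\leq 1$ from the first step then yields the claimed inequality.

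The only technically nontrivial ingredient is the separation-of-variables computation identifying $\sigma_1 = \tanh(L)$; once this is in hand, both the eigenvalue bound and the Poincaré-type inequality fall out of Rayleigh in a routine way. An equivalent route is the direct Fourier reduction: writing $\phi(\theta,t)=\sum_k a_k(t)e^{ik\theta}$, Parseval converts the inequality into mode-by-mode bounds $|a_k(L)|^2 \leq \sigma_k^{-1}\int_0^L(|a_k'|^2+k^2|a_k|^2)\,dt$ for $k\neq 0$, which follow by minimizing the right-hand side over $a_k$ with prescribed boundary value $a_k(L)$ and free Neumann data at $t=0$ (the minimizer is $a_k(t)=a_k(L)\cosh(kt)/\cosh(kL)$, realizing the spectral ratio $\sigma_k$), and then invoking the monotonicity $k\tanh(kL)\geq \tanh(L)$ for $k\geq 1$.
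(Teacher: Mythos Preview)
Your approach is essentially identical to the paper's: separation of variables yields eigenfunctions $\cos(k\theta)\cosh(kt)$, $\sin(k\theta)\cosh(kt)$ with eigenvalues $k\tanh(kL)$, so $\sigma_1=\tanh(L)<1$, and the paper likewise says the integral inequality ``follows from the min-max characterization of the eigenvalues.''

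There is, however, a genuine slip in your deduction of \eqref{ineq:SN_cyl} (and the paper's one-line justification has the same issue). The Rayleigh characterization gives, for $\tilde\phi$ with zero mean on $\{t=L\}$,
\[
\int_{C_L}|d\tilde\phi|^2 \;\geq\; \sigma_1 \int_{t=L}\tilde\phi^2.
\]
Combining this with $\sigma_1\leq 1$ does \emph{not} yield $\int_{t=L}\tilde\phi^2\leq \int_{C_L}|d\tilde\phi|^2$; you would need $\sigma_1\geq 1$ for that direction. In fact the stated inequality with constant $1$ is false for every $L$: the first eigenfunction $\phi=\cos\theta\,\cosh t$ has Rayleigh quotient exactly $\tanh(L)<1$, so $\int_{t=L}\phi^2>\int_{C_L}|d\phi|^2$. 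Your alternative Fourier route has the same problem: monotonicity $k\tanh(kL)\geq\tanh(L)$ only gives the bound with the factor $1/\tanh(L)$ on the right. The correct inequality is
\[
\int_{t=L}\tilde\phi^2 \;\leq\; \coth(L)\int_{C_L}|d\tilde\phi|^2,
\]
which is what the eigenvalue computation actually delivers. This is harmless for the paper's application (there $L\to\infty$ and $\coth(L)=1+O(e^{-2L})$), but as written your step ``$\sigma_1\leq 1$ yields the claimed inequality'' does not go through.
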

\begin{proof}
By separation of variables we find that the nonconstant eigenfunctions are of the form $\sin(k\theta)\cosh (kt)$, $\cos(k\theta)\cosh (kt)$ for $k\in\mathbb{N}$. The corresponding eigenvalues are $k\tanh(kL)$ and, in particular, the first nontrivial eigenvalue is at most $\tanh(L)<1$. Inequality~\eqref{ineq:SN_cyl} follows from the min-max characterization of the eigenvalues.
\end{proof}

Applying~\eqref{ineq:SN_cyl} to each component of $F$ restricted to $C_{\eps,L}(p)$, we arrive at
\[
\left|\frac{1}{\eps}\int_{S_\eps(q)} |F|^2 ds_{g_q}- \frac{1}{2\pi}\left|\frac{1}{\eps}\int_{S_\eps(q)}F\right|^2\,ds_{g_q}\right|\leq \|dF\|^2_{L^2(C_{\eps,L})}\leq \frac{C}{|\log \eps|}.
\]
Combining all the previous inequalities, one obtains
\[
\left|\left|\frac{1}{2\pi \eps}\int_{S_\eps(p)}\Psi_{\eps,L}\,ds_{g_p'}\right|^2 - \check F_i \right|\leq C\left(\frac{1}{|\log\eps|}+\frac{\sqrt{L}}{|\log\eps|^\frac{1}{2}} + e^{-L} + \eps^\frac{1}{4}|\log\eps|^{\frac{1}{2}}\right)
\]
To finish the proof, we choose $L=L(\eps)$ so that the r.h.s. tends to $0$ as $\eps\to 0$. As in the proof of Theorem~\ref{thm:general_existence}, up to a rotation and a choice of a subsequence, $\Psi_{\eps,L(\eps)}$ converges in $C^{\infty}$ to a map $\Psi\colon (N',g')\to\mathbb{B}^n$ by $\sigma_1(N',g')$-eigenfunctions. In particular, one has
\[
\left|\frac{1}{2\pi \eps}\int_{S_\eps(p)}\Psi_{\eps,L(\eps)}\,ds_{g_p'}\right|^2\to |\Psi(p)|^2<1.
\]
where the last inequality follows from the maximum principle and the fact that $p$ is an interior point of $N''$. At the same time, by~\eqref{ineq:cFilb} $\check F_i\to 1$, and we arrive at a contradiction.



\appendix

\section{Compactness of the moduli space.}
\label{app:moduli_space}

In this section we outline the ideas behind the results of~\cite{BSS} and provide modifications of their arguments necessary for our purposes as is explained in Remark~\ref{rmk:app_BSS}. Given $M$ a closed surface and $T\colon\Gamma\times M\to M$ an action of the group $\Gamma$ on $M$, we are interested in the compactness properties of the moduli space of $T$-invariant constant curvature metrics on $M$. In~\cite{BSS} Theorems~\ref{thm:Mahler} and~\ref{them_EquivDM} are proved provided $M$ is orientable, $\chi(M)<0$ and the action $T$ is by orientation preserving transformations. Below, we first sketch the proof in~\cite{BSS} and then show how to modify it in order to remove these assumptions.

\subsection{Sketch of Buser-Sepp\"al\"a-Silhol arguments} 
\label{sec:BSS}
In the case of $\chi(M)<0$ we are working with the space of hyperbolic metrics on $M$. The proof of Theorem~\ref{thm:Mahler} consists in constructing a geometrically controlled $T$-invariant geodesic triangulation of $M$, i.e. such that the length of each side is bounded from above and bounded from below away from zero; all angles are bounded away from $0$ and $\pi$; and the number of triangles is bounded from above, where the bounds only depend on $(M,T)$ and $\eps$, and independent of the hyperbolic metric $h$ with $\mathrm{inj}(h)\geq \eps$. One then argues that, up to a choice of a subsequence, each triangle has a limit (a triangle with sides and angles equal to the limits of the corresponding quantities). Furthermore, since the number of triangles (and, hence, possible ways to glue them together and act on them by the group) is finite, up to a further choice of subsequence, the combinatorial data of gluing triangles together and the way group permutes the triangles is independent of $h$. Gluing the limit triangles and defining the group action according to the this data yields the limit surface. 
It is explained in~\cite[Theorem 1]{BSS} that in order to construct a geometrically controlled triangulation it is sufficient to find a maximal $\eps/2$-separated collection of points $\mP$, i.e. a collection such that (a) any two points of $\mP$ lie at distance $\geq\eps/2$ from one another and (b) for any point $x\in M$ there exists a point in $\mP$ at distance $\leq\eps/2$ from it. Similarly, in order to find $T$-invariant triangulation, it is sufficient to find $T$-invariant maximal $\eps/2$-separated set $\mP$. The proof of Theorem~\ref{them_EquivDM} is similar: one constructs geometrically controlled triangulation of the thick part and treats thin part separately.

Let $h_n$ be a sequence of $T$-invariant metrics on $M$. Let $\{c_{n,i}\}$, $i= 1,\ldots, m$, $m\leq 3\gamma-3$ be a collection of disjoint simple closed geodesics in $(M,h_n)$ with $\ell_{h_n}(c_{n,i})\to 0$. Up to a choice of a subsequence, all other closed geodesics in $(M,h_n)$ have length at least $2\rho$, where $\rho\leq 1/16$ for convenience. For each geodesic we denote by $\mC_{n,i}$ its (truncated) collar chosen so that boundary curves have length $1$ and distance between different collars is at least $1/2$. The union $\cup_i\mC_{n,i}$ represents the thin part of $(M,h_n)$. As is explained in Section~\ref{sec:cc_limits}, for a fixed $n$ the action $T$ permutes $c_{n,i}$ and their collars $\mC_{n,i}$. For a fixed $n,i$, one now construct a tessellation of $\mC_{n,i}$ as follows. Let $\Gamma'\leq \Gamma$ be a subgroup that maps $\mC_{n,i}$ to itself. By a classification of isometries of a cylinder, all fixed points of elements of $\Gamma'$ are located on $c_{n,i}$ (recall that for now all isometries preserve the orientation) and are equidistant there. One declares those points to be vertices, and then adds additional vertices to form a set of $N$ equidistant points on $c_{n,i}$. Draw geodesics perpendicular to $c_{n,i}$ based at these points and declare their intersection with $\bd \mC_{n,i}$ to be vertices. Choose $N$ such that the distance $d$ between adjacent vertices is $\eps/2\leq d\leq\eps$, where $\eps\leq\rho/|\Gamma|$ is a small fixed constant to be precised later. Connecting them by a geodesic gives rise to a $\Gamma'$-invariant tessellation of $\mC_{n,i}$ by long thin rectangles, see~\cite[Fig. 3]{BSS}. Assuming now that the way group acts on $\mC_{n,i}$ is independent of $n$, it is explained in~\cite[Fig. 4,5]{BSS} that each long rectangle converges to a degenerate triangle in a way that $\mC_{n,i}$ converges to a union of two cusps. Doing this for all $i$ and passing to a subsequence takes care of the limit of the thin part. Denote by $\mP'$ the set of all vertices introduced at the last step, so that pairwise distances between two points in $\mP'$ are at least $\eps/2$ and for any point $p\in\mP$, there is $q\in\mP$, $p\ne q$ such that $\dist(p,q)<\eps$.  

In the thick part, the goal is to construct a $T$-invariant maximal $\eps/2$-separated set $\mP$. At this point the set $\mP'$ (as a subset of thick part) is $T$-invariant and $\eps/2$-separated, so one would like to add points to $\mP'$ in order for it to become maximal. If the action is trivial, then one could simply add points one at time: if an $\eps/2$-separated set $\mQ$ is not maximal, then there is $q$ lying in the complement of $D_{\eps/2}(\mQ)$ in the thick part, so we replace $\mQ$ by $\mQ\cup\{q\}$ and repeat the process. If the action is non-trivial, then as soon as we add a point $q$, we are forced to also add the whole orbit $\Gamma(q)$ and there is a potential problem that $\dist(q,\gamma(q))<\eps/2$. Indeed, this does occur if $q$ is sufficiently close to a fixed point of $\gamma$, hence, we need to treat neighbourhoods of fixed points separately.

Thus, we declare all points fixed by some element of $\Gamma$ to be vertices. According to~\cite[Lemma 1]{BSS} if $\gamma_1,\gamma_2\in\Gamma$ are non-trivial elements, $\gamma_1(p_1) = p_1$, $\gamma_1(p_2) = p_2$ and at least one of $\gamma_1,\gamma_2$ has order bigger than 2, then $\dist(p_1,p_2)>1/8\geq 2\rho$. Moreover, by~\cite[Lemma 2]{BSS} if additionally $p_1,p_2$ lie in the thick part, then $\dist(p_1,p_2)\geq2\rho/|\Gamma|\geq\eps$. Finally, Collar theorem~\cite[Theorem 4.1.1]{Buser} implies that $\dist(p_1, \mP')\geq 1/4$. Denote by $\mP''$ the union of $\mP'$ with all fixed points in the thick part, so that $\mP''$ is still $T$-invariant and $\eps/2$-separated.

 Let $p$ be a fixed point in the thick part and $\gamma\in\Gamma$ be the generator of $\Stab_\Gamma(p)$. If $\gamma$ has order at most $6$, then for any $q$ with $\dist(p,q) = \eps/2$ one has $\dist(q,\gamma(q))\geq\eps/2$, so that adding such $q$ to $\mP''$ would preserve $\eps/2$-separation property. However, if $\gamma$ has order $k\geq 7$, it is no longer the case. Instead, we pick a point $q$ with $\dist(p,q) = r$ such that $\dist(p,\gamma(p)) = \eps/2$ and it is proved~\cite[p. 218]{BSS} that, up to decreasing $\eps$ if needed, one has
\begin{equation}
\label{app1:ineq1}
\frac{\eps}{2}<r<\frac{k}{8}\eps\leq\frac{|\Gamma|}{8}\eps\leq \frac{\rho}{8}. 
\end{equation}
The idea now is to consider the ``crown'' of $p$ to be $p\cup \Gamma(q)$ with its natural triangulation~\cite[Fig. 7]{BSS} and treat separately, similar to the thin part. Indeed, the triangles in the crown only depend on $k$, which is independent of $n$, so the limit of the crown is itself. Inequality~\eqref{app1:ineq1} implies that points in the crown are at distance at least $\rho$ from $\mP'$ and different crowns are at distance at least $\rho$ from one another. As a result, if we define $\mP'''$ to be the union of $\mP''$ with all the crowns, it continues to be $T$-invariant and $\eps/2$-separated. 

One then observes that for any point $q$ lying in the thick part away from the crowns and at least $\eps/2$-away from $\mP''$ one has $\dist(q,\gamma(q))\geq\eps/2$ for all non-trivial $\gamma\in\Gamma$. Indeed, if $\dist(q,\gamma(q))<\eps/2$, then, assuming that $\gamma$ has order $k>2$, by consequetively connecting $p$ to $\gamma(p)$, then $\gamma(p)$ to $\gamma^2(p)$ etc by geodesics, we obtain a closed curve $S$ of length $\leq 2\rho$. Since any point in the thick part has injectivity radius at least $1/4>2\rho$, we conclude that $S$ is contractible. Since any diffeomorphism of a disk preserving the boundary has a fixed point, we conclude that there exists $p$ such that $\gamma(p) = p$. If $k=2$, then there is a possibility that the geodesic segment $[q,\gamma(q)]$ is mapped to itself, but then we conclude that the middle point $p$ of the segment is a fixed point. For a fixed point $p$ one necessarily has $\dist(p,q) = \dist(p,\gamma(q))$.
Using that $\dist(q,\gamma(q))<\eps/2$ the same computations that led to~\eqref{app1:ineq1} yield that either $\dist(p,q)<\eps/2$ (if order of $k\leq 6$), or $\dist(p,q)<r$ (if order of $k\geq 7$). Both of these result in a contradiction. Having established that  $\dist(q,\gamma(q))\geq\eps/2$, we can proceed as in the case of trivial action by simply adding orbits of points to $\mP''$ until we arrive at a $T$-invariant maximal $\eps/2$-separated set in the thick part away from the crowns. This completes the argument.

\subsection{Non-orientable transformations} In this section we make necessary adjustments to the arguments in the previous section to account for group actions that do not necessarily preserve orientation. For now we still assume that $M$ is orientable and $\chi(M)<0$. The main new feature is that fixed point sets of $\gamma\in\Gamma$ are no longer isolated and could consist of several disjoint closed geodesics. The strategy is to treat isolated fixed points in the same way as before and add an extra step in the construction of $\mP$ to accommodate for fixed curves of reflections. it would be convenient to take $\rho\leq 1/32$ as opposed to $\rho\leq 1/16$.

The tessellation of the thin part remains largely the same. The only difference is in the classification of isometries of the cylinder: additional isometries include reflection about $c_{n,i}$, reflections about two equidistant geodesics orthogonal to $c_{n,i}$ and the fixed-point free antipodal involution. This time, in addition to the isolated fixed points on $c_{n,i}$, we also declare the fixed points of the second kind of reflections on $c_{n,i}$ to be vertices. Thus declared vertices are equidistant on $c_{n,i}$ and we repeat the same procedure as before to form $T$-invariant $\eps/2$-separated set $\mP'$ in the thick part. 

Note that if  the action of a non-trivial $\gamma\in\Gamma$ has a fixed curve, then $\gamma$ is of order $2$, hence, a reflection (this follows from looking at the differential of the transformation at any point on the curve). 
One then has the following generalisation of~\cite[Lemma 1]{BSS}: if $\gamma_1$ is a reflection and $S\subset \mathrm{Fix}(\gamma_1)$ and $\gamma_2$ is an element of order $>2$ with $\gamma_2(p_2)=p_2$, then $\dist(S,p_2)>1/16\leq 2\rho$. Indeed, if $p_2$ is a fixed point of $\gamma_2$, then $p_1 = \gamma_1(p_2)$ is a fixed point of an orientation preserving transformation 
$\gamma_1\gamma_2\gamma_1$, so that by ~\cite[Lemma 1]{BSS}  $\dist(p_2, S) = \dist(p_1,p_2)/2>1/16$. In the same way~\cite[Lemma 2]{BSS} generalises to the case, where exactly one of $\gamma_1$, $\gamma_2$ is a reflection. Note, however, that fixed points sets for different reflections can intersect in the thick part, so~\cite[Lemma 2]{BSS} can not hold for two reflections.   

As before, we set $\mP''$ to be the set of isolated fixed points. Any such point $p$ is necessarily a fixed point of an orientation-preserving transformation, but it could also be preserved by some reflections, i.e. $\Stab_{\Gamma}(p)$ is either a cyclic group (and then it is not fixed by any reflections) or a dihedral group. In the former case we do the same procedure as before, whereas in the latter we form a crown depending on whether the order $2k = \Stab_{\Gamma}(p)$ exceeds $6$ or not. If $k\geq 4$, then the crown consist of an orbit of a point $q$ with $\dist(p,q) = r$, where $q,r$ are chosen so that $\Stab_{\Gamma}(p)$-orbit of $q$ forms $2k$-gon with sidelength equal to $\eps$. Then the same calculation as in~\eqref{app1:ineq1} gives $r<\rho/4$. As before, the set $\mP'''$ formed by the union of $\mP''$ and the crowns is $T$-invariant and $\eps/2$-separated. 

Let now $q$ be a point in the thick part outside of the crowns and at least $\eps/2$-away from $\mP''$. We already know from Section~\ref{sec:BSS} that for any non-trivial orientation preserving $\gamma\in \Gamma$ one has $\dist(q,\gamma(q))>\eps/2$. 
Let $S_i$ denote all connected components of fixed point sets of reflections lying in the thick part outside of the crowns and at least $\eps/2$-away from $\mP''$. We now claim that there is at most one $S_i$ at distance $\leq\eps/2$ from $q$. Indeed, suppose there are two such components $S_1$ and $S_2$. Let $p_i\in S_i$ be such that $\dist(q,p_i)\leq \eps/2$. Then repeatedly reflecting the curve $[p_1,q]\cup[q,p_2]$ across $S_1$ and $S_2$ we obtain a closed curve $S$ of length $\leq 2\rho$, which then has to be contractible. If reflections across $S_1$ and $S_2$ are distinct, then the argument in Section~\ref{sec:BSS} yields that  their (orientation preserving) composition has a fixed point at distance $<r$ from $q$, which leads to a contradiction as before. If  reflections across $S_1$ and $S_2$ is the of the same element $\tau$, then the action of $\tau$ on the disk bounding $S$ is by reflection, so it has a fixed point $S_0$ inside a disk such that $S_0\cup S_1\cup S_2$ is connected. Since $S_1$ and $S_2$ are different connected components, this is only possible if $S_0$ lies inside the crown, or  $\eps/2$-close to $\mP''$. Then there is an isolated fixed point on $S_0$ and we arrive at the same contradiction as before. As a result, $\eps/4$-tubular neighbourhoods $T_{\eps/4}(S_i)$ are at least $\eps/2$-away from each other and for any point on $p\in \bd T_{\eps/4}(S_i)$ one has $\dist(p,\gamma(p))\geq \eps/2$ for all non-trivial $\gamma\in\Gamma$. We can then add orbits of points from $\bd T_{\eps/4}(S_i)$ to $\mP'''$ to obtain $T$-invariant, $\eps/2$-separated sett $\mP^{(4)}$ such that $D_{\eps/2}(\mP^{(4)})\supset T_{\eps/4}(S_i)$ for all $i$.

Finally, for any point $p$ in the thick part outside of crowns, $D_{\eps/2}(\mP'')$ or $\cup_i T_{\eps/4}(S_i)$, one has $\dist(p,\gamma(p))\geq \eps/2$ for all non-trivial $\gamma\in\Gamma$. Thus, we can finish the construction of the set $\mP$ by adding the orbits of such points one by one.

\subsection{Non-orientable surfaces} In this section we continue to assume $\chi(M)<0$, but now $M$ could be non-orientable. 

Let $(M,T,h)$ be a triple consisting of a non-orientable surface $M$ with the action $T$ of group $\Gamma$ and $T$-invariant hyperbolic metric $h$. To any such triple we can associate $(\wt M, \wt T, \wt h)$, where $\pi\colon \wt M\to M$ is the orientable double cover; $\wt T$ is the action of the group $\wt\Gamma = \Z_2\times \Gamma$, where the generator $\tau$ of $\Z_2$ acts by a fixed-point free orientation reversing involution interchanging the leaves of $\pi$ and satisfying $T\circ\pi = \pi\circ\wt T$; and $\wt h = \pi^*h$ is a $\wt T$-invariant hyperbolic metric. Conversely any triple $(\wt M, \wt T, \wt h)$ can be factorised by $\tau$ to obtain $(M,T,h)$.

Given a sequence $(M,T,h_n)$ the natural way to obtain its limit is to consider the limit $(\wt M_\infty, \wt T_\infty,\wt h_\infty)$ of $(\wt M, \wt T, \wt h_n)$ and then factorise by $\tau$. First, observe that $\tau $ continues to act by fixed-point free orientation reversing involution. The statement about orientation follows directly from the construction. The only way for $\tau$ to gain fixed points is if one of the cusps is preserved by $\tau$. In that case $\tau$ has to preserve the corresponding collar and has to act by a fixed-point free orientation reversing involution, hence, by an antipodal map. However, antipodal map interchanges the two boundary components of the collar, hence, it interchanges the two corresponding cusps. Thus, the factor space is a smooth surface.

Note that since $\wt M_\infty$ is not necessarily connected, the limit of $(M,T,h_n)$ can have orientable and non-orientable connected components.

\subsection{Torus and Klein bottle} Finally, we consider the case $\chi(M)=0$. According to the previous section it is in fact sufficient to consider the case $M=\mathbb{T}^2$. There are two main differences compared to the hyperbolic case: the angles of Euclidean triangles do not determine their lengths (it makes a difference for the proof of~\cite[Lemma 1]{BSS}), and the definition of collapsing geodesics is not immediately obvious as a whole family of geodesics have lengths going to $0$.

Let $h_n$ be a sequence of flat metrics of unit area on $M$. Such metrics are in one-to one correspondence with lattices in $\mathbb{R}^2$ with fundamental parallelogram of unit area. 
 First assume that there is a uniform lower bound on the injectivity radius of $h_n$. Suppose that $\gamma_1,\gamma_2\in\Gamma$ have distinct fixed points $p_1,p_2\in M$ respectively. If $\gamma_1$ and $\gamma_2$ both have order $2$, then the same argument as in~\cite[Lemma 2]{BSS} gives a uniform lower bound on $\dist(p_1,p_2)$. If $\gamma_1$, $\gamma_2$ have orders $P_1,P_2$ respectively, where, for example $P_1>2$, then the same arguments as in~\cite[Lemma 1]{BSS} give existence of $Q\in\N$ such that $P_1^{-1} + P_2^{-1} + Q^{-1} = 1$, i.e. $\{P_1,P_2,Q\} = \{2,4,4\}, \{2,3,6\}$ or $\{3,3,3\}$. However, the only conformal class of tori that admits rotation by $\pi/2$ is the square torus, whereas the only class that admits a rotation by $2\pi/3$ is the equilateral torus. Hence, in such a situation the sequence of conformal classes is constant up to a choice of a subsequence, so that Theorem~\ref{thm:Mahler} follows. In the remaining cases, all fixed points correspond to rotations by $\pi$ with distances between centers of different rotations uniformly bounded from below. Then, the triangulation argument analogous to the one described in~\ref{sec:BSS} completes the proof of Theorem~\ref{thm:Mahler} for the torus.

Assume now that $\mathrm{inj}(h_n)\to 0$, so that without loss of generality there is a unique shortest geodesic passing through every point on $\mathbb{T}^2$ and they are all parallel to one another. Let $\mU_n$ denote the set of all such geodesics. The idea of the proof is to find a finite $T$-invariant subset $\{u_{n,i}\}_{i=1}^m$ of equidistant elements in $\mU_n$. Then since the length of those geodesics goes to $0$ and the area is fixed to be $1$, the distance $d_n$ between them goes to infinity. The tubular neighbourhoods $T_{d_n/4}(u_{n,i})$ are then long cylinders that are permuted by the action of $\Gamma$. These long cylinders can then be equivariantly and conformally identified with larger and larger subsets of the infinite cylinder, which can in turn be identified with $\mathbb{S}^2\setminus \{S,N\}$, where $N,S$ are north and south pole respectively. The complement to the tubular neighbourhoods can then be seen as a union of collars, so that $M_n$ (in notation of Theorem~\ref{them_EquivDM}) is a union of $m$ long cylinders separated by collars,  $(M_\infty,h_\infty)$ is a union of $m$ infinite flat cylinders and $(\wh M_\infty,\wh h_\infty)$ is a union of $m$ spheres.

It remains to construct $\mU_n$. To do this, we first observe that the image of a shortest geodesic under an isometry is a shortest geodesic, so $\Gamma$ acts on $\mU_n$. Introduce the distance function on $\mU_n$ induced by the distance between the parallel lines, this way $\mU_n$ becomes isometric to a circe of circumference $1/\mathrm{inj}(h_n)$ and $\Gamma$ is a finite group acting by isometries on it. Using the classification of isometries of a circle, it is easy to find the subset $\{u_{n,i}\}_{i=1}^m\subset \mU_n$ with the required properties.  



\end{document}